\documentclass[11pt,a4paper]{amsart}
\usepackage[T1]{fontenc}
\usepackage[nomath]{lmodern}
\usepackage{amsmath,amssymb,amsthm,mathtools,mathrsfs,esint}
\usepackage[a4paper,left=24mm,right=24mm,top=26mm,bottom=26mm]{geometry}
\usepackage{microtype,enumitem,booktabs,xcolor}
\usepackage[colorlinks=true,linkcolor=blue!45!black,citecolor=blue!45!black,urlcolor=blue!45!black]{hyperref}
\hypersetup{pdftitle={Local sheeting and rigidity for general double-well potentials},pdfauthor={}}

\makeatletter
\def\@setaddresses{\par\nobreak\begingroup\footnotesize
  \def\author##1{\nobreak\addvspace\bigskipamount}
  \def\\{\unskip, \ignorespaces}
  \interlinepenalty\@M
  \def\address##1##2{\begingroup
    \par\addvspace\bigskipamount\indent
    \@ifnotempty{##1}{(\ignorespaces##1\unskip) }
    {\scshape\ignorespaces##2}\par\endgroup}
  \def\email##1##2{\begingroup
    \@ifnotempty{##2}{\nobreak\indent\emailaddrname
      \@ifnotempty{##1}{, \ignorespaces##1\unskip}\/\space
      \ttfamily##2\par}\endgroup}
  \addresses\endgroup}
\makeatother
\newtheorem{theorem}{Theorem}[section]
\newtheorem{proposition}[theorem]{Proposition}
\newtheorem{lemma}[theorem]{Lemma}
\newtheorem{corollary}[theorem]{Corollary}
\theoremstyle{definition}

\theoremstyle{remark}
\newtheorem{remark}[theorem]{Remark}
\numberwithin{equation}{section}
\newcommand{\C}{\mathcal C}
\DeclareMathOperator{\Div}{div}

\allowdisplaybreaks[1]
\setlist{topsep=4pt,itemsep=2pt} 
\title[Sheeting and rigidity for Allen--Cahn]{Sheeting property and rigidity for stable solutions to the Allen--Cahn equation}
\author[Y. Liu]{Yong Liu}
\address{School of Mathematics and Statistics, Beijing Technology and Business University, Beijing, China}
\email{yliumath@btbu.edu.cn}

\author[T. Luo]{Tianci Luo}
\address{School of Mathematical Sciences, University of Science and Technology of China, Hefei 230026, P.R. China}
\email{Luo\_tianci@mail.ustc.edu.cn}

\author[K. Wang]{Kelei Wang}
\address{School of Mathematics and Statistics, Wuhan University, Wuhan, China}
\email{wangkelei@whu.edu.cn}

\author[J.C. Wei]{Juncheng Wei}
\address{Juncheng Wei, Department of Mathematics, Chinese University of Hong Kong, Shatin, New Territories, Hong Kong}
\email{wei@math.cuhk.edu.hk}

\author[Y. Wei]{Yong Wei}
\address{School of Mathematical Sciences, University of Science and Technology of China, Hefei 230026, P.R. China}
\email{yongwei@ustc.edu.cn}

 \author[K. Wu]{Ke Wu}
\address{School of Mathematics, Yunnan Normal University, Kunming, China}
\email{kewu@ynnu.edu.cn}

\date{}

\begin{document}

\begin{abstract}
We prove a sheeting theorem for stable solutions to the Allen--Cahn
equation in ambient dimensions $2$ through $10$. We show that weak convergence of
the measures $\varepsilon|Du_\varepsilon|^2\,dX$ to a hyperplane with
integer multiplicity implies that the zero sets are disjoint smooth graphs.
As a consequence, stable entire solutions in $\mathbb R^N$, $N\le7$,
with $O(R^{N-1})$ energy growth are one-dimensional.
When $W''(-1)=W''(1)$, every blowdown of an entire solution with finite
Morse index and this energy growth has multiplicity one for
$4\le N\le10$. For even potentials, we also obtain multiplicity one
for limits with bounded energy and Morse index on closed manifolds
of dimensions $3$ through $7$, under bumpiness or positive Ricci curvature.
\end{abstract}
\maketitle

\section{Introduction and main results}\label{intro}

This paper is devoted to the sheeting property of stable solutions of
the Allen--Cahn equation and its application to the classification of
stable entire solutions. Our objectives are to recover the individual
transition layers from a weak planar limit and to prove
one-dimensionality under the natural energy growth bound.
We also use this local analysis to study multiplicity for finite-index
entire solutions and for bounded-index sequences on closed manifolds.

Throughout the paper, $W$
is a smooth nonnegative function on $\mathbb R$ with precisely three
critical points $-1,0,1$, and
\begin{equation}\label{wells}
 W(-1)=W(1)=0,\qquad W>0\ \text{in }(-1,1),\qquad
 W''(-1)>0,\quad W''(1)>0,\quad W''(0)<0.
\end{equation}
The two wells have equal depth, and their second derivatives may differ.
These assumptions allow asymmetric potentials and nonconcave $\sqrt{2W}$.
We denote by $g$ the increasing heteroclinic solution of
\begin{equation*}
 g''=W'(g),\qquad g(0)=0,\qquad g(\pm\infty)=\pm1.
\end{equation*}
This solution connects the two wells. Its energy is
\begin{equation*}
 \sigma_0=\int_{\mathbb R}g'(t)^2\,dt
          =\int_{-1}^1\sqrt{2W(s)}\,ds.
\end{equation*}
We use $N=n+1$ for the ambient dimension, with
$X=(x,z)\in\mathbb R^n\times\mathbb R$, and denote the horizontal
gradient by $D_x$.

We consider the singularly perturbed Allen--Cahn equation
\begin{equation}\label{acn}
 \varepsilon\Delta u_\varepsilon=\frac{1}{\varepsilon}W'(u_\varepsilon),
 \qquad -1\le u_\varepsilon\le1 \quad \text{in} \ \ \Omega\subset\mathbb R^N.
\end{equation}
The parameter $\varepsilon>0$ measures the thickness of a
transition layer between the two wells of $W$. The equation is the
Euler--Lagrange equation of the energy
\begin{equation*}
 \mathcal E_\varepsilon(u;\Omega)
 =\int_\Omega\left(\frac{\varepsilon}{2}|Du|^2
                  +\frac{W(u)}{\varepsilon}\right)\,dX.
\end{equation*}
We call a solution of \eqref{acn} stable in $\Omega$ if
\begin{equation}\label{stab}
 \int_\Omega\left(\varepsilon|D\varphi|^2+
       \varepsilon^{-1}W''(u_\varepsilon)\varphi^2\right)\,dX\ge0
 \qquad\text{for every }\varphi\in C_c^1(\Omega).
\end{equation}
The local sheeting question is whether weak convergence of the measures
$\varepsilon|Du_\varepsilon|^2\,dX$ to a hyperplane with integer
multiplicity forces the zero sets to decompose into disjoint smooth
graphs. Note that several transition layers may approach the same hyperplane.

The corresponding regularity problem for minimal hypersurfaces has
a long history, which goes back to the work of De Giorgi
\cite{DeGiorgi1960} on partial regularity of area-minimizing boundaries.
In that work, he introduced the harmonic approximation method:
a sufficiently flat minimal hypersurface can be approximated by a
harmonic function. Allard \cite{Allard} subsequently established a
regularity theorem for stationary integral varifolds near a plane of
multiplicity one. These results made tilt-excess decay, which controls
the variation of tangent hyperplanes, a fundamental method in geometric
measure theory.

Higher multiplicity presents an additional difficulty for the analysis. Several sheets
may approach the same plane, and weak convergence alone leaves open
the possibility of small necks or singularities. For stable minimal
hypersurfaces, Schoen--Simon \cite{SS81} established a sheeting theorem
under a suitable size assumption on the singular set. Their theorem
shows that stability, together with weak closeness to a hyperplane,
possibly with higher multiplicity, forces a local decomposition into
disjoint smooth minimal graphs. Wickramasekera \cite{Wic14} extended
this theory to stable integral varifolds under a structural hypothesis
excluding classical junction singularities. His theorem replaces the
earlier assumption on the size of the singular set and preserves the
optimal bound of codimension $7$ for the singular set in the
hypersurface. More recently, Bellettini \cite{Bel25} obtained curvature
and tilt estimates by De Giorgi iteration on stable minimal immersions.
His tilt estimate controls the direction of the tangent hyperplanes
and leads to a graphical description near a hyperplane. For further
recent developments, we refer to Minter--Xiao \cite{MX26} and
Wang--Zhang \cite{WZ26}.

The variational relation between the Allen--Cahn equation and minimal
hypersurfaces suggests an analogous sheeting theorem for transition
layers. The convergence of $\mathcal E_\varepsilon$ to $\sigma_0$ times
the perimeter functional in the sense of $\Gamma$-convergence makes
this relation precise for energy minimizers \cite{Modica87}.
For general critical points with
uniform energy, Hutchinson--Tonegawa \cite{HT} established
subsequential convergence to stationary integral varifolds. The
limiting multiplicity counts the transition layers that concentrate
on the same interface. Under the further assumption of stability,
Tonegawa \cite{Ton05} derived a stability inequality for the limit.
Tonegawa--Wickramasekera \cite{TW12} then established optimal regularity
of its support, whose singular set has Hausdorff dimension at most
$N-8$.

To recover the geometry of the solutions from the limiting varifolds,
a further regularity argument is needed. Even when the limiting
interface is a hyperplane, a graphical description of the individual
transition layers still requires proof. In the multiplicity one case,
this description and the associated uniform $C^{1,\theta}$ estimate
were essentially established by Savin \cite{Savin09} and later
explicitly stated and reproved by Wang \cite{Wang17}. Savin used the
Harnack inequality and improvement of flatness in the $L^\infty$
setting, while Wang used a tilt-excess decay estimate in the spirit
of Allard. In the higher-multiplicity case, the layers may approach
the hyperplane at different rates, and small necks connecting them
may disappear in the weak limit. Their size must therefore be
controlled before the individual graphs can be constructed. Recall that Wang--Wei \cite[Theorem~1.1 and Corollary~1.2]{WW} obtained uniform
second-order estimates in ambient dimensions at most ten. Their
corollary starts from ordered transition layers whose graph functions
converge in $C^1$.  

The sheeting question also arises in the classification of entire
solutions. De Giorgi \cite{DG78} conjectured that a bounded entire
solution of the classical Allen--Cahn equation
\begin{equation*}
 \Delta u=u^3-u,
\end{equation*}
which is strictly monotone in one direction is one-dimensional when
$N\le8$. This conjecture was proved by Ghoussoub--Gui \cite{GG98} in
dimension two, by Ambrosio--Cabr\'e \cite{AC00} in dimension three,
and recently in our work \cite{Liu26} in dimension four.
Alberti--Ambrosio--Cabr\'e \cite{AAC01} studied the relation between
monotonicity, stability, and local minimality for general
nonlinearities. Savin \cite{Savin09} proved regularity of flat level
sets for minimizers. His result also proves De Giorgi's conjecture
through dimension eight when the solution tends to $\pm1$ along the
monotone direction. The limit assumption ensures the minimizing
property needed by this argument. In dimension nine,
del Pino--Kowalczyk--Wei \cite{DPKW11} constructed bounded monotone
solutions with nonplanar level sets. These results parallel those
of the Bernstein problem for minimal graphs.

Stability is a weaker hypothesis than monotonicity, because a positive
directional derivative solves the linearized equation and implies
the stability inequality. The stable De Giorgi conjecture asks whether
every bounded stable entire solution of the classical Allen--Cahn
equation is one-dimensional in ambient dimensions at most seven.
The two-dimensional case follows from \cite{GG98}, and we recently
proved the three-dimensional case in \cite{Liu26}. The conjecture
remains open in dimensions $4\le N\le7$ without an energy growth assumption. Pacard--Wei \cite{PW13}
constructed nonplanar stable solutions in dimension eight from minimal
cones, showing that this dimension range cannot be enlarged.

Towards a proof of this conjecture, it is natural to first consider solutions with the following energy
growth bound:
\begin{equation}\label{natgrowth}
 \int_{B_R}\left[\frac{1}{2}|\nabla u|^2+W(u)\right]\,dX
 \le CR^{N-1}\qquad\text{for }R\ge1.
\end{equation}
The stable De Giorgi conjecture with this additional assumption is
usually called the weak stable De Giorgi conjecture. In dimensions
$N\le3$, the conclusion follows from the quotient Liouville theorem
\cite[Theorem~3.1 and Proposition~4.2]{AAC01}. Florit-Simon--Serra
\cite[Theorem~1.3]{FSS25} recently proved the $N=4$ case.
Note that the energy bound assumption permits the rescaling
\begin{equation*}
 u_\varepsilon(X)=u(\varepsilon^{-1}X),\qquad \varepsilon\to0.
\end{equation*}
These functions solve \eqref{acn}. The compactness and stability
results \cite{HT,Ton05,TW12} yield subsequential stationary integral
varifold limits, called blowdown limits. The monotonicity formula
implies that each such limit is a cone. When $N\le7$, regularity
and Simons's theorem \cite{Simons} imply that the cone is a
hyperplane, possibly with integer multiplicity greater than one.

There is a more general class of solutions called Finite Morse index solutions. The Morse index is the maximal dimension of a space on which the
second variation is negative definite. Such solutions are stable
outside a compact set, so the energy bound still yields stationary
cone limits, with stability away from the origin. Several layers
can contribute to the same limiting cone in dimension three.
The entire solutions constructed near a catenoid by
del Pino--Kowalczyk--Wei \cite{DPKW13} have finite Morse index and
quadratic energy growth. Their two ends have logarithmic heights
and converge under rescaling to the same plane, each contributing
one transition layer. Thus their blowdown limit is a plane with
multiplicity two. For the classical potential, Florit-Simon
\cite[Theorems~1.1--1.2]{FS26} proved one-dimensionality in dimension
four under finite Morse index and bounded energy density.  

 Min-max
constructions produce minimal hypersurfaces with positive integer
multiplicities. Marques--Neves \cite[Conjecture~1.2]{MN21} formulated
the expectation that, for generic metrics in ambient dimensions three
through seven, the components obtained by min-max are two-sided and
have multiplicity one. Zhou \cite[Theorem~A]{Zhou20} proved this
conclusion for the volume-spectrum min-max hypersurfaces of bumpy
metrics in these dimensions. Allen--Cahn min-max constructions were
developed by Guaraco \cite{Gua18} and Gaspar--Guaraco \cite{GG18}.

For Allen--Cahn solutions, one can ask about every sequence with
uniformly bounded energy and Morse index, independently of its
construction. Chodosh--Mantoulidis \cite[Theorem~1.7 and Corollary~6.1]{CM20}
proved multiplicity one for such sequences in dimension three, for
bumpy metrics and for metrics with positive Ricci curvature, under
their assumptions on the even potential. Their argument controls
the separation between layers near the finitely many points where
stability may fail.

Our first main result concerns the local sheeting question in ambient
dimensions two through ten.
\begin{theorem}\label{sheet}
Let $1\le n\le9$. Suppose that $\varepsilon_j\downarrow0$ and that
$u_j$ are smooth stable solutions of \eqref{acn} in $B_1^{n+1}$,
with $-1\le u_j\le1$. Assume
\begin{equation}\label{planar}
 \varepsilon_j|Du_j|^2\,dX
 \rightharpoonup Q\sigma_0\mathcal H^n\lfloor\{z=0\}
 \quad\text{locally in }B_1^{n+1},\qquad Q\in\mathbb N.
\end{equation}
Then for every product $B_r^n\times(-h,h)\Subset B_1^{n+1}$ and all
sufficiently large $j$, there are smooth functions
$f_{1,j}<\cdots<f_{Q,j}$ on $B_r^n$ such that
\begin{equation*}
 \{u_j=0\}\cap(B_r^n\times(-h,h))
 =\bigcup_{\alpha=1}^Q\{(x,f_{\alpha,j}(x))\mid x\in B_r^n\}.
\end{equation*}
For any $0<\theta<1$, there exists a constant $C(r,\theta)$ such that
\begin{equation*}
 \max_\alpha\|f_{\alpha,j}\|_{C^{2,\theta}(B_r)}\leq C(r,\theta).
\end{equation*}
\end{theorem}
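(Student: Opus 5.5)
The plan has three stages. First, show that the zero set lies near the plane and is a union of $Q$ graphs away from a small "bad" set. Second, rule out necks using stability. Third, feed the resulting ordered graphs into the Wang--Wei second-order estimates \cite{WW}.

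\emph{Stage 1 (Hausdorff convergence and a tilt-excess bound).} From \eqref{planar} and the clearing-out lemma of Hutchinson--Tonegawa \cite{HT}, $\{|u_j|\le 1-\delta\}\cap (B_r^n\times(-h,h))$ converges in Hausdorff distance to $\{z=0\}$. The discrepancy $\frac{\varepsilon}{2}|Du|^2-\frac{W(u)}{\varepsilon}$ tends to zero in $L^1_{loc}$ \cite{HT}. Weak convergence to a multiplicity-$Q$ horizontal plane then gives vanishing tilt excess,
\[
\int_{B_{r'}^n\times(-h,h)}\varepsilon_j\bigl(1-\nu_{n+1}^2\bigr)|Du_j|^2\,dX\to 0,\qquad \nu=Du_j/|Du_j|,
\]
by the first-variation (Allard-type) argument at the diffuse level. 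Stability enters through a Sternberg--Zumbrun inequality. Testing \eqref{stab} with $|Du_j|\eta$ gives
\[
\int\varepsilon\bigl(|D^2u|^2-|D|Du||^2\bigr)\eta^2\le\int \varepsilon|Du|^2|D\eta|^2 .
\]
This is an $L^2$ bound on the "second fundamental form" $\mathcal A_j$ of the level sets, weighted by energy. Combined with $L^1$ smallness of tilt, a De Giorgi/Moser iteration in the style of Bellettini \cite{Bel25} should make the weighted tilt small in $L^\infty$ on a large set. Where $\varepsilon|\mathcal A_j|$ is small and the tilt is small, Wang's multiplicity-one improvement of flatness \cite{Wang17} (or Savin \cite{Savin09}) applies at scale $\varepsilon$. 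It shows that near each such point the level sets are Lipschitz graphs over $\{z=0\}$.

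\emph{Stage 2 (excluding necks; main obstacle).} Let $\Sigma_j$ be the set of $x\in B_r^n$ over which the vertical line meets $\{u_j=0\}$ in a non-graphical way: a point with $|\nu_{n+1}|\le 1/2$, or a number of crossings different from $Q$. The task is to show $\Sigma_j=\emptyset$ for large $j$. Suppose a point of $\Sigma_j$ persists. Blow up at scale $\varepsilon_j$ around a worst point, i.e.\ one maximizing $\varepsilon|\mathcal A|$ suitably weighted by distance to the boundary (a point-picking argument). This yields an entire stable solution $v$ of $\Delta v=W'(v)$ in $\mathbb R^{n+1}$. Its blowdowns are contained in planes parallel to $\{z=0\}$ with multiplicity at most $Q$, so $v$ has energy growth \eqref{natgrowth}. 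The key point is that $v$ must be one-dimensional, which contradicts the choice of the non-graphical point.

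I would prove this one-dimensionality by an induction on $Q$ inside the theorem itself. Away from a neck, the blown-up configuration is a union of ordered graphs. The Toda-system estimates of Wang--Wei \cite{WW}, valid for $n\le9$, control the separation $f_{\alpha+1}-f_\alpha\gtrsim \varepsilon|\log\varepsilon|$ and the curvature. The stability inequality restricted to these graphs gives a stable Toda system. For a stable Toda system, a Liouville-type argument rules out necks: the number of components of the zero set in a cylinder is constant, by a counting/degree argument on the vertical line. This step is the hardest, and it is where the dimension restriction $n\le 9$ enters, through \cite{WW}. If it fails, I would instead try a logarithmic cutoff capacity argument. Using the Sternberg--Zumbrun bound, such an argument would show that $\Sigma_j$ has vanishing $(n-2)$-capacity and therefore cannot disconnect the sheets. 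Gluing graphs across $\Sigma_j$ would then be possible, and the conclusion would follow by continuity of the crossing count.

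\emph{Stage 3 (estimates).} Once $\{u_j=0\}\cap(B_r^n\times(-h,h))$ is known to be $Q$ ordered graphs $f_{1,j}<\cdots<f_{Q,j}$ with $\|Df_{\alpha,j}\|_\infty\to0$, apply \cite[Theorem~1.1, Corollary~1.2]{WW}. That result requires ordered layers with $C^1$-converging graph functions, which is exactly what Stages 1--2 provide on a slightly larger cylinder. It gives uniform $C^{2,\theta}$ bounds depending only on $r$ and $\theta$.
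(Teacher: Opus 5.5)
Your Stages~1 and~3 correspond to ingredients the paper also uses: Lemma~\ref{gtfene}, and, for equal wells, Wang--Wei's Corollary~1.2. Stage~2 carries the whole difficulty, however, and it has a genuine gap.

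After blowing up at scale $\varepsilon_j$ about a bad zero, you assert that the blowdowns of the limit $v$ are planes parallel to $\{z=0\}$. Nothing you have proved gives this. The hypothesis \eqref{planar} controls the tilt only at scales comparable to one. It says nothing about $r^{-n}\int_{B_r(X_j)}\varepsilon_j|D_xu_j|^2\,dX$ for $\varepsilon_j\ll r\ll1$, so the blowdown of $v$ could be a tilted plane or, when $n+1\ge8$, a nonflat stable cone.

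What monotonicity does give, growth $CR^n$ and density at most $Q$, does not imply one-dimensionality. For $5\le n+1\le7$ that implication is exactly Theorem~\ref{global}, which the paper deduces \emph{from} the sheeting theorem, so using it here is circular. For $8\le n+1\le10$ it is false. The Pacard--Wei stable solutions built from the Simons cone (and cylinders over them) have natural energy growth and density at infinity strictly between $1$ and $2$. They are therefore not excluded once $Q\ge2$.

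The stable-Toda Liouville step cannot supply this either. The Toda reduction presupposes structure at infinity: ordered zero graphs over most of a large base, a polynomial count of exceptional zeros, and $L^2$ bounds for the layer slopes. All of this must be derived from some quantitative input you do not have.

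Your fallbacks do not close the gap.
\begin{itemize}
\item Point-picking on $\varepsilon|\mathcal A|$ misses a neck of size $\rho_j$ with $\varepsilon_j\ll\rho_j\ll1$. There $\varepsilon_j|\mathcal A|\to0$, so the scale-$\varepsilon_j$ limit is just a heteroclinic. At scale $\rho_j$ the varifold limit is a multiplicity-$Q$ plane in which the neck has disappeared.
\item The capacity argument cannot prove $\Sigma_j=\emptyset$. Two sheets joined by a catenoidal neck of size $\rho_j$ have exactly $Q$ crossings outside a base ball of radius $C\rho_j$. Over that ball the sheets are actually connected, so there is nothing to glue.
\end{itemize}

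The missing idea is a multiscale excess-decay iteration that carries flatness from scale one down to the transition scale, uniformly in the center. The paper proceeds as follows.
\begin{itemize}
\item It bounds the energy near small-gradient or strongly tilted points by $CE_\varepsilon^{1+1/[2(n-1)]}$, using tangential stability and Serre's estimate. This makes the $Q$ normalized layer heights converge to harmonic functions that recover the full tilt-excess.
\item It uses the ordering of the layers and a BV estimate for the gap $\bar f_Q-\bar f_1$ to show the harmonic heights share a gradient whenever the hyperplane through the marked zero keeps multiplicity $Q$. One rotation then improves every layer. When that multiplicity drops, it inducts on $Q$.
\item Keeping the thickness error yields the Morrey bound of Proposition~\ref{mwfloo}, $\int_{B_{2r}(X)}\varepsilon|D_xu|^2\,dX\le CE_\varepsilon r^{n-2+2\gamma}+C\varepsilon^2r^{n-2}$.
\item At the terminal scale, Lemma~\ref{bterm} gives a microscopic limit with $\int_{B_R(Y)}|D_xU|^2\,dX\le AR^{n-2}$ uniformly in $Y$.
\end{itemize}
This critical bound is what forces the horizontal planar blowdown. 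Via the horizontal Sternberg--Zumbrun inequality it also covers the exceptional zeros by $O(R^{n-4})$ balls. It further provides the slope bounds for the Toda reduction, whose radial Hardy test requires $(n-2)/4<2$. That is where $n\le9$ enters the graphical decomposition (Proposition~\ref{crit}), not through Wang--Wei.

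Finally, Stage~3 as written covers only $W''(-1)=W''(1)$. The assumption \eqref{wells} allows unequal values, and then the Wang--Wei interaction and separation estimates must be redone with two decay rates.
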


Our second main result is the one-dimensional classification under
the natural energy growth bound.
\begin{theorem}\label{global}
Let $N\le7$, and let $U\in C^2(\mathbb R^N;(-1,1))$ be a stable
entire solution of $\Delta U=W'(U)$, satisfying the energy growth
condition \eqref{natgrowth} with $u=U$.
Then we have
\begin{equation*}
 U(X)=g(e\cdot X-b)
 \quad\text{for some }e\in\mathbb S^{N-1},\ b\in\mathbb R.
\end{equation*}
\end{theorem}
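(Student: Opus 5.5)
We prove Theorem~\ref{global} by a blowdown argument combined with Theorem~\ref{sheet} and a rigidity statement for graphical layers. Set $u_\varepsilon(X)=U(\varepsilon^{-1}X)$. These functions solve \eqref{acn}, are stable in every ball, and by \eqref{natgrowth} have uniformly bounded energy on compact sets. By Hutchinson--Tonegawa \cite{HT}, Tonegawa \cite{Ton05} and Tonegawa--Wickramasekera \cite{TW12}, along a subsequence $\varepsilon_j\to0$ the measures $\varepsilon_j|Du_j|^2\,dX$ converge to $\sigma_0$ times a stationary integral varifold $V$. Its support is regular away from a set of dimension at most $N-8<0$, and $V$ is stable. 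The monotonicity formula makes $V$ a cone, and Simons's theorem \cite{Simons} then makes it a hyperplane of some multiplicity $Q\in\mathbb N$ when $N\le7$. After a rotation the hyperplane is $\{z=0\}$. We exclude $Q=0$: since $U$ is nonconstant and takes values in $(-1,1)$, $U$ has a zero level. The density at infinity is at least the density at a zero point, which is bounded below by the clearing-out lemma.

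Theorem~\ref{sheet} now applies to $u_j$ in $B_1$. For fixed $r<1$ and large $j$, the zero set of $u_j$ in $B_r^n\times(-h,h)$ is a union of $Q$ ordered graphs with uniform $C^{2,\theta}$ bounds. Scaling back, in $B_{r/\varepsilon_j}^n\times(-h/\varepsilon_j,h/\varepsilon_j)$ the zero set of $U$ consists of $Q$ ordered graphs $F_{\alpha,j}(x)=\varepsilon_j^{-1}f_{\alpha,j}(\varepsilon_jx)$. They satisfy
\[
|D^2F_{\alpha,j}|\le C\varepsilon_j,\qquad [D^2F_{\alpha,j}]_{\theta}\le C\varepsilon_j^{1+\theta}.
\]
Hence the second fundamental form of $\{U=0\}$ on $B_{R}$ is bounded by $C/R$ for a sequence $R=R_j\to\infty$. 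The uniform $C^{2,\theta}$ estimate, together with the fact that the $f_{\alpha,j}$ converge to $0$, gives $\|Df_{\alpha,j}\|_{L^\infty}\to0$. So the tilt of the zero set in $B_{R_j}$ relative to $\{z=0\}$ tends to zero. Using the directional blowdown at several scales, and passing to a further subsequence of the rotations $e_j$, we get that the unit normal of $\{U=0\}$ on $B_{R_j}$ lies within $o(1)$ of a fixed $e$.

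Next we show that the zero set is a single flat hyperplane. Fix any point $X_0$ of $\{U=0\}$. For every large $j$, $X_0$ lies on one of the graphs $F_{\alpha,j}$, whose curvature at $X_0$ is at most $C\varepsilon_j\to0$. Therefore $\{U=0\}$ is a smooth hypersurface with vanishing second fundamental form, i.e.\ a union of parallel hyperplanes. Each of these, once it has a transition-layer structure, is totally geodesic. A rigidity step then shows that $U$ is one-dimensional. Near $\{U=0\}$ the function $U$ is close to $g$ of the signed distance; this follows from the layer structure, using the $C^{2,\theta}$ control together with the multiplicity-one profile estimates of Savin/Wang applied to each sheet. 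The directional derivative $\partial_eU$ has a sign on each layer. We then apply the stability inequality and the Liouville-type argument of \cite[Theorem~3.1 and Proposition~4.2]{AAC01} with $\varphi=\partial_\tau U$ for $\tau\perp e$. Together with the energy growth, this gives $\partial_\tau U\equiv0$, so $U$ depends only on $e\cdot X$. A one-dimensional bounded solution that changes sign and lies in $(-1,1)$ is the heteroclinic $g(e\cdot X-b)$: finite-energy-per-area and stability rule out the periodic solutions, which are unstable. In particular exactly one layer exists, and $Q=1$ a posteriori.

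The main obstacle is the passage from curvature decay along a sequence of scales to exact flatness. We must make sure that the constant in Theorem~\ref{sheet} is scale-uniform: the graphs must cover all of $B_{R_j}$ and not just a slab, and points of $\{U=0\}$ far from the slab must be excluded. For the latter I would use that the support of the limit is exactly the plane, so by clearing-out there are no zeros of $u_j$ outside $\{|z|<h\}$ in $B_r$. The $C^{2,\theta}$ bound is crucial here, since a $C^{1,\theta}$ bound alone would not force the curvature to vanish.
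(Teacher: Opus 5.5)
Your argument is correct up to the point where $\{U=0\}$ is shown to be a union of at most $Q$ parallel hyperplanes $\{z=b_i\}$, and that part is essentially the first half of the paper's Proposition~\ref{prigi}. The paper uses slopes instead of curvatures. At a fixed zero, the slope of the rescaled sheet equals the slope of $\{U=0\}$, so the uniform $C^1$ convergence already forces $D_xU=0$ there. Contrary to your last remark, the $C^{2,\theta}$ bound is therefore not needed.

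The genuine gap is the final rigidity step. The quotient Liouville theorem of \cite{AAC01} is applied to $\sigma=\partial_\tau U/\varphi$, with $\varphi>0$ a solution of the linearized equation. It requires $\int_{B_R}|\partial_\tau U|^2\,dX\le CR^2$, but \eqref{natgrowth} only gives $CR^{N-1}$. That suffices exactly when $N\le3$, which is why this argument settles only $N\le3$. Knowing that the zero set is flat, and that $U$ is approximately $g(\pm(z-b_i))$ near each sheet, gives no decay in $|x|$ of $\int_{\mathbb R}|\partial_\tau U(x,z)|^2\,dz$. So for $4\le N\le7$ the conclusion $\partial_\tau U\equiv0$ does not follow, and the local sign of $\partial_eU$ near each layer does not help.

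The missing idea is exact rigidity on a half-space, followed by unique continuation. Let $b=\max_i b_i$. On $\{z>b\}$ the solution has one sign and vanishes on $\{z=b\}$. Lemma~\ref{hrigi} (Farina's lemma giving $\sup|U|=1$, then the Berestycki--Caffarelli--Nirenberg half-space theorem \cite{BCN97}) gives $U=g(z-b)$ or $g(b-z)$ there. Let $V$ be this entire one-dimensional solution. Then $U-V$ solves $\Delta(U-V)=c(X)(U-V)$ with bounded $c$ and vanishes on an open set. Unique continuation \cite{Aronszajn} gives $U\equiv V$ on $\mathbb R^N$, and $Q=1$ follows.

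Two smaller points. First, a nonconstant function with values in $(-1,1)$ need not vanish anywhere. You should obtain $Q\ge1$ from $W(U)>0$ and monotonicity, as the paper does, or from Lemma~\ref{wentr}. Second, Theorem~\ref{sheet} requires $N\ge2$, so the lowest dimension is best handled by adding dummy variables, as in the paper.
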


This theorem resolves the weak stable De Giorgi conjecture in its
full dimension range. 
The next theorem treats general potentials with
$W''(-1)=W''(1)$ and allows nonflat limiting cones in dimensions
eight through ten. 
\begin{theorem}\label{finindex}
Let $4\le N\le10$, and assume that $W$ satisfies \eqref{wells}
with $W''(-1)=W''(1)$.
Let $U\in C^2(\mathbb R^N;(-1,1))$ be an entire solution of
$\Delta U=W'(U)$ with finite Morse index. Suppose that, for some
$K>0$,
\begin{equation}\label{fingrowth}
 \int_{B_R}\left(\frac12|DU|^2+W(U)\right)\,dX
 \le KR^{N-1}\qquad\text{for }R\ge1.
\end{equation}
Then every varifold blowdown of $U$ is a stationary stable
minimal hypercone with multiplicity one.
\end{theorem}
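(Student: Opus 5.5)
We take the blowdown sequence $u_\varepsilon(X)=U(\varepsilon^{-1}X)$ along some $\varepsilon_j\downarrow0$ realizing a given blowdown $V$. \eqref{fingrowth} gives uniform energy bounds. Finite Morse index gives stability of $u_j$ outside $B_{R_0\varepsilon_j}$, hence on every compact subset of $\mathbb R^N\setminus\{0\}$ for large $j$. By \cite{HT,Ton05,TW12}, $V$ is a stationary integral varifold, stable away from the origin, with singular set of codimension at least $7$ in the support. The monotonicity formula makes $V$ a cone. A cone stable on $\mathbb R^N\setminus\{0\}$ is stable (capacity of a point is zero for $N\ge3$), so $V$ is a stable minimal hypercone. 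It remains to show every multiplicity is $1$. Since the density is constant along rays and on connected components of the regular part of the link, we argue by contradiction. Suppose some regular point $p\neq0$ of $\operatorname{spt}V$ has multiplicity $Q\ge2$.

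Near $p$ the cone is a smooth minimal hypersurface $\Sigma$. After rotating and rescaling a small ball around $p$, the measures converge to $Q\sigma_0\mathcal H^n\lfloor\Sigma$ with $\Sigma$ nearly flat. We then apply Theorem~\ref{sheet}. Strictly it is stated for a hyperplane, so we first blow up at $p$, or use its curved version, where the $C^{1,\theta}$ graph description persists. Near $p$, and by continuation along the connected regular part, $\{u_j=0\}$ consists of $Q$ ordered graphs $f_{1,j}<\dots<f_{Q,j}$ over $\Sigma$ with uniform $C^{2,\theta}$ bounds. Their separations $d_j=f_{\alpha+1,j}-f_{\alpha,j}\to0$, measured in the rescaled variable $\varepsilon_j$.

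The key input is the Toda-type system from the layer analysis underlying Theorem~\ref{sheet} (cf. \cite{WW}). With $W''(\pm1)=\kappa^2$, the interaction between consecutive layers is
\[
\varepsilon_j H_{\alpha,j}\approx A\bigl(e^{-\kappa d_{\alpha-1}/\varepsilon_j}-e^{-\kappa d_\alpha/\varepsilon_j}\bigr),
\]
with $A>0$. Equality of the two well curvatures makes the decay exponents on both sides equal, so the system is the symmetric Toda system. For unequal curvatures, alternating layers would decay at different rates. Stability, through the second-variation inequality tested with functions built from the $f_\alpha$, gives the standard separation estimate. Then $u_j$ is close to a superposition of $Q$ heteroclinics, and the Toda system has a sign structure.

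To rule out $Q\ge2$ we use a global argument on the cone, following the approach of Chodosh--Mantoulidis \cite{CM20}. For $N\ge4$, meaning a link of dimension at least $2$, the unscaled layers of $U$ far out are graphs over the cone $C$. The height differences $h=d_j/\varepsilon_j$ in the original scale solve $\Delta_C h\approx -2A e^{-\kappa h}$ plus lower-order terms on $\{|X|>R\}$. Positivity gives that $h$ is a positive supersolution, $-\Delta_C h\ge0$, on the end of $C$. On a cone of dimension $n\ge3$, such functions grow slower than any log-type bound relative to $e^{-\kappa h}$ integrability.

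Concretely, we integrate $e^{-\kappa h}$ against cutoffs. This forces $\int_{C\cap B_R}e^{-\kappa h}\le CR^{n-2}$. Since $h\ge0$ is superharmonic and $n-2\ge1$, it is bounded below by a positive multiple of $\log|X|$ at most. Combined with the Jacobi-field/stability consideration, the leading-order interaction energy is then of order $R^{n}e^{-\kappa h}\gg R^{n-2}$ unless $h\gtrsim 2\kappa^{-1}\log R$. The latter makes the layers' rescaled separation comparable to the neck scale, contradicting the cone being approached by $Q$ graphs with curvature $O(1/|X|)$.

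The main obstacle is this last step: turning the Toda system on the end of a possibly singular cone into a contradiction. In particular we must handle the singular set of the link for $8\le N\le10$, which has codimension at least $7$ and is therefore removable for capacity arguments since $N-1\le 9$. We must also ensure that the error terms in the Toda reduction are dominated by the interaction. We would first try to test the stability inequality on $\{|X|>R_0\}$ with $\varphi=\eta\,\partial_\nu u$ built from the layers. This should yield integrated control of $\int e^{-\kappa h}$, which, inserted into the equation, contradicts the logarithmic growth bound.
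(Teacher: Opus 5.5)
There is a genuine gap. The step you call the main obstacle is the whole proof, and the mechanism you sketch for it fails. Your first steps (cone structure, stability off the vertex, local sheeting at regular points) do match Lemmas~\ref{conelimits} and~\ref{localgraph}.

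\textbf{Planar blowdowns.} Here the sign is reversed. By \eqref{hsequa}, $\Delta f_i=a_{i-1}-a_i+\dots$ with $a_i>0$, so for $Q=2$ the gap satisfies $\Delta(f_2-f_1)\approx2a_1>0$. The gap is \emph{sub}harmonic: the layers bend away from each other, as for the catenoidal ends of \cite{DPKW13}. It is not a positive supersolution, and $w=-b$ has the structure of the stable Liouville equation $-\Delta w=e^{w}$.

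Your bound $h\gtrsim2\kappa^{-1}\log|X|$ from stability is also not a contradiction. It is exactly what holds: Lemma~\ref{vertdecomp} gives $e^{-k(f_{i+1}-f_i)}\le Cr^{-2}$, and this coexists with the curvature bound \eqref{anncurv}. The contradiction needs a sharp comparison of growth rates, which your proposal never makes; that is why $N\le10$ never enters it. The paper's comparison runs as follows.
\begin{itemize}
\item For the weighted gap $b$ of \eqref{clp}, spherically average $\Delta b\le a+\dots$.
\item Test stability with radial Hardy functions $|x|^{-(n-2)/2}\psi(\log|x|)$, whose sharp constant is $(n-2)^2/4$. Together these give $\limsup m(T)/T\le(n-2)/4$ for the spherical mean $m$ of $b$ in $T=\log r$.
\item Jensen gives $\omega\ge c\,e^{2t-m(t)}$ for $\omega=e^{2t}\fint a$. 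This is compatible with the stability bound $\int_s^T\omega\,dt=O(T)$ only if $m$ grows at rate at least $2$.
\end{itemize}
Since $(n-2)/4<2$ exactly when $N\le10$, this is the contradiction (Lemma~\ref{lemma}, Proposition~\ref{finplanar}). Running it also requires the errors to be $o(r^{-2})$. The paper gets this from an exterior graph description over one fixed hyperplane via \cite{FS26}, logarithmic height bounds, and the orthogonal decomposition \eqref{phasebd}. You simply assume the layers are global graphs over $C$.

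\textbf{Nonflat blowdowns.} You give no argument for these, and they cannot be excluded a priori when $8\le N\le10$. Treating the layers far out as graphs over one cone presupposes that the blowdown is unique and uniformly regular at infinity. Neither is available: different sequences may give different cones, and for $N=9,10$ the link may be singular. This is why the paper needs Lemma~\ref{fixedmult} and a family of dyadic reference cones.

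Your gap equation also drops the Jacobi term. Over a curved cone the linearization is $J_C=\Delta_C+|A_C|^2$, and $|A_C|^2h$ is of the same order as $\Delta_Ch$, not lower order.

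The paper handles this case by a different idea (Proposition~\ref{nonflat}).
\begin{itemize}
\item It normalizes the top-minus-bottom height difference at scales $R_j=2^{\ell_j}$ chosen so that the original-scale width $M_j=D_{\ell_j}=\max_{\ell\le\ell_j}D_\ell\to\infty$ is a running maximum.
\item The mean curvature is $O(1/R_j)$ by \eqref{localmean}, while the heights are of size $M_j/R_j$. So the normalized differences converge to a positive Jacobi field $v$ on $\operatorname{reg}C_*$.
\item The running-maximum choice forces $v(2^{-m}\theta)\le1$ for every $m$. This contradicts the theorem that positive Jacobi fields on stable nonflat cones blow up at the vertex \cite[Theorem~1.4]{ZW24}.
\end{itemize}
The capacity of the singular set, which you single out, is not where the difficulty lies.
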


We emphasize that in this result, $N$ can not be $3$, as can be seen from solutions with catenoidal ends.

Our final result concerns bounded-index sequences on a closed
manifold. A metric is bumpy if every normal Jacobi field on every
closed immersed minimal hypersurface vanishes identically. Such a
field solves the linearized minimal hypersurface equation. 
\begin{theorem}\label{multone}
Let $(M^N,\mathbf g)$ be a smooth closed Riemannian manifold,
$3\le N\le7$, and suppose that $\mathbf g$ is bumpy or that
$\operatorname{Ric}_{\mathbf g}>0$.
Assume that $W$ satisfies \eqref{wells} and is even.
Let $\varepsilon_j\downarrow0$ and let
$u_j\in C^\infty(M;[-1,1])$ satisfy
\begin{equation}\label{closedac}
 \varepsilon_j^2\Delta_{\mathbf g}u_j=W'(u_j).
\end{equation}
Suppose that, for fixed $E_0,I_0<\infty$,
\begin{equation}\label{closedbd}
 \mathcal E_{\varepsilon_j}(u_j;M)
 :=\int_M\left(\frac{\varepsilon_j}{2}|\nabla u_j|_{\mathbf g}^2
              +\frac{W(u_j)}{\varepsilon_j}\right)d\mu_{\mathbf g}
 \le E_0,
 \qquad \operatorname{ind}(u_j)\le I_0.
\end{equation}
Then every nonzero subsequential limit of the associated varifolds,
normalized by $\sigma_0^{-1}$, is of the form
\begin{equation*}
 V=\sum_{\ell=1}^J|\Sigma_\ell|,
\end{equation*}
where the $\Sigma_\ell$ are pairwise disjoint, smooth, closed,
embedded, two-sided minimal hypersurfaces.
In particular, every component has multiplicity one, and along
the same subsequence
\begin{equation*}
 \left(\frac{\varepsilon_j}{2}|\nabla u_j|_{\mathbf g}^2
              +\frac{W(u_j)}{\varepsilon_j}\right)d\mu_{\mathbf g}
 \rightharpoonup
 \sigma_0\sum_{\ell=1}^J
             \mathcal H_{\mathbf g}^{N-1}\lfloor\Sigma_\ell.
\end{equation*}
\end{theorem}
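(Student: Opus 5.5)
The plan follows the Chodosh--Mantoulidis strategy, with the three-dimensional curvature estimates replaced by the local sheeting result, Theorem~\ref{sheet}. Throughout, $3\le N\le7$, so every hypersurface in the limit has dimension $n=N-1\le6$.

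\emph{Step 1: structure of the limit.} By Hutchinson--Tonegawa \cite{HT}, a subsequence of the varifolds converges to a stationary integral varifold $V$, and the energy measures converge to $\sigma_0\|V\|$. The Morse index bound gives a finite set $\mathcal P\subset M$ with $\#\mathcal P\le I_0$ such that, on every ball $B$ compactly contained in $M\setminus\mathcal P$, $u_j$ is stable for large $j$. This is the standard argument: if index accumulated on $I_0+1$ pairwise disjoint regions, it would produce $I_0+1$ disjointly supported negative directions. By Tonegawa \cite{Ton05} and Tonegawa--Wickramasekera \cite{TW12}, $V$ is stable away from $\mathcal P$ and has singular set of dimension at most $N-8<0$. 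Since stable minimal hypersurfaces have removable point singularities in these dimensions, $V=\sum_\ell m_\ell|\Sigma_\ell|$ with $\Sigma_\ell$ smooth, closed, embedded, pairwise disjoint, and $m_\ell\in\mathbb N$.

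\emph{Step 2: local sheeting away from $\mathcal P$.} Fix a point $p\in\Sigma_\ell\setminus\mathcal P$ and work in Fermi coordinates on a small cylinder around $p$. Then $\varepsilon_j|\nabla u_j|^2$ converges to $m_\ell\sigma_0$ times a smooth hypersurface, which after rescaling is almost flat. I would apply Theorem~\ref{sheet} in its Riemannian-perturbed form; its proof is local and should tolerate smooth metric perturbations. This expresses $\{u_j=0\}$ near $p$ as $m_\ell$ ordered graphs $f_{1,j}<\dots<f_{m_\ell,j}$ over $\Sigma_\ell$, with uniform $C^{2,\theta}$ bounds. Since $W$ is even, $u_j$ alternates sign across consecutive sheets, and the layer separations $f_{\alpha+1,j}-f_{\alpha,j}\to0$. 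The Toda-type system for the sheets, which carries exponential interaction terms $e^{-\sqrt{2}\,d/\varepsilon_j}$, is available from the second-order analysis in \cite{WW}. Following \cite{CM20}, I would assume $m_\ell\ge2$ and study the normalized height $h_j=(f_{m_\ell,j}-f_{1,j})/\|f_{m_\ell,j}-f_{1,j}\|_{L^\infty}$ on compact subsets of $\Sigma_\ell\setminus\mathcal P$. The expected outcome is that $h_j$ converges to a positive Jacobi field $\phi$ on $\Sigma_\ell\setminus\mathcal P$, or, in the one-sided case, that $\Sigma_\ell$ is one-sided and the double cover carries such a field. The interaction terms have a sign that makes $h_j$ a supersolution, and I would exploit this sign in the limit.

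\emph{Step 3: the singular points, which I expect to be the main obstacle.} One must show that $\phi$ extends across $\mathcal P$ as a genuine positive Jacobi field, equivalently that separation is controlled near each $q\in\mathcal P$. I would first apply Theorem~\ref{sheet} on annuli around $q$ at all dyadic scales down to $\varepsilon_j$; stability holds there because index concentrates only at $q$. This gives sheeting on each annulus, with the graph count locked at $m_\ell$ by the monotonicity formula and the fact that tangent cones at $q$ are multiplicity-$m_\ell$ planes. Next I would use a Harnack inequality for the separation on annuli, derived from the Toda system, to get $\phi\in L^\infty$ near $q$ with $\phi>0$. Since $N-1\ge2$, a point is a removable singularity for bounded supersolutions of the Jacobi operator. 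With $\phi$ extended, the remaining case analysis is as follows. If $\mathbf g$ is bumpy, a nonzero Jacobi field cannot exist, a contradiction. If $\operatorname{Ric}>0$, $\Sigma_\ell$ is unstable in two-sided form, and a positive supersolution of the Jacobi operator forces stability, again a contradiction. In the one-sided case, the evenness of $W$ together with a parity count of sign changes across the sheets rules out odd multiplicity. Lifting to the orientation double cover and applying the same argument then rules out even multiplicity and two-sidedness failure. Together these give $m_\ell=1$ and two-sidedness. Finally, energy convergence to $\sigma_0\sum_\ell\mathcal H^{N-1}\lfloor\Sigma_\ell$ follows from equipartition in \cite{HT}.
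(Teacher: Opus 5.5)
Your outline follows the Chodosh--Mantoulidis scheme, as the paper does, but the steps that carry the proof would not work as written.

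\textbf{Step~2: the limit equation.} It cannot come from the sign of the interaction, because adjacent layers \emph{attract}. In the layer equations \eqref{hsequa} one has $\Delta f_Q\approx a_{Q-1}\ge0$ and $\Delta f_1\approx-a_1\le0$. So to leading order $H_{Q,j}-H_{1,j}\le0$, hence $\mathcal L_jw_j\ge0$, and $w_j=f_{Q,j}-f_{1,j}$ is a \emph{sub}solution. That gives neither stability (needed against $\operatorname{Ric}_{\mathbf g}>0$) nor a Jacobi field (needed against bumpiness, which says nothing about one-sided solutions).

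What is needed is smallness. The sharp separation estimate \eqref{closedsep}, $\sup_K e^{-kD_{i,j}/\varepsilon_j}=o(\varepsilon_j^2|\log\varepsilon_j|)$, gives at once $w_j\ge c_K\varepsilon_j|\log\varepsilon_j|$ and $\|H_{Q,j}-H_{1,j}\|_{L^\infty(K)}=o(\varepsilon_j|\log\varepsilon_j|)$. After normalizing at a fixed point $y_0$, the right side of $\mathcal L_jv_j=F_j$ vanishes in the limit, so $J_\Sigma v=0$ exactly. With $O(\cdot)$ in place of $o(\cdot)$ the interaction could survive in the limit. Normalizing instead by an $L^\infty$ norm on compact sets is not consistent along an exhaustion of $\Sigma\setminus\mathcal P$.

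You also invoke an unproved Riemannian version of Theorem~\ref{sheet}; its proof uses Euclidean structure throughout (the horizontal Sternberg--Zumbrun inequality, the stress-tensor identities). The paper avoids this. It blows up at scale $\varepsilon_j$, where the metric becomes Euclidean, and uses Theorem~\ref{global} to verify \cite[Proposition~3.7]{CM20}. It then proves $B(v_j)\le C$ by point selection and the stable Bernstein theorem (Lemma~\ref{stableest}), and Wang--Wei's Riemannian estimate supplies \eqref{closedsep}.

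\textbf{Step~3: boundedness near $q\in\mathcal P$.} A Harnack inequality on dyadic annuli cannot give $v\in L^\infty$ near $q$. A local fundamental solution of $J_\Sigma$ at $q$ (behaving like $|y|^{2-n}$, or $-\log|y|$ when $n=2$) is positive on the punctured disk and satisfies every such Harnack inequality. What you need is an upper bound for $w_j$ on $D_\rho(q)\setminus D_r(q)$ by its values on a \emph{fixed} outer circle, uniform in $r$.

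Your route to an annular analysis also fails. Index concentration at $q$ only gives stability on fixed annuli for large $j$. The unstable region may sit at points $q_j\to q$ at scales $\rho_j\to0$ where the rescaled limits are not planes (catenoidal necks are the model case). At scales comparable to $\varepsilon_j$ the rescaled thickness does not even tend to zero. So Theorem~\ref{sheet} is not available on all dyadic annuli.

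The paper's Lemma~\ref{heightest} obtains \eqref{heightsup} by a maximum-principle argument that never looks inside the disk:
\begin{itemize}
\item It foliates $D_{2\rho}(q)$ by minimal disks with boundary values $f_{Q,j}+t$ (resp.\ $f_{1,j}+t$).
\item It builds Allen--Cahn Dirichlet barriers on them by \cite[Theorem~7.4]{CM20}. This is where evenness of $W$, through oddness of $g$, is genuinely used.
\item It slides the barriers to first contact. There the strong maximum principle forces a boundary contact, and exponential decay of $1-u_j$ bounds the contact parameter by $C_\rho\varepsilon_j|\log\varepsilon_j|$.
\end{itemize}
Dividing \eqref{heightsup} by $w_j(y_0)\ge c\varepsilon_j|\log\varepsilon_j|$ then bounds $v$ near $q$. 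Only after that does the zero-capacity argument remove the point.
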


The theorem applies to arbitrary sequences satisfying
\eqref{closedbd}, independently of a min-max characterization.
Here evenness of the potential allows us to change the sign of a solution while
keeping the same equation. It also makes the centered heteroclinic
odd, as required in the barrier construction of
Chodosh--Mantoulidis \cite[Theorem~7.4 and Lemma~7.10]{CM20}.

The proof of our local sheeting theorem begins with an estimate for the horizontal
gradient energy, before the layers have been expressed as graphs.
Tonegawa's tangential stability inequality \cite{Ton05} controls
variations in the direction of the horizontal gradient. We use Serre's
higher integrability estimate for nonnegative symmetric tensors
with divergence constraint \cite{Serre} to control the concentration of
the projected horizontal gradient energy. Together with estimates
along level curves, these inequalities yield a superlinear bound
for the energy near points with small gradient or large tilt.
Their contribution is therefore negligible at the scale of the
tilt-excess. This estimate allows harmonic approximation of the
individual layers starting from weak planar convergence, including
when several layers approach the same plane.

Strong compactness of the vertical energy integrals then recovers
the full tilt-excess from the graphical regions. Following the
harmonic approximation in Florit-Simon--Serra \cite{FSS25}, we use
one affine function for all layers.
Possible decreasing of multiplicity is handled by an induction argument. As in Wang
\cite{Wang17}, the iteration must account for the transition
thickness: excess decay applies while the excess is much larger
than $\varepsilon^2$. At the last scale, the rescaled solutions
approach a stable entire solution with the full energy bound and
a stronger bound for its horizontal gradient energy, uniform in the
center of the ball. Proposition~\ref{crit} classifies these limits
and completes the iteration.

This directional classification uses tangential stability to control
the number of regions where the solution is not close to a 
vertical transition, following Liu--Wang--Wei--Wu \cite{LWWW24}.
On the remaining set, we construct ordered layers and project the
equation and stability onto heteroclinic derivatives, as in
Wang--Wei \cite{WWends,WW}. The resulting Toda equations describe
the exponentially small interactions between adjacent layers.
We estimate the errors in weighted averages, including the terms
from the excluded regions. Spherical averaging and radial Hardy
tests \cite{WangToda19} then exclude several layers.
With $n=N-1$, the strict inequality $(n-2)/4<2$, equivalently
$N\le10$, leaves room for the averaged errors and explains
the dimension range of local sheeting. In the global problem,
the restriction $N\le7$ enters earlier, when we prove that the blowdown
cone is a hyperplane. Sheeting on expanding balls then
shows that the zero set consists of finitely many parallel
hyperplanes. Half-space rigidity \cite{BCN93,BCN97} and unique
continuation imply one-dimensionality.

  For finite Morse index solutions, we exclude several
layers over the regular parts of planar and nonflat blowdown cones.
For bounded-index sequences on a closed manifold, a component with
higher multiplicity carries a positive Jacobi field, on its normal
double cover when it is one-sided. Bounds for the separation between
layers near the finitely many excluded points allow this field to
extend across them. The metric assumptions then exclude higher
multiplicity. These arguments connect local control of each transition
layer with the multiplicity of the limiting hypersurface.
On closed manifolds, we then obtain convergence of Morse indices
for bumpy metrics and another proof of the corresponding existence
results for infinitely many minimal hypersurfaces.

This paper is organized as follows.
Sections~\ref{locenergy}--\ref{vertcomp} develop the local estimates
and excess decay, Section~\ref{rigidpart} proves directional rigidity,
and Section~\ref{sheetproof} treats sheeting, global rigidity, and
higher-dimensional consequences.
Sections~\ref{finproof}--\ref{multproof} prove the two multiplicity
results, and Appendix~\ref{aux} contains the one-dimensional estimates.

\section{Energy estimates and exceptional points}\label{locenergy}

In this section, we estimate the horizontal gradient energy before constructing the
individual layers. The planar limit first yields local bounds for the
full energy and confines the transition region to a thin slab.
We then estimate the difference between the gradient and potential
energy densities. Finally, we cover the points where the gradient is
small or has large horizontal tilt. The energy of this covering is
bounded by a power of the tilt-excess greater than one.

This last bound is needed in Section~\ref{vertcomp}, where we divide
the estimates by the tilt-excess. A bound proportional to that excess
would leave an error in the limit. The tangential stability inequality
and the higher integrability estimate for the projected energy yield
the additional power needed to make the error vanish.

We work with $n\ge2$. The planar case will follow by adding an
independent coordinate in Section~\ref{sheetproof}. We use
$\C_r=B_r^n\times(-r,r)$ and define the tilt-excess relative to the
horizontal hyperplane by
\begin{equation*}
 E_\varepsilon=\int_{\C_{64}}\varepsilon|D_xu_\varepsilon|^2\,dX.
\end{equation*}
It is the squared horizontal component of the level-set normal,
integrated against $\varepsilon|Du_\varepsilon|^2\,dX$, and it
vanishes when the solution depends only on $z$.
Unless a different domain is specified, the local estimates concern
smooth stable solutions of \eqref{acn} on $\C_{64}$.
Here $B_r^n(x)$ is a ball in the horizontal space, and
$\omega_n=\mathcal L^n(B_1^n)$ is the volume of its unit ball. Constants may
depend on $W$, the dimension, the fixed domains, the chosen compact
intervals of level values, and the local energy bound. They are
independent of $\varepsilon$ and the particular solution.

\subsection{Full energy bounds and geometric stability}
We first clarify the potential assumptions in later applications of
Wang \cite{Wang17}. By \eqref{wells}, $W''$ has a positive lower bound
in fixed neighborhoods of $-1$ and $1$. We may extend its restriction
to $[-1,1]$ smoothly so that $W''\ge\kappa>0$ for $|s|\ge\gamma$,
with some $\gamma\in(0,1)$, as required in Section~2 of that paper.
Beyond $[-1,1]$, we smoothly continue $W''$ to a positive
constant and integrate twice from the corresponding well.
This changes $W$ only outside the range of our solutions.
Their equation, second variation, energy, and heteroclinic remain
unchanged. We use this extension whenever we invoke that paper.

We begin with the relation between the gradient energy and the full energy.

\begin{lemma}\label{gtfene}
Let $\varepsilon\downarrow0$ and let $u_\varepsilon$ be stable
solutions of \eqref{acn} in an open set, with a uniform local bound
for $\int\varepsilon|Du_\varepsilon|^2\,dX$. Their full energy is locally
uniformly bounded. If the measures $\varepsilon|Du_\varepsilon|^2\,dX$ converge to
$Q\sigma_0\mathcal H^n\lfloor\{z=0\}$, with $Q\in\mathbb N$, then the full energy
measures have the same local limit, the horizontal gradient energy
tends to zero on compact interior regions, and, for every fixed $b\in(0,1)$, the sets $\{|u_\varepsilon|\le b\}$ converge locally in Hausdorff distance to the hyperplane.
\end{lemma}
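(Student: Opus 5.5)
The plan is to deduce all four claims from the standard Hutchinson--Tonegawa theory \cite{HT}, combined with Modica's gradient bound in a local form. The key tool is the discrepancy estimate: for solutions of \eqref{acn} with locally bounded gradient energy, the discrepancy measure
\[
\xi_\varepsilon=\Bigl(\frac{\varepsilon}{2}|Du_\varepsilon|^2-\frac{W(u_\varepsilon)}{\varepsilon}\Bigr)dX
\]
satisfies $|\xi_\varepsilon|\to0$ locally. This is proved in \cite{HT} under a local bound on the full energy, so the first step is to obtain that bound from the gradient bound alone.

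\textbf{Step 1 (full energy bound).} I would use interior gradient estimates in the rescaled variable $X/\varepsilon$. Since $|u_\varepsilon|\le1$, these give $\varepsilon|Du_\varepsilon|\le C$ on compact subsets. In the rescaled picture the local Modica-type estimate of \cite{HT} then gives
\[
\frac{\varepsilon}{2}|Du_\varepsilon|^2-\frac{W(u_\varepsilon)}{\varepsilon}\le \frac{C}{\varepsilon}\,e^{-c\,d/\varepsilon}
\]
at distance $d$ from the boundary, so the positive part of the discrepancy is negligible. The remaining task is to bound $\int W(u_\varepsilon)/\varepsilon$ by the gradient energy plus a small error. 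I would do this by testing the equation against $u_\varepsilon\phi^2$. Because $W'(s)s\ge cW(s)$ near the wells and $W\le C$, this yields
\[
\int \frac{W(u_\varepsilon)}{\varepsilon}\phi^2\le C\int\varepsilon|Du_\varepsilon|^2(\phi^2+|D\phi|^2)+C\int_{\{|u_\varepsilon|\le\gamma\}}\frac{\phi^2}{\varepsilon}.
\]
The measure of the set $\{|u_\varepsilon|\le\gamma\}$ is controlled by covering it with balls of radius $\varepsilon$. On each such ball, the bounded gradient in rescaled variables together with $|u_\varepsilon|\le\gamma$ at the center forces $\int_{B_\varepsilon}\varepsilon|Du_\varepsilon|^2\ge c\varepsilon^{n}$, and this bounds the number of balls. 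Alternatively, I would simply invoke the statement of \cite{HT}, which needs only the full energy bound obtained this way.

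\textbf{Step 2 (limits).} With the full energy bounded, \cite{HT} gives $|\xi_\varepsilon|\to0$. Hence the full energy measure has the same limit as $\frac{\varepsilon}{2}|Du_\varepsilon|^2\cdot 2$ after normalization, namely $Q\sigma_0\mathcal H^n\lfloor\{z=0\}$. For the horizontal energy I would use the convergence of the first variations from \cite{HT}: the associated varifolds $V_\varepsilon$, whose weights are $\varepsilon|Du_\varepsilon|^2$ and whose planes are $\nu_\varepsilon^\perp$ with $\nu_\varepsilon=Du_\varepsilon/|Du_\varepsilon|$, converge as varifolds to the limit, which is $Q$ times the horizontal plane. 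Testing with the continuous function $|\nu_x|^2=|I-P_{T}|$ restricted to horizontal components, which is a continuous function of the plane $T$, gives
\[
\int\phi\,\varepsilon|D_xu_\varepsilon|^2=\int\phi\,|\nu_x|^2\,dV_\varepsilon\to0.
\]
Here I am using that the Grassmannian part of the limit is supported on the horizontal plane; this is the rectifiability/integrality conclusion of \cite{HT}.

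\textbf{Step 3 (Hausdorff convergence).} Two inclusions are needed. First, suppose $|u_\varepsilon(X_0)|\le b$ with $X_0$ at distance at least $\delta$ from $\{z=0\}$. By the density lower bound of \cite{HT} (clearing-out), $\varepsilon^{1-n}\mathcal E_\varepsilon(u_\varepsilon;B_r(X_0))\ge c(b)r^{n}$ for $\varepsilon\le r$. This contradicts the vanishing of the limit measure on $B_{\delta/2}(X_0)$. Second, every point of the hyperplane is approached by points of $\{|u_\varepsilon|\le b\}$. Otherwise $u_\varepsilon$ would be bounded away from $(-b,b)$ on a ball, hence close to $\pm1$ there. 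The energy would then decay exponentially there by the linear theory near the wells, contradicting $Q\ge1$ mass on that ball.

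\textbf{Main obstacle.} I expect the main obstacle to be Step 1, deriving the full energy bound from the gradient bound alone without assuming it. I would first try the covering argument, or check whether stability is needed at this point (it is not in \cite{HT}). I note that stability plays no essential role in this lemma.
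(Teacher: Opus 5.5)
Your Steps 2 and 3 match the paper: once the full energy is bounded, it also obtains equal limits, vanishing horizontal energy (since the limit's tangent planes are horizontal) and Hausdorff convergence directly from \cite{HT}. The gap is in Step 1, and it is your closing claim that stability plays no essential role. It does.

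The constant $u_\varepsilon\equiv0$ solves \eqref{acn}, because $W'(0)=0$. It has zero gradient energy on every set, and potential energy $W(0)|\Omega'|/\varepsilon\to\infty$ on every open $\Omega'$. So the first assertion of the lemma is false for unstable solutions. Your covering claim, that $|u_\varepsilon(X_0)|\le\gamma$ forces $\int_{B_\varepsilon(X_0)}\varepsilon|Du_\varepsilon|^2\,dX\ge c\varepsilon^n$, fails for this solution. Interior gradient bounds and the value at the center cannot rule it out. Your fallback of invoking \cite{HT} directly is circular, since that theory assumes the full energy bound.

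The missing ingredient is stability near the unstable critical point $0$. The paper picks $s_0>0$ with $W''\le-c_0<0$ on $[-2s_0,2s_0]$, and a cutoff $\zeta$ supported there and equal to one on $[-s_0,s_0]$. Testing \eqref{stab} with $\eta\zeta(u_\varepsilon)$ gives
\[
\frac1\varepsilon\int_{\{|u_\varepsilon|\le s_0\}}\eta^2\,dX\le C\int\varepsilon|Du_\varepsilon|^2\eta^2\,dX+C\varepsilon\int|D\eta|^2\,dX.
\]
This is exactly the bound on the transition volume your covering was meant to supply. Combine it with your test against $u_\varepsilon\eta^2$. There the correct inequality is $W(s)\le C(-sW'(s)+\mathbf1_{[-s_0,s_0]}(s))$ with $-sW'(s)\ge0$ on $[-1,1]$; your $sW'(s)\ge cW(s)$ has the wrong sign. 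The error term should be $C\varepsilon\int|D\eta|^2$, not $\int\varepsilon|Du_\varepsilon|^2|D\eta|^2$.

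Your covering route can also be repaired, but only by using stability on balls of radius $M\varepsilon$. Suppose stable $v_j$ on $B_{M_j}$, $M_j\to\infty$, had $|v_j(0)|\le\gamma$ and $\int_{B_1}|Dv_j|^2\to0$. The limit would be a constant critical point of $W$ in $[-\gamma,\gamma]$ on $B_1$, hence $0$ there. By unique continuation it would vanish everywhere. But the zero solution is unstable on large balls because $W''(0)<0$, a contradiction.

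Finally, the exponential discrepancy bound you attribute to \cite{HT} is not what that paper provides; it is the content of Lemma~\ref{ndisc} here. It is also irrelevant to this step, since the positive part of the discrepancy gives no upper bound on the potential term.
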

\begin{proof}
We choose $s_0>0$ so that $W''\le-c_0<0$ on $[-2s_0,2s_0]$,
and $\zeta\in C_c^\infty((-2s_0,2s_0))$ equal to one on
$[-s_0,s_0]$. For a fixed compactly supported spatial cutoff $\eta$,
we obtain from stability with $\eta\zeta(u_\varepsilon)$
\begin{equation*}
 \frac1\varepsilon\int_{\{|u_\varepsilon|\le s_0\}}\eta^2\,dX
 \le C\int\varepsilon|Du_\varepsilon|^2\eta^2\,dX
                              +C\varepsilon\int|D\eta|^2\,dX.
\end{equation*}
By \eqref{wells}, for $s\in[-1,1]$ we have
\begin{equation*}
 W(s)\le C\bigl(-sW'(s)+\mathbf1_{[-s_0,s_0]}(s)\bigr),
 \qquad -sW'(s)\ge0.
\end{equation*}
We test the equation with $u_\varepsilon\eta^2$ and apply Young's
inequality to obtain
\begin{equation*}
 \int\frac{-u_\varepsilon W'(u_\varepsilon)}\varepsilon\eta^2\,dX
 \le C\int\varepsilon|Du_\varepsilon|^2\eta^2\,dX
                               +C\varepsilon\int|D\eta|^2\,dX.
\end{equation*}
Combining these inequalities, we control the potential energy on the
region where $\eta=1$ by the gradient energy on its support plus
$C\varepsilon$. This proves the first assertion.

For the assertions about the planar limit, we use the compactness,
equipartition, and local Hausdorff convergence of the transition sets
proved by Hutchinson--Tonegawa \cite[Theorem~1]{HT}. Since the
limiting varifold is supported on the horizontal hyperplane, its tangent
hyperplanes agree with that hyperplane almost everywhere. Varifold
convergence therefore implies vanishing horizontal gradient energy on
compact subsets.
\end{proof}

For the remaining estimates we assume
\begin{equation}\label{bound}
 \mathcal E_\varepsilon(u_\varepsilon;\C_{64})\le\Lambda.
\end{equation}
This full-energy bound provides uniform control at smaller scales.
On fixed nested domains, Hutchinson--Tonegawa
\cite[Propositions~3.3--3.4]{HT} bound the discrepancy (the difference between the gradient
and potential energy densities) from above and prove an energy
monotonicity formula. Together, these results yield
\begin{equation}\label{lmearl}
 \frac\varepsilon2|Du_\varepsilon|^2-
       \frac{W(u_\varepsilon)}\varepsilon\le C,\qquad
 r^{-n}\mathcal E_\varepsilon(u_\varepsilon;B_r(X))
 \le s^{-n}\mathcal E_\varepsilon(u_\varepsilon;B_s(X))+Cs
 \quad\text{for }0<r<s.
\end{equation}
A bound on a fixed larger domain consequently implies
$\mathcal E_\varepsilon(u_\varepsilon;B_r(X))\le Cr^n$ uniformly
in interior centers. At radii $r<\varepsilon$, the same bound also
follows from the microscopic gradient estimate and boundedness of $W$.

We also need pointwise bounds at the scale of a transition.
Applying the standard interior elliptic estimates (see Gilbarg--Trudinger
\cite[Chapters~6 and~9]{GT}) to $u_\varepsilon(X+\varepsilon Y)$, we obtain
\begin{equation}\label{ellipt}
 \varepsilon|Du_\varepsilon|+\varepsilon^2|D^2u_\varepsilon|
 \le C\quad\text{on }\C_{56}.
\end{equation}
The strong maximum principle implies either $u_\varepsilon\equiv\pm1$
or $-1<u_\varepsilon<1$. For a constant solution, expressions
divided by $\sqrt{2W(u_\varepsilon)}$ are defined to be zero and all
the estimates below are immediate. Otherwise, after rescaling by
$\varepsilon$, the positive functions $1\pm u_\varepsilon$ solve
linear equations with uniformly H\"older-continuous coefficients,
by \eqref{ellipt}. Harnack's inequality and interior Schauder
estimates \cite[Theorems~6.2 and~8.20]{GT}, together with the
nondegeneracy of the wells, yield
\begin{equation}\label{relip}
 \varepsilon|Du_\varepsilon|+\varepsilon^2|D^2u_\varepsilon|
 \le C\min\{1-u_\varepsilon,1+u_\varepsilon\}
 \le C\sqrt{2W(u_\varepsilon)}
 \quad\text{on }\C_{48}.
\end{equation}

We now write stability in a form that controls the variation of the
level-set normals. We use the geometric stability inequality of
Sternberg--Zumbrun \cite{SZ98} and its horizontal version, both used by
Tonegawa \cite[Proposition~1, (4.1), and Proposition~4, (4.7)]{Ton05}.
By approximation, these inequalities hold for every compactly supported
Lipschitz function $\eta$:
\begin{align}\label{szum}
 \int\varepsilon\left(|D^2u_\varepsilon|^2-|D|Du_\varepsilon||^2\right)\eta^2\,dX
 &\le\int\varepsilon|Du_\varepsilon|^2|D\eta|^2\,dX,\notag\\
 \int\varepsilon\left(|D(D_xu_\varepsilon)|^2-|D|D_xu_\varepsilon||^2\right)\eta^2\,dX
 &\le\int\varepsilon|D_xu_\varepsilon|^2|D\eta|^2\,dX.
\end{align}
With fixed cutoffs supported in $\C_{56}$ and equal to one on
$\C_{48}$, the energy and excess bounds yield
\begin{equation}\label{gloca}
 \int_{\C_{48}}\varepsilon\left(|D^2u_\varepsilon|^2-|D|Du_\varepsilon||^2\right)\,dX\le C\Lambda,
 \qquad
 \int_{\C_{48}}\varepsilon\left(|D(D_xu_\varepsilon)|^2-|D|D_xu_\varepsilon||^2\right)\,dX\le CE_\varepsilon.
\end{equation}

The next lemma converts small geometric stability energy into
approximation by a one-dimensional transition on a fixed ball.
Equivalently, failure of this approximation forces a positive lower
bound for the integral in \eqref{psmal}. In particular, points with
a small gradient in the fixed transition range have a definite cost
in this integral. We will
use this cost to bound the number of disjoint neighborhoods in
Proposition~\ref{cover}.
\begin{lemma}\label{detect}
Fix $0\le b<1$, $L>0$, and $\delta>0$. There are $M>\max\{1,L\}$ and
$c>0$ such that the following holds. Let $v\in C^2(B_M^{n+1};[-1,1])$
be a stable solution of $\Delta v=W'(v)$, with $|v(0)|\le b$.
If
\begin{equation}\label{psmal}
 \int_{B_1}\bigl(|D(D_xv)|^2-|D|D_xv||^2\bigr)\,dX<c,
\end{equation}
then, for some $e\in\mathbb S^n$,
\begin{equation*}
 \|v-g(e\cdot X+g^{-1}(v(0)))\|_{C^2(B_L)}<\delta.
\end{equation*}
In particular, for suitable $M,c$,
\begin{equation}\label{sdete}
 |v(0)|\le b,\quad |Dv(0)|\le\tfrac12\sqrt{2W(v(0))}
 \quad\Longrightarrow\quad
 \int_{B_1}\bigl(|D(D_xv)|^2-|D|D_xv||^2\bigr)\,dX\ge c.
\end{equation}
Both conclusions hold with $|D^2v|^2-|D|Dv||^2$ in place of the
horizontal integrand.
\end{lemma}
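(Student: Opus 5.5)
The plan is a compactness argument by contradiction, followed by a rigidity statement for the limit. Suppose the first conclusion fails for fixed $b,L,\delta$. Then for every $k$ there is a stable solution $v_k$ on $B_{M_k}$ with $M_k\to\infty$ and $|v_k(0)|\le b$, such that the integral in \eqref{psmal} is less than $c_k\to0$. At the same time, $v_k$ stays $\delta$-far in $C^2(B_L)$ from every profile $g(e\cdot X+g^{-1}(v_k(0)))$.

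By \eqref{ellipt} at unit scale and Schauder estimates, the $v_k$ are bounded in $C^{3}_{\mathrm{loc}}$. A subsequence therefore converges in $C^2_{\mathrm{loc}}(\mathbb R^{n+1})$ to an entire stable solution $v$ with $|v(0)|\le b$. The integral $\int_{B_1}(|D(D_xv)|^2-|D|D_xv||^2)\,dX$ equals the limit of the integrals for $v_k$. This uses only $C^2$ convergence away from the zero set of $D_xv$; near that set the integrand is bounded by $2|D^2 v|^2$ and the pointwise formula $|D(D_xv)|^2-|D|D_xv||^2$ is continuous where $D_xv\neq0$. At worst lower semicontinuity suffices, which is all that is needed. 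The limit integral therefore vanishes on $B_1$.

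The key step is to show that vanishing of this integrand on $B_1$ forces $v$ to be one-dimensional. Where $D_xv\ne0$, equality in Kato's inequality means $D(D_xv)=D_xv\otimes\xi$ for some vector $\xi$. Equivalently, the vector $D_xv/|D_xv|$ is locally constant, say equal to $\nu\in\mathbb R^n$, so $D_x v$ is parallel to a fixed horizontal direction. Then $v$ depends only on $(\nu\cdot x,z)$ locally. Since $v$ solves an analytic-type semilinear equation, unique continuation for $D_xv-(\nu\cdot D_xv)\nu$, which solves the linearized equation, extends this to all of $\mathbb R^{n+1}$. The case $D_xv\equiv0$ on an open set gives dependence on $z$ alone, again globally by unique continuation.

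We thus get a stable entire solution in two variables, bounded by $1$. By the Ghoussoub--Gui / Ambrosio--Cabr\'e two-dimensional classification of stable solutions (see \cite{GG98,AAC01}), $v$ is one-dimensional or constant. Here $|v(0)|\le b<1$ excludes $\pm1$, and the constant $0$ is unstable because $W''(0)<0$. Hence $v=g(e\cdot X+g^{-1}(v(0)))$, and $C^2(B_L)$ convergence contradicts the $\delta$-separation.

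The main obstacle is the rigidity step. The Kato-equality argument must handle the set where $D_xv=0$, and the two-dimensional classification for general $W$ must be justified; \cite{AAC01} covers general nonlinearities in dimension two. The statement with $|D^2v|^2-|D|Dv||^2$ is proved identically: equality there forces the level sets to be parallel hyperplanes, so $v$ is one-dimensional directly.

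For \eqref{sdete}, take $\delta$ small, with $L=1$ and $b$ fixed. If the integral were below $c$, then $v$ would be $\delta$-close in $C^1$ to $g(e\cdot X+t_0)$ with $g(t_0)=v(0)$. This gives $|Dv(0)|\ge g'(t_0)-\delta=\sqrt{2W(v(0))}-\delta$. Since $\sqrt{2W}$ has a positive lower bound on $[-b,b]$, choosing $\delta<\tfrac12\min_{[-b,b]}\sqrt{2W}$ contradicts $|Dv(0)|\le\tfrac12\sqrt{2W(v(0))}$.
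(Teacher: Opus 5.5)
Your proposal is correct and follows the paper's proof essentially step for step. The shared steps are contradiction with $C^2_{\rm loc}$ compactness, equality in the horizontal Kato inequality forcing a locally constant direction of $D_xv$, unique continuation reducing to two variables, a two-dimensional classification of bounded stable solutions, and the gradient bound at the origin for \eqref{sdete}. The differences are minor: the paper cites Farina--Sciunzi--Valdinoci \cite{FSV08} where you cite \cite{GG98,AAC01}, and for the full-gradient integrand the paper uses the a.e.\ domination $|D^2v|^2-|D|Dv||^2\ge|D(D_xv)|^2-|D|D_xv||^2$ while your direct argument also works. You leave two steps implicit that the paper states: stability descends to the two-variable reduction via product cutoffs in the invariant directions, and a bounded stable one-dimensional solution with $|v(0)|\le b$ is a heteroclinic, by the first integral and Sturm's theorem, which also exclude periodic orbits and not only the constant $0$.
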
 
\begin{proof}
We argue by contradiction. If the assertion fails, there are
$M_j\to\infty$ and stable solutions $v_j$ on $B_{M_j}$ for
which the integral in \eqref{psmal} tends to zero while the
asserted approximation fails. Interior estimates yield, after
passage to a subsequence,
$v_j\to v_\infty$ in $C^2_{\mathrm{loc}}(\mathbb R^{n+1})$,
where $v_\infty$ is stable. The vanishing integral and the
horizontal identity in Tonegawa
\cite[Proposition~4 and Remark~2, pp.~450--451]{Ton05} imply
\[
D\left(\frac{D_xv_\infty}{|D_xv_\infty|}\right)=0
\quad\text{on }B_1\cap\{|D_xv_\infty|>0\}.
\]
On any nonempty connected component of this set, the
horizontal gradient therefore has a fixed direction.
The horizontal derivatives perpendicular to this direction
solve the Jacobi equation and vanish on an open set.
By unique continuation \cite{Aronszajn}, they vanish
throughout $\mathbb R^{n+1}$. If $D_xv_\infty=0$ on $B_1$,
the same argument applies to every horizontal derivative.
Thus $v_\infty$ depends on at most two variables.

We now determine this limit. Stability passes to the remaining
variables by testing with product cutoffs in the invariant directions.
The bounded gradient estimate and
Farina--Sciunzi--Valdinoci \cite[Theorem~1.1]{FSV08}
then imply one-dimensionality.
Under \eqref{wells}, the first integral and Sturm's theorem
classify bounded stable solutions of the one-dimensional
equation as the constants $\pm1$ or translated heteroclinics.
Since $|v_\infty(0)|\le b<1$, we obtain
\[
v_\infty(X)
=g\bigl(e\cdot X+g^{-1}(v_\infty(0))\bigr)
\]
for some $e\in\mathbb S^n$.
The local $C^2$ convergence and
$g^{-1}(v_j(0))\to g^{-1}(v_\infty(0))$
contradict the failure of the asserted approximation.

For \eqref{sdete}, we take $L=1$ and
$\delta<\frac12\min_{[-b,b]}\sqrt{2W}$,
and use $g'=\sqrt{2W(g)}$.
Finally, the almost-everywhere inequality
\[
|D^2v|^2-|D|Dv||^2
\ge
|D(D_xv)|^2-|D|D_xv||^2
\]
implies both conclusions for the full-gradient integrand.
\end{proof}

For the entire limits considered later, one-dimensionality still
leaves the direction of the transition to be determined. The next
lemma uses the tangential growth bound to exclude every nonvertical
direction. Thus the preceding detection argument also determines the
form of these limits.
\begin{lemma}\label{kzero}
Let $n\ge2$ and let $U\in C^2(\mathbb R^{n+1};[-1,1])$ be a
stable entire solution of
$\Delta U=W'(U)$, with $U(0)=0$. Suppose that, for all sufficiently
large $R$,
\begin{equation*}
 \int_{B_R}|D_xU|^2\,dX\le AR^{n-2}.
\end{equation*}
If $|D(D_xU)|^2-|D|D_xU||^2=0$ wherever $|D_xU|>0$,
then $U=g(z)$ or $g(-z)$.
\end{lemma}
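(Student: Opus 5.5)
The plan is to repeat the rigidity part of the proof of Lemma~\ref{detect}, now globally on $\mathbb R^{n+1}$, to reduce $U$ to a one-dimensional solution. The growth bound will then force the transition direction to be vertical.

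\emph{Step 1: reduction to at most two variables.} By the horizontal identity of Tonegawa \cite[Proposition~4 and Remark~2]{Ton05}, the vanishing of $|D(D_xU)|^2-|D|D_xU||^2$ on $\{|D_xU|>0\}$ gives
\[
D\bigl(D_xU/|D_xU|\bigr)=0
\]
on each connected component of this open set. If $D_xU\equiv0$, then $U=U(z)$ and we go directly to Step~3. Otherwise, we choose a component where $D_xU/|D_xU|\equiv\nu\in\mathbb S^{n-1}$. Every horizontal derivative $\partial_\tau U$ with $\tau\perp\nu$ solves the Jacobi equation $\Delta w=W''(U)w$ and vanishes on an open set. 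By unique continuation \cite{Aronszajn}, it vanishes identically. Hence $U(X)=V(\nu\cdot x,z)$ for a stable solution $V$ of the equation in $\mathbb R^2$. Stability passes to $V$ by using product cutoffs in the invariant directions.

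\emph{Step 2: one-dimensionality.} Since $|DV|$ is bounded and $V$ is stable in the plane, Farina--Sciunzi--Valdinoci \cite[Theorem~1.1]{FSV08} give $V(y,z)=\phi(e'\cdot(y,z))$. Thus $U(X)=\phi(e\cdot X)$ with $e=(e_x,e_z)\in\mathbb S^n$ and $e_x\parallel\nu$. As in Lemma~\ref{detect}, the first integral and Sturm's theorem classify bounded stable one-dimensional solutions. Since $U(0)=0$ excludes the constants, we get $\phi(t)=g(t)$, possibly after replacing $e$ by $-e$. So $U(X)=g(e\cdot X)$.

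\emph{Step 3: the direction is vertical.} We have $D_xU=e_x\,g'(e\cdot X)$. Assume $e_x\ne0$; then $|e_z|<1$. Take the unit vector $e$ as one coordinate axis and integrate over $B_R$. This gives
\[
\int_{B_R}|D_xU|^2\,dX=|e_x|^2\int_{B_R}g'(e\cdot X)^2\,dX\ge |e_x|^2\,c_n\sigma_0 R^{n}
\]
for large $R$. Here we use the slab $\{|e\cdot X|\le1\}$ of $B_R$, whose cross-sections are $n$-balls of radius comparable to $R$. This contradicts the hypothesis $\int_{B_R}|D_xU|^2\le AR^{n-2}$. Therefore $e_x=0$, so $e=\pm e_{n+1}$ and $U=g(z)$ or $g(-z)$.

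The main obstacle is Step~1 in the global setting. The normalized horizontal gradient is constant only on each component of $\{|D_xU|>0\}$, so different components might have different directions. Unique continuation avoids this problem: fixing a single component determines $\nu$ globally, and we never need to compare components. Stability of the two-variable reduction is the only other point to check; it is routine.

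Note that the growth exponent $n-2$ is much stronger than what Step~3 needs: any bound of order $o(R^n)$ would suffice. Presumably the sharper exponent is the form in which the bound arises for the limits in later sections.
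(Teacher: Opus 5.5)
Your argument is correct and is essentially the paper's route. The paper applies Lemma~\ref{detect} with $L\to\infty$, $\delta\to0$ and compactness of $\mathbb S^n$ to get $U=g(e\cdot X)$; that lemma's proof is exactly your Steps~1--2 (Tonegawa's identity, unique continuation, product cutoffs, Farina--Sciunzi--Valdinoci, and the one-dimensional classification), and the same slab estimate then forces $e=\pm e_{n+1}$. You simply run the rigidity argument directly on $U$, which avoids the compactness detour, with one small slip: when $D_xU\equiv0$ you must still apply the one-dimensional classification of Step~2, rather than going straight to Step~3, to conclude $U=g(\pm z)$.
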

\begin{proof}
On $\{D_xU=0\}$, both $D(D_xU)$ and $D|D_xU|$ vanish almost
everywhere. Thus the integral in \eqref{psmal} is zero. From
Lemma~\ref{detect} with $L\to\infty$ and $\delta\to0$, and
compactness of $\mathbb S^n$, we obtain $U(X)=g(e\cdot X)$ for some
$e\in\mathbb S^n$. Here $U(0)=0$ fixes the translation.
If $e$ has a nonzero horizontal component, we integrate over the slab
$\{|e\cdot X|\le1\}\cap B_R$ and obtain
\begin{equation*}
 \int_{B_R}|D_xU|^2\,dX\ge cR^n
\end{equation*}
for all large $R$, contrary to the assumed bound. Hence $e$ is one of
the two vertical unit vectors, which proves the conclusion.
\end{proof}

\subsection{Quantitative bounds for the difference of energy densities}\label{disc}
Our objective is to control the difference between the gradient and
potential energy densities at the accuracy required for normalized
compactness. The weighted estimates proved here are used to recover
the full tilt-excess from the layer heights, including the contribution
near the wells.

We set $S(s)=\sqrt{2W(s)}$ on $[-1,1]$. Hutchinson--Tonegawa
\cite[Proposition~4.3]{HT} proved that
$\varepsilon|Du_\varepsilon|^2/2-W(u_\varepsilon)/\varepsilon$
tends to zero in $L^1$ on compact interior domains.
Tonegawa \cite[Proposition~3, (4.6)]{Ton05} also obtained a weighted
$L^2$ bound for the gradient of this difference using stability.
For the normalized compactness argument, we need the weighted
$O(\varepsilon^2)$ estimates in Proposition~\ref{discr}, including
control of the derivative of
$S(u_\varepsilon)-\varepsilon|Du_\varepsilon|$.

The proof combines a local gradient bound with stability and the
equation for the energy difference. The main difficulty is that the
coefficient from the usual stability test may change sign.
An auxiliary one-dimensional equation will produce a positive
coefficient, and the detection lemma will control the remaining region
where the gradient is too small. We finish with an estimate for the
variation of the normalized vertical derivative.

We begin with a local version of Modica's gradient bound
\cite{Modica85}, with an exponentially small error.

\begin{lemma}\label{ndisc}
For sufficiently small $\varepsilon$,
\begin{equation}\label{ndest}
 \sup_{\C_{32}}
 \bigl(\varepsilon|Du_\varepsilon|-S(u_\varepsilon)\bigr)_+
 \le Ce^{-c/\varepsilon}.
\end{equation}
\end{lemma}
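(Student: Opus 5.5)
We follow Modica's $P$-function argument in the rescaled variables, localized with a cutoff. The exponentially small error comes from the fact that, near the boundary of a ball of radius $\sim1/\varepsilon$, the local maximum is controlled only by a decaying barrier.

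Set $v(Y)=u_\varepsilon(X_0+\varepsilon Y)$ for $X_0\in\C_{32}$. Then $v$ solves $\Delta v=W'(v)$ on $B_{R}$ with $R=c_1/\varepsilon$, since $\C_{48}$ contains the corresponding ball. By \eqref{ellipt} and \eqref{relip}, $|Dv|+|D^2v|\le C$ and $|Dv|\le CS(v)$ there. Consider the Modica function
\[
P=\tfrac12|Dv|^2-W(v).
\]
The claim is equivalent to $P(0)\le Ce^{-c/\varepsilon}$, up to constants. Indeed, $|Dv|-S(v)=2P/(|Dv|+S(v))$. Where $P>0$ we have $|Dv|\ge S(v)$, and the bound $|Dv|\le CS(v)$ from \eqref{relip} makes $|Dv|+S(v)$ comparable to $S(v)$. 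If $S(v)$ is tiny, $|Dv|$ is also tiny, so we need a lower bound on $|Dv|+S(v)$. To avoid this degeneracy we estimate $\tilde P=|Dv|-S(v)$ directly, or we work with $P$ and divide by $|Dv|$ on the set where $|Dv|$ is bounded below. Near the wells, $S(v)\approx\sqrt{W''(\pm1)}\,(1\mp v)$, so it suffices to show $P\le Ce^{-c/\varepsilon}\,S(v)$.

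The classical computation (Modica, Caffarelli--Garofalo--Segala) gives, on $\{Dv\neq0\}$,
\[
|Dv|^2\Delta P\ge \tfrac12|DP|^2+\text{(terms linear in }DP\text{)},
\]
so $P$ satisfies an elliptic inequality without zeroth-order term and admits no interior positive maximum away from critical points. Critical points of $v$ with $P>0$ cannot occur, because $P=-W\le0$ there. For the localization we compare $P$ with a barrier. On $B_R$ we have $P\le C$. We seek a supersolution $\Phi$ of the same linearized inequality with $\Phi\ge C$ on $\partial B_R$ and $\Phi(0)\le Ce^{-cR}$. A natural choice is
\[
\Phi=C\,\frac{\cosh(\mu|Y|)}{\cosh(\mu R)}\,S(v)^2,
\]
or, working with $\tilde P/S(v)$ as the unknown, simply the cosh factor. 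Its decay is driven by the positive potential $W''\ge\kappa$ near the wells and by the drift structure in the transition region.

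The main obstacle is to make the maximum-principle comparison rigorous where the coefficients degenerate, namely at $Dv=0$ and near $v=\pm1$, where $S(v)\to0$. The first option we would try is Modica's argument in its quantitative form: the ratio $w=\tilde P/S(v)$, or $P/W$, satisfies an inequality $Lw\ge0$ in which $L$ has a uniformly bounded first-order drift on $\{Dv\ne0\}$. At an interior maximum of $w-\Phi$ with $w>0$ we have $|Dv|>S(v)>0$, so the drift is controlled by \eqref{relip}. Since $\Phi$ is strictly supersolution-like for suitable small $\mu$, this yields a contradiction.

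If that fails near the wells, the fallback is the $\gamma$-extension of $W$ described earlier. On $\{|v|\ge\gamma\}$ we use exponential convergence of $v$ to $\pm1$, with rate $\sqrt\kappa$, by comparing $1\mp v$ with cosh barriers. This handles the region where $S(v)$ is small. On $\{|v|\le\gamma\}$, $S$ is bounded below and the ratio argument is uniform. Undoing the scaling gives $\varepsilon|Du_\varepsilon|-S(u_\varepsilon)\le Ce^{-c/\varepsilon}$ at $X_0$, uniformly in $X_0\in\C_{32}$.
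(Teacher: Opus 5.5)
Your proposal has a genuine gap at the step where you assert that the $\cosh$ barrier is ``strictly supersolution-like for suitable small $\mu$.'' The obstacle you single out, degeneracy at $Dv=0$ and at $v=\pm1$, is not the real one.

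Whatever weight you factor out, comparing with a barrier that is $O(e^{-\mu R})$ at the centre and of order one on $\partial B_R$ requires a specific form for the quotient's equation. It must read $-\Delta+b\cdot D+c$ with bounded $b$ and $c\ge c_0>0$ on the set where the quotient is positive; $\mu$ is then taken small relative to $c_0$ and $|b|$. With $c\equiv0$ the minimum principle forbids such a supersolution, and no drift produces interior decay. None of your unknowns has this property.

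\begin{itemize}
\item Modica's $P$ has no zeroth-order term at all.
\item Take $p=|Dv|$ and $w=(p-S(v))/S(v)$; this is equivalent to the weight $S(v)^2$ in your $\Phi$. Kato's inequality $\Delta p\ge W''(v)p$ and $\Delta S(v)=S'^2S+S''p^2$ give
\[
 -\Delta w-2D\log S(v)\cdot Dw-\frac{S''(v)\,p\,\bigl(p+S(v)\bigr)}{S(v)}\,w\le0 .
\]
The coefficient is approximately $-2S''(v)S(v)$. It tends to zero at both wells, so the barrier fails there even for $W=(1-s^2)^2/4$. It is negative wherever $\sqrt{2W}$ is strictly convex, which the hypotheses allow.
\item Working with $p-S(v)$ itself gives the coefficient $S'^2-S''p\approx S'^2-SS''=-S^2(\log S)''$. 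This is positive near the wells and near $0$, but negative wherever $\sqrt{2W}$ fails to be log-concave.
\end{itemize}

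Your fallback does not repair this. Convergence of $1\mp v$ to zero only bounds $\varepsilon|Du_\varepsilon|$ and $S(u_\varepsilon)$ by $Ce^{-cd/\varepsilon}$, where $d$ is the distance to the transition set. That is not $e^{-c/\varepsilon}$ when $d\ll1$, and it says nothing about their difference. Splitting into $\{|v|\le\gamma\}$ and $\{|v|\ge\gamma\}$ also needs boundary data on $\{|v|=\gamma\}$ that neither piece supplies. Moreover, $\{|v|\le\gamma\}$ is exactly where the sign of the coefficient is uncontrolled.

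The missing idea is a weight that makes the zeroth-order coefficient uniformly positive up to the wells. The paper starts from
\[
 \bigl[-\varepsilon^2\Delta+S'(u_\varepsilon)^2-S''(u_\varepsilon)\varepsilon|Du_\varepsilon|\bigr]\bigl(S(u_\varepsilon)-\varepsilon|Du_\varepsilon|\bigr)
 =\varepsilon^2\Delta(\varepsilon|Du_\varepsilon|)-W''(u_\varepsilon)\varepsilon|Du_\varepsilon|\ge0 .
\]
It then constructs a smooth positive $h$ on $[-1,1]$ with $-2Wh''-W'h'+(S'^2-SS'')h\ge ch$. This comes from the coercivity of $-\partial_t^2+S'(g)^2-S(g)S''(g)$ on $L^2(\mathbb R)$, for three reasons:
\begin{itemize}
\item its quadratic form is $\int|v'+S'(g)v|^2\,dt$;
\item the only solutions of $v'+S'(g)v=0$ are multiples of $1/g'\notin L^2$;
\item its potential tends to $W''(\pm1)>0$.
\end{itemize}
One solves $-v''+(S'(g)^2-S(g)S''(g))v=1$, shows $v>0$, freezes $v$ at its limits for large $|t|$, and sets $h=v\circ g^{-1}$.

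The paper also uses the preliminary bound $(\varepsilon|Du_\varepsilon|-S(u_\varepsilon))_+\le C\sqrt\varepsilon$ from Hutchinson--Tonegawa, to replace $\varepsilon|Du_\varepsilon|$ by $S(u_\varepsilon)$ in the coefficients. Then $(\varepsilon|Du_\varepsilon|-S(u_\varepsilon))_+/h(u_\varepsilon)$ is a subsolution of $-\varepsilon^2\Delta-2\varepsilon^2D\log h(u_\varepsilon)\cdot D+c/2$ with bounded drift. Only at that point are the functions $\sum_j\cosh(\lambda X_j/\varepsilon)$ genuine supersolutions. For the quartic potential $h\equiv1$ already works, since there $S'^2-SS''=1+s^2$. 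The difficulty is specific to general double-well potentials.
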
 \begin{proof}
By Hutchinson--Tonegawa \cite[Proposition~3.3]{HT}, on a fixed
interior domain containing $\overline{\C_{32}}$ we have
\[
 \varepsilon^2|Du_\varepsilon|^2
 -S(u_\varepsilon)^2\le C\varepsilon.
\]
In particular,
\[
 \bigl(\varepsilon|Du_\varepsilon|
       -S(u_\varepsilon)\bigr)_+
 \le C\sqrt{\varepsilon}.
\]
To improve this preliminary bound, we divide the function by a
positive weight chosen so that the resulting operator has a strictly
positive zeroth-order coefficient. From the maximum principle, we will then
deduce exponential decay away from the boundary.

We first derive the differential inequality for this construction.
By differentiating the equation for $u_\varepsilon$,
regularizing $|Du_\varepsilon|$, and passing to the limit,
we obtain
\begin{equation}\label{fdequa}
 \begin{split}
 &\bigl[-\varepsilon^2\Delta+S'(u_\varepsilon)^2
       -S''(u_\varepsilon)\varepsilon|Du_\varepsilon|\bigr]
       \bigl(S(u_\varepsilon)-\varepsilon|Du_\varepsilon|\bigr)\\
 &\quad=
 \varepsilon^2\Delta(\varepsilon|Du_\varepsilon|)
 -W''(u_\varepsilon)\varepsilon|Du_\varepsilon|
 \ge0
 \end{split}
\end{equation}
in distributions. Here we have used
$W''=S'^2+SS''$.
To obtain a strictly positive coefficient for the function
itself after division, we will construct a smooth positive
function $h$ on $[-1,1]$ such that
\begin{equation}\label{gbphas}
 -2Wh''-W'h'+(S'^2-SS'')h\ge ch
\end{equation}
for some $c>0$.

For this construction, we consider the one-dimensional operator
\[
 -\partial_{tt}+S'(g(t))^2-S(g(t))S''(g(t)).
\]
Since $g'=S(g)$, its quadratic form is
\[
 \int_{\mathbb R}|v'+S'(g)v|^2\,dt.
\]
The solutions of $v'+S'(g)v=0$ are multiples of $1/g'$.
Thus its only square-integrable solution is zero.
Moreover, the potential of the second-order operator tends
to the positive constants $W''(-1)$ and $W''(1)$.

These facts imply that the quadratic form bounds a positive
multiple of $\|v\|_2^2$. Indeed, otherwise we could choose
functions of $L^2$ norm one whose quadratic forms tend to zero.
This sequence is bounded in $H^1$. After we pass to a
subsequence, its local limit solves the first-order equation
and is therefore zero. The positive lower bound for the
potential outside a fixed interval then forces the
$L^2$ mass outside that interval to tend to zero as well.
This contradicts the normalization.

We next construct a bounded solution of
\[
 -v''+\bigl(S'(g)^2-S(g)S''(g)\bigr)v=1
\]
with limits $1/W''(\pm1)$ as $t\to\pm\infty$.
We start with a smooth function equal to these constants
near the two ends. Its error in the equation decays
exponentially and belongs to $L^2$.
Using the positive lower bound for the quadratic form,
we solve for an $L^2$ correction.
The resulting solution has the stated limits, and
$v',v''\to0$ at both ends.
Testing the equation with its negative part and using
the positivity of the quadratic form, we obtain $v\ge0$.
The equation then implies $v>0$.

We smoothly replace $v$ by its limiting constants for
sufficiently large $|t|$. The function remains positive,
and the second-order operator applied to it remains
at least $1/2$. We now set $h=v\circ g^{-1}$ on $(-1,1)$.
This function is constant near each endpoint, so it
extends smoothly and positively to $[-1,1]$.
Using $g'=S(g)$ and $g''=W'(g)$, we obtain
\eqref{gbphas}, with a suitable $c>0$.

We now use this weight in the operator in \eqref{fdequa}.
On the set where
$\varepsilon|Du_\varepsilon|>S(u_\varepsilon)$,
its value on $h(u_\varepsilon)$ differs from the left-hand
side of \eqref{gbphas}, evaluated at $u_\varepsilon$, by
\[
 -h''(u_\varepsilon)
 \bigl[\varepsilon^2|Du_\varepsilon|^2
       -S(u_\varepsilon)^2\bigr]
 -
 S''(u_\varepsilon)h(u_\varepsilon)
 \bigl[\varepsilon|Du_\varepsilon|-S(u_\varepsilon)\bigr].
\]
The initial gradient estimate bounds the first bracket
by $C\varepsilon$ and the second by $C\sqrt{\varepsilon}$.
Since $h$ has a positive minimum on $[-1,1]$, for small
$\varepsilon$ we therefore have
\[
 \bigl[-\varepsilon^2\Delta+S'(u_\varepsilon)^2
       -S''(u_\varepsilon)\varepsilon|Du_\varepsilon|\bigr]
 h(u_\varepsilon)
 \ge \frac c2h(u_\varepsilon)
\]
on this set.

By \eqref{fdequa} and Kato's inequality, the quotient
\[
 \frac{\bigl(\varepsilon|Du_\varepsilon|
             -S(u_\varepsilon)\bigr)_+}
      {h(u_\varepsilon)}
\]
is a nonnegative subsolution of
\[
 -\varepsilon^2\Delta
 -2\varepsilon^2D\log h(u_\varepsilon)\cdot D
 +\frac c2.
\]
The preceding lower bound is needed only where the
quotient is positive. The quotient vanishes elsewhere,
so this differential inequality holds throughout the
interior domain in the weak sense.

The quotient is uniformly bounded. The initial gradient estimate
also implies
\begin{equation*}
 \varepsilon|D\log h(u_\varepsilon)|\le C.
\end{equation*}
On a fixed cube, with its center translated to the origin,
we use sums of $\cosh(\lambda X_j/\varepsilon)$,
where $\lambda>0$ is fixed and sufficiently small.
The bound for the first-order coefficient ensures that
these sums are supersolutions.
We normalize them to dominate the quotient on the
boundary. On a smaller concentric cube, their values
are bounded by $Ce^{-c/\varepsilon}$.
The maximum principle therefore yields the same bound
for the quotient there.

A finite covering of $\overline{\C_{32}}$ by such smaller
cubes, together with the upper bound for $h$, proves
\eqref{ndest}.
\end{proof}

The stability test $S(u_\varepsilon)\eta$ in
\cite[Lemma~2.3]{Vil22} controls the weighted difference of the two
energy densities when $-S''$ has a positive lower bound.
Our assumptions, however, allow $S''$ to change sign.
To handle this difficulty, we construct the following auxiliary
functions. They produce a positive coefficient when we combine the
stability inequality with the equation for the energy difference.
\begin{lemma}\label{phase}
There are smooth functions $h,c$ on $[-1,1]$, with $\min c>0$, such that
\begin{equation}\label{ptode}
 (Sh'+2S'h)'=S''+c,\qquad h(-1)=h(1)=0.
\end{equation}
Moreover, $|h|\le CS$, and the derivatives of $h$ are bounded.
\end{lemma}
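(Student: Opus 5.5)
We reduce \eqref{ptode} to an explicit first-order problem. If $C$ is a primitive of $c$ and $F=S'+C$, then \eqref{ptode} follows from $Sh'+2S'h=F$ by differentiation. Multiplying this by $S$ turns it into $(S^2h)'=SF$. So we choose $c$ first and then set
\begin{equation*}
 h(s)=S(s)^{-2}\int_{-1}^sS(\tau)F(\tau)\,d\tau .
\end{equation*}
Throughout we use that, by \eqref{wells}, $S=\sqrt{2W}=(1-s^2)q(s)$ with $q$ smooth and positive on $[-1,1]$. Indeed, $2W/(1-s^2)^2$ is smooth and positive up to the endpoints because the wells are nondegenerate. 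Write $a=S'(-1)=\sqrt{W''(-1)}>0$ and $k=-S'(1)=\sqrt{W''(1)}>0$.

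\emph{Step 1: conditions on $C$.} For $h$ to vanish at both endpoints we need two things. First, $F(\pm1)=0$, i.e.\ $C(-1)=-a$ and $C(1)=k$. Second, $\int_{-1}^1SF=0$, so that the integral defining $h$ also vanishes at $s=1$. Since $\int_{-1}^1SS'=\tfrac12[S^2]_{-1}^1=0$, the second condition reduces to $\int_{-1}^1SC=0$.

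\emph{Step 2: choice of $C$.} We look for $C=-a+(a+k)\varphi$, with $\varphi$ smooth, $\varphi(-1)=0$, $\varphi(1)=1$, and $\varphi'>0$ on $[-1,1]$. Then $c=C'=(a+k)\varphi'$ has a positive minimum, and the boundary values in Step~1 hold automatically. The remaining condition becomes
\begin{equation*}
 \int_{-1}^1S\varphi=\frac{a}{a+k}\int_{-1}^1S .
\end{equation*}
Consider the family $\varphi_t=(1-\tau)\psi_t+\tau\frac{1+s}{2}$, where $\tau>0$ is small and fixed, and $\psi_t$ are smooth increasing transitions from $0$ to $1$ whose jump location moves continuously from near $-1$ to near $1$. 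The ratio $\int S\varphi_t/\int S$ then ranges continuously over an interval that contains $[\tau',1-\tau']$ for small $\tau'$. Since $a/(a+k)\in(0,1)$, the intermediate value theorem gives a suitable $t$. This is the only genuinely non-routine choice, and it is where the positivity $c>0$ must be balanced against the orthogonality $\int SC=0$.

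\emph{Step 3: regularity of $h$.} On $(-1,1)$ we have $S>0$, so $h$ is smooth there, and dividing $(S^2h)'=SF$ by $S$ and differentiating gives \eqref{ptode}. Near $-1$, both $S$ and $F$ vanish to first order, so $G(s)=\int_{-1}^sSF$ vanishes to third order, while $S^2=(1+s)^2\tilde q$ with $\tilde q>0$ smooth. By Hadamard division, $h=G/S^2$ is smooth up to $-1$ and $h=O(1+s)$. Near $1$ we write $G(s)=-\int_s^1SF$ using $\int_{-1}^1SF=0$, and the same argument applies. Hence $h$ is smooth on $[-1,1]$ with $h(\pm1)=0$, its derivatives are bounded, and $|h|\le CS$ because $S$ has simple zeros at the endpoints. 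The equation extends to the endpoints by continuity. I expect no serious obstacle beyond Step 2; the endpoint analysis is routine once the factorization of $S$ is noted.
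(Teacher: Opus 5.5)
Your proof is correct and follows the paper's route: the same reduction $(S^2h)'=S(S'+C)$ with $C'=c$, the same endpoint conditions $C(-1)=-\sqrt{W''(-1)}$, $C(1)=\sqrt{W''(1)}$ and orthogonality $\int_{-1}^1 SC=0$ (equivalent to the paper's two moment identities), and the same third-order-vanishing argument for the smoothness of $h$ and $|h|\le CS$. The only difference is the one-parameter family used to meet the moment condition: the paper takes $c$ proportional to $\exp[\tau\int_{-1}^sS]$, whose normalized first moment of the primitive increases strictly from $0$ to $\sigma_0$, while your moving smooth step with a small linear admixture, combined with the intermediate value theorem, works equally well.
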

\begin{proof}
We choose a strictly positive smooth function $c$ satisfying
\begin{align*}
 \int_{-1}^1c(s)\,ds
 &=\sqrt{W''(-1)}+\sqrt{W''(1)},\\
 \int_{-1}^1\left(\int_{-1}^s S(t)\,dt\right)c(s)\,ds
 &=\sqrt{W''(1)}\,\sigma_0.
\end{align*}
One may choose $c$ proportional to
$\exp[\tau\int_{-1}^sS(t)\,dt]$. Its normalized first moment is strictly
increasing in $\tau$, with derivative equal to the variance of the
primitive, and ranges from $0$ to $\sigma_0$. After choosing $\tau$,
we fix the proportionality constant by the first identity and set
\begin{equation*}
 h(s)=\frac1{S(s)^2}\int_{-1}^sS(t)
\left[S'(t)-\sqrt{W''(-1)}+\int_{-1}^t c(r)\,dr\right]\,dt.
\end{equation*}
The bracket vanishes at both endpoints, and the two moment identities
make the numerator vanish at $s=1$. Its zero has order at least three
at each end of $[-1,1]$, whereas $S^2$ has a zero of order two. Thus $h$ is
smooth, vanishes at the endpoints, and is $O(S)$.
By differentiation, we obtain \eqref{ptode}.
\end{proof}

We can now estimate the energy difference and the derivative of
$S(u_\varepsilon)-\varepsilon|Du_\varepsilon|$ together.
The resulting $O(\varepsilon^2)$ bound is negligible relative to
$E_\varepsilon$ in the compactness argument, where we assume
$\varepsilon^2=o(E_\varepsilon)$. This is the quantitative form of
equipartition needed after normalization by the tilt-excess.
\begin{proposition}\label{discr}
On $\C_{16}$,
\begin{equation}\label{adest}
 \begin{split}
 \int_{\C_{16}}\biggl\{&
 \frac{S(u_\varepsilon)}\varepsilon
       \left|W(u_\varepsilon)-\frac{\varepsilon^2}2|Du_\varepsilon|^2\right|
 +\frac{(S(u_\varepsilon)-\varepsilon|Du_\varepsilon|)^2}\varepsilon\\
 &+\varepsilon
 \left|D\bigl(S(u_\varepsilon)-\varepsilon|Du_\varepsilon|\bigr)\right|^2
 \biggr\}\,dX\le C\varepsilon^2.
 \end{split}
\end{equation}
In particular, for every fixed $0\le b<1$,
\begin{equation}\label{cdisc}
 \int_{\C_{16}\cap\{|u_\varepsilon|\le b\}}
 \frac1\varepsilon
 \left|W(u_\varepsilon)-\frac{\varepsilon^2}2|Du_\varepsilon|^2\right|\,dX
 \le C_b\varepsilon^2.
\end{equation}
\end{proposition}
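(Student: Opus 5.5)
We set $w=S(u_\varepsilon)-\varepsilon|Du_\varepsilon|$ and aim to show the three terms in \eqref{adest} are bounded by $C\varepsilon^2$. Lemma~\ref{ndisc} gives $w\ge -Ce^{-c/\varepsilon}$ on $\C_{32}$, so up to exponentially small errors $w$ is nonnegative. The first term is controlled by the second. Indeed, $W-\frac{\varepsilon^2}{2}|Du_\varepsilon|^2=\frac12(S-\varepsilon|Du|)(S+\varepsilon|Du|)$, and $\varepsilon|Du|\le S+Ce^{-c/\varepsilon}$ by Lemma~\ref{ndisc}. Hence $S|W-\tfrac{\varepsilon^2}2|Du|^2|\le CS^2|w|+\text{exp.\ small}$, and the task reduces to bounding $\int S^2 |w|/\varepsilon$ together with $\int w^2/\varepsilon$ and $\int\varepsilon|Dw|^2$. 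Once \eqref{adest} is proved, \eqref{cdisc} follows at once, because $S\ge c_b>0$ on $\{|u_\varepsilon|\le b\}$.

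The core step combines stability with the equation \eqref{fdequa}. We test the stability inequality \eqref{stab} with $\varphi=\bigl(S(u_\varepsilon)+h(u_\varepsilon)w\bigr)\eta$, where $h$ is the function of Lemma~\ref{phase} and $\eta$ is a cutoff supported in $\C_{32}$ and equal to one on $\C_{16}$. This modifies the test $S(u)\eta$ of \cite{Vil22}. Expanding, we use $W''=S'^2+SS''$ together with the identity $\varepsilon^2\Delta S(u)=S'W'/1+S''\varepsilon^2|Du|^2$, rewritten in terms of $w$. The leading terms should then give
\[
\int \frac{1}{\varepsilon}\bigl(S''(u)+\text{terms from }h\bigr)S\,w\,\eta^2+\dots\le C\varepsilon\int|D\eta|^2(\dots).
\]
The role of \eqref{ptode}, $(Sh'+2S'h)'=S''+c$, is to absorb the sign-changing coefficient $S''$ and leave $c(u)\ge \min c>0$ multiplying the good quantity. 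The derivative structure $(\cdot)'$ lets the $S''$ part be integrated by parts in $u$ along the gradient, using $D[F(u)]=F'(u)Du$ and $\varepsilon|Du|\approx S$. The boundary conditions $h(\pm1)=0$ and $|h|\le CS$ ensure no contribution from the wells.

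Separately, we test \eqref{fdequa} against $w\eta^2$ (more precisely its positive part plus the exponentially small correction). Since the zeroth-order coefficient $S'^2-S''\varepsilon|Du|$ equals $S'^2-SS''+O(|w|)$, this yields $\int\varepsilon|Dw|^2\eta^2$ plus a quadratic term in $w$. Combining it with the stability identity, where the cross terms $h w$ are bounded via Young's inequality, should produce the coercive bound
\[
\int\Bigl(\frac{S^2 w+w^2}{\varepsilon}+\varepsilon|Dw|^2\Bigr)\eta^2\le C\int\varepsilon\, w^2|D\eta|^2+C\varepsilon^2.
\]
In this bound the $C\varepsilon^2$ comes from the $\varepsilon|D\eta|^2$ terms with the energy bound \eqref{bound}. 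We then absorb the right-hand side. Since $|w|\le CS$ and, by \eqref{lmearl}, the discrepancy bound gives $w\ge -C\varepsilon/S$-type control, we can iterate between $\C_{32}$ and $\C_{16}$, or use a $\varepsilon$-weighted cutoff, to reach $O(\varepsilon^2)$.

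The main obstacle is the region where the gradient is small relative to $S$, near points where $\varepsilon|Du|\ll S(u)$ in the transition range. There the coefficient $c(u)$ competes with error terms of relative size $|w|/S$, which are no longer small. For this region we would use Lemma~\ref{detect}, specifically \eqref{sdete} with the full-gradient integrand. Each such point at scale $\varepsilon$ costs $c\,\varepsilon^{n}$ of the integral in \eqref{gloca}, which is at most $C\Lambda$, and so the number of disjoint $\varepsilon$-balls is $O(\varepsilon^{-n})$. This count only gives an $O(\varepsilon)$ contribution, so we still need the coercivity to handle these balls. Our intended fix is to choose the combination so that the bad terms carry an extra factor $w$ and are absorbed quadratically, with the positive coefficient $c$ doing the remaining work. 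Making this absorption precise is where we expect the real difficulty.
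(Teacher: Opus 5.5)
There is a genuine gap at your core step: testing stability with $\bigl(S(u_\varepsilon)+h(u_\varepsilon)w\bigr)\eta$ cannot produce the coefficient $S''+c$. Write $L=-\varepsilon\Delta+\varepsilon^{-1}W''(u_\varepsilon)$ and $\xi=W(u_\varepsilon)-\frac{\varepsilon^2}2|Du_\varepsilon|^2=\frac12\bigl(S(u_\varepsilon)+\varepsilon|Du_\varepsilon|\bigr)w$. Then $L\,S(u_\varepsilon)=2\varepsilon^{-1}S''(u_\varepsilon)\xi=O(w)$, so $S(u_\varepsilon)$ is an approximate Jacobi field.

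Consequently the cross term of the second variation is $2\int hw\eta^2\,L S(u_\varepsilon)$ plus cutoff terms. This equals $\frac2\varepsilon\int hS''(S+\varepsilon|Du_\varepsilon|)w^2\eta^2+\cdots$, which is quadratic in $w$, and so is the second variation of $hw\eta$. If you expand $\int\varepsilon D(S\eta)\cdot D(hw\eta)$ by hand, terms linear in $w$ containing $h'$ do appear, but they cancel exactly against the potential term.

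The only interior term linear in $w$ is therefore still $\frac2\varepsilon\int SS''\xi\eta^2$. It has the wrong sign where $S''>0$, so you get no inequality controlling $\int S\xi\eta^2$. Your plan works only when $-S''$ has a positive lower bound, which is the special case the paper treats first.

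For general $W$, the paper uses $h$ not in a test function but as a multiplier for the pointwise identity \eqref{gtran} satisfied by $\xi$, with $T=D^2u_\varepsilon-\frac{S'}{S}Du_\varepsilon\otimes Du_\varepsilon$. Multiplying \eqref{gtran} by $h(u_\varepsilon)\eta^2$ and integrating by parts twice yields exactly $\int S(Sh'+2S'h)'\xi\eta^2=\int S(S''+c)\xi\eta^2$ on one side. The other side contains:
\begin{itemize}
\item $\xi^2/S\le CSw^2$;
\item cutoff terms;
\item $\varepsilon^4h|T|^2$, which \eqref{ghboun} and $|h|\le CS$ bound by the geometric stability integrand plus $Cw^2$; the former is $O(\varepsilon^3)$ by \eqref{gloca}.
\end{itemize}
Adding the stability inequality for the test $S\eta$ cancels $S''$ and leaves $\min c>0$. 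This identity is the missing idea.

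Two further steps also need repair.

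First, testing \eqref{fdequa} with $w\eta^2$ gives no upper bound for $\int\varepsilon|Dw|^2$. Inequality \eqref{fdequa} says that $[-\varepsilon^2\Delta+S'^2-S''\varepsilon|Du_\varepsilon|]w$ is a nonnegative measure $\mu$, which lands on the wrong side. Moreover, the zeroth-order coefficient has a guaranteed positive lower bound only near the wells. The paper bounds $\langle\mu,(S+\varepsilon|Du_\varepsilon|)\eta^2\rangle$ by testing stability with $(S+\varepsilon|Du_\varepsilon|)\eta$, which gives \eqref{mwcont}. Away from the wells it uses $w^2\le CS\xi_++Ce^{-c/\varepsilon}$. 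Together these give \eqref{gsprel}, which controls $\int(w^2+\varepsilon^2|Dw|^2)\eta^2$ by $\int S\xi_+\eta^2$. So the linear quantity $\int S\xi_+$ is the primary one; it is not controlled by the second term of \eqref{adest}.

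Second, the small-gradient region is not the obstacle you fear; your count has the wrong power. After rescaling $v(Y)=u_\varepsilon(X+\varepsilon Y)$, the bound $\int_{B_1}(|D^2v|^2-|D|Dv||^2)\,dY\ge c$ becomes $\int_{B_\varepsilon(X)}\varepsilon(|D^2u_\varepsilon|^2-|D|Du_\varepsilon||^2)\,dX\ge c\varepsilon^{n-2}$, not $c\varepsilon^n$. Hence \eqref{gloca} allows $O(\varepsilon^{2-n})$ disjoint balls of volume $\sim\varepsilon^{n+1}$, with total volume $O(\varepsilon^3)$. The set where $w\ge tS$ away from the wells (handled via Lemma~\ref{detect}) therefore contributes $O(\varepsilon^3)$ to $\int Sw^2\eta^2$, which is exactly the required order.

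The absorption then closes by an elementary split:
\begin{itemize}
\item near the wells $S\le\delta$, and \eqref{gsprel} gives a factor $C_0\delta$;
\item where $0\le w<tS$, one has $Sw^2\le2tS\xi_+$.
\end{itemize}
Choosing $\delta$ and $t$ small absorbs $\int Sw^2\eta^2$ into $\int S\xi_+\eta^2$, giving $\int S\xi_+\eta^2\le C\varepsilon^3$. Applying \eqref{gsprel} once more then yields \eqref{adest}.
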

\begin{proof}
We first use stability to bound the derivative of the gradient defect
by the weighted energy difference. We then estimate this difference
itself, treating first the case with a favorable sign and then the
general potential.

To simplify the notation, we set
\begin{equation*}
 p=\varepsilon|Du_\varepsilon|,\qquad d=S(u_\varepsilon)-p,
 \qquad \xi=W(u_\varepsilon)-\frac{p^2}2.
\end{equation*}
We take $\eta\in C_c^\infty(\C_{32})$, with $0\le\eta\le1$. By
Lemma~\ref{ndisc}, terms bounded by $Ce^{-c/\varepsilon}$ may be added
to the right sides below. The bounds \eqref{relip} yield $p\le CS(u_\varepsilon)$.
We first apply stability to $S(u_\varepsilon)\eta$, as in
\cite[Lemma~2.3]{Vil22}. After rescaling, the identity used there yields
\begin{equation*}
 -2\int S(u_\varepsilon)S''(u_\varepsilon)\xi\eta^2\,dX
       \le\varepsilon^2\int S(u_\varepsilon)^2|D\eta|^2\,dX.
\end{equation*}
The measure
$\mu=\varepsilon^2\Delta p-W''(u_\varepsilon)p$ is nonnegative by
\eqref{fdequa}. Since
\begin{equation*}
 \varepsilon^2\Delta S(u_\varepsilon)-W''(u_\varepsilon)S(u_\varepsilon)
 =-2S''(u_\varepsilon)\xi,
\end{equation*}
we obtain by testing stability with $[S(u_\varepsilon)+p]\eta$
\begin{equation}\label{mwcont}
 \langle\mu,[S(u_\varepsilon)+p]\eta^2\rangle
 \le C\int S(u_\varepsilon)\xi_+\eta^2\,dX
       +C\varepsilon^2\int S(u_\varepsilon)^2|D\eta|^2\,dX
       +Ce^{-c/\varepsilon}.
\end{equation}
The coefficient $S'(u_\varepsilon)^2-S''(u_\varepsilon)p$ in
\eqref{fdequa} is bounded and has a fixed positive lower bound in
two well neighborhoods. On the complementary interval,
$d^2\le CS(u_\varepsilon)\xi_++Ce^{-c/\varepsilon}$.
We test \eqref{fdequa} with $d\eta^2$ and use \eqref{mwcont} to obtain
\begin{equation}\label{gsprel}
 \int\bigl(\varepsilon^2|D(d\eta)|^2+d^2\eta^2\bigr)\,dX
 \le C_0\int S(u_\varepsilon)\xi_+\eta^2\,dX
       +C_0\varepsilon^2\int S(u_\varepsilon)^2|D\eta|^2\,dX
       +Ce^{-c/\varepsilon}.
\end{equation}
We fix $C_0$ at this stage. This estimate will allow us to absorb
the gradient defect once the weighted energy difference is controlled.

First consider the case in which $-S''$ has a positive lower
bound on $[-1,1]$. Lemma~\ref{ndisc} bounds the contribution of
$\xi<0$ exponentially, and \eqref{bound} yields
$\int_{\C_{32}}S(u_\varepsilon)^2\,dX\le C\varepsilon$.
The stability inequality obtained using $S(u_\varepsilon)\eta$
therefore implies
$\int S(u_\varepsilon)\xi_+\eta^2\,dX\le C\varepsilon^3$.
We first choose $\eta=1$ on $\C_{24}$ and then apply
\eqref{gsprel} with a cutoff supported in $\C_{24}$ and equal to one
on $\C_{16}$. Division by $\varepsilon$ proves \eqref{adest}
in this case. This includes $W(s)=(1-s^2)^2/4$, for which
$S''=-\sqrt2$. We now prove the same bound for the general
potentials in \eqref{wells}.

Define
\begin{equation*}
 T=D^2u_\varepsilon
       -\frac{S'(u_\varepsilon)}{S(u_\varepsilon)}
                         Du_\varepsilon\otimes Du_\varepsilon.
\end{equation*}
By direct differentiation of the equation, we obtain
\begin{equation}\label{gtran}
 \varepsilon^2\Delta\xi
 -2\varepsilon^2\frac{S'(u_\varepsilon)}{S(u_\varepsilon)}
                   Du_\varepsilon\cdot D\xi
 =4\frac{S'(u_\varepsilon)^2}{S(u_\varepsilon)^2}\xi^2
                                  -\varepsilon^4|T|^2.
\end{equation}
The corresponding identity for the quartic potential, written in
normalized form, appears in \cite[Lemma~2.1, (2.1)]{Liu26}.
At a regular point, $T$ differs from $D^2u_\varepsilon$ only in the
normal--normal entry, and
\begin{equation*}
 \operatorname{tr}T
 =2\varepsilon^{-2}\frac{S'(u_\varepsilon)}{S(u_\varepsilon)}\xi.
\end{equation*}
Since $\xi=(S(u_\varepsilon)+p)d/2$, we deduce
\begin{equation}\label{ghboun}
 \varepsilon^4|T|^2
 \le C\varepsilon^4\bigl(|D^2u_\varepsilon|^2-|D|Du_\varepsilon||^2\bigr)
          +Cd^2.
\end{equation}
On the zero set of $Du_\varepsilon$, its derivative vanishes almost
everywhere, so the estimate holds almost everywhere there as well.

We take $h,c$ from Lemma~\ref{phase} and multiply \eqref{gtran}
by $h(u_\varepsilon)\eta^2$. Two integrations by parts yield
\begin{align*}
 &\int S(u_\varepsilon)(S''+c)(u_\varepsilon)\xi\eta^2\,dX\\
 =&\int\left(2h''+4\frac{S'}S h'+4\frac{S''}S h\right)(u_\varepsilon)
                 \xi^2\eta^2\,dX-\varepsilon^4\int h(u_\varepsilon)|T|^2\eta^2\,dX\\
 &-\varepsilon^2\int\xi\left[
       h(u_\varepsilon)\Delta(\eta^2)
       +2\left(h'+\frac{S'}S h\right)(u_\varepsilon)
                          Du_\varepsilon\cdot D(\eta^2)\right]\,dX.
\end{align*}
The coefficient of $\xi^2$ is bounded by $C/S(u_\varepsilon)$, and
$\xi^2/S(u_\varepsilon)\le CS(u_\varepsilon)d^2$. The tensor term is
controlled by \eqref{ghboun} and $|h|\le CS$.
Young's inequality bounds the cutoff terms by an arbitrarily small
multiple of $\int S(u_\varepsilon)\xi_+\eta^2\,dX$ and
\begin{equation*}
 C\varepsilon^2\int S(u_\varepsilon)^2
                       (|D\eta|^2+|\eta\Delta\eta|)\,dX
       +Ce^{-c/\varepsilon}.
\end{equation*}
We combine this identity with the stability inequality obtained
above using $S(u_\varepsilon)\eta$. Since $\min c>0$, we obtain
\begin{equation}\label{gmast}
 \begin{split}
 \int S(u_\varepsilon)\xi_+\eta^2\,dX
 &\le C_1\varepsilon^4\int
       \bigl(|D^2u_\varepsilon|^2-|D|Du_\varepsilon||^2\bigr)\eta^2\,dX
       +C_1\int S(u_\varepsilon)d^2\eta^2\,dX\\
 &\quad+C_1\varepsilon^2\int S(u_\varepsilon)^2
                    (|D\eta|^2+|\eta\Delta\eta|)\,dX+Ce^{-c/\varepsilon}.
 \end{split}
\end{equation}

Let us choose $\delta>0$ so that $C_1C_0\delta<1/4$. On two sufficiently
small well neighborhoods, $S\le\delta$, and \eqref{gsprel} bounds
their contribution to $\int S(u_\varepsilon)d^2\eta^2\,dX$ by
\begin{equation*}
 C_0\delta\int S(u_\varepsilon)\xi_+\eta^2\,dX
       +C\varepsilon^2\int S(u_\varepsilon)^2|D\eta|^2\,dX
       +Ce^{-c/\varepsilon}.
\end{equation*}
Outside those neighborhoods and where $0\le d<tS(u_\varepsilon)$,
we have $S(u_\varepsilon)d^2\le2tS(u_\varepsilon)\xi_+$.
We then choose $t>0$ so that $2C_1t<1/4$.

At a point in $\operatorname{supp}\eta$ outside those neighborhoods
where $d\ge tS(u_\varepsilon)$, we have $|u_\varepsilon|\le b<1$
and $p\le(1-t)S(u_\varepsilon)$. We choose the allowed error in
Lemma~\ref{detect} smaller than $t\min_{[-b,b]}S/2$.
From its full-gradient form, we then obtain on the ball of radius $\varepsilon$
about this point
\begin{equation*}
 \int_{B_\varepsilon}\varepsilon
       \bigl(|D^2u_\varepsilon|^2-|D|Du_\varepsilon||^2\bigr)\,dX
       \ge c\varepsilon^{n-2}.
\end{equation*}
Here $b,t$ are fixed, and the required larger microscopic ball lies
in the domain for small $\varepsilon$. A disjoint-ball covering and
\eqref{gloca} bound the number of such balls by
$C\varepsilon^{2-n}$. Their fivefold enlargements cover this
set, whose volume is therefore at most $C\varepsilon^3$.
Since $S$ and $d$ are bounded, its contribution is $C\varepsilon^3$.
The set $d<0$ contributes only an exponential error.

Combining the estimates on the well neighborhoods, the region with
small relative gradient defect, and the remaining covered region,
we have proved
\begin{equation*}
 \int S(u_\varepsilon)d^2\eta^2\,dX
 \le(C_0\delta+2t)\int S(u_\varepsilon)\xi_+\eta^2\,dX
       +C\varepsilon^3,
\end{equation*}
where \eqref{bound} bounds the cutoff terms.
The geometric term in \eqref{gmast} is $O(\varepsilon^3)$ by
\eqref{gloca}. The choices of $\delta,t$ thus absorb its second term
and yield $\int S(u_\varepsilon)\xi_+\eta^2\,dX\le C\varepsilon^3$.
From Lemma~\ref{ndisc}, we obtain the same bound with $|\xi|$.
We choose the cutoff equal to one on $\C_{24}$ and supported in $\C_{32}$.
We apply \eqref{gsprel} again, with a cutoff equal to one
on $\C_{16}$ and supported in $\C_{24}$, and obtain
\begin{equation*}
 \int_{\C_{16}}\bigl(S(u_\varepsilon)|\xi|+d^2+
                                      \varepsilon^2|Dd|^2\bigr)\,dX
       \le C\varepsilon^3.
\end{equation*}
Division by $\varepsilon$ proves \eqref{adest}. From the lower bound of
$S$ on $[-b,b]$, we deduce \eqref{cdisc}. Tests against $\mu$ are justified
by uniform and $H^1$ approximation on compact supports.
\end{proof}

The final estimate controls the variation of
$\varepsilon u_{\varepsilon,z}/S(u_\varepsilon)$.
We obtain it from the equation for this quotient and the quantitative
energy-difference bound just proved.
 
\begin{lemma}\label{rlemm}
On $\C_{12}$,
\begin{equation}\label{resti}
 \int_{\C_{12}}\frac{S(u_\varepsilon)^2}\varepsilon
 \left|D\left(\frac{\varepsilon u_{\varepsilon,z}}
                         {S(u_\varepsilon)}\right)\right|^2\,dX\le C.
\end{equation}
\end{lemma}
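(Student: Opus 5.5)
The plan is to write the quotient as $w=\varphi/\psi$, with $\varphi=\varepsilon u_{\varepsilon,z}$ and $\psi=S(u_\varepsilon)$, and to use the divergence identity for quotients of two solutions of nearly the same linear equation. If $u_\varepsilon$ is constant there is nothing to prove. Otherwise $-1<u_\varepsilon<1$, so $\psi>0$ and $w$ is smooth on $\C_{16}$. By \eqref{relip}, $|\varphi|\le C\psi$, hence $|w|\le C$ on $\C_{48}$.

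The first step is the computation. Differentiating \eqref{acn} in $z$ gives $\varepsilon^2\Delta\varphi=W''(u_\varepsilon)\varphi$. As used in the proof of Proposition~\ref{discr}, with $\xi=W(u_\varepsilon)-\tfrac{\varepsilon^2}{2}|Du_\varepsilon|^2$,
\begin{equation*}
 \varepsilon^2\Delta\psi-W''(u_\varepsilon)\psi=-2S''(u_\varepsilon)\xi .
\end{equation*}
Since $\Div(\psi^2Dw)=\psi\Delta\varphi-\varphi\Delta\psi$, the zeroth-order terms cancel and we get
\begin{equation*}
 \varepsilon^2\Div(\psi^2Dw)=2S''(u_\varepsilon)\,\xi\,\varphi .
\end{equation*}
The key point is that the right-hand side is exactly the energy difference already controlled in Proposition~\ref{discr}.

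The second step is the energy estimate. Take $\eta\in C_c^\infty(\C_{16})$ with $\eta=1$ on $\C_{12}$, and test the identity with $w\eta^2$; integration by parts is legitimate since everything is smooth and compactly supported. This gives
\begin{equation*}
 \int\psi^2|Dw|^2\eta^2\,dX
 =-2\int\psi^2 w\eta\,Dw\cdot D\eta\,dX
  -2\varepsilon^{-2}\int S''(u_\varepsilon)\xi\varphi w\eta^2\,dX .
\end{equation*}
We apply Young's inequality to the first term on the right and absorb half of it into the left side. This leaves $C\int\psi^2w^2|D\eta|^2\le C\int S(u_\varepsilon)^2\,dX$. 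By the energy bound \eqref{bound}, since $S^2=2W$, this is at most $C\varepsilon$.

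For the second term we use $|S''|\le C$, $|w|\le C$ and $|\varphi|\le CS(u_\varepsilon)$. It is then bounded by
\begin{equation*}
 C\varepsilon^{-2}\int_{\C_{16}}S(u_\varepsilon)|\xi|\,dX\le C\varepsilon^{-2}\cdot\varepsilon^{3}=C\varepsilon,
\end{equation*}
where the integral bound $\int_{\C_{16}}S|\xi|\le C\varepsilon^3$ is exactly the first term of \eqref{adest}. Dividing by $\varepsilon$ then yields \eqref{resti}.

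The only delicate point is that without the quantitative bound of Proposition~\ref{discr}, the term involving $\xi$ would be of size $\varepsilon^{-2}\cdot o(\varepsilon)$ and would not be bounded after division by $\varepsilon$. With \eqref{adest} in hand, that term is harmless, and the remaining steps are routine.
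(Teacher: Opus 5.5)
Your proposal is correct and follows the paper's proof essentially step for step. You use the same divergence identity $\varepsilon^2\operatorname{div}\bigl(S(u_\varepsilon)^2Dw\bigr)=2S''(u_\varepsilon)\,\xi\,\varepsilon u_{\varepsilon,z}$, the same test function $w\eta^2$ with $\eta$ supported in $\C_{16}$ and equal to one on $\C_{12}$, Young's inequality, and the bounds \eqref{relip}, \eqref{adest}, and \eqref{bound} to reach $C\varepsilon$ before dividing by $\varepsilon$.
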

\begin{proof}
The equation for $u_\varepsilon$ and its derivative in $z$ yield
\begin{equation*}
 \varepsilon^2\operatorname{div}\left[
  S(u_\varepsilon)^2D\left(\frac{\varepsilon u_{\varepsilon,z}}
                              {S(u_\varepsilon)}\right)\right]
 =2S''(u_\varepsilon)\varepsilon u_{\varepsilon,z}
       \left(W(u_\varepsilon)-\frac{\varepsilon^2}2|Du_\varepsilon|^2\right).
\end{equation*}
We choose a smooth cutoff $\eta$ supported in $\C_{16}$ and equal
to one on $\C_{12}$. By \eqref{relip}, the quotient is bounded.
We test its equation with the quotient times $\eta^2$ and apply
Young's inequality. The left side of \eqref{resti} is then bounded by
\begin{equation*}
 C\varepsilon^{-2}\int_{\C_{16}}
 \frac{S(u_\varepsilon)}\varepsilon
       \left|W(u_\varepsilon)-\frac{\varepsilon^2}2|Du_\varepsilon|^2\right|\,dX
  +C\int_{\C_{16}}\frac{S(u_\varepsilon)^2}\varepsilon|D\eta|^2\,dX,
\end{equation*}
which is at most $C$ by \eqref{adest} and \eqref{bound}.
\end{proof}

\subsection{Tilt-excess near points with a small gradient or large tilt}\label{badtilt}
In this subsection, we isolate the points where the gradient is small or
strongly tilted before constructing the layer heights. The objective
is a covering whose energy is bounded by a power of $E_\varepsilon$
greater than one. Its contribution therefore vanishes after division
by the tilt-excess, which is the normalization used in the subsequent
compactness and harmonic approximation arguments.

The proof treats the two kinds of points separately. Small-gradient
points have a definite cost in the horizontal geometric stability
inequality. For points with large tilt, we first control the
concentration of the tangential energy in selected horizontal
directions. We then use the thin transition slab to estimate their
projection. Together, these bounds provide the relative error control
needed for harmonic approximation.

Fix $\Lambda<\infty$, $0<b<\bar b<1$, and $0<\tau<1$.
The points to be covered form the set
\begin{equation}\label{bset}
 \left\{X\in\C_{16}\mid \ |u_\varepsilon(X)|\le b,\quad
 \varepsilon|Du_\varepsilon(X)|\le\frac12\sqrt{2W(u_\varepsilon)}\
 \ \text{or}\
 |D_xu_\varepsilon(X)|\ge\tau|Du_\varepsilon(X)|\right\}.
\end{equation}
We use $\pi(x,z)=x$ for the horizontal projection.

\begin{proposition}\label{cover}
There are positive constants $\varepsilon_0,E_0,h_0,c,C$, depending
only on $\Lambda,b,\bar b,\tau,n,W$, with the following property.
Assume
\begin{equation}\label{nassu}
 \int_{\C_{64}}\varepsilon|Du_\varepsilon|^2\,dX\le\Lambda,
 \quad 0<\varepsilon\le\varepsilon_0,\quad E_\varepsilon\le E_0,
 \quad
 \{|u_\varepsilon|\le\bar b\}\cap\C_{32}\subset\{|z|\le h_0\}.
\end{equation}
Then the set in \eqref{bset} is covered by finitely many balls
$B_{r_\ell}(X_\ell)\subset\C_{20}$ satisfying
\begin{equation}\label{ncove}
 c\varepsilon\le r_\ell\le C\varepsilon,
 \qquad \sum_\ell r_\ell^n\le
 C E_\varepsilon^{\,1+1/[2(n-1)]}.
\end{equation}
For their union $\mathcal U_\varepsilon$,
\begin{equation}\label{nener}
 \int_{\mathcal U_\varepsilon}\left(\frac{\varepsilon}{2}|Du_\varepsilon|^2+
                   \frac{W(u_\varepsilon)}{\varepsilon}\right)\,dX
 +\frac{|\mathcal U_\varepsilon|}{\varepsilon}
 +|\pi(\mathcal U_\varepsilon)|
 \le C E_\varepsilon^{\,1+1/[2(n-1)]}.
\end{equation}
\end{proposition}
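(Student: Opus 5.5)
Split the set in \eqref{bset} into the small-gradient part $A_1$ (where $\varepsilon|Du_\varepsilon|\le\frac12 S(u_\varepsilon)$) and the large-tilt part $A_2$ (where $|D_xu_\varepsilon|\ge\tau|Du_\varepsilon|$ and the gradient is not small). Cover each by balls of radius comparable to $\varepsilon$ and bound the number of balls. Then deduce \eqref{nener} from \eqref{ncove}. Indeed, \eqref{lmearl} and \eqref{ellipt} give energy $\le Cr_\ell^n$ and volume $\le Cr_\ell^{n+1}$ in each ball, and the projection of each ball has measure $\le Cr_\ell^n$. So everything reduces to a count $\#\{\ell\}\le C\varepsilon^{-n}E_\varepsilon^{1+1/[2(n-1)]}$.

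\emph{Small-gradient points.} For $X\in A_1$, rescale $v(Y)=u_\varepsilon(X+\varepsilon Y)$. By \eqref{sdete}, $\int_{B_\varepsilon(X)}\varepsilon(|D(D_xu_\varepsilon)|^2-|D|D_xu_\varepsilon||^2)\,dX\ge c\varepsilon^{n-2}$. Take a Vitali family of disjoint balls of radius $M\varepsilon$ and use \eqref{gloca}. This bounds their number by $CE_\varepsilon\varepsilon^{2-n}$, so the contribution is $\sum r_\ell^n\le CE_\varepsilon\varepsilon^2$. This is only linear in $E_\varepsilon$, so I have to combine it with a lower bound $E_\varepsilon\gtrsim \varepsilon^{c}$, or otherwise absorb the factor $\varepsilon^2$.

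The cleanest route is a dichotomy. If $A_1\ne\emptyset$, the detection lemma at a point of $A_1$ forces a definite horizontal gradient nearby (the approximation by a one-dimensional profile fails, so tilt appears at scale $\varepsilon$). Hence $E_\varepsilon\ge c\varepsilon^{n}$, and then $E_\varepsilon\varepsilon^2\le CE_\varepsilon^{1+2/n}\le CE_\varepsilon^{1+1/[2(n-1)]}$, since $2/n\ge 1/[2(n-1)]$. I expect this to work, although the lower bound on $E_\varepsilon$ needs care, via a local version of Lemma~\ref{kzero}.

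\emph{Large-tilt points (main obstacle).} On $A_2$ the density $\varepsilon|D_xu_\varepsilon|^2\ge\tau^2\varepsilon|Du_\varepsilon|^2\sim\tau^2S^2/\varepsilon$, so each ball $B_\varepsilon(X)$ carries horizontal energy $\ge c\varepsilon^{n}$. The naive count $\#\le CE_\varepsilon\varepsilon^{-n}$ gives only a linear bound, so the extra power must come from concentration. For this I consider the nonnegative symmetric tensor $\varepsilon(D u\otimes Du)$ corrected by the energy (the stress tensor $\varepsilon Du\otimes Du-e_\varepsilon I$, which is divergence free), projected onto horizontal directions. Serre's higher integrability for divergence-controlled positive tensors should give an $L^{p}$ bound with $p=n/(n-1)$ for the projected horizontal energy density, in terms of its $L^1$ mass $E_\varepsilon$ plus the discrepancy error from Proposition~\ref{discr}. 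Applied to the horizontally projected measure $\pi_\#(\varepsilon|D_xu|^2\,dX)$ on $\mathbb R^n$, this makes the density in $L^{n/(n-1)}$ with norm $\lesssim E_\varepsilon$.

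Tilted points in the thin slab $\{|z|\le h_0\}$ project to a set $P=\pi(A_2)$. Each point of $P$ carries projected horizontal mass $\ge c\varepsilon^{n}$ in an $\varepsilon$-disc. This uses the slab and the estimates along level curves: a tilted layer travels horizontally a distance comparable to its vertical extent, so its projected density is $\gtrsim 1$ there. Hölder then gives $E_\varepsilon\gtrsim|P|\cdot 1$ and $|P|\le(E_\varepsilon/\text{density})$, but with the $L^p$ bound: $\int_P\rho\le|P|^{1/n}\|\rho\|_{p}$. Also, tangential stability via the second inequality of \eqref{gloca}, with test function $|D_xu|\eta$, gives a Sobolev-type gain $\|\rho\|_{L^{n/(n-1)}}\lesssim E_\varepsilon$ over $\varepsilon|D_xu|^2$. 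Interpolating these yields $|P|\lesssim E_\varepsilon^{1+1/[2(n-1)]}$, with the exponent $\frac12\cdot\frac1{n-1}$ coming from the square root of the Sobolev gain. Finally, use $\varepsilon$-discs covering $P$, each with boundedly many balls above it by the monotonicity bound \eqref{lmearl}, to get \eqref{ncove}. I expect making the Serre step and the level-curve projection rigorous to be the hardest part.
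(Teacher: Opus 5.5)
Your treatment of the small-gradient points is fine, and so is the passage from \eqref{ncove} to \eqref{nener}. In fact a single ball in \eqref{scost}, combined with the second bound in \eqref{gloca}, gives $c\varepsilon^{n-2}\le CE_\varepsilon$ directly, so no local version of Lemma~\ref{kzero} is needed; for $n=2$ a small $E_0$ rules such points out.

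The gap is in the large-tilt part, which is the core of the proposition. The bound $\|\rho\|_{L^{n/(n-1)}(\mathbb R^n)}\lesssim E_\varepsilon$ for $\rho(x)=\int\varepsilon|D_xu_\varepsilon|^2\,dz$ does not follow from Serre's theorem, for three reasons.
\begin{itemize}
\item The stress tensor is not nonnegative. Its divergence-free structure controls only the full energy at size $O(1)$, with no smallness in $E_\varepsilon$.
\item The tensor whose divergence is $O(E_\varepsilon)$ in $L^1$, by \eqref{tdiv} and \eqref{gloca}, is $\varepsilon(|D_xu_\varepsilon|^2I_n-D_xu_\varepsilon\otimes D_xu_\varepsilon)$. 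It vanishes in the direction of $D_xu_\varepsilon$, so after integration in $z$ its determinant need not dominate $\rho^n$. For example, if all tilts point along $e_n$, the integrated tensor is $\rho\,(I_n-e_n\otimes e_n)$. This has zero determinant and is divergence-free whenever $\rho=\rho(x_n)$. So the divergence bound alone allows $\rho$ to concentrate on a band $\{|x_n|\lesssim\varepsilon\}$.
\item Tangential stability gives no Sobolev gain either. It controls only the defect $|D(D_xu_\varepsilon)|^2-|D|D_xu_\varepsilon||^2$, not $|D|D_xu_\varepsilon||^2$.
\end{itemize}
Had an $n$-dimensional bound been available, the slab hypothesis would play no role and you would get exponent $1+1/n$; that is a warning sign. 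The final ``interpolation'' producing $1+1/[2(n-1)]$ is therefore not a derivation. Separately, the claim that only boundedly many balls lie above each $\varepsilon$-disc does not follow from \eqref{lmearl}, which controls balls, not columns of height $h_0$.

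The paper gets the extra power from two separate ingredients.
\begin{enumerate}
\item \emph{Directional Serre estimate (Lemma~\ref{plemm}).} Localize $D_xu_\varepsilon/|D_xu_\varepsilon|$ to caps near $\pm e_n$ and integrate out \emph{both} $z$ and $t=x_n$. Set $g(w)=\int\chi f\,\varepsilon|D_xu_\varepsilon|^2\,dt\,dz$ and let $S(w)$ be the corresponding integral of $\chi f\,\varepsilon(|D_xu_\varepsilon|^2I_{n-1}-D_wu_\varepsilon\otimes D_wu_\varepsilon)$. Since $|u_{\varepsilon,t}|\ge c|D_xu_\varepsilon|$ on the cap, $c\,g\,I_{n-1}\le S\le g\,I_{n-1}$. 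The divergence of $S$ has $L^1$ norm at most $CE_\varepsilon$. Serre in $\mathbb R^{n-1}$ then gives $\int_Ag\,dw\le CE_\varepsilon|A|^{1/(n-1)}$. Your H\"older step has this shape, but in the wrong dimension.
\item \emph{Level-curve bound (Proposition~\ref{lhori}).} The $w$-projection $A$ of the tilted transition points satisfies $|A|\le CE_\varepsilon^{1/2}$. Along level curves of $u_\varepsilon$ in the $(t,z)$-plane, the slab hypothesis forces $\varepsilon|u_{\varepsilon,t}|$ to drop by a fixed amount within arclength $Ch_0$. That costs a definite amount of $\int\varepsilon\sqrt{|D^2u_\varepsilon|^2-|D|Du_\varepsilon||^2}\,(|D_xu_\varepsilon|/|Du_\varepsilon|)\,d\ell$ per curve. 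Coarea and Cauchy--Schwarz, using $E_\varepsilon$ and the $O(\Lambda)$ bound in \eqref{gloca}, give the square root.
\end{enumerate}
This second step is where $h_0$ enters and where the factor $\tfrac12$ in the exponent comes from. Together the two bound the tilt-excess of the tilted region by $CE_\varepsilon^{1+1/[2(n-1)]}$. Disjoint balls, each carrying tilt-excess at least $c\varepsilon^n$, are then counted directly, with no per-column count.
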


The first step is a directional concentration estimate for the
tangential energy. We integrate in two variables and use the
divergence of a nonnegative matrix to control the resulting density
on the remaining variables.

\begin{lemma}\label{plemm}
Let $f\in C^\infty(\mathbb S^{n-1})$ be nonnegative, even, and
supported in sufficiently small neighborhoods of $e_n$ and $-e_n$.
We take $\chi\in C_c^\infty(\C_{32})$, with $0\le\chi\le1$ and
$\chi=1$ on $\C_{28}$. We write $x=(w,t)$, $w\in\mathbb R^{n-1}$.
For every measurable $A\subset\mathbb R^{n-1}$,
\begin{equation}\label{pui}
 \int_{\{w\in A\}}\chi
 f\left(\frac{D_xu_\varepsilon}{|D_xu_\varepsilon|}\right)
 \varepsilon|D_xu_\varepsilon|^2\,dX
 \le CE_\varepsilon |A|^{1/(n-1)}.
\end{equation}
The integrand is zero where $D_xu_\varepsilon=0$. The constant may
also depend on $f$ and $\chi$.
\end{lemma}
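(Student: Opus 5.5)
The plan is to integrate out the two variables $(t,z)$, which produces a density on $w\in\mathbb R^{n-1}$. I will show this density lies in $L^{(n-1)/(n-2)}$ with norm $\le CE_\varepsilon$, using Serre's estimate in the $d=n-1$ variables $w$, and then conclude by H\"older's inequality. Set $\nu=D_xu_\varepsilon/|D_xu_\varepsilon|$ where $D_xu_\varepsilon\neq0$, and
\begin{equation*}
 \rho(w)=\int\chi f(\nu)\,\varepsilon|D_xu_\varepsilon|^2\,dt\,dz .
\end{equation*}
The left side of \eqref{pui} is $\int_A\rho\,dw$. If $\|\rho\|_{L^{d/(d-1)}}\le CE_\varepsilon$, H\"older gives $\int_A\rho\le CE_\varepsilon|A|^{1/d}$, which is exactly \eqref{pui}. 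For $n=2$ we have $d=1$, and the claim is $\|\rho\|_{L^\infty}\le CE_\varepsilon$. This follows from $\rho$ having compact support and $|\rho'|$ of mass $\le CE_\varepsilon$, which is the same divergence bound as below in one variable.

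Next I build a nonnegative symmetric $d\times d$ matrix field on $\mathbb R^d$ comparable to $\rho I$. For $a,b\le n-1$ put
\begin{equation*}
 \mathcal A_{ab}(w)=\int\chi f(\nu)\,\varepsilon\bigl(|D_xu_\varepsilon|^2\delta_{ab}-\partial_au_\varepsilon\,\partial_bu_\varepsilon\bigr)\,dt\,dz .
\end{equation*}
Since $f$ is supported near $\pm e_n$, on its support the $w$-components of $D_xu_\varepsilon$ are at most a small fraction of $|D_xu_\varepsilon|$. Hence $\frac12\rho I\le\mathcal A\le\rho I$, so $(\det\mathcal A)^{1/(d-1)}\ge c\rho^{d/(d-1)}$. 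Serre's inequality for compactly supported nonnegative symmetric fields,
\begin{equation*}
 \int(\det\mathcal A)^{1/(d-1)}\,dw\le C\,\|\operatorname{div}_w\mathcal A\|_{\mathcal M}^{d/(d-1)},
\end{equation*}
reduces everything to showing that $\operatorname{div}_w\mathcal A$ is a measure of mass $\le CE_\varepsilon$.

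To compute $\operatorname{div}_w\mathcal A$, I compare $\mathcal A$ with the $w$-block of the stress--energy tensor $T=e_\varepsilon I-\varepsilon Du_\varepsilon\otimes Du_\varepsilon$, which is divergence-free. On the $w$-block, $|D_xu_\varepsilon|^2\delta_{ab}-u_au_b$ differs from $\varepsilon^{-1}(T_{ab}-T_{zz}\delta_{ab})$ only by discrepancy terms $\varepsilon^{-1}(W-\tfrac{\varepsilon^2}{2}|Du|^2)\delta_{ab}$. The identity $\operatorname{div}T=0$ converts $\sum_b\partial_b T_{ab}$ into $-\partial_tT_{at}-\partial_zT_{az}$. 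After integration in $(t,z)$ against $\chi$, these exact derivatives fall on $\chi$.

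The terms to bound are then the following.
\begin{enumerate}
\item Cutoff terms: these are at most $\int|D\chi|\,\varepsilon|D_xu_\varepsilon|\,|Du_\varepsilon|$ restricted to the support of $f$. A direct Cauchy--Schwarz bound gives only $\sqrt{E_\varepsilon\Lambda}$. I will avoid this by keeping $f(\nu)$ inside and exploiting that on $\operatorname{supp}\chi\setminus\C_{28}$ the transition lies in $\{|z|\le h_0\}$. Failing that, I will choose the tensor so that only $D_xu_\varepsilon$-quadratic entries meet $D\chi$.
\item Terms where derivatives hit $f(\nu)$: these are bounded by $C\int\chi\,\varepsilon|D_xu_\varepsilon|^2|D\nu|$. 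Since $|D_xu_\varepsilon|^2|D\nu|^2\le|D(D_xu_\varepsilon)|^2-|D|D_xu_\varepsilon||^2$, Cauchy--Schwarz together with \eqref{gloca} gives at most $\sqrt{E_\varepsilon}\sqrt{E_\varepsilon}=E_\varepsilon$.
\item Discrepancy terms: these are controlled by Proposition~\ref{discr}, which yields $O(\varepsilon^2)$ contributions together with \eqref{adest}. In the regime of interest, $O(\varepsilon^2)$ is dominated by $E_\varepsilon$; otherwise it is absorbed via the $D(S-\varepsilon|Du|)$ term.
\end{enumerate}

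The main obstacle is arranging the divergence so that every error is genuinely linear in $E_\varepsilon$ rather than $\sqrt{E_\varepsilon\Lambda}$. The delicate pieces are the mixed terms $\varepsilon u_au_z$ and $\varepsilon u_au_t$ that arise when the $(t,z)$ derivatives are integrated out, and the variation of the weight $f(\nu)$. My first attempt is to insert the weight $f(\nu)$ before invoking $\operatorname{div}T=0$. The mixed terms then appear only differentiated, and after integration by parts they are either exact in $(t,z)$ or carry a factor $D\nu$, which the horizontal geometric stability bound handles as in item 2.
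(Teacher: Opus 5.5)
Your overall architecture matches the paper's: the column density $\rho$, the block matrix $\mathcal A$, the comparison $\tfrac12\rho I\le\mathcal A\le\rho I$, Serre's inequality, and H\"older. However, the step that carries all the content, $\|\Div_w\mathcal A\|_{\mathcal M}\le CE_\varepsilon$, is not proved, and the stress-tensor route you propose for it does not close.

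The identification in your third paragraph is false. With $T=e_\varepsilon I-\varepsilon Du_\varepsilon\otimes Du_\varepsilon$ one has $T_{ab}-T_{zz}\delta_{ab}=\varepsilon(u_{\varepsilon,z}^2\delta_{ab}-\partial_au_\varepsilon\,\partial_bu_\varepsilon)$. This differs from the integrand of $\mathcal A$ by $\varepsilon(|D_xu_\varepsilon|^2-u_{\varepsilon,z}^2)\delta_{ab}$, a scalar of full energy size, not a discrepancy term.

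Comparing with $T_{ab}$ alone leaves the scalar $\tfrac\varepsilon2u_{\varepsilon,z}^2-\tfrac\varepsilon2|D_xu_\varepsilon|^2+\varepsilon^{-1}W(u_\varepsilon)$ times $\delta_{ab}$. Up to sign and an additive constant, this is exactly the scalar in \eqref{sbv}. You have no independent control of its $w$-derivative. The paper bounds its variation in Proposition~\ref{bvcmp} only by subtracting the stress identity from the tangential-tensor estimate that the present lemma supplies, so this route is circular.

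There are further problems. The rows $T_{at},T_{az}$ are linear in $D_xu_\varepsilon$, so the lateral cutoff terms are only $O(\sqrt{E_\varepsilon\Lambda})$; you note this but do not resolve it. The discrepancy errors are controlled only by powers of $\varepsilon$ (Proposition~\ref{discr}), while the lemma assumes no relation between $\varepsilon^2$ and $E_\varepsilon$ (Proposition~\ref{cover} uses it without one). Your remark about absorbing them is therefore not an argument.

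The missing idea is that the purely horizontal tangential tensor has a divergence controlled pointwise, without using the equation at all. For $P=|D_xu_\varepsilon|^2I_n-D_xu_\varepsilon\otimes D_xu_\varepsilon$,
\[
\Div_xP=D_x^2u_\varepsilon\,D_xu_\varepsilon-(\Delta_xu_\varepsilon)D_xu_\varepsilon,\qquad
|\Div_xP|\le\sqrt{n-1}\,|D_xu_\varepsilon|\sqrt{|D(D_xu_\varepsilon)|^2-|D|D_xu_\varepsilon||^2},
\]
as one checks in a frame with $D_xu_\varepsilon$ along the last horizontal axis.

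Multiply $P$ by $\varepsilon\chi f(\nu)$. Each $w$-row of its horizontal divergence equals the $w$-divergence of the integrand of your block $\mathcal A$ plus $\partial_t$ of a last-column entry, and the latter integrates to zero in $t$. No $z$-derivatives occur at all. The $D\chi$ terms are then at most $C\int\varepsilon|D_xu_\varepsilon|^2\le CE_\varepsilon$, because every entry is quadratic in $D_xu_\varepsilon$. The $Df(\nu)$ terms are handled exactly as in your item~2. The main term is bounded by $CE_\varepsilon$ via Cauchy--Schwarz and \eqref{gloca}. Your fallback of choosing a tensor whose entries meeting $D\chi$ are quadratic in $D_xu_\varepsilon$ points in this direction, but without this identity there is no proof.

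A smaller point concerns $n=2$. You should bound $\int|\mathcal A'|$ rather than $\int|\rho'|$, because $\partial_w\rho$ contains $|D_xu_\varepsilon|\,\partial_w|D_xu_\varepsilon|$, which the geometric integrand does not control. Then conclude with $\rho\le C\mathcal A$.
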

\begin{proof}
We first integrate the selected part of the tilt-excess in
$t$ and $z$. This defines a nonnegative function of $w$,
\begin{equation*}
 g(w)=\int_{\mathbb R^2}\chi
 f\left(\frac{D_xu_\varepsilon}{|D_xu_\varepsilon|}\right)
 \varepsilon|D_xu_\varepsilon|^2\,dt\,dz.
\end{equation*}
The left-hand side of \eqref{pui} is $\int_A g(w)\,dw$.
To estimate this integral using stability, we consider the symmetric
$(n-1)\times(n-1)$ matrix
\begin{equation*}
 S(w)=\int_{\mathbb R^2}\chi
 f\left(\frac{D_xu_\varepsilon}{|D_xu_\varepsilon|}\right)
 \varepsilon\bigl(|D_xu_\varepsilon|^2I_{n-1}
              -D_wu_\varepsilon\otimes D_wu_\varepsilon\bigr)\,dt\,dz.
\end{equation*}
Both integrands extend as $C^1$ functions through
$D_xu_\varepsilon=0$, with derivative zero there. Indeed, their
dependence on $D_xu_\varepsilon$ is homogeneous of degree two and
smooth away from zero. For each fixed $\varepsilon$, the cutoff
$\chi$ therefore ensures that $g$ and $S$ are $C^1$ and compactly
supported.

The restriction on the support of $f$ ensures that $S$ controls $g$
in every $w$ direction. For $\xi\in\mathbb R^{n-1}$, we have
\begin{equation*}
 |u_{\varepsilon,t}|^2|\xi|^2
 \le |D_xu_\varepsilon|^2|\xi|^2
       -(D_wu_\varepsilon\cdot\xi)^2
 \le |D_xu_\varepsilon|^2|\xi|^2.
\end{equation*}
Where $f$ is nonzero, we have
$|u_{\varepsilon,t}|\ge c|D_xu_\varepsilon|$.
After multiplication by the nonnegative weight and integration in
$t,z$, the preceding inequalities yield
$cg(w)I_{n-1}\le S(w)\le g(w)I_{n-1}$.

We next estimate the divergence of $S$. We start with the full
horizontal matrix, whose divergence has the identity and bound
\begin{align}\label{tdiv}
 &\Div_x\bigl(|D_xu_\varepsilon|^2I_n
             -D_xu_\varepsilon\otimes D_xu_\varepsilon\bigr)
 =D_x^2u_\varepsilon D_xu_\varepsilon
                  -(\Delta_xu_\varepsilon)D_xu_\varepsilon,\notag\\
 &\left|\Div_x\bigl(|D_xu_\varepsilon|^2I_n
             -D_xu_\varepsilon\otimes D_xu_\varepsilon\bigr)\right|
 \le\sqrt{n-1}\,|D_xu_\varepsilon|\sqrt{|D(D_xu_\varepsilon)|^2-|D|D_xu_\varepsilon||^2}.
\end{align}
For the second line, we take coordinates at the point in question
with $D_xu_\varepsilon$ in the last horizontal direction.
Cauchy--Schwarz bounds the trace over the remaining directions.
Differentiation of $f(D_xu_\varepsilon/|D_xu_\varepsilon|)$ contributes
at most $C/|D_xu_\varepsilon|$ times the square root in \eqref{tdiv}.
The matrix multiplying this derivative has norm at most
$|D_xu_\varepsilon|^2$, so this term satisfies the same bound.

Let us multiply the full horizontal matrix in \eqref{tdiv} by
$\varepsilon\chi f(D_xu_\varepsilon/|D_xu_\varepsilon|)$.
For each row corresponding to a $w$ coordinate, its horizontal
divergence also contains the $t$ derivative of the entry in the last
column. Its integral in $t$ is zero because $\chi$ has compact support.
Thus the first $n-1$ components of the full divergence, integrated
in $t,z$, equal $\Div_w S$. The terms with a derivative of $\chi$
have integral at most $CE_\varepsilon$. Cauchy--Schwarz and the
second bound in \eqref{gloca} therefore yield
\begin{align*}
 \int_{\mathbb R^{n-1}}|\Div_w S|\,dw
 &\le CE_\varepsilon+CE_\varepsilon^{1/2}
  \left(\int_{\C_{32}}\varepsilon\left(|D(D_xu_\varepsilon)|^2-|D|D_xu_\varepsilon||^2\right)\,dX\right)^{1/2}\\
 &\le CE_\varepsilon.
\end{align*}

We first consider $n\ge3$. The matrix $S$ is symmetric,
nonnegative, and compactly supported. We apply
Serre \cite[Theorem~2.3, (11)]{Serre} on a ball containing its support
in its interior, so the boundary term is zero.
Since $\det S\ge c g^{n-1}$, the determinant inequality implies
\begin{align*}
 \|g\|_{L^{(n-1)/(n-2)}(\mathbb R^{n-1})}
 &\le C\left(\int_{\mathbb R^{n-1}}(\det S)^{1/(n-2)}\,dw\right)^{(n-2)/(n-1)}\\
 &\le C\int_{\mathbb R^{n-1}}|\Div_w S|\,dw
 \le CE_\varepsilon.
\end{align*}
H\"older's inequality now bounds $\int_A g(w)\,dw$ by
$CE_\varepsilon|A|^{1/(n-1)}$, as required.
For $n=2$, the matrix $S$ is scalar. Its compact support and the
bound for its derivative imply
\begin{equation*}
 \|g\|_{L^\infty(\mathbb R)}
 \le C\|S\|_{L^\infty(\mathbb R)}
 \le C\int_\mathbb R |S'(w)|\,dw
 \le CE_\varepsilon.
\end{equation*}
We obtain \eqref{pui} by integration over $A$ in this case as well.
\end{proof}

We next estimate the tilt-excess at transition points where
$\varepsilon|D_xu_\varepsilon|$ is bounded below.
The thin-slab assumption forces the associated level curves to have
short length, so a change in the tangential derivative has a definite
geometric cost. Combined with the preceding directional estimate,
this shows that these points carry a vanishing fraction of
$E_\varepsilon$ as $E_\varepsilon\to0$.
We will use the estimate to bound the number of covering balls in
Proposition~\ref{cover}.

\begin{proposition}\label{lhori}
Fix $0<b_1<\bar b$ and $\kappa>0$. Under \eqref{nassu}, for
sufficiently small $h_0$, we have
\begin{equation}\label{lhest}
 \int_{\C_{20}\cap\{|u_\varepsilon|\le b_1,
                       \,\varepsilon|D_xu_\varepsilon|\ge\kappa\}}
              \varepsilon|D_xu_\varepsilon|^2\,dX
 \le C E_\varepsilon^{\,1+1/[2(n-1)]}.
\end{equation}
\end{proposition}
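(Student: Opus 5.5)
The plan is to reduce \eqref{lhest} to Lemma~\ref{plemm} by showing that the set
\[
G=\C_{20}\cap\{|u_\varepsilon|\le b_1,\ \varepsilon|D_xu_\varepsilon|\ge\kappa\}
\]
projects, in a suitable horizontal splitting $x=(w,t)$, onto a set of small $(n-1)$-measure. On $G$ the horizontal derivative is microscopically large, so by \eqref{ellipt} it stays comparable on an $\varepsilon$-ball. We cover $G$ by balls of radius $c\varepsilon$ centred at points of $G$ and select a disjoint Vitali subfamily. By \eqref{relip} and $|u_\varepsilon|\le b_1<\bar b$, each ball lies in $\{|u_\varepsilon|\le\bar b\}$, hence in the slab $|z|\le h_0$. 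Each such ball carries horizontal excess $\ge c\varepsilon^{n}\kappa^2$, so the number $K$ of balls satisfies $K\le CE_\varepsilon\varepsilon^{-n}$.

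Next we fix finitely many unit directions $e^{(1)},\dots,e^{(m)}$ in $\mathbb R^n$ and a partition of unity $\sum_k f_k=1$ on $\mathbb S^{n-1}$ by even nonnegative functions. Each $f_k$ is supported near $\pm e^{(k)}$ and small enough for Lemma~\ref{plemm} in coordinates with $e_n=e^{(k)}$. It suffices to bound $\int_G f_k(\nu)\varepsilon|D_xu_\varepsilon|^2$, with $\nu=D_xu_\varepsilon/|D_xu_\varepsilon|$, for each $k$. Write $x=(w,t)$ with $t$ along $e^{(k)}$, and let $A_k$ be the $w$-projection of the balls whose centres have $\nu$ near $\pm e^{(k)}$. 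Lemma~\ref{plemm} gives the bound $CE_\varepsilon|A_k|^{1/(n-1)}$, so the goal becomes
\[
|A_k|\le CE_\varepsilon^{1/2}.
\]

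This is where the thin slab enters. Fix a line in direction $e^{(k)}$, parametrised by $t$, at a given $w$ and height $z$ with $|z|\le h_0$. Near a ball centre, $u_{\varepsilon,t}$ has size $\ge c\kappa/\varepsilon$, so along this $t$-segment $u_\varepsilon$ crosses the interval $[-b_1,b_1]$ within distance $C\varepsilon$. Consider the two-dimensional slice in $(t,z)$. The level curve $\{u_\varepsilon=s\}$ through the centre is tilted: its normal has horizontal component $\ge c$. It must nevertheless stay in $|z|\le h_0$ while $u_\varepsilon\to\pm1$ away from the slab. So in the $(t,z)$ plane, following the level curve a distance of order $h_0$ forces the unit normal $\nu$ to turn from nearly horizontal to nearly vertical. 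The geometric cost of turning is measured by $|D(D_xu_\varepsilon)|^2-|D|D_xu_\varepsilon||^2$, or by Lemma~\ref{detect}-type detection along the curve.

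Quantitatively, I would try to show that for each $w\in A_k$ the slice integral
\[
\int\varepsilon\bigl(|D(D_xu)|^2-|D|D_xu||^2\bigr)+\varepsilon|D_xu|^2\,dt\,dz
\]
is at least $c$ when $h_0$ is small. The reason is that a tilted transition on a segment of length $\sim h_0$ already contributes excess $\gtrsim h_0$. If instead it bends within length $\ll h_0$, the change of direction costs energy of order its inverse length. Integrating over $w\in A_k$ and using \eqref{gloca} then gives $|A_k|\le CE_\varepsilon$, even better than needed.

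The main obstacle is making this slice lower bound rigorous. The concern is that the curvature cost appears only in the full gradient $D^2u$, rather than in the horizontal quantity whose integral is $O(E_\varepsilon)$. If only the full stability integral $C\Lambda$ is available, the Cauchy--Schwarz version yields the weaker bound $|A_k|\le CE_\varepsilon^{1/2}$ instead: bound the turning of $\nu$ by $\int|D\nu|$ and pair it with the excess. That weaker bound suffices for the exponent $1+1/[2(n-1)]$, and I would aim for it.
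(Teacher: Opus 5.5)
Your reduction is the same as the paper's. The even partition of unity subordinate to the caps of Lemma~\ref{plemm} reduces \eqref{lhest} to $|A_k|\le CE_\varepsilon^{1/2}$. Your fallback, pairing a Hessian cost with the excess by Cauchy--Schwarz and using the full bound $C\Lambda$ in \eqref{gloca}, is also the paper's final estimate. Two side remarks: the Vitali count $K\le CE_\varepsilon\varepsilon^{-n}$ is never used, and it is simpler to let $A_k$ be the $w$-projection of the selected points in the support of $f_k(\nu)$ themselves.

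\textbf{The gap.} The step that proves $|A_k|\le CE_\varepsilon^{1/2}$ is not supplied, and the mechanism you sketch would not close it.
\begin{itemize}
\item Your primary route fails for the reason you suspect. In a $(t,z)$-slice the level curve can flatten while $D_xu_\varepsilon$ keeps pointing along $e_t$. What changes is $|D_xu_\varepsilon|/|Du_\varepsilon|$, not the direction of $D_xu_\varepsilon$. So $|D(D_xu_\varepsilon)|^2-|D|D_xu_\varepsilon||^2$ does not detect the turning, and no order-one slice bound for it is available. Your heuristic ``excess $\gtrsim h_0$'' is not order one either, since $h_0$ is small.
\item In the fallback, following the level curve until its normal is ``nearly vertical'' is the wrong stopping rule. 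On the late part of the arc the weight $|D_xu_\varepsilon|/|Du_\varepsilon|$ degenerates, and the slice curve may meet critical points. The cost accumulated there cannot be paired with $\varepsilon|D_xu_\varepsilon|^2$.
\end{itemize}

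\textbf{What is needed} is the following.
\begin{itemize}
\item \emph{Measure of levels.} At a selected point $|\varepsilon u_{\varepsilon,t}|\ge c\kappa$. By \eqref{ellipt}, moving $t$ by a small multiple of $\varepsilon$ keeps $|\varepsilon u_{\varepsilon,t}|\ge k>0$. It sweeps a level interval of \emph{fixed} length inside $[-b_2,b_2]$, with $b_1<b_2<\bar b$, at points of $\C_{24}$. Hence the pairs $(w,s)$ have measure at least $c|A_k|$. Your claim that the segment crosses all of $[-b_1,b_1]$ within $C\varepsilon$ is unjustified and unnecessary.
\item \emph{Stopping rule.} For a.e.\ such pair, follow $\{u_\varepsilon(w,\cdot,\cdot)=s\}$ upward and stop when $|\varepsilon u_{\varepsilon,t}|=k/2$. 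Before that, the curve is a graph over $z$ with $dz/d\ell\ge c_k$ and $|D_xu_\varepsilon|/|Du_\varepsilon|\ge c_k$. The slab in \eqref{nassu} therefore forces the stop within length $C_kh_0$, inside $\C_{28}$; this is where small $h_0$ is used.
\item \emph{Cost along the arc.} The unit tangent $T$ is orthogonal to $Du_\varepsilon$, so $|D^2u_\varepsilon T|^2\le|D^2u_\varepsilon|^2-|D|Du_\varepsilon||^2$. The drop $k/2$ of $\varepsilon u_{\varepsilon,t}$ then gives $\int_{\mathrm{arc}}\varepsilon\sqrt{|D^2u_\varepsilon|^2-|D|Du_\varepsilon||^2}\,|D_xu_\varepsilon|/|Du_\varepsilon|\,d\ell\ge c_k$.
\item \emph{Coarea.} The map $X\mapsto(w,u_\varepsilon(X))$ has Jacobian $(u_{\varepsilon,t}^2+u_{\varepsilon,z}^2)^{1/2}\le|Du_\varepsilon|$. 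Coarea gives $|A_k|\le C\int_{\C_{28}}\varepsilon|D_xu_\varepsilon|\sqrt{|D^2u_\varepsilon|^2-|D|Du_\varepsilon||^2}\,dX\le CE_\varepsilon^{1/2}$.
\end{itemize}
It is the stopping rule, keeping the tilt bounded below along the whole arc, that produces the factor $|D_xu_\varepsilon|$ needed for the pairing.
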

\begin{proof}
The estimates along level curves follow the argument of
Schoen--Simon \cite[Section~6, pp.~785--787]{SS81}.
Related Allen--Cahn slice and coarea arguments appear in
Tonegawa--Wickramasekera \cite[Section~4]{TW12} and
Florit-Simon--Serra \cite[Appendix~B, Lemmas~B.4--B.5]{FSS25}.

We take a finite smooth even partition of unity on $\mathbb S^{n-1}$
subordinate to rotated copies of the caps in Lemma~\ref{plemm}.
We fix one term and rotate the horizontal coordinates so that
$x=(w,t)$. At the selected points in its support,
$|\varepsilon u_{\varepsilon,t}|\ge c\kappa$. Let $A$ be their
$w$-projection. By \eqref{ellipt}, varying $t$ through a sufficiently
small fixed multiple of $\varepsilon$ preserves
$|\varepsilon u_{\varepsilon,t}|\ge k$, for some fixed $k>0$, and
sweeps an interval of level values of fixed positive length.
We choose the variation so that the new points lie in $\C_{24}$ and
their values in $[-b_2,b_2]$, where $b_1<b_2<\bar b$.

The measure of the resulting pairs $(w,s)$ is therefore at least
$c|A|$. For almost every such pair, we follow the level curve
$u_\varepsilon(w,t,z)=s$ in the direction of increasing $z$.
We stop when $|\varepsilon u_{\varepsilon,t}|=k/2$ or when the curve
leaves $\C_{28}$. Before either event,
\begin{equation*}
 \frac{dz}{d\ell}
 =\frac{|u_{\varepsilon,t}|}
        {(u_{\varepsilon,t}^2+u_{\varepsilon,z}^2)^{1/2}}
 \ge c_k,
 \qquad \frac{|D_xu_\varepsilon|}{|Du_\varepsilon|}\ge c_k.
\end{equation*}
The implicit function theorem continues the curve while
$u_{\varepsilon,t}\ne0$. Since $|s|<\bar b$, the slab in
\eqref{nassu} bounds this length by $C_kh_0$. For small $h_0$
the curve stays in $\C_{28}$ and reaches
$|\varepsilon u_{\varepsilon,t}|=k/2$.

If $T$ is a unit tangent to this curve, then
\begin{equation*}
 |D^2u_\varepsilon T|^2\le\left(|D^2u_\varepsilon|^2-|D|Du_\varepsilon||^2\right).
\end{equation*}
Indeed the expression on the right is the sum of these squares over
an orthonormal basis perpendicular to $Du_\varepsilon$. The change
of $\varepsilon u_{\varepsilon,t}$ is at least $k/2$. Therefore
\begin{equation*}
 \int_{\mathrm{arc}}\varepsilon\sqrt{|D^2u_\varepsilon|^2-|D|Du_\varepsilon||^2}
       \frac{|D_xu_\varepsilon|}{|Du_\varepsilon|}\,d\ell\ge c_k.
\end{equation*}
The Jacobian of $X\mapsto(w,u_\varepsilon(X))$ is
$(u_{\varepsilon,t}^2+u_{\varepsilon,z}^2)^{1/2}\le|Du_\varepsilon|$.
Each of these slices has a full nonnegative integral at least
$c_k$. The coarea formula and \eqref{gloca} consequently yield
\begin{equation*}
 |A|\le C_k\int_{\C_{28}}\varepsilon|D_xu_\varepsilon|
        \sqrt{|D^2u_\varepsilon|^2-|D|Du_\varepsilon||^2}\,dX
       \le C E_\varepsilon^{1/2}.
\end{equation*}
Lemma~\ref{plemm} bounds the contribution of this cap by
$CE_\varepsilon|A|^{1/(n-1)}$. We sum over the partition of unity and obtain
\eqref{lhest}.
\end{proof}

We now prove Proposition~\ref{cover} by combining the
small-gradient cost with the estimate for large horizontal gradients.

\begin{proof}[Proof of Proposition~\ref{cover}]
Lemma~\ref{gtfene}, applied on nested interior cylinders, converts
the gradient-energy bound in \eqref{nassu} into the full energy
bounds needed for the local estimates below.
At a point where $|u_\varepsilon|\le b$ and
$\varepsilon|Du_\varepsilon|\le \sqrt{2W(u_\varepsilon)}/2$, we apply
\eqref{sdete} to $u_\varepsilon(X+\varepsilon Y)$.
For sufficiently small $\varepsilon$ its required domain lies in
$\C_{64}$. We have
\begin{equation}\label{scost}
 \int_{B_\varepsilon(X)}\varepsilon\left(|D(D_xu_\varepsilon)|^2-|D|D_xu_\varepsilon||^2\right)\,dX
       \ge c\varepsilon^{n-2}.
\end{equation}
We choose a maximal disjoint family of balls of this radius centered
at these points.
Their twofold dilates cover all such points, and
\begin{equation*}
 \#\{\text{balls}\}\,\varepsilon^n\le C\varepsilon^2E_\varepsilon.
\end{equation*}
If the set is nonempty and $n>2$, we also obtain from one ball in \eqref{scost}
that $\varepsilon^2\le C E_\varepsilon^{2/(n-2)}$.
The total $n$th power of these radii is consequently at most
$CE_\varepsilon^{1+2/(n-2)}$.
For $n=2$, a sufficiently small $E_0$ excludes such a point.

At each remaining point of \eqref{bset},
\begin{equation*}
 \varepsilon|D_xu_\varepsilon|
 >\frac{\tau}{2}\min_{[-b,b]}\sqrt{2W}>0.
\end{equation*}
By \eqref{ellipt}, a fixed-radius multiple $B_{r_0\varepsilon}(X)$
has tilt-excess at least $c\varepsilon^n$. Its fivefold dilate
lies in $\C_{20}\cap\{|u_\varepsilon|\le b_1\}$, where
$b<b_1<\bar b$, and has $\varepsilon|D_xu_\varepsilon|\ge\kappa$
for some fixed $\kappa>0$. A maximal disjoint family of the smaller
balls has fivefold dilates covering
these points. Proposition~\ref{lhori} bounds their number
times $\varepsilon^n$ by
$CE_\varepsilon^{1+1/[2(n-1)]}$.
Taking $E_0\le1$, we use $2/(n-2)>1/[2(n-1)]$ when $n>2$.
For $n=2$ the first set is empty. The two coverings therefore prove
\eqref{ncove}. Finally \eqref{ellipt} implies
$\left(\frac{\varepsilon}{2}|Du_\varepsilon|^2+
                   \frac{W(u_\varepsilon)}{\varepsilon}\right)\le C/\varepsilon$ on each covering ball. Integration and
projection yield \eqref{nener}.
\end{proof}

\begin{remark}
The same maximal-ball argument and Lemma~\ref{detect}
yield the following estimate. For fixed $0\le b<1$, $L>0$, and
$\delta>0$, the points in
$\{|u_\varepsilon|\le b\}\cap\C_{40}$ at which
$u_\varepsilon(X+\varepsilon\,\cdot)$ has $C^2(B_L)$ distance at
least $\delta$ from every rotated heteroclinic have a covering by
$C\varepsilon$ balls with total $n$th power at most
$C\varepsilon^2E_\varepsilon$. The covering balls and any fixed
enlargement of them lie in
$\C_{44}$ for sufficiently small $\varepsilon$. Their total volume
is at most $C\varepsilon^3E_\varepsilon$, by the lower and upper
bounds on their radii. This assertion follows from
\eqref{gloca} for solutions with arbitrary transition sets.
\end{remark}

\section{Harmonic approximation and excess decay}
\label{vertcomp}\label{tiltpart}

We use the estimates of Section~\ref{locenergy} to recover all the
tangential energy from the layer heights. We first count transitions
on vertical lines and prove strong compactness of the corresponding
energy integrals. We then construct harmonic limits of the normalized
heights and show that their Dirichlet energies equal the limiting
tilt-excess. These steps require the squared transition thickness to
be negligible compared with the excess.

The harmonic estimates lead to decay on smaller balls. We keep track
of the thickness error and determine when one rotation improves every
layer. The resulting iteration either reduces the multiplicity,
continues with improved excess, or reaches the transition thickness.
In the last case, it produces an entire solution to which
Proposition~\ref{crit} will apply.

\subsection{Strong compactness of the tilt-excess integrals}\label{vertlines}
Our objective is to prove strong compactness of the normalized
tilt-excess integrals on the horizontal base. For this purpose, the
covering from Subsection~\ref{badtilt} must control complete vertical
lines, since a line meeting an exceptional ball may carry energy
outside that ball. We obtain this control before discarding the
corresponding base points in the construction of the layer heights.

We first control the energy near the wells and count the transitions
on almost every vertical line. We then combine the intersection counts
with the stress-tensor identity to obtain a BV bound for a scalar
combination of vertical energy integrals. Proposition~\ref{bvcmp}
uses this bound to prove uniform integrability and strong local
$L^1$ precompactness of the normalized vertical integrals of the
tilt-excess. It also shows that the lines with fewer than the limiting
number of transitions have negligible base measure at the excess scale.

Throughout this subsection and Subsection~\ref{harm}, we assume
\eqref{acn}, \eqref{stab}, the energy bound \eqref{bound},
\begin{equation}\label{plimi}
 \varepsilon|Du_\varepsilon|^2\,dX
 \rightharpoonup Q\sigma_0\mathcal H^n\lfloor\{z=0\}
 \quad\hbox{locally in }\C_{32},\qquad Q\in\mathbb N,
\end{equation}
and
\begin{equation}\label{thickn}
 E_\varepsilon\longrightarrow0,\qquad
              \varepsilon^2/E_\varepsilon\longrightarrow0.
\end{equation}
By Lemma~\ref{gtfene}, for every fixed $b\in(0,1)$,
$\{|u_\varepsilon|\le b\}\cap\C_{24}$ lies in a slab whose width
tends to zero, and the full energy measures have the limit
\eqref{plimi}. Counting intersections and controlling an integer
multiplicity by BV and Poincar\'e estimates also appear in
Schoen--Simon \cite[Section~3, (3.11)--(3.13)]{SS81}.

We choose $\zeta\in C_c^\infty((-4,4))$, with $0\le\zeta\le1$ and
$\zeta=1$ on $[-3,3]$. On the base balls used below, all points with values in a fixed
compact subinterval of $(-1,1)$ eventually lie in $|z|<1$. In $2<|z|<5$ the solution is in a
convex well neighborhood. The maximum principle for
$1\mp u_\varepsilon$, followed by interior estimates, yields
\begin{equation}\label{cerro}
 1-|u_\varepsilon|+\varepsilon|Du_\varepsilon|
                  \le C e^{-c/\varepsilon}
 \quad\text{on the supports of }\zeta'\text{ and }\zeta''.
\end{equation}
All estimates denoted by $o(E_\varepsilon)$ are on fixed interior
base domains, with the normal interval kept equal to $(-4,4)$.

For the applications of Proposition~\ref{cover} here and in
Subsection~\ref{harm}, we make one fixed localization.
We set $\lambda=3/4$ and apply that proposition to
$\widetilde u_\varepsilon(Y)=u_\varepsilon(\lambda Y)$, with parameter
$\widetilde\varepsilon=\varepsilon/\lambda$ and upper normalization
$\widetilde E_\varepsilon=\lambda^{-n}E_\varepsilon$. Its domain
$\C_{64}$ corresponds to the original $\C_{48}$, and its slab
hypothesis on $\C_{32}$ corresponds to the original $\C_{24}$.
The preceding Hausdorff convergence verifies the slab hypothesis.
The resulting estimate covers the exceptional points in the original
$\C_{12}$ by balls in $\C_{15}$, which is sufficient for all uses
below. The original covering radii remain comparable to $\varepsilon$,
and their total $n$th power is bounded by
$C\lambda^{-n/[2(n-1)]}E_\varepsilon^{1+1/[2(n-1)]}$.
The fixed factor is included in $C$.

Proposition~\ref{cover} concerns the transition region.
To control complete vertical lines, we also need an estimate for the
tilt-excess where $u_\varepsilon$ is close to either well.
The next lemma makes this contribution small relative to
$E_\varepsilon$ and provides the error bounds used in counting
intersections with vertical lines.

\begin{lemma}
For every sufficiently small $\delta>0$,
\begin{equation}\label{rtail}
 \int_{\C_{12}\cap\{|u_\varepsilon|>1-\delta\}}
                  \varepsilon|D_xu_\varepsilon|^2\,dX
 \le C(\delta^c+\varepsilon^2)E_\varepsilon.
\end{equation}
Furthermore, for every fixed $\tau>0$,
\begin{equation}\label{fangl}
 \int_{\C_{10}\cap\{|D_xu_\varepsilon|\ge\tau|Du_\varepsilon|\}}
                  \varepsilon|D_xu_\varepsilon|^2\,dX=o(E_\varepsilon),
\end{equation}
and
\begin{equation}\label{ndsmal}
 \int_{\C_{10}}\frac{\left(\sqrt{2W(u_\varepsilon)}-\varepsilon|u_{\varepsilon,z}|\right)^2}{\varepsilon}\,dX
                          =o(E_\varepsilon).
\end{equation}
\end{lemma}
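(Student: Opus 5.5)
The plan is to prove \eqref{rtail} by a weighted Caccioppoli argument for the horizontal derivatives near the wells. Then \eqref{fangl} and \eqref{ndsmal} follow by splitting the domain and using Proposition~\ref{cover}, \eqref{rtail} and Proposition~\ref{discr}.

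\emph{Step 1: \eqref{rtail}.} Each $u_{\varepsilon,x_i}$ solves $\varepsilon^2\Delta u_{\varepsilon,x_i}=W''(u_\varepsilon)u_{\varepsilon,x_i}$. Hence $v=|D_xu_\varepsilon|^2$ satisfies
\begin{equation*}
 \varepsilon^2\Delta v\ge 2W''(u_\varepsilon)v+2\varepsilon^2|D(D_xu_\varepsilon)|^2 .
\end{equation*}
Choose $\delta_0$ so that $W''\ge\kappa>0$ on $\{|s|\ge1-2\delta_0\}$. For small $\gamma>0$, set
\begin{equation*}
 \psi(s)=\phi(s)(1-|s|)^{-\gamma},
\end{equation*}
where $\phi$ is a cutoff vanishing on $|s|\le1-2\delta_0$ and equal to one on $|s|\ge1-\delta_0$. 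To keep $\psi$ bounded, cap it at $\delta^{-\gamma}$. Test the inequality for $v$ against $\varepsilon^{-1}\psi(u_\varepsilon)\eta^2$, with $\eta$ a fixed cutoff equal to one on $\C_{12}$. The terms in which a derivative falls on $\psi$ are of the form $\varepsilon\,\psi'(u_\varepsilon)\,Du_\varepsilon\cdot Dv$. By \eqref{relip}, $\varepsilon|Du_\varepsilon|\le C(1-|u_\varepsilon|)$, so $\varepsilon|\psi'(u_\varepsilon)||Du_\varepsilon|\le C\gamma\psi$ on the support of $\phi'$ complement. After Young's inequality, for $\gamma$ small these terms are absorbed by the good terms $\kappa\varepsilon^{-1}\psi v$ and $\varepsilon\psi|D(D_xu_\varepsilon)|^2$. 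The terms containing $\phi'$ live on $\{1-2\delta_0\le|u_\varepsilon|\le1-\delta_0\}$, where $\psi$ is bounded, so they are at most $CE_\varepsilon$. The terms containing $D\eta$ give $C\varepsilon\int v|D\eta|^2$. These are bounded by $CE_\varepsilon$, and with one more integration by parts and \eqref{ellipt} their well part improves to $C\varepsilon^2E_\varepsilon$. We conclude
\begin{equation*}
 \int\varepsilon^{-1}\psi(u_\varepsilon)v\eta^2\le CE_\varepsilon .
\end{equation*}
Since $\psi\ge\delta^{-\gamma}$ on $\{|u_\varepsilon|>1-\delta\}$, this gives \eqref{rtail} with $c=\gamma$. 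The $\varepsilon^2$ in \eqref{rtail} is there to absorb the cutoff errors. This step is the main obstacle: the absorption needs the relative gradient bound \eqref{relip} and a careful choice of $\gamma$ against $\kappa$.

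\emph{Step 2: \eqref{fangl}.} Fix $\delta$ and apply Proposition~\ref{cover} with $b=1-\delta$, through the rescaling fixed before the lemma. Points of $\C_{10}$ with $|u_\varepsilon|\le1-\delta$ and $|D_xu_\varepsilon|\ge\tau|Du_\varepsilon|$ belong to the set \eqref{bset}. They therefore lie in $\mathcal U_\varepsilon$, whose energy is $O(E_\varepsilon^{1+1/[2(n-1)]})=o(E_\varepsilon)$. The remaining points satisfy $|u_\varepsilon|>1-\delta$, and \eqref{rtail} bounds their contribution by $C(\delta^c+\varepsilon^2)E_\varepsilon$. Letting $\varepsilon\to0$ and then $\delta\to0$ gives \eqref{fangl}.

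\emph{Step 3: \eqref{ndsmal}.} Write
\begin{equation*}
 \bigl(S-\varepsilon|u_{\varepsilon,z}|\bigr)^2\le2\bigl(S-\varepsilon|Du_\varepsilon|\bigr)^2+2\varepsilon^2\bigl(|Du_\varepsilon|-|u_{\varepsilon,z}|\bigr)^2 .
\end{equation*}
By \eqref{adest}, the first term contributes $C\varepsilon^2=o(E_\varepsilon)$ after division by $\varepsilon$, using \eqref{thickn}. For the second term we use
\begin{equation*}
 |Du_\varepsilon|-|u_{\varepsilon,z}|\le\frac{|D_xu_\varepsilon|^2}{|Du_\varepsilon|}\le|D_xu_\varepsilon| .
\end{equation*}
On $\{|D_xu_\varepsilon|\ge\tau|Du_\varepsilon|\}$ we bound the integrand by $\varepsilon|D_xu_\varepsilon|^2$, which is $o(E_\varepsilon)$ by \eqref{fangl}. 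On the complement the middle expression is at most $\tau|D_xu_\varepsilon|$, so that part contributes at most $\tau^2E_\varepsilon$. Since $\tau$ is arbitrary, \eqref{ndsmal} follows.
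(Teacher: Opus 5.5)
Your Steps~2 and~3 are exactly the paper's argument. For \eqref{rtail} you take a different route.

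The paper tests the Jacobi equation for $u_{\varepsilon,x_i}$ with $\eta^2\chi_T^2u_{\varepsilon,x_i}$. Here $\chi_T$ cuts off in the distance $d$ to $\{u_\varepsilon\le1-\delta_0\}\cup(\mathbb R^{n+1}\setminus\C_{20})$. Iterating gives $\int_{\{d>T\}}\eta^2\varepsilon|D_xu_\varepsilon|^2\,dX\le C(e^{-cT/\varepsilon}+\varepsilon^2)E_\varepsilon$. Only afterwards does the paper use \eqref{relip}, in the form $|D\log(1-u_\varepsilon)|\le C/\varepsilon$, to get $d\ge c\varepsilon\log(\delta_0/\delta)$ on $\{u_\varepsilon>1-\delta\}$.

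Your single estimate with the weight $(1-|u_\varepsilon|)^{-\gamma}$ uses \eqref{relip} instead to control the derivative of the weight. It therefore needs neither the distance function nor the iteration. The paper's version buys the stronger information of exponential decay in the distance from the transition region. Your route is viable, but Step~1 as written is not correct.

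\textbf{The error in Step~1.} Your normalization is off by a factor $\varepsilon^{-2}$, and the displayed conclusion $\int\varepsilon^{-1}\psi(u_\varepsilon)v\eta^2\le CE_\varepsilon$ is false. Take $u_\varepsilon=g(e_\varepsilon\cdot X/\varepsilon)$, where the horizontal component of $e_\varepsilon$ tends to zero more slowly than $\varepsilon$; all hypotheses hold. The set $\{|u_\varepsilon|\ge1-\delta_0\}$, where $\psi\ge1$, carries a fixed fraction of $E_\varepsilon$, so this integral is of order $\varepsilon^{-2}E_\varepsilon$.

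Correspondingly, two of your bounds fail.
\begin{itemize}
\item The $\phi'$-terms are controlled by $C\int_{\{1-2\delta_0\le|u_\varepsilon|\le1-\delta_0\}}\eta^2|D_xu_\varepsilon|\,|D(D_xu_\varepsilon)|\,dX$. Bounding this requires the Caccioppoli estimate $\int\eta^2\varepsilon^3|D(D_xu_\varepsilon)|^2\,dX\le CE_\varepsilon$ from the Jacobi equation; \eqref{gloca} only controls the Kato defect. With that estimate the term is of size $\varepsilon^{-2}E_\varepsilon$, not $E_\varepsilon$.
\item After Young's inequality, the $D\eta$-terms become $C\int\varepsilon\psi(u_\varepsilon)|D\eta|^2v\,dX$. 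Since $\psi$ attains its cap $\delta^{-\gamma}$ on $\operatorname{supp}D\eta$, this is $C\delta^{-\gamma}E_\varepsilon$. No further integration by parts removes that factor.
\end{itemize}

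\textbf{The fix.} Test instead with $\varepsilon\psi(u_\varepsilon)\eta^2$. The same absorption then gives
\[
 \kappa\int\varepsilon\psi(u_\varepsilon)\eta^2|D_xu_\varepsilon|^2\,dX
 \le CE_\varepsilon+C\varepsilon^2\delta^{-\gamma}E_\varepsilon .
\]
Multiplying by $\delta^\gamma$ yields \eqref{rtail} with $c=\gamma$. So the $\varepsilon^2$ in \eqref{rtail} comes precisely from the lateral cutoff acting on the capped weight. In the paper it likewise comes from the $D\eta$-term in the Caccioppoli identity.
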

\begin{proof}
The estimate near the wells uses the decay of tangential derivatives
in a convex well, as in Wang \cite[Appendix~B, Lemma~B.4]{Wang17}.

We choose $\delta_0>0$ such that $W''\ge c>0$ in the two well
intervals of length $2\delta_0$. We also choose $\eta=1$ on $\C_{12}$,
supported in $\C_{18}$, with bounded derivative. In the positive well region, let $d(X)$ be the distance to
\begin{equation*}
 \{Y\in\C_{20}\mid u_\varepsilon(Y)\le1-\delta_0\}
                    \cup(\mathbb R^{n+1}\setminus\C_{20}).
\end{equation*}
For $T\ge0$, we choose $\chi_T$ zero when $d\le T$, equal to one when
$d\ge T+L\varepsilon$, and linear between these values. The number
$L$ is a sufficiently large fixed constant. We test the Jacobi
equation with $\eta^2\chi_T^2u_{\varepsilon,x_i}$ and obtain
\begin{equation*}
 \int\left\{\varepsilon^2|D(\eta\chi_Tu_{\varepsilon,x_i})|^2+
 W''(u_\varepsilon)(\eta\chi_Tu_{\varepsilon,x_i})^2\right\}\,dX
 =\varepsilon^2\int u_{\varepsilon,x_i}^2|D(\eta\chi_T)|^2\,dX.
\end{equation*}
Its support is in the positive well. After summing over $i$ and
choosing $L$ large, we obtain
\begin{equation*}
 \int_{\{d>T+L\varepsilon\}}\eta^2\varepsilon|D_xu_\varepsilon|^2\,dX
 \le\frac12\int_{\{d>T\}}\eta^2\varepsilon|D_xu_\varepsilon|^2\,dX
                                      +C\varepsilon^2E_\varepsilon.
\end{equation*}
By iteration, we obtain, for every $T\ge0$,
\begin{equation*}
 \int_{\{d>T\}}\eta^2\varepsilon|D_xu_\varepsilon|^2\,dX
 \le C\bigl(e^{-cT/\varepsilon}+\varepsilon^2\bigr)E_\varepsilon.
\end{equation*}
The cutoff errors are summed by a geometric series.

By \eqref{relip}, $|D\log(1-u_\varepsilon)|\le C/\varepsilon$
in $\C_{20}$. For $X\in\C_{12}$ with $u_\varepsilon(X)>1-\delta$,
\begin{equation*}
 d(X)\ge\min\{c\varepsilon\log(\delta_0/\delta),\,8\}.
\end{equation*}
Indeed, the line segment from $X$ to a point in the transition set stays in the convex
cylinder $\C_{20}$, and the distance to its boundary is at least $8$.
The preceding estimate is consequently bounded by
$C(\delta^c+e^{-c/\varepsilon}+\varepsilon^2)E_\varepsilon$ on this
set. We absorb $e^{-c/\varepsilon}$ into $\varepsilon^2$ and repeat the
argument near $-1$ to obtain \eqref{rtail}.

On each fixed set $\{|u_\varepsilon|\le b\}$ with $b<1$, Proposition~\ref{cover} controls the integral
in \eqref{fangl} by $CE_\varepsilon^{1+1/[2(n-1)]}$.
We first let $\varepsilon\to0$ and then let $b\uparrow1$ in \eqref{rtail}. This proves \eqref{fangl}.
The elementary inequality
\begin{equation*}
 \left(\sqrt{2W(u_\varepsilon)}-\varepsilon|u_{\varepsilon,z}|\right)^2\le2\left(\sqrt{2W(u_\varepsilon)}-\varepsilon|Du_\varepsilon|\right)^2+
 2\varepsilon^2|D_xu_\varepsilon|^2
                      \frac{|D_xu_\varepsilon|^2}{|Du_\varepsilon|^2}
\end{equation*}
holds with the last term set to zero at $Du_\varepsilon=0$.
We split the last integral at a fixed angle $\tau$, use
\eqref{fangl}, and then let $\tau\downarrow0$.
Proposition~\ref{discr} and
$\varepsilon^2=o(E_\varepsilon)$ prove
\eqref{ndsmal}.
\end{proof}

We now prepare the energy comparison used to bound the number of
transitions on a vertical line. For $\epsilon>0$ and a smooth function
$v$, we use the densities
\begin{equation*}
 e_\epsilon(v)=\frac\epsilon2|Dv|^2+\frac{W(v)}\epsilon,
 \qquad \xi_\epsilon(v)=\frac\epsilon2|Dv|^2-\frac{W(v)}\epsilon.
\end{equation*}

\begin{lemma}\label{hfset}
Fix $M\ge2$, $\Lambda_0>0$, and $t\in(0,1)$. There is $\eta_*>0$
with the following property. Let $\mathcal Y$ be a finite subset of one
vertical line in $\mathbb R^{n+1}$, with $\#\mathcal Y\le M$ and
$\operatorname{diam}\mathcal Y\le\eta_*$. Suppose $0<a<1$ and
$|Y-Y'|>3a$ for distinct $Y,Y'\in\mathcal Y$.
Let $\epsilon>0$ and let $v$ be a smooth solution of
$\epsilon^2\Delta v=W'(v)$ on
$\{\operatorname{dist}(\cdot,\mathcal Y)<1\}$, with $|v|\le1$.
Assume $\xi_\epsilon(v)\le\eta_*$ on that neighborhood and, for every
$Y\in\mathcal Y$ and $a\le r\le1$,
\begin{equation*}
 \int_{B_r(Y)}\bigl(|\xi_\epsilon(v)|+\epsilon|D_xv|^2\bigr)\,dX
 \le\eta_*r^n,
 \qquad
 \int_{B_r(Y)}\epsilon|Dv|^2\,dX\le\Lambda_0r^n.
\end{equation*}
Then
\begin{equation}\label{ajqwwb}
 \sum_{Y\in\mathcal Y}a^{-n}\int_{B_a(Y)}e_\epsilon(v)\,dX
 \le t+(1+t)\int_{\{\operatorname{dist}(\cdot,\mathcal Y)<1\}}e_\epsilon(v)\,dX.
\end{equation}
\end{lemma}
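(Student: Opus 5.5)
The estimate is an almost-monotonicity statement at several collinear centers, and I would derive it from the Hutchinson--Tonegawa monotonicity identity applied at each $Y\in\mathcal Y$. For a solution of $\epsilon^2\Delta v=W'(v)$ and any center $Y$, the identity reads
\begin{equation*}
 \frac{d}{dr}\Bigl(r^{-n}\int_{B_r(Y)}e_\epsilon(v)\,dX\Bigr)
 =r^{-n}\int_{\partial B_r(Y)}\epsilon\bigl(\partial_\nu v\bigr)^2\,d\mathcal H^n
 -r^{-n-1}\int_{B_r(Y)}\xi_\epsilon(v)\,dX .
\end{equation*}
Dropping the nonnegative boundary term and integrating from $a$ to $\rho\le1$, the hypothesis $\int_{B_r(Y)}|\xi_\epsilon|\le\eta_*r^n$ gives
\begin{equation*}
 a^{-n}\int_{B_a(Y)}e_\epsilon\le\rho^{-n}\int_{B_\rho(Y)}e_\epsilon+\eta_*\log(\rho/a).
\end{equation*}
The logarithm is not uniformly controlled as $a\to0$, so this crude form is insufficient. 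Instead I would use the one-sided bound $\xi_\epsilon\le\eta_*$ only on $[a,\epsilon]$, where it yields $\int_{B_r}\xi_\epsilon\le C\eta_* r^{n+1}$ and hence a bounded contribution. On $[\epsilon,1]$ I would use the integral bound. This keeps the error at $O(\eta_*)$ up to a logarithm that I would avoid by dyadic summation. Concretely, I would split $\xi_\epsilon=\xi_+-\xi_-$. The positive part contributes at most $\eta_*$ times a convergent integral thanks to the pointwise bound, together with $\int\xi_+\le\eta_*r^n$. The negative part only helps in the monotonicity direction I need.

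The essential step is to compare the sum over several centers with a single integral over the union. Since the points lie on one vertical line within diameter $\eta_*$ and are $3a$-separated, I would not enlarge each ball up to radius $1$, since the balls would then overlap $\#\mathcal Y$ times. Instead I would run an inductive clustering argument, in the style of the Schoen--Simon multiplicity count. I increase the radius $r$ from $a$, and whenever two balls $B_r(Y),B_r(Y')$ start to meet (at $r\approx|Y-Y'|/2$), I replace the pair by a single ball centered at one of them with comparable radius. The key ingredient is that at such a scale the energy is nearly translation invariant in the vertical direction. The small tilt $\int_{B_r}\epsilon|D_xv|^2\le\eta_*r^n$, the small discrepancy, and the energy bound $\Lambda_0$ give, by a compactness-contradiction argument, closeness of the rescaled energy measure on $B_{2r}(Y)$ to $\sigma_0$ times a finite union of horizontal hyperplanes. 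For such configurations the density of a ball of radius $r$ about $Y$ plus that about $Y'$ is bounded by $(1+t)$ times the density of a ball of radius $\sim 2r$ about their midpoint, scaled appropriately. More precisely, two disjoint balls of radius $r$ on a vertical line meet a union of horizontal planes in disks whose total $\omega_n r^n$-normalized area is at most that of the enclosing ball of radius $2r$ times $2^{-n}\cdot 2$. For $n\ge2$ this is $\le 1$, which is where the inequality gains. I expect this compactness step, with $\eta_*$ chosen depending on $M,\Lambda_0,t$, to be the main obstacle. One must ensure the limit is a stationary varifold with horizontal tangent, which follows from the tilt and discrepancy smallness, so that the monotonicity defect in the limit is exactly the vertical-translation-invariant density.

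After at most $M-1$ mergers the cluster is contained in a single ball of radius at most $2\eta_*+$ small. From there one final application of the monotonicity inequality, up to radius $1-\eta_*$, gives a ball contained in $\{\operatorname{dist}(\cdot,\mathcal Y)<1\}$. Each merger and each monotonicity stretch costs a factor $(1+t/2M)$ or an additive $t/2M$, and these accumulate to the stated $t+(1+t)\int e_\epsilon$. The normalization constant $\omega_n$ is absorbed because both sides use the same unnormalized $r^{-n}$ densities.
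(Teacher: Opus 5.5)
The paper does not reprove this estimate. It quotes Hutchinson--Tonegawa, Proposition~5.5, with outer radius one, zero forcing and cardinality parameter $M-1$, after noting that their tilt term is $\epsilon|D_xv|^2$ and adjusting the normalization of $\sigma_0$. Your proposal tries to reprove that proposition, and single-center monotonicity plus clustering is the right kind of argument. There is a genuine gap, however: the merging step fails as you state it.

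At the scale where two balls meet, the comparison you claim is false. Let $\mu$ be the energy measure of two horizontal hyperplanes through $Y$ and $Y'$, with $|Y-Y'|=2r$ and midpoint $O$. Then
\[
 r^{-n}\mu(B_r(Y))+r^{-n}\mu(B_r(Y'))=2\sigma_0\omega_n,
 \qquad
 (2r)^{-n}\mu(B_{2r}(O))=\left(\tfrac34\right)^{n/2}2\sigma_0\omega_n ,
\]
so no factor $1+t$ with small $t$ is possible, and centering the merged ball at $Y$ is worse. Your count $2^{-n}\cdot2\le1$ compares unnormalized areas, not densities. The comparison becomes $(1+t)$-sharp only if the merged ball has radius $R$ at least $K$ times the diameter of the union of the balls being merged, with $K=K(n,t)$ large. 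Then every plane meeting the small balls contributes at least $(1-K^{-2})^{n/2}\sigma_0\omega_n$ at radius $R$.

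Such a ball generally overlaps the balls of other clusters at intermediate distances, so ``increase $r$ and merge when balls meet'' double counts. The missing idea is a choice of scale.
\begin{itemize}
\item Among $\#\mathcal Y-1$ disjoint windows $[\rho/K,K\rho]$ lying below the largest gap, one contains no gap of $\mathcal Y$. At that $\rho$, the clusters formed by gaps $<\rho/K$ have diameter at most $M\rho/K$ and pairwise disjoint $\rho$-neighborhoods.
\item You apply the statement inductively in $\#\mathcal Y$ to each cluster with outer radius $\rho$. The integral hypotheses are dilation invariant, and the rescaled discrepancy is $\rho\,\xi_\epsilon\le\eta_*$.
\item You then make one bounded-ratio comparison from the $\rho$-neighborhood of $\mathcal Y$ to $B_R(Y_0)$ with $R\gg\operatorname{diam}\mathcal Y$, and finish with monotonicity at $Y_0$ from $R$ to $1$.
\item When $\operatorname{diam}\mathcal Y$ is a bounded multiple of $a$, a single comparison at scale $a$ suffices.
\end{itemize}

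Your compactness step also needs repair. Closeness to a finite union of horizontal hyperplanes presupposes $\epsilon/r\to0$ and the integrality theorem. Here $a$ may be comparable to $\epsilon$: in Lemma~\ref{ccove}, $a=L\epsilon$ with $\epsilon=\varepsilon/r_0$. So some limits are smooth solutions rather than varifolds.

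What holds in every regime, and suffices, comes from the stress tensor. One has $\partial_{x_i}e_\epsilon(v)=\operatorname{div}(\epsilon\,\partial_{x_i}v\,Dv)$, and
\[
 \int_{B_r(Y)}\epsilon|D_xv|\,|Dv|\,dX\le(\eta_*\Lambda_0)^{1/2}r^n .
\]
Hence every limit of the rescaled energy measures is invariant under horizontal translations, that is, of the form $\mathcal L^n\otimes\nu$. Fubini then gives the bounded-ratio comparison without integrality.

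Finally, the logarithm in your first paragraph is not an issue. Since $\xi_\epsilon\le\eta_*$ on the whole neighborhood, $\int_a^1r^{-n-1}\int_{B_r}(\xi_\epsilon)_+\,dX\,dr\le|B_1|\eta_*$. No split at $\epsilon$ or dyadic summation is needed, and using the integral bound on $[\epsilon,1]$ would reintroduce an error of size $\eta_*\log(1/\epsilon)$.
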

\begin{proof}
We apply Hutchinson--Tonegawa \cite[Proposition~5.5]{HT} with outer
radius one, zero forcing, and their cardinality parameter equal to
$M-1$, after horizontal translation. Their tilt term is
$\epsilon(|Dv|^2-|v_z|^2)=\epsilon|D_xv|^2$,
which vanishes at critical points. Their full energy is
exactly $e_\epsilon(v)$ above. The convention
$\sigma_0=2\sigma_{\mathrm{HT}}$ changes the density assigned to one
limiting transition. Both energy integrals in \eqref{ajqwwb} keep
their original normalization.
\end{proof}

The preceding comparison will bound the number of crossings once
we exclude points with excessive discrepancy or tilt at some scale.
Outside this covering, local approximation implies a nonzero vertical
derivative, and the planar energy limit prevents an additional
transition. The next lemma states both conclusions, with covering
energy negligible relative to $E_\varepsilon$.

\begin{lemma}\label{ccove}
Fix a compact interval $I\Subset(-1,1)$. The points with $u_\varepsilon\in I$ in
$B_6^n\times(-4,4)$ can be divided into covered and uncovered points,
where the covering consists of balls contained in $\C_8$ and
\begin{equation*}
 \sum_\ell r_\ell^n=o(E_\varepsilon),
\end{equation*}
with the following properties. At every uncovered point with $u_\varepsilon\in I$,
\begin{equation}\label{gcros}
 \varepsilon|u_{\varepsilon,z}|\ge c_I\sqrt{2W(u_\varepsilon)}.
\end{equation}
For each $x\in B_6^n$ and $s\in I$, the interval $-4<z<4$ contains
at most $Q$ uncovered roots of $u_\varepsilon(x,z)=s$.
The energy of the covering, its unsigned normal integral
$\int \sqrt{2W(u_\varepsilon)}|u_{\varepsilon,z}|\,dX$, and its projected base volume
are all $o(E_\varepsilon)$.
\end{lemma}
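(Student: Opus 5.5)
We will build the covering from three families of balls of radius comparable to $\varepsilon$, each with total $n$th power $o(E_\varepsilon)$. Then we use the uncovered condition to get the lower bound \eqref{gcros}, and we use Lemma~\ref{hfset} to count roots.

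\textbf{First family.} This is the covering $\mathcal U_\varepsilon$ of Proposition~\ref{cover}. We apply it in the fixed localization described above, with $b$ chosen so that $I\subset[-b,b]$ and a small fixed angle $\tau$. Its radii satisfy $\sum r_\ell^n\le CE_\varepsilon^{1+1/[2(n-1)]}=o(E_\varepsilon)$. Outside it, at points with $u_\varepsilon\in I$, we have $\varepsilon|Du_\varepsilon|\ge\frac12\sqrt{2W}$ and $|D_xu_\varepsilon|\le\tau|Du_\varepsilon|$. Taking $\tau\le1/2$ gives $\varepsilon|u_{\varepsilon,z}|\ge\frac14\sqrt{2W(u_\varepsilon)}$, which is \eqref{gcros} with $c_I=1/4$.

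\textbf{Second family.} These balls enforce the scale-wise smallness required by Lemma~\ref{hfset}. Call a point $Y$ bad if some radius $a\le r\le1$, with $a$ a fixed multiple of $\varepsilon$ after rescaling by $\varepsilon$ (equivalently, $\varepsilon$-rescaled radii $1\ge r\ge a\varepsilon$), satisfies
\[
\int_{B_r(Y)}\bigl(|\xi_\varepsilon|+\varepsilon|D_xu_\varepsilon|^2\bigr)\,dX>\eta_*r^n .
\]
A Vitali covering by the maximal such radii gives $\sum r^n\le\eta_*^{-1}\int(|\xi_\varepsilon|+\varepsilon|D_xu_\varepsilon|^2)$, which is not $o(E_\varepsilon)$ as it stands. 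So we split the integrand.
\begin{itemize}
\item The discrepancy part is $O(\varepsilon^2)=o(E_\varepsilon)$ on the transition region by \eqref{cdisc}, and near the wells by \eqref{adest}.
\item For the tilt part, we use the horizontal energy restricted to the first covering, to the wells \eqref{rtail}, and to large angles \eqref{fangl}. These pieces are $o(E_\varepsilon)$ after letting $\delta,\tau\to0$.
\item The remaining tilt density is at most $\tau^2\varepsilon|Du_\varepsilon|^2$. Since the energy ratio is at most $C$ by \eqref{lmearl}, this piece is at most $C\tau^2r^n$, which is below $\eta_*r^n/2$ when $\tau$ is small.
\end{itemize}
Hence only the $o(E_\varepsilon)$ pieces can trigger badness. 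Radii much larger than $\varepsilon$ are harmless, because they can only occur with total measure $o(E_\varepsilon)$. The fact that the tilt integral tends to zero also keeps all these balls small, and for small $\varepsilon$ they lie in $\C_8$.

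\textbf{Root counting.} Suppose some $x$ and $s\in I$ have $Q+1$ uncovered roots $z_0<\dots<z_Q$ in $(-4,4)$. By Lemma~\ref{gtfene} they lie in a slab of width $o(1)$, so $\operatorname{diam}\le\eta_*$. Consecutive roots are separated by at least a fixed multiple of $\varepsilon$, because $u_{\varepsilon,z}$ has a fixed sign near each root by \eqref{gcros} and \eqref{ellipt}. Rescale by $\varepsilon$ around the roots with $a$ a fixed multiple of $\varepsilon$. The $C^2$ closeness to a one-dimensional profile (Lemma~\ref{detect}) makes each density $a^{-n}\int_{B_a}e_\varepsilon$ at least $\omega_n\sigma_0(1-t)$, provided $a/\varepsilon$ is chosen large. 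Then Lemma~\ref{hfset}, applied with unit outer radius, gives
\[
(Q+1)(1-t)\omega_n\sigma_0\le t+(1+t)\int_{\{\operatorname{dist}(\cdot,\mathcal Y)<1\}}e_\varepsilon .
\]
After shrinking the outer radius to $\rho$ and scaling, the right side tends to $(1+t)Q\omega_n\sigma_0$ by the planar limit. This is a contradiction for small $t$.

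\textbf{Main obstacle.} The delicate point is reconciling the micro scale $a\sim\varepsilon$ with the macro scale $\rho$. Lemma~\ref{hfset} needs smallness at every intermediate scale. Before taking the Vitali covering, I would verify that the non-negligible tilt at intermediate scales is truly controlled by the small-angle bound $C\tau^2r^n$, together with the monotonicity bound \eqref{lmearl}.

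\textbf{Error terms.} The energy of the covering is bounded by $C\sum r_\ell^n$ via \eqref{ellipt}, and likewise its unsigned normal integral, since $\sqrt{2W}|u_{\varepsilon,z}|\le C/\varepsilon$. The projected base volume is at most $C\sum r_\ell^n$. All three are therefore $o(E_\varepsilon)$.
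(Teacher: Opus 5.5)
Your scheme is essentially the paper's: a Vitali covering of points where $\int_{B_r}(|\xi_\varepsilon|+\varepsilon|D_xu_\varepsilon|^2)>\eta r^n$ at some scale, the tilt split by angle using \eqref{fangl} and the monotonicity bound, then Lemma~\ref{hfset} against the planar limit. Reading \eqref{gcros} off the complement of the covering in Proposition~\ref{cover} is a legitimate shortcut.

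The root count, however, has a real gap. Lemma~\ref{hfset} requires the roots to be more than $3a$ apart, where $a$ corresponds to $L\varepsilon$ with $L$ large; you need $L$ large for the density bound $\omega_n\sigma_0(1-t)$ on $B_{L\varepsilon}$. Your justification is that $u_{\varepsilon,z}$ keeps its sign near each root by \eqref{gcros} and \eqref{ellipt}. That only separates consecutive roots by $c\varepsilon$ with $c$ small. Nothing in your first or second family excludes two uncovered roots of the same level at distance, say, $L\varepsilon$. Your second family uses the threshold $\eta_*$ dictated by Lemma~\ref{hfset}, which is not small in terms of $L$.

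The density bound has the same defect. Lemma~\ref{detect} needs smallness of the horizontal Kato integral on the $\varepsilon$-ball, and being outside your two families does not provide it. What is missing is a $C^2$ approximation by a \emph{vertical} heteroclinic on a ball of radius about $4L\varepsilon$ at every uncovered root. This gives at once the energy ratio on $B_{L\varepsilon}$ and uniqueness of the root within $3L\varepsilon$ on its vertical line.

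The paper gets this approximation by compactness from failure of the scale-wise condition at $r=4L\varepsilon$, with a separate threshold $\eta\le\eta_*$ fixed after $L$. Alternatively, you could add an explicit third family: the covering from the Remark following Proposition~\ref{cover}, with radius $4L$ and error small in terms of $L$ (its total $n$th power is $C\varepsilon^2E_\varepsilon$). You would then use the small angle at the center to force the approximating direction to be nearly vertical.

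Three smaller points also need repair.

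First, the discrepancy near the wells is not $O(\varepsilon^2)$ by \eqref{adest}. That estimate weights $|\xi_\varepsilon|$ by $\sqrt{2W(u_\varepsilon)}$, and Cauchy--Schwarz with its unweighted defect term yields only $\int|\xi_\varepsilon|=O(\varepsilon)$, which need not be $o(E_\varepsilon)$. Use instead the pointwise bound $|\xi_\varepsilon|\le\eta'e_\varepsilon(u_\varepsilon)+C_{\eta'}\varepsilon^{-1}\bigl(\sqrt{2W(u_\varepsilon)}-\varepsilon|Du_\varepsilon|\bigr)^2$. Absorb the first term with \eqref{lmearl}, as you did for the small-angle tilt, and apply \eqref{adest} to the second.

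Second, Lemma~\ref{hfset} also assumes the pointwise bound $\xi_\epsilon(v)\le\eta_*$ on the whole neighborhood. After rescaling by the outer radius $\rho$ this becomes $C\rho\le\eta_*$, by \eqref{lmearl}. This is why $\rho$ must be small; the planar limit gives $Q\sigma_0\omega_n$ at every radius and does not require it.

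Third, your second family contains balls with $r\gg\varepsilon$. Their energy must be bounded by \eqref{lmearl}, since \eqref{ellipt} only gives $Cr^{n+1}/\varepsilon$ there.
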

\begin{proof}
By \eqref{lmearl} and \eqref{bound}, for centers in $\C_{10}$ and
$0<r<1$,
\begin{equation}\label{luppe}
 \frac{\varepsilon}{2}|Du_\varepsilon|^2
                   -\frac{W(u_\varepsilon)}{\varepsilon}\le C,
 \qquad
 \int_{B_r(X)}\left(\frac{\varepsilon}{2}|Du_\varepsilon|^2+
                   \frac{W(u_\varepsilon)}{\varepsilon}\right)\,dX\le Cr^n
 \quad\text{for }0<r<1.
\end{equation}
The constants are uniform in these centers.

Let us choose $r_0<1/10$, $L>1$, and $\eta>0$ in the order described
at the end of this proof. We cover the indicated points in the transition set for which,
at some radius $L\varepsilon\le r\le r_0$,
\begin{equation}\label{witnes}
 \int_{B_r(X)}\left[
 \left|\frac{W(u_\varepsilon)}{\varepsilon}
                 -\frac{\varepsilon}{2}|Du_\varepsilon|^2\right|
                +\varepsilon|D_xu_\varepsilon|^2\right]\,dX>\eta r^n.
\end{equation}
By \eqref{relip} and Cauchy--Schwarz,
\begin{equation*}
 \int_{B_r(X)}\left|
 \frac{W(u_\varepsilon)}{\varepsilon}
                 -\frac{\varepsilon}{2}|Du_\varepsilon|^2\right|\,dX
 \le \frac{\eta r^n}{4}
             +C_\eta\int_{B_r(X)}\frac{\left(\sqrt{2W(u_\varepsilon)}-\varepsilon|Du_\varepsilon|\right)^2}{\varepsilon}\,dX.
\end{equation*}
Here \eqref{luppe} controls $\int 2W(u_\varepsilon)/\varepsilon\,dX$.
The part of the tilt-excess where
$|D_xu_\varepsilon|<\tau|Du_\varepsilon|$ is at most $C\tau^2r^n$.
We choose a fixed $\tau$ so that this is at most $\eta r^n/4$.
Every ball satisfying \eqref{witnes} consequently satisfies
\begin{equation*}
 r^n\le C_\eta\int_{B_r(X)}\frac{\left(\sqrt{2W(u_\varepsilon)}-\varepsilon|Du_\varepsilon|\right)^2}{\varepsilon}\,dX
 +C_\eta\int_{B_r(X)\cap\{|D_xu_\varepsilon|\ge\tau|Du_\varepsilon|\}}
                    \varepsilon|D_xu_\varepsilon|^2\,dX.
\end{equation*}
We choose disjoint balls satisfying \eqref{witnes} whose fivefold
dilates cover all points with such a radius. Their centers lie in
$B_6^n\times(-4,4)$ and $r_0<1/10$, so the enlarged balls stay in
$\C_8$. Equations
\eqref{adest} and \eqref{fangl} imply
\begin{equation*}
 \sum_\ell r_\ell^n\le C_\eta\varepsilon^2+o(E_\varepsilon)
                              =o(E_\varepsilon).
\end{equation*}
Their full energy and projected volume obey the same bound by
\eqref{luppe}. Also
$\sqrt{2W(u_\varepsilon)}|u_{\varepsilon,z}|\le\left(\frac{\varepsilon}{2}|Du_\varepsilon|^2+
                   \frac{W(u_\varepsilon)}{\varepsilon}\right)$, so their unsigned
normal integral does too.

It remains to control the uncovered crossings. We first choose the
parameters so that local approximation ensures a definite energy cost
at each crossing. We then apply Lemma~\ref{hfset} to show that an
additional crossing would exceed the planar limiting energy.
We choose $t\in(0,1)$ and $\delta>0$ so small that
\begin{equation}\label{cngap}
 (Q+1)\omega_n(\sigma_0-\delta)
 >t+(1+t)Q\sigma_0\omega_n.
\end{equation}
We use $M=Q+1$ and the bound for
$r^{-n}\int_{B_r}\varepsilon|Du_\varepsilon|^2\,dX$
in \eqref{luppe} to fix $\Lambda_0$ and then $\eta_*$.
We choose $r_0<1/10$ so that $Cr_0\le\eta_*$, where $C$ is the
constant in the first inequality in \eqref{luppe}.
Next we choose $L$ large enough that every vertical heteroclinic centered
at a level in $I$ has normalized energy at least
$\omega_n(\sigma_0-\delta/2)$ on $B_L$.
Finally we choose $\eta\le\eta_*$ small enough
for the following fixed-scale compactness implication.

At an uncovered point $X$, failure of \eqref{witnes} at
$r=4L\varepsilon$ implies approximation by a vertical heteroclinic in
$C^2(B_{7L/2})$, by the compactness argument used in
Hutchinson--Tonegawa \cite[Proposition~5.6]{HT}.
Indeed, along $\eta\to0$, every limit on $B_{4L}$ satisfies
$D_xv=0$ and $W(v)=|Dv|^2/2$. Its central value lies in $I$,
so it is an increasing or decreasing heteroclinic.
For this fixed $L$, the derivative of every such heteroclinic is bounded
away from zero on the vertical segment $[-3L,3L]$, uniformly in its
central level value in $I$. Consequently the approximation yields
\eqref{gcros}, energy ratio at least
$\omega_n(\sigma_0-\delta)$ on $B_{L\varepsilon}(X)$, and uniqueness
of the root of that level within distance $3L\varepsilon$ on its
vertical line. The radius $4L\varepsilon$ is admissible once
$4L\varepsilon<r_0$.

Suppose one vertical line with base coordinate $x_*$ contained $Q+1$
uncovered roots of the same level. We denote this finite set by
$\mathcal X$, and rescale by
\begin{equation*}
 v(Y)=u_\varepsilon((x_*,0)+r_0Y),\qquad
 \epsilon=\varepsilon/r_0,\qquad
 \mathcal Y=r_0^{-1}(\mathcal X-(x_*,0)),\qquad
 a=L\varepsilon/r_0.
\end{equation*}
The set $\mathcal Y$ lies on a vertical line, has $Q+1$ points
separated by more than $3a$, and $v$ satisfies the required equation
with $|v|\le1$.
Under this rescaling, we have
$\xi_\epsilon(v)(Y)=r_0\xi_\varepsilon(u_\varepsilon)((x_*,0)+r_0Y)$,
so its pointwise upper bound is at most $\eta_*$ on the full required
neighborhood. For every $Y\in\mathcal Y$ and $a\le r\le1$, the
corresponding ball before rescaling has radius between $L\varepsilon$ and
$r_0$. Its center is uncovered, so failure of \eqref{witnes}
implies precisely the first integral bound in Lemma~\ref{hfset},
after multiplication by $r_0^{-n}$. Equation~\eqref{luppe}
implies the second integral bound with the fixed $\Lambda_0$.

Since the slab width tends to zero,
$\operatorname{diam}\mathcal X=o(1)$ and
$\operatorname{diam}\mathcal Y\le\eta_*$ for small $\varepsilon$.
The parameters have been fixed in the order $t,\delta$, $\eta_*$,
$r_0$, $L$, and $\eta$. The unit neighborhood of $\mathcal Y$ corresponds
to a neighborhood contained in a ball of radius $r_0+o(1)$
centered at distance $o(1)$ from the limiting hyperplane. Weak full-energy
convergence therefore bounds its rescaled energy above by
$Q\sigma_0\omega_n+o(1)$. This assertion is uniform in the moving
base coordinates. To see this, we pass to convergent subsequences of
the centers and use nearby-ball inclusions, since the limiting planar
measure assigns zero mass to each boundary sphere.
The lower energy ratio at every selected root and
\eqref{ajqwwb} now yield
\begin{equation*}
 (Q+1)\omega_n(\sigma_0-\delta)
 \le t+(1+t)\bigl(Q\sigma_0\omega_n+o(1)\bigr),
\end{equation*}
contradicting \eqref{cngap}. 
\end{proof}

We now relate the number of transitions on a vertical line to the
integral of $\sqrt{2W(u_\varepsilon)}|u_{\varepsilon,z}|$ along that
line, with the cutoff $\zeta^2$. The following lemma shows that this
integral equals, up to a small $L^1$ error, $\sigma_0$ times an integer
between zero and $Q$. This relation will be used in
Proposition~\ref{bvcmp} to obtain strong compactness.

\begin{lemma}\label{count}
On $B_5^n$ there are measurable functions
$N_\varepsilon(x)\in\{0,1,\ldots,Q\}$ for which
\begin{align}\label{nsurp}
 &\int\left(\int\zeta^2\sqrt{2W(u_\varepsilon)}|u_{\varepsilon,z}|\,dz
                                    -Q\sigma_0\right)_+\,dx=o(E_\varepsilon),\\
 &\int\left|\int\zeta^2\sqrt{2W(u_\varepsilon)}|u_{\varepsilon,z}|\,dz
                              -\sigma_0N_\varepsilon(x)\right|\,dx
                                      =o(E_\varepsilon),\label{ninte}\\
 &|\{x\mid N_\varepsilon(x)<Q\}|\longrightarrow0.
                                      \label{cmacr}
\end{align}
Outside a set of base volume $o(E_\varepsilon)$, $N_\varepsilon$
is the actual number of roots of every level in one fixed compact subinterval of $(-1,1)$.
\end{lemma}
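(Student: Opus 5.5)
The plan is to compute the vertical integral line by line through the coarea formula in $z$. Fix a compact interval $I=[-b,b]\Subset(-1,1)$, with $b$ close to $1$, to be specified. For each $x\in B_5^n$, write
\[
 \int\zeta^2\sqrt{2W(u_\varepsilon)}|u_{\varepsilon,z}|\,dz
 =\int_{-1}^1\sqrt{2W(s)}\sum_{z:\,u_\varepsilon(x,z)=s}\zeta^2(z)\,ds.
\]
On almost every line the sum counts the roots of level $s$ in $(-4,4)$. Two kinds of root must be discarded: those lying in the covering of Lemma~\ref{ccove}, and those with $|s|>b$.

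\begin{enumerate}
\item For the levels near the wells, the contribution of $\{|u_\varepsilon|>b\}$ to $\int\sqrt{2W}|u_{\varepsilon,z}|\,dX$ over $B_5^n\times(-4,4)$ should be $o(E_\varepsilon)$ after letting $b\uparrow1$ at the end. Bounding it crudely by the full energy is too weak. Instead I would compare with the line count. By \eqref{ndsmal}, $\sqrt{2W}|u_{\varepsilon,z}|$ differs from $\varepsilon u_{\varepsilon,z}^2$ and from $2W/\varepsilon$ by $o(E_\varepsilon)$ in $L^1$. So this tail equals the tail of the full energy up to $o(E_\varepsilon)$.

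\item I would then handle the tail by exact one-dimensional accounting. On each line with $N$ uncovered, graphical transitions, the integral over $\{|u|>b\}$ of $\sqrt{2W}|u_z|$ is exactly $N\int_{|s|>b}\sqrt{2W}$ plus the contribution of non-monotone pieces. Non-monotone pieces inside the wells force $u_z=0$, where $\sqrt{2W}\ne \varepsilon|u_z|$. This is again controlled by \eqref{ndsmal} together with \eqref{rtail}.

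\item For the lines that meet the covering, I would use the fact that the projected base volume of the covering from Lemma~\ref{ccove} is $o(E_\varepsilon)$. On the good base set $G_\varepsilon$, no root of a level in $I$ is covered. By \eqref{gcros}, every root is then transversal. Each level $s\in I$ has at most $Q$ roots, so the function $z\mapsto u_\varepsilon(x,z)$ crosses the band $I$ monotonically on each of at most $Q$ intervals. The sign of $u_z$ alternates between consecutive crossings. Since $u_\varepsilon\to\pm1$ exponentially near $|z|=3,4$ by \eqref{cerro}, the number of roots is the same for every $s\in I$. I define $N_\varepsilon(x)$ to be this common number on $G_\varepsilon$, and set $N_\varepsilon=Q$ elsewhere.

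\item On $G_\varepsilon$, the coarea formula then gives $\sigma_0N_\varepsilon(x)$ minus the well part. The well part is $N_\varepsilon\int_{|s|>b}\sqrt{2W}$ plus an error whose $x$-integral is $o(E_\varepsilon)$, by \eqref{ndsmal} and \eqref{rtail}. Off $G_\varepsilon$, the base measure is $o(E_\varepsilon)$. The line integrals there are integrable with a uniform bound only after integration in $x$, so I would use the covering's unsigned normal integral bound from Lemma~\ref{ccove} for the covered part. This yields \eqref{ninte}. Since $N_\varepsilon\le Q$, \eqref{nsurp} follows from \eqref{ninte}.
\end{enumerate}

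For \eqref{cmacr}, I would integrate \eqref{ninte} over $B_r^n$. By Lemma~\ref{gtfene} and \eqref{ndsmal}, the left side converges to $Q\sigma_0|B_r^n|$, since $\sqrt{2W}|u_z|$ has the same weak limit as the energy measure, with $\zeta=1$ near the plane. Hence $\int(Q-N_\varepsilon)\,dx\to0$, and because $Q-N_\varepsilon\ge1$ where $N_\varepsilon<Q$, we get $|\{N_\varepsilon<Q\}|\to0$.

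The main obstacle is step 2, the near-well levels. There one cannot appeal to transversality, and the bound must be $o(E_\varepsilon)$ rather than $o(1)$. I would first try to show that the error $\bigl|\sqrt{2W}|u_z|-\varepsilon u_z^2\bigr|$, integrated over the wells, is $o(E_\varepsilon)$ via \eqref{ndsmal} and Cauchy--Schwarz against the energy. I would then argue that on good lines the one-dimensional profile restricted to $\{|u|>b\}$ is monotone up to that error.
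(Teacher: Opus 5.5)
Your treatment of the vertical lines over base points outside the projected covering matches the paper: transversal roots by \eqref{gcros}, a root count $N_\varepsilon\le Q$ independent of the level in $I$, exact coarea on the band, and one-dimensional accounting near the wells, which the paper makes rigorous in Lemma~\ref{odtail}. Your derivation of \eqref{cmacr} through \eqref{ciden} also matches. The gap is on the projected covering $B_\varepsilon$, where you set $N_\varepsilon=Q$. There you need $\int_{B_\varepsilon}\int\zeta^2\sqrt{2W(u_\varepsilon)}|u_{\varepsilon,z}|\,dz\,dx=o(E_\varepsilon)$. The fact $|B_\varepsilon|=o(E_\varepsilon)$ does not give this, because nothing bounds the column integral uniformly per unit base area; monotonicity ball bounds over base balls of radius comparable to $\varepsilon$ do not. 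The covering of Lemma~\ref{ccove} and the bound of at most $Q$ uncovered roots handle the levels in $I$. The near-well part $\{|u_\varepsilon|>b\}$ of these columns, however, is not controlled, because your step~2 holds only on good lines. On a bad column, every covered crossing of the level $b$ can start another well excursion worth about $\int_b^1\sqrt{2W}$. For a fixed level $b$ you have no bound on the base integral of the number of covered crossings; the covering's unsigned normal integral controls it only after averaging over levels. So deriving \eqref{nsurp} from \eqref{ninte} reverses the order in which the estimates can be proved. The one-sided bound \eqref{nsurp} on \emph{all} lines is exactly the missing input.

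The paper proves \eqref{nsurp} in four steps.
\begin{enumerate}
\item Fix two narrow level bands near the wells, inside the interval of Lemma~\ref{ccove}.
\item In each band, choose an $\varepsilon$-dependent boundary level at which the base integral of the number of covered crossings is $o(E_\varepsilon)$. This is possible by coarea, because $\sqrt{2W}$ is bounded below on the bands and the covering has unsigned normal integral $o(E_\varepsilon)$.
\item On \emph{every} line, the upper tail estimate \eqref{ultail} bounds each well part by the number of regular crossings of the boundary level times the well integral, plus the discrepancy term and endpoint terms. The endpoint terms are exponentially small by \eqref{cerro}.
\item Uncovered crossings contribute at most $Q$ times each principal integral, and covered ones contribute $o(E_\varepsilon)$ after integration. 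This gives \eqref{nsurp}.
\end{enumerate}
On $B_\varepsilon$, with $a$ the column integral, the inequality $|a-Q\sigma_0|\le(a-Q\sigma_0)_++Q\sigma_0$ then yields \eqref{ninte}.

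Smaller corrections:
\begin{itemize}
\item \eqref{rtail} concerns tilt-excess and plays no role here.
\item No limit $b\uparrow1$ is needed, since the discrepancy error is $o(E_\varepsilon)$ for each fixed band.
\item Cauchy--Schwarz with \eqref{ndsmal} gives only $o(\sqrt{E_\varepsilon})$ for $\sqrt{2W}|u_{\varepsilon,z}|-\varepsilon u_{\varepsilon,z}^2$ in $L^1$, so your step~1 claim is false as stated. Only the average $\tfrac12(\varepsilon u_{\varepsilon,z}^2+2W/\varepsilon)$ is $o(E_\varepsilon)$-close, by the exact identity.
\end{itemize}
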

\begin{proof}
We use the one-dimensional inequalities of
Lemma~\ref{odtail} in the appendix. They apply to a
normal line of the present solution by \eqref{relip}.
Their errors are controlled by
$\int(\sqrt{2W(u_\varepsilon)}-\varepsilon|u_{\varepsilon,z}|)^2/\varepsilon\,dz$.

Choose two fixed narrow intervals of level values near the wells, both inside
a compact interval to which Lemma~\ref{ccove} applies.
Vertical coarea permits a boundary level value to be chosen in each band
so that the integrated number of covered crossings at that level
is $o(E_\varepsilon)$. The positive lower bound for $\sqrt{2W}$
on each band and the small unsigned normal integral of the covering
justify this assertion. The chosen level values may depend on $\varepsilon$.
The constants in Lemma~\ref{odtail} are uniform on the fixed bands.
Between these two boundary level values, uncovered roots contribute at
most $Q\int\sqrt{2W(s)}\,ds$. The covered contribution is
$o(E_\varepsilon)$. The upper bounds near the wells add the two missing
well integrals, at most $Q$ times each, together with the covered
crossing error. By \eqref{ndsmal} and \eqref{cerro},
all remaining terms are $o(E_\varepsilon)$. The three principal
integrals add to $Q\sigma_0$. This proves \eqref{nsurp}.

We project the covering for the fixed larger interval of level values onto the
base. Its image $B_\varepsilon$ has volume $o(E_\varepsilon)$.
On every line outside $B_\varepsilon$, all roots in the interval
satisfy \eqref{gcros}. Their number is at most $Q$ and
is constant in the level. A root cannot stop or fold while
$u_{\varepsilon,z}\ne0$, and cannot leave through the pure-phase
upper and lower boundaries of the normal interval. We denote this
number by $N_\varepsilon(x)$ and apply the upper and lower bounds near the wells,
including every boundary crossing in the lower bound. For
$x\notin B_\varepsilon$, we obtain
\begin{equation*}
 \left|\int\zeta^2\sqrt{2W(u_\varepsilon)}|u_{\varepsilon,z}|\,dz
                         -\sigma_0N_\varepsilon(x)\right|
 \le C\int\frac{\left(\sqrt{2W(u_\varepsilon)}-\varepsilon|u_{\varepsilon,z}|\right)^2}{\varepsilon}\,dz
                                  +Ce^{-c/\varepsilon}.
\end{equation*}
The extension used for the upper bound near each well lies inside the
pure-phase boundary regions. Set $N_\varepsilon=Q$ on $B_\varepsilon$. We have
\begin{equation*}
 \left|\int\zeta^2\sqrt{2W(u_\varepsilon)}|u_{\varepsilon,z}|\,dz-Q\sigma_0\right|
 \le\left(\int\zeta^2\sqrt{2W(u_\varepsilon)}|u_{\varepsilon,z}|\,dz-Q\sigma_0\right)_+
                                      +Q\sigma_0.
\end{equation*}
Integration proves \eqref{ninte}.

The exact normal-line identity is
\begin{equation}\label{ciden}
\begin{split}
 \int\zeta^2\left(\frac{\varepsilon}{2}|Du_\varepsilon|^2+
                   \frac{W(u_\varepsilon)}{\varepsilon}\right)\,dz
 &=\int\zeta^2\sqrt{2W(u_\varepsilon)}|u_{\varepsilon,z}|\,dz\\
 &\quad+\frac12\int\zeta^2\varepsilon|D_xu_\varepsilon|^2\,dz\\
 &\quad+\frac1{2\varepsilon}\int\zeta^2\left(\sqrt{2W(u_\varepsilon)}-\varepsilon|u_{\varepsilon,z}|\right)^2\,dz.
\end{split}
\end{equation}
It shows that the unsigned normal integral has weak limit
$Q\sigma_0\,dx$. Equation \eqref{ninte} transfers this
limit to $\sigma_0N_\varepsilon\,dx$. Since $0\le N_\varepsilon\le Q$,
we test on an interior base ball with a nonnegative smooth cutoff
and obtain \eqref{cmacr}.
\end{proof}

\begin{remark}\label{cntupper}
For sequences satisfying \eqref{acn}, \eqref{stab},
\eqref{bound}, and \eqref{plimi}, the root-count conclusion of
Lemma~\ref{count} also holds under the weaker condition
\begin{equation*}
 E_\varepsilon\le\widehat E_\varepsilon\longrightarrow0,
 \qquad
 \frac{\varepsilon^2}{\widehat E_\varepsilon}\longrightarrow0,
\end{equation*}
where $\widehat E_\varepsilon>0$ is an upper bound for the actual
tangential energy. We verify the errors needed for this conclusion.
Proposition~\ref{cover} bounds its covering contribution by
\begin{equation*}
 CE_\varepsilon^{1+1/[2(n-1)]}
 \le C\widehat E_\varepsilon^{1+1/[2(n-1)]}
 =o(\widehat E_\varepsilon).
\end{equation*}
Proposition~\ref{discr} and the boundary terms in
\eqref{cerro} contribute $O(\varepsilon^2)+O(e^{-c/\varepsilon})
=o(\widehat E_\varepsilon)$. For each sufficiently small fixed
$\rho>0$, the proof of \eqref{rtail} yields
\begin{equation*}
 \int_{\C_{12}\cap\{|u_\varepsilon|>1-\rho\}}
 \varepsilon|D_xu_\varepsilon|^2\,dX
 \le C(\rho^c+\varepsilon^2)E_\varepsilon.
\end{equation*}
After division by $\widehat E_\varepsilon$, we first let
$\varepsilon\to0$ and then $\rho\downarrow0$. The proof of
\eqref{fangl} therefore yields that estimate with right side
$o(\widehat E_\varepsilon)$. The inequality preceding
\eqref{ndsmal}, followed first by $\varepsilon\to0$ and then
by $\tau\downarrow0$, yields the same replacement in
\eqref{ndsmal}.

These are all the small errors used in Lemmas~\ref{ccove}
and~\ref{count}. The exclusion of $Q+1$ uncovered roots uses
the planar full-energy limit and Lemma~\ref{hfset}.
Equation~\eqref{ciden} and that limit then imply that the set
where $N_\varepsilon<Q$ has measure tending to zero.
For any prescribed compact interval $I\Subset(-1,1)$, we choose
the two auxiliary bands of level values on opposite sides of
$I$. Outside a subset of $B_4^n$ whose measure tends to zero,
every level $s\in I$ consequently has exactly $Q$ roots in
$(-4,4)$. This argument concerns the count alone and keeps
$E_\varepsilon$ equal to the actual tangential energy.
\end{remark}

The preceding result controls the number of transitions on most
vertical lines. We now prove compactness for the tilt-excess
integrated along vertical lines with the cutoff $\zeta^2$, including
lines meeting the covering. A BV bound for a suitable scalar
combination implies strong local $L^1$ precompactness after division by
$E_\varepsilon$. The same bound improves the estimate for the set of
lines with fewer than $Q$ transitions to the scale required in the
next subsection.

\begin{proposition}\label{bvcmp}
On $B_4^n$, the following estimate holds.
\begin{equation}\label{sbv}
 \left\|\frac1{E_\varepsilon}\left[Q\sigma_0+
 \int\zeta^2\left(\frac\varepsilon2|D_xu_\varepsilon|^2
      -\frac\varepsilon2 u_{\varepsilon,z}^2
      -\frac{W(u_\varepsilon)}\varepsilon\right)\,dz\right]\right\|_{BV}
 \le C.
\end{equation}
The functions
$E_\varepsilon^{-1}\int\zeta^2\varepsilon|D_xu_\varepsilon|^2\,dz$
are strongly precompact in $L^1_{\rm loc}(B_4^n)$. On each compactly
contained base ball, one also has
\begin{equation}\label{rsatu}
 \int\left|\int\zeta^2\sqrt{2W(u_\varepsilon)}|u_{\varepsilon,z}|\,dz-Q\sigma_0\right|\,dx
                =o(E_\varepsilon),
 \qquad |\{N_\varepsilon<Q\}|=o(E_\varepsilon).
\end{equation}
\end{proposition}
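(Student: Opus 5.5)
We obtain the BV bound from the stress--energy tensor identity, integrated in the normal variable. Let $T_{ij}=e_\varepsilon(u_\varepsilon)\delta_{ij}-\varepsilon\,\partial_iu_\varepsilon\partial_ju_\varepsilon$. It is divergence free, since $u_\varepsilon$ solves \eqref{acn}. For a horizontal index $i$,
\begin{equation*}
 \partial_{x_i}\int\zeta^2 T_{ii}\,dz
 =-\sum_{j\ne i,\,j\le n}\partial_{x_j}\int\zeta^2T_{ij}\,dz
  +\int(\zeta^2)'T_{iz}\,dz .
\end{equation*}
The boundary term is $O(e^{-c/\varepsilon})$ by \eqref{cerro}. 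The off-diagonal horizontal entries are $-\varepsilon u_{\varepsilon,x_i}u_{\varepsilon,x_j}$, which are of tilt-excess size. This alone does not give a BV bound on a scalar.

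We therefore use the scalar combination in \eqref{sbv} and compute its horizontal gradient directly. Write
\begin{equation*}
 F(x)=\int\zeta^2\Bigl(\tfrac\varepsilon2|D_xu_\varepsilon|^2-\tfrac\varepsilon2u_{\varepsilon,z}^2-\tfrac{W(u_\varepsilon)}\varepsilon\Bigr)dz .
\end{equation*}
This integrand is minus the $zz$-entry of $T$. The equation $\partial_jT_{zj}=0$ gives
\begin{equation*}
 \partial_zT_{zz}=-\sum_{i\le n}\partial_{x_i}T_{zi}.
\end{equation*}
That identity concerns $\partial_z$, not $\partial_x$. Instead we differentiate in $x_k$ and use the equation:
\begin{equation*}
 \partial_{x_k}\Bigl(\tfrac\varepsilon2u_z^2+\tfrac W\varepsilon\Bigr)
 =\varepsilon u_zu_{zk}+\varepsilon u_k\Delta u
 =\partial_z(\varepsilon u_zu_k)+\sum_{i\le n}\partial_{x_i}(\varepsilon u_iu_k)-\partial_{x_k}\bigl(\tfrac\varepsilon2|D_xu|^2\bigr).
\end{equation*}
Hence
\begin{equation*}
 \partial_{x_k}F=\sum_{i\le n}\partial_{x_i}\Bigl(\int\zeta^2\varepsilon u_iu_k\,dz\Bigr)\cdot(-1)+\int(\zeta^2)'\varepsilon u_zu_k\,dz+\partial_{x_k}\int\zeta^2\varepsilon|D_xu|^2\,dz .
\end{equation*}
Thus $DF$ is the horizontal divergence of a matrix-valued measure with total mass at most $CE_\varepsilon$, plus an exponentially small term. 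This is a $W^{-1,1}$ bound, not BV.

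To obtain genuine BV we need the stronger structure used in Lemma~\ref{plemm}. By \eqref{tdiv}, the divergence of $|D_xu|^2I-D_xu\otimes D_xu$ is bounded by $|D_xu|$ times the horizontal geometric stability density. With Cauchy--Schwarz and \eqref{gloca}, its total mass is at most $CE_\varepsilon$. So the plan is to rewrite
\begin{equation*}
 \sum_i\partial_i(\varepsilon u_iu_k)=\partial_k(\varepsilon|D_xu|^2)-\operatorname{div}_x\bigl(\varepsilon(|D_xu|^2I-D_xu\otimes D_xu)\bigr)_k .
\end{equation*}
Then $\partial_kF$ equals the $z$-integral of a function with $L^1$ norm $\le CE_\varepsilon$, plus exponentially small errors; the $\partial_k(\varepsilon|D_xu|^2)$ terms cancel exactly. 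The adjusted constant $Q\sigma_0$ does not affect the derivative. This gives the BV seminorm bound. The $L^1$ part of the norm follows from \eqref{ciden} and \eqref{nsurp}--\eqref{ninte}: $|F+Q\sigma_0|$ is controlled by $\int\zeta^2\varepsilon|D_xu|^2dz$, the discrepancy \eqref{ndsmal}, and $\sigma_0(Q-N_\varepsilon)$. The last term is handled in the next paragraph.

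The main obstacle is that $L^1$ bound: $|\{N_\varepsilon<Q\}|$ is only known to tend to zero, not to be $O(E_\varepsilon)$. I would argue as follows. By \eqref{ninte} and \eqref{ciden}, $G=E_\varepsilon^{-1}(F+Q\sigma_0)$ equals $E_\varepsilon^{-1}\sigma_0(Q-N_\varepsilon)$ up to an $L^1$ error $O(1)$. Its BV seminorm is bounded. Take a cutoff and apply the BV Poincaré inequality on a base ball. Then $G-\bar G$ is bounded in $L^{n/(n-1)}$. The integer-valued part $\sigma_0(Q-N_\varepsilon)/E_\varepsilon$ takes values in $\{0\}\cup[\sigma_0/E_\varepsilon,\infty)$ and vanishes on a set of measure tending to full. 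This forces the mean $\bar G$ to be bounded, as in the Schoen--Simon multiplicity argument: a large mean would contradict the $L^{n/(n-1)}$ concentration on a set of vanishing measure. Hence $\|G\|_{L^1}\le C$, which gives \eqref{sbv} and $|\{N_\varepsilon<Q\}|\le CE_\varepsilon$. Running the argument at a slightly smaller threshold, or using that $G$ converges in $L^1$ to a BV function whose integer part must vanish, improves this to $o(E_\varepsilon)$. Then \eqref{rsatu} follows from \eqref{ninte}.

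Finally, strong $L^1_{\rm loc}$ precompactness comes from writing
\begin{equation*}
 E_\varepsilon^{-1}\int\zeta^2\varepsilon|D_xu|^2\,dz=G-\text{(discrepancy and count terms)}.
\end{equation*}
The remaining terms are $o(1)$ in $L^1$ by \eqref{ndsmal}, \eqref{cerro} and \eqref{rsatu}. Rellich compactness applies to $G$, which is bounded in BV.
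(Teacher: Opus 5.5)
Your route is the paper's. The cancellation you reach after the detour through $\partial_{x_k}F$ is exactly the observation that the integrand of $F$ equals $\varepsilon|D_xu_\varepsilon|^2-e_\varepsilon(u_\varepsilon)$. Thus $F\,I_n$ is the $z$-integral of the tangential tensor in \eqref{tdiv} minus the horizontal stress, and its gradient is bounded through \eqref{tdiv}, Cauchy--Schwarz and \eqref{gloca}. (In passing: the integrand is not $-T_{zz}$, but you never use that remark.) Your bound for the mean is also the paper's argument: $G$ is $O(1)$ in $L^1$ on $\{N_\varepsilon=Q\}$, a set of measure bounded below, and Poincar\'e controls the oscillation. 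It correctly gives $\|G\|_{L^1}\le C$ and $|\{N_\varepsilon<Q\}|\le CE_\varepsilon$.

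The step that is not actually argued is the upgrade to $o(E_\varepsilon)$ in \eqref{rsatu}. ``A slightly smaller threshold'' has no evident meaning here. ``The integer part of the limit must vanish'' is the conclusion, not a reason. A nonnegative part concentrating on sets of vanishing measure is compatible with strong $L^1$ convergence of $G$, provided another part concentrates with the opposite sign. What excludes this is the sign of the tangential integral, which you never invoke at this stage. By \eqref{ciden},
\[
 \frac{\sigma_0(Q-N_\varepsilon)}{E_\varepsilon}
 =G-\frac1{2E_\varepsilon}\int\zeta^2\varepsilon|D_xu_\varepsilon|^2\,dz+r_\varepsilon
 \le G_++r_\varepsilon,
 \qquad \|r_\varepsilon\|_{L^1}\to0,
\]
where $r_\varepsilon$ is controlled by \eqref{ninte} and \eqref{ndsmal}. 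Integrate over $A_\varepsilon=\{N_\varepsilon<Q\}$ and use the BV--Sobolev bound $\|G\|_{L^{n/(n-1)}}\le C$, available once the full BV norm is bounded. This gives
\[
 \Bigl\|\frac{\sigma_0(Q-N_\varepsilon)}{E_\varepsilon}\Bigr\|_{L^1}
 \le C|A_\varepsilon|^{1/n}+o(1)\le CE_\varepsilon^{1/n}+o(1)\longrightarrow0,
\]
which is \eqref{rsatu}. The paper writes the same two inequalities and closes them with Young's inequality instead of inserting $|A_\varepsilon|\le CE_\varepsilon$; either works.

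Finally, in the compactness step the normalized tangential integral is $2G$, not $G$, up to an $o(1)$ error in $L^1$. This slip is harmless.
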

\begin{proof}
We use the domain-variation identity for the Allen--Cahn stress tensor
\cite[(2.6)]{Wang17}. The BV compactness and Poincar\'e--Sobolev
estimates can be found in \cite[Chapter~3]{AFP}.
The proof has three steps. We first bound the variation and mean of
the scalar expression. The BV Sobolev inequality then implies uniform
integrability, and the transition count allows us to express the
normalized tangential integral up to an error that tends to zero.

The horizontal rows of the Allen--Cahn stress yield
\begin{equation*}
\begin{split}
 \Div_x\int\zeta^2\left[\left(\frac{\varepsilon}{2}|Du_\varepsilon|^2+
                   \frac{W(u_\varepsilon)}{\varepsilon}\right) I_n
        -\varepsilon D_xu_\varepsilon\otimes D_xu_\varepsilon\right]\,dz
 =-\int(\zeta^2)'\varepsilon u_{\varepsilon,z}D_xu_\varepsilon\,dz.
\end{split}
\end{equation*}
The right side is exponentially small. We subtract the full horizontal stress from the tangential tensor in
\eqref{tdiv}, integrated against $\varepsilon\zeta^2$.
The difference is a scalar multiple of $I_n$, namely the scalar in
\eqref{sbv} before adding its constant and dividing by
$E_\varepsilon$. Thus its distributional derivative is controlled
by the difference of the two divergences. After division by $E_\varepsilon$, the total variation
of the normalized scalar is bounded by
\begin{equation*}
 \frac C{E_\varepsilon}
 \left(\int\varepsilon|D_xu_\varepsilon|^2\,dX\right)^{1/2}
 \left(\int\varepsilon\left(|D(D_xu_\varepsilon)|^2-|D|D_xu_\varepsilon||^2\right)\,dX\right)^{1/2}+o(1)\le C.
\end{equation*}
We work first on $B_5^n$ and then restrict to $B_4^n$. On
$\{N_\varepsilon=Q\}$, whose measure tends to that of the base ball, \eqref{ciden} and \eqref{ninte}
bound the $L^1$ norm of this scalar expression by $C$.
The Poincar\'e inequality in $BV$ bounds the oscillation from the mean.
On the same set, the triangle inequality bounds the absolute mean
by the $L^1$ norm there and the oscillation, divided by
$|\{N_\varepsilon=Q\}|$. This measure is bounded below, which proves
\eqref{sbv}.

By \eqref{ciden}, the nonnegative normalized tangential
integral is bounded above by twice the positive part of the scalar
function in \eqref{sbv}, plus
\begin{equation*}
 \frac2{E_\varepsilon}
   \left(\int\zeta^2\sqrt{2W(u_\varepsilon)}|u_{\varepsilon,z}|\,dz-Q\sigma_0\right)_+
       +\frac1{\varepsilon E_\varepsilon}\int\zeta^2\left(\sqrt{2W(u_\varepsilon)}-\varepsilon|u_{\varepsilon,z}|\right)^2\,dz.
\end{equation*}
The last two functions have $L^1$ norm tending to zero. From the Sobolev
inequality for $BV$ functions, we therefore obtain, uniformly over measurable base sets $A$,
\begin{equation}\label{uinte}
 \frac1{E_\varepsilon}\int_A\int\zeta^2
                  \varepsilon|D_xu_\varepsilon|^2\,dz\,dx
 \le C|A|^{1/n}+o(1).
\end{equation}
The estimate applies to full columns, including those meeting the
covering.

The same scalar identity, using nonnegativity of the tangential
integral, leads to
\begin{equation*}
\begin{split}
 \left\|\frac{\sigma_0}{E_\varepsilon}(Q-N_\varepsilon)\right\|_1
 &\le C|\{N_\varepsilon<Q\}|^{1/n}+o(1),\\
 |\{N_\varepsilon<Q\}|
 &\le\frac{E_\varepsilon}{\sigma_0}
        \left\|\frac{\sigma_0}{E_\varepsilon}(Q-N_\varepsilon)\right\|_1.
\end{split}
\end{equation*}
We substitute the second bound into the first and apply Young's
inequality. This yields
\begin{equation*}
 \left\|\frac{\sigma_0}{E_\varepsilon}(Q-N_\varepsilon)\right\|_1
 \le C E_\varepsilon^{1/(n-1)}+o(1)=o(1).
\end{equation*}
Together with \eqref{ninte}, we obtain
\eqref{rsatu}. Finally, \eqref{ciden}
expresses the normalized tangential integral as twice the scalar
BV function in \eqref{sbv}, up to an error tending to zero
in $L^1$. BV compactness therefore implies the asserted strong
precompactness and completes the proof.
\end{proof}

\subsection{Harmonic approximation and strong convergence of tilt-excess}
\label{harm}
\label{harmapprox}
The aim of this subsection is to determine the compactness limit from
Proposition~\ref{bvcmp} and prove the harmonic approximation in
Proposition~\ref{hcomp}. We show that the normalized layer heights
have harmonic limits and that their Dirichlet energy densities recover
the full normalized tilt-excess. Strong convergence of the slopes then
yields an excess decay estimate.

We first select and order the roots of intermediate level sets.
These heights need not be continuous, since roots may enter or leave
the selected list. We estimate the resulting boundary terms and use
the transformed normal flux to show that the normalized,
level-averaged heights have harmonic limits. We then control the
variation of the slopes in the level variable and recover the full
tilt-excess, including the regions near the wells.

Finally, from interior estimates for the harmonic limits, we obtain excess
decay after subtracting the linear part of each height. A common
change of normal requires these linear parts to agree. We state
this condition explicitly. The next subsection will determine when it
holds and use it in the iteration of excess decay.
 \begin{proposition}\label{hcomp}
Assume \eqref{wells}, \eqref{acn}, \eqref{stab},
\eqref{bound}, \eqref{plimi}, and \eqref{thickn}.
After passing to a subsequence, there are harmonic functions
$h_1,\ldots,h_Q$ on $B_2^n$ such that
\begin{equation}\label{stren}
 \frac1{E_\varepsilon}\int_{-4}^{4}
       \varepsilon|D_xu_\varepsilon(x,z)|^2\,dz
 \longrightarrow
 \sigma_0\sum_{\alpha=1}^Q|Dh_\alpha(x)|^2
 \quad\text{strongly in }L^1_{\mathrm{loc}}(B_2^n).
\end{equation}
These functions satisfy the energy bound
\begin{equation*}
 \sigma_0\sum_{\alpha=1}^Q
       \int_{B_{3/2}}|Dh_\alpha|^2\,dx\le1.
\end{equation*}
Along the same subsequence, for every fixed sufficiently small
$\theta>0$, we can choose unit vectors
$e_{\alpha,\varepsilon}$, $1\le\alpha\le Q$,
satisfying the following estimates with $C$ independent of
$\theta$:
\begin{gather}
 \max_{1\le\alpha\le Q}
       |e_{\alpha,\varepsilon}-e_{n+1}|
 \le C\sqrt{E_\varepsilon},
 \label{nchan}\\
 \limsup_{\varepsilon\to0}
 \frac1{\theta^n E_\varepsilon}
 \int_{B_\theta^n\times(-4,4)}
 \min_{1\le\alpha\le Q}
 \varepsilon\left(
       |Du_\varepsilon|^2
       -|Du_\varepsilon\cdot e_{\alpha,\varepsilon}|^2
       \right)\,dX
 \le C\theta^2.
 \label{mdeca}
\end{gather}
If
\begin{equation}\label{cgrad}
 Dh_1(0)=\cdots=Dh_Q(0),
\end{equation}
we can choose the same unit vector for every $\alpha$
in \eqref{nchan} and \eqref{mdeca}.
\end{proposition}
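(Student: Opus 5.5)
We construct the layer heights from ordered roots of intermediate levels, pass to harmonic limits of the normalized heights, and then use the strong compactness of Proposition~\ref{bvcmp} to identify the limit of the tilt-excess densities.

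\textbf{Step 1: layer heights.} Fix a compact interval $I\Subset(-1,1)$. Let $G_\varepsilon\subset B_4^n$ be the complement of the projected covering from Lemma~\ref{ccove} and of $\{N_\varepsilon<Q\}$; by Lemma~\ref{ccove} and \eqref{rsatu}, $|B_4^n\setminus G_\varepsilon|=o(E_\varepsilon)$. For $x\in G_\varepsilon$ and $s\in I$, order the $Q$ roots $z_1(x,s)<\cdots<z_Q(x,s)$ of $u_\varepsilon(x,\cdot)=s$; each has $\varepsilon|u_{\varepsilon,z}|\ge c_I\sqrt{2W}$ by \eqref{gcros}. Set
\[
 \bar z_\alpha(x)=\fint_I z_\alpha(x,s)\,ds,\qquad
 w_{\alpha,\varepsilon}=E_\varepsilon^{-1/2}(\bar z_\alpha-m_\alpha),
\]
where $m_\alpha$ is a mean. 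The implicit function theorem gives $D_xz_\alpha=-D_xu_\varepsilon/u_{\varepsilon,z}$. By the coarea formula in $(x,s)$ and $\varepsilon|u_{\varepsilon,z}|\approx\sqrt{2W(s)}$, one gets
\[
 \sum_\alpha\int_{G_\varepsilon}\int_I|D_xz_\alpha|^2\sqrt{2W(s)}\,ds\,dx\le C\int\varepsilon|D_xu_\varepsilon|^2\,dX\le CE_\varepsilon .
\]
The functions $\bar z_\alpha$ jump only across the boundary of $G_\varepsilon$, where roots enter or leave. Using $|\pi(\mathcal U_\varepsilon)|=o(E_\varepsilon)$ and the slab bound on heights, I will control this jump part as a BV defect of size $o(E_\varepsilon^{1/2})$ in $L^1$, after a truncation at height $\ll1$. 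Rellich plus BV compactness then gives $w_{\alpha,\varepsilon}\to h_\alpha$ in $L^2_{\rm loc}$, with $h_\alpha\in W^{1,2}$. Lower semicontinuity gives $\sigma_0\sum_\alpha\int_{B_{3/2}}|Dh_\alpha|^2\le1$, using $\int_I\sqrt{2W}\,ds$ normalized to recover $\sigma_0$ as $I\uparrow(-1,1)$.

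\textbf{Step 2: harmonicity.} Test the vertical row of the stress tensor (\cite[(2.6)]{Wang17}) against $\varphi(x)\zeta^2(z)\psi(u_\varepsilon)$, localized to the $\alpha$-th layer through the level window. This yields
\[
 \int\varepsilon u_{\varepsilon,z}D_xu_\varepsilon\cdot D\varphi\,dX=o(E_\varepsilon^{1/2}).
\]
Rewritten in root coordinates, the transformed normal flux is $-\int\!\!\int\sqrt{2W(s)}\,D_xz_\alpha\cdot D\varphi\,ds\,dx$, up to errors controlled by \eqref{ndsmal}, \eqref{adest}, the covering, and \eqref{thickn}. After dividing by $E_\varepsilon^{1/2}$ this gives $\int Dh_\alpha\cdot D\varphi=0$. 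To separate the layers, I localize in $z$ between consecutive roots on $G_\varepsilon$; the complement is negligible.

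\textbf{Step 3: strong convergence \eqref{stren}.} This is the main obstacle: we must show that no tilt-excess is lost near the wells, on covered columns, or in the variation of $D_xz_\alpha$ in $s$. Proposition~\ref{bvcmp} gives $L^1_{\rm loc}$ precompactness with a limit density $F$. First I will show $D_xz_\alpha(x,s)$ is almost independent of $s$, using Lemma~\ref{rlemm}: the weighted derivative of $\varepsilon u_{\varepsilon,z}/S$ along the level direction controls $\partial_s D_xz_\alpha$ in $L^2$ by $o(E_\varepsilon)$. Then \eqref{rtail} (first $\varepsilon\to0$, then $\delta\to0$), Lemma~\ref{ccove} and \eqref{uinte} reduce $F$ to $\sigma_0\sum_\alpha\lim|D\bar z_\alpha|^2/E_\varepsilon$. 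This does not yet give $F=\sigma_0\sum|Dh_\alpha|^2$, since weak convergence of gradients only yields $\ge$. To upgrade, I test the harmonic equation of Step 2 with $\varphi^2 w_{\alpha,\varepsilon}$, which is a Caccioppoli-type energy identity, and compare it with the limit identity for $h_\alpha$. Strong $L^2$ convergence of $w_{\alpha,\varepsilon}$ then forces $\int\varphi^2F=\sigma_0\sum\int\varphi^2|Dh_\alpha|^2$. Combined with the lower bound, this proves \eqref{stren}.

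\textbf{Step 4: decay.} Let $\ell_\alpha$ be the linear part of $h_\alpha$ at $0$, and set $e_{\alpha,\varepsilon}=(-E_\varepsilon^{1/2}Dh_\alpha(0),1)/|\cdot|$. Then \eqref{nchan} follows from the energy bound and interior estimates $|Dh_\alpha(0)|\le C$. For \eqref{mdeca}, on the $\alpha$-th layer the excess relative to $e_{\alpha,\varepsilon}$ is $\varepsilon u_{\varepsilon,z}^2|D_xz_\alpha-E_\varepsilon^{1/2}Dh_\alpha(0)|^2$ up to the negligible errors. Dividing by $E_\varepsilon$ and arguing as in Step 3 with $h_\alpha-\ell_\alpha$, the limit is $\sigma_0\fint_{B_\theta}\sum|Dh_\alpha-Dh_\alpha(0)|^2\le C\theta^2$, by harmonic interior estimates and the energy bound. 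Taking the minimum over $\alpha$ only decreases the integrand. Under \eqref{cgrad} all the $e_{\alpha,\varepsilon}$ coincide.
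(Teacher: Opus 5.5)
Your architecture matches the paper's: ordered roots of intermediate levels, level-averaged heights, the transformed vertical stress row for harmonicity, and rotated normals for the decay. However, the mechanism that controls where roots enter or leave the list is missing, and your substitute does not work.

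\textbf{Entry/exit boundary.} In Step~1 you remove the projected covering and $\{N_\varepsilon<Q\}$, and claim the jump part of $D\bar z_\alpha$ is $o(E_\varepsilon^{1/2})$ because $|B_4^n\setminus G_\varepsilon|=o(E_\varepsilon)$. But the variation of a jump part is (jump size)$\times\mathcal H^{n-1}(\partial G_\varepsilon)$, not a volume. The covering balls have radii at least of order $\varepsilon$, so the available estimates give only $\mathcal H^{n-1}(\partial G_\varepsilon)\lesssim\varepsilon^{-1}\sum_\ell r_\ell^n=o(E_\varepsilon/\varepsilon)$. The jumps are bounded only by the unquantified slab width. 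Since \eqref{thickn} gives only $\varepsilon=o(E_\varepsilon^{1/2})$, this product is not $o(E_\varepsilon^{1/2})$.

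Step~2 has the same defect, without even a small jump factor. Localizing the flux identity to the $\alpha$th layer over $G_\varepsilon$ creates boundary terms bounded only by $C\,\mathcal H^{n-1}(\partial G_\varepsilon)$, since the layer's horizontal flux is only known to be $O(1)$ per unit base area. ``The complement is negligible'' is precisely what fails.

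The paper's device is a soft selection whose entry/exit set has controlled area. It keeps roots with $\varepsilon|u_{\varepsilon,z}|/\sqrt{2W(u_\varepsilon)}>t$, as in \eqref{rroot}, and averages over $t\in(t_0,2t_0)$. Coarea, Lemma~\ref{rlemm}, and the superlinear energy bound of Proposition~\ref{cover} on the region where this ratio lies in $(t_0,2t_0)$ bound the averaged $\mathcal H^n$ measure of these level sets by $C\varepsilon E_\varepsilon^{1/2+1/[4(n-1)]}$. After the cofactor change to $(x,s)$, both height and flux jumps are $O(E_\varepsilon^{1/2+1/[4(n-1)]})=o(E_\varepsilon^{1/2})$ (Lemma~\ref{tboun}).

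That is where Lemma~\ref{rlemm} is needed. It does not control $\partial_sD_xz_\alpha$, because it bounds the variation of a quantity close to the vertical component of the normal, which is insensitive to changes of small slopes. For the $s$-variation of slopes, use $|\partial_sD_xz_\alpha|\le C\varepsilon|D(Du_\varepsilon/|Du_\varepsilon|)|$ on the branches and the first bound in \eqref{gloca}, giving $O(\varepsilon^2)=o(E_\varepsilon)$.

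\textbf{Strong convergence.} The Caccioppoli upgrade in Step~3 is not justified for errors of this kind. Even with the correct selection, the derivative and divergence identities hold only up to finite measures of small total variation and $L^2$-small terms. The test function $\varphi^2w_{\alpha,\varepsilon}$ is not uniformly bounded: its supremum can be as large as the slab width divided by $E_\varepsilon^{1/2}$. Its gradient also has a singular part. Hence neither $\int v_\varepsilon\cdot D(\varphi^2w_{\alpha,\varepsilon})$, with $v_\varepsilon$ the normalized flux average, nor the pairing of the error measures with it is controlled.

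The paper avoids the energy test through Lemma~\ref{ecomp}. It subtracts Newtonian potentials of the localized sources and uses weak-type Calder\'on--Zygmund bounds and the Ambrosio--Ponce--Rodiac estimate for measures to get $v_\varepsilon\to Dh$ in measure. It then applies Vitali with the uniform integrability \eqref{uinte}.

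Step~4 then works as you describe, once you also show, as the paper does, that the normal energy outside $\{u_\varepsilon\in I\}$ has small mass. This is needed for the cross terms near the wells.
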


The functions $h_\alpha$ arise from the ordered roots of intermediate
levels after subtracting separate additive constants.
For a single normal, the estimate also contains the variance of
$Dh_1(0),\ldots,Dh_Q(0)$, as shown in \eqref{spvari}.
Condition \eqref{cgrad} removes this term and holds automatically
when $Q=1$.

We follow Wang's method of normalizing the height functions and
proving strong convergence of the excess
\cite[Sections~6--8]{Wang17}.
Related estimates for several layers appear in Florit-Simon--Serra
\cite[Proposition~7.6, Lemma~7.9 and Section~7.5]{FSS25}.

We begin with the construction of the selected heights. We work on
$B_4^n\times(-4,4)$ and fix a compact interval
$I\Subset(-1,1)$ containing the central interval in
Lemma~\ref{count}. We enlarge it slightly
when a cutoff in the level variable is used. We choose a small
$t_0>0$ and a smooth nonnegative weight $\rho\in C_c^\infty((t_0,2t_0))$, with
$\int\rho(t)\,dt=1$. For almost every $t\in(t_0,2t_0)$, select those
roots of $u_\varepsilon(x,z)=s$ for which
\begin{equation}\label{rroot}
 \frac{\varepsilon|u_{\varepsilon,z}|}{\sqrt{2W(u_\varepsilon)}}>t.
\end{equation}
Order the roots by height and denote the first $Q$ by
$f_{\alpha,\varepsilon}^t(x,s)$, $1\le\alpha\le Q$.
If there are between one and $Q-1$ roots, we repeat the last one.
If the root set is empty, we assign zero to every index. We use the same
conventions for ordinary derivatives and fluxes.
Counting selected roots below each height shows that the roots and
their ranks are measurable.

On a compact interval of level values, we obtain from \eqref{rroot}
$|u_{\varepsilon,z}|\ge c/\varepsilon$ at each selected root.
By \eqref{ellipt}, this derivative keeps its sign on a vertical
segment of length $c\varepsilon$. Thus distinct selected roots of
the same level are separated by $c\varepsilon$, and there are
finitely many roots per line. The selected roots over a compact base
and a compact set of level values have compact closure in the normal variable and satisfy
$|u_{\varepsilon,z}|\ge c/\varepsilon$. Away from the images of the
boundary where equality holds in \eqref{rroot}, properness and the
inverse function theorem show that the number of selected roots is
locally constant and each root is smooth. On this set we have
\begin{equation}\label{rslop}
 D_xf_{\alpha,\varepsilon}^t=-\frac{D_xu_\varepsilon}{u_{\varepsilon,z}},
 \qquad |D_xf_{\alpha,\varepsilon}^t|\le C_{I,t_0},
\end{equation}
where values of $u_\varepsilon$ are evaluated on the indicated root.
All these heights are bounded by
$\sup\{|z|\mid (x,z)\in\C_6,\ u_\varepsilon(x,z)\in I\}$, which tends to
zero. The supremum is set equal to zero when the set is empty.
The selected heights can jump when roots enter or leave the list.
Before averaging the heights, we estimate these jumps and the
boundary terms in the transformed normal flux. The following lemma
shows that both errors vanish after normalization.

\begin{lemma}\label{tboun}
With all integrals restricted to fixed interior cylinders and an
enlarged compact subinterval of $(-1,1)$,
\begin{equation}\label{tarea}
 \int\rho(t)\mathcal H^n\left(
 \left\{\frac{\varepsilon|u_{\varepsilon,z}|}{\sqrt{2W(u_\varepsilon)}}=t\right\}
                         \cap\{u_\varepsilon\in I\}\right)\,dt
 \le C\varepsilon E_\varepsilon^{\,1/2+1/[4(n-1)]}.
\end{equation}
After integration in the level variable, the horizontal distributional derivative
of each selected height differs from its ordinary derivative by a
measure whose variation, averaged against $\rho(t)\,dt$, is at most
\begin{equation}\label{hberro}
 C\left(\sup_{\C_6\cap\{u_\varepsilon\in I\}}|z|\right)
                    E_\varepsilon^{\,1/2+1/[4(n-1)]}.
\end{equation}
The divergence of its transformed normal flux, in the variables
$(x,s)$, has variation, averaged against $\rho(t)\,dt$, at most
\begin{equation}\label{fberro}
 C E_\varepsilon^{\,1/2+1/[4(n-1)]}.
\end{equation}
\end{lemma}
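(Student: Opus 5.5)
The plan is to reduce all three bounds to the first one, \eqref{tarea}, and to prove \eqref{tarea} with the coarea formula in the variable $t$, using Proposition~\ref{cover} to localize the threshold surfaces. Set
\[
 \psi=\frac{\varepsilon|u_{\varepsilon,z}|}{\sqrt{2W(u_\varepsilon)}} .
\]
By \eqref{relip}, $\psi$ is bounded on the region where $u_\varepsilon\in I$, and $\varepsilon|D\psi|\le C$ there. The coarea formula gives
\[
 \int\rho(t)\,\mathcal H^n(\{\psi=t\}\cap\{u_\varepsilon\in I\})\,dt
 =\int_{\{u_\varepsilon\in I\}}\rho(\psi)|D\psi|\,dX .
\]
The integrand is supported where $\psi<2t_0$. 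Since $\sqrt{2W(u_\varepsilon)}$ is bounded below on $I$, at such points $\varepsilon|u_{\varepsilon,z}|\le C t_0\sqrt{2W(u_\varepsilon)}$. Then either $\varepsilon|Du_\varepsilon|\le\frac12\sqrt{2W(u_\varepsilon)}$, or $|D_xu_\varepsilon|\ge\tau|Du_\varepsilon|$ with $\tau$ close to $1$ once $t_0$ is small. In both cases the point lies in the set \eqref{bset}, with $b$ chosen so that $I\subset[-b,b]$. Hence the support lies in the covering $\mathcal U_\varepsilon$ of Proposition~\ref{cover}, applied after the fixed localization by $\lambda=3/4$.

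The main step is to bound $\int_{\mathcal U_\varepsilon}|D\psi|\,dX$ sharply enough. The crude pointwise bound $|D\psi|\le C/\varepsilon$ gives only $C|\mathcal U_\varepsilon|/\varepsilon\le C E_\varepsilon^{1+1/[2(n-1)]}$. This is off by a factor of $\varepsilon$ in one direction and has a better power of $E_\varepsilon$ in the other, so it does not directly give \eqref{tarea}.

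To obtain the stated form, I would use Cauchy--Schwarz instead:
\[
 \int_{\mathcal U_\varepsilon}|D\psi|
 \le |\mathcal U_\varepsilon|^{1/2}\Bigl(\int_{\mathcal U_\varepsilon}|D\psi|^2\Bigr)^{1/2}.
\]
The first factor satisfies $|\mathcal U_\varepsilon|^{1/2}\le C\varepsilon^{1/2}E_\varepsilon^{1/2+1/[4(n-1)]}$ by \eqref{nener}. For the second factor, I would write $D\psi$ through $D(\varepsilon u_{\varepsilon,z}/S(u_\varepsilon))$ and invoke Lemma~\ref{rlemm}. On $\{u_\varepsilon\in I\}$ the weight $S^2$ is bounded below, so \eqref{resti} gives $\int|D\psi|^2\le C\varepsilon$. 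The product is $C\varepsilon E_\varepsilon^{1/2+1/[4(n-1)]}$, which is exactly \eqref{tarea}. The choice of the enlarged interval ensures that \eqref{relip}, \eqref{resti} and the covering all apply on the cylinders used.

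For \eqref{hberro}, I would fix a level $s$. Away from the boundary set $\{\psi=t\}$, \eqref{rslop} shows that each selected height is smooth, so its distributional derivative equals its ordinary derivative. Where roots enter or leave the list, the jump part of $D_xf^t_{\alpha,\varepsilon}(\cdot,s)$ is supported on the horizontal projection of $\{\psi=t,\ u_\varepsilon=s\}$. Its size is at most twice $\sup|z|$, because all heights, including the conventional values $0$ and the repeated last root, lie within that bound. The projection of the $(n-1)$-dimensional slice has measure at most its $\mathcal H^{n-1}$ measure. Integrating in $s$ over the enlarged interval and using the coarea formula on the surface $\{\psi=t\}$ with the function $u_\varepsilon$, whose tangential gradient is at most $C/\varepsilon$, converts $\int\mathcal H^{n-1}\,ds$ into at most $C\varepsilon^{-1}\mathcal H^n(\{\psi=t\}\cap\{u_\varepsilon\in I\})$. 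Averaging against $\rho$ and applying \eqref{tarea} gives \eqref{hberro}.

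For \eqref{fberro}, the transformed normal flux in the variables $(x,s)$ comes from rewriting the equation on the graph of the selected roots; by \eqref{rslop} it is a bounded vector field. I would differentiate it distributionally in $(x,s)$. Where the list is locally constant, it is divergence free up to the equation, with no singular part. Its singular divergence is again concentrated on the image of $\{\psi=t\}$ in the $(x,s)$ variables, with density bounded by the jump of the bounded flux, which is at most $C$. The image measure is bounded by $C\varepsilon^{-1}\mathcal H^n$ of that set through the Jacobian of $X\mapsto(x,u_\varepsilon)$, which is at most $|u_{\varepsilon,z}|\le C/\varepsilon$. Using \eqref{tarea} then gives \eqref{fberro}.

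I expect the main obstacle to be the coarea step in \eqref{tarea}: checking that $D\psi$ is genuinely controlled in $L^2$ by \eqref{resti} with only $C\varepsilon$, including the absolute value and points where $u_{\varepsilon,z}=0$, rather than merely by $C/\varepsilon$ pointwise.
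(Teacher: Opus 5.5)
Your proof of \eqref{tarea} is the paper's argument: coarea in $t$, the support of $\rho(\psi)$ lying in the exceptional set of Proposition~\ref{cover}, and Cauchy--Schwarz against Lemma~\ref{rlemm}. You use $|\mathcal U_\varepsilon|$ where the paper uses $\int 2W(u_\varepsilon)/\varepsilon$; these are equivalent on $\{u_\varepsilon\in I\}$. The step you flagged is harmless, since $|D|f||\le|Df|$ a.e. Your proof of \eqref{hberro} slices at fixed $s$ and applies coarea for $u_\varepsilon$ on $\{\psi=t\}$. This is a valid alternative to the paper's area-formula computation.

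The gap is in \eqref{fberro}: the transformed flux is not bounded. It is
\[
 \mathcal F=\Bigl(\varepsilon|u_{\varepsilon,z}|D_xf^t_{\alpha,\varepsilon},\ \operatorname{sgn}(u_{\varepsilon,z})\Bigl[\frac{W(u_\varepsilon)}{\varepsilon}-\frac\varepsilon2|Du_\varepsilon|^2\Bigr]\Bigr),
\]
and \eqref{rslop} controls only the horizontal entry. The $s$-entry is the pointwise discrepancy, of size $\varepsilon^{-1}$. On $\{\psi=t\}$, a subset of the exceptional set \eqref{bset}, nothing makes it small. So when a label switches roots there, its jump is genuinely of order $\varepsilon^{-1}$.

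Your isotropic estimate (jump times image area) therefore gives only $C\varepsilon^{-2}\mathcal H^n(\{\psi=t\}\cap\{u_\varepsilon\in I\})$. After averaging this is $C\varepsilon^{-1}E_\varepsilon^{1/2+1/[4(n-1)]}$, which is unbounded after division by $\sqrt{E_\varepsilon}$ because $\varepsilon=o(\sqrt{E_\varepsilon})$. It is useless in \eqref{afdive}. Also, the $n$-dimensional Jacobian of $X\mapsto(x,u_\varepsilon)$ on a hypersurface with unit normal $N$ is $|(u_{\varepsilon,z}N_x-D_xu_\varepsilon N_z,\,N_z)|$, not $|u_{\varepsilon,z}|$.

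The missing idea is the anisotropy of this cofactor vector \eqref{bcofa}. For the image normal $n=(n_x,n_s)$, the measure $|n_s|\,d\mathcal H^n$ pulls back to $|N_z|\,d\mathcal H^n\le d\mathcal H^n$. By contrast, $|n_x|\,d\mathcal H^n$ pulls back only to at most $C\varepsilon^{-1}\,d\mathcal H^n$. This gives \eqref{aboun}:
\[
\int(|n_x|+\varepsilon^{-1}|n_s|)\,d\mathcal H^n\le C\varepsilon^{-1}\mathcal H^n(\{\psi=t\}\cap\{u_\varepsilon\in I\}).
\]
So the $\varepsilon^{-1}$-sized entry pairs only with $n_s$, and the jump density $|[\mathcal F]\cdot n|\le C|n_x|+C\varepsilon^{-1}|n_s|$ is controlled by \eqref{tarea}.

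In your slicing language, the fix is to treat $\partial_s$ of the $s$-entry separately. For each $x$, count the crossings of the vertical line with $\{\psi=t\}$:
\[
\int\#\bigl(\{\psi=t\}\cap(\{x\}\times\mathbb R)\bigr)\,dx=\int_{\{\psi=t\}}|N_z|\,d\mathcal H^n,
\]
with each crossing carrying a jump $O(\varepsilon^{-1})$.
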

\begin{proof}
Take $2t_0<1/4$. If the ratio in \eqref{rroot} is between
$t_0$ and $2t_0$, either $\varepsilon|Du_\varepsilon|\le \sqrt{2W(u_\varepsilon)}/2$,
or $|u_{\varepsilon,z}|/|Du_\varepsilon|<1/2$.
The region where this ratio lies in $(t_0,2t_0)$ is therefore contained
in the exceptional set of Proposition~\ref{cover}, for a fixed angle. Its full energy is
$O(E_\varepsilon^{1+1/[2(n-1)]})$.
Coarea and Lemma~\ref{rlemm} yield
\begin{align*}
 &\int\rho(t)\int_{\{\varepsilon|u_{\varepsilon,z}|/\sqrt{2W(u_\varepsilon)}=t\}}
                   \frac{2W(u_\varepsilon)}{\varepsilon}\,d\mathcal H^n\,dt\\
 \le& C
 \left(\int\frac{2W(u_\varepsilon)}{\varepsilon}
       \left|D\left(\frac{\varepsilon u_{\varepsilon,z}}{\sqrt{2W(u_\varepsilon)}}\right)\right|^2\,dX\right)^{1/2}
 \left(\int_{\{t_0<\varepsilon|u_{\varepsilon,z}|/\sqrt{2W(u_\varepsilon)}<2t_0\}}
                 \frac{2W(u_\varepsilon)}{\varepsilon}\,dX\right)^{1/2}
 \\
 \le& C E_\varepsilon^{1/2+1/[4(n-1)]}.
\end{align*}
The positive lower bound for $\sqrt{2W}$ on $I$ proves
\eqref{tarea}.

The map $(x,z)\mapsto(x,u_\varepsilon(x,z))$ is a local
diffeomorphism at selected roots and on the boundary where equality
holds in \eqref{rroot}. If $(N_x,N_z)$ is a unit normal to this boundary
in the original variables,
its cofactor image is
\begin{equation}\label{bcofa}
 (u_{\varepsilon,z}N_x-D_xu_\varepsilon N_z,\ N_z).
\end{equation}
Thus on the image boundary, with unit normal $(n_x,n_s)$, we obtain from the area
formula with multiplicities
\begin{equation}\label{aboun}
 \int\left(|n_x|+\varepsilon^{-1}|n_s|\right)\,d\mathcal H^n
 \le\frac C\varepsilon\mathcal H^n
 \left(\left\{\frac{\varepsilon|u_{\varepsilon,z}|}
                         {\sqrt{2W(u_\varepsilon)}}=t\right\}
                         \cap\{u_\varepsilon\in I\}\right).
\end{equation}
Here we have used \eqref{ellipt} in the cofactor formula.

On a smooth branch the normal row of the stress is transformed into
\begin{equation}\label{bflux}
 \left(\varepsilon|u_{\varepsilon,z}|D_xf_{\alpha,\varepsilon}^t,
 \quad\operatorname{sgn}(u_{\varepsilon,z})
       \left[\frac{W(u_\varepsilon)}{\varepsilon}
                     -\frac\varepsilon2|Du_\varepsilon|^2\right]\right).
\end{equation}
Its divergence in $(x,s)$ is zero. To see this, recall that the
original divergence-free row is
\begin{equation*}
 \left(-\varepsilon u_{\varepsilon,z}D_xu_\varepsilon,
 \quad\frac{W(u_\varepsilon)}{\varepsilon}
       -\frac\varepsilon2|Du_\varepsilon|^2
       +\varepsilon|D_xu_\varepsilon|^2\right).
\end{equation*}
We multiply by the differential of
$(x,z)\mapsto(x,u_\varepsilon(x,z))$ and divide by
$|u_{\varepsilon,z}|$ to obtain
\eqref{bflux}. Its horizontal entries are bounded by $C$,
and its last entry is bounded by $C/\varepsilon$.

On a compact interior base and a compact interval of level values,
the selected roots admit finitely many inverse-function charts for
each fixed $\varepsilon$. The roots keep their order. For almost
every $t$, the inequalities \eqref{rroot} define finite-perimeter
pieces, by coarea. We apply the BV product rule on these pieces
\cite[Chapter~3]{AFP}. Contributions from subdivisions of the charts
cancel, and ordered labels change only on the image of the boundary
where equality holds in \eqref{rroot}.

At each insertion or deletion, at most $Q$ selected labels change.
The sum of the absolute values of their height jumps is bounded by
$2Q\sup_{\C_6\cap\{u_\varepsilon\in I\}}|z|$ per boundary preimage.
The normal flux jumps are bounded by
$CQ(|n_x|+\varepsilon^{-1}|n_s|)$ per preimage.
The area formula, with these multiplicities, and
\eqref{tarea}, \eqref{bcofa}, and \eqref{aboun} yield
\eqref{hberro}--\eqref{fberro}. Exhaustion removes the restriction
to compact chart domains. The selected roots stay away from the normal
endpoints, and the tests are compactly supported in the level
variable, so all boundary terms have been accounted for.
\end{proof}

We now average the selected heights in the level variable. Choose a
nonzero function $0\le\chi\in C_c^\infty(I)$ and define
the level-averaged height
\begin{equation}\label{aheig}
 \bar f_{\alpha,\varepsilon}(x)=
 \frac{\displaystyle\int\rho(t)\int\chi(s)\sqrt{2W(s)}
                         f_{\alpha,\varepsilon}^t(x,s)\,ds\,dt}
      {\displaystyle\int\chi(s)\sqrt{2W(s)}\,ds}.
\end{equation}
In the following formulas $D_x f_{\alpha,\varepsilon}^t$ means the
ordinary derivative away from the images of these boundaries, with the repetition
and zero conventions already stated. From Lemma~\ref{tboun},
we obtain
\begin{equation}\label{ahderi}
 D\bar f_{\alpha,\varepsilon}=
 \frac{\displaystyle\int\rho(t)\int\chi(s)\sqrt{2W(s)}
                         D_x f_{\alpha,\varepsilon}^t\,ds\,dt}
      {\displaystyle\int\chi(s)\sqrt{2W(s)}\,ds}\,dx
                         +\xi_{\alpha,\varepsilon},
 \qquad
 \frac{\|\xi_{\alpha,\varepsilon}\|_{\mathcal M}}
                  {\sqrt{E_\varepsilon}}\longrightarrow0.
\end{equation}
The measure $\xi_{\alpha,\varepsilon}$ includes its absolutely
continuous part.

Coarea $ds=|u_{\varepsilon,z}|\,dz$ and the Cauchy--Schwarz inequality
show that the squares of the slope averages in \eqref{ahderi}
and of the flux averages
\begin{equation*}
 \int\rho(t)\int\chi(s)\varepsilon|u_{\varepsilon,z}|
                      D_x f_{\alpha,\varepsilon}^t\,ds\,dt
\end{equation*}
are bounded by
$C\int\zeta^2\varepsilon|D_xu_\varepsilon|^2\,dz$.
A repeated root is used at most $Q$ times. Furthermore, we have
\begin{equation}\label{aferro}
\begin{split}
 &\int\left|\int\rho(t)\int\chi(s)
       \left(\varepsilon|u_{\varepsilon,z}|-\sqrt{2W(s)}\right)
                 D_x f_{\alpha,\varepsilon}^t\,ds\,dt\right|^2\,dx\\
 \le& C\int\frac{\left(\sqrt{2W(u_\varepsilon)}-\varepsilon|u_{\varepsilon,z}|\right)^2}{\varepsilon}\,dX\\
 =&o(E_\varepsilon).
\end{split}
\end{equation}
The bounded slopes in \eqref{rslop} and
$|u_{\varepsilon,z}|\le C/\varepsilon$ justify this direct estimate.

We integrate \eqref{bflux} against $\chi(s)$ and obtain
\begin{equation}\label{afdive}
\begin{split}
 &\left\|\Div_x\int\rho(t)\int\chi(s)\varepsilon|u_{\varepsilon,z}|D_x f_{\alpha,\varepsilon}^t\,ds\,dt\right\|_{\mathcal M}\\
 \le& C E_\varepsilon^{1/2+1/[4(n-1)]}
    +C\int_{\{u_\varepsilon\in I\}}
     \left|\frac{W(u_\varepsilon)}{\varepsilon}
            -\frac\varepsilon2|Du_\varepsilon|^2\right|
    |u_{\varepsilon,z}|\,dX\\
 \le& C E_\varepsilon^{1/2+1/[4(n-1)]}+C\varepsilon\\
=&o(\sqrt{E_\varepsilon}).
\end{split}
\end{equation}
The term containing $\chi'$ is bounded by $C\varepsilon$, by
\eqref{relip} and \eqref{adest}. After normalization,
$\varepsilon/\sqrt{E_\varepsilon}\to0$ by \eqref{thickn}.

The estimates above imply a small error in the derivative identity
and a small divergence for the averaged flux. The next lemma uses
these two facts, together with uniform integrability, to prove
strong convergence of the averaged slopes and harmonicity of the
limiting heights.

\begin{lemma}\label{ecomp}
Let $h_j\to h$ locally in $L^1$ on a domain in $\mathbb R^n$, $n\ge2$.
Suppose
\begin{equation*}
 Dh_j=v_j\,dx+\xi_j,\qquad
 \|\xi_j\|_{\mathcal M}\to0,
 \qquad \Div v_j=f_j+\Div e_j,
 \qquad \|f_j\|_{\mathcal M}+\|e_j\|_1\to0
\end{equation*}
on interior subdomains. If $v_j$ is bounded in $L^2$ and $|v_j|^2$
is uniformly integrable there, then $h$ is harmonic and
$v_j\to Dh$ strongly in $L^2_{\rm loc}$.
\end{lemma}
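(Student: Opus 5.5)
We first show that $h$ is weakly harmonic. Fix an interior subdomain and a test function $\varphi\in C_c^\infty$. Since $h_j\to h$ in $L^1_{\rm loc}$ and $\|\xi_j\|_{\mathcal M}\to0$, for every vector test field $\Phi$ we have
\begin{equation*}
 -\int h\,\Div\Phi\,dx=\lim_j\Bigl(\int v_j\cdot\Phi\,dx+\int\Phi\cdot d\xi_j\Bigr)=\lim_j\int v_j\cdot\Phi\,dx.
\end{equation*}
The sequence $v_j$ is bounded in $L^2$. Hence, after passing to a subsequence, $v_j\rightharpoonup v$ weakly in $L^2_{\rm loc}$, and the identity shows that $v=Dh$ with $h\in W^{1,2}_{\rm loc}$. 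Uniqueness of the weak limit makes the subsequence irrelevant. Next we test $\Div v_j=f_j+\Div e_j$ with $\varphi$:
\begin{equation*}
 -\int v_j\cdot D\varphi\,dx=\int\varphi\,df_j-\int e_j\cdot D\varphi\,dx.
\end{equation*}
The right side is bounded by $\|\varphi\|_{C^1}(\|f_j\|_{\mathcal M}+\|e_j\|_1)\to0$. Passing to the weak limit gives $\int Dh\cdot D\varphi=0$, so $h$ is harmonic by Weyl's lemma.

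For strong convergence, it suffices to show $\int\eta^2|v_j|^2\to\int\eta^2|Dh|^2$ for each cutoff $\eta$; combined with weak convergence, this gives strong $L^2_{\rm loc}$ convergence. Since $\int\eta^2 v_j\cdot Dh\to\int\eta^2|Dh|^2$, we need
\begin{equation*}
 \int\eta^2 v_j\cdot(v_j-Dh)\,dx\to0.
\end{equation*}
The natural idea is to write $v_j-Dh=D(h_j-h)-\xi_j$ and integrate by parts against the divergence equation. However, $h_j-h$ is not bounded, and $\xi_j$ is only a measure, so pairing it with $v_j$ is not allowed. This is the main obstacle, and we handle it by truncation and mollification.

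Fix $\delta>0$ and mollify at scale $\delta$: set $h_j^\delta=h_j*\phi_\delta$. Then
\begin{equation*}
 Dh_j^\delta=v_j*\phi_\delta+\xi_j*\phi_\delta,
\end{equation*}
and $\|\xi_j*\phi_\delta\|_{L^\infty}\le C_\delta\|\xi_j\|_{\mathcal M}\to0$. Uniform integrability of $|v_j|^2$ together with $L^2$ boundedness implies that $v_j$ is equi-continuous in $L^2$ under translations only after accounting for the weak limit, so instead we argue via truncation. Let $T_k$ be the truncation at level $k$ and set $w_j=T_k(h_j^\delta-h^\delta)$. We compute
\begin{equation*}
 \int\eta^2 v_j\cdot Dw_j\,dx=-\int w_j\,\eta^2\,df_j+\int e_j\cdot D(\eta^2w_j)\,dx-\int w_j\,v_j\cdot D\eta^2\,dx.
\end{equation*}
The first term is bounded by $k\|f_j\|_{\mathcal M}\to0$. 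The third term tends to zero because $w_j\to0$ in $L^1$ and is bounded by $k$, while $|v_j|^2$ is uniformly integrable. The second term is the delicate one, since $Dw_j$ is only bounded by $C_\delta$ times $L^\infty$-type quantities. Because $Dh_j^\delta$ is bounded by $C_\delta\|h_j\|_{L^1}$, we get $|Dw_j|\le C_\delta$, and this term is bounded by $C_\delta\|e_j\|_1\to0$ for fixed $\delta$.

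Finally, we compare $Dw_j$ with $v_j-Dh$ on the set where no truncation occurs. We have
\begin{equation*}
 Dw_j-(v_j-Dh)=(v_j*\phi_\delta-v_j)-(Dh*\phi_\delta-Dh)+\xi_j*\phi_\delta.
\end{equation*}
On the set $\{|h_j^\delta-h^\delta|>k\}$, whose measure tends to zero, uniform integrability makes the contribution small. The term $\int\eta^2 v_j\cdot(v_j*\phi_\delta-v_j)$ is where uniform integrability alone is insufficient. Here we would instead use the harmonic structure: we replace the mollifier step by testing with $\eta^2(h_j-h)$ after first showing that $h_j$ is bounded in $W^{1,1}$ with $|Dh_j|$ close to $|v_j|$ in total variation. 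Because $|v_j|^2$ is uniformly integrable and $\xi_j$ is small in mass, the truncated difference $T_k(h_j-h)$ is a legitimate BV test function whose jump part pairs with $v_j$ with error at most $\sup|v_j|$ on a small set. We control that error by first approximating $v_j$ by its truncation $T_M v_j$, with uniform-integrability error $o_M(1)$. The pairing with $\xi_j$ is then bounded by $M\|\xi_j\|_{\mathcal M}$ plus the tail. We expect this tail control, pairing $v_j$ with the singular part $\xi_j$, to be the main technical step; letting $j\to\infty$, then $M,k\to\infty$, completes the proof.
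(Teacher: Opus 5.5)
Your proof of harmonicity is correct. Since $\|\xi_j\|_{\mathcal M}\to0$, every weak $L^2_{\rm loc}$ limit of $v_j$ is $Dh$, and testing $\Div v_j=f_j+\Div e_j$ with $\varphi\in C_c^\infty$ gives $\int Dh\cdot D\varphi\,dx=0$. The strong convergence, however, is not proved.

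In the mollified version, for fixed $\delta$ one has $h_j^\delta-h^\delta=(h_j-h)*\phi_\delta\to0$ in $C^1_{\rm loc}$. The truncation is therefore eventually inactive, and $\int\eta^2v_j\cdot Dw_j\,dx\to0$ holds trivially, without using the equation at all. All the content is then in showing that $\int\eta^2v_j\cdot(v_j-v_j*\phi_\delta)\,dx$ is small uniformly in $j$ as $\delta\to0$. This amounts to $L^2$ equicontinuity of translations, which by the Fr\'echet--Kolmogorov criterion is the precompactness you are trying to prove; as you note, uniform integrability does not give it.

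The fallback with $\varphi=T_k(h_j-h)$ fails for a different reason. The identity $\int\eta^2v_j\cdot D\varphi=-\int\eta^2\varphi\,df_j+\int e_j\cdot D(\eta^2\varphi)-\int\varphi\,v_j\cdot D\eta^2$ is available for Lipschitz $\varphi$. Here, however, $D\varphi$ contains the singular part of $\xi_j$ and the merely integrable density $\xi_j^{ac}$. The terms $\int v_j\cdot dD\varphi$ and $\int e_j\cdot dD\varphi$ then pair $L^2$ and $L^1$ functions with measures or with $L^1$ functions, so they are undefined or uncontrolled. Uniform integrability of $|v_j|^2$ is a statement about Lebesgue measure. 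It says nothing on the null set carrying $\xi_j^s$, nor against a concentrating $\xi_j^{ac}$ of small $L^1$ norm. Replacing $v_j$ by $T_Mv_j$ destroys the divergence equation, so ``$M\|\xi_j\|_{\mathcal M}$ plus the tail'' is not an estimate. The underlying obstruction is that all the errors are small only in $\mathcal M$ or $L^1$, and these norms are not dual to the $L^2$ gradient norms an energy identity produces.

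The missing idea is to prove convergence \emph{in measure} from a linear representation, and only then use uniform integrability. Since $\Delta h_j=f_j+\Div(e_j\,dx+\xi_j)$, take $\chi$ equal to one on a larger ball and let $P_j=\Gamma*(\chi f_j)+\sum_i\partial_i\Gamma*(\chi e_j^i\,dx+\chi\xi_j^i)$ be the Newtonian potential of the cut-off sources. Local integrability of $\Gamma$ and $D\Gamma$ (including the logarithmic case $n=2$) gives $P_j\to0$ in $L^1$ on bounded sets. Hence the harmonic functions $h_j-P_j$ converge smoothly to $h$ on a smaller ball.

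The gradient $DP_j$ is $D\Gamma$ convolved with a measure of small mass, plus $D^2\Gamma$ convolved with $\chi e_j\,dx+\chi\xi_j$. The weak-type estimates of Calder\'on--Zygmund and Stein, together with the finite-measure version of Ambrosio--Ponce--Rodiac, make these small in weak $L^{n/(n-1)}$ and weak $L^1$ respectively. Passing these bounds to absolutely continuous densities (via mollification, Lebesgue differentiation and Fatou) gives $v_j=(Dh_j)^{ac}-\xi_j^{ac}\to Dh$ in measure. Uniform integrability of $|v_j|^2$ and Vitali's theorem then give strong $L^2_{\rm loc}$ convergence. This is the paper's argument, and it is where the uniform integrability hypothesis genuinely enters.
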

\begin{proof}
We use the weak estimates for Newtonian potentials and the estimate
for convolutions with finite measures in Ambrosio--Ponce--Rodiac
\cite[Theorem~4.1]{APR20}. Weak convergence and the derivative identity yield $h\in W^{1,2}_{\rm loc}$
and $\Delta h=0$. On nested interior balls, multiply the sources
in $\Delta h_j=f_j+\Div(e_j\,dx+\xi_j)$ by a cutoff equal to one
on the larger ball. Their Newtonian potential tends to zero in
$L^1$ on bounded sets, by local integrability of the fundamental
solution and its gradient. This includes the logarithmic fundamental
solution when $n=2$. Subtraction leaves a harmonic function whose
gradients converge smoothly on the smaller ball.

The weak estimates of Calder\'on--Zygmund \cite{CZ} and Stein
\cite{Stein} control the gradients of the mollified potentials.
For the terms containing $e_j\,dx+\xi_j$, the bound for finite
measures follows from \cite[Theorem~4.1]{APR20}, applied to each
derivative of the fundamental solution. All these bounds tend to
zero uniformly in the mollification parameter. On an interior
ball, the distributional gradient of the potential is $Dh_j$
minus a smooth gradient. Lebesgue differentiation and Fatou's
lemma therefore pass the estimates to its absolutely continuous
density, which tends to zero in measure.
The absolutely continuous part of $\xi_j$ tends to zero in $L^1$.
It follows that $v_j\to Dh$ in measure. Uniform integrability of
$|v_j|^2$ and Vitali's theorem yield the asserted strong convergence.
\end{proof}

We can now prove Proposition~\ref{hcomp}. We first determine the
harmonic limits of the averaged heights, then recover the excess
from the individual slopes, and finally estimate the excess after
changing the normal.

\begin{proof}[Proof of Proposition~\ref{hcomp}]
We first apply Lemma~\ref{ecomp} to the averaged heights in
\eqref{aheig}. For each $\alpha$, we let $c_{\alpha,\varepsilon}$ be
the mean of $\bar f_{\alpha,\varepsilon}$ over $B_3^n$ and divide
$\bar f_{\alpha,\varepsilon}-c_{\alpha,\varepsilon}$ by
$\sqrt{E_\varepsilon}$. Equation \eqref{ahderi} and the Poincar\'e
inequality in $BV$ imply local $L^1$ compactness. We divide the flux
average preceding \eqref{aferro} by the fixed denominator in
\eqref{aheig}. After normalization by $\sqrt{E_\varepsilon}$, the
slope and flux averages have uniformly integrable squares, by their
pointwise bounds and \eqref{uinte}. Their difference tends to zero
in $L^2$ by \eqref{aferro}, and \eqref{afdive} provides the required
divergence estimate. After we pass to a subsequence, we obtain
harmonic functions $h_\alpha$ on $B_2$ such that
\begin{align}\label{hconv}
 &\frac{\bar f_{\alpha,\varepsilon}-c_{\alpha,\varepsilon}}
              {\sqrt{E_\varepsilon}}
          \longrightarrow h_\alpha
                       \quad\text{in }L^1_{\rm loc}(B_2),\notag\\
 &\frac{1}{\sqrt{E_\varepsilon}}
 \frac{\displaystyle\int\rho(t)\int\chi(s)\sqrt{2W(s)}
                          D_xf_{\alpha,\varepsilon}^t\,ds\,dt}
      {\displaystyle\int\chi(s)\sqrt{2W(s)}\,ds}
          \longrightarrow Dh_\alpha
                       \quad\text{strongly in }L^2_{\rm loc}(B_2).
\end{align}
The convergence in \eqref{hconv} concerns averages over the level
values. To recover the tilt-excess, we must show that the individual
slopes have the same limits. We keep the weight $\chi$
fixed and consider a compact interval $I\Subset(-1,1)$ containing
its support and the central interval in Lemma~\ref{count}.
We denote by $A_{I,\varepsilon}$ the union of the projection of
\begin{equation*}
 \{u_\varepsilon\in I,\quad
       \varepsilon|u_{\varepsilon,z}|/\sqrt{2W(u_\varepsilon)}\le2t_0\},
\end{equation*}
the projected covering in Lemma~\ref{ccove}, and the set
$\{N_\varepsilon<Q\}$. Proposition~\ref{cover} controls the first
projection, as in the proof of Lemma~\ref{tboun}. Lemma~\ref{ccove}
and \eqref{rsatu} control the other two sets. Consequently,
\begin{equation}\label{dbase}
 |A_{I,\varepsilon}|=o(E_\varepsilon).
\end{equation}
Outside $A_{I,\varepsilon}$, $N_\varepsilon$ is the actual crossing
number, and every level in $I$ has exactly $Q$ roots. These roots
continue across $I$ in a fixed vertical order. Each occurs once in
the selected list, which is independent of $t\in(t_0,2t_0)$.

We now estimate the contributions from the deleted columns.
Equation \eqref{uinte} shows that their tilt-excess is
$o(E_\varepsilon)$. The pointwise bounds used above yield the same
conclusion for the squared slope averages. To control the individual
slopes as well, we use coarea and \eqref{rslop}. Each selected root
occurs at most $Q$ times, so for every measurable base set $A$,
\begin{equation*}
 \int_A\int_I\int\rho(t)
          \sum_{\alpha=1}^Q|D_xf_{\alpha,\varepsilon}^t|^2\,dt\,ds\,dx
 \le C_I\int_{A\times(-4,4)}\varepsilon|D_xu_\varepsilon|^2\,dX.
\end{equation*}
With $A=A_{I,\varepsilon}$, \eqref{uinte} makes this bound
$o(E_\varepsilon)$. We also need the normal energy on these columns.
Equations \eqref{rsatu} and \eqref{dbase} bound their unsigned normal
integral by $o(E_\varepsilon)$. The identity \eqref{ciden}, together
with \eqref{uinte} and \eqref{ndsmal}, bounds their full energy by
the same quantity. In particular,
\begin{equation}\label{dnorm}
 \int_{A_{I,\varepsilon}}\int\zeta^2
                        \varepsilon u_{\varepsilon,z}^2\,dz\,dx=o(1).
\end{equation}
It remains to compare the slopes on the columns that have not been
deleted. On each selected branch, we have
$|u_{\varepsilon,z}|\ge c_I/\varepsilon$ and a fixed positive lower
bound for $|u_{\varepsilon,z}|/|Du_\varepsilon|$. By differentiation of
\eqref{rslop}, we therefore obtain
\begin{equation*}
 \left|\partial_sD_x f_{\alpha,\varepsilon}^t\right|
 \le C_I\varepsilon
                  \left|D\left(\frac{Du_\varepsilon}{|Du_\varepsilon|}\right)\right|.
\end{equation*}
Coarea, \eqref{gloca}, and $\varepsilon^2/E_\varepsilon\to0$
from \eqref{thickn} yield
\begin{equation}\label{psvari}
 \frac1{E_\varepsilon}
 \sum_{\alpha=1}^Q\int_{(B_2\setminus A_{I,\varepsilon})\times I}
             |\partial_sD_xf_{\alpha,\varepsilon}^t|^2\,dx\,ds
 \le\frac{C_I\varepsilon^2}{E_\varepsilon}
                       \int\varepsilon\left(|D^2u_\varepsilon|^2-|D|Du_\varepsilon||^2\right)\,dX
 \longrightarrow0
\end{equation}
on a fixed smaller base ball when needed. The one-dimensional
Poincar\'e inequality in $s$, with the fixed nonzero weight
$\chi(s)\sqrt{2W(s)}$, shows that each slope has the same limit as
its average in \eqref{hconv}. We extend the slopes by zero over
$A_{I,\varepsilon}$. The estimates above justify this extension and
yield
\begin{equation}\label{pslimi}
 \frac{D_xf_{\alpha,\varepsilon}^t(x,s)}{\sqrt{E_\varepsilon}}
       \longrightarrow Dh_\alpha(x)
 \quad\text{strongly in }L^2_{\rm loc}(dx\,ds)
\end{equation}
for the extended functions.

We now recover the tilt-excess from the slope convergence. On each
remaining column, vertical coarea yields
\begin{equation*}
 \int_{\{u_\varepsilon\in I\}}\varepsilon|D_xu_\varepsilon|^2\,dz
 =\sum_{\alpha=1}^Q\int_I
       \varepsilon|u_{\varepsilon,z}|\,
                        |D_xf_{\alpha,\varepsilon}^t|^2\,ds.
\end{equation*}
To use \eqref{pslimi}, we replace $\varepsilon|u_{\varepsilon,z}|$
by $\sqrt{2W(s)}$. Cauchy--Schwarz, \eqref{ndsmal}, and the gradient
bound \eqref{ellipt} bound the integrated absolute error by
\begin{align*}
 &C_I\int\varepsilon\left|\sqrt{2W(u_\varepsilon)}-\varepsilon|u_{\varepsilon,z}|\right|\,|D_xu_\varepsilon|^2\,dX\\
 \le& C_I\left(\int\frac{\left(\sqrt{2W(u_\varepsilon)}-\varepsilon|u_{\varepsilon,z}|\right)^2}{\varepsilon}\,dX\right)^{1/2}
          \left(\int\varepsilon^3|D_xu_\varepsilon|^4\,dX\right)^{1/2}\\
 \le& o(\sqrt{E_\varepsilon})\,C\sqrt{E_\varepsilon}\\
 =&o(E_\varepsilon).
\end{align*}
By \eqref{pslimi} and the estimates on the deleted columns,
the normalized tangential integral over these level values converges
strongly in local $L^1$ to
\begin{equation*}
 \left(\int_I\sqrt{2W(s)}\,ds\right)
                          \sum_{\alpha=1}^Q|Dh_\alpha|^2.
\end{equation*}
We first let $\varepsilon\to0$ for fixed $I$, and then increase $I$
to $(-1,1)$. Since $\chi$ stays fixed, all these intervals yield the
same limiting gradients. Equation \eqref{rtail} controls the omitted
tilt-excess near the wells. We obtain \eqref{stren} with
$\zeta^2$, and \eqref{cerro} removes this cutoff. The definition of
$E_\varepsilon$ also yields the stated bound on the Dirichlet
energies of $h_1,\ldots,h_Q$.

We turn to the effect of changing the normal. In addition to the
tangential energy, we need the mixed integral containing
$u_{\varepsilon,z}D_xu_\varepsilon$ and the normal energy. On a selected
root, \eqref{rslop} expresses the
mixed integral in terms of $D_xf_{\alpha,\varepsilon}^t$.
Coarea, \eqref{pslimi}, and the coefficient estimate \eqref{ndsmal}
determine its limit on each fixed interval $I$. On the deleted
columns, Cauchy--Schwarz applies to the $o(E_\varepsilon)$ tangential
energy and the bounded normal energy. Near the wells, the same
inequality applies with \eqref{rtail} and the full energy bound.
Both contributions vanish after division by $\sqrt{E_\varepsilon}$,
with $I$ increased to $(-1,1)$ after taking the limit in
$\varepsilon$.

For the normal energy, we use the identity
\begin{equation*}
 \varepsilon u_{\varepsilon,z}^2
 =\varepsilon|Du_\varepsilon|^2-\varepsilon|D_xu_\varepsilon|^2.
\end{equation*}
The first term has the measure limit \eqref{plimi}, and the second
has total mass at most $E_\varepsilon\to0$. Integration in $z$
against $\zeta^2$, which equals one on the limiting hyperplane, therefore
determines the normal energy limit on the base. Together with the
mixed integral just considered, we have
\begin{align}\label{mnlimi}
 \frac1{\sqrt{E_\varepsilon}}
       \int\zeta^2\varepsilon u_{\varepsilon,z}D_xu_\varepsilon\,dz
 &\longrightarrow-\sigma_0\sum_{\alpha=1}^QDh_\alpha
                  &&\text{strongly in }L^1_{\rm loc},\notag\\
 \left(\int\zeta^2\varepsilon u_{\varepsilon,z}^2\,dz\right)\,dx
 &\rightharpoonup Q\sigma_0\,dx.
\end{align}
To allow a separate normal for each layer, we also need to control
the normal energy near the wells. For fixed
$I\Subset(-1,1)$, coarea on the selected roots expresses the normal
integral as the sum of $Q$ integrals of
$\varepsilon|u_{\varepsilon,z}|$. Cauchy--Schwarz and
\eqref{ndsmal} allow us to replace this coefficient by
$\sqrt{2W(s)}$ in $L^1(dx\,ds)$. Equation \eqref{dnorm} controls
the deleted columns. Hence, strongly in $L^1$ on smaller base balls,
\begin{equation*}
 \int_{\{z\mid u_\varepsilon(x,z)\in I\}}\zeta^2
                       \varepsilon u_{\varepsilon,z}^2\,dz
       \longrightarrow Q\int_I\sqrt{2W(s)}\,ds.
\end{equation*}
Let us now subtract this limit from the total normal measure in
\eqref{mnlimi}. The remaining measure is nonnegative and has limiting mass
$Q\int_{[-1,1]\setminus I}\sqrt{2W(s)}\,ds$ per unit base volume,
on balls whose boundaries have zero limiting mass. This mass tends
to zero as $I$ exhausts $(-1,1)$.

We first estimate the excess for one prescribed change of normal.
For a fixed vector $v\in\mathbb R^n$, we set
\begin{equation*}
 e_\varepsilon(v)=
 \frac{(-\sqrt{E_\varepsilon}v,1)}{\sqrt{1+E_\varepsilon|v|^2}}.
\end{equation*}
Orthogonal projection onto the hyperplane perpendicular to
$e_\varepsilon(v)$ yields
\begin{equation}\label{ncomp}
 |Du_\varepsilon|^2-|Du_\varepsilon\cdot e_\varepsilon(v)|^2
 \le\left|D_xu_\varepsilon+
                      \sqrt{E_\varepsilon}v u_{\varepsilon,z}\right|^2.
\end{equation}
We multiply \eqref{ncomp} by $\varepsilon/E_\varepsilon$ and
integrate. The squared terms and the mixed term on the right have
the limits established in \eqref{stren} and \eqref{mnlimi}.
The limiting normal measure assigns zero mass to the boundary of
each fixed ball $B_\theta\Subset B_2$, so its mass on that ball also
converges. Using \eqref{cerro} to remove $\zeta^2$, we obtain
\begin{equation}\label{onlimi}
\begin{split}
 &\limsup_{\varepsilon\to0}\frac1{E_\varepsilon}
 \int_{B_\theta\times(-4,4)}\varepsilon
       \left(|Du_\varepsilon|^2
                     -|Du_\varepsilon\cdot e_\varepsilon(v)|^2\right)\,dX\le\sigma_0\sum_{\alpha=1}^Q
                                  \int_{B_\theta}|Dh_\alpha-v|^2\,dx.
\end{split}
\end{equation}

Next we allow a different change of normal on each layer. For fixed
vectors $v_1,\ldots,v_Q$, we use
$e_\varepsilon(v_\alpha)$ on the $\alpha$th root outside the deleted
columns. This bounds the pointwise minimum over the $Q$ normals
from above. On a compact interval of level values, coarea reduces
the estimate to the squared slopes, the constant vectors, and their
cross terms. The coefficient replacement for the squared slopes is
the one used to prove \eqref{stren}. For the constant vectors, we
use their boundedness and
$\int\left|\sqrt{2W(u_\varepsilon)}-\varepsilon|u_{\varepsilon,z}|\right|\,ds\,dx\to0$.
Cauchy--Schwarz controls the cross terms by these two bounds.
Thus \eqref{pslimi} determines the limit on the selected roots.

We still have to estimate the deleted columns and the regions near
the wells. On either set, \eqref{ncomp} bounds the excess by twice the
tilt-excess plus $CE_\varepsilon$ times the normal energy.
On the deleted columns, \eqref{uinte} and \eqref{dnorm} make both
contributions negligible after division by $E_\varepsilon$.
Near the wells, \eqref{rtail} controls the first contribution, and
the preceding estimate for the normal energy controls the second.
We first let $\varepsilon\to0$ and then increase $I$ to $(-1,1)$.
We conclude that
\begin{equation}\label{snlimi}
\begin{split}
 &\limsup_{\varepsilon\to0}\frac1{E_\varepsilon}
 \int_{B_\theta\times(-4,4)}\min_{1\le\alpha\le Q}\varepsilon
       \left(|Du_\varepsilon|^2
    -|Du_\varepsilon\cdot e_\varepsilon(v_\alpha)|^2\right)\,dX\\
    \le&\sigma_0\sum_{\alpha=1}^Q
    \int_{B_\theta}|Dh_\alpha-v_\alpha|^2\,dx.
\end{split}
\end{equation}

We complete the decay estimate by choosing the linear part of each
harmonic height. Interior estimates and their Dirichlet energy
bound yield
\begin{equation*}
 \max_\alpha|Dh_\alpha(0)|\le C,
 \qquad
 \sigma_0\sum_{\alpha=1}^Q\int_{B_\theta}
                    |Dh_\alpha-Dh_\alpha(0)|^2\,dx\le C\theta^{n+2}
 \quad\text{for }0<\theta<1/4.
\end{equation*}
One may choose $v_\alpha=Dh_\alpha(0)$ and
$e_{\alpha,\varepsilon}=e_\varepsilon(v_\alpha)$.
The first bound yields \eqref{nchan}, and the second, together with
\eqref{snlimi}, yields \eqref{mdeca}.

If a single normal is required, we choose
$v=Q^{-1}\sum_{\beta=1}^QDh_\beta(0)$. We decompose each
$Dh_\alpha-v$ as $(Dh_\alpha-Dh_\alpha(0))+(Dh_\alpha(0)-v)$.
The mean-value property implies that the integral of
$Dh_\alpha-Dh_\alpha(0)$ over $B_\theta$ is zero, so the cross term
vanishes after integration. Equation \eqref{onlimi} and the preceding
harmonic estimate yield
\begin{equation}\label{spvari}
\begin{split}
 &\limsup_{\varepsilon\to0}\frac1{\theta^n E_\varepsilon}
 \int_{B_\theta\times(-4,4)}\varepsilon
       \left(|Du_\varepsilon|^2
                     -|Du_\varepsilon\cdot e_\varepsilon(v)|^2\right)\,dX\\
 \le& C\theta^2+
 \sigma_0|B_1^n|\sum_{\alpha=1}^Q
 \left|Dh_\alpha(0)-\frac1Q\sum_{\beta=1}^QDh_\beta(0)\right|^2.
\end{split}
\end{equation}
Under \eqref{cgrad}, the last term vanishes and all
$e_{\alpha,\varepsilon}$ coincide. This proves the final assertion
of Proposition~\ref{hcomp}.
\end{proof}

\subsection{Excess decay and a common rotation for the layers}
\label{scale}
\label{excessiter}
In this subsection, we turn harmonic approximation into the continuation
alternative used in the local sheeting proof. The main objective is
to determine when one rotation improves all the layers, since the
preceding subsection initially allows a separate normal for each harmonic
height. We also keep the thickness error through the iteration and
determine the entire limits that arise at its terminal scale.

We first extend harmonic approximation to limits supported on
several parallel hyperplanes, keeping a common base domain for the
limiting heights. We then prove the fixed-normal estimate and
iterate it to obtain a Morrey bound with a thickness error. This
bound controls limits at intermediate scales and implies the growth
estimates for entire limits obtained at the transition thickness.
Iteration down to a scale determined by the thickness already
appears in Tonegawa \cite[p.~35]{Ton03}; for the tangential excess,
Wang \cite[Lemma~10.1]{Wang17} includes the thickness error in a
Morrey estimate. We follow this scheme using the strong convergence
of Subsection~\ref{harm} at arbitrary multiplicity.
A related distinction between geometric scales and transition
thickness appears in the nonlocal argument of
Dipierro--Serra--Valdinoci \cite{DSV20}.

The remaining difficulty is to compare the linear parts of the
harmonic heights. If the hyperplane through a marked zero carries
the full multiplicity at the height scale, ordering and estimates
for the gaps force the limiting gradients to coincide. The final
proposition consequently establishes three alternatives: decay after a common
rotation, reduction to a smaller multiplicity, or a microscopic
entire limit with the growth bounds needed in Section~\ref{rigidpart}.

For the following lemma, let $u_j$ be bounded stable solutions with parameters $\varepsilon_j$
on a neighborhood of $\overline{B_1^n\times(-1,1)}$. Assume a uniform
local energy bound and, on this product region,
\begin{equation*}
 \varepsilon_j|Du_j|^2\,dX\rightharpoonup
 \sigma_0\sum_{\ell=1}^{L}m_\ell
       \mathcal H^n\lfloor\{z=c_\ell\},
 \qquad -1<c_1<\cdots<c_L<1,\quad m_\ell\in\mathbb N.
\end{equation*}
Suppose that $E_j>0$ bounds the tilt-excess on a fixed larger
neighborhood and
\begin{equation*}
 E_j\longrightarrow0,\qquad \varepsilon_j^2/E_j\longrightarrow0.
\end{equation*}

 We choose a fixed interval $(a,b)\Subset(-1,1)$ with
$a,b\notin\{c_1,\ldots,c_L\}$.

\begin{lemma}\label{pwind}
Under the preceding assumptions, a subsequence admits harmonic
functions $h_{\ell,\alpha}$ such that
\begin{equation*}
 \frac1{E_j}\int_a^b
       \varepsilon_j|D_xu_j(x,z)|^2\,dz
 \longrightarrow
 \sigma_0\sum_{\ell:\,a<c_\ell<b}
          \sum_{\alpha=1}^{m_\ell}|Dh_{\ell,\alpha}(x)|^2
\end{equation*}
strongly in $L^1$ on each smaller fixed base ball.
The harmonic functions have a common base domain chosen
independently of the distances between the hyperplanes.
\end{lemma}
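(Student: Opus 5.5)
The plan is to localize Proposition~\ref{hcomp} around each hyperplane $\{z=c_\ell\}$ with $a<c_\ell<b$, and then assemble the pieces on a common base ball. Set $d=\min\{|c_\ell-c_{\ell'}|,\,|c_\ell-a|,\,|c_\ell-b|\}>0$. Fix $\rho=d/8$ (and at most $1/8$). Around each relevant $c_\ell$, the slab $B_{1/2}^n\times(c_\ell-\rho,c_\ell+\rho)$ meets only the $\ell$-th plane in the limit. We cover a smaller base ball $B_{1/2}^n$ by finitely many horizontal balls $B^n_{\rho/64}(x_k)$. For each pair $(k,\ell)$ we rescale
\[
v_j(Y)=u_j\bigl((x_k,c_\ell)+(\rho/64)Y\bigr),\qquad \epsilon_j=64\varepsilon_j/\rho .
\]
On $\C_{64}$ the rescaled measures converge to $m_\ell\sigma_0\mathcal H^n\lfloor\{z=0\}$, by weak convergence and because the other planes lie outside the rescaled cylinder. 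The tilt-excess of $v_j$ on $\C_{64}$ is bounded by $C_\rho E_j$, and the full energy bound holds by Lemma~\ref{gtfene}. If the actual excess $E_{v_j}$ is comparable to $E_j$ along the subsequence, we apply Proposition~\ref{hcomp} with $Q=m_\ell$. Otherwise $E_{v_j}=o(E_j)$, and the limit is zero, which corresponds to constant harmonic functions. The subtle point is that Proposition~\ref{hcomp} normalizes by the actual excess; in the intermediate regime we renormalize by $E_{v_j}$ and multiply the limit by $\lim E_{v_j}/E_j$. The hypothesis $\epsilon_j^2/E_{v_j}\to0$ may fail when $E_{v_j}\ll E_j$, but then the $L^1$ limit relative to $E_j$ is zero anyway, because of the bound $\int\le E_{v_j}$. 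Either way we obtain strong $L^1$ convergence of $E_j^{-1}\int_{c_\ell-\rho}^{c_\ell+\rho}\varepsilon_j|D_xu_j|^2\,dz$ on $B^n_{\rho/32}(x_k)$ to $\sigma_0\sum_\alpha|Dh^{k}_{\ell,\alpha}|^2$ (after rescaling back).

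Next we glue the local harmonic functions into ones on the common base. The local heights come from the ordered roots of intermediate levels in each slab, as in \eqref{aheig}. On overlaps of base balls, the same ordered roots are selected outside sets of base measure $o(E_j)$ (Lemma~\ref{count} and \eqref{dbase}). Hence the normalized slopes \eqref{pslimi} agree, and so the gradients $Dh^k_{\ell,\alpha}$ agree on overlaps. To avoid patching additive constants, we instead run the construction in the proof of Proposition~\ref{hcomp} directly on the region $B_{1/2}^n\times(c_\ell-\rho,c_\ell+\rho)$. This means using the ordered selected roots there, subtracting one mean over the whole base, and applying Lemma~\ref{ecomp} on the connected base ball. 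All inputs of that proof are local in the base and were verified on each small ball above, so they hold on $B_{1/2}^n$. The resulting $h_{\ell,\alpha}$ live on $B_{1/2}^n$, a domain independent of $d$, which gives the last claim.

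Finally we sum over $\ell$. On $(a,b)$ minus the union of the slabs, the solution is close to a well: the transition set converges in Hausdorff distance to the planes (Lemma~\ref{gtfene}). The argument for \eqref{rtail}, together with exponential decay \eqref{cerro} away from the transition set, shows that the tilt-excess there is $o(E_j)$. This applies both between slabs and near $a,b$, since $a,b$ avoid the $c_\ell$. Adding the slab contributions gives the stated strong $L^1$ limit.

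The main obstacle is the regime $E_{v_j}\ll E_j$ or $\epsilon_j^2\not\ll E_{v_j}$ on some local pieces. I would handle it through Remark~\ref{cntupper}, which replaces the actual excess by an upper bound $\widehat E=E_j$ in all error terms. Running the proof of Proposition~\ref{hcomp} with normalization by $\widehat E$ then produces harmonic limits directly, with no case distinction.
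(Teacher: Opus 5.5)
Your proposal is correct and follows essentially the paper's route. You use fixed disjoint normal slabs about the hyperplanes $\{z=c_\ell\}$ with $a<c_\ell<b$, and run the proof of Proposition~\ref{hcomp} on each slab over a common lateral base, normalized by the upper bound $E_j$. The small-scale inputs, such as Lemma~\ref{ccove}, are checked on cylinders narrower than the gaps, which is exactly what your rescaled cylinders provide; the pure-phase regions between the slabs contribute $o(E_j)$, and you sum over $\ell$. The case distinction between $E_{v_j}$ and $E_j$ and the gluing of local harmonic functions in your first two paragraphs become unnecessary once you run the construction directly on the common base with $\widehat E=E_j$, as you do in the end.
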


Since $E_j$ bounds the tilt-excess on a larger neighborhood,
the convergence also yields
\begin{equation*}
 \sigma_0\sum_{\ell:\,a<c_\ell<b}
          \sum_{\alpha=1}^{m_\ell}
          \int_{B_r^n}|Dh_{\ell,\alpha}|^2\,dx\le1
\end{equation*}
on each smaller base ball $B_r^n$.
The same conclusions hold on any fixed product region,
after restriction to compactly contained products.
\begin{proof}
We choose disjoint fixed normal intervals about the relevant hyperplanes and
cutoffs equal to one on smaller such intervals. By Lemma~\ref{gtfene},
each fixed intermediate band belonging to a hyperplane lies in its smaller
interval for large $j$. The cutoff derivatives are in pure phase
regions, where the equation and interior estimates yield errors
$C\exp(-c/\varepsilon_j)=o(E_j)$. Constants may depend on the fixed
gaps.

We apply the proof of Proposition~\ref{hcomp} in each interval, keeping the
same lateral base and nested interior base balls. The geometric
stability and tensor estimates use these lateral cutoffs. Their
normal cutoff terms have the preceding exponential bound. We localize
the scalar estimates of Subsection~\ref{disc} by finitely many fixed
balls and obtain errors $O(\varepsilon_j^2)=o(E_j)$. Proposition~\ref{cover}
localizes in the same way. In Lemma~\ref{ccove}, we choose the outer
radius smaller than the distance to the adjacent hyperplanes.
The limiting density there is $m_\ell$, so the uncovered crossing
number in this interval is at most $m_\ell$.

We use all the roots in that interval in the BV estimate of
Proposition~\ref{bvcmp} and the ordered-height arguments of
Subsection~\ref{harm}. Their normal boundary terms are
exponentially small, and the term from the level cutoff divided by
$\sqrt{E_j}$ is $O(\varepsilon_j/\sqrt{E_j})=o(1)$.
Equations~\eqref{hconv}, \eqref{psvari}, and \eqref{pslimi}, with the relative
estimate near the wells in \eqref{rtail}, therefore express the limit of the
whole normal integral in terms of $m_\ell$ harmonic heights on the common smaller base.

The portions between these intervals have exponentially small energy.
The sum of the strong limits proves the assertion. The original
normalized integral bounds the sum of the limiting Dirichlet energies.
Consequently their interior harmonic estimates on the common base
are independent of the gaps.
\end{proof}

The next proposition controls the tilt-excess on a smaller ball,
up to an error proportional to the squared thickness. Keeping the
normal fixed will allow us to iterate this estimate uniformly in
the center and radius.

\begin{proposition}\label{fnstep}
Fix $n\ge2$ and $W$ as in \eqref{wells}, and $\Lambda<\infty$. There
are $A<\infty$ and $\theta_0>0$,  such
that for each $0<\theta\le\theta_0$ there are
$\eta_\theta,\varepsilon_\theta>0$ and $M_\theta<\infty$ with
the following property. If $u_\varepsilon$ is a smooth stable
solution of \eqref{acn} in $B_2^{n+1}$, with
$-1\le u_\varepsilon\le1$, and
\begin{equation*}
 \int_{B_2}\left(\frac\varepsilon2|Du_\varepsilon|^2+
                   \frac{W(u_\varepsilon)}\varepsilon\right)\,dX\le\Lambda,
 \qquad
 \int_{B_2}\varepsilon|D_xu_\varepsilon|^2\,dX\le\eta_\theta,
 \qquad \varepsilon\le\varepsilon_\theta,
\end{equation*}
then
\begin{equation}\label{fnfloo}
 \int_{B_{2\theta}}\varepsilon|D_xu_\varepsilon|^2\,dX
 \le A\theta^n\int_{B_2}\varepsilon|D_xu_\varepsilon|^2\,dX
                         +M_\theta\varepsilon^2.
\end{equation}
\end{proposition}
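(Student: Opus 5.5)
We argue by contradiction and compactness, reducing to Lemma~\ref{pwind} and the harmonic estimates. Fix $\theta$ and suppose that for some $A$ (chosen below, independent of $\theta$) there are stable solutions $u_j$ with parameters $\varepsilon_j\to0$. Suppose also that the excess $E_j=\int_{B_2}\varepsilon_j|D_xu_j|^2\,dX$ tends to zero along the sequence. Assume finally that
\begin{equation*}
 \int_{B_{2\theta}}\varepsilon_j|D_xu_j|^2\,dX> A\theta^nE_j+j\varepsilon_j^2 .
\end{equation*}
Since the left side is at most $E_j$, we get $\varepsilon_j^2/E_j\le 1/j\to0$. Thus the thickness condition in Lemma~\ref{pwind} holds automatically, and this is the point of the additive term $M_\theta\varepsilon^2$.

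By Hutchinson--Tonegawa compactness, Tonegawa's stability result, and the vanishing of the tilt-excess, a subsequence of $\varepsilon_j|Du_j|^2\,dX$ converges in $B_2$ to a stationary integral varifold. Its tangent planes are a.e.\ horizontal. A stationary integral varifold with horizontal tangents a.e.\ is, by the constancy theorem applied slice by slice, a sum $\sigma_0\sum_\ell m_\ell\mathcal H^n\lfloor\{z=c_\ell\}$ of horizontal hyperplanes with integer multiplicities. The energy bound $\Lambda$ bounds $\sum m_\ell$ on $B_{3/2}$. At most finitely many planes meet $B_1^n\times(-1,1)$, and none meet it if the limit vanishes there. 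We restrict to this product region, on which the hypotheses of Lemma~\ref{pwind} hold with $E_j$ as the excess bound on a larger neighborhood. We choose $(a,b)$ containing $[-2\theta,2\theta]$ and with endpoints off the $c_\ell$. The lemma gives harmonic $h_{\ell,\alpha}$ on a common base domain, say $B_{3/4}^n$, with
\begin{equation*}
 \frac1{E_j}\int_a^b\varepsilon_j|D_xu_j|^2\,dz\to\sigma_0\sum_{\ell,\alpha}|Dh_{\ell,\alpha}|^2
\end{equation*}
strongly in $L^1(B_{1/2}^n)$, and with total Dirichlet energy at most $1$. The normal ranges outside the hyperplane neighborhoods carry exponentially small energy, by the pure-phase estimates as in \eqref{cerro}, so they are $o(E_j)$.

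Since $B_{2\theta}\subset B_{2\theta}^n\times(a,b)$, strong convergence yields
\begin{equation*}
 \limsup_j\frac1{E_j}\int_{B_{2\theta}}\varepsilon_j|D_xu_j|^2
 \le\sigma_0\sum_{\ell,\alpha}\int_{B^n_{2\theta}}|Dh_{\ell,\alpha}|^2 .
\end{equation*}
Interior estimates for harmonic functions bound $\sup_{B_{1/4}}|Dh|^2$ by $C\int_{B_{1/2}}|Dh|^2$, with $C$ depending only on $n$. Hence for $2\theta\le1/4$ the right side is at most $C_n\omega_n(2\theta)^n$. Taking $A=2^{n+1}C_n\omega_n$ and $\theta_0=1/8$ contradicts the assumed inequality. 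The constants $\eta_\theta,\varepsilon_\theta,M_\theta$ come from the contradiction argument: they are the thresholds beyond which the sequence cannot exist, and $M_\theta$ plays the role of $j$.

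The main obstacle is the case of a limit with several distinct parallel planes, possibly collapsing toward one another or toward $z=0$ in a way not uniform in $\theta$. Lemma~\ref{pwind} handles this only with fixed gaps, so I rely on the fact that the subsequential limit is a fixed varifold and that the lemma's harmonic base domain is independent of the gaps. One must also check that the ball $B_{2\theta}$ is compactly inside the product region where the strong $L^1$ convergence holds. This needs slight enlargement of radii, which is harmless. A second, minor point is the degenerate case of a zero limit. Then the energy in $B_{3/2}$ tends to zero, the transition set is eventually empty near $B_{2\theta}$, and the solution is in pure phase. The excess there is then exponentially small, namely $o(\varepsilon_j^2)$, so again no contradiction sequence exists.
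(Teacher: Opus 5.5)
Your proposal is correct and follows essentially the same route as the paper. The paper also argues by contradiction with $E_j\to0$, $\varepsilon_j\to0$, $\varepsilon_j^2/E_j\to0$; your derivation of the last condition from $\int_{B_{2\theta}}\varepsilon_j|D_xu_j|^2\,dX\le E_j$ is exactly the paper's remark that $E_j<M_\theta\varepsilon_j^2$ gives \eqref{fnfloo} by inclusion. It then identifies the limit as finitely many horizontal hyperplanes and applies Lemma~\ref{pwind}, whose base domain and unit Dirichlet bound do not depend on the gaps, and finishes with the interior harmonic gradient estimate, so $A$ does not depend on $\theta$. The ``obstacle'' you flag is not a real one: $\theta$ is fixed within the contradiction argument, and only that gap-independent normalization enters $A$.
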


\begin{proof}
If the assertion failed for every simultaneous choice
of $\eta_\theta,\varepsilon_\theta,M_\theta$,
there would be counterexamples with
\begin{equation*}
 E_j=\int_{B_2}\varepsilon_j|D_xu_j|^2\,dX\longrightarrow0,
 \qquad \varepsilon_j\longrightarrow0,
 \qquad \varepsilon_j^2/E_j\longrightarrow0.
\end{equation*}
From the compactness theorem of Hutchinson--Tonegawa
\cite[Theorem~1]{HT}, we obtain a stationary integral varifold limit on
interior subdomains. Vanishing horizontal energy makes its tangent
hyperplane horizontal almost everywhere. Stationarity against horizontal
vector fields implies that its weight on a product region is
$\mathcal L^n\otimes\nu$. Rectifiability and integrality make
$\nu$ atomic, with masses in $\sigma_0\mathbb N$. The local energy
bound permits only finitely many atoms on a fixed smaller product.

We work in $B_{1/2}^n\times(-1,1)\Subset B_2$ and choose endpoints in
$(-3/4,-1/2)$ and $(1/2,3/4)$ avoiding the limiting hyperplanes.
From Lemma~\ref{pwind}, with the original normalization $E_j$, we obtain
harmonic heights on a common fixed smaller base with total Dirichlet
energy at most one. Thus, for universal $A_0$ and $\theta_0$,
\begin{equation*}
 \sigma_0\sum_{\ell,\alpha}
   \int_{B_{2\theta}^n}|Dh_{\ell,\alpha}|^2\,dx\le A_0\theta^n
 \qquad\text{for }0<\theta\le\theta_0.
\end{equation*}
If the interval between the endpoints contains no limiting
hyperplane, the normalized energy tends to zero on a smaller product.
In either case, strong convergence of the normal integral bounds the energy in
$B_{2\theta}^{n+1}$ and contradicts the counterexample when $A>2A_0$.

In the contradiction argument, we impose the simultaneous conditions
$E_j\le1/j$, $\varepsilon_j\le1/j$, and
$E_j\ge j\varepsilon_j^2$. If instead
$E_j<M_\theta\varepsilon_j^2$, we obtain \eqref{fnfloo} directly
by inclusion of the smaller ball. The case $E_j=0$ is immediate.
\end{proof}

We next iterate the fixed-normal estimate over a range of radii.
The following proposition keeps the thickness error explicit, with
constants uniform in the center and the number of scales. This is
the estimate used below both at intermediate scales and at the
transition thickness.

\begin{proposition}\label{mwfloo}
Fix $0<\gamma<1$. Let $u_\varepsilon$ be bounded and stable in
$B_8^{n+1}$, with uniformly bounded diffuse energy there. Suppose
\begin{equation*}
 \int_{B_4}\varepsilon|D_xu_\varepsilon|^2\,dX\le E_\varepsilon,
 \qquad E_\varepsilon\longrightarrow0.
\end{equation*}
There are constants $C,K$, depending only on the fixed data and
$\gamma$, such that for every $X\in B_1^{n+1}$ and
\begin{equation}\label{mrrang}
 K\max\{E_\varepsilon^{1/(2-2\gamma)},\varepsilon\}\le r\le1,
\end{equation}
one has
\begin{equation}\label{mfloo}
 \int_{B_{2r}(X)}\varepsilon|D_xu_\varepsilon|^2\,dX
 \le C E_\varepsilon r^{n-2+2\gamma}
                              +C\varepsilon^2r^{n-2}.
\end{equation}
\end{proposition}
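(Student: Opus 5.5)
We will iterate Proposition~\ref{fnstep} at dyadic-type scales $\theta^k$ around each center $X\in B_1^{n+1}$, after rescaling. Fix $\theta\le\theta_0$ so small that $A\theta^{n}\le\theta^{n-2+2\gamma}$, i.e. $A\theta^{2-2\gamma}\le1$; this is possible since $\gamma<1$. With $\theta$ fixed, we obtain $\eta_\theta,\varepsilon_\theta,M_\theta$. For $r_k=\theta^k$, consider the rescaled function $v_k(Y)=u_\varepsilon(X+r_kY/2)$ on $B_2$, with parameter $\varepsilon_k=2\varepsilon/r_k$. By the monotonicity formula \eqref{lmearl}, its energy on $B_2$ is bounded by a fixed $\Lambda$, uniformly in $X$ and $k$. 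Its tilt-excess is
\[
 a_k:=\int_{B_2}\varepsilon_k|D_xv_k|^2\,dY=(r_k/2)^{-(n-1)}\cdot\frac{2}{r_k}\cdot\int_{B_{r_k}(X)}\varepsilon|D_xu_\varepsilon|^2\,dX\cdot\frac{r_k}{2}.
\]
Equivalently, $a_k=c_n r_k^{-n}F_k$ with $F_k=\int_{B_{r_k}(X)}\varepsilon|D_xu_\varepsilon|^2$.

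Whenever the hypotheses of Proposition~\ref{fnstep} hold at step $k$, namely $a_k\le\eta_\theta$ and $\varepsilon_k\le\varepsilon_\theta$, \eqref{fnfloo} gives
\[
 a_{k+1}\le\theta^{-n}\bigl(A\theta^n a_k+M_\theta\varepsilon_k^2\bigr)\le\theta^{-2+2\gamma}a_k+C_\theta\varepsilon^2r_k^{-2}.
\]
Unfolding with $a_0\le CE_\varepsilon$, we get by induction
\[
 a_k\le C E_\varepsilon r_k^{-2+2\gamma}+C\varepsilon^2r_k^{-2}\sum_{i<k}\theta^{2\gamma(k-i)}\cdots .
\]
The geometric series converges because the thickness term grows like $r_k^{-2}$, faster than the ratio $\theta^{-2+2\gamma}$. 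This yields $a_k\le C E_\varepsilon r_k^{-2+2\gamma}+C\varepsilon^2 r_k^{-2}$. Multiplying by $r_k^n$ gives \eqref{mfloo} at $r=r_k$. Intermediate radii follow by inclusion, at the cost of a factor depending on $\theta$.

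The main point to check is that the induction hypotheses persist over the whole range \eqref{mrrang}. The condition $\varepsilon_k\le\varepsilon_\theta$ holds precisely when $r_k\ge2\varepsilon/\varepsilon_\theta$, which follows from $r\ge K\varepsilon$ once $K$ is large. The smallness $a_k\le\eta_\theta$ follows from the bound just proved. Indeed, $CE_\varepsilon r_k^{-2+2\gamma}\le CK^{-(2-2\gamma)}$ when $r_k\ge KE_\varepsilon^{1/(2-2\gamma)}$, and $C\varepsilon^2r_k^{-2}\le CK^{-2}$ when $r_k\ge K\varepsilon$. Both are at most $\eta_\theta/2$ for $K$ large, so a continuous induction closes. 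The initial step needs $a_0\le\eta_\theta$, which holds for small $E_\varepsilon$. Since $E_\varepsilon\to0$, we may also assume $\varepsilon$ is small, or else treat finitely many indices trivially; the case where \eqref{mrrang} is empty is vacuous.

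The remaining care is bookkeeping. First, the balls $B_{r_k}(X)$ must lie in $B_4$, where the excess bound $E_\varepsilon$ holds, and the energy must be bounded on $B_8$. Second, the constants must be uniform in $X$, which holds because Proposition~\ref{fnstep} applies to arbitrary stable solutions on $B_2$.
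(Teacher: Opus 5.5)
Your proposal is correct and follows essentially the same route as the paper. You rescale about $X$ at radii $\theta^k$, apply Proposition~\ref{fnstep} with $A\theta^{2-2\gamma}\le1$ (the paper's extra factor $\tfrac12$ is not essential), sum the thickness terms as a geometric series with ratio $\theta^{2\gamma}$, and close the smallness hypotheses by induction using $K$ large. Two slips are cosmetic: your first displayed expression for $a_k$ evaluates to $(r_k/2)^{-(n-1)}F_k$ instead of the correct $(2/r_k)^nF_k$ (the recursion you then use has the right scaling), and for $1/2<r\le1$ the bound on $B_{2r}(X)$ comes directly from $B_{2r}(X)\subset B_4$ rather than from the iteration.
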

\begin{proof}
By \eqref{lmearl} and the energy bound on $B_8$, we have
\begin{equation*}
 r^{-n}\int_{B_{2r}(X)}
 \left(\frac{\varepsilon}{2}|Du_\varepsilon|^2
       +\frac{W(u_\varepsilon)}{\varepsilon}\right)\,dX
 \le C
 \qquad\text{for }X\in B_1,\quad 0<r\le1.
\end{equation*}
Thus, after rescaling any of these balls to $B_2$, the energy
bound required in Proposition~\ref{fnstep} is uniform in $X$
and $r$.

Fix $0<\theta<1$, with $\theta\le\theta_0$, sufficiently
small that
\begin{equation*}
 A\theta^n\le\frac12\theta^{n-2+2\gamma}.
\end{equation*}
We consider the radii $r_k=\theta^k$. At radius $r_k$,
Proposition~\ref{fnstep} applies to
$Y\mapsto u_\varepsilon(X+r_kY)$ whenever
\begin{equation*}
 r_k^{-n}\int_{B_{2r_k}(X)}
       \varepsilon|D_xu_\varepsilon|^2\,dX\le\eta_\theta,
 \qquad
 \frac{\varepsilon}{r_k}\le\varepsilon_\theta.
\end{equation*}
The rescaled parameter is $\varepsilon/r_k$. Hence
\eqref{fnfloo}, after returning to the original variables, yields
\begin{equation*}
 \int_{B_{2r_{k+1}}(X)}
       \varepsilon|D_xu_\varepsilon|^2\,dX
 \le \frac12\theta^{n-2+2\gamma}
       \int_{B_{2r_k}(X)}
       \varepsilon|D_xu_\varepsilon|^2\,dX
       +M_\theta\varepsilon^2r_k^{n-2}.
\end{equation*}

Since $B_2(X)\subset B_4$, the integral at $r_0=1$ is at
most $E_\varepsilon$. As long as the preceding applications
of Proposition~\ref{fnstep} are valid, we can iterate this
inequality. The contribution from the initial integral is
at most $E_\varepsilon r_k^{n-2+2\gamma}$. The terms containing
$\varepsilon^2$ form a geometric series with ratio
$\frac12\theta^{2\gamma}<1$, after division by
$\varepsilon^2r_k^{n-2}$. We therefore obtain
\begin{equation}\label{frsolv}
 \int_{B_{2r_k}(X)}
       \varepsilon|D_xu_\varepsilon|^2\,dX
 \le E_\varepsilon r_k^{n-2+2\gamma}
       +C_\theta\varepsilon^2r_k^{n-2}.
\end{equation}
The constant $C_\theta$ is independent of $k$ and $X$.
The same calculation applies when $n=2$.

We now verify the two smallness conditions at every radius
in \eqref{mrrang}. We choose $K\ge1$ sufficiently large that
\begin{equation*}
 K^{-(2-2\gamma)}+C_\theta K^{-2}\le\eta_\theta,
 \qquad
 K^{-1}\le\varepsilon_\theta.
\end{equation*}
At $r_0=1$, both conditions hold for sufficiently small
$\varepsilon$, since $E_\varepsilon\to0$. Suppose that the
iteration has reached a radius $r_k$ satisfying
\eqref{mrrang}. By \eqref{frsolv} and the lower bound for $r_k$,
we have
\begin{equation*}
 r_k^{-n}\int_{B_{2r_k}(X)}
       \varepsilon|D_xu_\varepsilon|^2\,dX
 \le E_\varepsilon r_k^{-2+2\gamma}
       +C_\theta\left(\frac{\varepsilon}{r_k}\right)^2
 \le K^{-(2-2\gamma)}+C_\theta K^{-2}
 \le\eta_\theta.
\end{equation*}
From the same lower bound, we obtain
$\varepsilon/r_k\le K^{-1}\le\varepsilon_\theta$.
Proposition~\ref{fnstep} therefore applies again.
Induction proves \eqref{frsolv} at every radius $r_k$
in the stated range.

Finally, for any $r$ satisfying \eqref{mrrang}, we choose
$k$ so that $r_{k+1}<r\le r_k$. The radius $r_k$ also
satisfies \eqref{mrrang}, and
$B_{2r}(X)\subset B_{2r_k}(X)$. Since $r_k<r/\theta$,
estimate \eqref{frsolv} implies \eqref{mfloo} after enlarging
$C$. All choices are uniform in $X$ and in the number
of iterations.
\end{proof}

Under microscopic rescaling, the thickness term in \eqref{mfloo}
becomes $CR^{n-2}$. It will imply the tangential growth bound for the
entire limits below, as the thickness-scale term in
\cite[(10.8)--(10.9)]{Wang17} does in the multiplicity-one argument.
Before passing to this scale, we describe the limits at intermediate
scales and state the alternatives for their multiplicities.

\label{sectio}
Assume that $u_j$ are bounded stable solutions with parameters
$\varepsilon_j$, locally bounded diffuse energy, and
\begin{equation*}
 \varepsilon_j|Du_j|^2\,dX\rightharpoonup
 Q\sigma_0\mathcal H^n\lfloor\{z=0\}.
\end{equation*}
We work on fixed compact interior regions. By Lemma~\ref{gtfene},
the full energy measures have the same limit. The monotonicity
estimate \eqref{lmearl} also yields, uniformly in interior centers,
\begin{equation}\label{monoto}
 r^{-n}\mathcal E_{\varepsilon_j}(u_j;B_r(X))
 \le s^{-n}\mathcal E_{\varepsilon_j}(u_j;B_s(X))+Cs
 \qquad\text{for }0<r<s.
\end{equation}

The next lemma shows that an intermediate limit with vanishing
rescaled tilt consists of parallel hyperplanes, with bounded total
multiplicity. This description determines the case in which the
local sheeting argument can be reduced to a smaller multiplicity.
 \begin{lemma}\label{lspli}
Under the preceding assumptions, let $X_j$ be zeros in a fixed
compact interior region, and let $r_j\downarrow0$ satisfy
$\varepsilon_j/r_j\to0$. Suppose that, for every fixed $R>0$,
\begin{equation}\label{vtilt}
 r_j^{-n}\int_{B_{Rr_j}(X_j)}
       \varepsilon_j|D_xu_j|^2\,dX\longrightarrow0.
\end{equation}
Then, along a subsequence, the energy measures of
$U_j(Y)=u_j(X_j+r_jY)$, with parameter $\varepsilon_j/r_j$,
converge locally weakly to
\begin{equation}\label{xzqki}
 \mu_\infty=\sigma_0\sum_{\ell=1}^k
 m_\ell\mathcal H^n\lfloor\{Y_{n+1}=a_\ell\},
 \qquad m_\ell\in\mathbb N,
 \qquad \sum_{\ell=1}^k m_\ell\le Q.
\end{equation}
The heights $a_\ell$ are distinct, and one of them is zero.
\end{lemma}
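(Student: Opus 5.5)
The plan is to run the compactness argument already used in the proof of Proposition~\ref{fnstep}, now at the moving scale $r_j$. First, the rescaled functions $U_j$ solve \eqref{acn} with parameter $\epsilon_j=\varepsilon_j/r_j\to0$, and they remain stable on balls $B_{R}$ for every fixed $R$ once $j$ is large. This is because $X_j$ stays in a compact interior region and $r_j\to0$. The monotonicity formula \eqref{monoto}, applied with a fixed outer radius $s$, gives $\mathcal E_{\epsilon_j}(U_j;B_R)=r_j^{-n}\mathcal E_{\varepsilon_j}(u_j;B_{Rr_j}(X_j))\le C R^n$ uniformly in $j$. Hutchinson--Tonegawa \cite[Theorem~1]{HT} then yields a subsequence whose energy measures converge to $\sigma_0$ times a stationary integral varifold $V$ on $\mathbb R^{n+1}$, with equipartition. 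The hypothesis \eqref{vtilt} says exactly that $\int_{B_R}\epsilon_j|D_xU_j|^2\,dY\to0$ for each $R$. Hence the tangent planes of $V$ are horizontal $\|V\|$-almost everywhere.

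Second, I would identify the structure of the limit. Stationarity against horizontal vector fields $\phi(Y)e_i$, $i\le n$, together with horizontal tangents, gives $\partial_{x_i}\|V\|=0$ in distributions. Thus on products $\|V\|=\mathcal L^n\otimes\nu$ for a Radon measure $\nu$ on the vertical line. Rectifiability and integrality force $\nu$ to be a sum of atoms with weights in $\mathbb N$, giving the form \eqref{xzqki} with distinct heights $a_\ell$. Local finiteness permits only finitely many atoms in each bounded vertical interval.

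Third, I would bound the total multiplicity and show that the heights are bounded, so that $k$ is finite; this is the main point. By the monotonicity formula, the density ratio of the limit at scale $R$ about the origin is bounded by $\lim_{s\to0}\bigl(s^{-n}\mathcal E(u_j;B_s(X_j))+Cs\bigr)$. Here $s$ is a small fixed scale, chosen after letting $j\to\infty$ so that $sr_j^{-1}\to\infty$. Because the original limit is $Q\sigma_0\mathcal H^n\lfloor\{z=0\}$, with $X_j$ converging to a point of the plane by Lemma~\ref{gtfene}, this bound tends to $Q\sigma_0\omega_n$. Therefore $R^{-n}\|\mu_\infty\|(B_R)\le Q\sigma_0\omega_n$ for all $R$. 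As $R\to\infty$, each plane $\{Y_{n+1}=a_\ell\}$ contributes density tending to $m_\ell\sigma_0\omega_n$, so $\sum m_\ell\le Q$. In particular $k\le Q$. The delicate step is passing the upper density bound from the original scale through the limit uniformly in the moving centers. I would handle it as in the end of the proof of Lemma~\ref{ccove}: take convergent subsequences of the centers and use that the limiting planar measure gives no mass to boundary spheres.

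Finally, $U_j(0)=0$ for every $j$. By the Hausdorff convergence of transition sets in \cite[Theorem~1]{HT}, zeros of $U_j$ accumulate only on $\operatorname{supp}\mu_\infty$. Hence $0$ lies in the support, which means one of the heights $a_\ell$ equals zero.
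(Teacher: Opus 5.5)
Your proposal is correct and follows essentially the same route as the paper: Hutchinson--Tonegawa compactness at the moving scale, horizontal stationarity plus integrality to get $\mathcal L^n\otimes(\text{atomic})$, and monotonicity between $Rr_j$ and a fixed $s$, with nearby-ball inclusions for the moving centers, to bound the total multiplicity by $Q$. Hausdorff convergence with $U_j(0)=0$ then places a hyperplane through the origin; the only thing to tidy is the order of limits, which should read $j\to\infty$ at fixed $R,s$, then $s\downarrow0$, then $R\to\infty$, since $\lim_{s\to0}s^{-n}\mathcal E(u_j;B_s(X_j))$ at fixed $j$ is zero rather than $Q\sigma_0\omega_n$.
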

\begin{proof}
The thickness of $U_j$ is $\varepsilon_j/r_j\to0$.
From \eqref{monoto}, we obtain local energy bounds on every fixed
rescaled ball. The compactness theorem \cite[Theorem~1]{HT}
and a diagonal subsequence yield a stationary integral varifold.
As in Proposition~\ref{fnstep},
\eqref{vtilt} makes its weight a product of $\mathcal L^n$ with
an atomic measure of masses in $\sigma_0\mathbb N$.

We pass to a subsequence for which $X_j\to X_*$. Hausdorff convergence of the
transition sets puts $X_*$ on $\{z=0\}$. For fixed $R$ and then
fixed small $s$, we compare $Rr_j$ with $s$ in \eqref{monoto}.
From convergence on the outer ball, using nearby-ball inclusions for
the moving center, we obtain
\begin{equation*}
 R^{-n}\mu_\infty(B_R)
 \le Q\sigma_0\omega_n+Cs.
\end{equation*}
We let $s\downarrow0$ and then $R\to\infty$. Any finite collection
of atoms has total multiplicity at most $Q$, so there are at most
$Q$ atoms altogether. Local Hausdorff convergence and $U_j(0)=0$
put one hyperplane through the origin. 
\end{proof}

If the hyperplane through the origin has multiplicity $m_0<Q$,
we choose a fixed product neighborhood meeting only this
limiting hyperplane. The restrictions of $U_j$ remain stable,
and their energy measures converge to this hyperplane with
multiplicity $m_0$. Suppose that the local sheeting theorem
holds for every multiplicity below $Q$. After a fixed
isotropic dilation, we can apply that theorem on a smaller
neighborhood. This dilation preserves graph slopes and
nonvanishing of the vertical derivative. The zero sets
therefore consist of graphs whose slopes tend to zero.
In particular, the angle between their tangent hyperplanes
at the origin and the horizontal hyperplane tends to zero.

\begin{remark}
A closed normal interval lying strictly between two hyperplanes in
\eqref{xzqki} is eventually zero-free over any fixed compact
lateral base, by Hausdorff convergence of the transition sets.
If $\delta_j=\varepsilon_j/r_j$, the pure-phase error on such
a rescaled product is $C\delta_j^{-p}e^{-c/\delta_j}=o(1)$ for
every fixed $p$.
\end{remark}

\begin{corollary}\label{morrey}
Under the hypotheses of Proposition~\ref{mwfloo}, let $r_j\to0$
satisfy, for some fixed $0<\gamma<1$,
\begin{equation}\label{schoi}
 E_jr_j^{-2+2\gamma}\to0,\qquad
 \varepsilon_j/r_j\to0.
\end{equation}
Then \eqref{vtilt} holds uniformly in interior centers.
In particular, if $\varepsilon_j^2=o(E_j)$, one may take
$r_j=\sqrt{E_j}$.
\end{corollary}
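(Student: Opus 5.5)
The plan is to apply the Morrey bound of Proposition~\ref{mwfloo} at the radii $Rr_j$ and divide by $r_j^n$. Fix $R>0$ and an interior center $X_j$. By translating and rescaling the fixed interior region, I place it inside $B_1^{n+1}$, so that the hypotheses of Proposition~\ref{mwfloo} hold with the fixed $\gamma$ from \eqref{schoi}. The rescaling uses a fixed factor, so only constants change.

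First I check that $r=Rr_j/2$ lies in the admissible range \eqref{mrrang} for large $j$, that is,
\[
K\max\{E_j^{1/(2-2\gamma)},\varepsilon_j\}\le Rr_j/2\le1.
\]
The upper bound holds because $r_j\to0$. For the lower bound, the first condition in \eqref{schoi} gives $E_j^{1/(2-2\gamma)}/r_j=(E_jr_j^{-2+2\gamma})^{1/(2-2\gamma)}\to0$. The second condition gives $\varepsilon_j/r_j\to0$. Hence both quotients are eventually below $R/(2K)$, uniformly in the center, since $K$ does not depend on $X$.

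Applying \eqref{mfloo} with $2r=Rr_j$ and dividing by $r_j^n$ then yields
\[
r_j^{-n}\int_{B_{Rr_j}(X_j)}\varepsilon_j|D_xu_j|^2\,dX
\le C R^{n-2+2\gamma}E_jr_j^{-2+2\gamma}+CR^{n-2}(\varepsilon_j/r_j)^2 .
\]
Both terms tend to zero by \eqref{schoi}, uniformly in interior centers, which is \eqref{vtilt}. For the last assertion, set $r_j=\sqrt{E_j}$. Then $E_jr_j^{-2+2\gamma}=E_j^{\gamma}\to0$, and $\varepsilon_j/r_j=(\varepsilon_j^2/E_j)^{1/2}\to0$ by assumption. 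Also $r_j\to0$ because $E_j\to0$.

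The only delicate point is bookkeeping. The hypotheses of Proposition~\ref{mwfloo} must be arranged on a fixed larger domain: bounded diffuse energy on $B_8$ and the excess bound on $B_4$, after the fixed rescaling. Moreover, $C$ and $K$ must be independent of the center. Both requirements are guaranteed by the statement of that proposition.
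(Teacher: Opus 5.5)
Your proposal is correct and follows the paper's proof. You use \eqref{schoi} to check that $Rr_j/2$ lies in the admissible range \eqref{mrrang}, apply \eqref{mfloo}, and normalize by $r_j^n$ to get $C_RE_jr_j^{-2+2\gamma}+C_R(\varepsilon_j/r_j)^2\to0$; at $r_j=\sqrt{E_j}$ the first term becomes $E_j^\gamma$.
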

\begin{proof}
For each fixed $R>0$, \eqref{schoi} makes $Rr_j/2$ admissible
in Proposition~\ref{mwfloo}. Its normalized bound is
\begin{equation*}
 C_R E_jr_j^{-2+2\gamma}
       +C_R(\varepsilon_j/r_j)^2\longrightarrow0.
\end{equation*}
At $r_j=\sqrt{E_j}$, the first term is $C_RE_j^\gamma$.
\end{proof}

The other possibility is that the hyperplane through the marked
zero carries the full multiplicity. The next lemma shows that this
centered hyperplane persists at larger shrinking scales satisfying
\eqref{vtilt}. We will use this conclusion to compare the heights
at successive scales.

\begin{lemma}\label{satura}
Suppose that the limit at scales $r_j$ in Lemma~\ref{lspli} is
$Q\sigma_0\mathcal H^n\lfloor\{Y_{n+1}=0\}$.
Let $s_j\to0$, $s_j/r_j\to\infty$, and assume \eqref{vtilt}
also holds with $s_j$ in place of $r_j$. Every subsequential limit
at scales $s_j$ is the multiplicity-$Q$ horizontal hyperplane through
the center.
\end{lemma}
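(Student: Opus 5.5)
By Lemma~\ref{lspli} applied at the scales $s_j$, every subsequential limit of the energy measures of $V_j(Y)=u_j(X_j+s_jY)$ has the form
\[
\nu_\infty=\sigma_0\sum_{\ell=1}^k m_\ell\,\mathcal H^n\lfloor\{Y_{n+1}=b_\ell\},
\qquad \sum_\ell m_\ell\le Q,
\]
and one height vanishes, say $b_1=0$ with multiplicity $m_1\ge1$. It therefore suffices to show that $m_1=Q$, since this forces $k=1$. The plan is to compare densities through the monotonicity formula \eqref{monoto}, using the two scales $r_j\ll s_j$ and the common center $X_j$.

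For a fixed radius $\rho>0$ we first estimate densities at scale $s_j$. Choose $\rho$ smaller than half the minimal distance from $0$ to the other heights $b_\ell$. Weak convergence, with nearby-ball inclusions since the planar limit charges no sphere, gives
\[
(\rho s_j)^{-n}\mathcal E_{\varepsilon_j}\bigl(u_j;B_{\rho s_j}(X_j)\bigr)\longrightarrow m_1\sigma_0\omega_n .
\]
At scale $r_j$ the limit is the multiplicity-$Q$ plane through the origin, so for every fixed $R$
\[
(Rr_j)^{-n}\mathcal E_{\varepsilon_j}\bigl(u_j;B_{Rr_j}(X_j)\bigr)\longrightarrow Q\sigma_0\omega_n .
\]
Since $Rr_j<\rho s_j$ for large $j$, \eqref{monoto} gives
\[
(Rr_j)^{-n}\mathcal E_{\varepsilon_j}(u_j;B_{Rr_j})\le(\rho s_j)^{-n}\mathcal E_{\varepsilon_j}(u_j;B_{\rho s_j})+C\rho s_j .
\]
Passing to the limit yields $Q\sigma_0\omega_n\le m_1\sigma_0\omega_n$, and hence $m_1=Q$. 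Because the total multiplicity is at most $Q$, the centered horizontal hyperplane is the only one. This identification holds for every subsequence, which proves the claim.

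The step that needs care is the use of \eqref{monoto}. We must check that the error term $Cs$ is harmless, which holds because $s=\rho s_j\to0$. We must also check that \eqref{monoto} applies with moving interior centers $X_j$; the paper states it uniformly in interior centers. A remaining subtlety is that the hyperplane through the origin at scale $s_j$ might a priori have zero multiplicity, but Lemma~\ref{lspli} guarantees that one of its heights is zero.

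The horizontality of the limiting planes comes from \eqref{vtilt} at scales $s_j$, exactly as in Lemma~\ref{lspli}. No excess-decay input is needed. The main obstacle is only the bookkeeping of the nearby-ball inclusions for the density convergence at both scales.
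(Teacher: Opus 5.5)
Your proof is correct and follows the paper's argument. Monotonicity between the radii $Rr_j<\rho s_j$ transfers the density $Q\sigma_0\omega_n$ to the limit at scales $s_j$, and the bound $\sum_\ell m_\ell\le Q$ from Lemma~\ref{lspli} then forces a single centered plane of multiplicity $Q$. The only difference is cosmetic: you isolate the centered plane with one small radius, whereas the paper shows the ratio equals $Q\sigma_0\omega_n$ at every radius and uses the mass $m\sigma_0\omega_n(1-a^2/t^2)_+^{n/2}$ of an offset plane.
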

\begin{proof}
For fixed $R,t>0$, we have $Rr_j<ts_j$ for large $j$, and
from \eqref{monoto} we obtain
\begin{equation*}
 (ts_j)^{-n}\mathcal E_{\varepsilon_j}(u_j;B_{ts_j}(X_j))
 \ge (Rr_j)^{-n}\mathcal E_{\varepsilon_j}(u_j;B_{Rr_j}(X_j))-Cts_j.
\end{equation*}
The limiting ratio is at least $Q\sigma_0\omega_n$.
Lemma~\ref{lspli} establishes the opposite bound and expresses the
limit as finitely many parallel hyperplanes. Thus its normalized ball
mass is $Q\sigma_0\omega_n$ for every $t$. A hyperplane at height $a$
contributes
\begin{equation*}
 m\sigma_0\omega_n(1-a^2/t^2)_+^{n/2}.
\end{equation*}
We let $t\to\infty$ and obtain total multiplicity $Q$.
The equality for every $t$ then forces all heights to be zero.
\end{proof}

For the remainder of this subsection, $\delta_j$ denotes the
parameter in the equation satisfied by the solutions under
consideration:
\[
 \delta_j^2\Delta u_j=W'(u_j).
\]
When we obtain these solutions from the original sequence
with parameters $\varepsilon_j$ by rescaling a neighborhood
of radius $r_j$ to unit size, we have
$\delta_j=\varepsilon_j/r_j$. Thus $\delta_j$ measures the
transition thickness relative to the radius of that
neighborhood. We continue to use $u_j$ for the rescaled
solutions.

We now pass from intermediate scales to the transition thickness.
The next lemma converts Proposition~\ref{mwfloo} into growth bounds
for the resulting entire limits. These are the bounds used in the
rigidity argument of Section~\ref{rigidpart}.

\begin{lemma}\label{bterm}
Let $u_j$ be bounded stable solutions on a fixed larger cylinder,
with thickness $\delta_j\to0$, uniformly bounded diffuse energy,
a marked zero in a fixed compact interior subdomain, and excess
$E_j\to0$ relative to the normal direction. Suppose that for some
$0<\gamma<1$,
\begin{equation}\label{tentr}
 E_j\delta_j^{-2+2\gamma}\longrightarrow0.
\end{equation}
Every microscopic pointed limit $U$ is a bounded stable entire
solution satisfying, with constants uniform in $Y$,
\begin{equation}\label{tgbroa}
 \int_{B_R(Y)}|D_xU|^2\,dX\le C R^{n-2},\qquad
 \int_{B_R(Y)}\left(\tfrac12|DU|^2+W(U)\right)\,dX\le C R^n
 \quad\text{for }R\ge R_*.
\end{equation}
If the rescalings come from the original multiplicity-$Q$ sequence,
the density at infinity is at most $Q$.
\end{lemma}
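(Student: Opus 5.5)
The plan is to apply Proposition~\ref{mwfloo} to the sequence $u_j$ at scales comparable to the thickness, rescale to microscopic variables, and pass the resulting Morrey bound to the limit. Fix $\gamma$ as in \eqref{tentr}. Let $X_j$ be the marked zeros (or, for the uniformity in $Y$, any points of a fixed compact interior region). Set $U_j(Y)=u_j(X_j+\delta_jY)$, which solves $\Delta U_j=W'(U_j)$ with $|U_j|\le1$. By \eqref{ellipt} and interior Schauder estimates, a subsequence converges in $C^2_{\rm loc}(\mathbb R^{n+1})$ to a bounded entire solution $U$. Stability passes to the limit: a compactly supported test function in $Y$ becomes, after scaling back, a test function supported in a ball of radius $O(\delta_j)$ inside the domain, and the quadratic forms converge.

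For the growth bounds, I first check the admissible range \eqref{mrrang} for the radius $r=R\delta_j$ with $R\ge R_*$ fixed. We have $r\ge K\delta_j$ once $R_*\ge K$. We also have $r\ge KE_j^{1/(2-2\gamma)}$, because \eqref{tentr} gives $E_j^{1/(2-2\gamma)}=o(\delta_j)$. After a fixed rescaling of the cylinder so that the hypotheses of Proposition~\ref{mwfloo} hold on $B_8$, \eqref{mfloo} yields
\begin{equation*}
 \int_{B_{2R\delta_j}(X)}\delta_j|D_xu_j|^2\,dX\le CE_j(R\delta_j)^{n-2+2\gamma}+C\delta_j^2(R\delta_j)^{n-2}.
\end{equation*}
This holds uniformly for centers $X$ in a fixed region. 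Changing variables $X=X_j+\delta_jY'$ multiplies the left side by $\delta_j^{-(n-1)}$ relative to $\int_{B_{2R}}|D_xU_j|^2$. The first term on the right becomes $CE_j\delta_j^{-2+2\gamma}R^{n-2+2\gamma}\to0$ for each fixed $R$, and the second becomes exactly $CR^{n-2}$. Local $C^1$ convergence then gives the first bound in \eqref{tgbroa}. The bound is uniform in $Y$, since $X_j+\delta_jY$ stays in the admissible region for each fixed $Y$ and the constants in Proposition~\ref{mwfloo} do not depend on the center.

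The full energy bound follows in the same way from the monotonicity inequality \eqref{monoto}. Together with the uniform diffuse energy bound, it gives $r^{-n}\mathcal E_{\delta_j}(u_j;B_r(X))\le C$ for $0<r\le1$, and rescaling by $\delta_j$ turns this into $\int_{B_R(Y)}(\frac12|DU_j|^2+W(U_j))\le CR^n$. For the density statement, I fix $R$ and a small $s$ and apply \eqref{monoto} with $r=R\delta_j<s$:
\begin{equation*}
 R^{-n}\int_{B_R}\Bigl(\tfrac12|DU_j|^2+W(U_j)\Bigr)\le s^{-n}\mathcal E_{\varepsilon_j}(u_j;B_s(X_j))+Cs.
\end{equation*}
When the rescalings come from the multiplicity-$Q$ sequence, the planar limit (Lemma~\ref{gtfene}) bounds the right side by $Q\sigma_0\omega_n+o(1)+Cs$, using nearby-ball inclusions as in Lemma~\ref{lspli}. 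Letting $j\to\infty$, then $s\downarrow0$, then $R\to\infty$, shows that the density at infinity is at most $Q$.

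The main obstacle is the bookkeeping of domains and normalizations. The original cylinder has to be rescaled to fit Proposition~\ref{mwfloo}, with its $E_\varepsilon$ taken relative to the given normal direction. One then has to confirm that \eqref{tentr} is precisely what makes $R\delta_j$ admissible and kills the excess term; this is the step where the exponent $\gamma$ matters.
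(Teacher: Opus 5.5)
Your proposal is correct and is essentially the paper's proof: you apply Proposition~\ref{mwfloo} at radii $R\delta_j$, which are admissible because \eqref{tentr} gives $E_j^{1/(2-2\gamma)}=o(\delta_j)$, so that after rescaling the excess term becomes $CE_j\delta_j^{-2+2\gamma}R^{n-2+2\gamma}\to0$ and the thickness term becomes $CR^{n-2}$; you then use monotonicity for the full-energy bound, and compare with a fixed outer ball via \eqref{monoto} and the planar limit for the density at infinity. Two cosmetic slips: the change of variables multiplies the $\delta_j$-weighted integral by $\delta_j^{-n}$, not $\delta_j^{-(n-1)}$ (that is the factor for the unweighted integral), although the rescaled right-hand terms you write are the correct ones; and in the density step the microscopic ball should have radius $R\varepsilon_j$ in the original coordinates, as in the paper, rather than $R\delta_j$.
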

\begin{proof}
The critical tangential growth in the lemma has appeared in Wang
\cite[Lemmas~11.4--11.5]{Wang17}, obtained there under a near-unit
density hypothesis. Here it follows from \eqref{mfloo} by rescaling.

From condition~\eqref{tentr}, we obtain
$E_j^{1/(2-2\gamma)}=o(\delta_j)$. Hence
Proposition~\ref{mwfloo}, on a fixed interior subdomain, is
available at all radii $R\delta_j$ with $R\ge R_*$. For
$V_j(Y)=u_j(X_j+\delta_jY)$ we have
\begin{equation*}
 \int_{B_R(Y)}|D_xV_j|^2\,dX
 \le C E_j\delta_j^{-2+2\gamma}R^{n-2+2\gamma}
                         +C R^{n-2}.
\end{equation*}
For each fixed $Y,R$, the first term tends to zero. Interior
elliptic compactness and stability yield the entire limit. The
local full-energy ratio bound implies the second estimate in
\eqref{tgbroa}. Every fixed microscopic center eventually lies
in the same parent interior subdomain, so the constants are
uniform over all centers of the limit.

For the last assertion, we compare a fixed microscopic ball with
a fixed outer ball in the original coordinates using
\eqref{monoto}. We pass to the microscopic limit and then let the
outer radius tend to zero, as in Lemma~\ref{lspli}. The resulting
energy ratio is at most $Q\sigma_0\omega_n$ at every radius.
\end{proof}

We return to the common-normal condition in
Proposition~\ref{hcomp}. When the marked hyperplane carries the full
multiplicity at the height scale, the selected heights are confined
to a thin slab at every larger scale. The next proposition uses
this confinement and the ordering of the heights to show that their
normalized gaps have constant limits, so their gradients agree.

\begin{proposition}\label{hcgrad}
Let $u_j$ be bounded stable solutions on a fixed larger cylinder,
with thickness $\delta_j\to0$, uniformly bounded diffuse energy,
and
\begin{equation*}
 \delta_j|Du_j|^2\,dX\rightharpoonup
 Q\sigma_0\mathcal H^n\lfloor\{z=0\}.
\end{equation*}
Let $E_j\to0$ bound their tangential excess on the larger cylinder,
and put $\lambda_j=\sqrt{E_j}$. Assume
\begin{equation}\label{hregi}
 \delta_j^2=o(E_j),\qquad
 \frac{\delta_j}{\lambda_j}\log(2/\lambda_j)\longrightarrow0.
\end{equation}
Let $X_j=(0,z_j)$ be interior zeros, after horizontal translation.
Suppose that the pointed limit at scale $\lambda_j$ is exactly
the multiplicity-$Q$ horizontal hyperplane through $X_j$.
Then the harmonic heights of Proposition~\ref{hcomp} have the same
gradient.
\end{proposition}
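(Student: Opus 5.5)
We compare the harmonic heights $h_1,\ldots,h_Q$ of Proposition~\ref{hcomp} through their normalized gaps. Since the roots are ordered, the differences $\bar f_{\alpha+1,j}-\bar f_{\alpha,j}$ are nonnegative. After subtracting the additive constants, the normalized differences converge in $L^1_{\rm loc}(B_2)$ to $h_{\alpha+1}-h_\alpha+\lim (c_{\alpha+1,j}-c_{\alpha,j})/\lambda_j$. The goal is to show that each such limit is a nonnegative harmonic function whose oscillation near the origin is controlled at every scale. Liouville-type reasoning at the origin then forces $D(h_{\alpha+1}-h_\alpha)(0)=0$.

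\textbf{Step 1: confinement.} The hypothesis says that the pointed limit at scale $\lambda_j$ about $X_j$ is the multiplicity-$Q$ hyperplane through $X_j$. By Corollary~\ref{morrey}, \eqref{vtilt} holds at scales $s_j$ with $s_j/\lambda_j\to\infty$ and $s_j\to0$, at least while $E_j s_j^{-2+2\gamma}\to0$. Lemma~\ref{satura} then shows that the limit at every such scale is again the multiplicity-$Q$ hyperplane through the center. Via Hausdorff convergence of transition sets, all $Q$ selected roots of intermediate levels over $B^n_{Rs_j}$ lie within $o(s_j)$ of $z_j$, for most base points. Applying this at the scale $s_j=R\lambda_j$ for every fixed $R$, and at the unit scale, I obtain that the normalized gaps $(\bar f_{\alpha+1,j}-\bar f_{\alpha,j})/\lambda_j$ stay bounded on average over $B^n_{R\lambda_j}$. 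They also have small oscillation compared with the unit scale.

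\textbf{Step 2: positivity and harmonicity of gaps.} The normalized constants $c_{\alpha,j}/\lambda_j$ need not be bounded. Confinement at scale $\lambda_j$ near the origin, transported to the mean over $B_3^n$ by the $BV$ Poincar\'e inequality from \eqref{ahderi}, shows that $(c_{\alpha+1,j}-c_{\alpha,j})/\lambda_j$ is bounded. Passing to a subsequence, the gap limits $g_\alpha=h_{\alpha+1}-h_\alpha+\kappa_\alpha$ are harmonic and nonnegative. By Step~1 they also vanish, in a suitable averaged sense, at the origin. The harmonic mean-value property then gives $g_\alpha\equiv$ its value at $0$ on the set where the averages are controlled. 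More precisely, a nonnegative harmonic function with $g_\alpha(0)=0$ is identically zero by the strong minimum principle. Hence $Dh_{\alpha+1}=Dh_\alpha$, which gives \eqref{cgrad}.

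\textbf{Main obstacle.} The crux is the pointwise value $g_\alpha(0)=0$. The convergence is only $L^1_{\rm loc}$, while the confinement lives at the shrinking scale $\lambda_j$. The gap could be $O(\lambda_j)$ near the origin in a way that is invisible in the limit unless it is quantified. I would try first to use a barrier/separation estimate in the spirit of the Toda interaction: two layers at distance $d$ interact with strength $e^{-cd/\delta_j}$. The condition $\frac{\delta_j}{\lambda_j}\log(2/\lambda_j)\to0$ in \eqref{hregi} means that gaps of size $\gtrsim\lambda_j$ correspond to negligible interaction, while smaller gaps are $o(\lambda_j)$ after normalization. Combined with the monotonicity \eqref{monoto}, this should upgrade the scale-$\lambda_j$ confinement to: $\lambda_j^{-1}\times$ the averaged gap over $B_{\rho}^n$ tends to $0$ as $\rho\to0$ uniformly in $j$. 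That gives $\inf_{B_\rho}g_\alpha\to0$, which suffices for the minimum principle.
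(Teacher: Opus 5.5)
Your overall architecture is the same as the paper's: nonnegative normalized gaps converge to a nonnegative harmonic function, which is shown to vanish at the origin, and the strong minimum principle finishes. But the step you yourself flag as the crux is left unproved, and the route you suggest cannot supply it. Confinement at a fixed small scale $r$ only gives gaps of size $\eta_j r$, and $\eta_j r/\lambda_j$ need not be bounded. Monotonicity only sees densities, and these are saturated at $Q$ whatever the heights are at order $\lambda_j$. Toda-type interaction estimates constrain how \emph{close} layers can be, not how far apart they drift at intermediate scales. The same defect already affects your Step~2. The single-scale bound $|D\bar f_{\alpha,j}|(B_3)\le C\lambda_j$ from \eqref{ahderi} only yields $\fint_{B_{\lambda_j}}|\bar f_{\alpha,j}-c_{\alpha,j}|\,dx\le C\lambda_j^{2-n}$. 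So it cannot transport the scale-$\lambda_j$ confinement to the means over $B_3^n$, and it does not bound $(c_{\alpha+1,j}-c_{\alpha,j})/\lambda_j$.

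The missing ingredient is a scale-by-scale $BV$ bound for the gap $G_j=\bar f_{Q,j}-\bar f_{1,j}$, driven by the Morrey estimate of Proposition~\ref{mwfloo}; consecutive gaps would work equally well. For $\lambda_j\le r\le r_0$, that estimate gives normalized tilt $\mathcal E_j(r)=C(E_jr^{-2+2\gamma}+\delta_j^2r^{-2})\to0$. Rescaling the boundary estimates of Lemma~\ref{tboun} to $B_r$ then gives $|DG_j|(B_r)\le Cr^n\sqrt{\mathcal E_j(r)}$. The rescaling is legitimate because confinement $|z-z_j|\le C\eta_j r$ at \emph{every} intermediate scale guarantees two things: the roots selected in the local cylinder are exactly those defining $G_j$, and the rescaled heights are bounded. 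The $BV$ Poincar\'e inequality then gives $|\fint_{B_r}G_j-\fint_{B_{r/2}}G_j|\le C(\lambda_jr^\gamma+\delta_j)$. Telescoping down to a radius in $[\lambda_j,2\lambda_j]$, where $G_j\le C\eta_j\lambda_j$, gives
\[
0\le\fint_{B_r}\frac{G_j}{\lambda_j}\,dx\le Cr^\gamma+C\eta_j+C\frac{\delta_j}{\lambda_j}\log\frac{2r}{\lambda_j}.
\]
At fixed $r$ this bounds $(c_{Q,j}-c_{1,j})/\lambda_j$. In the limit it yields $0\le G(0)\le Cr^\gamma$ for every small $r$, hence $G(0)=0$. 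This is also where the second condition in \eqref{hregi} actually enters. The thickness term of the Morrey bound costs $C\delta_j$ per dyadic scale, over roughly $\log(r/\lambda_j)$ scales. It has nothing to do with the strength of the interaction between layers.
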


Condition~\eqref{hregi} holds, in particular, if $E_j\ge\delta_j$.
In that case the height-scale limit either has smaller multiplicity
at the marked hyperplane or provides the common gradient required for
the single-normal conclusion of Proposition~\ref{hcomp}.
\begin{proof}
We fix $\gamma\in(0,1)$ and apply Proposition~\ref{mwfloo}.

The argument follows the approach of Florit-Simon--Serra
\cite[Sections~7.4--7.5]{FSS25}, who use small-scale proximity and
ordering of the layers to compare their affine approximations.
Here we use the heights in \eqref{aheig}, with the same root selection
and weights at every scale, and estimate their nonnegative gaps in $BV$.

Fix a large numerical constant $A$ and a small $s_0>0$ so that
$B_{As_0}(X_j)$ lies in the parent domain. Uniformly for
$\lambda_j\le s\le s_0$, we obtain from Proposition~\ref{mwfloo}
\begin{equation*}
 s^{-n}\int_{B_{As}(X_j)}\delta_j|D_xu_j|^2\,dX
 \le C_A E_j^\gamma+C_A\delta_j^2/E_j\longrightarrow0.
\end{equation*}
The rescaled thickness also tends to zero uniformly. Every sequence
of scales tending to zero therefore has a parallel-hyperplane limit.
If $s/\lambda_j$ stays bounded, the assumed height-scale limit
forces it to be the centered $Q$-hyperplane. If $s/\lambda_j\to\infty$,
Lemma~\ref{satura} implies the same conclusion. At scales bounded
away from zero, we use the original planar convergence.
Hausdorff convergence of the transition sets and a contradiction
argument yield $\eta_j\to0$ such that
\begin{equation}\label{ucslab}
 \{u_j\in I\}\cap B_{As}(X_j)
        \subset\{|z-z_j|\le\eta_j s\}
 \qquad(\lambda_j\le s\le s_0),
\end{equation}
for a fixed compact interval $I\Subset(-1,1)$ enlarged once to
contain the level cutoffs.

We choose $r_0\ll s_0$ so that all enlarged balls below remain in
the parent domain. Original Hausdorff convergence puts every root
in the original fixed normal interval over the relevant base
within $|z-z_j|<s_0/2$ for large $j$. For
$\lambda_j\le r\le r_0$, $|x|\le Cr$, and any such root, we set
$d=|z-z_j|$ and $s=\max\{Cr,d\}$. The fixed constants can be
chosen so that the root lies in $B_{As}(X_j)$ and
$\lambda_j\le s\le s_0$. If $d\ge Cr$, \eqref{ucslab}
would yield $d\le\eta_jd$. Hence $d<Cr$, and
\begin{equation}\label{arconf}
 |z-z_j|\le C\eta_j r.
\end{equation}
Thus every root in the original fixed normal interval lies in the
local interval of length comparable to $r$. The two intervals
therefore contain the same roots over the smaller base. We apply
\eqref{rroot} with the same $t_0$ and the same values of $t$ in both
intervals, so the selected roots and their rank labels agree at every
dyadic scale.

We use the fixed weights $\rho(t)$ and $\chi(s)$ from
\eqref{aheig}, and set
\begin{equation*}
 G_j=\bar f_{Q,j}-\bar f_{1,j}\ge0.
\end{equation*}
On empty fibers, we assign $z_j$ to every index. This leaves $G_j$
unchanged, and all heights relative to $z_j$ satisfy
\eqref{arconf}. We set
\begin{equation*}
 \mathcal E_j(r)=C\bigl(E_jr^{-2+2\gamma}
                               +\delta_j^2r^{-2}\bigr),
\end{equation*}
with $C$ bounding the normalized tilt-excess on the fixed
enlargements used to prove Lemma~\ref{tboun}. This bound tends to zero
uniformly for $r\in[\lambda_j,r_0]$.

The quantitative boundary estimates in the proof of
Lemma~\ref{tboun} use Proposition~\ref{cover}, Lemma~\ref{rlemm},
and the local full-energy bound. Thus they apply here with small
$\mathcal E_j(r)$, small $\delta_j/r$, and the thin slab
\eqref{arconf}. Together with the coarea bound for the ordinary
slope density, they yield
\begin{equation*}
 \begin{gathered}
 DG_j=v_{j,r}\,dx+\xi_{j,r},\qquad
 \int_{B_r}|v_{j,r}|^2\,dx\le C r^n\mathcal E_j(r),\\
 \|\xi_{j,r}\|_{\mathcal M(B_r)}
       \le C r^n\mathcal E_j(r)^{1/2+1/[4(n-1)]}.
 \end{gathered}
\end{equation*}
To check the last scaling, we dilate lengths by $r^{-1}$ and heights
by $r^{-1}$. The ratio in \eqref{rroot} is invariant under this
dilation, so the selected roots are unchanged. Equation~\eqref{hberro}
then bounds the variation in the
unit base ball, since \eqref{arconf} bounds the rescaled heights
uniformly. Returning to the original variables multiplies this
variation by $r^{n-1}$ from the base and by $r$ from the height jump,
which yields the factor $r^n$. Both estimates concern restrictions
of the same function $G_j$, with the fixed average against
$\rho(t)\,dt$ at every scale.
The Cauchy--Schwarz inequality and $\mathcal E_j(r)\to0$ yield
\begin{equation*}
 |DG_j|(B_r)\le C r^n\sqrt{\mathcal E_j(r)}.
\end{equation*}

From the BV Poincar\'e inequality \cite[Chapter~3]{AFP}, we obtain
\begin{equation*}
 \left|\fint_{B_r}G_j\,dx-\fint_{B_{r/2}}G_j\,dx\right|
 \le C r^{1-n}|DG_j|(B_r)
 \le C\bigl(\lambda_j r^\gamma+\delta_j\bigr).
\end{equation*}
For fixed $0<r\le r_0$, we telescope down to a radius in
$[\lambda_j,2\lambda_j]$, where
$0\le G_j\le C\eta_j\lambda_j$. The geometric sum and the
number of dyadic scales yield
\begin{equation}\label{gmboun}
 0\le\fint_{B_r}\frac{G_j}{\lambda_j}\,dx
 \le Cr^\gamma+C\eta_j+
 C\frac{\delta_j}{\lambda_j}
                  \log\frac{2r}{\lambda_j}.
\end{equation}

By \eqref{hconv}, we have
\begin{equation*}
 \frac{\bar f_{\alpha,j}-c_{\alpha,j}}{\lambda_j}
               \longrightarrow h_\alpha\quad\text{in }L^1_{\rm loc}.
\end{equation*}
At fixed $r$, \eqref{gmboun} bounds
$(c_{Q,j}-c_{1,j})/\lambda_j$. We pass to a subsequence on which
it tends to $c$. Then
\begin{equation*}
 G_j/\lambda_j\longrightarrow G=c+h_Q-h_1
       \quad\text{in }L^1_{\rm loc},\qquad
 G\ge0,\quad\Delta G=0.
\end{equation*}
Condition~\eqref{hregi} removes the last two errors in
\eqref{gmboun}. From the mean-value property, we obtain
$0\le G(0)\le Cr^\gamma$ for every small $r$.
Thus $G(0)=0$, and the strong minimum principle implies $G\equiv0$.
Since
$0\le\bar f_{\alpha,j}-\bar f_{1,j}\le G_j$,
every normalized intermediate gap tends to zero in $L^1_{\rm loc}$.
Equation~\eqref{ahderi} and the slope convergence in \eqref{hconv}
then yield $Dh_\alpha=Dh_1$ for every $\alpha$.

Finally, $E_j\ge\delta_j$ implies
$\delta_j/\lambda_j\le\sqrt{\delta_j}$ and
$\log(2/\lambda_j)\le\log(2/\sqrt{\delta_j})$, proving
\eqref{hregi} in this case.
\end{proof}

We can now combine the preceding estimates into the alternative
used at each step of the sheeting argument. We seek a smaller
cylinder on which the excess decreases after a small change of
normal. The next proposition proves this decay when the harmonic
heights have a common gradient. The two remaining outcomes are a
limit that permits a reduction in multiplicity and an entire
solution satisfying the growth bounds of Lemma~\ref{bterm}.

We consider bounded stable rescaled solutions $u_j$ on the
fixed nested cylinders used above, with $u_j(0)=0$ and a
uniform bound on the full energy. Their thicknesses
$\delta_j$ and excesses $E_j$, measured relative to unit
normals $e_j$, tend to zero. In coordinates whose last axis
is $e_j$, we assume convergence to the multiplicity-$Q$
horizontal hyperplane through the origin on the domain of
Proposition~\ref{hcomp}. We also assume that every energy
measure $\mu_\infty$ obtained at intermediate scales
satisfies the inherited density bound
\begin{equation*}
 \mu_\infty(B_R)\le Q\sigma_0\omega_nR^n
 \qquad\text{for every }R>0.
\end{equation*}
When the cylinder radius is multiplied by $\theta$ and
the normal is changed to $e_j^+$, we denote by $E_j^+$
the excess on the corresponding rotated outer cylinder,
after rescaling the new radius to one.

\begin{proposition}\label{cont}
Under the preceding assumptions, for every $a\in(0,1/4)$
there are $\theta\in(0,1)$ and $C_0<\infty$, depending only
on $a$ and the fixed data, with the following properties
along a subsequence.

If $E_j\le\delta_j$, every microscopic pointed limit
satisfies \eqref{tgbroa}, uniformly in the center.

If $E_j>\delta_j$, either the hyperplane through the origin
in the limit at scale $\sqrt{E_j}$ has multiplicity
less than $Q$, permitting the restriction in
Lemma~\ref{lspli}, or there are unit normals $e_j^+$ such that
\begin{equation*}
 |e_j^+-e_j|\le C_0\sqrt{E_j},
 \qquad E_j^+\le aE_j.
\end{equation*}
\end{proposition}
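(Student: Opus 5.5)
The proof splits according to the size of $E_j$ relative to $\delta_j$. Both branches are assembled from Lemma~\ref{bterm}, Proposition~\ref{hcgrad}, Lemma~\ref{lspli}, and Proposition~\ref{hcomp}, applied in coordinates whose last axis is $e_j$.

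\emph{Case $E_j\le\delta_j$.} Fix any $\gamma\in(0,1)$. Then
\[
E_j\delta_j^{-2+2\gamma}\le\delta_j^{2\gamma-1}\cdot\delta_j^{0}\cdot\delta_j^{\,\cdots}
\]
is not immediately small, so I would instead use $E_j\delta_j^{-2+2\gamma}\le\delta_j^{-1+2\gamma}$. This tends to zero once $\gamma>1/2$. I therefore fix $\gamma\in(1/2,1)$, and \eqref{tentr} holds. Lemma~\ref{bterm} then gives \eqref{tgbroa} for every microscopic pointed limit, with constants uniform in the center. Its hypotheses are bounded stable solutions, uniformly bounded energy, the marked zero at the origin, and $E_j\to0$; all of these are part of the standing assumptions. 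The inherited density bound also supplies the final density assertion.

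\emph{Case $E_j>\delta_j$.} Put $\lambda_j=\sqrt{E_j}$. Then $\delta_j^2<E_j\delta_j=o(E_j)$, so \eqref{thickn} holds and Proposition~\ref{hcomp} applies along a subsequence. It yields harmonic heights $h_1,\dots,h_Q$ with total Dirichlet energy at most one, together with the decay \eqref{spvari}. By Corollary~\ref{morrey} with $r_j=\lambda_j$, condition \eqref{vtilt} holds. Lemma~\ref{lspli} therefore describes the limit at scale $\lambda_j$ as finitely many parallel hyperplanes, one of them through the origin. If that hyperplane has multiplicity below $Q$, we are in the first alternative.

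Otherwise the scale-$\lambda_j$ limit is the centered multiplicity-$Q$ hyperplane. The remark after Proposition~\ref{hcgrad} shows that $E_j\ge\delta_j$ implies \eqref{hregi}, so that proposition gives \eqref{cgrad}. The single-normal version of \eqref{mdeca} then provides normals $e_j^+=e_\varepsilon(v)$, with $v$ the common gradient $Dh_\alpha(0)$. These satisfy $|e_j^+-e_j|\le C_0\sqrt{E_j}$, where $C_0$ comes from the interior harmonic bound $|Dh_\alpha(0)|\le C$, and
\[
\limsup_j\frac{1}{\theta^nE_j}\int_{B_\theta\times(-4,4)}\varepsilon\bigl(|Du_j|^2-|Du_j\cdot e_j^+|^2\bigr)\,dX\le C\theta^2 .
\]

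It remains to turn this into $E_j^+\le aE_j$. Here $E_j^+$ is the excess on the rotated outer cylinder of radius $64\theta$-type size, rescaled to radius one. This excess is dominated by $\theta^{-n}$ times the excess in a slightly larger straight cylinder $B_{c\theta}\times(-4,4)$. The containment holds because the rotation is $O(\sqrt{E_j})$ and the transition set lies in a slab of vanishing width, while \eqref{cerro} makes the pure-phase parts outside the slab exponentially small. Applying the estimate above with $c\theta$ in place of $\theta$ gives $E_j^+\le C\theta^2E_j+o(E_j)$. Choosing $\theta$ with $C\theta^2<a/2$ yields $E_j^+\le aE_j$ for large $j$.

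\textbf{Main obstacle.} The delicate step is this last comparison of the rotated outer cylinder with straight cylinders. The rescaling of the fixed nested cylinders $\C_{64}$ must stay compatible with the domain of Proposition~\ref{hcomp}. The tilt measured with respect to $e_j^+$ on a rotated cylinder also has to be controlled by the quantity in \eqref{mdeca} without losing a constant that depends on $\theta$. I would handle this by first applying Proposition~\ref{hcomp} on a dilated domain, so that its $B_\theta$ contains the projection of the rotated cylinder. The constant $C$ in \eqref{mdeca} is independent of $\theta$, so $\theta$ can be fixed after all of these choices.
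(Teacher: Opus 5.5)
Your proposal is correct and follows the paper's proof essentially step by step: the same case split (the paper fixes $\gamma=3/4$ in Lemma~\ref{bterm}), then Corollary~\ref{morrey} and Lemma~\ref{lspli} at scale $\sqrt{E_j}$, Proposition~\ref{hcgrad} for the common gradient, and the single-normal decay of Proposition~\ref{hcomp}. The final comparison you flag as the main obstacle is purely geometric: with $\theta=\rho/128$, the rotated cylinder of radius $64\theta$ lies in $B^{n+1}_{64\sqrt2\theta}\subset B^n_\rho\times(-4,4)$ for any unit normal, so no slab, pure-phase, or dilated-domain argument is needed.
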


The same conclusions and constants apply when the
iteration depths depend on $j$, provided the hypotheses
and uniform bounds on the larger cylinders remain valid.
 \begin{proof}
We work in coordinates whose last axis is $e_j$ and fix
$\gamma=3/4$. After passing to a subsequence, we may assume
that either $E_j\le\delta_j$ for every $j$ or
$E_j>\delta_j$ for every $j$.

Suppose first that $E_j\le\delta_j$. Then
\[
 E_j\delta_j^{-2+2\gamma}
 =E_j\delta_j^{-1/2}
 \le\delta_j^{1/2}\longrightarrow0.
\]
Lemma~\ref{bterm} therefore applies. Every microscopic
pointed limit satisfies the two growth bounds in
\eqref{tgbroa}, with constants uniform in the center.

We now suppose that $E_j>\delta_j$ and set
$\lambda_j=\sqrt{E_j}$. For all sufficiently large $j$,
we have
\[
 \frac{\delta_j^2}{E_j}\le\delta_j\longrightarrow0,
 \qquad
 \frac{\delta_j}{\lambda_j}\log\frac{2}{\lambda_j}
 \le\sqrt{\delta_j}\log\frac{2}{\sqrt{\delta_j}}
 \longrightarrow0.
\]
In particular, $\delta_j/\lambda_j\to0$. By
Corollary~\ref{morrey}, for every fixed $R>0$,
\[
 \lambda_j^{-n}
 \int_{B_{R\lambda_j}}
       \delta_j|D_xu_j|^2\,dX
 \le C_R\left(E_j^\gamma+\frac{\delta_j^2}{E_j}\right)
 \longrightarrow0.
\]
By Lemma~\ref{lspli}, the limit at scale $\lambda_j$ consists
of parallel hyperplanes of total multiplicity at most $Q$,
with one hyperplane through the origin. If that hyperplane
has multiplicity less than $Q$, we restrict to a product
neighborhood meeting only that hyperplane, as described
after Lemma~\ref{lspli}. This proves the alternative with
smaller multiplicity.

It remains to consider the case in which the hyperplane
through the origin has multiplicity $Q$. Since the total
multiplicity is at most $Q$, the limit consists entirely
of this hyperplane. After passing to a further subsequence,
Proposition~\ref{hcomp} provides harmonic heights
$h_1,\ldots,h_Q$. The two thickness estimates above verify
the remaining hypotheses of Proposition~\ref{hcgrad},
so these heights have the same gradient.

We can consequently remove the same linear part from
every harmonic height. Their uniform Dirichlet energy bound
and the interior estimates for harmonic functions imply
\[
 \sigma_0\sum_{\alpha=1}^Q
 \int_{B_\rho^n}
       |Dh_\alpha(x)-Dh_1(0)|^2\,dx
 \le C\rho^{n+2}
\]
for every fixed sufficiently small $\rho>0$.
The single-normal conclusion of Proposition~\ref{hcomp}
therefore provides unit vectors $e_j^+$ satisfying
\[
 |e_j^+-e_j|\le C_0\sqrt{E_j},
 \qquad
 \limsup_{j\to\infty}\frac{1}{E_j}
 \int_{B_\rho^n\times(-4,4)}
 \delta_j\left(|Du_j|^2-|Du_j\cdot e_j^+|^2\right)\,dX
 \le C\rho^{n+2}.
\]
The constants $C$ and $C_0$ depend only on the fixed data
and are independent of $\rho$.

We set $\theta=\rho/128$. The rotated outer cylinder
of radius $64\theta$ is contained in $B_\rho^{n+1}$,
since its distance from the origin is at most
$64\sqrt{2}\theta<\rho$. This ball is contained in
$B_\rho^n\times(-4,4)$. Rescaling the cylinder by
$\theta^{-1}$ multiplies its excess integral by
$\theta^{-n}$. Hence
\[
 \limsup_{j\to\infty}\frac{E_j^+}{E_j}
 \le C\theta^{-n}\rho^{n+2}
 =C128^n\rho^2.
\]
We first choose $\rho$ sufficiently small that
$C128^n\rho^2<a/2$, and then take $j$ sufficiently large.
We obtain $E_j^+\le aE_j$, together with the required
bound on the change of normal.
\end{proof}

This completes the estimates for the finite iteration.
The decay alternative will be used in the local sheeting argument,
while the microscopic alternative leads to the rigidity problem
considered in Section~\ref{rigidpart}.

\section{Rigidity under a tangential energy bound}\label{rigidpart}

We classify the entire solutions obtained when the excess iteration
reaches the transition thickness. Besides the full energy bound,
these solutions satisfy a bound for the horizontal gradient energy
that is uniform in the center of the ball. This additional information
will force a vertical one-dimensional transition throughout the
dimension range needed for local sheeting.

We first construct ordered layers outside controlled neighborhoods
of exceptional zeros. The equations and stability of these layers
yield scalar inequalities, which we extend across the omitted regions.
Spherical averaging and radial Hardy tests then exclude several layers.
The estimates include the different exponential rates that arise when
$W''(-1)$ and $W''(1)$ differ.

\begin{proposition}\label{crit}
Let $2\le n\le9$ and let $U\in C^2(\mathbb R^{n+1};[-1,1])$ be a stable
entire solution of $\Delta U=W'(U)$ with $U(0)=0$. Suppose that there
are constants $A,K>0$ and $R_*\ge1$ such that
\begin{equation*}
 \begin{aligned}
 \int_{B_R(Y)}|D_xU|^2\,dX&\le AR^{n-2}
       &&\text{for }Y\in\mathbb R^{n+1},\ R\ge R_*,\\
 \int_{B_R}\left(\frac12|DU|^2+W(U)\right)\,dX&\le KR^n
       &&\text{for }R\ge R_*.
 \end{aligned}
\end{equation*}
Then $U(x,z)=g(z)$ or $g(-z)$.
\end{proposition}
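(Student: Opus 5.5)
The plan follows the outline sketched in the introduction. First, the tangential bound is used to show that $U$ is close to a vertical heteroclinic away from a sparse exceptional set. Ordered layers are then built on the complement. Finally, the Toda system for these layers is tested against radial Hardy functions to rule out more than one layer, and Lemma~\ref{kzero} finishes.

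\textbf{Step 1 (exceptional set).} Apply the horizontal Sternberg--Zumbrun inequality \eqref{szum} with a cutoff adapted to $B_{2R}(Y)$. Together with the hypothesis, this gives
\[
\int_{B_R(Y)}\bigl(|D(D_xU)|^2-|D|D_xU||^2\bigr)\,dX\le CR^{n-4}
\]
uniformly in $Y$. By Lemma~\ref{detect}, every point of $\{|U|\le b\}$ at which $U$ is not $C^2(B_L)$-close to a rotated heteroclinic costs a fixed amount $c$ of this integral on a unit ball. A maximal disjoint family therefore has at most $CR^{n-4}$ such points in $B_R(Y)$. The tangential bound then excludes large tilt at most remaining points, by the slab argument of Lemma~\ref{kzero} on unit balls. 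Proposition~\ref{lhori} rescaled, or a direct count via the $R^{n-2}$ bound, controls the exceptional points where $U$ is close to a tilted heteroclinic.

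We remove unit neighborhoods of all these points, as in \cite{LWWW24}. On the complement, $U$ is near a vertical heteroclinic. The energy bound with density $K$ and the crossing count of Lemma~\ref{ccove} bound the number of layers over each vertical line. In this way we obtain ordered graphs $z=f_1(x)<\cdots<f_Q(x)$ over a horizontal region $\Omega\subset\mathbb R^n$. The complement of $\Omega$ in $B_R^n$ has measure $O(R^{n-4})$ at large scales. If $Q\le1$ everywhere, one more argument shows that there is a single graph. Its vanishing tilt and the rigidity of Lemma~\ref{kzero}/Lemma~\ref{detect} then give $U=g(\pm z)$.

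\textbf{Step 2 (Toda system and stability).} On $\Omega$ we write $U$ as a sum of shifted heteroclinics plus a small error and project the equation onto $g'(z-f_\alpha)$, following \cite{WWends,WW}. This yields
\[
\Delta f_\alpha\approx A_-e^{-\sqrt{W''(\mp1)}(f_{\alpha+1}-f_\alpha)}-A_+e^{-\sqrt{W''(\pm1)}(f_\alpha-f_{\alpha-1})}.
\]
Here the rates alternate according to which well lies between consecutive layers. Projecting stability onto the same functions gives the stability inequality of the Toda system. Errors are quadratic in the interaction terms and in $|Df_\alpha|^2$, and the tangential bound controls them in average.

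Summing the equations suitably, the positive quantities
\[
w_\alpha=e^{-\sqrt{W''}(f_{\alpha+1}-f_\alpha)}
\]
satisfy two kinds of bound on $\Omega$. The first is a scalar differential inequality $-\Delta\log w\ge c\,w$ (up to errors). The second is stability,
\[
\int w\,\varphi^2\le C\int|D\varphi|^2.
\]
Across the excluded neighborhoods we extend these inequalities using the counting bound $O(R^{n-4})$ and local $L^\infty$ bounds; this is where the weighted averages of the errors enter.

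\textbf{Step 3 (Hardy test).} We average spherically and test stability with radial functions $\varphi=r^{-(n-2)/2+\kappa}$, cut off at scale $R$. Combined with the inequality for $\log w$, this forces the spherical average of $w$ to decay faster than $r^{-2}$. Iterating the argument shows that interactions are incompatible with the Toda equation: a positive superharmonic quantity cannot sustain this decay. The exponent bookkeeping requires $(n-2)/4<2$, that is $n\le9$, which is exactly the given range. Hence $Q=1$ on $\Omega$. The remaining graph then has tilt energy $o(R^n)$, so by Lemma~\ref{kzero} and unique continuation $U=g(z)$ or $g(-z)$.

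\textbf{Main obstacle.} I expect the hardest step to be the error control in Step 2. We must show that contributions from the excluded regions and from asymmetric decay rates, when $W''(-1)\ne W''(1)$, remain below the Hardy threshold. This is the point where $n\le9$ must be used sharply. I would first try bounding the exceptional contributions by the $R^{n-4}$ count times $\sup w$ on unit balls, which elliptic estimates control.
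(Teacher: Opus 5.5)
Your outline follows the paper's architecture, but the step that produces the dimension bound does not work as you set it up. With $b=-\log w$, your inequality $-\Delta\log w\ge c\,w$ is a \emph{lower} bound on $\Delta b$, and a lower bound is useless here. It only bounds the spherical mean $m(t)$ of $b$ (in $t=\log r$) from below. The argument needs an upper bound on $m$, because Jensen only gives $\fint e^{-b}\ge e^{-m}$. For an individual gap only the upper bound holds anyway, since $\Delta(f_{i+1}-f_i)\approx 2a_i-a_{i-1}-a_{i+1}$.

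Your Step 3 has further problems. Radial stability tests do not make the mean of $w$ decay faster than $r^{-2}$; they only bound the logarithmic average of $r^2$ times the spherical mean of the interaction by the Hardy constant. The principle that a positive superharmonic quantity cannot decay faster than $r^{-2}$ is false for $n\ge5$, as $r^{2-n}$ shows. With unspecified constants $c,C$, you can at best reach $\lambda\kappa(n-2)/4\ge2$ with unknown $\lambda\kappa$, so no restriction on $n$ follows.

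The missing device is the weighted mean gap of \eqref{gtcoef}--\eqref{clp}. Take $\sum_ic_i=0$, $c_{i+1}-c_i=1/\sqrt{W''(s_i)}$, and $b=\sum_ic_if_i/\sum_ic_i^2$. Then:
\begin{itemize}
\item $b$ is a convex combination of the $\sqrt{W''(s_i)}(f_{i+1}-f_i)$;
\item the weighted projected equations give $\Delta b\le(1+C\epsilon_D)a+\text{errors}+\operatorname{div}F$;
\item testing \eqref{evjhr} with $\eta_i=c_i\zeta$ gives $\int a\zeta^2\le(1+C\epsilon_D+C\tau)\int|D\zeta|^2+\text{errors}$ with the \emph{same} $a$;
\item Jensen gives $a\ge ce^{-b}$.
\end{itemize}
This exact matching is what handles $W''(-1)\ne W''(1)$. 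Lemma~\ref{lemma} then closes in one pass, with no iteration. Stability bounds the logarithmic average of $\omega=r^2\fint a$ by $(n-2)^2/4$ plus a small error. The averaged inequality $m''+(n-2)m'\le\omega+\dots$ gives $\limsup m(T)/T\le(n-2)/4$ plus a small error. Jensen then forces $\omega$ to grow like $e^{(2-(n-2)/4-\text{small})t}$, a contradiction exactly because $(n-2)/4<2$. The averaged errors only need to be small, not sharp.

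Step~1 also falls short of what the extension across the removed balls needs. Lemma~\ref{geomet}, the cutoff bound \eqref{xvd}, the boundary terms \eqref{kwwk} of $b\mathbf 1_\Omega$ (where $|b|\le C(1+|x|)$), and \eqref{gzsj} all rest on an $O(R^{n-4})$ count of \emph{all} exceptional zeros, tilted ones included. Your direct count from $\int_{B_R}|D_xU|^2\le AR^{n-2}$ gives $O(R^{n-2})$ balls. That makes the cutoff and boundary costs of order one per dyadic annulus, comparable to the main Hardy term. Rescaling Propositions~\ref{lhori}--\ref{cover} to scale $R$ gives only $O(R^{n-2-1/(n-1)})$ balls, still far from the required count.

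The paper's Lemma~\ref{cost} supplies the missing idea. Call a zero exceptional when $U$ is not $C^2(B_{2M})$-close to $g(\pm z)$. By compactness, each such zero costs a fixed amount of the horizontal Sternberg--Zumbrun integral on a ball of \emph{large} radius $L$. The reason is that a limit with vanishing integrand is one-dimensional, inherits the uniform tangential bound, and is therefore vertical by Lemma~\ref{kzero}. So the slab argument works at scale $L$, not on unit balls.

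You also need four further ingredients.
\begin{itemize}
\item Lemma~\ref{plane}, for cone confinement of the zeros. This makes the projection of the zero set proper and identifies the number of graphs with the blowdown multiplicity via \eqref{nkpr}.
\item Logarithmic enlargement of the removed balls, to control the errors in \eqref{jgfb}.
\item A correct treatment of $Q=1$. A single graph does not make $|D(D_xU)|^2-|D|D_xU||^2$ vanish, so Lemma~\ref{kzero} does not apply. The paper invokes Wang's multiplicity-one theorem \cite[Theorem~11.1]{Wang17} and then uses the tangential bound to fix the vertical direction.
\item A separate treatment of $2\le n\le4$ (Proposition~\ref{small}), where none of the above is needed. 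There the bound $CR^{n-4}$, with a logarithmic cutoff at $n=4$, already forces the integrand to vanish.
\end{itemize}
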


\subsection{The exceptional zeros}
\label{exzeros}

The objective of this subsection is to recover the geometry at infinity
needed for the analysis of interacting layers. We determine the planar
blowdown and bound the heights of all zeros. The horizontal stability
inequality then controls the number of balls covering the zeros where
the solution fails to approximate a vertical transition. Removing the
enlarged projected balls leaves an unbounded component on which the next subsection
will construct ordered layers.

We first determine the planar limit. The following lemma holds for every
$n\ge2$ and uses only the two energy bounds at the origin.
\begin{lemma}\label{plane}
Let $U\in C^2(\mathbb R^{n+1};[-1,1])$ be a stable entire solution
of $\Delta U=W'(U)$,
with $U(0)=0$. Suppose that for all sufficiently large $R$,
\begin{equation*}
 \int_{B_R}|D_xU|^2\,dX\le AR^{n-2},\qquad
 \int_{B_R}\left(\frac12|DU|^2+W(U)\right)\,dX\le KR^n.
\end{equation*}
There is a fixed integer $Q\ge1$ such that
\begin{equation}\label{bdlim}
 R\left(\frac12|DU(RX)|^2+W(U(RX))\right)\,dX
 \rightharpoonup Q\sigma_0\mathcal H^n\lfloor\{z=0\}
 \qquad\text{as }R\to\infty.
\end{equation}
The measures $R|DU(RX)|^2\,dX$ have the same limit. For every $\eta>0$
there is $C_\eta<\infty$ such that
\begin{equation}\label{cone}
 U(x,z)=0\quad\Longrightarrow\quad |z|\le\eta|x|+C_\eta.
\end{equation}
\end{lemma}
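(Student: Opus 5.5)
We plan to rescale by setting $u_R(X)=U(RX)$. These functions solve \eqref{acn} with $\varepsilon=1/R$ and remain stable. The two assumed bounds, written in rescaled variables, give
\begin{equation*}
 \mathcal E_{1/R}(u_R;B_s)\le Ks^n,\qquad
 \int_{B_s}R^{-1}|D_xu_R|^2\,dX\le AR^{-1}s^{n-2}.
\end{equation*}
Thus the rescaled energy is locally bounded and the horizontal tilt vanishes in the limit.

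\textbf{Blowdown limits.} By Hutchinson--Tonegawa \cite[Theorem~1]{HT}, every sequence $R_j\to\infty$ has a subsequence whose energy measures converge to a stationary integral varifold $V$. The varifold has density at least one at the origin, since $U(0)=0$ and the monotonicity formula \eqref{lmearl} applies (its error term $Cs/R$ vanishes). The monotonicity formula also makes $V$ a cone, because the density ratio $R^{-n}\mathcal E_1(U;B_R)$ is monotone up to $O(R^{-1})$ and bounded by $K$, hence has a limit $\Theta$. The vanishing tilt makes the tangent planes of $V$ horizontal almost everywhere. As in the proof of Proposition~\ref{fnstep}, stationarity against horizontal fields then makes the weight equal to $\mathcal L^n\otimes\nu$ with $\nu$ atomic, with masses in $\sigma_0\mathbb N$. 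A cone of this form is supported on the single plane $\{z=0\}$, since any atom $a\neq0$ would be dilated to infinitely many atoms. The multiplicity of this plane is $Q=\Theta/(\sigma_0\omega_n)$, which does not depend on the subsequence. Hence the full family $R\to\infty$ converges, and \eqref{bdlim} holds. Equipartition in \cite{HT} gives the same limit for $R|DU(RX)|^2\,dX$.

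\textbf{The cone bound \eqref{cone}.} We use the Hausdorff convergence of the transition sets \cite[Theorem~1]{HT}, applied to level $0$ with $b=0$ as in Lemma~\ref{gtfene}. This convergence is uniform along all $R\to\infty$, by the subsequence-independence of the limit. For each $\eta>0$ there is therefore $R_\eta$ such that, for $R\ge R_\eta$,
\begin{equation*}
 \{u_R=0\}\cap\bigl(B_2\setminus B_{1/2}\bigr)\subset\{|z|\le\eta/2\}.
\end{equation*}
A zero $X$ with $|X|\ge R_\eta$ lies in this annulus after scaling by $R=|X|$, so $|z|\le\eta|X|/2$. For small $\eta$ this implies $|z|\le\eta|x|$. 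Zeros with $|X|<R_\eta$ have $|z|\le R_\eta=:C_\eta$.

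\textbf{Main obstacle.} The delicate step is the argument that the cone is exactly one horizontal plane. The rescaled tilt bound $AR^{-1}s^{n-2}$ only yields integrated control, so the product structure must come from stationarity. That requires checking that the Hutchinson--Tonegawa limit has its first variation computed with the tilt term $\varepsilon|D_xu|^2\to0$, so that the tangential part of the stress tensor produces horizontal invariance. The uniformity of Hausdorff convergence over the continuous parameter $R$ also needs care. We would handle it by contradiction, taking a sequence of bad zeros and extracting a subsequence, which yields a limit with a zero off the plane and contradicts the support of the limit.
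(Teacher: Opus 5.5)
Your proposal is correct and follows essentially the paper's route. The paper also uses Hutchinson--Tonegawa blowdowns and the stationary product argument to get horizontal hyperplanes with integer multiplicity, then exact monotonicity from Modica's bound to place every hyperplane at height zero with a sequence-independent $Q$, and finally a compactness/clearing-out argument for \eqref{cone}. Your dilation-invariance argument for the atoms and your use of Hausdorff convergence of the transition sets only repackage the paper's two steps: an offset hyperplane would make the density ratio strictly increase, and a direct monotonicity lower bound holds at far-off zeros. One small slip is that the rescaled tilt is actually $AR^{-2}s^{n-2}$, not $AR^{-1}s^{n-2}$, which only strengthens your estimate.
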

\begin{proof}
By Modica's estimate and monotonicity \cite{Modica85,Modica88}, the
energy ratio has a finite limit. This limit is positive because
$U(0)=0$ and the uniform gradient bound yield positive energy in a
fixed ball. For any $R_j\to\infty$, we apply Hutchinson--Tonegawa
\cite[Theorem~1]{HT} to $U(R_j\,\cdot)$. The horizontal gradient
mass on a fixed ball is $O(R_j^{-2})$. The stationary-measure
argument in the proof of Lemma~\ref{lspli} therefore shows that the limit is
a union of horizontal hyperplanes with integer multiplicities. Its centered
mass ratio is constant in the radius and equals the energy density
of $U$ at infinity. An offset hyperplane would make that ratio strictly
increase once the radius passed its height. Thus every hyperplane lies
at height zero, and the limiting density fixes one multiplicity for
all sequences. Equipartition implies the same convergence for
$R|DU(RX)|^2\,dX$.

If zeros $P_j=(x_j,z_j)$ satisfy $|P_j|\to\infty$ and
$|z_j|\ge a|P_j|$ for some $a>0$, the local gradient bound and
Modica monotonicity yield a fixed positive normalized energy on
$B_{a|P_j|/2}(P_j)$. After rescaling by $|P_j|$, this ball stays
away from the limiting hyperplane, contradicting \eqref{bdlim}. Hence
$|z|=o(|(x,z)|)$ along unbounded sequences of zeros. From this we obtain
\eqref{cone}.
\end{proof}

Fix $n\ge5$ and consider bounded stable
entire solutions of $\Delta U=W'(U)$ with fixed constants in
\begin{equation}\label{mwm}
\int_{B_R(Y)}|D_xU|^2\,dX\le A R^{n-2}
 \qquad\text{for }Y\in\mathbb R^{n+1},\ R\ge R_*.
\end{equation}

We apply the horizontal Sternberg--Zumbrun inequality \eqref{szum},
with $\varepsilon=1$, to a cutoff equal to one on $B_R(Y)$ and
supported in $B_{2R}(Y)$. This is the directional inequality used
by Tonegawa \cite[Proposition~4]{Ton05}. Combining it with \eqref{mwm}, we obtain
\begin{equation}\label{subv}
\int_{B_R(Y)}\left(|D(D_xU)|^2-|D|D_xU||^2\right)\,dX\le C A R^{n-4}
 \qquad\text{for }R\ge R_*,
\end{equation}
uniformly in $Y$.

Now fix $M>0$ and $\delta>0$. We call a zero $P$ exceptional if
\begin{equation}\label{pcgbpz}
 \min_{\pm}\|U(P+\cdot)-g(\pm z)\|_{C^2(B_{2M})}\ge\delta.
\end{equation}
Thus, after $P$ is moved to the origin, the solution has
$C^2(B_{2M})$ distance at least $\delta$ from each of $g(z)$ and
$g(-z)$. The next lemma shows that every exceptional zero contributes
a fixed positive amount to the integral in \eqref{subv} on a ball
of fixed radius.

\begin{lemma}\label{cost}
For fixed $M,\delta>0$, there are $L\ge1$ and $c>0$, depending
only on $n,W,A,R_*,M,\delta$, such that every exceptional zero $P$
satisfies
\begin{equation}\label{cvm}
\int_{B_L(P)}\left(|D(D_xU)|^2-|D|D_xU||^2\right)\,dX\ge c.
\end{equation}
\end{lemma}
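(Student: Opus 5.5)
We argue by contradiction and compactness, in the spirit of Lemma~\ref{detect}, but with the tangential growth bound \eqref{mwm} used to rule out nonvertical limits. Suppose that for some sequence $L_j\to\infty$ there are solutions $U_j$ satisfying \eqref{mwm} with the fixed constants $A,R_*$, and exceptional zeros $P_j$ for which the integral in \eqref{cvm} over $B_{L_j}(P_j)$ is less than $1/j$. We translate so that $P_j=0$ and set $V_j=U_j(P_j+\cdot)$. Interior elliptic estimates give uniform $C^{2,\alpha}$ bounds, so a subsequence converges in $C^2_{\rm loc}(\mathbb R^{n+1})$ to a bounded entire solution $V$. Stability, the zero at the origin, and the translation-invariant bound \eqref{mwm} all pass to $V$, the last by lower semicontinuity on each fixed ball. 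The exceptional property \eqref{pcgbpz} also passes to the limit, because it is a closed condition in $C^2(B_{2M})$: $V$ has $C^2(B_{2M})$ distance at least $\delta$ from both $g(z)$ and $g(-z)$.

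Next, we show that the horizontal geometric integrand of $V$ vanishes. The integrand $|D(D_xV_j)|^2-|D|D_xV_j||^2$ is nonnegative almost everywhere. On any fixed ball $B_R$ its integral is at most $1/j$ once $L_j\ge R$. We therefore need lower semicontinuity of this integrand under $C^2_{\rm loc}$ convergence. On the open set $\{|D_xV|>0\}$ this holds pointwise, since the integrand is a continuous function of first and second derivatives away from $D_xV=0$. Fatou's lemma then gives
\[
|D(D_xV)|^2-|D|D_xV||^2=0\quad\text{wherever }|D_xV|>0 .
\]
This is exactly the hypothesis of Lemma~\ref{kzero}. Together with $V(0)=0$, stability, and the growth bound $\int_{B_R}|D_xV|^2\le AR^{n-2}$, that lemma yields $V=g(z)$ or $g(-z)$. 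This contradicts the exceptional property of $V$.

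Finally, we fix the constant $c$. The argument above gives a radius $L$ such that every exceptional zero has a positive integral over $B_L$; we still need a uniform lower bound. If no such $c$ existed for this $L$, we would take a sequence with integrals tending to zero. Since we may enlarge $L$ in the first step, we run the contradiction with $L_j\to\infty$ and integrals tending to zero simultaneously, and conclude as before. The constants then depend only on $n,W,A,R_*,M,\delta$, because the compactness uses only these data.

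The main obstacle is the semicontinuity step at points where $D_xV=0$ but $D_xV_j\ne0$. There the integrand is not continuous, and we rely only on the open-set argument. This is enough, since Lemma~\ref{kzero} requires vanishing only on $\{|D_xV|>0\}$ and the complementary set is handled inside that lemma.
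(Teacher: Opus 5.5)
Your proof is correct and follows the paper's argument essentially step for step. You negate the lemma along radii $j$ with bounds $1/j$, translate the exceptional zeros to the origin, and pass to a $C^2_{\rm loc}$ limit that inherits stability, $U(0)=0$ and \eqref{mwm}; convergence of the nonnegative integrands on compact subsets of $\{|D_xU|>0\}$ then gives the hypothesis of Lemma~\ref{kzero}, which contradicts the closed condition \eqref{pcgbpz}. Your final paragraph is redundant, since the negation of the statement already produces $L_j=j$ and $c_j=1/j$ simultaneously, so $L$ and $c$ come out of the same contradiction.
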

\begin{proof}
Suppose that the conclusion fails for fixed $M,\delta$.
After translating exceptional zeros to the origin, we obtain
solutions $U_j$ satisfying \eqref{mwm}, $U_j(0)=0$, and
\begin{equation*}
 \int_{B_j}\left(|D(D_xU_j)|^2-|D|D_xU_j||^2\right)\,dX<1/j.
\end{equation*}
After we pass to a subsequence, $U_j$ converges to $U$ in
$C^2_{\rm loc}(\mathbb R^{n+1})$. The limit is stable, satisfies
$U(0)=0$, and inherits \eqref{mwm} with the same constants.
On every relatively compact subset of $\{|D_xU|>0\}$, the integrands
converge and are nonnegative. Their integrals tend to zero, so
$|D(D_xU)|^2-|D|D_xU||^2=0$ there. By Lemma~\ref{kzero},
$U=g(z)$ or $g(-z)$.
However, convergence in $C^2(B_{2M})$ preserves the lower bound
$\delta$ in \eqref{pcgbpz}, whereas either limiting solution has
distance zero from one of the two heteroclinics. This is a contradiction.
\end{proof}

The estimate \eqref{cvm} also holds at zeros satisfying
\begin{equation}\label{nxgpf}
|U_z(P)|\le\tfrac12g'(0)
  \quad\hbox{or}\quad |D_xU(P)|\ge|U_z(P)|.
\end{equation}
Indeed, we take $M=1$ and fix $\delta>0$ sufficiently small in terms
of $n,W$. Closeness to either vertical heteroclinic then implies
$|U_z(P)|>g'(0)/2$ and $|D_xU(P)|<|U_z(P)|$.
Thus every zero satisfying \eqref{nxgpf} is exceptional for these
fixed parameters, and Lemma~\ref{cost} applies with $L,c$ depending
only on $n,W,A,R_*$.

The following counting argument adapts the division into good and
bad cubes used by Liu--Wang--Wei--Wu \cite[Section~2]{LWWW24}.
Here the horizontal integral in \eqref{subv} controls the number of
balls centered at exceptional zeros.
We choose a maximal $2L$-separated set of exceptional zeros. The balls $B_L$
are disjoint, while their doubled balls cover the exceptional set. Equations
\eqref{subv}--\eqref{cvm} yield, for $R\gg L+R_*$,
\begin{equation*}
\#\{\text{chosen centers in }B_R\}\le C R^{n-4}.
\end{equation*}

To project the entire exceptional set onto a horizontal annulus, we
assume the confinement
\begin{equation}\label{ehltc}
U(x,z)=0\quad\Longrightarrow\quad |z|\le C(1+|x|).
\end{equation}
Under the centered full-energy bound of Lemma~\ref{plane},
its confinement conclusion implies \eqref{ehltc}.

Under \eqref{ehltc}, the horizontal projections of the covering balls form
fixed-radius balls with
\begin{equation}\label{ugiskt}
\#\{\text{projected centers in }B_R^n\}\le C R^{n-4}.
\end{equation}
They are locally finite. We choose smooth functions $\rho_i$, equal to
one on the projected $2L$ balls, supported on their $4L$ enlargements,
with $0\le\rho_i\le1$ and $|D\rho_i|\le C/L$. The Lipschitz cutoff
$\chi=1-\sup_i\rho_i$ vanishes on the projected exceptional set.
Since $|D\sup_i\rho_i|^2\le\sum_i|D\rho_i|^2$ almost everywhere,
on each large horizontal dyadic annulus $A_R=\{R<|x|<2R\}$,
\begin{equation}\label{xvd}
\int_{A_R}|x|^{-(n-2)}|D\chi|^2\,dx\le C R^{-2},
 \qquad
 \int_{A_R}|x|^{-n}\mathbf1_{\{\chi\ne1\}}\,dx\le C R^{-4}.
\end{equation}
A fixed enlargement of the annulus changes only the constants.

Consequently, for radial Hardy tests
\begin{equation*}
 \zeta(x)=|x|^{-(n-2)/2}\psi(\log|x|),\qquad
 |\psi|+|\psi'|\le1,\qquad \operatorname{supp}\zeta\subset\{|x|>R_1\},
\end{equation*}
the additional gradient cost of replacing $\zeta$ by $\chi\zeta$
is $O(R_1^{-2})$, uniformly in the logarithmic length of the support.
Indeed, the additional square term is bounded by the dyadic sum of
$CR^{-2}$. Since $|D\zeta|^2\le C|x|^{-n}$, Cauchy--Schwarz and
\eqref{xvd} bound the cross term on each annulus by
$C(R^{-2}R^{-4})^{1/2}=CR^{-3}$. By summation, we obtain the assertion.

\label{vnmu}

We need a connected region over which the zeros can be labeled by
their heights. The next lemma bounds the volume left outside one
unbounded component of the complement of the removed balls.
This will allow us to prove that the number of zero graphs equals the
multiplicity of the planar energy limit.

\begin{lemma}\label{geomet}
Let $n\ge5$. Suppose a locally finite set of centers $p_i\in\mathbb R^n$
satisfies
\begin{equation}\label{govvli}
\#\{i\mid |p_i|\le R\}\le C(1+R)^{n-4}.
\end{equation}
Let $H$ be the union of closed balls about these centers whose radii
$\rho_i$ satisfy
\begin{equation}\label{mli}
0<\rho_0\le\rho_i\le C_0\log(2+|p_i|).
\end{equation}
Then $\mathbb R^n\setminus H$ has an unbounded connected component
$\Omega_\infty$ such that, with
\begin{equation*}
 \kappa=\frac{2(n-1)}{n-2},\qquad 2<\kappa<3,
\end{equation*}
for almost every sufficiently large $R$,
\begin{equation}\label{nffq}
\frac{\mathcal H^{n-1}(S_R\setminus\Omega_\infty)}
      {\mathcal H^{n-1}(S_R)}
 \le C R^{-\kappa}(\log R)^{n-1}.
\end{equation}
Consequently
\begin{equation}\label{uyctu}
|(B_{2R}\setminus B_R)\setminus\Omega_\infty|
 \le C R^{n-\kappa}(\log R)^{n-1}.
\end{equation}
Constants depend on $n$ and the packing and radius bounds.
\end{lemma}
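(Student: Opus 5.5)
The plan is to work sphere by sphere. On each sphere $S_R$ I would separate a ``giant'' piece of $S_R\setminus H$ from the small pieces, bound the area of the small pieces by the spherical isoperimetric inequality, and then show that the giant pieces for different radii all lie in one unbounded component $\Omega_\infty$. The exponent $\kappa=2(n-1)/(n-2)$ should come from isoperimetry in the $(n-1)$-dimensional sphere, combined with the packing bound \eqref{govvli}.

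\emph{Step 1: the trace of $H$ on $S_R$.} For large $R$, a ball of $H$ meets $S_R$ only if its center satisfies $\bigl||p_i|-R\bigr|\le C_0\log(2+|p_i|)$. By \eqref{govvli}, there are at most $CR^{n-4}$ such balls, and each has radius at most $C\log R$. For almost every $R$, the set $S_R\cap H$ is therefore a finite union of spherical caps $\kappa_i$. Their total area is at most $CR^{n-4}(\log R)^{n-1}$, and the sum of the $(n-2)$-dimensional measures of their relative boundaries is
\[
 P_R\le CR^{n-4}(\log R)^{n-2}.
\]
In particular, the area of $S_R\cap H$ is $O(R^{-3}(\log R)^{n-1})$ times $\mathcal H^{n-1}(S_R)$.

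\emph{Step 2: small components on $S_R$.} Let $D_1,D_2,\dots$ be the connected components of $S_R\setminus H$. The area of $S_R\cap H$ is a vanishing fraction of the sphere, and each small component must have area at most half the sphere. Hence exactly one component $G_R$ has area at least half the sphere. Every other component $D_k$ has boundary contained in $\bigcup_i\partial\kappa_i$ and area at most half of $S_R$. The isoperimetric inequality on $S_R$ then gives
\[
 \mathcal H^{n-1}(D_k)\le C\,\mathcal H^{n-2}(\partial D_k)^{(n-1)/(n-2)}.
\]
Distinct components have boundaries that overlap in at most a null set. The exponent exceeds one, so superadditivity yields
\[
 \sum_k\mathcal H^{n-1}(D_k)\le C P_R^{(n-1)/(n-2)}\le CR^{(n-4)(n-1)/(n-2)}(\log R)^{n-1}.
\]
Dividing by $R^{n-1}$ gives exactly $CR^{-\kappa}(\log R)^{n-1}$, since $n-1-(n-4)(n-1)/(n-2)=2(n-1)/(n-2)$. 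This term dominates the area of $S_R\cap H$ found in Step 1, because $\kappa<3$.

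\emph{Step 3: the giant components belong to one unbounded component.} This is the step I expect to be the main obstacle. We must show that $G_R$ and $G_{R'}$ lie in the same component of $\mathbb R^n\setminus H$ for all large admissible $R,R'$, so that $\Omega_\infty$ can be defined as this common component, which is then unbounded. I would use radial segments. For admissible radii $R<R'\le2R$, consider the rays $\{t\omega\mid R\le t\le R'\}$. A ray meets a given ball only if $\omega$ lies in a set of solid angle at most $C(\log R)^{n-1}R^{1-n}$. At most $CR^{n-4}$ balls meet the shell, so the measure of bad directions is $O(R^{-3}(\log R)^{n-1})$. The directions $\omega$ with $R\omega\in G_R$ and $R'\omega\in G_{R'}$ form a set of measure at least about $1/2-o(1)$ of the unit sphere, so some such ray avoids $H$. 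Connecting through dyadic radii, all the $G_R$ lie in one component, and $\Omega_\infty$ is unbounded.

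It follows that $S_R\setminus\Omega_\infty\subset(S_R\cap H)\cup\bigcup_kD_k$, and Steps 1--2 give \eqref{nffq}. Finally, \eqref{uyctu} follows by the coarea formula in the radial variable: integrate \eqref{nffq} over $R\le r\le2R$ and use $\mathcal H^{n-1}(S_r)\le Cr^{n-1}$.
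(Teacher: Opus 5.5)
Your proposal is correct and follows essentially the same argument as the paper. It bounds the area and perimeter of the caps on $S_R$, applies the spherical isoperimetric inequality summed over the components of relative area at most one half to get the $R^{-\kappa}(\log R)^{n-1}$ bound and a unique giant component $G_R$, and uses radial segments that avoid $H$ to place all the $G_R$ in one unbounded component. The only difference is cosmetic: the paper bounds by $CR^{-3}(\log R)^{n-1}$ the set of directions whose entire outward ray beyond $R$ meets $H$, and joins any two large radii with a single segment, whereas you join radii within a factor of two and chain. Also note that the existence of $G_R$ should be deduced after the isoperimetric sum, not before it as your Step~2 wording suggests.
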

\begin{proof}
Only centers with $R/2<|p_i|<2R$ can have balls meeting $S_R$, once
$R$ is large. The balls about the finitely many remaining small centers lie inside $B_R$. Each spherical cap has area at most $C(\log R)^{n-1}$ and
boundary perimeter at most $C(\log R)^{n-2}$. After rescaling
$S_R$ to the unit sphere, the removed relative area and total
perimeter are bounded by
\begin{equation}\label{gmsyvj}
C R^{-3}(\log R)^{n-1},
 \qquad C R^{-2}(\log R)^{n-2},
\end{equation}
respectively. The sum of the perimeters of the components of the
open complement is bounded by the same total perimeter up to a fixed
factor. These estimates hold for almost every radius.

The classical spherical isoperimetric inequality, recalled in
B\"ogelein--Duzaar--Fusco \cite{BDF17}, bounds the volume $v$ of
each component of relative volume at most $1/2$ by
$C p^{(n-1)/(n-2)}$, where $p$ is its normalized perimeter.
Since the exponent exceeds one, summing this estimate bounds the
total volume of all such components by
\begin{equation*}
 C\left(\sum p\right)^{(n-1)/(n-2)}
 \le C R^{-\kappa}(\log R)^{n-1}.
\end{equation*}
As the total area of the complement tends to one, there must be a unique component
$G_R$ of relative volume greater than $1/2$. It occupies all but the
error in \eqref{nffq}, together with the smaller removed-area term in \eqref{gmsyvj}.

The directions of rays from the origin that meet a ball centered at
$p_i$, $|p_i|\gg1$, have spherical area at most
$C(\rho_i/|p_i|)^{n-1}$. We sum over dyadic annuli using \eqref{govvli} and obtain
\begin{equation}\label{krn}
|\{\theta\mid \{r\theta\mid r\ge R\}\cap H\ne\varnothing\}|
 \le C R^{-3}(\log R)^{n-1}.
\end{equation}
We take any two sufficiently large radii $R_2\ge R_1$ for which these
estimates hold.
The angular images of $G_{R_1}$ and $G_{R_2}$ both have measure
close to the full sphere. Their intersection contains a direction
outside the exceptional set in \eqref{krn}. The segment of that ray
between the two spheres avoids $H$, and joins the two largest
components. Thus all $G_R$ lie in one connected component
$\Omega_\infty$. This component is unbounded. Equation \eqref{nffq} follows,
and by radial integration we obtain \eqref{uyctu}.
\end{proof}

Fixed-radius balls satisfy the lemma after enlarging $C_0$.
For any two ball families satisfying \eqref{govvli}--\eqref{mli}, the
components constructed in the lemma intersect, since their traces on
almost every sufficiently large common sphere both have relative
measure tending to one.

\subsection{Ordered zero graphs and the choice of layer positions}
\label{lyeacx}\label{ordgraphs}

In this subsection, we pass from the geometry of the zero set to the layer
positions used in the interaction equations. We order the zeros on the
unbounded component from the preceding subsection and prove that their
number equals the blowdown multiplicity. We then adjust these heights
so that the error in the sum of heteroclinic profiles is orthogonal to
the derivatives of those profiles. Proposition~\ref{orth} controls
the squared slopes by the horizontal gradient energy, allowing the
next subsection to estimate the interaction errors in integral form.

Let $U\in C^2(\mathbb R^{n+1};[-1,1])$ be an entire stable double-well solution
satisfying \eqref{mwm}. Assume the all-scale planar full-energy blowdown
\begin{equation}\label{dofz}
R\left(\frac12|DU(RX)|^2+W(U(RX))\right)\,dX
 \rightharpoonup \theta\sigma_0\mathcal H^n\llcorner\{z=0\},
 \qquad 0<\theta<\infty.
\end{equation}
Here $e(U)=|DU|^2/2+W(U)$. The mass of the measure on the left
on a set $A$ is $R^{-n}\int_{RA}e(U)\,dX$. 

We apply the clearing-out argument in the proof of Lemma~\ref{plane}
to \eqref{dofz} and obtain
\begin{equation}\label{izhux}
U(x,z)=0\ \Longrightarrow\ |z|\le C(1+|x|).
\end{equation}
We fix $M\ge1$ and then choose $\delta>0$ sufficiently small.
At every zero outside the exceptional set \eqref{pcgbpz}, closeness to
$g(z)$ or $g(-z)$ implies
\begin{equation}\label{kdi}
|U_z|\ge\tfrac12g'(0),\qquad |D_xU|\le|U_z|.
\end{equation}
Lemma~\ref{cost} and \eqref{ugiskt}, with \eqref{izhux}, cover the
horizontal projections of all exceptional zeros by balls
$B_{2L}(p_i)$ satisfying \eqref{govvli}. We enlarge them to
\begin{equation}\label{hrpy}
B_{\rho_i}(p_i),\qquad
 \rho_i=2L+\lambda\log(2+|p_i|),
\end{equation}
where $\lambda$ is a fixed sufficiently large number chosen below.
Let $\Omega_\infty(M,\delta)$ be the component from
Lemma~\ref{geomet}.

Over each connected component of the complement, the zero-set
projection is a local diffeomorphism by \eqref{kdi}. It is proper
by \eqref{izhux}. Its fibers are compact and discrete, and thus
finite. By ordering their real heights, we obtain global smooth labels of
constant number, including zero for an empty fiber. Their slopes
are at most one by \eqref{kdi}.

We denote this number by $k(M,\delta)$ on $\Omega_\infty(M,\delta)$.
All these integers equal one integer $k$, since any two of the
components intersect and both count the complete zero fiber there.
The constants may depend on $M,\delta$, which are fixed while the
spatial radius tends to infinity.

The next lemma controls how a solution approaches a well away from
its zero set. Its exponential energy bound will allow us to prove that
the number of ordered zero graphs equals the multiplicity of the planar
energy limit. For a proof of the convergence, we refer to
Hamel--Liu--Sicbaldi--Wang--Wei \cite[Proposition~7]{HLSWW21}.

\begin{lemma}\label{wentr}
A bounded entire solution $v\in C^2(\mathbb R^{n+1};[0,1])$ of
$\Delta v=W'(v)$ is identically zero or identically one.
For every $b_0\in(0,1)$ there is $R_0<\infty$, depending only on
$W,b_0,n$, such that every bounded stable entire solution satisfies
\begin{equation*}
 \operatorname{dist}(Y,\{U=0\})\ge R_0\quad\Longrightarrow\quad |U(Y)|>b_0.
\end{equation*}
For $b_0<1$ sufficiently close to one,
\begin{equation}\label{sbplfj}
 \frac12|DU(Y)|^2+W(U(Y))
 \le C\exp[-c(\operatorname{dist}(Y,\{U=0\})-R_0)_+].
\end{equation}
If the zero set is empty, $U$ is a constant.
\end{lemma}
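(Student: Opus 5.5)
The lemma has three assertions, and I would prove them in the order stated. The first is a Liouville statement, the second a quantitative clearing-out statement, and the third an exponential decay estimate in the well.

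For the Liouville statement, let $v\in[0,1]$ solve $\Delta v=W'(v)$. Since $W'<0$ on $(0,1)$, $v$ is superharmonic wherever $0<v<1$. By the strong maximum principle, either $v\equiv0$, $v\equiv1$, or $0<v<1$ everywhere. In the last case I would slide a compactly supported subsolution under $v$. Take the first Dirichlet eigenfunction $\phi_R$ on a large ball $B_R$, for which $\lambda_1(B_R)\to0$. Since $-W'(s)\ge cs$ for small $s>0$, the function $t\phi_R$ is a strict subsolution for small $t$. Because $\inf_{B_R(Y)}v>0$ on each compact set, we may increase $t$ and translate $Y$. A touching argument with the strong maximum principle then shows that $v$ stays above a fixed positive level $a$ everywhere. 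The same reasoning near $1$, using $W'>0$ just below $1$, applied to $1-v$, gives $v\le1-a$. Taking infima along a sequence of translates, we obtain a limit solution $\tilde v$ with $\inf\tilde v$ attained and $\tilde v\in[a,1-a]$. At a minimum point, $\Delta\tilde v\ge0$ while $W'(\tilde v)<0$, a contradiction. Equivalently, one can cite the cited Hamel--Liu--Sicbaldi--Wang--Wei argument.

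For the clearing-out statement I would argue by contradiction and compactness. Suppose there are stable solutions $U_j$ and points $Y_j$ with $\operatorname{dist}(Y_j,\{U_j=0\})\ge j$ but $|U_j(Y_j)|\le b_0$. After translating $Y_j$ to the origin and passing to a subsequence, $U_j\to U$ in $C^2_{\rm loc}$ by the interior estimates \eqref{ellipt}. We may assume $U_j>0$ on $B_j$ after a sign change of the solution, which is harmless here because $W$ need not be even but the argument is symmetric in the two wells. The limit then satisfies $U\ge0$ on all of $\mathbb R^{n+1}$ with $|U(0)|\le b_0<1$. The Liouville statement forces $U\equiv0$. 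But $U\equiv0$ is not stable, since $W''(0)<0$, while $C^2_{\rm loc}$ convergence preserves stability; this is a contradiction. If $U\equiv1$ instead, this contradicts $U(0)\le b_0$. Stability is used only here. The last sentence of the lemma follows from the same argument: if the zero set is empty, then $U>0$ or $U<0$ everywhere, so Liouville gives a constant, and stability excludes $0$.

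For the energy decay \eqref{sbplfj}, choose $b_0$ so close to $1$ that $W''\ge\kappa>0$ on $[b_0,1]$. Fix $Y$ with $d=\operatorname{dist}(Y,\{U=0\})>R_0$, and suppose $U>0$ there. On $B_{d-R_0}(Y)$ we have $U>b_0$. The function $w=1-U$ satisfies $\Delta w=-W'(U)\ge\kappa' w$ with $0\le w\le1-b_0$. Comparison with the supersolution $\sum_i\cosh(\sqrt{\kappa'/n}\,(X-Y)_i)$, scaled to dominate $w$ on the ball boundary (exactly as in the proof of Lemma~\ref{ndisc}), gives $w(Y)\le Ce^{-c(d-R_0)}$. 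Applying interior gradient estimates on unit balls then bounds $|DU|^2\le Cw^2$, and $W(U)\le Cw^2$ near the well, which yields \eqref{sbplfj}.

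I expect the main obstacle to be the Liouville step. The other two steps are routine once it is in hand, so there I would first try to quote \cite[Proposition~7]{HLSWW21} directly, and fall back on the eigenfunction-sliding argument sketched above if a self-contained proof is needed.
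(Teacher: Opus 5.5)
Your proposal is correct. It gives a self-contained proof, whereas the paper proves nothing here and only refers to \cite[Proposition~7]{HLSWW21}. Your route has three steps:
\begin{enumerate}
\item a hair-trigger Liouville theorem, obtained by sliding small Dirichlet eigenfunctions under $v$ (using $-W'(s)\ge cs$ near $0$);
\item contradiction and compactness for the clearing-out statement;
\item a $\cosh$ barrier for $\Delta w\ge\kappa w$ plus interior gradient estimates for \eqref{sbplfj}.
\end{enumerate}
This is the standard argument, and it makes the dependence of $R_0$ on $W,b_0,n$ explicit.

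One step needs correcting. $W'<0$ on all of $(0,1)$, including just below $1$, since $W$ decreases from its maximum at $0$ to the well at $1$. So $1-v$ satisfies $\Delta(1-v)=-W'(v)>0$ near the well, no positive subsolution can be slid under it, and the bound $v\le 1-a$ does not follow. You do not need it. The translated limit $\tilde v$ attains its minimum $m=\inf v$ at the origin, with $m\ge a>0$ from the sliding step and $m\le v(0)<1$. Hence $W'(m)<0$ contradicts $\Delta\tilde v(0)\ge0$ directly.

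Two smaller remarks on the clearing-out step. First, phrase the sign normalization as replacing $(U,W)$ by $(-U,W(-\,\cdot))$, which again satisfies \eqref{wells}, and take the larger of the two resulting values of $R_0$. Second, stability is only used there to rule out the limit $U\equiv0$, which is legitimate under the lemma's hypotheses, but it can be avoided. Run your sliding argument on $U_j$ inside the ball $B_j$ where $U_j>0$, before passing to the limit. For each center $y$ with $\overline{B_{R_1}(y)}\subset B_j$, this gives $U_j\ge\epsilon\phi_{R_1}(\cdot-y)$, hence $U_j\ge a$ on $B_{j-R_1}$. The limit is therefore bounded below by $a$ and cannot be zero. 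So both the clearing-out statement and the decay estimate hold for all bounded solutions, not only stable ones.
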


To determine $k$, we compare the energy near each layer with the planar
density, following Florit-Simon--Serra \cite[Lemma~4.9]{FSS25}.
Here the discarded base region must also be estimated. The zero set
is nonempty by \eqref{dofz} and Lemma~\ref{wentr}. By \eqref{izhux},
\begin{equation*}
 \operatorname{dist}((x,z),\{U=0\})
 \ge\frac{(|z|-C(1+|x|))_+}{\sqrt{1+C^2}}.
\end{equation*}
The global pointwise energy bound and \eqref{sbplfj} imply
\begin{equation}\label{cda}
F(x)=\int_{\mathbb R}e(U)(x,z)\,dz\le C(1+|x|).
\end{equation}
Thus from \eqref{uyctu} we obtain
\begin{equation}\label{dtee}
\int_{(B_{2R}\setminus B_R)\setminus\Omega_\infty(M,\delta)}
       F(x)\,dx
 \le C R^{n+1-\kappa}(\log R)^{n-1}=o(R^n).
\end{equation}
By a dyadic sum, we obtain the same $o(R^n)$ bound on $B_R$. 
For $x\in\Omega_\infty(M,\delta)$ and $r=|x|$ sufficiently large,
we obtain from \eqref{hrpy}
\begin{equation}\label{qojmg}
d(x)=\operatorname{dist}
   (x,\{\text{horizontal projections of exceptional zeros}\})
 \ge c\lambda\log r.
\end{equation}
Indeed any projected exceptional point is within $2L$ of a covering center.
Comparable-radius centers yield the logarithmic distance directly.
Noncomparable-radius centers are at distance comparable to $r$.
If the exceptional set is empty, interpret $d(x)=+\infty$.

The horizontal ball $B_{d(x)}(x)$ is disjoint from the projections of exceptional zeros.
Proper projection and \eqref{kdi} yield all its ordered root graphs, with
the same $k$ roots as at $x$. The straight segment between base
points remains in this ball, so the slope bound applies there.
It follows that, writing the roots at $x$ as $f_\ell(x)$,
\begin{equation}\label{dist}
\operatorname{dist}((x,z),\{U=0\})
 \ge \min\left\{\frac{d(x)}2,
       \frac1{\sqrt2}\min_\ell|z-f_\ell(x)|\right\}.
\end{equation}
A zero whose base lies outside $B_{d(x)/2}(x)$ satisfies the first
distance bound. Any other zero lies on one of the locally continued
graphs and satisfies the second bound. For $k=0$ the inner minimum is infinite.

On the vertical interval $|z|\le B(1+r)$, where $B$ is fixed larger
than the constant in \eqref{izhux}, we combine \eqref{sbplfj}--\eqref{dist} and sum the individual
exponential bounds associated with the roots. Outside that interval \eqref{izhux}--\eqref{sbplfj} yield an
exponentially small integral over the remaining vertical region. Consequently
\begin{equation}\label{bsi}
\int_{\{|z-f_\ell(x)|\ge M\ \forall\ell\}} e(U)(x,z)\,dz
 \le Ck e^{-cM}+C(1+r)e^{-c d(x)}+Ce^{-cr}
\end{equation}
for every sufficiently large fixed $M$. All well-entry constants
are fixed independently of $M,\delta$. The local graph slope is at most
one. Taking $\lambda$ large enough makes the last two terms $O(r^{-2})$.
The enlarged balls still satisfy the hypotheses of Lemma~\ref{geomet}.

At every root of $x\in\Omega_\infty(M,\delta)$, \eqref{pcgbpz} fails, so the
solution on $B_{2M}$ is $\delta$-close to a vertical heteroclinic.
Choosing $\delta$ sufficiently small after $M$ makes the normal
derivative nonzero throughout $[-2M,2M]$, and excludes another root
at distance less than $2M$. The intervals of radius $M$
are therefore disjoint, and each has full energy
\begin{equation*}
 \sigma_M+O(M\delta),\qquad
 \sigma_M=\int_{-M}^{M}g'(t)^2\,dt.
\end{equation*}
Together with \eqref{bsi}  we obtain
\begin{equation}\label{nkpr}
|F(x)-k\sigma_0|
 \le Ck\bigl(M\delta+\sigma_0-\sigma_M+e^{-cM}\bigr)+O(r^{-2})
 \quad\text{for }x\in\Omega_\infty(M,\delta),\ r\to\infty.
\end{equation}
The same argument applies to $\int_{\mathbb R}|DU(x,z)|^2\,dz$.

We integrate \eqref{nkpr} on $B_R\cap\Omega_\infty(M,\delta)$ and use
\eqref{cda}--\eqref{dtee} on the complement. For fixed $M,\delta$, we divide by
$R^n$ and let $R\to\infty$. From \eqref{dofz}, we obtain
\begin{equation*}
 R^{-n}\int_{B_R}F(x)\,dx\longrightarrow\theta\sigma_0\omega_n.
\end{equation*}
For this limit, we fix a sufficiently large $B$ and first integrate over
$|z|\le BR$. Equations \eqref{izhux}--\eqref{sbplfj} make the remaining
mass exponentially small. The boundary of this cylinder has zero
limiting measure. Hence
\begin{equation*}
 |\theta-k|\sigma_0
 \le Ck\bigl(M\delta+\sigma_0-\sigma_M+e^{-cM}\bigr).
\end{equation*}
The integer $k$ is independent of the allowed error and radius as proved
above. We let $\delta\downarrow0$ for fixed $M$ and then let $M\to\infty$.
This yields
\begin{equation}\label{pqmaw}
k=\theta.
\end{equation}
Thus $Q=k=\theta$ is a positive integer. For each fixed approximation
error bound, the zero set over the selected component consists of
exactly $Q$ globally ordered smooth graphs.

We now choose layer positions for which a sum of one-dimensional
transitions approximates $U$ with a small error.
The next proposition establishes the orthogonality conditions and the
integral bounds for the squared slopes needed in the layer equations
and stability inequality.
Let $U$ satisfy \eqref{mwm} and the all-scale planar energy convergence
\eqref{dofz}. Fix a large ball for approximation by heteroclinics and a small error bound, remove
the logarithmically enlarged balls described above, and let $\Omega$ be
the component of asymptotically full spherical measure. There are exactly
$Q$ ordered zero graphs $h_1<\cdots<h_Q$ over this component.

We choose the lower well to be $-1$, reflecting both $U$ and $W$
when necessary. For ordered layer functions $f_1<\cdots<f_Q$, we set
\begin{equation}\label{sumg}
 U=G+\phi,\qquad
 G(x,z)=\sum_{i=1}^Q g(s_i(z-f_i(x)))-\frac{1+(-1)^Q}{2},
 \qquad s_i=(-1)^{i-1}.
\end{equation}

\begin{proposition}\label{orth}
For each $D_0<\infty$ and small $\nu>0$, we can choose the radius for
approximation by heteroclinics, the error bound, enlargement, and
exterior radius so that there are smooth functions $f_1<\cdots<f_Q$
on the exterior part of $\Omega$ satisfying
\begin{equation}\label{komua}
 \begin{gathered}
 f_{i+1}-f_i\ge D_0,\qquad |Df_i|\le\nu,\qquad |\phi|\le\nu,\\
 \int_{\mathbb R}\phi(x,z)s_i g'(s_i(z-f_i(x)))\,dz=0,
 \end{gathered}
\end{equation}
\begin{equation*}
 \int_{\mathbb R}\phi(x,z)^2\,dz\le\nu^2,
 \qquad |f_i(x)|\le C(1+|x|).
\end{equation*}
Furthermore,
\begin{equation}\label{ofd}
 \begin{split}
 \sum_i|Df_i(x)|^2&\le C\int_{\mathbb R}|D_xU(x,z)|^2\,dz,\\
 \int_{\Omega\cap\{R<|x|<2R\}}\sum_i|Df_i|^2\,dx&\le C(A+1)R^{n-2}.
 \end{split}
\end{equation}
Here $C$ is independent of $D_0,\nu$ once the basic separation and smallness conditions hold.
Every fixed derivative order is bounded up to the artificial boundary
from within the domain. Derivatives of $U,G$, and $\phi$ decay
exponentially with the vertical distance from the layers.
\end{proposition}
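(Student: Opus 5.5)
The plan is to obtain the layer positions $f_i$ by perturbing the zero graphs $h_1<\cdots<h_Q$ through the implicit function theorem applied to the $Q$ orthogonality conditions in \eqref{komua}. The first step fixes the parameters in the natural order. For given $D_0,\nu$ we take the heteroclinic radius $M\gg D_0$ and then $\delta$ small, so that at every zero over $\Omega$ the solution is $\delta$-close in $C^2(B_{2M})$ to $g(\pm(z-z_0))$. This forces $h_{i+1}-h_i\ge 2M$ and $|Dh_i|\le C\delta$. We choose the signs in \eqref{sumg} with $s_i=(-1)^{i-1}$, using that consecutive transitions alternate and that the lowest phase is $-1$. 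Lemma~\ref{wentr}, \eqref{bsi} and \eqref{dist} show that far from the layers $U$ is exponentially close to the wells. Consequently $\phi_0=U-G_0$, built with $f_i=h_i$, satisfies $|\phi_0|\le C(\delta+e^{-cM})$ near each layer and decays exponentially away from them. The logarithmic enlargement $\lambda\log(2+|p_i|)$ in \eqref{hrpy} makes the contributions of the removed exceptional zeros $O(r^{-2})$ or smaller. We take the exterior radius large enough that these errors are below $\nu$.

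The second step solves $\Phi_i(x,f)=\int (U-G_f)\,s_ig'(s_i(z-f_i))\,dz=0$ pointwise in $x$. The Jacobian $\partial\Phi_i/\partial f_j$ is $\sigma_0\,\delta_{ij}+O(\nu+e^{-cD_0})$: the diagonal comes from $\int g'^2$, and the off-diagonal terms come from exponentially small overlaps of separated profiles together with the small $\phi$ terms. A contraction argument in the ball $|f-h|\le C\nu$ therefore gives unique smooth $f_i$ with $|f_i-h_i|\le C(\delta+e^{-cM})$. Separation $D_0$, the growth bound $|f_i|\le C(1+|x|)$ from \eqref{izhux}, and $|\phi|\le\nu$ follow, as does $\int\phi^2\,dz\le\nu^2$ from the exponential decay. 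Smoothness and the bounds on every fixed derivative order up to the artificial boundary come from differentiating the fixed-point equation and from interior Schauder estimates for $U$ on unit balls. The exponential decay of the derivatives of $U$, $G$ and $\phi$ follows from the corresponding bounds at the wells.

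The third step, the estimate \eqref{ofd}, is the main obstacle. I would differentiate the orthogonality identities in $x$. This gives a linear system
\begin{equation*}
 \sum_j\bigl(\sigma_0\delta_{ij}+O(\nu+e^{-cD_0})\bigr)Df_j
 =\int D_xU\,s_ig'(s_i(z-f_i))\,dz+\int\phi\,D_x\bigl[s_ig'(s_i(z-f_i))\bigr]\,dz .
\end{equation*}
The last term equals $-Df_i\int\phi\, g''(s_i(z-f_i))\,dz$, which is $O(\nu)|Df_i|$ and can be absorbed into the left side. Inverting the nearly diagonal matrix and applying Cauchy--Schwarz with $\int g'^2<\infty$ gives the pointwise bound
\begin{equation*}
 \sum_i|Df_i|^2\le C\int_{\mathbb R}|D_xU|^2\,dz,
\end{equation*}
with $C$ independent of $D_0$ and $\nu$. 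To integrate this bound over the dyadic annulus, I would use \eqref{izhux} to cover the region $\{|z|\le C(1+|x|)\}$ above $\Omega\cap\{R<|x|<2R\}$ by about one ball $B_{CR}(Y)$. The uniform-in-center bound \eqref{mwm} then gives $C A R^{n-2}$, and the exponentially small tails outside this slab, by \eqref{sbplfj}, contribute at most $CR^{n-2}$. This yields the factor $C(A+1)$.
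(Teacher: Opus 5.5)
Your proposal is correct and follows essentially the same route as the paper: start from the zero graphs with $f_i=h_i$, solve the orthogonality conditions by the quantitative implicit-function argument of Wang--Wei using the nearly diagonal Jacobian, differentiate those conditions to obtain the system \eqref{jgvcmu}, invert it and apply Cauchy--Schwarz, and then integrate over annuli using \eqref{izhux}, \eqref{mwm}, and the exponential tails. The one listed conclusion you never derive is the pointwise bound $|Df_i|\le\nu$. It follows from the same system: $|D_xU|\le\delta$ within distance $M$ of each layer, while on the complement $D_xU$ is globally bounded and $\int|g'(s_i(z-f_i))|\,dz=O(e^{-cM})$, which is how the paper argues. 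Equivalently, it follows from your bound $\sum_i|Df_i|^2\le C\int|D_xU|^2\,dz$ once you note that your Step~1 estimates make the right side small.
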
 
\begin{proof}
We first approximate $U$ using the zero graphs $h_i$, and then
adjust their heights to satisfy the orthogonality conditions in
\eqref{komua}. For this adjustment, we follow the implicit-function
construction of Wang--Wei
\cite[Proposition~4.1 and Remark~4.2]{WW}, using fixed vertical
coordinates and the values $W''(-1)$ and $W''(1)$ of the present
potential.

We choose $M$ large in terms of $D_0$ and $\nu$, and then choose
$\delta$ sufficiently small depending on $M$ and $\nu$.
At every zero outside the exceptional set, we require $U$ to be
$\delta$-close to a vertical heteroclinic in $C^2(B_{8M})$.
The choice of $\delta$ ensures that $U_z$ keeps its sign on
the vertical interval of radius $8M$ about that zero.
Consequently,
\[
 h_{i+1}-h_i\ge 8M,
 \qquad |Dh_i|\le C\delta.
\]
By \eqref{izhux}, the region below the cone containing the zero
set is connected, so the lower limiting well is independent of
$x$. With this well fixed as $-1$, as above, the signs of the
successive crossings alternate.

We first set $f_i=h_i$ in \eqref{sumg}. On each interval
$|z-h_i(x)|\le M$, we estimate $U-G$ using the approximation
by a heteroclinic and the separation of the other layers.
Outside these intervals, we use \eqref{dist},
Lemma~\ref{wentr}, and the exponential convergence of $g$
to its limiting values. To estimate the integral over the
whole vertical line, we first restrict to
$|z|\le B(1+|x|)$, where $B$ is fixed and larger than the
constant in \eqref{izhux}. Equations \eqref{izhux} and
\eqref{sbplfj} bound the integral over the remaining region
exponentially. We obtain
\begin{equation}\label{jgfb}
 \begin{split}
 \|U(x,\cdot)-G(x,\cdot)\|_\infty
 &\le C_Q(\delta+e^{-cM})+Ce^{-cd(x)},\\
 \int_{\mathbb R}|U-G|^2(x,z)\,dz
 &\le C_Q(M\delta^2+e^{-cM})
      +C(1+|x|)e^{-cd(x)}+Ce^{-c|x|}.
 \end{split}
\end{equation}
The choices of $M$ and $\delta$ make the first terms as small
as required in terms of $\nu$. By \eqref{qojmg}, we can next
fix the logarithmic enlargement sufficiently large that the
last two terms in the second inequality are
$O(|x|^{-K})$ for any prescribed $K$. We then increase the
exterior radius so that both errors in \eqref{jgfb} are
uniformly small.

We now solve the orthogonality equations in \eqref{komua}
for $f_i$ close to $h_i$. Their Jacobian with respect to the
layer positions has entries
\[
 \int_{\mathbb R}
 s_i g'(s_i(z-f_i))s_jg'(s_j(z-f_j))\,dz
 -
 \delta_{ij}\int_{\mathbb R}
 (U-G)g''(s_i(z-f_i))\,dz.
\]
The diagonal entries of the first integral equal $\sigma_0$,
and its off-diagonal entries tend uniformly to zero as the
minimum separation increases. The second integral is small
by \eqref{jgfb}. These bounds also hold for positions
sufficiently close to $h_i$, since translations of $g$
depend continuously on their shifts. Thus the Jacobian
has a uniformly bounded inverse.

By the quantitative implicit-function argument in
\cite[Proposition~4.1]{WW}, we obtain a unique solution
near the zero heights, with
$\max_i|f_i-h_i|$ bounded by $C_Q$ times the initial
$L^2(\mathbb R)$ error in \eqref{jgfb}.
Uniqueness ensures that the local solutions agree on
overlaps. Translations of $g$ are Lipschitz in both
$L^2$ and $L^\infty$, with constants $\|g'\|_2$ and
$\|g'\|_\infty$. Hence, after the preceding choices,
the resulting $\phi=U-G$ satisfies both
\[
 \|\phi(x,\cdot)\|_\infty\le\nu,
 \qquad
 \|\phi(x,\cdot)\|_2\le\nu.
\]
The small shifts preserve the required separation
$f_{i+1}-f_i\ge D_0$. They also preserve the linear height
bound, by \eqref{izhux}.

It remains to estimate the derivatives of $f_i$.
Differentiating the orthogonality equations with respect
to $x$, while keeping $z$ fixed, we obtain
\begin{equation}\label{jgvcmu}
 \begin{split}
 \sum_j\biggl[
 &\int_{\mathbb R}
 s_i g'(s_i(z-f_i))s_jg'(s_j(z-f_j))\,dz\\
 &\qquad
 -\delta_{ij}\int_{\mathbb R}
 \phi g''(s_i(z-f_i))\,dz
 \biggr]D_xf_j
 =
 -\int_{\mathbb R}
 D_xU\,s_i g'(s_i(z-f_i))\,dz.
 \end{split}
\end{equation}
Using the uniform bound for the inverse matrix and
Cauchy--Schwarz on the right-hand side, we deduce the
first inequality in \eqref{ofd}.

The same equation also controls the pointwise slopes.
Interior estimates bound $D_xU$ globally. Near each
layer, the heteroclinic approximation yields
$D_xU=O(\delta)$, and the small shifts from $h_i$ to $f_i$
keep these intervals inside the regions of approximation.
Outside an interval of radius $M$, the integral of
$|g'(s_i(z-f_i))|$ is $O(e^{-cM})$.
Splitting the right-hand side of \eqref{jgvcmu} into
these two regions, we obtain $|Df_i|\le\nu$.

We next integrate the first inequality in \eqref{ofd}
over $\Omega\cap\{R<|x|<2R\}$.
For $|z|\le BR$, the resulting cylinder lies in a ball
of radius $CR$, so \eqref{mwm} bounds the integral of
$|D_xU|^2$ by $CAR^{n-2}$.
For $|z|>BR$, we choose $B$ larger than the confinement
constant and use \eqref{izhux} and \eqref{sbplfj}
to bound the remaining integral exponentially.
We therefore obtain the second inequality in \eqref{ofd}.
The constants in these two estimates depend on fixed
bounds for the inverse matrix and for $g'$.
They are independent of $D_0$ and $\nu$ once the basic
separation and smallness conditions hold.

Finally, interior estimates bound the derivatives of $U$.
Repeated differentiation of the orthogonality equations
uses the same invertible matrix, while all remaining
terms contain lower derivatives of $f_i$ and integrable
derivatives of $g$. By induction, as in
\cite[Remark~4.2]{WW}, every fixed derivative order of
$f_i$ is bounded. The logarithmic enlargement separates
the artificial boundary from the projections of exceptional
zeros. The same local construction therefore applies
across that boundary, and the derivative bounds hold
up to it from within the domain.

In the two unbounded vertical intervals, \eqref{izhux}
and Lemma~\ref{wentr}, together with interior estimates,
imply exponential convergence of $U$ to the corresponding
well and exponential decay of its derivatives.
The corresponding estimates for $G$ follow from those
for $g$. Thus $\phi$ and its derivatives also decay
exponentially there, which justifies integration by
parts in $z$.
\end{proof}

\subsection{Interaction estimates and stability for the layers}
\label{layers}\label{duuc}\label{layerint}

The aim of this subsection is to compute the interaction between adjacent
layers and transfer stability to their displacements. We will use these
estimates to derive the Toda equations and scalar inequalities in
Subsection~\ref{scalarred}. Using the decomposition in
Proposition~\ref{orth}, we
estimate the errors through first derivatives of the layer functions,
whose squared integrals are controlled by \eqref{ofd}. We keep the
coefficients determined by $W''(-1)$ and $W''(1)$ separate to account
for the different exponential decay rates at the two wells.

The heteroclinic solution approaches the two wells $1,-1$ with the
following exponential asymptotics:
\begin{equation}\label{gtail}
 \begin{split}
 g(t)&=1-A_+e^{-\sqrt{W''(1)}t}
                    +O(e^{-2\sqrt{W''(1)}t})\quad\text{as }t\to+\infty,\\
 g(t)&=-1+A_-e^{\sqrt{W''(-1)}t}
                    +O(e^{2\sqrt{W''(-1)}t})\quad\text{as }t\to-\infty.
 \end{split}
\end{equation}
Here $A_+,A_->0$ are determined by $W$ and $g(0)=0$.
The expansions, including any fixed number of derivatives, follow by
integrating $g'=\sqrt{2W(g)}$ near each well, as in
\cite[Appendix~A]{WW}.
When an index is $1$ or $-1$, we also write $A_1=A_+$ and $A_{-1}=A_-$.

Let $f_1<\cdots<f_Q$, with $f_{i+1}-f_i\ge D$. We use $s_i$,
$G$ and $\phi$ as in \eqref{sumg}.
At a descending layer the heteroclinic is $g(f_i-z)$, which solves
the same one-dimensional equation as the increasing heteroclinic.
Derivatives with respect to a layer position satisfy
\begin{equation*}
 \partial_{f_i}G=-s_i g'(s_i(z-f_i)),\qquad
 \partial_{f_i}^2G=g''(s_i(z-f_i)).
\end{equation*}
On the gap between $f_i$ and $f_{i+1}$ the well is $s_i$ and its
exponential rate is $\sqrt{W''(s_i)}$.

The next proposition computes the leading interaction between
neighboring layers and bounds the remaining integrals in the vertical
variable. These estimates determine the equations for the layer
positions and control the errors in their stability inequality.

In the formulas below, the right-neighbor term is omitted when $i=Q$,
and the left-neighbor term is omitted when $i=1$. We also estimate
the functions
\begin{equation}\label{tizqr}
 \begin{gathered}
 [W''(G)-W''(g(s_i(z-f_i)))]g'(s_i(z-f_i)),\\
 [W''(G)-W''(g(s_i(z-f_i)))]g''(s_i(z-f_i)),\\
 [W'''(G)-W'''(g(s_i(z-f_i)))]g'(s_i(z-f_i))^2,\\
 g'(s_i(z-f_i))g'(s_j(z-f_j))\quad\text{for }i\ne j
 \end{gathered}
\end{equation}

\begin{proposition}\label{quanti}
For sufficiently large $D$, the following estimates hold uniformly
in the ordered layer positions. For every fixed multi-index $\alpha$,
\begin{equation}\label{suu}
 \begin{split}
 &\sum_i\bigg|\partial_f^\alpha\bigg\{
 \int_{\mathbb R}
 [W'(G)-\sum_j W'(g(s_j(z-f_j)))]s_i g'(s_i(z-f_i))\,dz\\
 &\qquad\qquad-2W''(s_i)A_{s_i}^2
       e^{-\sqrt{W''(s_i)}(f_{i+1}-f_i)}\\
 &\qquad\qquad+2W''(s_{i-1})A_{s_{i-1}}^2
       e^{-\sqrt{W''(s_{i-1})}(f_i-f_{i-1})}\bigg\}\bigg|\\
 \le& C_{W,Q,\alpha}e^{-c_WD}
       \sum_{j=1}^{Q-1}e^{-\sqrt{W''(s_j)}(f_{j+1}-f_j)}.
 \end{split}
\end{equation}
Moreover,
\begin{equation}\label{drtf}
 \int_{\mathbb R}|W'(G)-\sum_iW'(g(s_i(z-f_i)))|^2\,dz
 \le Ce^{-c_WD}\sum_{i=1}^{Q-1}
                     e^{-\sqrt{W''(s_i)}(f_{i+1}-f_i)}.
\end{equation}
The right side of \eqref{drtf} also bounds the squared normal
$L^2$ norms of the functions in \eqref{tizqr} and of their derivatives
of any fixed order with respect to the layer positions.

For $i\ne j$, the integrals of products of fixed positive-order normal
derivatives of $g(s_i(z-f_i))$ and $g(s_j(z-f_j))$ tend uniformly to
zero as $D\to\infty$. The same holds for their fixed-order derivatives
with respect to the layer positions.
\end{proposition}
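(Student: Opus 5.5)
The plan is a direct one-dimensional computation in the normal variable $z$, for fixed $x$. We split $\mathbb R$ into the intervals $J_i$ between the midpoints $m_{i-1}=(f_{i-1}+f_i)/2$ and $m_i=(f_i+f_{i+1})/2$, with $m_0=-\infty$ and $m_Q=+\infty$. On $J_i$ we write $G=g(s_i(z-f_i))+\psi_i$, where $\psi_i$ collects the tails of the other layers. By \eqref{gtail} and alternation of the wells,
\[
\psi_i=\sum_{j\ne i}\bigl[g(s_j(z-f_j))-w_j\bigr],
\]
with $w_j$ the limiting well that $g(s_j(\cdot-f_j))$ takes on $J_i$. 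On $J_i$ the dominant part of $\psi_i$ comes from the neighbours $j=i\pm1$:
\[
\psi_i\approx -s_iA_{s_i}e^{-\sqrt{W''(s_i)}(f_{i+1}-z)}
\]
from the right, and symmetrically from the left with the rate $\sqrt{W''(s_{i-1})}$. All other terms are smaller by a factor $e^{-cD}$. Taylor expansion then gives, on $J_i$,
\[
W'(G)-\sum_jW'(g(s_j(z-f_j)))=\bigl[W''(g(s_i(z-f_i)))-W''(w_{i\pm1})\bigr]\psi_i+O(\psi_i^2)+\sum_{j\ne i}O\bigl(|g(s_j(\cdot-f_j))-w_j|^2\bigr).
\]
This uses $W'(w)=0$ and linearizes $W'$ at the wells for the far layers.

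For \eqref{suu}, we integrate this expansion against $s_ig'(s_i(z-f_i))$ over $J_i$, and also account for the parts of the $i$th test function on $J_{i\pm1}$. The leading term is the classical identity
\[
\int\bigl[W''(g)-W''(1)\bigr]\,g'\,e^{\sqrt{W''(1)}t}\,dt=-2\sqrt{W''(1)}\,A_+\cdot\sqrt{W''(1)}\,.
\]
We prove it by integrating by parts twice, using $g'''=W''(g)g'$; the boundary term at $+\infty$, computed with \eqref{gtail}, produces $2W''(1)A_+$. The analogous identity near $-1$ gives the left-neighbour coefficient. Multiplying by $A_{s_i}e^{-\sqrt{W''(s_i)}(f_{i+1}-f_i)}$ produces exactly the stated main terms, with their signs determined by $s_i$.

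The error terms are products of exponentials evaluated at the midpoints, and of quadratic tails. Each of them is bounded by $e^{-c_WD}$ times one of the $e^{-\sqrt{W''(s_j)}(f_{j+1}-f_j)}$. The point needing care is that the two wells have different rates. At the midpoint of the gap $(f_i,f_{i+1})$, both adjacent layers decay at the same rate $\sqrt{W''(s_i)}$, because that gap lies in the single well $s_i$. Hence products across one gap are $e^{-\sqrt{W''(s_i)}(f_{i+1}-f_i)}$ times an extra factor $e^{-cD}$ coming from the second power or the distance to the midpoint. Contributions from non-adjacent layers pass through at least two gaps and are bounded by the product of two such exponentials, each at most $e^{-cD}$.

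Derivatives $\partial_f^\alpha$ are handled the same way. Differentiating with respect to $f_j$ only replaces $g$ by $g'$, $g''$, and so on, in the $j$th profile. These have the same exponential asymptotics as in \eqref{gtail}, with constants multiplied by powers of $\sqrt{W''(\pm1)}$. The same splitting into the $J_i$ therefore applies term by term.

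Estimate \eqref{drtf} and the $L^2$ bounds for \eqref{tizqr} follow from the same pointwise expansion. On $J_i$ each function there is bounded by $C|\psi_i|\,|g'(s_i(z-f_i))|$, plus terms of the form $C|\psi_i|\,\mathbf 1$ times a decaying factor, plus quadratic tails. Squaring and integrating then gives bounds of the form
\[
e^{-2\sqrt{W''}\,d}\cdot(\text{length factor})\le e^{-cD}e^{-\sqrt{W''}\,d},
\]
where $d$ is the relevant gap. The last assertion is immediate: the product of fixed-order derivatives of two profiles centered at distance at least $D$ is bounded by $Ce^{-c|z-f_i|-c|z-f_j|}$, whose integral is $O(De^{-cD})$.

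The main obstacle is the bookkeeping of mixed exponential rates for non-neighbouring layers when $W''(1)\neq W''(-1)$, and checking that every cross term really gains the factor $e^{-c_WD}$ relative to some gap exponential. I would organize this by proving a single pointwise lemma: on $J_i$,
\[
|g(s_j(z-f_j))-w_j|\le C\prod_{\text{gaps between } z \text{ and } f_j} e^{-\sqrt{W''(\cdot)}\cdot(\text{portion of gap})}.
\]
I would then reduce all estimates to this lemma.
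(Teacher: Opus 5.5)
Your architecture is the paper's. You split at the midpoints, Taylor-expand $W'$ about the central heteroclinic, and get the leading coefficient from the exact identity $\int_{\mathbb R}e^{kt}[W''(g)-k^2]g'\,dt=[e^{kt}(g''-kg')]_{-\infty}^{+\infty}=-2k^2A_+$, which is \eqref{gfwron}. You then bound the remainder by gap decay and differentiate in the layer positions term by term. Do the differentiation on the full-line integrand before splitting, so the moving midpoints contribute no boundary terms. The leading order is right.

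The flaw is in the step you single out as the main obstacle, and it bites exactly when $W''(1)\ne W''(-1)$. Write $k_m=\sqrt{W''(s_m)}$ and $d_m=f_{m+1}-f_m$.

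\emph{The pointwise lemma is false.} As your phrasing indicates, each traversed gap in the lemma contributes the rate of its own well. But $g(s_j(\cdot-f_j))-w_j$ is a fixed profile: by \eqref{gtail} it is asymptotically $\pm A_{w_j}e^{-\sqrt{W''(w_j)}|z-f_j|}$, whatever the wells of the intermediate gaps. For $j=i+2$ and $z\in J_i$ to the right of $f_i$, its size is $e^{-k_{i+1}d_{i+1}}e^{-k_{i+1}(f_{i+1}-z)}$. The lemma asserts $e^{-k_{i+1}d_{i+1}}e^{-k_i(f_{i+1}-z)}$ instead, which is off by the unbounded factor $e^{(k_i-k_{i+1})(f_{i+1}-z)}$ when $k_i>k_{i+1}$.

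\emph{Far tails need not be $e^{-cD}$ smaller than the neighbour tail on $J_i$.} At $z=f_i$ the ratio is $e^{(k_i-k_{i+1})d_i-k_{i+1}d_{i+1}}$. This is large when $d_i\gg d_{i+1}$, and only $d_m\ge D$ is assumed.

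\emph{Your Taylor expansion drops a term.} The coefficient of $g_j-w_j$ is $W''(g(s_i(z-f_i)))-W''(w_j)$, not the neighbour's $W''(w_{i\pm1})$. For $j=i+2$ one has $w_{i+2}=-s_i$, so this coefficient tends to $W''(s_i)-W''(-s_i)\ne0$ to the right of $f_i$. The expansion therefore omits a term linear in a far tail with a non-decaying coefficient. When $k_i>k_{i+1}$, termwise estimates of that term give only $e^{-k_{i+1}(d_i+d_{i+1})}$, which is not $O(e^{-k_id_i-k_{i+1}d_{i+1}})$. So your bound by a product of two gap exponentials is not available.

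What closes the argument is cruder, and it is what the paper uses (``after a heteroclinic has crossed the first complete gap\dots''). On $J_i$, a layer $j>i$ faces the well $-s_j=s_{j-1}$ of its own adjacent gap $(f_{j-1},f_j)$. A layer $j<i$ faces $s_j$, the well of $(f_j,f_{j+1})$. So each tail decays throughout at the rate of the first gap it crosses, and for $j\ge i+2$
\[
|g(s_j(z-f_j))-w_j|\le Ce^{-k_{j-1}d_{j-1}}e^{-k_{j-1}(f_{j-1}-z)}\le Ce^{-k_{j-1}d_{j-1}}e^{-k_{j-1}D/2}\qquad\text{on }J_i,
\]
with the symmetric bound for $j\le i-2$. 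You extract a single gap exponential, and the remaining distance supplies $e^{-c_WD}$. Since the coefficient in front is merely bounded, this is enough:
\begin{itemize}
\item for \eqref{suu}, pair it with the bounded $L^1$ norm of the test derivative;
\item for \eqref{drtf}, use its square;
\item for \eqref{tizqr}, also integrate beyond the neighbouring layer. There the coefficient is $O(1)$, but the multiplying derivative has crossed a full gap, so its square integrates to $O(e^{-2k_md_m})$.
\end{itemize}
With these replacements your plan goes through.
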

We prove this in Appendix~\ref{inter}. In the layer estimates below,
$\epsilon_D$ denotes a constant bounded by $Ce^{-c_WD}$. Its value can
increase by a factor depending only on $W,Q$. 

Suppose that \eqref{sumg} is defined on
$\mathcal O\times\mathbb R$, that $U$ solves $\Delta U=W'(U)$ and is
stable, and that
\begin{equation}\label{iwh}
 \begin{gathered}
 f_{i+1}-f_i\ge D,\qquad |Df_i|\le\delta,\qquad |\phi|\le\delta,\\
 \int_{\mathbb R}\phi(x,z)s_i g'(s_i(z-f_i(x)))\,dz=0
       \qquad\text{for }1\le i\le Q.
 \end{gathered}
\end{equation}
The normal decay in Proposition~\ref{orth} justifies the integrations
below by truncation. By direct differentiation, we obtain
\begin{equation}\label{wkjm}
 \begin{split}
 (-\Delta+W''(G))\phi={}&
 \sum_i g''(s_i(z-f_i))|Df_i|^2
 -\sum_i s_i g'(s_i(z-f_i))\Delta f_i\\
 &-[W'(G)-\sum_iW'(g(s_i(z-f_i)))]\\
 &-[W'(G+\phi)-W'(G)-W''(G)\phi].
 \end{split}
\end{equation}

To estimate the orthogonal error, we use the following standard
spectral gap.

\begin{lemma}
There are $D_*,c>0$, depending only on $W,Q$, such that if all gaps
are at least $D_*$ and $v\in H^1(\mathbb R)$ is orthogonal to every
$s_i g'(s_i(z-f_i))$, then
\begin{equation*}
 \int_{\mathbb R}(v_z^2+W''(G)v^2)\,dz
          \ge c\int_{\mathbb R}(v_z^2+v^2)\,dz.
\end{equation*}
\end{lemma}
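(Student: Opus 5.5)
We argue by contradiction and compactness, reducing to the spectral properties of the one-dimensional operators $L_\pm=-\partial_z^2+W''(\pm g(\pm t))$, that is, $-\partial_z^2+W''(g(s_i(z-f_i)))$. The key one-layer fact is the following. For $L_0=-\partial_t^2+W''(g(t))$, the kernel is spanned by $g'$. The spectrum of $L_0$ is contained in $\{0\}\cup[\mu,\infty)$ for some $\mu>0$, because the essential spectrum starts at $\min\{W''(-1),W''(1)\}>0$ and $g'>0$ is the ground state. Hence $\int(v'^2+W''(g)v^2)\ge \mu\int v^2$ for $v\perp g'$, and interpolating with the bounded potential upgrades this to control of $\int(v'^2+v^2)$. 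The reflected layers $g(f_i-z)$ solve the same equation, so the same bound holds with $s_ig'(s_i(z-f_i))$.

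For several layers, I will use an IMS-type localization. Choose a partition of unity $\eta_0^2+\sum_{i=1}^Q\eta_i^2=1$ on $\mathbb R$. Here $\eta_i$ equals one on $|z-f_i|\le D/4$ and is supported in $|z-f_i|\le D/3$, and $|\eta_i'|\le C/D$; $\eta_0$ covers the gaps and the two unbounded ends. The localization formula gives
\begin{equation*}
\int(v_z^2+W''(G)v^2)\,dz=\sum_{k=0}^Q\int\bigl((\eta_kv)_z^2+W''(G)(\eta_kv)^2\bigr)\,dz-\sum_k\int|\eta_k'|^2v^2\,dz,
\end{equation*}
and the last sum is $O(D^{-2})\|v\|_2^2$. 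On $\operatorname{supp}\eta_0$, every point is at distance $\ge D/4$ from all layers. Thus $G$ is exponentially close to $\pm1$ there and $W''(G)\ge \tfrac12\min W''(\pm1)$ for large $D$, so the $k=0$ term controls $c\int(\eta_0v)^2$. On $\operatorname{supp}\eta_i$, $|W''(G)-W''(g(s_i(z-f_i)))|\le Ce^{-cD}$, using the exponential tails \eqref{gtail} of the other heteroclinics (compare \eqref{tizqr}, Proposition~\ref{quanti}). Hence the $k=i$ term is at least the $L_i$ form of $\eta_iv$ minus $Ce^{-cD}\|\eta_iv\|^2$.

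The main step is that $\eta_iv$ is almost orthogonal to $g_i':=s_ig'(s_i(z-f_i))$. Indeed $\langle\eta_iv,g_i'\rangle=\langle v,(\eta_i-1)g_i'\rangle$, since $v\perp g_i'$. Because $1-\eta_i$ vanishes on $|z-f_i|\le D/4$, this is at most $\|v\|_2\,\|(1-\eta_i)g'_i\|_2\le Ce^{-cD}\|v\|_2$. Decompose $\eta_iv=a_i g_i'/\|g'\|_2^2+w_i$ with $w_i\perp g_i'$ and $|a_i|\le Ce^{-cD}\|v\|_2$. The form of $L_i$ on $\eta_iv$ equals its form on $w_i$ (the kernel direction contributes nothing), which is at least $\mu\|w_i\|^2\ge\mu\|\eta_iv\|^2-Ce^{-cD}\|v\|^2$. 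Summing over $k$, we obtain
\begin{equation*}
\int(v_z^2+W''(G)v^2)\,dz\ge(c-CD^{-2}-Ce^{-cD})\|v\|_2^2.
\end{equation*}
Choosing $D_*$ large gives $c'\|v\|_2^2$.

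Finally, we upgrade to the $H^1$ norm. With $\Lambda=\sup|W''|$ on $[-2,2]$ (the range of $G$), write the form as $\ge\lambda(\text{form})$ with $\lambda\in(0,1)$ and split: the form is at least $\lambda\int v_z^2-\lambda\Lambda\|v\|^2+(1-\lambda)c'\|v\|^2$. Choosing $\lambda$ small gives the stated inequality with $c=\min\{\lambda,(1-\lambda)c'-\lambda\Lambda\}>0$, depending only on $W,Q$.

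The only delicate point is the almost-orthogonality step, which requires the cutoff scale $D/4$ to be compatible with the exponential tails. I expect it to be routine given the well nondegeneracy.
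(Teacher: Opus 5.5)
Your proof is correct. The paper gives no argument for this lemma and simply calls it standard. The usual proof, which is also the scheme the paper uses for its own one-dimensional operator in the proof of Lemma~\ref{ndisc}, is by contradiction. One takes $\|v_k\|_{L^2}=1$ with quadratic forms tending to zero and translates to each layer. The local limits lie in the kernel of $-\partial_t^2+W''(g)$ and are orthogonal to $g'$, so they vanish. Finally, $W''(G)\ge c>0$ away from the layers excludes the remaining mass.

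Your IMS localization is a direct alternative. It produces the explicit constant $\min\{\mu,\tfrac12\min W''(\pm1)\}-O(D^{-2}+e^{-cD})$. It also avoids the bookkeeping a contradiction argument needs when several layers separate from one another at different rates. The one genuinely new ingredient is the almost-orthogonality $\langle\eta_iv,g_i'\rangle=\langle v,(\eta_i-1)g_i'\rangle=O(e^{-cD})\|v\|_{L^2}$, and you handle it correctly.

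Two minor points. First, your opening sentence announces a contradiction--compactness argument that you never actually use. Second, the bound $\sum_k|\eta_k'|^2\le CD^{-2}$ requires $\eta_0=(1-\sum_i\eta_i^2)^{1/2}$ to be Lipschitz with constant $C/D$. This holds, for instance by Glaeser's inequality $|h'|^2\le2h\sup|h''|$ applied to $h=1-\eta_i^2\ge0$, or by building the partition from $\cos$ and $\sin$ of a single cutoff angle. It should be stated explicitly.
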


For $D\ge D_*$, $\delta$ sufficiently small depending on $W,Q$, and
$\rho\in C_c^1(\mathcal O)$, we test \eqref{wkjm} with $\rho(x)^2\phi$.
Orthogonality cancels the $\Delta f_i$ terms. The spectral gap absorbs
the Taylor remainder, so we obtain
\begin{equation}\label{vnb}
 \begin{split}
 \int\rho^2\int_{\mathbb R}(|D\phi|^2+\phi^2)\,dz\,dx
 \le{}&C\int\rho^2\sum_i|Df_i|^4\,dx\\
 &+C\epsilon_D\int\rho^2\sum_{i=1}^{Q-1}
                 e^{-\sqrt{W''(s_i)}(f_{i+1}-f_i)}\,dx
 +C\int|D\rho|^2\int_{\mathbb R}\phi^2\,dz\,dx.
 \end{split}
\end{equation}

The next proposition expresses the stability of $U$ as an inequality
for layer displacements. It bounds the interaction between neighboring
layers by the derivatives of these displacements, with errors involving
$Df_i$ and $\phi$.

\begin{proposition}\label{rstat}
Assume \eqref{iwh}, with $D$ sufficiently large and $\delta$
sufficiently small. For every fixed $\tau\in(0,1)$ and every
$\eta\in C_c^1(\mathcal O;\mathbb R^Q)$, we have
\begin{equation}\label{evjhr}
 \begin{split}
 &\int_{\mathcal O}\sum_{i=1}^{Q-1}
 2W''(s_i)^{3/2}A_{s_i}^2
 e^{-\sqrt{W''(s_i)}(f_{i+1}-f_i)}
 (\eta_{i+1}-\eta_i)^2\,dx\\
 &\le
 (\sigma_0+C\epsilon_D+C\tau)
 \int_{\mathcal O}|D\eta|^2\,dx\\
 &\quad+
 C(\epsilon_D+\tau)
 \int_{\mathcal O}|\eta|^2\sum_i|Df_i|^2\,dx\\
 &\quad+
 C_\tau\int_{\mathcal O}|\eta|^2
 \int_{\mathbb R}(|D_x\phi|^2+\phi^2)\,dz\,dx\\
 &\quad+
 C\epsilon_D\int_{\mathcal O}|\eta|^2
 \sum_{j=1}^{Q-1}
 e^{-\sqrt{W''(s_j)}(f_{j+1}-f_j)}\,dx.
 \end{split}
\end{equation}
\end{proposition}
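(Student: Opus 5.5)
We test the stability inequality \eqref{stab} (with $\varepsilon=1$) with
\begin{equation*}
 \varphi(x,z)=\sum_{i=1}^Q\eta_i(x)\,s_ig'(s_i(z-f_i(x))).
\end{equation*}
Each term is the Jacobi-type field obtained by moving the $i$th layer vertically. This is the approach of Wang--Wei \cite{WW}. We then expand the resulting quadratic form in the vertical variable, using the decomposition \eqref{sumg} and Proposition~\ref{quanti}.

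\textbf{Gradient terms.} The horizontal gradient of $\varphi$ has two parts. The first is $D\eta_i\,s_ig'$, which after $z$-integration yields $\sigma_0\sum_i|D\eta_i|^2$. The second is $-\eta_i g''\,Df_i$. Cross terms between different indices carry off-diagonal integrals, which tend to zero with $D$ by the last assertion of Proposition~\ref{quanti} and give the $C\epsilon_D\int|D\eta|^2$ error. The mixed terms $D\eta_i\cdot Df_i\int g'g''$ vanish by oddness of $g'g''$ integrated over $\mathbb R$, up to off-diagonal errors. Young's inequality with parameter $\tau$ handles what remains. The squares of $\eta_ig''Df_i$ contribute $C\int|\eta|^2|Df_i|^2$, which is absorbed into the $(\epsilon_D+\tau)$ term if paired with a Taylor correction below; otherwise it appears with constant $C$. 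To obtain the small coefficient, I would combine it with the $W'''$-term from the next step.

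\textbf{Zeroth-order terms.} The contribution $\int(\varphi_z^2+W''(U)\varphi^2)\,dz$ is the main computation. Write $W''(U)=W''(G)+W'''(G)\phi+O(\phi^2)$.

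For the diagonal part, use $-g'''+W''(g)g'=0$ and replace $W''(G)$ by $W''(g(s_i(z-f_i)))$. The error is controlled by the first function in \eqref{tizqr}. Integrating against $g'$ produces the interaction term. More precisely, differentiating the leading expansion in \eqref{suu} with respect to $f_i$ shows that
\begin{equation*}
 \int[W''(G)-W''(g_i)]g_i'^2\,dz
\end{equation*}
equals $-\partial_{f_i}$ of the interaction integral. This gives $-2W''(s_i)^{3/2}A_{s_i}^2e^{-\sqrt{W''(s_i)}(f_{i+1}-f_i)}$ plus the analogous left-neighbor term, up to $\epsilon_D$ relative errors.

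The off-diagonal terms $\eta_i\eta_{i+1}\int W''(G)g_i'g_{i+1}'$ produce the same exponential with the opposite sign. Hence the total is
\begin{equation*}
 -\sum_i c_i e^{-\sqrt{W''(s_i)}(f_{i+1}-f_i)}(\eta_{i+1}-\eta_i)^2,
 \qquad c_i=2W''(s_i)^{3/2}A_{s_i}^2.
\end{equation*}
The asymmetric rates are handled automatically, since each gap only involves its well $s_i$. The remaining errors are bounded by $\epsilon_D|\eta|^2\sum_je^{-\cdots}$, which is the last line of \eqref{evjhr}.

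\textbf{The $\phi$ term.} The term $\int W'''(G)\phi\,\varphi^2$ is where the $\phi$-integral enters. I would integrate by parts in $x$ using the orthogonality conditions in \eqref{iwh}. Differentiating those conditions converts $\int\phi g''g'$-type quantities into $D_x\phi$ and $Df_i$ pieces. Cauchy--Schwarz then gives
\begin{equation*}
 C_\tau\int|\eta|^2\int(|D_x\phi|^2+\phi^2)\,dz\,dx+\tau\int|D\eta|^2+\tau\int|\eta|^2|Df|^2.
\end{equation*}
The $O(\phi^2)$ remainder is bounded directly.

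\textbf{Main obstacle.} The hardest step is getting a coefficient $C(\epsilon_D+\tau)$, rather than $C$, in front of $|\eta|^2|Df_i|^2$. For the term $\eta_i^2|Df_i|^2\int g''^2$, I would use the identity that $g'(s_i(z-f_i))$ composed with a tilted layer is an approximate Jacobi field. That is, I would replace the test function by one built with the normal derivative $\partial_\nu$ of the tilted heteroclinic, of the form $\eta_i\,s_ig'(\cdots)/\sqrt{1+|Df_i|^2}$. This makes the $|Df|^2$ contributions cancel exactly to leading order, as in \cite[Section~6]{WW}, leaving only $\tau$- and $\epsilon_D$-small remainders. If this cancellation fails, the fallback is to accept a constant $C$ there, which \eqref{ofd} still controls in the later Hardy arguments.
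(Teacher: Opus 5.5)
Your proposal has a genuine gap at the linear-in-$\phi$ diagonal term $\eta_i^2\int_{\mathbb R}W'''(G)\phi\,(g_i')^2\,dz$, where $g_i^{(m)}=g^{(m)}(s_i(z-f_i))$. This term has no $x$-derivative, so there is nothing to integrate by parts in $x$. The orthogonality conditions in \eqref{iwh} only annihilate $\phi$ against the functions $s_jg_j'$, not against $W'''(g_i)(g_i')^2$. A direct Cauchy--Schwarz bound gives $C\int|\eta|^2\|\phi(x,\cdot)\|_{L^2(\mathbb R)}\,dx$, which is linear in $\phi$ and is not one of the admissible terms in \eqref{evjhr}.

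The missing idea is to use the equation \eqref{wkjm} for $\phi$. By \eqref{ldhl}, $[-\partial_{zz}+W''(g_i)]g_i''=-W'''(g_i)(g_i')^2$. Testing \eqref{wkjm} with $g_i''$ and integrating twice by parts in $z$ then expresses $\int W'''(g_i)\phi(g_i')^2\,dz$ as
\begin{equation*}
 -\int g_i''\Delta_x\phi\,dz-\sum_j|Df_j|^2\int g_i''g_j''\,dz
 +\sum_j\Delta f_j\int g_i''s_jg_j'\,dz
 +\int\Bigl[W'(G)-\sum_jW'(g_j)\Bigr]g_i''\,dz,
\end{equation*}
plus the Taylor remainder and a coefficient-difference term from \eqref{tizqr}. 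Three things then happen:
\begin{itemize}
\item the $j=i$ term $-|Df_i|^2\int(g'')^2\,dz$ cancels exactly the square of $\eta_ig_i''Df_i$ coming from the gradient;
\item the $\Delta f_i$ term vanishes because $\int g'g''\,dz=0$, which holds as an exact derivative, not by oddness (oddness fails when $W''(-1)\ne W''(1)$);
\item only the $\Delta_x\phi$ term is integrated by parts in $x$, and this is where the $\tau$ and $C_\tau\int|D_x\phi|^2$ contributions come from.
\end{itemize}

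Your tilted test function cannot replace this. When $f_i$ is affine, the factor $(1+|Df_i|^2)^{-1/2}$ is constant and merely rescales the quadratic form. The cancellation is genuinely between $|Df_i|^2\int(g'')^2$ and the $W'''$-contribution of $\phi$, which for a tilted heteroclinic is itself of size $|Df_i|^2$.

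Your fallback, an order-one constant in front of $\int|\eta|^2\sum_i|Df_i|^2$, proves a weaker inequality than \eqref{evjhr}. It is also not harmless later. In the proof of Proposition~\ref{crit} it adds $CA_1$ to $G_*$ with $A_1$ not small. In Proposition~\ref{finplanar} it adds a non-small multiple of $r^{-2}$. In both cases the Hardy contradiction is lost.

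The same missing step hides an error in your interaction computation. Differentiating $I_i=\int[W'(G)-\sum_kW'(g_k)]s_ig_i'\,dz$ in $f_i$ gives
\begin{equation*}
 \int_{\mathbb R}[W''(G)-W''(g_i)](g_i')^2\,dz
 =-\partial_{f_i}I_i-\int_{\mathbb R}\Bigl[W'(G)-\sum_kW'(g_k)\Bigr]g_i''\,dz,
\end{equation*}
which is the diagonal case of \eqref{hodtv}, not $-\partial_{f_i}I_i$ alone. The extra integral is of the same order $e^{-\sqrt{W''(s_i)}(f_{i+1}-f_i)}$ as the interaction. Its leading coefficient does not vanish in general; it is already nonzero for $W(u)=(1-u^2)^2/4$. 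Moreover, \eqref{drtf} bounds it only by the square root of an interaction, so it is not an $\epsilon_D$-relative error. In the paper it is cancelled exactly by the last term of the display above, coming from \eqref{wkjm}.

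Without the $\phi$-equation, your argument therefore leaves two order-one errors. One multiplies $\int|\eta|^2\sum_i|Df_i|^2$. The other multiplies $\int|\eta|^2\sum_je^{-\sqrt{W''(s_j)}(f_{j+1}-f_j)}$ and is comparable to the left side of \eqref{evjhr}.
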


\begin{proof}
We use the derivatives of the one-dimensional transitions
as test functions, following del Pino--Kowalczyk--Wei
\cite[Lemma~9.2]{DPKW13} and Wang--Wei
\cite[Proposition~8.1]{WW}.
More precisely, we apply the stability inequality of $U$ to
\[
 \sum_i\eta_i(x)s_i g'(s_i(z-f_i(x))).
\]
A cutoff in $z$, followed by a limit, makes this test
admissible. The exponential decay in the vertical variable
justifies this limit and the integrations below.

We first compute the contribution of the horizontal
derivatives. Since
\[
 D_x\bigl(s_i g'(s_i(z-f_i))\bigr)
 =-g''(s_i(z-f_i))Df_i,
\]
this contribution is
\begin{equation}\label{shexac}
 \begin{split}
 \int_{\mathcal O}\sum_{i,j}\biggl[
 &D\eta_i\cdot D\eta_j
 \int_{\mathbb R}
 s_i g'(s_i(z-f_i))s_jg'(s_j(z-f_j))\,dz\\
 &-2\eta_jD\eta_i\cdot Df_j
 \int_{\mathbb R}
 s_i g'(s_i(z-f_i))g''(s_j(z-f_j))\,dz\\
 &+\eta_i\eta_jDf_i\cdot Df_j
 \int_{\mathbb R}
 g''(s_i(z-f_i))g''(s_j(z-f_j))\,dz
 \biggr]\,dx.
 \end{split}
\end{equation}
For $i=j$, we have
$\int_{\mathbb R}(g')^2\,dz=\sigma_0$ and
$\int_{\mathbb R}g'g''\,dz=0$.
Thus the diagonal terms consist of
$\sigma_0\int_{\mathcal O}|D\eta|^2\,dx$ and
\begin{equation}\label{bal}
 \int_{\mathcal O}\sum_i\eta_i^2|Df_i|^2
 \int_{\mathbb R}g''(s_i(z-f_i))^2\,dz\,dx.
\end{equation}
To obtain the small coefficient of $\sum_i|Df_i|^2$ in
\eqref{evjhr}, we must cancel the second term. We use the equation for
$\phi$ to do so.

We next consider the vertical derivatives and the potential.
Differentiating the interaction integral with respect to
$f_j$, and using the equation for $g$, we obtain
\begin{equation}\label{hodtv}
 \begin{split}
 &\int_{\mathbb R}\bigl[
 g''(s_i(z-f_i))g''(s_j(z-f_j))
 +W''(G)s_i g'(s_i(z-f_i))s_jg'(s_j(z-f_j))
 \bigr]\,dz\\
 =&-\partial_{f_j}\int_{\mathbb R}
 \bigl[W'(G)-\sum_kW'(g(s_k(z-f_k)))\bigr]
 s_i g'(s_i(z-f_i))\,dz\\
 &\quad-\delta_{ij}\int_{\mathbb R}
 \bigl[W'(G)-\sum_kW'(g(s_k(z-f_k)))\bigr]
 g''(s_i(z-f_i))\,dz.
 \end{split}
\end{equation}
The derivative on the right will produce the interaction
term in \eqref{evjhr}. The remaining integral will cancel
another term from the equation for $\phi$.

To account for the difference between $U$ and $G$, we expand
\[
 W''(U)=W''(G)+W'''(G)\phi+O_W(\phi^2).
\]
In the diagonal part of the term linear in $\phi$, we first
replace $W'''(G)$ by $W'''(g(s_i(z-f_i)))$.
We will estimate the difference after the cancellations.
Differentiating the equation for $g$ twice, we have
\begin{equation}\label{ldhl}
 \bigl[-\partial_{zz}+W''(g(s_i(z-f_i)))\bigr]
 g''(s_i(z-f_i))
 =
 -W'''(g(s_i(z-f_i)))g'(s_i(z-f_i))^2.
\end{equation}
We multiply \eqref{wkjm} by $g''(s_i(z-f_i))$ and integrate
twice by parts in $z$. Using \eqref{ldhl}, we obtain
\[
 \begin{split}
 &\int_{\mathbb R}
 W'''(g(s_i(z-f_i)))\phi g'(s_i(z-f_i))^2\,dz\\
 =&-\int_{\mathbb R}
 g''(s_i(z-f_i))\Delta_x\phi\,dz\\
 &\quad-\sum_j|Df_j|^2\int_{\mathbb R}
 g''(s_i(z-f_i))g''(s_j(z-f_j))\,dz\\
 &\quad+\sum_j\Delta f_j\int_{\mathbb R}
 g''(s_i(z-f_i))s_jg'(s_j(z-f_j))\,dz\\
 &\quad+\int_{\mathbb R}
 \bigl[W'(G)-\sum_jW'(g(s_j(z-f_j)))\bigr]
 g''(s_i(z-f_i))\,dz\\
 &\quad+\int_{\mathbb R}
 \bigl[W'(G+\phi)-W'(G)-W''(G)\phi\bigr]
 g''(s_i(z-f_i))\,dz\\
 &\quad+\int_{\mathbb R}
 \bigl[W''(G)-W''(g(s_i(z-f_i)))\bigr]
 \phi g''(s_i(z-f_i))\,dz.
 \end{split}
\]

We multiply this identity by $\eta_i^2$ and sum over $i$.
The terms with $j=i$ in the negative sum containing
$|Df_j|^2$ cancel \eqref{bal}.
The terms containing $\Delta f_i$ vanish because
$\int_{\mathbb R}g'g''\,dz=0$.
The positive integral containing
$W'(G)-\sum_jW'(g(s_j(z-f_j)))$ cancels the last term
of \eqref{hodtv}.
These are exact cancellations.

After these cancellations, the leading interaction
contribution is
\[
 -\int_{\mathcal O}\sum_{i,j}\eta_i\eta_j
 \partial_{f_j}\int_{\mathbb R}
 \bigl[W'(G)-\sum_kW'(g(s_k(z-f_k)))\bigr]
 s_i g'(s_i(z-f_i))\,dz\,dx.
\]
We apply \eqref{suu} with one derivative with respect to
the layer positions. Summing the leading terms over $i,j$,
we obtain the negative of the left-hand side of
\eqref{evjhr}.
The differentiated remainder is bounded in absolute value by
\[
 C\epsilon_D\int_{\mathcal O}|\eta|^2
 \sum_{j=1}^{Q-1}
 e^{-\sqrt{W''(s_j)}(f_{j+1}-f_j)}\,dx.
\]

We now estimate the remaining terms.
By Proposition~\ref{quanti}, the integrals involving
derivatives of two distinct layers, together with their
first derivatives with respect to the layer positions,
are $O(\epsilon_D)$.
Consequently, the terms with $i\ne j$ in \eqref{shexac}
and in the sum containing $|Df_j|^2$ are bounded by
\[
 C\epsilon_D\int_{\mathcal O}
 \left(|D\eta|^2+|\eta|^2\sum_i|Df_i|^2\right)\,dx.
\]
For the terms containing $\Delta f_j$, we integrate by
parts in $x$. Differentiation of the vertical integrals
introduces only first derivatives of the layer functions,
so these terms satisfy the same bound.

For the term containing $\Delta_x\phi$, integration by
parts in $x$ yields
\[
 \begin{split}
 &-\int_{\mathcal O}\sum_i\eta_i^2
 \int_{\mathbb R}g''(s_i(z-f_i))\Delta_x\phi\,dz\,dx\\
 =&2\int_{\mathcal O}\sum_i\eta_iD\eta_i\cdot
 \int_{\mathbb R}g''(s_i(z-f_i))D_x\phi\,dz\,dx\\
 &\quad-\int_{\mathcal O}\sum_i\eta_i^2Df_i\cdot
 \int_{\mathbb R}s_i g'''(s_i(z-f_i))D_x\phi\,dz\,dx.
 \end{split}
\]
The vertical $L^2$ norms of $g''$ and $g'''$ are uniformly
bounded. By Cauchy--Schwarz and Young's inequality, the
absolute value of this expression is at most
\[
 C\tau\int_{\mathcal O}
 \left(|D\eta|^2+|\eta|^2\sum_i|Df_i|^2\right)\,dx
 +
 C_\tau\int_{\mathcal O}|\eta|^2
 \int_{\mathbb R}|D_x\phi|^2\,dz\,dx.
\]

The other terms linear in $\phi$ contain the coefficient
differences in \eqref{tizqr}.
This also includes the terms with $i\ne j$ from the
expansion of $W''(U)$, whose coefficients contain
$g'(s_i(z-f_i))g'(s_j(z-f_j))$.
Using the squared $L^2$ bounds in Proposition~\ref{quanti}
and Young's inequality, we bound all these terms by
\[
 C\int_{\mathcal O}|\eta|^2
 \int_{\mathbb R}\phi^2\,dz\,dx
 +
 C\epsilon_D\int_{\mathcal O}|\eta|^2
 \sum_{j=1}^{Q-1}
 e^{-\sqrt{W''(s_j)}(f_{j+1}-f_j)}\,dx.
\]
Finally, the Taylor remainders in the expansions of
$W''(G+\phi)$ and $W'(G+\phi)$ are bounded by $C\phi^2$.
Their contributions are therefore bounded by
$C\int_{\mathcal O}|\eta|^2
\int_{\mathbb R}\phi^2\,dz\,dx$.

The full quadratic form is nonnegative by stability.
Its leading terms are
$\sigma_0\int_{\mathcal O}|D\eta|^2\,dx$ and the negative
interaction term computed above.
Combining the error estimates and moving this interaction
term to the other side, we obtain \eqref{evjhr}.
\end{proof}

\subsection{Scalar reduction and radial Hardy tests}
\label{scalarred}

The objective of this subsection is to reduce the layer equations and
stability to scalar inequalities suitable for spherical averaging and
radial Hardy tests. Assuming $Q\ge2$, we choose a weighted mean gap
for which the equation and stability contain
the same positive interaction function $a$. An exponential lower bound
for $a$ then allows us to deduce the dimension restriction in
Lemma~\ref{lemma} by spherical averaging and radial Hardy tests.

We first derive the individual layer equations with their errors, and
then choose the weights and establish the three scalar estimates.
The calculations take place on $\mathcal O$, while
Lemma~\ref{lemma} concerns a full exterior domain.
Proposition~\ref{extens} will account for the regions removed from
$\mathcal O$.

For each gap, we denote its leading interaction by
\begin{equation}\label{hidefi}
 a_i(x)=\frac{2W''(s_i)A_{s_i}^2}{\sigma_0}
 e^{-\sqrt{W''(s_i)}(f_{i+1}(x)-f_i(x))},
 \qquad a_0=a_Q=0.
\end{equation}
We project \eqref{wkjm} onto
$s_i g'(s_i(z-f_i))$.
The one-dimensional Jacobi equation removes the
corresponding vertical derivative terms.
To handle the horizontal derivatives, we differentiate
the orthogonality condition in \eqref{iwh} and obtain
\[
 \int_{\mathbb R}
 s_i g'(s_i(z-f_i))D_x\phi\,dz
 =
 Df_i\int_{\mathbb R}\phi g''(s_i(z-f_i))\,dz.
\]
For $i\ne j$, we also have
\[
 \begin{split}
 &D_x\int_{\mathbb R}
 s_i g'(s_i(z-f_i))s_jg'(s_j(z-f_j))\,dz\\
 &=-Df_i\int_{\mathbb R}
 g''(s_i(z-f_i))s_jg'(s_j(z-f_j))\,dz\\
 &\quad-Df_j\int_{\mathbb R}
 s_i g'(s_i(z-f_i))g''(s_j(z-f_j))\,dz.
 \end{split}
\]
We use this identity to place the terms containing
$\Delta f_j$, for $j\ne i$, inside a divergence.
The terms containing $|Df_j|^2$ that arise in this step
cancel the corresponding terms in \eqref{wkjm}.
For $j=i$, we use
$\int_{\mathbb R}(g')^2\,dz=\sigma_0$ and
$\int_{\mathbb R}g'g''\,dz=0$.
By Proposition~\ref{quanti}, the resulting equation is
\begin{equation}\label{hsequa}
 \Delta f_i=a_{i-1}-a_i+e_i+\operatorname{div}F_i,
\end{equation}
where
\[
 \begin{split}
 F_i=\frac1{\sigma_0}\biggl[
 &Df_i\int_{\mathbb R}
 \phi g''(s_i(z-f_i))\,dz-\sum_{j\ne i}Df_j\int_{\mathbb R}
 s_i g'(s_i(z-f_i))s_jg'(s_j(z-f_j))\,dz
 \biggr].
 \end{split}
\]

We next estimate $F_i$ and the remaining scalar term $e_i$.
The bound for $F_i$ follows from $|\phi|\le\delta$ and
the estimates for integrals involving distinct layers
in Proposition~\ref{quanti}.
In $e_i$, these estimates also control the products
of derivatives of distinct layer positions.
For the term containing $D_x\phi$, Young's inequality yields
\[
 \left|Df_i\cdot\int_{\mathbb R}
 g''(s_i(z-f_i))D_x\phi\,dz\right|
 \le
 \tau|Df_i|^2
 +C_\tau\int_{\mathbb R}|D_x\phi|^2\,dz.
\]
The coefficient estimates in \eqref{tizqr} control the
remaining term linear in $\phi$, and the Taylor remainder
is bounded by $C\phi^2$.
Together with the interaction remainder in \eqref{suu},
we obtain, for every fixed $\tau>0$,
\begin{equation}\label{hserro}
 \begin{aligned}
 \sum_i|F_i|
 &\le C(\delta+\epsilon_D)
       \left(\sum_i|Df_i|^2\right)^{1/2},\\
 \sum_i|e_i|
 &\le C(\epsilon_D+\tau)\sum_i|Df_i|^2
 +C_\tau\int_{\mathbb R}(|D_x\phi|^2+\phi^2)\,dz
 +C\epsilon_D\sum_{i=1}^{Q-1}a_i.
 \end{aligned}
\end{equation}

We next choose a weighted sum of the layer equations. The weights
must produce the same interaction term in \eqref{hsequa} and in
\eqref{evjhr} tested with $\eta_i=c_i\zeta$.
We therefore choose $c_1,\ldots,c_Q$ by
\begin{equation}\label{gtcoef}
 \sum_{i=1}^Qc_i=0,
 \qquad
 c_{i+1}-c_i=\frac1{\sqrt{W''(s_i)}}
 \quad\text{for }1\le i<Q.
\end{equation}
These coefficients depend only on $W,Q$.
They are strictly increasing, and
$\sum_{j=1}^i c_j<0$ for $i<Q$.
When $W''(-1)=W''(1)$, they are equally spaced.
We define
\begin{equation}\label{clp}
 \begin{split}
 b&=\frac{\sum_i c_i f_i}{\sum_i c_i^2},\\
 a&=\frac1{\sigma_0\sum_i c_i^2}
 \sum_{i=1}^{Q-1}
 2\sqrt{W''(s_i)}A_{s_i}^2
 e^{-\sqrt{W''(s_i)}(f_{i+1}-f_i)}.
 \end{split}
\end{equation}

The leading terms in the weighted sum satisfy
\[
 \sum_i c_i(a_{i-1}-a_i)
 =
 \sum_{i=1}^{Q-1}(c_{i+1}-c_i)a_i
 =
 \left(\sum_i c_i^2\right)a.
\]
We therefore multiply \eqref{hsequa} by $c_i$, sum over
$i$, and divide by $\sum_i c_i^2$.
With
\[
 F=\frac{\sum_i c_iF_i}{\sum_i c_i^2},
 \qquad
 e=\frac{\sum_i c_ie_i}{\sum_i c_i^2},
\]
we obtain
\[
 \Delta b=a+e+\operatorname{div}F.
\]
The coefficients in these sums depend only on $W,Q$,
and $a$ is bounded above and below by positive fixed
multiples of $\sum_i a_i$.
Thus \eqref{hserro} implies
\begin{equation}\label{hpiano}
 \begin{split}
 |F|&\le C(\delta+\epsilon_D)
       \left(\sum_i|Df_i|^2\right)^{1/2},\\
 |e|&\le C(\epsilon_D+\tau)\sum_i|Df_i|^2
 +C_\tau\int_{\mathbb R}(|D_x\phi|^2+\phi^2)\,dz
 +C\epsilon_D a.
 \end{split}
\end{equation}
In particular, the differential inequality that we
will use is
\begin{equation}\label{mny}
 \Delta b\le
 (1+C\epsilon_D)a
 +C(\epsilon_D+\tau)\sum_i|Df_i|^2
 +C_\tau\int_{\mathbb R}(|D_x\phi|^2+\phi^2)\,dz
 +\operatorname{div}F.
\end{equation}

The function $b$ also has a direct interpretation in
terms of the layer gaps.
By summation by parts and \eqref{gtcoef}, we have
\[
 \begin{split}
 \sum_i c_i f_i
 &=-\sum_{i=1}^{Q-1}
   \left(\sum_{j=1}^i c_j\right)(f_{i+1}-f_i),\\
 \sum_i c_i^2
 &=\sum_{i=1}^{Q-1}
   \frac{-\sum_{j=1}^i c_j}{\sqrt{W''(s_i)}}.
 \end{split}
\]
Consequently, $b$ is a convex combination of
$\sqrt{W''(s_i)}(f_{i+1}-f_i)$, with coefficients
\[
 \frac{-\sum_{j=1}^i c_j}
 {\sqrt{W''(s_i)}\sum_j c_j^2},
 \qquad 1\le i<Q.
\]
These coefficients are positive and sum to one.
All coefficients of the exponentials in $a$ are also
positive and depend only on $W,Q$.
Jensen's inequality therefore yields
\begin{equation}\label{ojyzt}
 a\ge c_{W,Q}e^{-b},
 \qquad c_{W,Q}>0.
\end{equation}

We next obtain the scalar stability inequality.
For $\zeta\in C_c^1(\mathcal O)$, we take
$\eta_i=c_i\zeta$ in \eqref{evjhr} and divide by
$\sigma_0\sum_i c_i^2$.
By \eqref{gtcoef}, the left-hand side becomes exactly
$\int_{\mathcal O}a\zeta^2\,dx$.
The last term on the right is bounded by
$C\epsilon_D\int_{\mathcal O}a\zeta^2\,dx$.
For sufficiently large $D$, we absorb this term and obtain
\begin{equation}\label{ctyq}
 \begin{split}
 \int_{\mathcal O}a\zeta^2\,dx
 &\le [1+C(\epsilon_D+\tau)]
       \int_{\mathcal O}|D\zeta|^2\,dx\\
 &\quad+C(\epsilon_D+\tau)
       \int_{\mathcal O}\zeta^2\sum_i|Df_i|^2\,dx\\
 &\quad+C_\tau\int_{\mathcal O}\zeta^2
       \int_{\mathbb R}(|D_x\phi|^2+\phi^2)\,dz\,dx.
 \end{split}
\end{equation}
The absorption changes the coefficients by a factor
$1+O(\epsilon_D)$. The function $a$ remains the one
defined in \eqref{clp}.

We have now obtained the three scalar estimates:
\eqref{mny}, \eqref{ojyzt}, and \eqref{ctyq}.
We will also need to control the contribution of
$\operatorname{div}F$ when taking spherical averages.
If
\[
 \int_{\mathcal O\cap\{R<|x|<2R\}}
 \sum_i|Df_i|^2\,dx\le A_1R^{n-2},
\]
then Cauchy--Schwarz and \eqref{hpiano} imply
\begin{equation}\label{vtnenm}
 \int_{\mathcal O\cap\{R<|x|<2R\}}
 |F|\,|x|^{1-n}\,dx
 \le C(\delta+\epsilon_D)\sqrt{A_1}.
\end{equation}
Summing this estimate over dyadic annuli and dividing
by the logarithm of the outer radius yields a bound
of the same size. Thus the averaged contribution of
$F$ becomes small with $\delta$ and $\epsilon_D$.

The next lemma states the radial estimate that we will
apply to these scalar inequalities.
When the coefficients in the differential and stability
inequalities equal one and the averaged errors vanish,
its conclusion requires $n\ge10$.
The lemma quantifies how the errors affect this restriction.
The radial argument follows Wang
\cite[Proposition~4.1 and Section~6]{WangToda19}
and the spherical-average argument of Dancer--Farina
\cite[proof of Theorem~1, Step~4]{DF09}.
Gui--Wang--Wei \cite{Gui-Wang-Wei} used related radial estimates
for axially symmetric Allen--Cahn solutions.
The dimension restriction agrees with the result for
the exponential equation in Farina \cite{Farina07}.

We consider general functions on a full
exterior domain. Let $n>2$, $d=n-2$, and
$\Omega=\mathbb R^n\setminus\overline{B_{R_0}}$.
We take
$b\in C^2(\Omega)$,
$a,e,g\in C^0(\Omega)$, and
$F\in C^1(\Omega;\mathbb R^n)$.
Here $e$ denotes the scalar error in the differential
inequality, and $g$ denotes the error in the stability
inequality. In the application, we use the upper bounds
for the errors in \eqref{mny} and \eqref{ctyq}.

Fix $R_1>R_0$ and set $S=|S^{n-1}|$.
The radial tests involve the weight $|x|^{2-n}$,
so we measure the positive parts of the scalar errors by
\begin{equation}\label{ohk}
 \begin{aligned}
 E_*&=\limsup_{R\to\infty}\frac1{S\log R}
 \int_{R_1<|x|<R}e_+(x)|x|^{2-n}\,dx,\\
 G_*&=\limsup_{R\to\infty}\frac1{S\log R}
 \int_{R_1<|x|<R}g_+(x)|x|^{2-n}\,dx.
 \end{aligned}
\end{equation}
For the divergence term, we average the radial component
of $F$, keeping its sign:
\begin{equation}\label{bpke}
 \begin{split}
 q(t)&=e^t\fint_{S^{n-1}}
 F(e^t\theta)\cdot\theta\,d\theta,\\
 F_*&=\limsup_{T\to\infty}
 \frac1T\int_{\log R_1}^Tq(t)\,dt.
 \end{split}
\end{equation}
\begin{lemma}\label{lemma}
For the functions defined above, suppose $a>0$, the upper limits
$E_*,G_*,F_*$ are finite, and, for constants $c,\beta,\lambda,\kappa>0$,
\begin{equation}\label{dbdve}
a\ge c e^{-\beta b},\qquad
 \Delta b\le\lambda a+e+\operatorname{div}F.
\end{equation}
Assume also that every compactly supported radial test satisfies
\begin{equation}\label{uufske}
\int_\Omega a\zeta^2\,dx
 \le\kappa\int_\Omega|D\zeta|^2\,dx+\int_\Omega g\zeta^2\,dx.
\end{equation}
Then we have
\begin{equation}\label{fsjoj}
\beta\left\{\frac{\lambda\kappa d}{4}
           +\frac{\lambda G_*+E_*}{d}+F_*\right\}\ge2.
\end{equation}
\end{lemma}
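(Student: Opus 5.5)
The proof is a spherical-average argument in the logarithmic radial variable, in the spirit of Dancer--Farina and Wang. Set $B(t)=\fint_{S^{n-1}}b(e^t\theta)\,d\theta$ and $\bar a(t)=\fint_{S^{n-1}}a(e^t\theta)\,d\theta$. Convexity of the exponential and the first inequality in \eqref{dbdve} give $\bar a(t)\ge c\,e^{-\beta B(t)}$. The plan has three steps: an upper bound on the growth of $B$ from the differential inequality and the stability inequality; a lower bound for $\int_{B_R}a$ from this and Jensen; and comparison of that lower bound with the $R^{n-2}$ upper bound that stability gives for the same integral.

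\emph{Step 1 (slope of $B$).} Integrating $\Delta b\le\lambda a+e+\operatorname{div}F$ over $B_r\setminus B_{R_1}$ and using $\int_{\partial B_r}F\cdot\nu=r^{n-1}S\fint F(r\theta)\cdot\theta\,d\theta$ gives, with $B_t=rB_r$,
\[
 B'(T)\le Ce^{-dT}+\frac{e^{-dT}}{S}\int_{R_1<|x|<e^T}(\lambda a+e_+)\,dx+q(T).
\]
I then average over $T\in[\log R_1,\mathcal T]$ and divide by $\mathcal T$. The $q$ term contributes at most $F_*$ in the limit superior. For the other terms, Fubini turns $\frac1{\mathcal T}\int e^{-dT}\int_{|x|<e^T}h\,dx\,dT$ into $\frac1{d\mathcal T}\int_{|x|<e^{\mathcal T}}h\,|x|^{-d}\,dx$, up to a boundary error at the outer radius. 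For $h=e_+$ this gives $E_*/d$.

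For $h=\lambda a$ I need the log-averaged Hardy bound
\[
 \limsup_{\mathcal T\to\infty}\frac1{S\mathcal T}\int a|x|^{2-n}\,dx\le\frac{\kappa d^2}{4}+G_* .
\]
I would obtain it from \eqref{uufske} with the radial test $\zeta=|x|^{-d/2}\psi(\log|x|)$, where $\psi=1$ on a long interval of length $\mathcal T$ and has linear ramps of length $\sqrt{\mathcal T}$ at both ends. On the plateau, $|D\zeta|^2=(d/2)^2|x|^{-n}$, which integrates to $S(d^2/4)\mathcal T$. The ramps contribute $O(\mathcal T^{1/2})$, and the $g$ term gives $G_*$. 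Multiplying by $\lambda/d$, the mean slope satisfies $\limsup B(\mathcal T)/\mathcal T\le s$, where $s=\frac{\lambda\kappa d}{4}+\frac{\lambda G_*+E_*}{d}+F_*$.

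\emph{Step 2 (lower bound for $\int_{B_R}a$).} From Step 1, for every $s'>s$ there are arbitrarily large $\mathcal T$ for which $B\le s't$ on a set of $t\le\mathcal T$ of proportionally large measure. A mean bound only controls $B$ on average, so I would pass from $B'$ to $B$ carefully, for example by taking the positive part of $B'$ and integrating. On that set, $\bar a\ge c\,e^{-\beta s't}$. Integrating in polar coordinates then gives $\int_{B_{e^{\mathcal T}}}a\,dx\gtrsim e^{(n-\beta s')\mathcal T}$ along a sequence.

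\emph{Step 3 (comparison with stability).} Testing \eqref{uufske} with $\zeta=1$ on $B_R$ and $\zeta=(R/|x|)^{d/2}\psi$ outside, with a long logarithmic cutoff, bounds $\int_{B_R}a$ by $R^{n-2}$ times the logarithmic cutoff length plus the $g$ term. Subpolynomial factors such as $\log R$ do not matter. The comparison gives $n-\beta s'\le n-2$ for every $s'>s$, hence $\beta s\ge2$, which is \eqref{fsjoj}.

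The main obstacle is keeping exact constants while only limit superiors are available. The error quantities $E_*,G_*,F_*$ and the Hardy constant $\kappa d^2/4$ are defined only in a log-averaged sense, so Step 1 yields a bound on the mean slope of $B$, not a pointwise bound. Turning this into the sharp exponent in Step 2, without losing a factor from the ramps of the cutoff or from boundary terms at the outer radius, is the delicate part. I would handle it by choosing the ramps of length $o(\mathcal T)$ and by using Jensen's inequality in the $t$ variable to pass from the averaged slope to the averaged exponential.
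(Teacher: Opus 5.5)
Your proposal is correct and is essentially the paper's argument: spherical means in $t=\log r$, the radial Hardy test $|x|^{-d/2}\psi(\log|x|)$ giving the log-averaged bound on $a|x|^{2-n}$, the double integration of the averaged differential inequality (your Fubini step) giving $\limsup_{\mathcal T\to\infty}B(\mathcal T)/\mathcal T\le s$, and Jensen's inequality on spheres. There are two small corrections. First, that limsup is already a pointwise statement (for each $s'>s$, $B(t)\le s't$ for all large $t$), so Step~2 needs no careful passage from mean to pointwise. Second, the paper closes more economically than your Step~3, whose test would in any case have to vanish near $\partial B_{R_0}$ because \eqref{uufske} is only available on $\Omega$: it notes that $e^{2t}\bar a(t)\ge c\,e^{(2-\beta s')t}$ then grows exponentially, contradicting the linear bound on $\int_s^T e^{2t}\bar a(t)\,dt$ you already obtained from the Hardy test.
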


\begin{proof}
We set $t=\log r$ and $\gamma=d/2$, and define
\begin{equation*}
 \begin{aligned}
 m(t)&=\fint_{S^{n-1}} b(e^t\theta)\,d\theta,
 &\omega(t)&=e^{2t}\fint_{S^{n-1}} a(e^t\theta)\,d\theta,\\
 E(t)&=e^{2t}\fint_{S^{n-1}} e(e^t\theta)\,d\theta,
 &G(t)&=e^{2t}\fint_{S^{n-1}} g_+(e^t\theta)\,d\theta.
 \end{aligned}
\end{equation*}
We first use stability to bound the average of $\omega$.
For $\chi\in C_c^1((\log R_1,\infty))$, we take
$\zeta(r)=r^{-\gamma}\chi(\log r)$ in \eqref{uufske}.
After the change of variables $r=e^t$ and division by $S$,
the Dirichlet integral becomes
$\int(\chi'-\gamma\chi)^2\,dt$.
Since $\chi$ has compact support, $\int\chi\chi'\,dt=0$.
Using $g\le g_+$, we therefore obtain
\begin{equation*}
 \int\omega\chi^2\,dt
 \le\kappa\int\bigl((\chi')^2+\gamma^2\chi^2\bigr)\,dt
       +\int G\chi^2\,dt.
\end{equation*}

We fix $s>\log R_1+1$.
For $T>s+1$, we choose $\chi$ between zero and one, equal to
one on $[s,T]$ and zero outside $[s-1,T+1]$.
The function varies only on $[s-1,s]$ and $[T,T+1]$,
where $|\chi'|\le C$ with $C$ independent of $T$.
Since $\omega>0$ and $G\ge0$, the preceding inequality implies
\begin{equation*}
 \int_s^T\omega(t)\,dt
 \le\kappa\gamma^2(T-s)+C
       +\int_{\log R_1}^{T+1}G(t)\,dt.
\end{equation*}
The two intervals where $\chi$ varies contribute a bounded
amount to the Dirichlet integral.
By \eqref{ohk} and the change of variables $r=e^t$, the upper
limit of the last integral divided by $T$ is $G_*$.
Here replacing $T$ by $T+1$ leaves the upper limit unchanged,
since $(T+1)/T\to1$.
We divide by $T$ and obtain
\begin{equation}\label{gxhbmb}
\limsup_{T\to\infty}\frac1T\int_s^T\omega(t)\,dt
 \le\kappa\gamma^2+G_*.
\end{equation}

We next estimate the growth of $m$.
Taking spherical averages in the differential inequality
in \eqref{dbdve}, and using the definition of $q$ in
\eqref{bpke}, we obtain
\begin{equation*}
 m''+dm'\le\lambda\omega+E+q'+dq.
\end{equation*}
Thus $(m'-q)'+d(m'-q)\le\lambda\omega+E$.
We multiply by $e^{dt}$ and integrate from $s$ to $t$.
A second integration from $s$ to $T$ yields
\begin{equation}\label{opt}
\begin{split}
 m(T)\le {}&m(s)+\frac{m'(s)-q(s)}d(1-e^{-d(T-s)})
              +\int_s^Tq(t)\,dt\\
 &+\frac1d\int_s^T(1-e^{-d(T-t)})
                    (\lambda\omega(t)+E(t))\,dt.
 \end{split}
\end{equation}
Since $0\le1-e^{-d(T-t)}\le1$ for $s\le t\le T$,
the last line is bounded above by
$d^{-1}\int_s^T(\lambda\omega+E_+)\,dt$.
Moreover,
\[
 E_+(t)\le e^{2t}\fint_{S^{n-1}}e_+(e^t\theta)\,d\theta.
\]
Consequently, \eqref{ohk} bounds the upper limit of
$T^{-1}\int_s^T E_+(t)\,dt$ by $E_*$, while
\eqref{bpke} shows that the corresponding upper limit
for $q$ equals $F_*$.
The first two terms on the right-hand side of \eqref{opt}
remain bounded as $T\to\infty$ and vanish after division by $T$.
Together with \eqref{gxhbmb} and $\gamma=d/2$, these facts imply
\begin{equation}\label{tcxs}
\limsup_{T\to\infty}\frac{m(T)}T
 \le B_*=\frac{\lambda\kappa d}{4}
                 +\frac{\lambda G_*+E_*}{d}+F_*.
\end{equation}

Finally, the lower bound $a\ge ce^{-\beta b}$ in
\eqref{dbdve} and Jensen's inequality imply
$\omega(t)\ge c\exp(2t-\beta m(t))$.
If $\beta B_*<2$, we choose $B>B_*$ such that $\beta B<2$.
By \eqref{tcxs}, we have $m(t)\le Bt$ for all sufficiently
large $t$, and hence
\[
 \omega(t)\ge c e^{(2-\beta B)t}.
\]
The integral $\int_s^T\omega(t)\,dt$ then grows faster than $T$, contradicting
\eqref{gxhbmb}.
Thus $\beta B_*\ge2$, which is \eqref{fsjoj}.
\end{proof}  

For the exact coefficients $\lambda=\kappa=\beta=1$ and zero
averaged flux, the lower bound \eqref{fsjoj} is violated whenever
\begin{equation*}
E_*+G_*<\frac{(n-2)(10-n)}4.
\end{equation*}
Thus the base dimensions $3\le n\le9$, corresponding to ambient
dimensions $4\le N\le10$, admit a strictly positive error allowance.
At $n=9$, the sum $E_*+G_*$ may be any number below $7/4$ under
the normalization in \eqref{ohk}. The case $n=2$ is treated by the
geometric stability inequality.

In particular, if an Allen--Cahn reduction produces
$\lambda,\kappa,\beta\to1$ as a fixed approximation error
$\delta\downarrow0$, and
$E_*+G_*+\max(F_*,0)\le C(A,Q,n)\delta^\alpha$ for some
$\alpha>0$, one fixed sufficiently small $\delta$ leads to the
contradiction. The required averaged bounds are verified in
the proof of Proposition~\ref{crit} below.

\subsection{Extending the scalar estimates across the excluded regions}
\label{scalext}

In this subsection, we make the scalar estimates available for averaging
over complete spheres. The layer functions are defined only on the
component constructed in Lemma~\ref{geomet}, so their equations and
stability inequalities acquire boundary and cutoff terms when extended
across the complement. Our objective is to show that these added terms
have bounded weighted integrals and vanish in the logarithmic averages.
Proposition~\ref{extens} then permits the application of
Lemma~\ref{lemma} to the rigidity problem.

Let $n\ge5$ and let $H$ be a locally finite union of closed balls with
center count $\#\{|p_i|\le R\}\le C(1+R)^{n-4}$ and radii between
a positive constant and $C\log(2+|p_i|)$. Let $\Omega$ be the
component of asymptotically full measure constructed in
Lemma~\ref{geomet}. For functions on $\Omega$, we define the averaged
positive scalar errors by \eqref{ohk}, integrating only over $\Omega$,
and the signed radial-averaged flux by \eqref{bpke}, with $F1_\Omega$
in place of $F$.
\begin{proposition}\label{extens}
Let $\Omega$ be the component defined above. Suppose that
$b,a,e,g$ are smooth real-valued functions on $\Omega$ and
$F$ is a smooth vector-valued function on $\Omega$.
Assume that $E_*,G_*,F_*$ are finite. For some positive constants $c,\beta,\lambda,\kappa,C_a,C_b$,
suppose that, outside a fixed ball,
\begin{equation}\label{frhn}
0<a\le C_a,\qquad
a\ge c e^{-\beta b},\qquad
\Delta b\le\lambda a+e+\operatorname{div}F.
\end{equation}
Assume also that every $\xi\in C_c^1(\Omega)$ supported
outside this ball satisfies
\begin{equation}\label{djcrh}
\int_\Omega a\xi^2\,dx
\le
\kappa\int_\Omega|D\xi|^2\,dx
+\int_\Omega g\xi^2\,dx.
\end{equation}
Suppose that $b$ and $F$ extend smoothly to a neighborhood
of each point of $\partial\Omega$, and that their boundary
values satisfy
\begin{equation}\label{uhzotk}
|Db|+|F|\le C_b,\qquad
|b|\le C_b(1+|x|).
\end{equation}
Then
\[
\beta\left\{
\frac{\lambda\kappa(n-2)}{4}
+\frac{\lambda G_*+E_*}{n-2}
+F_*
\right\}\ge2.
\]
\end{proposition}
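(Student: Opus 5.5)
The plan is to reduce to Lemma~\ref{lemma} on the full exterior domain $\mathbb R^n\setminus\overline{B_{R_0}}$. We extend the data across $H$ and show that every added error has zero logarithmic average. There are two parts: the stability inequality \eqref{uufske} for radial tests, and the averaged differential inequality for $m(t)$.

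\textbf{Stability for radial tests.} Given a radial test $\zeta=r^{-\gamma}\chi(\log r)$, we apply \eqref{djcrh} to $\xi=\chi_H\zeta$. Here $\chi_H$ is a Lipschitz cutoff vanishing on a unit neighborhood of $H$ and equal to one on $\Omega$ away from that neighborhood. It is built from the balls as in \eqref{xvd}, so its gradient cost against $|x|^{2-n}$ is $O(R^{-2})$ per dyadic annulus, which is summable. We extend $a$ by zero off $\Omega$ and define
\[
g_{\rm ext}=g\mathbf 1_\Omega+\kappa\,|x|^{n-2}\times(\text{cutoff cost density})+C_a\mathbf 1_{\{\chi_H\ne1\}\cap\Omega}.
\]
Then \eqref{uufske} holds with $g_{\rm ext}$. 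By \eqref{uyctu}, the set $\{\chi_H\ne1\}$ has measure $O(R^{n-\kappa}(\log R)^{n-1})$ per annulus, where $\kappa>2$ is the exponent of Lemma~\ref{geomet}. Weighted by $|x|^{2-n}$, this gives $O(R^{2-\kappa}\log^{n-1}R)$, which is summable. Hence the added terms contribute zero to $G_*$.

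\textbf{Averaged differential inequality.} Lemma~\ref{lemma} only uses the averaged inequality $m''+dm'\le\lambda\omega+E+q'+dq$, so it suffices to prove this with extra errors. We define $m(t)=\fint b\,\mathbf 1_\Omega$, or equivalently extend $b$ by its values on $\Omega$. The cleaner route is to average over $S_r\cap\Omega$ only and compute $\frac{d}{dr}$ of $\int_{S_r\cap\Omega}b$ via the divergence theorem on $\Omega\cap B_r$. Applying Green's identity to $\Delta b\le\lambda a+e+\operatorname{div}F$ on $\Omega\cap(B_r\setminus B_{s})$ produces boundary integrals over $\partial\Omega\cap(B_r\setminus B_s)$ of $\partial_\nu b$ and $F\cdot\nu$. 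By \eqref{uhzotk}, these are bounded by $C_b\,\mathcal H^{n-1}(\partial H\cap A_R)$. The boundary area is at most $CR^{n-4}(\log R)^{n-1}$ per annulus, and against the weight $|x|^{2-n}$ it sums to $O(R^{-2}\log^{n-1}R)$, which is summable.

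The spherical-mean defect also needs control. The difference between $\fint_{S_r}$ and $\fint_{S_r\cap\Omega}$ and the variation of the domain of the averages add terms involving $|b|\le C_b(1+r)$ times the relative missing area $CR^{-\kappa}\log^{n-1}R$ from \eqref{nffq}. This is $O(R^{1-\kappa}\log^{n-1}R)$. After the double integration in \eqref{opt}, it contributes $o(T)$ to $m(T)$ because $\kappa>2$. The same applies to the flux average $q$, whose definition already uses $F\mathbf 1_\Omega$.

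With these errors having zero averages, the proof of Lemma~\ref{lemma} goes through verbatim. We use $\omega(t)\ge c\exp(2t-\beta m(t))$ via Jensen on $S_r\cap\Omega$, whose relative measure tends to one, together with $a>0$ and $a\le C_a$ to handle the missing part. This yields the stated inequality.

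\textbf{Main obstacle.} The hard step is the Green's identity bookkeeping on the perforated region. We need the cross terms $\int_{S_r\cap H}|Db|$ and the change of $\Omega\cap S_r$ with $r$ to be controlled, which is why $b$ must extend smoothly near $\partial\Omega$ with bounded $|Db|$. We would use the a.e.\ radii from Lemma~\ref{geomet} and integrate in $r$ to avoid pointwise-in-$r$ regularity.
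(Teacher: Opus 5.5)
Your proposal is correct and is essentially the paper's proof. You extend across the removed balls, show that the cutoff, collar, and $\partial\Omega$ contributions have summable weighted dyadic bounds (so they leave $E_*,G_*,F_*$ unchanged and add only $O(1)$ to the plateau tests), and rerun the proof of Lemma~\ref{lemma}. The one variation is that you extend $a$ by zero and apply Jensen on $S_r\cap\Omega$, whereas the paper takes $\widetilde b=b\mathbf 1_\Omega$ and $\widetilde a=a\mathbf 1_\Omega+c\mathbf 1_{\Omega^c}$ so that $\widetilde a\ge ce^{-\beta\widetilde b}$ holds on whole spheres, paying only the finite term $c\int_{\Omega^c}\zeta^2\,dx$.

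Two estimates need correcting.

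\textbf{The moving-domain term.} This term is the boundary measure $-b\,\nu\,\mathcal H^{n-1}\llcorner\partial\Omega$ in $D(b\mathbf 1_\Omega)$, not something governed by the relative missing area on a single sphere. It must be bounded using $\mathcal H^{n-1}(\partial\Omega\cap\{R<|x|<2R\})\le CR^{n-4}(\log R)^{n-1}$ together with the boundary bound $|b|\le C_b(1+|x|)$. After weighting, this gives $CR^{-2}(\log R)^{n-1}$ per annulus, which is summable.

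\textbf{Comparing the two spherical means.} The means $\fint_{S_r}b\mathbf 1_\Omega$ and $\fint_{S_r\cap\Omega}b$ differ only by the factor $|S_r\cap\Omega|/|S_r|$. So no interior growth bound on $b$ is needed, and none is assumed; the linear bound on $b$ is given only for boundary values.
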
 

\begin{proof}
Fix a sufficiently large interior radius. Terms from its boundary
are bounded independently of the outer radius. We write
$A_R=\{R<|x|<2R\}$ and
$\kappa_0=2(n-1)/(n-2)>2$, reserving $\kappa$ for the stability
coefficient. Lemma~\ref{geomet} and the ball count yield
\begin{equation}\label{rgg}
|A_R\setminus\Omega|\le C R^{n-\kappa_0}(\log R)^{n-1},
 \qquad
 \mathcal H^{n-1}(\partial\Omega\cap A_R)
 \le C R^{n-4}(\log R)^{n-1}.
\end{equation}
In particular,
\begin{equation}\label{gzsj}
\int_{\Omega^c\cap\{|x|>R_1\}}|x|^{2-n}\,dx<\infty.
\end{equation}
The complement here includes every other component of
$\mathbb R^n\setminus H$.

We define the extensions by
\begin{equation}\label{qdjwb}
\widetilde b=b1_\Omega,
 \qquad \widetilde a=a1_\Omega+c1_{\Omega^c}.
\end{equation}
Then $\widetilde a\ge c e^{-\beta\widetilde b}$ almost everywhere.
From the Gauss formula, with $\nu$ the outward normal of $\Omega$, we obtain
\begin{equation*}
 \Delta(b1_\Omega)
 =1_\Omega\Delta b-\partial_\nu b\,
        \mathcal H^{n-1}\llcorner\partial\Omega
       -\operatorname{div}
        (b\nu\,\mathcal H^{n-1}\llcorner\partial\Omega),
\end{equation*}
\begin{equation*}
 1_\Omega\operatorname{div}F
 =\operatorname{div}(F1_\Omega)
    +(F\cdot\nu)\mathcal H^{n-1}\llcorner\partial\Omega.
\end{equation*}
Consequently \eqref{frhn} implies
\begin{equation}\label{moqdzv}
\Delta\widetilde b\le\lambda\widetilde a+
                  \widetilde e+\operatorname{div}\widetilde F,
\end{equation}
where the scalar and vector measures are
\begin{equation}\label{alr}
\widetilde e=e_+1_\Omega\,dx+
       (F\cdot\nu-\partial_\nu b)\,
         \mathcal H^{n-1}\llcorner\partial\Omega,
 \qquad
 \widetilde F=F1_\Omega\,dx-
       b\nu\,\mathcal H^{n-1}\llcorner\partial\Omega.
\end{equation}
Here we have used $a1_\Omega\le\widetilde a$ and $e\le e_+$.
By \eqref{uhzotk} and \eqref{rgg}, the boundary terms satisfy
\begin{equation}\label{kwwk}
 \begin{split}
 &\int_{\partial\Omega\cap A_R}
    |F\cdot\nu-\partial_\nu b|\,|x|^{2-n}\,d\mathcal H^{n-1}(x)+\int_{\partial\Omega\cap A_R}
    |b\nu\cdot(x/|x|)|\,|x|^{1-n}\,d\mathcal H^{n-1}(x)\\
 \le& C R^{-2}(\log R)^{n-1}.
 \end{split}
\end{equation}
Thus these contributions have finite total weighted variation and
contribute zero to both asymptotic averages.

For the stability estimate, we choose $0\le\chi\le1$, vanishing on a neighborhood of $H$,
equal to one outside its fixed-thickness enlargements, and with
$|D\chi|\le C$. We use $\chi\zeta$ on $\Omega$ and set it equal to zero
on other components. This is an admissible compactly supported
Sobolev test in \eqref{djcrh}, by approximation. The collar volume on $A_R$ is at most $C R^{n-4}(\log R)^n$.
For radial Hardy plateaus
\begin{equation*}
 \zeta=r^{-(n-2)/2}\psi(\log r),\qquad
 |\psi|+|\psi'|\le C,
\end{equation*}
the lost left-hand term, the extra cutoff Dirichlet term, and the
Dirichlet cross term all have a summable dyadic bound. For example,
\begin{equation*}
 \int_{\Omega\cap A_R}a(1-\chi^2)\zeta^2\,dx
 +\int_{\Omega\cap A_R}\zeta^2|D\chi|^2\,dx
 \le C R^{-2}(\log R)^n.
\end{equation*}
On the collar, $|D\zeta|^2\le Cr^{-n}$, so Cauchy--Schwarz bounds
the cross term by $CR^{-3}(\log R)^n$. We sum over dyadic annuli
and obtain, uniformly in the plateau length,
\begin{equation}\label{kltvp}
\int\widetilde a\zeta^2\,dx
 \le\kappa\int|D\zeta|^2\,dx+
       \int_\Omega g_+\zeta^2\,dx+O_{R_1}(1).
\end{equation}
The added term $c\int_{\Omega^c}\zeta^2\,dx$ is bounded by \eqref{gzsj}.
Thus from \eqref{kltvp} we deduce \eqref{gxhbmb} for the full spherical mean
of $\widetilde a$.

We push the scalar measure in \eqref{alr}, weighted by $|x|^{2-n}$,
and the radial component of the vector measure, weighted by
$|x|^{1-n}$, forward by $x\mapsto\log|x|$. Let us divide both measures
by $|S^{n-1}|$ and denote them by $E$ and $q$, respectively.
Their interior parts agree with the densities used in
Lemma~\ref{lemma}, with $e$ replaced by $e_+$. Their boundary parts have finite total variation.
Indeed, the dyadic sum in \eqref{kwwk} is bounded by
\begin{equation*}
 C\sum_{j\ge0}(2^jR_1)^{-2}
                      [\log(2^jR_1)]^{n-1}<\infty.
\end{equation*}
Since $\widetilde b$ is locally BV on each exterior annulus, its
spherical mean $m$ is locally BV in $t=\log r$. Radial testing of
\eqref{moqdzv} yields, in distributions and with $d=n-2$,
\begin{equation*}
 m''+dm'\le\lambda\omega+E+q'+dq.
\end{equation*}
We set $p=m'-q$ as a distribution. Then
$(e^{dt}p)'\le e^{dt}(\lambda\omega\,dt+E)$.
The difference between the right side and the left side is a positive
distribution, hence a locally finite Radon measure. Thus
$(e^{dt}p)'$ is a Radon measure and $p$ has a locally BV
representative. In particular, $m'=p\,dt+q$ as measures.
We choose $s$ at a common Lebesgue point of $m,p$ outside the atoms
of $E,q$. We integrate the inequality for $p$ and use the
identity for $m'$ to obtain, for almost every $T>s$,
\begin{equation*}
 \begin{split}
 m(T)\le{}&m(s)+\frac{p(s)}d(1-e^{-d(T-s)})+q((s,T))\\
 &+\frac{\lambda}{d}\int_s^T
                  (1-e^{-d(T-t)})\omega(t)\,dt\\
 &+\frac1d\int_{(s,T)}(1-e^{-d(T-t)})\,dE(t).
 \end{split}
\end{equation*}
This is the measure form of \eqref{opt}. The scalar kernel is
nonnegative and bounded by $1/d$, so its boundary contribution is
bounded by $d^{-1}$ times the total variation of the boundary part of
$E$. The boundary contribution to $q((s,T))$ is bounded by the total
variation of its boundary part. Both bounds are independent of $T$.
After division by $T$, these terms are $O(T^{-1})$.
Consequently \eqref{tcxs} holds for almost every large $T$ with
the stated averages.
Jensen's inequality for the spherical means of \eqref{qdjwb} implies
the exponential lower bound for $\omega$ for almost every large $T$,
contradicting \eqref{kltvp}.
\end{proof}

\subsection{Rigidity from the averaged error bounds}
\label{averigid}

Our objective is to complete the directional rigidity theorem,
Proposition~\ref{crit}, by combining the geometric estimates with the
scalar argument. For $2\le n\le4$, cutoffs in the horizontal stability
inequality force the integral in \eqref{subv} to vanish, and
Lemma~\ref{kzero} applies. For $5\le n\le9$, we estimate the
logarithmic averages of the errors and use the extension from the
preceding subsection. Choosing the separation and approximation parameters
in order makes the left-hand side of \eqref{fsjoj} smaller than $2$
and excludes multiple layers.
 
We begin with the lower-dimensional cases, which require only the
horizontal energy bound at the origin.

\begin{proposition}\label{small}
Let $2\le n\le4$, and let $U\in C^2(\mathbb R^{n+1};[-1,1])$ be a
stable entire solution of $\Delta U=W'(U)$ with $U(0)=0$.
If $\int_{B_R}|D_xU|^2\,dX\le AR^{n-2}$ for all sufficiently large
$R$, then $U(x,z)=g(z)$ or $g(-z)$.
\end{proposition}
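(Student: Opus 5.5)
The plan is to show that the horizontal geometric stability integral vanishes identically, and then to invoke Lemma~\ref{kzero}. We apply the horizontal Sternberg--Zumbrun inequality \eqref{szum} with $\varepsilon=1$:
\begin{equation*}
 \int\bigl(|D(D_xU)|^2-|D|D_xU||^2\bigr)\eta^2\,dX
 \le\int|D_xU|^2|D\eta|^2\,dX .
\end{equation*}
The cutoff $\eta$ is chosen differently according to the dimension.

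For $n=2$ and $n=3$ we use the standard cutoff, equal to one on $B_R$, supported in $B_{2R}$, with $|D\eta|\le C/R$. The right side is then at most $CR^{-2}\cdot AR^{n-2}=CAR^{n-4}$, which tends to zero since $n<4$. For $n=4$ this bound is only $O(1)$, so we instead use the logarithmic cutoff
\begin{equation*}
 \eta(X)=\frac{\log(R^2/|X|)}{\log R}\quad\text{on } R\le|X|\le R^2,
\end{equation*}
equal to one inside $B_R$ and zero outside $B_{R^2}$. On each dyadic annulus $\{2^k<|X|<2^{k+1}\}$ we have $|D\eta|^2\le C(2^k\log R)^{-2}$, and the horizontal energy there is at most $CA2^{2k}$. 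Each annulus therefore contributes $CA(\log R)^{-2}$. There are about $\log R$ such annuli, so the total is $O(1/\log R)\to0$. This is the classical capacity argument for $n-2=2$, and it is the only delicate point; in particular, the growth hypothesis at all large radii about the origin is exactly what is needed to sum over the dyadic annuli.

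Letting $R\to\infty$ gives $\int_{\mathbb R^{n+1}}(|D(D_xU)|^2-|D|D_xU||^2)\,dX=0$. The integrand is nonnegative almost everywhere, so it vanishes wherever $|D_xU|>0$. Lemma~\ref{kzero}, whose hypotheses are exactly the stated growth bound together with $U(0)=0$, then yields $U=g(z)$ or $g(-z)$.

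One technical check remains: \eqref{szum} is stated for compactly supported Lipschitz $\eta$, and the logarithmic cutoff is Lipschitz with compact support, so it is admissible. Note also that the inequality is applied directly to the entire solution, so there is no need to use the local bound \eqref{subv}. Lemma~\ref{kzero} requires $n\ge2$, which holds throughout this range.
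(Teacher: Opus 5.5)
Your proposal is correct and follows essentially the same route as the paper. The paper applies the horizontal Sternberg--Zumbrun/Tonegawa inequality \eqref{szum} with $\varepsilon=1$, using a standard cutoff for $n<4$ and, for $n=4$, a cutoff linear in $\log|X|$ between $B_R$ and $B_{R^2}$; your dyadic-annulus summation is the same estimate as the paper's bound of $\int_{R<|X|<R^2}|X|^{-2}|D_xU|^2\,dX$. It then concludes with Lemma~\ref{kzero}, as you do.
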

\begin{proof}
For the lower-dimensional cases, we apply the horizontal inequality
of Tonegawa \cite[Proposition~4, (4.7)]{Ton05}, recalled in
\eqref{szum}, with radial cutoff functions. At $n=4$ we use a
logarithmic cutoff, as recalled in \cite[Section~2.2]{FSS25}.
Take $\varepsilon=1$.
For $n<4$, we use a cutoff equal to one on $B_R$, supported in $B_{2R}$,
and with gradient bounded by $C/R$ to obtain
\begin{equation*}
 \int_{B_R}\left(|D(D_xU)|^2-|D|D_xU||^2\right)\,dX
          \le CA R^{n-4}\longrightarrow0.
\end{equation*}
For $n=4$, we take $\eta=1$ on $B_R$, zero outside $B_{R^2}$, and
linear in $\log|X|$ between these balls. By integration in radius, we obtain
\begin{equation*}
 \int |D_xU|^2|D\eta|^2\,dX
 \le\frac{C}{(\log R)^2}
       \int_{R<|X|<R^2}\frac{|D_xU|^2}{|X|^2}\,dX
 \le\frac{CA}{\log R}\longrightarrow0.
\end{equation*}
In either case the integrand in the horizontal Kato inequality vanishes wherever
$|D_xU|>0$. Lemma~\ref{kzero} proves the assertion.
\end{proof}

\begin{proof}[Proof of Proposition~\ref{crit}]
Proposition~\ref{small} treats $2\le n\le4$. We assume $5\le n\le9$.
From Lemma~\ref{plane}, we obtain the planar energy limit \eqref{bdlim} with
a fixed multiplicity $Q\ge1$. If $Q=1$, then
\begin{equation*}
 \lim_{R\to\infty}R^{-n}
    \int_{B_R}\left(\frac12|DU|^2+W(U)\right)\,dX=\sigma_0\omega_n.
\end{equation*}
From Wang's theorem \cite[Theorem~11.1]{Wang17}, we obtain a one-dimensional
heteroclinic solution.  The tangential growth and $U(0)=0$
fix the vertical direction and the translation, as in the last
argument of Lemma~\ref{kzero}.

Suppose now that $Q\ge2$. For any sufficiently large $D$ and
sufficiently small $\nu>0$, we obtain from Proposition~\ref{orth} the
orthogonal decomposition on a component $\Omega$ of asymptotically
full spherical measure. We use its convention for the lower well,
reflecting both $U$ and $W$ if necessary. The layer functions satisfy
\begin{equation}\label{kmod}
 \begin{gathered}
 f_{i+1}-f_i\ge D,\qquad |Df_i|,|\phi|\le\nu,\qquad
 \int_{\mathbb R}\phi^2\,dz\le\nu^2,\\
 \int_{\Omega\cap\{R<|x|<2R\}}\sum_i|Df_i|^2\,dx
                  \le A_1R^{n-2}.
 \end{gathered}
\end{equation}
Here $A_1$ is independent of $D,\nu$. The covering constants and
the starting radius may depend on them. We define $a,b$ by
\eqref{clp}, using the coefficients \eqref{gtcoef}.

We take
\begin{equation*}
 \zeta(x)=|x|^{-(n-2)/2}\psi(\log|x|),
\end{equation*}
where $\psi$ equals one on $[s,T]$ and has unit ramps at both ends.
We multiply $\zeta$ by the cutoff around the removed balls and set it
zero on the other components, obtaining an admissible test $\rho$.
For a sufficiently large fixed $s$, the cutoff estimates from
Subsection~\ref{scalext} and \eqref{kmod} yield
\begin{equation}\label{ehwhw}
 \begin{split}
 \frac1{|\mathbb S^{n-1}|}\int|D\rho|^2\,dx
       &\le\frac{(n-2)^2}{4}T+O(1),\\
 \int\rho^2\sum_i|Df_i|^2\,dx&\le CA_1T+O(1).
 \end{split}
\end{equation}
By approximation we may use $\rho$ in \eqref{vnb}. It follows that
\begin{equation}\label{tfayyf}
 \begin{split}
 \int\rho^2\int_{\mathbb R}(|D_x\phi|^2+\phi^2)\,dz\,dx
 \le{}&C\nu^2\int\rho^2\sum_i|Df_i|^2\,dx
       +C\epsilon_D\int a\rho^2\,dx
       +C\nu^2\int|D\rho|^2\,dx.
 \end{split}
\end{equation}
By substitution in \eqref{ctyq}, we obtain, for every finite $T$,
\begin{equation}\label{qgr}
 \begin{split}
 (1-C_\tau\epsilon_D)\int a\rho^2\,dx
 \le{}&[1+C(\epsilon_D+\tau)+C_\tau\nu^2]\int|D\rho|^2\,dx\\
 &+[C(\epsilon_D+\tau)+C_\tau\nu^2]
                          \int\rho^2\sum_i|Df_i|^2\,dx.
 \end{split}
\end{equation}
For fixed $\tau$, we choose $D$ large enough that
$C_\tau\epsilon_D<1/2$ and absorb the interaction term.
For each $\eta>0$, we first choose $\tau$ small, then $D$ large, and
finally $\nu$ small. The constants $C_\tau$ in \eqref{mny},
\eqref{ctyq}, and \eqref{qgr} are fixed before the latter two choices.
We fix the resulting exterior domain and $s$. Equations
\eqref{ehwhw}, \eqref{tfayyf}, and \eqref{qgr} yield
\begin{equation}\label{arnv}
 \begin{split}
 &\limsup_{T\to\infty}\frac1{|\mathbb S^{n-1}|T}\int a\rho^2\,dx
       \le\frac{(n-2)^2}{4}+\eta,\\
 &\limsup_{T\to\infty}\frac{C_\tau}{|\mathbb S^{n-1}|T}
       \int\rho^2\int_{\mathbb R}(|D_x\phi|^2+\phi^2)\,dz\,dx
       \le\eta.
 \end{split}
\end{equation}

The same bounds hold on the full weighted plateau in $\Omega$.
Indeed, $a$ and $\sum_i|Df_i|^2$ are bounded, and
\eqref{cda}, \eqref{sumg}, and \eqref{kmod} yield
\begin{equation*}
 \begin{split}
 \int_{\mathbb R}|D_x\phi|^2\,dz
 &\le2\int_{\mathbb R}|D_xU|^2\,dz+C\sum_i|Df_i|^2
       \le C(1+|x|),\\
 \int_{\mathbb R}\phi^2\,dz&\le\nu^2.
 \end{split}
\end{equation*}
On $\{R<|x|<2R\}$ the cutoff collar has volume at most
$CR^{n-4}(\log R)^n$. The lost integrals with weight $|x|^{2-n}$
are bounded by $CR^{-2}(\log R)^n$ for the interaction and
the squared slopes, and by $CR^{-1}(\log R)^n$ for the remainder. These bounds are summable over dyadic annuli.

Equations \eqref{mny}, \eqref{ctyq}, and \eqref{ojyzt} yield the
scalar inequalities on $\Omega$ with
\begin{equation*}
 \beta=1,\qquad \lambda=1+O(\epsilon_D),\qquad
 \kappa=1+O(\epsilon_D+\tau).
\end{equation*}
We use their nonnegative error bounds for $e,g$ in
Proposition~\ref{extens}. By \eqref{ehwhw} and \eqref{arnv}, the
normalized averages in \eqref{ohk} satisfy
\begin{equation*}
 E_*+G_*\le C(\epsilon_D+\tau)A_1+2\eta.
\end{equation*}
From \eqref{vtnenm}, we obtain
\begin{equation*}
 F_*\le C(\nu+\epsilon_D)\sqrt{A_1}.
\end{equation*}
Proposition~\ref{orth} provides the local boundary extensions and
the bounds $|b|\le C(1+|x|)$ and $|Db|+|F|\le C$ required in
Proposition~\ref{extens}. The interaction density $a$ is also bounded.
Since
\begin{equation*}
 \frac{n-2}{4}\le\frac74<2,
\end{equation*}
one fixed sufficiently small choice of the parameters makes
the left-hand side of \eqref{fsjoj} smaller than $2$.
Proposition~\ref{extens} leads to a contradiction. This excludes
$Q\ge2$ and completes the proof.
\end{proof}

\section{Sheeting and consequences for entire solutions}\label{sheetproof}

We combine the excess estimates with Proposition~\ref{crit} to
recover the normal direction at every interior zero and obtain
exactly $Q$ disjoint graphs. We then apply the resulting geometric
control on expanding balls. A planar blowdown leads to
one-dimensionality, and the regularity of stable limiting cones
provides planarity in ambient dimensions at most seven.

We also examine what remains valid in higher dimensions under the
additional tangential growth bound. The layer equations bound the
possible multiplicity and the amount of tangential energy required
by several layers. These estimates describe the entire limits that
can arise when the local graphical argument stops.

\subsection{Local sheeting and one-dimensional symmetry}
\label{proof}

The proof of the first theorem follows the sheeting approach of Schoen--Simon
\cite{SS81}, with an induction on multiplicity as in Wickramasekera
\cite[Section~6 and Lemma~15.1]{Wic14}. At smaller radii, either the
limiting hyperplane through the selected zero has lower multiplicity, or
we improve the excess relative to a common hyperplane for all the layers. 
For minimal hypersurfaces, dilations preserve minimality and stability.
For Allen--Cahn, rescaling also changes the transition thickness
relative to the radius. Wang \cite{Wang17} already accounts for this
thickness in excess decay and uses microscopic classification to
recover the normal direction. Here the estimates of Section~\ref{tiltpart}
control the full horizontal gradient energy throughout the finite
iteration, while the total change of the approximating normal stays
small. When the excess becomes small relative to the thickness,
they produce an entire limit satisfying the directional growth bound
in Proposition~\ref{crit}. Its classification determines
the normal at the selected zero and completes this part of the
argument. Thus the local proof starts from weak planar convergence
and obtains the geometry of every zero layer. The restriction
$n\le9$ enters through the classification of these entire limits.

For a unit vector $e$, we will denote orthogonal projection onto $e^\perp$
by $P_{e^\perp}$ and set
\begin{equation*}
 \mathcal C_r(X;e)=\{X+y+te\mid y\in e^\perp,\ |y|<r,\ |t|<r\}.
\end{equation*}
Under this notation, we have $\mathcal C_r(0;e_{n+1})=\C_r$.

We now prove Theorem~\ref{sheet}. It suffices to consider
$2\le n\le9$.
Indeed, when $n=1$, we may extend each solution by setting
$\widetilde u_j(x,y,z)=u_j(x,z)$ in $B_1^3$. 
By Fubini's theorem, the measure convergence in
\eqref{planar} extends to the horizontal plane in $B_1^3$
with the same multiplicity $Q$.
Once the case $n=2$ has been proved, we can then apply it to
$\widetilde u_j$ and restrict the resulting graphs to $y=0$. 

\begin{proof}[Proof of Theorem~\ref{sheet}]
We prove the theorem by induction on $Q$.
At the induction step, we assume the full conclusion for every
positive multiplicity less than $Q$. We first show that all interior
zeros are regular and that their normal directions converge uniformly
to the vertical direction.  

The convergence in \eqref{planar} concerns fixed compact regions.
To control every zero, we also examine neighborhoods whose radii tend
to zero along the sequence. When the rescaled thickness and tangential
excess tend to zero, the limit can contain several distinct parallel
hyperplanes, although the original limit is a single hyperplane of multiplicity
$Q$. Distances that tend to zero in the original coordinates can have
positive limits after rescaling. Monotonicity bounds the multiplicity of the hyperplane through the
selected zero by $Q$. If this multiplicity is smaller, restriction to
a neighborhood containing only that hyperplane yields the same local
problem with lower multiplicity. The induction hypothesis then
implies regularity and convergence of the normal direction at the
selected zero. When this hyperplane still has multiplicity $Q$, we use the
excess improvement and the further rescaling described below.

Let us set $e_0=e_{n+1}$ and, for a unit vector $e$, define
\begin{equation*}
 \beta(p,e)=
 \begin{cases}
 |p\cdot e|/|p|,&p\ne0,\\
 0,&p=0.
 \end{cases}
\end{equation*}
 We will prove that
$\beta(Du_j,e_0)\to1$ uniformly on the zeros of $u_j$ in each compact
interior region. This implies $u_{j,z}\ne0$ there for large $j$ and,
since
\begin{equation*}
 \frac{|D_xu_j|}{|u_{j,z}|}
       =\sqrt{\beta(Du_j,e_0)^{-2}-1},
\end{equation*}
also proves the required convergence of the slopes.
Suppose that this assertion fails. We can then choose zeros $X_j$
in one fixed compact interior region and $\eta\in(0,1)$ such that
\begin{equation}\label{mbad}
 \beta(Du_j(X_j),e_0)\le1-\eta.
\end{equation}

Let us keep $X_j$ at the center of every rescaling. The smaller
neighborhoods allow us to examine the regularity and normal direction
at this zero. The points $X_j$ may vary throughout the fixed compact
region. Excluding every sequence satisfying \eqref{mbad} will therefore
prove the desired estimate uniformly at all zeros in that region,
even though the radii used in the argument may depend on the
selected points.

Choose $r_0>0$ so that all the balls
$\overline{B_{256r_0}(X_j)}$ lie in a fixed compact subset of
$B_1^{n+1}$. At a radius $r\le r_0$ and with a unit normal $e$,
we rescale about $X_j$ by $r$ and take $e$ as the last coordinate
direction. The thickness and normalized tilt-excess become
\begin{equation*}
 \delta=\varepsilon_j/r,\qquad
 E=r^{-n}\int_{\mathcal C_{64r}(X_j;e)}
                   \varepsilon_j|P_{e^\perp}Du_j|^2\,dX.
\end{equation*}
We denote their initial values at $(r_0,e_0)$ by
$\delta_{0,j}$ and $E_{0,j}$. From Lemma~\ref{gtfene}, we obtain
$E_{0,j}\to0$ and $\delta_{0,j}\to0$. Every rotated cylinder
$\mathcal C_{128r}(X_j;e)$ lies in the chosen ambient ball. Observe that
\eqref{lmearl} bounds the full energy on these cylinders uniformly
in the chosen radius and normal.

We also need the original multiplicity to bound the mass of every
limit obtained at shrinking radii. After a subsequence, the centers
$X_j$ converge to a point of $\{z=0\}$, by Lemma~\ref{gtfene}.
For $r_j\to0$ and fixed $R>0$, we apply \eqref{monoto} between
$Rr_j$ and a fixed small radius $s$. We first pass to the limit
$j\to\infty$ on the outer ball, using nearby-ball inclusions for
the moving centers, and then let $s\downarrow0$. Thus every
limiting full-energy measure satisfies
\begin{equation}\label{dcap}
 \mu_\infty(B_R)\le Q\sigma_0\omega_nR^n
 \qquad\text{for }R>0.
\end{equation}
The same argument applies to all later choices of shrinking radii.

Before choosing the successive radii, we describe the limits that
can occur while the marked zero still satisfies a fixed angle bound.
Consider any sequence of the above rescalings for which
\begin{equation}\label{nsmal}
 E_j\to0,\qquad e_j\to e_0,\qquad \delta_j\to0,\qquad
 \beta(Du_j(X_j),e_j)\le1-\eta/2.
\end{equation}
If the radii stay bounded away from zero, the original planar
convergence yields the centered multiplicity-$Q$ hyperplane.
If the radii tend to zero, vanishing tilt-excess and stationarity
yield horizontal hyperplanes with integer multiplicities on interior
products in $\mathcal C_{64}$. Here we use the local product argument
in the proof of Lemma~\ref{lspli}. Hausdorff convergence places one
hyperplane through the origin, and \eqref{dcap} bounds its multiplicity
$m$ by $1\le m\le Q$. If $m<Q$, we restrict to a fixed small
cylinder containing only that limiting hyperplane and dilate isotropically.
From the induction hypothesis, we then deduce
$\beta(Du_j(X_j),e_j)\to1$, contradicting \eqref{nsmal}.
Consequently $m=Q$. This central hyperplane already contributes
$Q\sigma_0\omega_nR^n$ to each ball $B_R$ with $R<64$.
Any additional hyperplane meeting $\mathcal C_{32}$ would increase the
mass of one such ball, contrary to \eqref{dcap}. We have therefore
proved that the limit in $\mathcal C_{32}$ is the centered
multiplicity-$Q$ hyperplane.

We now choose the radii and normals. Fix $a\in(0,1/4)$ and
take $\theta,C_0$ from Proposition~\ref{cont}. At radius $r$, the
estimates required to pass to radius $\theta r$ are
\begin{equation}\label{istep}
 |e^+-e|\le C_0\sqrt E,\qquad E^+\le aE,
\end{equation}
where $E^+$ is measured on the rotated outer cylinder at the new
radius. We also choose $\delta_*>0$ so that the constant-zero
solution is unstable on a fixed interior ball whenever
$0<\delta\le\delta_*$. Such a choice is possible because
$W''(0)<0$. Indeed, for one fixed nonzero compactly supported test
function $\varphi$, we have
\begin{equation*}
 \delta^2\int|D\varphi|^2\,dX
       +W''(0)\int\varphi^2\,dX<0
 \qquad\text{for }0<\delta\le\delta_*.
\end{equation*}

For each sufficiently large $j$, we start at $(r_0,e_0)$.
At each step we stop if $E\le\delta$ or
$\delta\ge\theta\delta_*$. Otherwise, if a normal satisfying
\eqref{istep} exists, we choose it and replace $r$ by $\theta r$.
If every normal fails \eqref{istep}, we stop at the current radius.
This procedure is finite, since $\delta$ increases by
$\theta^{-1}$ at each step. Moreover, $\delta\le\delta_*$
throughout the procedure. At depth $k$, we obtain from the energy contraction
and the sum of the successive changes of normal
\begin{equation}\label{anboun}
 E_{k,j}\le a^kE_{0,j},\qquad
 |e_{k,j}-e_0|
 \le\frac{C_0\sqrt{E_{0,j}}}{1-\sqrt a}\longrightarrow0.
\end{equation}
This summation follows the same argument as in Wang
\cite[Lemmas~10.3--10.4]{Wang17}.
The bound on $|e_{k,j}-e_0|$ is uniform in the number of steps.
Thus the reference direction at the final radius still converges
to the original vertical direction, even when that radius tends
to zero and the number of steps increases with $j$. Since
\begin{equation*}
 |\beta(p,e)-\beta(p,e_0)|\le|e-e_0|
 \qquad\text{for every }p,
\end{equation*}
including $p=0$, the bound in \eqref{mbad} implies
$\beta(Du_j(X_j),e_{k,j})\le1-\eta/2$ at every selected depth
for large $j$.

We pass to a subsequence of the final rescalings and denote their
thicknesses, excesses, and normals by $\delta_j,E_j,e_j$.
From \eqref{anboun}, we obtain $E_j\to0$ and $e_j\to e_0$.
If $\delta_j\to0$, then $\delta_j<\theta\delta_*$ for large $j$.
The procedure must then have stopped because $E_j\le\delta_j$
or because every normal fails \eqref{istep}.
After a further subsequence, there are three possible cases.
\begin{enumerate}
\item Suppose that $\delta_j\to\delta_\infty>0$.
By elliptic compactness, we obtain a stable limit on a fixed interior
cylinder. The vanishing tilt-excess makes this limit
independent of the horizontal variables, and its value at the
origin is zero. If its vertical derivative also vanished there,
uniqueness for the one-dimensional equation, together with
$W'(0)=0$, would force the limit to be identically zero.
This contradicts the choice of $\delta_*$, since
$\delta_\infty\le\delta_*$. Its vertical derivative at the origin
is therefore nonzero. Smooth convergence implies
$\beta(Du_j(X_j),e_j)\to1$, contradicting the bound at the
marked zero.

\item Suppose that $\delta_j\to0$ and $E_j\le\delta_j$.
We apply Lemma~\ref{bterm} with $\gamma=3/4$, since
\begin{equation*}
 E_j\delta_j^{-2+2\gamma}\le\delta_j^{1/2}\longrightarrow0.
\end{equation*}
After rescaling once more by $\delta_j$ about the origin, we obtain
a bounded stable entire limit $U$ with $U(0)=0$ and both growth
bounds in \eqref{tgbroa}. Proposition~\ref{crit} implies
$U(x,z)=g(z)$ or $g(-z)$. Thus $|U_z(0)|=g'(0)>0$ and
$D_xU(0)=0$. Smooth convergence again yields
$\beta(Du_j(X_j),e_j)\to1$, since multiplication of the gradient
by a positive dilation factor leaves $\beta$ unchanged.
This contradicts the bound at the marked zero.

\item Suppose that $\delta_j\to0$, $E_j>\delta_j$, and every
normal fails \eqref{istep}. These rescalings satisfy
\eqref{nsmal}, so the preceding compactness argument shows that
their limit in $\mathcal C_{32}$ is the centered
multiplicity-$Q$ hyperplane. Their intermediate limits also satisfy
\eqref{dcap}. We can therefore apply Proposition~\ref{cont}
to this sequence. If the hyperplane through the marked zero at the
height scale has smaller multiplicity, restriction to a fixed
cylinder and the induction hypothesis contradict the angle bound.
The other alternative produces a normal satisfying \eqref{istep},
contradicting the stopping condition. The larger-domain bounds
and constants remain uniform when the final depths depend on $j$,
as required in that proposition.
\end{enumerate}
All three cases contradict \eqref{mbad}. Hence $u_{j,z}\ne0$
at every zero in each fixed compact interior region for large
$j$, and $|D_xu_j|/|u_{j,z}|\to0$ uniformly on these zeros.

It remains to obtain exactly $Q$ graphs. Fix
$B_r^n\times(-h,h)\Subset B_1^{n+1}$ and use the normal estimate
on a slightly larger product. Hausdorff convergence keeps all
zeros away from the upper and lower faces. Since $u_{j,z}\ne0$,
vertical projection of the zero set is a local diffeomorphism,
and it is proper over compact subsets of $B_r^n$. Its fibers
are finite and their cardinality is locally constant. The base
$B_r^n$ is connected, so this cardinality is constant.
Ordering the heights therefore defines smooth disjoint graphs
over the whole base. By implicit differentiation, we obtain
\begin{equation*}
 Df_{\alpha,j}(x)
 =-\frac{D_xu_j(x,f_{\alpha,j}(x))}
          {u_{j,z}(x,f_{\alpha,j}(x))}.
\end{equation*}
The uniform normal estimate implies that all these slopes tend
uniformly to zero.

We determine the number of graphs by counting the zeros on one
vertical line. We choose a fixed $s>0$ such that
$\C_{128s}\Subset B_1^{n+1}$, $B_{5s}^n\Subset B_r^n$,
and $4s<h$. We set $v_j(Y)=u_j(sY)$ and
$\delta_j=\varepsilon_j/s$, and use the upper bound
\begin{equation*}
 \widehat E_j
 =\max\left\{\int_{\C_{64}}\delta_j|D_xv_j|^2\,dY,\,
                   \delta_j\right\}.
\end{equation*}
The rescaled solutions satisfy \eqref{bound}, and their gradient
measures converge to the multiplicity-$Q$ horizontal hyperplane.
By Lemma~\ref{gtfene}, we have $\widehat E_j\to0$ and
$\delta_j^2/\widehat E_j\le\delta_j\to0$.
Remark~\ref{cntupper}, applied with an interval containing zero,
shows that, outside a subset of $B_4^n$ of measure $o(1)$,
each vertical line has exactly $Q$ zero roots in $(-4,4)$.
Hausdorff convergence places every zero of the target product
over $B_{4s}^n$ in $|z|<s$ for large $j$. Thus any one of these
lines counts the complete zero fiber in the target product.
Local constancy now makes the number of graphs equal to $Q$
on all of $B_r^n$. Finally, from Hausdorff convergence we obtain
\begin{equation*}
 \max_\alpha\|f_{\alpha,j}\|_{L^\infty(B_r)}\longrightarrow0.
\end{equation*}
This completes the induction for the graphical decomposition and
the $C^1$ convergence.

When $W''(-1)=W''(1)=k^2$ with $k>0$, the second-order estimate follows
from Wang--Wei \cite[Corollary~1.2]{WW}. Indeed, replacing $W$
by $W/k^2$ and $\varepsilon_j$ by $\varepsilon_j/k$ preserves
the equation and stability, and divides the energy by $k$.
The new potential has second derivative one at both wells.
On a slightly larger interior product, Lemma~\ref{gtfene}
provides the local energy bound. The argument above provides
exactly $Q$ ordered zero graphs and their $C^1$ convergence.
These are the three hypotheses of that corollary. Its dimension
is the ambient dimension, which is at most ten here. Applying
it on finitely many smaller cylinders, we obtain
\begin{equation*}
 \max_\alpha\|f_{\alpha,j}\|_{C^{2,\theta}(B_r)}
 \le C(r,\theta),\qquad 0<\theta<1.
\end{equation*}

For unequal values of $W''(-1)$ and $W''(1)$, we adapt the proof
of the same corollary. We explain the estimates that change.
We first impose the preliminary geometric bound in
\cite[Theorem~1.1]{WW} and use the microscopic coordinates of
\cite[Sections~4--11]{WW}. In these coordinates, the second
fundamental forms and the tangential derivatives of
$\log|\nabla u|$ are $O(\varepsilon)$. When $Q=1$, the interaction
terms vanish. The linear estimates in \cite[Section~6]{WW}
then bound the remainder and the mean curvature by
$C\varepsilon^2$, and the final paragraph below completes the
argument. We now assume $Q\ge2$. For the gap through the well $s_i$,
we set
\begin{equation*}
 \begin{gathered}
 k_i=\sqrt{W''(s_i)},\qquad
 a_i=\frac{2k_i^2A_{s_i}^2}{\sigma_0}e^{-k_i d_i},\\
 \delta=\frac{\min\{\sqrt{W''(-1)},\sqrt{W''(1)}\}}
                 {8\max\{\sqrt{W''(-1)},\sqrt{W''(1)}\}}>0.
 \end{gathered}
\end{equation*}
Here $d_i$ is the adjacent-layer distance used in the local normal
coordinates. We denote by $\mathcal A$ the largest value of
$e^{-k_i d_i}$, allowing both choices of the adjacent reference
surface, on the enlarged region for each local estimate.
The enlargements have width $C_W|\log\varepsilon|^2$, as in
\cite[Propositions~6.1 and~7.1]{WW}. All constants may depend on
$W,Q,\theta$ and the fixed interior domains, and are independent
of $\varepsilon$.

The linear estimates use positivity of $W''$ near either well
and the spectral gap on the orthogonal complement of $g'$.
These properties hold under \eqref{wells}. On a region close
to the well $s$, the outer operator is $-\Delta+W''(s)$.
Truncating the heteroclinics at distance $C_W|\log\varepsilon|$
makes the truncation error and its required derivatives
$O(\varepsilon^3)$.

  In an adjacent gap of
length $d_i$, the two differences from the common well are bounded
by $Ce^{-k_i t}$ and $Ce^{-k_i(d_i-t)}$ for $0\le t\le d_i$.
Their product is therefore bounded by $Ce^{-k_i d_i}$, and its
integral by $C(1+d_i)e^{-k_i d_i}$. On the half-gap nearest one
layer, the difference contributed by its neighbor is at most
$Ce^{-k_i d_i/2}\le C\mathcal A^{1/2}$. A farther layer has
crossed its first complete gap and at least half of a second gap.
The first gap contributes at most $\mathcal A$, and the second
at most $\mathcal A^{4\delta}$. Polynomial factors in the gap
lengths are absorbed using the unused exponential decay in each
gap. Thus the nonadjacent terms are bounded by
$C\mathcal A^{1+\delta}$.
These bounds also hold for the fixed-order derivatives of $g$
that occur in the calculation. For position derivatives, we
differentiate the full integral before dividing it at the
midpoints, as in Appendix~\ref{inter}. This avoids boundary
terms from moving midpoints.
 Some coefficients require a separate estimate away from the
common gap. The difference $W''(u)-W''(g_i)$ may approach a
nonzero constant beyond a neighboring layer. We integrate over
the part of the normal line beyond that layer. Since this region
is separated from the $i$th layer by the whole adjacent gap,
the exponential decay of $g_i'$ yields
\begin{equation}\label{farint}
 \int |g_i'|\,dt \le Ce^{-k_i d_i},
 \qquad
 \int |g_i'|^2\,dt \le Ce^{-2k_i d_i}.
\end{equation}
On the common-gap part, Taylor expansion bounds the coefficient
difference by $C|\phi|$ and the neighboring differences from the
common well. Estimate~\eqref{farint} controls the farther terms.
These regional estimates, together with the preceding
pointwise bounds for the heteroclinics, replace the estimates in
\cite[(7.4)--(7.6), (8.7), and (8.15)--(8.17)]{WW}.

For example, using
$\|\phi\|_{C^{2,\theta}}\le C(\varepsilon^2+\mathcal A)$,
we obtain the following bound for the integral considered
after \cite[(8.16)]{WW}:
\begin{equation}\label{mixcoef}
 \begin{split}
 &\int |W''(u)-W''(g_i)|\,|g_i'\phi_t|\,dt\\
 &\quad\le C(\varepsilon^2+\mathcal A)
 \left(\varepsilon^2+\mathcal A+
       \sum_{j=i-1}^{i}(1+d_j)e^{-k_jd_j}\right)\\
 &\quad\le C(\varepsilon^2+\mathcal A^{1+\delta}).
 \end{split}
\end{equation}
Terms from farther layers have already been included in the
$C\mathcal A$ bound inside the parentheses.
At the earlier stage of \cite[Appendix~B, case~(6)]{WW}, the
analogous integral contains $\phi$ in place of $\phi_t$. Before
using the estimate for $\phi$, Young's inequality bounds it by
$C\|\phi\|_{C^{2,\theta}}^2+C\mathcal A^{1+\delta}$.
This is the bound used to obtain \cite[Proposition~6.1]{WW}.
Thus the estimate for $\phi$ is available before we use it
in \eqref{mixcoef}.

The geometric estimates in \cite[Lemmas~3.1--3.5]{WW} use the
preliminary geometric bound and the equation, and apply to the
fixed potential $W$. On the truncated support, (8.1) bounds the
Jacobian error by $C\varepsilon|t|$, (8.6) bounds the error
in replacing a signed distance by its affine expression by
$C\varepsilon^{1/3}$, and (8.21) bounds the relevant normal
and projection errors by $C\varepsilon^{1/6}$.
The Hessian of each signed distance is $O(\varepsilon)$.
Thus the tangential derivative of the signed-distance error
in (8.6) is bounded by $C\varepsilon|t|$.
This bound and the $C\varepsilon$ bound for derivatives of the
second fundamental forms in their Lemma~3.1 provide the corresponding local
$C^\theta$ estimates for the distance and Jacobian errors.
The normal and projection errors have the same control by
Lemmas~3.4--3.5. In each interaction integral,
these errors multiply either a full adjacent interaction or a
nonadjacent term. Since $|t|\le C_W|\log\varepsilon|$ and
$\delta\le1/8$, their total contribution is bounded by
$C\varepsilon^\delta\mathcal A+C\mathcal A^{1+\delta}$.
The same reasoning controls the local $C^\theta$ norm in
Appendix~B, case~(6), and the tangentially differentiated
terms. The projections remain inside the enlarged domains
specified above.

These estimates verify the modifications needed in
\cite[Sections~5--8]{WW}. More precisely, the adjacent
$\mathcal A^{1/2}$ bound above is used in their Lemma~6.6
and in the second term of (7.2), so the Schauder absorption
still yields $\|\phi\|_{C^{2,\theta}}\le
C(\varepsilon^2+\mathcal A)$. For the tangential derivatives,
the chain rule introduces the derivatives of the normal
coordinates or the layer corrections. The former are bounded
by $C\varepsilon^{1/6}$, as in their Lemma~3.4, and the latter
by $C(\varepsilon^2+\mathcal A)$, as in their Lemma~4.6.
They multiply a full adjacent interaction or one of the
nonadjacent terms estimated above. The terms in their
Lemma~7.4 are consequently bounded by
$C(\varepsilon^2+\varepsilon^\delta\mathcal A+
\mathcal A^{1+\delta})$.
For the quadratic form, adjacent derivative overlaps contribute
at most $C(1+d_i)e^{-k_i d_i}$ to the coefficient of
$|D\eta_i|^2$. The same regional estimates, followed by the
normal integrations by parts in their Lemmas~8.3--8.6, bound
the remaining coefficient of $\eta_i^2$ by
\begin{equation*}
 C\bigl(\varepsilon^2+\varepsilon^\delta\mathcal A
                       +\mathcal A^{1+\delta}\bigr).
\end{equation*}
The coefficient of $\sum_i\int|D\eta_i|^2$ is
$1+O(\varepsilon^\delta+\mathcal A^\delta)$.
Thus their Propositions~6.1, 7.1, and~8.1 and Corollary~7.6
hold with these errors. The leading terms of the layer
equations are $a_{i-1}-a_i$. In the signed displacement
convention, the interaction term in the stability inequality
is $\sum_i\int k_i a_i(\eta_{i+1}-\eta_i)^2$.
Both coefficients follow from \eqref{gfwron} and differentiation
with respect to the gap. For the negative well, we apply that
calculation to $-g(-t)$ and $W(-s)$.

 Subtracting the adjacent layer equations
and multiplying by $k_i$ produces $2k_i a_i$ in the equation
for $k_i d_i$. The interactions with the exterior neighbors
have the favorable sign. In the stability inequality, we take
the two signed displacements to be $-\zeta$ and $\zeta$ and
the others to be zero. Division by two produces the same
coefficient $2k_i a_i$. Subtracting
$\log(4k_i^3A_{s_i}^2/\sigma_0)$ from $k_i d_i$
therefore makes the two leading coefficients equal to one.
At interaction size $\kappa\ge M\varepsilon^2|\log\varepsilon|$,
the rescaled errors in \cite[Section~9]{WW} are bounded by
\begin{equation*}
 C\left(\frac{1}{M|\log\varepsilon|}
                 +\varepsilon^\delta+\kappa^\delta\right).
\end{equation*}
Their integral argument requires $q<2$ and
$n-2(2q+1)<0$, which is possible for every $n\le9$.

The iteration in \cite[Section~10.1]{WW} now yields
$\mathcal A\le\varepsilon^{1+\theta_0}$ for every fixed
$\theta_0<1$. We choose $\theta_0>1-\delta$.
The errors in the layer equations are then $O(\varepsilon^2)$,
and the subsequent iterations yield
$\mathcal A\le C\varepsilon^2|\log\varepsilon|$.
For the final improvement in \cite[Section~10.2]{WW}, we use
$k_i$ times the intrinsic distance between the two layers.
The geometric term remains nonnegative. Their relative
gradient estimate and the bound
$C\varepsilon^3|\log\varepsilon|^2$ for the nonlinear
geometric error remain valid, since $k_i$ is a fixed positive
number. The differential and stability inequalities again
have matching coefficients. The same rescaling therefore
implies $\mathcal A\le C\varepsilon^2$.

As in \cite[Section~11]{WW}, this bounds the microscopic
$C^\theta$ norm of the mean curvature by $C\varepsilon^2$.
In the original coordinates, the graph mean-curvature equation
yields the required uniform $C^{2,\theta}$ estimate.
 \end{proof}

\begin{remark}
The compactness and continuation arguments are independent of the
dimension. For the graphical decomposition, the restriction in
Theorem~\ref{sheet} enters through Proposition~\ref{crit} at the
microscopic terminal scale. For several layers, the second-order
estimate uses the same dimension range through the scalar
stability argument in \cite[Section~9]{WW}.
\end{remark}

The next proposition extends local sheeting for multiplicity-one
limits to every $n\ge2$.  

\begin{proposition}
For $Q=1$, the conclusion of Theorem~\ref{sheet} holds for every $n\ge2$.
\end{proposition}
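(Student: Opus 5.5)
The plan is to rerun the proof of Theorem~\ref{sheet} with $Q=1$, tracking where $n\le9$ was used. The remark preceding the statement says the compactness and continuation arguments of Sections~\ref{locenergy}--\ref{vertcomp} are dimension-free. The restriction entered only in two places: Proposition~\ref{crit} at the terminal microscopic scale, and the scalar stability argument for several layers in the second-order estimate. For $Q=1$ both should be replaceable by multiplicity-one tools valid in every dimension.

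For the $C^1$ part, I would prove $\beta(Du_j,e_0)\to1$ uniformly on zeros. Assume \eqref{mbad} and run the same stopping procedure with Proposition~\ref{cont}.
\begin{itemize}
\item The alternative ``smaller multiplicity'' cannot occur, since the hyperplane through the marked zero has multiplicity at least one. No induction is needed.
\item Case~(1), $\delta_j\to\delta_\infty>0$, and case~(3) go through verbatim.
\item In case~(2) we obtain a bounded stable entire solution $U$ with $U(0)=0$ satisfying \eqref{tgbroa}. By the last assertion of Lemma~\ref{bterm} its density at infinity is at most $1$.
\end{itemize}
In case~(2) I would avoid Proposition~\ref{crit} and argue as in its $Q=1$ branch. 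Lemma~\ref{plane} holds for all $n\ge2$, so the blowdown of $U$ is a horizontal hyperplane of some multiplicity $Q'\ge1$. The density bound forces $Q'=1$, so the energy ratio tends to $\sigma_0\omega_n$. Wang's theorem \cite[Theorem~11.1]{Wang17} then gives $U(X)=g(e\cdot X+b)$, and this is valid in every dimension. The tangential growth bound in \eqref{tgbroa} and $U(0)=0$ force $e=\pm e_{n+1}$ and $b=0$, exactly as at the end of Lemma~\ref{kzero}. The resulting contradiction with the angle bound at the marked zero finishes the $C^1$ part.

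The counting step needs no change. Remark~\ref{cntupper} gives exactly one root on almost every vertical line, so the zero set is a single smooth graph whose slopes tend to zero uniformly.

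For the $C^{2,\theta}$ estimate, I would use the paper's statement that for $Q=1$ there are no interaction terms. In the microscopic coordinates of \cite[Sections~4--11]{WW}, the linear estimates of \cite[Section~6]{WW} then bound $\phi$ and the mean curvature by $C\varepsilon^2$ in $C^\theta$. These estimates rely only on the spectral gap of the one-dimensional linearized operator and on positivity of $W''$ near the wells, and neither depends on $n$. The graph mean-curvature equation and Schauder theory then give the uniform $C^{2,\theta}$ bound.

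The main obstacle is confirming that this single-layer branch of \cite{WW}, including the preliminary geometric bound of \cite[Theorem~1.1]{WW}, never invokes the Toda-system stability argument of \cite[Section~9]{WW}, since that is where the dimension restriction lives. If it does, I would instead take the $C^{1,\theta}$ bound for the level set from Wang's multiplicity-one excess decay \cite{Wang17}. Then I would write the equation as $\Delta f$ minus a nonlinear term equal to the projection error along $g'$, and show that error is $O(\varepsilon^2)$ in microscopic $C^\theta$ using Proposition~\ref{discr} and the Schauder estimate for $\phi$ on the orthogonal complement of $g'$.
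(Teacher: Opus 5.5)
Your proposal is correct and follows the paper's own proof essentially step for step. It uses the same finite iteration, with the smaller-multiplicity alternative vacuous and the common-gradient condition automatic for a single harmonic height. At the terminal microscopic scale it classifies the limit by Lemma~\ref{plane}, the density bound of Lemma~\ref{bterm}, and Wang's multiplicity-one theorem \cite[Theorem~11.1]{Wang17} in place of Proposition~\ref{crit}, then applies the unchanged graph count and the dimension-independent single-layer second-order argument. The fallback in your last paragraph is not needed, since the paper simply asserts that the single-layer branch of the second-order argument does not depend on the dimension.
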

\begin{proof}
We use the finite iteration from the proof of Theorem~\ref{sheet}.
For $Q=1$, the density bound forces every pointed limit for which the
thickness tends to zero to be the centered multiplicity-one hyperplane on the
required interior cylinder. There is only one limiting harmonic height,
so the condition that all these heights have the same gradient holds
automatically. All continuation estimates are therefore
available in every dimension.

At a stopping scale with $E_j\le\delta_j$ and $\delta_j\to0$,
Lemma~\ref{bterm} yields a bounded stable entire solution with critical
tangential growth.
The original monotonicity argument bounds its density at infinity by
one. The existence of a zero implies positive integer multiplicity, so its blowdown
has multiplicity one. Wang's theorem \cite[Theorem~11.1]{Wang17}
classifies it as a heteroclinic. The tangential growth forces a vertical
normal. This proves the stopping conclusion used in the local proof,
independently of Proposition~\ref{crit}. The other stopping cases,
uniform bound on the total rotation, and final graph counting are
unchanged. Hausdorff convergence implies convergence of the graph heights.
The single-layer second-order argument at the end of the preceding
proof is independent of dimension and completes the uniform
$C^{2,\theta}$ estimate.
\end{proof}
 
We first classify stable entire solutions that have a planar blowdown.
We then use regularity of limiting varifolds to prove the classification
under the energy growth assumption and discuss the finite-index case.

The following half-space result shows that a bounded positive solution
with zero boundary values equals $g(z)$. It will determine the solution
above its highest zero hyperplane in Proposition~\ref{prigi}.

\begin{lemma}\label{hrigi}
Suppose $v$ is a bounded classical solution of
\begin{equation*}
 \Delta v=W'(v)\quad\text{in }\mathbb R^{N-1}\times(0,\infty),
 \qquad 0<v\le1,\qquad v(x,0)=0.
\end{equation*}
Then $v(x,z)=g(z)$ for $z\ge0$.
\end{lemma}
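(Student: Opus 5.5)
The plan is the Berestycki--Caffarelli--Nirenberg half-space rigidity argument \cite{BCN93,BCN97}: first show that $v$ is monotone increasing in $z$ by sliding, then identify the resulting one-dimensional limits, and finally compare $v$ with $g$ from above and below. Throughout we use $0<v\le1$, so $v$ takes values in the range where $W$ has no critical points other than $0$ and $1$. The strong maximum principle gives $v<1$ in the open half-space unless $v\equiv1$, which is impossible by continuity at $z=0$.

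\emph{Step 1 (uniform positivity away from the boundary).} Fix $b\in(0,1)$. We want $R_b$ such that $v\ge b$ whenever $z\ge R_b$. Use the Dirichlet ball argument. Since $W'<0$ on $(0,1)$, for large $R$ there is a positive solution $w_R$ of $\Delta w=W'(w)$ in $B_R$ with $w_R=0$ on $\partial B_R$. One can take $w_R$ to be the minimizer of the energy with potential $W$, which has negative energy for large $R$ because $W''(0)<0$. Moreover $w_R(0)\to1$ as $R\to\infty$, uniformly in the position of the center. Start from a tiny multiple $\varepsilon\psi$ of the first Dirichlet eigenfunction, which is a subsolution lying below $v$ on a ball; this uses $v>0$ in the interior. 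Sliding the ball inside $\{z>R\}$ and invoking the strong maximum principle gives $v\ge w_R(\cdot-Y)$ for all such translates. This yields $\inf_{z\ge R_b}v\ge b$.

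\emph{Step 2 (monotonicity).} Show $v(x,z+t)\ge v(x,z)$ for all $t>0$ by sliding in $z$. For large $t$, on $\{z\le R_b\}$ we compare $v^t(x,z)=v(x,z+t)\ge b$ against $v$. On $\{z\ge R_b\}$ both functions lie in the well region $[b,1]$, where $W''>0$ once $b$ is close to $1$. There the maximum principle on unbounded domains for $w=v-v^t$ applies, since the linear equation has positive zeroth-order coefficient and $w$ is bounded; this gives $w\le0$. The strip $0\le z\le R_b$ needs the standard BCN treatment. Take $t^*=\inf\{t: v^s\ge v\ \forall s\ge t\}$ and suppose $t^*>0$. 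Then there are points $(x_k,z_k)$ with $v^{t^*-\epsilon_k}-v$ negative somewhere and bounded $z_k$. Translating horizontally by $x_k$ and passing to a limit via elliptic estimates gives a limiting solution $\bar v$ with $\bar v^{t^*}\ge\bar v$ and touching, either in the interior or at the boundary $z=0$. Strong maximum principle or the Hopf lemma, using $\bar v^{t^*}(x,0)>0=\bar v(x,0)$, gives a contradiction. This step is the main obstacle: one must keep track of the noncompact horizontal directions and check that limits still satisfy $\bar v>0$ in the interior. That follows from Step 1's uniform bound, which passes to limits. Hence $v_z\ge0$, and in fact $v_z>0$ by the strong maximum principle applied to the linearized equation.

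\emph{Step 3 (identification).} The limit $\bar v(x)=\lim_{z\to\infty}v(x,z)$ is a solution on $\mathbb R^{N-1}$ with $b\le\bar v\le1$, hence $\bar v\equiv1$ by the first assertion of Lemma~\ref{wentr} applied after reflection, or directly since $W'<0$ on $(0,1)$. Next compare $v$ with translates $g(z-s)$, which solve the same equation and equal $1$ at infinity.
\begin{itemize}
\item For $s$ large, $g(z-s)\le v$. This holds on $\{z\le s\}$ because $g(z-s)\le0$ there. On the remaining region it follows from the same well-region maximum principle, using that $v\to1$ uniformly as $z\to\infty$ by Step 1.
\item Slide $s$ down to the infimal value $s_*$ and use the strong maximum principle together with horizontal limits as in Step 2 to get $s_*\le0$, so that $v\ge g(z)$.
\item Symmetrically, $v\le g(z+s)$ for large $s$; sliding downward to $s=0$ gives $v\le g(z)$. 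Here the boundary comparison $v(x,0)=0=g(0)$ prevents going further.
\end{itemize}
Combining the two bounds, $v=g(z)$.
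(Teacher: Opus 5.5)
Your argument is correct in substance, but it takes a more self-contained route than the paper. The paper's proof is a chain of citations:
\begin{itemize}
\item Farina's lemma \cite[Lemma~2.3]{Farina03} gives $\sup v=1$;
\item the half-space theorem \cite[Theorem~1.4]{BCN97} then gives $v=h(z)$ with $h'>0$ and $h\to1$;
\item the first integral $h'^2/2=W(h)$ together with $h(0)=0$ identifies $h=g$.
\end{itemize}
You instead prove by hand that $v\to1$ uniformly as $z\to\infty$, using $W''(0)<0$ for the ball subsolution. You then slide the explicit translates $g(z\mp s)$ from below and above. This uses only the positive-coefficient maximum principle in the well region (from $W''>0$ near $1$), horizontal compactness, and the strong maximum principle. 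Because the profile $g$ is known, this bypasses both the symmetry theorem and the ODE step. In fact your Step~2 is never used: Step~3 needs Step~1 and the sliding technique, but not monotonicity. The paper's route is shorter; yours needs nothing beyond maximum principles.

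Two places need repair as written.

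\emph{Step~2 initialization.} Comparing $v^t\ge b$ with $v$ on $\{z\le R_b\}$ proves nothing, since $v$ may exceed $b$ in that strip. Without $v\le v^t$ on $\{z=R_b\}$, the maximum principle on $\{z\ge R_b\}$ has no boundary data. The correct initialization is as follows. For $t\ge R_b$ you have $v^t\ge b$ everywhere, so on the open set $\{v>v^t\}$ both values lie in $[b,1]$. The positive-coefficient maximum principle then applies on that set, with $v-v^t<0$ on $\{z=0\}$. This is exactly what you do correctly for $v\le g(z+s)$. Since Step~2 is redundant, you may also simply delete it.

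\emph{Step~1.} Three points fail as written:
\begin{itemize}
\item sliding $\varepsilon\psi$ does not by itself yield $v\ge w_R(\cdot-Y)$, because you have no initial position where $w_R$ lies below $v$;
\item the claim $w_R(0)\to1$ is asserted without proof;
\item the unconstrained minimizer of the $W$-energy in $H^1_0(B_R)$ may fall into the $-1$ well.
\end{itemize}
It is simpler to drop $w_R$ altogether. Choose $R_1$ with $\lambda_1(B_{R_1})<-W''(0)$, so that $\varepsilon\psi$ is a strict subsolution for $\varepsilon\le\varepsilon_0$. Raise $\varepsilon$ to $\varepsilon_0$ and then slide the center to get $\inf_{z>R_1}v\ge\varepsilon_0$. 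Now set $m=\lim_{R\to\infty}\inf_{z\ge R}v$ and suppose $m<1$. Translates of $v$ along a minimizing sequence with $z\to\infty$ converge to an entire solution $\bar v\ge m$ attaining its minimum $m\in(0,1)$. At that point $0\le\Delta\bar v=W'(m)<0$, a contradiction. Hence $v\to1$ uniformly, which is all that Step~3 requires.
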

\begin{proof}
Farina \cite[Lemma~2.3]{Farina03}, applied to $f=-W'$, yields
$\sup v=1$. The function $-W'$ has a globally Lipschitz extension
from $[0,1]$, and $W'(1)=0$. Thus the half-space symmetry theorem
of Berestycki--Caffarelli--Nirenberg \cite[Theorem~1.4]{BCN97}
yields $v(x,z)=h(z)$ with $h'>0$ and $h(z)\to1$.
The first integral $h'^2/2=W(h)$ and the boundary value $h(0)=0$
imply $h=g$.
For a negative solution, we apply the same argument to its negative
with potential $W(-s)$. The original solution is then $g(-z)$ for the
original potential.
\end{proof}

For $3\le N\le10$, the next proposition shows that a stable entire
solution with a planar blowdown is a single heteroclinic.
This consequence of local sheeting reduces the proof of
Theorem~\ref{global} to showing that the limiting varifold is
supported on a hyperplane.
\begin{proposition}\label{prigi}
Suppose $3\le N\le10$ and $U\in C^2(\mathbb R^N;[-1,1])$ is stable and
solves $\Delta U=W'(U)$. Suppose there are $R_j\to\infty$, a hyperplane $P$ through
the origin, and $Q\in\mathbb N$ such that
\begin{equation}\label{epblow}
 R_j^{-1}|D(U(R_j\,\cdot))|^2\,dX
 \rightharpoonup Q\sigma_0\mathcal H^{N-1}\lfloor P
 \quad\text{locally in }\mathbb R^N.
\end{equation}
Then $Q=1$ and $U(X)=g(e\cdot X-b)$ for some
$e\in\mathbb S^{N-1}$ and $b\in\mathbb R$.
\end{proposition}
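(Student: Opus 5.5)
We rotate coordinates so that $P=\{z=0\}$ and consider the rescaled solutions $u_j(X)=U(R_jX)$, which solve \eqref{acn} with $\varepsilon_j=R_j^{-1}$ and are stable on every ball. By \eqref{epblow} and Lemma~\ref{gtfene}, on any fixed ball $B_\rho$ the hypothesis \eqref{planar} holds after rescaling $B_\rho$ to $B_1$, with $n=N-1\in[2,9]$. Theorem~\ref{sheet} then applies on $B_\rho$ for every fixed $\rho$. For all large $j$, it gives exactly $Q$ ordered smooth graphs $f_{1,j}<\cdots<f_{Q,j}$ over $B_{\rho/2}^{n}$ describing $\{u_j=0\}$ in a thin product. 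Their $C^{2,\theta}$ norms are bounded and, by Hausdorff convergence and the $C^1$ convergence in the proof, they tend to zero.

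Returning to the original variables, the zero set of $U$ in $B^n_{R_j\rho/2}\times(-R_jh,R_jh)$ is the union of the $Q$ graphs $F_{\alpha,j}(x)=R_jf_{\alpha,j}(x/R_j)$. These satisfy $|DF_{\alpha,j}|=|Df_{\alpha,j}|\to0$ and $|D^2F_{\alpha,j}|\le C R_j^{-1}$. The key step is to show that the zero set of $U$ consists of $Q$ parallel hyperplanes. Since the graphs over expanding balls are nested in $j$, they are restrictions of $Q$ entire graphs $F_\alpha$ on $\mathbb R^n$. Their Hessians are bounded by $CR_j^{-1}$ on $B_{R_j\rho/2}$ for every $j$, so $D^2F_\alpha\equiv0$ and each $F_\alpha$ is affine. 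Their gradients are bounded by $\sup_{B_{\rho/2}}|Df_{\alpha,j}|\to0$, so each $F_\alpha$ is constant. The zero set, including the part outside the slab, is handled by the Hausdorff confinement from Lemma~\ref{gtfene} applied to every fixed compact region: any zero $X$ lies, for $j$ large, inside the slab over $B_{R_j\rho/2}$. Hence $\{U=0\}=\bigcup_\alpha\{z=c_\alpha\}$ with $c_1<\cdots<c_Q$.

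Above the top hyperplane $\{z=c_Q\}$, $U$ has no zeros and so has a fixed sign. After possibly replacing $U$ by $-U$ and $W$ by $W(-s)$, it is positive there. Lemma~\ref{hrigi}, applied to $U(x,z+c_Q)$, gives $U(x,z)=g(z-c_Q)$ for $z\ge c_Q$. The function $g(z-c_Q)$ is a solution on all of $\mathbb R^N$ and agrees with $U$ on an open set, so unique continuation for the difference, which solves a linear equation with bounded coefficient, yields $U(x,z)=g(z-c_Q)$ everywhere. This function has a single zero hyperplane, so $Q=1$, and $U=g(e\cdot X-b)$ with $e=e_{n+1}$ or $-e_{n+1}$.

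The main obstacle is the passage from local sheeting on each fixed scale to an entire description of the zero set. The Hessian decay uses the uniform $C^{2,\theta}$ constant $C(r,\theta)$ of Theorem~\ref{sheet}, applied on a fixed normalized ball. One must also check that the graphs obtained for different $j$ match, which follows from uniqueness of the ordered decomposition on the overlap. If the $C^{2}$ bound were only available in a weaker form, the fallback is to use $C^1$ flatness on all scales: $\sup|DF_\alpha|\to0$ over expanding balls gives $DF_\alpha\equiv0$ directly, since $|DF_\alpha(x)|\le\sup_{B_{\rho/2}}|Df_{\alpha,j}|$ for every large $j$.
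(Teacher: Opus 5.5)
Your proposal is correct and follows the paper's route. You apply Theorem~\ref{sheet} to $U(R_j\,\cdot)$ to show that $\{U=0\}$ is a union of parallel hyperplanes, then use Lemma~\ref{hrigi} above the highest one, and finally unique continuation gives $U=g(\pm(z-c_Q))$.

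The differences are only cosmetic. You globalize the graphs through consistency of the ordered decomposition on nested cylinders, which gives exactly $Q$ hyperplanes, so $Q=1$ follows by counting zero hyperplanes. The paper instead derives $D_xU=0$ at each fixed zero and uses an open--closed argument, then reads off $Q=1$ from the multiplicity of the blowdown.
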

\begin{proof}
After a rotation, we have $P=\{z=0\}$. We apply Theorem~\ref{sheet} to
$U(R_j\,\cdot)$ on a fixed product in $B_1$ and obtain $Q$ zero graphs
whose slopes tend uniformly to zero. In particular, $\{U=0\}$ is
nonempty. Every fixed zero $(x,z)$ of $U$ belongs, after rescaling,
to one of these graphs for all large $j$, and
\begin{equation*}
 U_{x_i}(x,z)+U_z(x,z)\partial_i f_{\alpha,j}(x/R_j)=0.
\end{equation*}
We let $j\to\infty$ and obtain $D_xU(x,z)=0$. The same theorem implies
$U_z(x,z)\ne0$. The implicit function theorem therefore represents
the zero set locally by constant graphs.
For each height $b$ at which a zero occurs, $\{x\mid U(x,b)=0\}$ is
closed by continuity and open by this local description. It equals
$\mathbb R^{N-1}$. There are at most $Q$ such heights, since $Q+1$ fixed
zero hyperplanes would produce $Q+1$ intersections with a vertical line in
the rescaled product for large $j$.

We denote the height of the highest zero hyperplane by $b$. On $z>b$ the solution has one sign,
so Lemma~\ref{hrigi} implies that it equals $g(z-b)$ or $g(b-z)$.
We denote this entire one-dimensional solution by $V$. Then
\begin{equation*}
 \Delta(U-V)=\left[\int_0^1W''(V+t(U-V))\,dt\right](U-V).
\end{equation*}
By unique continuation \cite{Aronszajn}, we have $U=V$ throughout
$\mathbb R^N$. The limiting measure of $R|DU(RX)|^2\,dX$ has multiplicity one,
and hence $Q=1$.
\end{proof}

We can now prove the second main theorem. The argument below also
covers the lower-dimensional cases by adding independent coordinates.
 
\begin{proof}[Proof of Theorem~\ref{global}]
It suffices to prove the result for $5\le N\le7$. Indeed, if $N<5$,
we extend $U$ to $\mathbb R^5$ by setting
$\widetilde U(X,Y)=U(X)$. We apply stability on each slice with
$Y$ fixed and integrate in $Y$. Adding the nonnegative term from
the $Y$ derivatives proves stability of $\widetilde U$.
Fubini's theorem bounds its energy in $B_R^5$ by
$CR^{5-N}R^{N-1}=CR^4$. The five-dimensional conclusion
then applies. Since $\widetilde U$ is independent of $Y$ and $g'>0$,
the resulting one-dimensional direction lies in $\mathbb R^N$.

We therefore assume $5\le N\le7$. By the range of $U$, we have
$W(U)>0$. Modica's inequality and monotonicity
\cite{Modica85,Modica88}, together with \eqref{natgrowth}, imply
\begin{equation*}
 0<\lim_{R\to\infty}R^{1-N}
 \int_{B_R}\left(\frac12|DU|^2+W(U)\right)\,dX<\infty.
\end{equation*}
For any $R_j\to\infty$, we obtain from the compactness theorem of
Hutchinson--Tonegawa \cite[Theorem~1 and Proposition~4.3]{HT}, after a
diagonal extraction, a stationary integral varifold $V$ on
$\mathbb R^N$. Its weight is $\sigma_0^{-1}$ times the limiting full
energy measure, by equipartition. Thus
$r^{1-N}\|V\|(B_r)$ is the same positive constant for every $r>0$.
Equality in the stationary monotonicity formula implies that $V$ is a cone.

The assumptions on $W$ satisfy (A1) of
Tonegawa--Wickramasekera \cite{TW12}. The bound \eqref{natgrowth}, the
range $[-1,1]$, and stability of the dilated solutions verify (A2)--(A3)
on every fixed ball. Their Theorem~2.1 makes the support of $V$ smooth
at the origin when $N\le7$. A cone smooth at its vertex is a hyperplane.
Stationarity and integrality then imply a constant positive integer
multiplicity. Equipartition yields \eqref{epblow}, and
Proposition~\ref{prigi} proves the assertion in these dimensions and
completes the proof.
\end{proof}

\subsection{Transition layers in higher dimensions}
\label{higher}\label{highlayers}

The objective of this subsection is to determine what the total energy
and uniform horizontal energy bounds imply beyond the dimension range
of Proposition~\ref{crit}. The same averaged equations and stability tests
imply restrictions on the blowdown multiplicity and a positive lower
bound for the horizontal energy growth of multiple layers. When
$W''(-1)=W''(1)$, the multiplicity $Q\ge2$ must satisfy
$n\ge2+4Q(Q-1)$. A sufficiently small horizontal growth coefficient
still forces one-dimensionality.

Finally, we return to local sheeting. If regularity of the zeros or
uniform convergence of their normal lines fails, the rescaling
argument of Subsection~\ref{proof} produces a stable entire solution
subject to these restrictions. For $10\le n\le25$, this
solution must have planar blowdown multiplicity two, even when
$W''(-1)\ne W''(1)$. Proposition~\ref{reduce} establishes the precise
reduction.  

Throughout this subsection $n\ge5$, and $W$ satisfies \eqref{wells}.
Unless stated otherwise, $U$ is a bounded stable entire solution,
$U(0)=0$, and
\begin{equation}\label{hassu}
 \begin{aligned}
 \int_{B_R(Y)}|D_xU|^2\,dX&\le AR^{n-2}
       &&\text{for }Y\in\mathbb R^{n+1},\ R\ge R_*,\\
 \int_{B_R(0)}\left(\frac12|DU|^2+W(U)\right)\,dX&\le KR^n
       &&\text{for }R\ge R_*.
 \end{aligned}
\end{equation}
The constants $A,K,R_*$ are fixed, and the tangential estimate is
uniform in the center.

The geometric conclusions of Subsections~\ref{exzeros}
and~\ref{ordgraphs} remain available
under these assumptions. Lemma~\ref{plane} establishes the planar limit
\eqref{bdlim}, with integer multiplicity $Q$, and  
\eqref{cone}. The measures $R|DU(RX)|^2\,dX$ have the same limit.
If $Q=1$, Wang's theorem \cite[Theorem~11.1]{Wang17} makes $U$
one-dimensional. Its normal is vertical, since a nonzero horizontal
component would produce tilt-excess of order $R^n$, contrary
to \eqref{hassu}. Since $U(0)=0$, we have $U=g(z)$ or $g(-z)$.

We also use the zero graphs constructed in
Subsection~\ref{ordgraphs}.
For any fixed $M,\delta>0$, Lemmas~\ref{cost} and \ref{geomet}
cover the zeros satisfying
\begin{equation*}
 \min_{\pm}\|U(P+\cdot)-g(\pm z)\|_{C^2(B_{2M})}\ge\delta
\end{equation*}
by at most $C_{M,\delta}R^{n-4}$ fixed-radius balls in $B_R$.
We may increase the approximation radius beyond $M$ and decrease
the error below $\delta$ as required for the graph construction.
After projection onto $\mathbb R^n$ and enlargement by a fixed
multiple of $\log(2+|x|)$, the complement of these balls has an
unbounded component $\Omega$ satisfying \eqref{nffq} and \eqref{uyctu}.
The zero set above $\Omega$ consists of exactly $Q$ ordered smooth
graphs, by \eqref{pqmaw}. These arguments apply for every $n\ge5$,
since $2(n-1)/(n-2)>2$. The component and its constants may depend
on the chosen approximation.

We choose the lower limiting well to be $-1$, replacing $(U,W)$ by
$(-U,W(-\,\cdot))$ once if necessary. Thus $s_i=(-1)^{i-1}$ is the well
between the $i$th and $(i+1)$st transition layers. We use the constants
$c_i$ from \eqref{gtcoef}.
In particular, $-\sum_{j=1}^i c_j>0$ for $i<Q$.

For the following lemma, assume that the integer $Q$ in
\eqref{bdlim} satisfies $Q\ge2$. Fix a component $\Omega_0$ from
the preceding construction, with sufficiently accurate approximation by
vertical heteroclinics. Denote its zero graphs by $h_1<\cdots<h_Q$
and extend them by zero outside $\Omega_0$ when taking spherical means.
For any sufficiently large fixed radius $R_1$, we consider
\begin{equation}\label{hmdefi}
 \begin{aligned}
 &\frac1T\fint_{\mathbb S^{n-1}}
           (h_i\mathbf1_{\Omega_0})(e^T\theta)\,d\theta,\\
 &\frac1{|\mathbb S^{n-1}|T}
 \int_{\Omega_0\cap\{R_1<|x|<e^T\}}
 \frac{2W''(s_i)A_{s_i}^2}{\sigma_0}
 e^{-\sqrt{W''(s_i)}(h_{i+1}-h_i)}|x|^{2-n}\,dx.
 \end{aligned}
\end{equation}
We take $T\to\infty$ through regular radii. Limits of spherical means
are essential limits of their almost-everywhere representatives.

The next lemma bounds these averages and relates their limits
through the layer equations and stability. These conclusions will
yield the multiplicity restriction in Proposition~\ref{multi} and the
tilt-excess lower bound in Proposition~\ref{egap}.

\begin{lemma}\label{haver}
The quantities in \eqref{hmdefi} are bounded. Every simultaneous
subsequential limit, denoted respectively by
$\beta_1,\ldots,\beta_Q\in\mathbb R$ and
$\alpha_1,\ldots,\alpha_{Q-1}\ge0$, satisfies, with
$\alpha_0=\alpha_Q=0$,
\begin{equation}\label{hlequa}
 (n-2)\beta_i=\alpha_{i-1}-\alpha_i,\qquad
 \beta_{i+1}-\beta_i\ge\frac2{\sqrt{W''(s_i)}},
\end{equation}
\begin{equation}\label{hlstab}
 \sum_{i=1}^{Q-1}\sqrt{W''(s_i)}\alpha_i(v_{i+1}-v_i)^2
 \le\frac{(n-2)^2}{4}\sum_{i=1}^Q v_i^2
 \qquad\text{for }v\in\mathbb R^Q.
\end{equation}
Moreover,
\begin{equation}\label{hgmean}
 \liminf_{R\to\infty}
 \frac{1}{\log R}\fint_{\mathbb S^{n-1}}
 [(h_{i+1}-h_i)\mathbf1_{\Omega_0}](R\theta)\,d\theta
 \ge\frac2{\sqrt{W''(s_i)}}.
\end{equation}
\end{lemma}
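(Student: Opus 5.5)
We plan to rerun the scalar reduction of Subsection~\ref{scalarred}, now keeping each layer separately instead of collapsing onto the weighted mean gap $b$. Fix the component $\Omega_0$ and the layer functions $f_i$ of Proposition~\ref{orth}, with large separation $D$ and small $\nu$. Since $\max_i|f_i-h_i|$ is controlled by the small $L^2$ error, the averages in \eqref{hmdefi} may be computed with $f_i$ in place of $h_i$; the change is $o(T)$ in the first quantity and a factor $1+o(1)$ in the second. Boundedness comes first. The interaction averages are bounded by \eqref{arnv}, which gives the averaged bound for $a$, together with the equivalence $a\simeq\sum_i a_i$. The height averages are bounded by $|f_i|\le C(1+|x|)$, and more sharply by integrating the layer equation.

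For \eqref{hlequa} we will average \eqref{hsequa} over spheres, as in the proof of Lemma~\ref{lemma}, but using equality rather than an inequality. Extend $f_i$ by $f_i\mathbf1_{\Omega_0}$. Proposition~\ref{extens} produces boundary measures whose weighted variation is summable, and the same holds for the collar errors. Set $m_i(t)=\fint f_i\mathbf1_{\Omega_0}(e^t\theta)\,d\theta$ and $\omega_i(t)=e^{2t}\fint a_i(e^t\theta)\,d\theta$. Then $m_i''+dm_i'=\omega_{i-1}-\omega_i+E_i+q_i'+dq_i$ with $d=n-2$. Integrating twice as in \eqref{opt}, dividing by $T$ and letting $T\to\infty$ gives $d\,\beta_i=\alpha_{i-1}-\alpha_i$, up to the averaged errors $E_*$ and $F_*$. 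By \eqref{hpiano}, \eqref{vtnenm}, \eqref{kmod} and \eqref{tfayyf}, these errors are $O(\epsilon_D+\tau+\nu)$. The limits $\beta_i,\alpha_i$ do not depend on $(D,\nu,\tau)$, because the $h_i$ are fixed and the $f_i$ differ from them by $o(1)$. We therefore send these parameters to their limits and obtain the equality. The subtle point is that $\Omega_0$ is held fixed while the layer construction changes. Two such components intersect on almost all of every large sphere by Lemma~\ref{geomet}, and the labeled zero graphs agree there, so the averages are unaffected.

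For \eqref{hlstab} we test Proposition~\ref{rstat} with $\eta_i=v_i\zeta$, taking $\zeta=|x|^{-(n-2)/2}\psi(\log|x|)$ with the cutoff of Subsection~\ref{scalext}. Using \eqref{ehwhw}, \eqref{tfayyf} and \eqref{arnv}, divide by $\sigma_0|\mathbb S^{n-1}|T$. The left side tends to $\sum_i\sqrt{W''(s_i)}\alpha_i(v_{i+1}-v_i)^2$, and the right side to $\frac{(n-2)^2}{4}|v|^2$ plus errors that vanish after the parameters are sent to their limits.

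The gap bound in \eqref{hlequa} and \eqref{hgmean} will follow from the exponential lower bound, applied gap by gap. Set $g_i=\sqrt{W''(s_i)}(f_{i+1}-f_i)$ and $\mu_i(t)$ for its spherical mean. Jensen gives $\omega_i(t)\ge c\exp(2t-\mu_i(t))$. From the equation we get $\mu_i(t)=\sqrt{W''(s_i)}(\beta_{i+1}-\beta_i)t+o(t)$, because the averaged errors can be made small. If $\sqrt{W''(s_i)}(\beta_{i+1}-\beta_i)<2$, then $\int^T\omega_i$ grows superlinearly, which contradicts the boundedness of $\alpha_i$. Running the same argument with a liminf over all large $R$, and not only along the chosen subsequence, gives \eqref{hgmean}. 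The step we expect to be the main obstacle is that the limits along the subsequence must be exact rather than approximate. This requires showing that every error term has averaged size tending to zero uniformly as the construction parameters are refined, while the limiting quantities stay fixed. Concretely, we must control $\int\phi^2$ and $\int|D_x\phi|^2$ with weight $|x|^{2-n}$ through \eqref{vnb} independently of $T$, and the boundary terms of the $BV$ extension for each $f_i$ separately.
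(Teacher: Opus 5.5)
\textbf{Gap: your identity and gap bound rest only on Ces\`aro control of the interaction, which is not enough.}

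Your architecture matches the paper's. You extend each projected equation \eqref{hsequa} by zero, take spherical means, use plateau tests $\eta_i=v_i\zeta$ in \eqref{evjhr} for \eqref{hlstab}, apply Jensen for the gaps, and transfer from $f_i$ to $h_i$ on the common part of the components. Your boundedness argument (via the integrated equation) and your derivation of \eqref{hlstab} are sound. Everything else, however, uses only the bound $\int_s^T\omega_j\,dt\le CT$ from \eqref{arnv}.

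\textbf{The identity.} Integrating $m_i''+dm_i'=\omega_{i-1}-\omega_i+E_i+q_i'+dq_i$ twice gives
\[
 d\,m_i(T)=\int_s^T\bigl(1-e^{-d(T-t)}\bigr)\bigl(\omega_{i-1}-\omega_i\bigr)\,dt+\text{errors}.
\]
So $d\beta_i=\alpha_{i-1}-\alpha_i$ along an arbitrary subsequence $T_k$ requires $T_k^{-1}\int_s^{T_k}e^{-d(T_k-t)}\omega_j(t)\,dt\to0$.
\begin{itemize}
\item This exponential convolution is not one of the errors $E_*,F_*$, and $\omega_{i-1}-\omega_i$ has no sign.
\item A Ces\`aro bound does not prevent $\omega_j$ from carrying mass comparable to $T_k$ on $[T_k-1,T_k]$.
\end{itemize}
The paper closes this with the scale-local bound \eqref{huannu}. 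Test \eqref{ctyq}, combined with \eqref{vnb}, by $|x|^{-(n-2)/2}\psi(\log|x|-T)$ times the collar cutoff, where $\psi$ has fixed compact support. This bounds $\int\sum_ia_i|x|^{2-n}\,dx$ on every far unit logarithmic annulus uniformly in $T$, so the convolution is $O(1)$.

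\textbf{The gap bound.} You use $\mu_i(t)=\sqrt{W''(s_i)}(\beta_{i+1}-\beta_i)t+o(t)$ for all large $t$. But the $\beta_i$ are only subsequential limits, so this is known only at the times $T_k$. An upper bound on $\mu_i$ at isolated times gives no lower bound on $\int\omega_i$, and rerunning the argument ``with a liminf'' has the same defect. What is missing is bounded oscillation of the spherical means on unit $t$-intervals:
\begin{itemize}
\item Cauchy--Schwarz with $\int_{\Omega\cap\{R<|x|<2R\}}\sum_i|Df_i|^2\,dx\le A_1R^{n-2}$ gives $\int_{\Omega\cap\{e^T<|x|<e^{T+1}\}}|Df_i|\,|x|^{1-n}\,dx\le CA_1^{1/2}$.
\item Together with the finite weighted variation of the boundary part $-f_i\nu\,\mathcal H^{n-1}\lfloor\partial\Omega$ of $D(f_i\mathbf 1_\Omega)$, this bounds the oscillation of $\ell_i=m_{i+1}-m_i$ on $[T,T+1]$.
\item For the zero extension, Jensen needs the constant $2W''(s_i)A_{s_i}^2/\sigma_0$ added on $\Omega^c$, as in \eqref{qdjwb}; its weighted mass is finite by \eqref{gzsj}.
\item Jensen and the bound on $\int_T^{T+1}\omega_i$ then give $\sqrt{W''(s_i)}\,\ell_i(T)\ge2T-C$ for almost every large $T$. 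Once the oscillation is controlled, even the Ces\`aro bound gives $2T-O(\log T)$.
\end{itemize}
This yields \eqref{hgmean} as a genuine liminf over all radii. The gap inequality in \eqref{hlequa} then follows from it, not from the $\beta_i$.

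\textbf{Smaller points.}
\begin{itemize}
\item $|f_i|\le C(1+|x|)$ does not bound $T^{-1}$ times the means; only the integrated equation does.
\item $|f_i-h_i|\le\nu$ does not tend to zero at infinity, so you must let $T\to\infty$ before sending $\nu\to0$.
\end{itemize}
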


\begin{proof}
The statement concerns the actual zero heights $h_i$ on the
fixed domain $\Omega_0$. To use the projected layer equations,
we work with the functions $f_i$ obtained from the orthogonality
conditions on a domain $\Omega$. We can improve this approximation
by changing $\Omega$ and $f_i$, while keeping $\Omega_0$ and $h_i$
fixed. We first estimate the averages involving $f_i$.
We then use the small difference between $f_i$ and $h_i$ on
$\Omega\cap\Omega_0$, together with the bounds on the omitted
regions, to transfer these estimates to $h_i$. In this last step,
we first let the radius tend to infinity and then let the
approximation error tend to zero.

For any large $D$ and small $\nu$, we obtain from Proposition~\ref{orth}
a component $\Omega$ and orthogonal layer functions $f_i$ whose
distance from the actual ordered zero heights is at most $\nu$.
In particular, $|f_i-h_i|\le\nu$ on $\Omega\cap\Omega_0$.
We have $|Df_i|,|\phi|\le\nu$, $\int\phi^2\,dz\le\nu^2$, and
\begin{equation*}
 \int_{\Omega\cap\{R<|x|<2R\}}\sum_i|Df_i|^2\,dx\le A_1R^{n-2},
\end{equation*}
where $A_1$ is independent of $D,\nu$.

We use the interaction densities \eqref{hidefi} and the projected
equations \eqref{hsequa}, with the estimates \eqref{hserro} at approximation
error $\nu$. Constants below may depend on $n,W,Q,A,K$.
We choose $\psi\in C_c^1(\mathbb R)$ with fixed support and equal to one
on $[0,1]$. We multiply $|x|^{-(n-2)/2}\psi(\log|x|-T)$ by the cutoff
around the removed balls and denote the result by $\rho$. Its Dirichlet integral
and $\int\rho^2\sum_i|Df_i|^2\,dx$ are bounded independently of $T$.
The inequality for $\phi$ in \eqref{vnb} and scalar stability
\eqref{ctyq} yield
\begin{align*}
 \int\rho^2\left(\int(|D\phi|^2+\phi^2)\,dz\right)\,dx
 &\le C\nu^2\int\rho^2\sum_i|Df_i|^2\,dx
     +C\epsilon_D\int\rho^2\sum_i a_i\,dx\\
 &\quad+C\nu^2\int|D\rho|^2\,dx,\\
 \int\rho^2\sum_i a_i\,dx
 &\le C\int|D\rho|^2\,dx+C(\epsilon_D+\tau)
       \int\rho^2\sum_i|Df_i|^2\,dx\\
 &\quad+C_\tau\int\rho^2\left(\int(|D_x\phi|^2+\phi^2)\,dz\right)\,dx.
\end{align*}
The scalar density in \eqref{clp} is comparable to $\sum_i a_i$.
Fix $\tau$ and substitute the first inequality into the second.
The resulting term $C_\tau\epsilon_D\int\rho^2\sum_i a_i\,dx$ can be
absorbed by taking $D$ sufficiently large. We then choose $\nu$ small.
On every unit logarithmic annulus sufficiently far out, we obtain
\begin{equation}\label{huannu}
 \int_{\Omega\cap\{e^T<|x|<e^{T+1}\}}
                  \sum_i a_i\,|x|^{2-n}\,dx\le C.
\end{equation}
Moreover the weighted integral of $\sum_i|e_i|$ there and the integral
of $\sum_i|F_i||x|^{1-n}$ can be made at most any prescribed
$\eta>0$, apart from summable errors from the removed balls. To obtain
this estimate, we choose $\tau$ first, then $D$, then $\nu$, including
$C_\tau$ in the choice of the last two parameters.

These statements hold without the cutoff around the removed balls.
In fact the interactions and $\sum_i|Df_i|^2$ are bounded, and
\begin{equation*}
 \int_{\mathbb R}(|D_x\phi|^2+\phi^2)\,dz\le C(1+|x|).
\end{equation*}
The latter follows from the full normal energy estimate for $U$ and
$D_x\phi=D_xU+\sum_i s_i g'(s_i(z-f_i))Df_i$.
On a dyadic annulus the regions where the cutoff differs from one have volume
at most $CR^{n-4}(\log R)^n$. The lost weighted integrals are therefore
$O(R^{-2}(\log R)^n)$ for bounded integrands and
$O(R^{-1}(\log R)^n)$ for the last integrand. Both are summable.
The flux estimate also follows directly by Cauchy--Schwarz and the
annular bound for $\sum_i|Df_i|^2$.

The vector inequality \eqref{evjhr}, tested with
$\eta_i=v_i\rho$ on a long logarithmic annulus, tells us that
\begin{equation}\label{hvaver}
 \begin{split}
 &\limsup_{T\to\infty}\frac1{|\mathbb S^{n-1}|T}
 \int_{\Omega\cap\{R_1<|x|<e^T\}}
 \sum_{i=1}^{Q-1}\sqrt{W''(s_i)}a_i(v_{i+1}-v_i)^2|x|^{2-n}\,dx\\
 \le&
 \left(\frac{(n-2)^2}{4}+C\eta\right)\sum_{i=1}^Q v_i^2.
 \end{split}
\end{equation}
The end cutoffs have fixed logarithmic length. Their costs are bounded
independently of $T$ by \eqref{huannu} and the estimates
just proved.

We extend $f_i$ by zero across the complement of $\Omega$.
The distributional identities in Proposition~\ref{extens}
apply to each equation \eqref{hsequa}. The added scalar
boundary measure is
$(F_i\cdot\nu-\partial_\nu f_i)\mathcal H^{n-1}\lfloor\partial\Omega$.
The added vector measure is
$-f_i\nu\mathcal H^{n-1}\lfloor\partial\Omega$.
Their total variations with weights $|x|^{2-n}$ and $|x|^{1-n}$,
respectively, are finite outside a fixed ball. This uses the bounded
traces of $Df_i,F_i$, the bound $|f_i|\le C(1+|x|)$, and
\eqref{rgg}. These facts hold for every $n\ge5$.

Let us set $d=n-2$ and
\begin{equation*}
 m_i(t)=\fint_{\mathbb S^{n-1}}(f_i\mathbf1_\Omega)(e^t\theta)\,d\theta,
 \qquad
 \omega_i(t)=e^{2t}\fint_{\mathbb S^{n-1}}
                     (a_i\mathbf1_\Omega)(e^t\theta)\,d\theta.
\end{equation*}
The spherical averages of the extended equations read
\begin{equation*}
 m_i''+d m_i'=\omega_{i-1}-\omega_i+E_i+q_i'+d q_i
\end{equation*}
in distributions. Here $E_i$ and $q_i$ are the weighted pushforwards
of the scalar errors and radial fluxes. Their absolute integrals on
$[s,T]$ are at most $C\eta T+O(1)$, including the finite boundary
measures. Fix $s$ sufficiently large and integrate as in \eqref{opt},
with equality for the present equation. This yields an exponential convolution
of $\omega_{i-1}-\omega_i+E_i$ and the accumulated flux $q_i$.
By \eqref{huannu}, the exponential convolution of each $\omega_i$ is
bounded. The error and flux integrals, divided by $T$, are
$O(\eta)+o(1)$. Consequently
\begin{equation}\label{hmfrel}
 \frac{m_i(T)}T=
 \frac1{dT}\int_s^T(\omega_{i-1}-\omega_i)\,dt+O(\eta)+o(1).
\end{equation}
In particular the normalized means are bounded.

We next prove the lower bound for each gap. The first derivative of a
zero extension is
\begin{equation*}
 D(f_i\mathbf1_\Omega)=\mathbf1_\Omega Df_i\,dx
             -f_i\nu\mathcal H^{n-1}\lfloor\partial\Omega.
\end{equation*}
The annular gradient bound and Cauchy--Schwarz inequality yield
\begin{equation*}
 \int_{\Omega\cap\{e^T<|x|<e^{T+1}\}}|Df_i|\,|x|^{1-n}\,dx
 \le C A_1^{1/2}.
\end{equation*}
Together with the finite weighted boundary variation, this bounds the
variation of $m_i$ on every unit interval sufficiently far out. We set
$\ell_i=m_{i+1}-m_i$ and add the constant
$2W''(s_i)A_{s_i}^2/\sigma_0$ to the interaction density on
$\Omega^c$. Its weighted integral at infinity is finite by
\eqref{gzsj}. We then use Jensen's inequality on the sphere to obtain
\begin{equation*}
 e^{2t}\fint_{\mathbb S^{n-1}}\left[
 a_i\mathbf1_\Omega+
 \frac{2W''(s_i)A_{s_i}^2}{\sigma_0}\mathbf1_{\Omega^c}\right](e^t\theta)\,d\theta
 \ge \frac{2W''(s_i)A_{s_i}^2}{\sigma_0}
      e^{2t-\sqrt{W''(s_i)}\ell_i(t)}.
\end{equation*}
Its integral on $[T,T+1]$ is bounded by
\eqref{huannu} and \eqref{nffq} and \eqref{uyctu}.
The bounded oscillation of $\ell_i$ on this interval therefore implies
\begin{equation}\label{hmgmea}
 \sqrt{W''(s_i)}\ell_i(T)\ge2T-C
 \quad\text{for almost every sufficiently large }T.
\end{equation}
The constant may depend on the fixed approximation parameters.

Every component used above and $\Omega_0$ contains asymptotically all
of each large sphere. On their intersection the $i$th actual zero has
the same label, and its distance from $f_i$ is at most $\nu$.
On the rest of the sphere all heights are bounded by $C(1+R)$ and the
relative measure is $O(R^{-2(n-1)/(n-2)}(\log R)^{n-1})$.
Consequently the means of $f_i\mathbf1_\Omega$ and
$h_i\mathbf1_{\Omega_0}$ differ by at most $\nu+o(1)$.
On the intersection the corresponding interaction densities differ
by a multiplicative factor between $e^{-C\nu}$ and $e^{C\nu}$.
On its complement their weighted integrals at infinity are finite,
since the densities are bounded and \eqref{gzsj} holds for both
components. Thus their normalized logarithmic integrals differ by
$O(\nu)+o(1)$, using their uniform boundedness.

It follows that the quantities in \eqref{hmdefi} are
bounded and that \eqref{hgmean} holds. We select any subsequence
on which all those finitely many quantities converge. We apply
\eqref{hmfrel} and \eqref{hvaver} using successively more accurate
approximations and then let $\eta,\nu$ tend to zero. A countable choice of these approximations leaves one
common set of full measure of admissible radii. The limits satisfy
\eqref{hlequa} and \eqref{hlstab}.
The same argument applies to every convergent subsequence.
\end{proof}

When $W''(-1)=W''(1)$, Wang \cite[Definition~1.2, Remark~1.3 and
Proposition~4.1]{WangToda19} obtains the dimension bound $n\ge2+4Q(Q-1)$
for homogeneous Toda solutions. The averaged layer equations imply
the following necessary condition for the Allen--Cahn solution with
possibly different values of $W''$ at the two wells.

\begin{proposition}\label{multi}
Assume \eqref{hassu}, and let $Q\ge2$ be the multiplicity
of the limiting hyperplane. With the constants in
\eqref{gtcoef}, one necessarily has
\begin{equation}\label{hscond}
 \begin{split}
 &2(n-2)\sum_{i=1}^{Q-1}\sqrt{W''(s_i)}
       \left(-\sum_{j=1}^i c_j\right)(v_{i+1}-v_i)^2\le\frac{(n-2)^2}{4}\sum_{i=1}^Qv_i^2
       \qquad\text{for }v\in\mathbb R^Q.
 \end{split}
\end{equation}
If $W''(-1)=W''(1)$, this is equivalent to
\begin{equation}\label{hecthr}
 n\ge 2+4Q(Q-1),\qquad
 Q\le\frac{1+\sqrt{n-1}}2.
\end{equation}
\end{proposition}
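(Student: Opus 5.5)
The plan is to derive \eqref{hscond} purely algebraically from the three relations of Lemma~\ref{haver}, and then to diagonalize the resulting quadratic form when the two wells agree. Since $Q\ge2$, Lemma~\ref{haver} applies: the quantities \eqref{hmdefi} are bounded, so we may fix one simultaneous subsequential limit $(\beta_i)$, $(\alpha_i)$ satisfying \eqref{hlequa} and \eqref{hlstab}.

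\emph{First step: express each $\alpha_i$ through the $\beta_j$.} Telescoping the first relation in \eqref{hlequa} with $\alpha_0=0$ gives
\begin{equation*}
 \alpha_i=-(n-2)\sum_{j=1}^i\beta_j .
\end{equation*}
Taking $i=Q$ and using $\alpha_Q=0$ gives $\sum_{j=1}^Q\beta_j=0$.

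\emph{Second step: compare $\beta$ with the coefficients $c$.} Set $\gamma_i=\beta_i-2c_i$. By \eqref{gtcoef}, $c_{i+1}-c_i=1/\sqrt{W''(s_i)}$, so the gap inequality in \eqref{hlequa} says exactly that $\gamma$ is nondecreasing. Moreover $\sum_i\gamma_i=0$, because $\sum_i\beta_i=0$ and $\sum_ic_i=0$. A nondecreasing finite sequence has partial averages below its total average, so
\begin{equation*}
 \sum_{j\le i}\gamma_j\le\frac iQ\sum_{j=1}^Q\gamma_j=0 .
\end{equation*}
This yields $-\sum_{j\le i}\beta_j\ge-2\sum_{j\le i}c_j$, and hence
\begin{equation*}
 \alpha_i\ge2(n-2)\Bigl(-\sum_{j\le i}c_j\Bigr)>0 .
\end{equation*}
Substituting these lower bounds into \eqref{hlstab} gives \eqref{hscond} for every $v$.

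\emph{Third step: the equal-well case.} Suppose $W''(\pm1)=k^2$. Then \eqref{gtcoef} gives $c_i=(i-\frac{Q+1}2)/k$, so
\begin{equation*}
 \sqrt{W''(s_i)}\Bigl(-\sum_{j\le i}c_j\Bigr)=\frac{i(Q-i)}2 .
\end{equation*}
Therefore \eqref{hscond} reads
\begin{equation*}
 \sum_{i=1}^{Q-1}i(Q-i)(v_{i+1}-v_i)^2\le\frac{n-2}4|v|^2\qquad\text{for all }v\in\mathbb R^Q,
\end{equation*}
which holds if and only if $n-2\ge4\Lambda_Q$, where $\Lambda_Q$ is the top eigenvalue of the tridiagonal Jacobi matrix of this form. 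The point requiring real care is the identity $\Lambda_Q=Q(Q-1)$. I expect the eigenvalues to be $m(m+1)$, $0\le m\le Q-1$, as a discrete Legendre-type, or angular-momentum, matrix for spin $(Q-1)/2$; this is the matrix behind Wang's bound \cite[Proposition~4.1]{WangToda19}. I would verify this by exhibiting the eigenvectors explicitly: discrete orthogonal (Hahn) polynomials of degree $m$ in $i$, whose three-term recurrence matches the weights $i(Q-i)$. Alternatively I would quote Wang's computation. The case $Q=2$, where the form is $(v_2-v_1)^2$ with top eigenvalue $2$, is a check.

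\emph{Conclusion.} With $\Lambda_Q=Q(Q-1)$ the condition becomes $n\ge2+4Q(Q-1)$. This is equivalent to $(2Q-1)^2\le n-1$, that is, to $Q\le(1+\sqrt{n-1})/2$, proving \eqref{hecthr}. Conversely, because the eigenvalue is attained, \eqref{hecthr} also implies \eqref{hscond} in this case, which gives the asserted equivalence.
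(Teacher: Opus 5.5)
Your proposal is correct and follows the paper's proof. The paper derives $\alpha_i\ge2(n-2)\bigl(-\sum_{j\le i}c_j\bigr)$ from the second differences of \eqref{hlequa} via a discrete maximum principle; this is the same concavity fact you obtain by telescoping and noting that $\beta_i-2c_i$ is nondecreasing with zero sum. The paper likewise reduces the equal-well case to the top eigenvalue $Q(Q-1)$ of the weighted form, citing Wang \cite[Remark~1.3]{WangToda19}, and your Hahn-polynomial identification of the spectrum as $m(m+1)$, $0\le m\le Q-1$, is correct and would make that step self-contained.
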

\begin{proof}
We subtract consecutive equations in \eqref{hlequa} and obtain
\begin{equation*}
 2\alpha_i-\alpha_{i-1}-\alpha_{i+1}
 \ge\frac{2(n-2)}{\sqrt{W''(s_i)}},\qquad \alpha_0=\alpha_Q=0.
\end{equation*}
The numbers $-\sum_{j=1}^i c_j$, with zero values at $i=0,Q$,
satisfy
\begin{equation*}
 2\left(-\sum_{j=1}^i c_j\right)
 -\left(-\sum_{j=1}^{i-1}c_j\right)
 -\left(-\sum_{j=1}^{i+1}c_j\right)
 =\frac1{\sqrt{W''(s_i)}}.
\end{equation*}
From the discrete maximum principle, we therefore obtain
\begin{equation*}
 \alpha_i\ge2(n-2)\left(-\sum_{j=1}^i c_j\right).
\end{equation*}
Substitution into \eqref{hlstab} proves
\eqref{hscond}.

We next suppose that $W''(-1)=W''(1)=k^2$ with $k>0$. Then
$c_i=(i-(Q+1)/2)/k$ and
$-\sum_{j=1}^i c_j=i(Q-i)/(2k)$. Condition
\eqref{hscond} becomes
\begin{equation*}
 \sum_{i=1}^{Q-1}i(Q-i)(v_{i+1}-v_i)^2
 \le\frac{n-2}{4}\sum_{i=1}^Qv_i^2.
\end{equation*}
The matrix of this quadratic form has largest eigenvalue $Q(Q-1)$,
as computed in Wang \cite[Remark~1.3]{WangToda19}. This proves
\eqref{hecthr}.

For general $W$ satisfying \eqref{wells}, we first consider three layers and set
$k_1=\sqrt{W''(s_1)}$ and $k_2=\sqrt{W''(s_2)}$ in this calculation.
The matrix with quadratic form
\begin{equation*}
 2\sum_{i=1}^{2}k_i\left(-\sum_{j=1}^i c_j\right)
                  (v_{i+1}-v_i)^2
\end{equation*}
has edge weights
\begin{equation*}
 \frac23\left(2+\frac{k_1}{k_2}\right),\qquad
 \frac23\left(2+\frac{k_2}{k_1}\right).
\end{equation*}
Its eigenvalues are
\begin{equation*}
 0,\qquad 2,\qquad
 \frac{10+4(k_1/k_2+k_2/k_1)}3\ge6.
\end{equation*}
If $Q>3$, the first two numbers $-\sum_{j\le i}c_j$ are at least as large
as the corresponding three-layer values. Indeed, on the indices
$0,1,2,3$ they solve the same second-difference equations with the
same zero left boundary value. Their right boundary value is nonnegative,
whereas the three-layer value is zero. The discrete maximum principle applies.
We test the full quadratic form with a vector supported on its first
three entries. The remaining edge contributes a nonnegative term.
Its largest Rayleigh quotient is therefore at least six. Condition
\eqref{hscond} implies $6\le(n-2)/4$, as required.
The case $Q=1$ was settled at the beginning of this subsection.
\end{proof}

In dimension $n=10$, the preceding proposition and
Lemma~\ref{haver} determine the averages completely.
Suppose that $U$ satisfies \eqref{hassu} and is non-one-dimensional.
Then $Q=2$. We set $k=\sqrt{W''(1)}$, with the lower well fixed as
above. Equation~\eqref{hlequa} implies $\alpha_1\ge8/k$,
while \eqref{hlstab}, tested with $v=(-1,1)$, implies
$2k\alpha_1\le16$. Hence every subsequential limit satisfies
$\alpha_1=8/k$, $\beta_1=-1/k$, and $\beta_2=1/k$.
The bounded quantities in \eqref{hmdefi} therefore converge, and
the actual zero heights on the fixed component $\Omega_0$ satisfy
\begin{equation*}
 \begin{aligned}
 \frac1{\log R}\fint_{\mathbb S^9}(h_1\mathbf1_{\Omega_0})(R\theta)\,d\theta
     &\longrightarrow-\frac1k,\\
 \frac1{\log R}\fint_{\mathbb S^9}(h_2\mathbf1_{\Omega_0})(R\theta)\,d\theta
     &\longrightarrow\frac1k.
 \end{aligned}
\end{equation*}
These are essential limits, as specified before Lemma~\ref{haver}.
Multiplying the interaction average in \eqref{hmdefi} by $2k$ also yields
\begin{equation*}
 \frac1{|\mathbb S^9|\log R}
 \int_{\Omega_0\cap\{R_1<|x|<R\}}
 \frac{4k^3A_+^2}{\sigma_0}e^{-k(h_2-h_1)}|x|^{-8}\,dx
 \longrightarrow16.
\end{equation*}

We return to arbitrary $n\ge5$ and estimate the tilt-excess
required by several transition layers.
Wang--Wei \cite[Proposition~4.1 and Remark~4.2]{WW} differentiate the
orthogonality conditions to estimate the layer functions.
We combine that calculation with the averaged gaps to determine
the leading constant in the tilt-excess.

\begin{proposition}\label{egap}
Under \eqref{hassu}, if $Q\ge2$, then
\begin{equation}\label{hlegap}
 \begin{split}
 &\liminf_{R\to\infty}\frac1{|\mathbb S^{n-1}|\log R}
 \int_{\{1<|x|<R\}}|x|^{2-n}
             \left(\int_{\mathbb R}|D_xU(x,z)|^2\,dz\right)\,dx\ge4\sigma_0\sum_{i=1}^Q c_i^2.
 \end{split}
\end{equation}
Consequently,
\begin{equation}\label{hbegap}
 \limsup_{R\to\infty}R^{2-n}\int_{B_R^{n+1}}|D_xU|^2\,dX
 \ge\frac{4\sigma_0|\mathbb S^{n-1}|}{n-2}\sum_{i=1}^Q c_i^2.
\end{equation}
In particular, under the same uniform bound in the center, either
\begin{equation}\label{hsgrow}
 \int_{B_R(0)}|D_xU|^2\,dX=o(R^{n-2}),
\end{equation}
or the more general quantitative condition
\begin{equation}\label{hscoef}
 \limsup_{R\to\infty}R^{2-n}\int_{B_R(0)}|D_xU|^2\,dX
 <\frac{2\sigma_0|\mathbb S^{n-1}|}
 {(n-2)\max\{W''(-1),W''(1)\}}
\end{equation}
forces $U=g(z)$ or $g(-z)$.
\end{proposition}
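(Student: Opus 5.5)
The plan is to combine the slope identity from the orthogonality conditions with the averaged gap growth \eqref{hgmean} of Lemma~\ref{haver}. Fix a component $\Omega$ with orthogonal layer functions $f_i$ from Proposition~\ref{orth} at small $\nu$ and large $D$.

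\textbf{Step 1: reduce to the layer slopes.} By \eqref{jgvcmu}, the matrix there is $\sigma_0 I+O(\epsilon_D+\nu)$. Applying Cauchy--Schwarz against the nearly orthonormal family $s_ig'(s_i(z-f_i))/\sqrt{\sigma_0}$ gives, on $\Omega$,
\[
\int_{\mathbb R}|D_xU|^2\,dz\ge(1-C(\epsilon_D+\nu))\,\sigma_0\sum_i|Df_i|^2 .
\]
This is the reverse of \eqref{ofd}. Equivalently, one can use $D_xU=-\sum_i s_ig'Df_i+D_x\phi$ and orthogonality; the cross terms are then controlled by $\epsilon_D$ and by the $\phi$-estimates \eqref{vnb}, whose logarithmic averages were shown to be $O(\eta)$ in the proof of Lemma~\ref{haver}. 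Since the integrand is nonnegative, we may discard $\Omega^c$. It therefore suffices to bound from below
\[
\liminf_{R\to\infty}\frac{1}{|\mathbb S^{n-1}|\log R}\int_{\Omega\cap\{1<|x|<R\}}|x|^{2-n}\sigma_0\sum_i|Df_i|^2\,dx .
\]

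\textbf{Step 2: a weighted slope lower bound.} Let $b=\sum_ic_if_i/\sum c_i^2$ as in \eqref{clp}. Cauchy--Schwarz gives $\sum_i|Df_i|^2\ge(\sum_ic_i^2)|Db|^2$. Extend $b$ by zero across $\partial\Omega$ and let $m(t)$ be its spherical mean at radius $e^t$. By the convex-combination description of $b$ and \eqref{hgmean} applied to $f_i$ (these differ from $h_i$ by $\nu+o(1)$ in mean), we get $\liminf m(t)/t\ge2$. Next, write $t=\log r$. Jensen on spheres gives $e^{2t}\fint|Db|^2\ge|m'(t)|^2$, modulo the boundary measure of the zero extension, whose weighted variation is finite by \eqref{rgg}. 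Then
\[
\frac{1}{|\mathbb S^{n-1}|}\int_{\{1<|x|<R\}}|x|^{2-n}|Db|^2\,dx\ge\int_0^{\log R}|m'|^2\,dt\ge\frac{(m(\log R)-m(0))^2}{\log R}.
\]
Along the liminf this is at least $(4-o(1))\log R$. Combining with Step~1 gives the lower bound $4\sigma_0\sum_ic_i^2$ after letting $\nu,\epsilon_D\to0$, which is \eqref{hlegap}. The main technical point is handling the singular part of $m'$ from the boundary measure. Since that measure has finite total variation, it contributes only $O(1)$ to $m$, and Cauchy--Schwarz applies to the absolutely continuous part on the remaining increment.

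\textbf{Step 3: from the logarithmic average to \eqref{hbegap}.} Set $\Phi(r)=\int_{B_r^{n+1}}|D_xU|^2$ and assume $\limsup r^{2-n}\Phi(r)<L$. Because $U$ decays exponentially away from the zero set, which is confined by \eqref{cone}, the integral over the horizontal ball $\{|x|<r\}$ with all $z$ is $\Phi(Cr)+o(r^{n-2})$. Integrating by parts in $r$, the weighted integral in \eqref{hlegap} is bounded by
\[
\int_1^R r^{2-n}\,d\Phi\le R^{2-n}\Phi+(n-2)\int_1^R r^{1-n}\Phi\,dr\le(n-2)L\log R+O(1).
\]
This yields $L\ge4\sigma_0|\mathbb S^{n-1}|\sum c_i^2/(n-2)$, which is \eqref{hbegap}.

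\textbf{Step 4: the final dichotomy.} Either hypothesis \eqref{hsgrow} or \eqref{hscoef} contradicts \eqref{hbegap} provided $2\sum_ic_i^2\ge1/\max W''$. This holds because $Q\ge2$ and \eqref{gtcoef} give $\sum c_i^2\ge(c_2-c_1)^2/2\ge1/(2\max W'')$. Hence $Q=1$, and the discussion preceding the proposition then gives $U=g(\pm z)$.
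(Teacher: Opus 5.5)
Your argument is essentially the paper's proof: the convex-combination gap growth for $b$ with zero extension and finite weighted boundary variation, Cauchy--Schwarz in $t=\log r$, the sharp slope bound from \eqref{jgvcmu} with Gram matrix $\sigma_0 I+o(1)$ combined with $|Db|^2\le\sum_i|Df_i|^2/\sum_i c_i^2$, radial integration by parts, and $\sum_i c_i^2\ge 1/(2\max\{W''(-1),W''(1)\})$. The one point to tighten is Step~3. There the weighted integral involves the column function $F(r)=\int_{B_r^n}\int_{\mathbb R}|D_xU|^2\,dz\,dx$, not $\Phi$. You must use \eqref{cone} with arbitrarily small slope $\eta$ to get $F(r)\le\Phi(\sqrt{1+\eta^2}\,r)+o(r^{n-2})$ and then let $\eta\downarrow0$. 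A fixed comparison factor $C>1$ would cost a factor $C^{n-2}$ in \eqref{hbegap}.
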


\begin{proof}
We use arbitrarily accurate orthogonal layer functions on $\Omega$ as in
Lemma~\ref{haver} and let $b$ be the function in
\eqref{clp}. The identity after \eqref{mny} expresses $b$ as a
convex combination of
$\sqrt{W''(s_i)}(f_{i+1}-f_i)$. Hence
we obtain from \eqref{hmgmea}
\begin{equation*}
 \fint_{\mathbb S^{n-1}}(b\mathbf1_\Omega)(e^T\theta)\,d\theta
 \ge2T-C
\end{equation*}
for almost every large $T$.

The boundary part of $D(b\mathbf1_\Omega)$ has finite total variation
with weight $|x|^{1-n}$. We integrate its radial derivative from a fixed
$s$ to $T$ and apply Cauchy--Schwarz to the interior part. This yields
\begin{equation*}
 2T-C\le O(1)+
 \left[\frac{T-s}{|\mathbb S^{n-1}|}
 \int_{\Omega\cap\{e^s<|x|<e^T\}}|Db|^2|x|^{2-n}\,dx\right]^{1/2}.
\end{equation*}
Thus the lower limit of the weighted integral of $|Db|^2$, divided by
$|\mathbb S^{n-1}|T$, is at least four.

We need the leading constant in the derivative estimate for $b$.
We differentiate the orthogonality equations and obtain \eqref{jgvcmu}.
The Gram matrix of the functions $s_i g'(s_i(z-f_i))$ is
$\sigma_0 I+o(1)$ in operator norm as $D\to\infty$. The matrix on the
left of \eqref{jgvcmu} differs from it by a diagonal matrix of norm
$O(\nu)$, and is therefore also $\sigma_0 I+o(1)$.
For each horizontal derivative, Bessel's inequality in this finite
family and inversion of this matrix yield
\begin{equation*}
 \left|\frac{\sum_i c_i\partial_{x_j}f_i}{\sum_i c_i^2}\right|^2
 \le\frac{1+o(1)}{\sigma_0\sum_i c_i^2}
                      \int_{\mathbb R}|U_{x_j}|^2\,dz.
\end{equation*}
Indeed, the projection map has norm at most
$(\sigma_0+o(1))^{1/2}$ and the inverse matrix has norm at most
$(\sigma_0-o(1))^{-1}$. We sum over $j$ and let $D\to\infty$,
$\nu\to0$ to obtain \eqref{hlegap}. Since the integrand is nonnegative,
we may enlarge the base integration to the whole annulus.

We compare normal columns with ambient balls. By
\eqref{cone} and the exponential decay away from the zero
set, for every fixed $\eta>0$ and all sufficiently large $R$,
\begin{equation}\label{hcball}
 \int_{B_R^n}\int_{\mathbb R}|D_xU|^2\,dz\,dx
 \le\int_{B_{\sqrt{1+\eta^2}R}^{n+1}}|D_xU|^2\,dX+o(R^{n-2}).
\end{equation}
To justify the error, we use \eqref{cone} with slope
$\eta/4$. For $|x|\le R$ and $|z|>\eta R$, the distance to the zero
set is at least $c_\eta R+c_\eta(|z|-\eta R)$, after increasing $R$.
The integral in $z$ over $|z|>\eta R$ and its integral over $B_R^n$ are exponentially small.
The remaining cylinder is contained in the displayed ball.
The reverse inequality with radius $R$ follows from inclusion in the
full column. It follows, by letting $\eta\downarrow0$, that the upper
limits of the two unweighted energies divided by $R^{n-2}$ agree.

To compare the weighted and unweighted bounds, we set
$F(R)=\int_{B_R^n}\int_{\mathbb R}|D_xU|^2\,dz\,dx$.
It is finite, nondecreasing, and $O(R^{n-2})$.
By integration by parts in the radius, we obtain
\begin{equation*}
 \int_{1<|x|<R}|x|^{2-n}\left(\int_{\mathbb R}|D_xU|^2\,dz\right)\,dx
 =R^{2-n}F(R)-F(1)+(n-2)\int_1^Rr^{1-n}F(r)\,dr.
\end{equation*}
The upper limit of this expression divided by $\log R$ is at most
$(n-2)\limsup R^{2-n}F(R)$. Together with
\eqref{hlegap} and \eqref{hcball}, this proves
\eqref{hbegap}.

Finally, if $Q\ge2$, two consecutive $c_i$ differ by at least
$1/\sqrt{\max\{W''(-1),W''(1)\}}$. Hence
\begin{equation*}
 \sum_i c_i^2\ge\frac1{2\max\{W''(-1),W''(1)\}}.
\end{equation*}
Either \eqref{hsgrow} or
\eqref{hscoef} contradicts
\eqref{hbegap}. We are left with $Q=1$, which was
settled at the beginning of this subsection.
\end{proof}

If $W''(-1)=W''(1)=k^2$ with $k>0$, then
\begin{equation*}
 \sum_i c_i^2=\frac{Q(Q^2-1)}{12k^2}.
\end{equation*}
Thus \eqref{hbegap} takes the explicit form
\begin{equation*}
 \limsup_{R\to\infty}R^{2-n}\int_{B_R}|D_xU|^2\,dX
 \ge\frac{\sigma_0|\mathbb S^{n-1}|Q(Q^2-1)}{3k^2(n-2)}.
\end{equation*}
These conclusions assume the uniform tangential bound in \eqref{hassu}.

We now apply these bounds to the local sheeting problem. The following
proposition treats zeros whose regularity or normal convergence fails.
Uniform convergence of the normal lines would yield exactly $Q$
graphs with slopes tending to zero, by the graph-counting argument
in the proof of Theorem~\ref{sheet}. A uniform second-order estimate
is a separate issue in the dimensions considered here.
 
\begin{proposition}\label{reduce}
Let $n\ge10$, and suppose that a sequence satisfies the assumptions
of Theorem~\ref{sheet} apart from the restriction $n\le9$.
Suppose that, after a subsequence, there are zeros $X_j$ in a fixed
compact interior set and $\eta>0$ satisfying \eqref{mbad}.
Then there exists a bounded stable
entire solution $U$ of $\Delta U=W'(U)$ satisfying \eqref{hassu},
with $U(0)=0$, distinct from both $g(z)$ and $g(-z)$.
The planar blowdown of $U$ has integer multiplicity $q$ with
$2\le q\le Q$. If $10\le n\le25$, then $q=2$.
\end{proposition}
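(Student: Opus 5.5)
We rerun the finite iteration from the proof of Theorem~\ref{sheet}, now with $n\ge10$ and arguing by induction on the multiplicity $Q$. The assumption \eqref{mbad} at the zeros $X_j$ is exactly what starts that argument. The compactness, continuation and excess-decay results of Section~\ref{tiltpart} (Propositions~\ref{hcomp}, \ref{fnstep}, \ref{mwfloo}, \ref{hcgrad}, \ref{cont} and Lemma~\ref{bterm}) hold in every dimension. So the only step that used $n\le9$ was the appeal to Proposition~\ref{crit} at the microscopic stopping scale. We therefore run the same stopping procedure from $(r_0,e_0)$ and pass to a subsequence of the final rescalings. As before there are three cases.

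In case~(1), where $\delta_j\to\delta_\infty>0$, the instability choice of $\delta_*$ gives the same contradiction as before. In case~(3), Proposition~\ref{cont} either produces a normal satisfying \eqref{istep} or yields a reduction of multiplicity at the marked hyperplane. In the reduction case we restrict to a cylinder carrying a smaller multiplicity $m<Q$. If the normals there converge, we get the same contradiction as before. If they fail to converge, the induction hypothesis applied to the restricted problem already gives a counterexample $U$ with $2\le q\le m<Q$. Thus we may assume we land in case~(2), with $\delta_j\to0$ and $E_j\le\delta_j$. Lemma~\ref{bterm} with $\gamma=3/4$ then produces, after rescaling by $\delta_j$, a bounded stable entire solution $U$ with $U(0)=0$. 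This $U$ satisfies both bounds of \eqref{tgbroa}, uniformly in the center, so \eqref{hassu} holds, and its density at infinity is at most $Q$. If $U$ were $g(\pm z)$, smooth convergence would force $\beta(Du_j(X_j),e_j)\to1$, contradicting the angle bound inherited from \eqref{mbad}. Hence $U$ differs from both vertical heteroclinics.

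By Lemma~\ref{plane}, the blowdown of $U$ is a horizontal hyperplane with integer multiplicity $q$. The density bound gives $q\le Q$. If $q=1$, Wang's theorem \cite[Theorem~11.1]{Wang17}, combined with the tangential growth and $U(0)=0$, would force $U=g(\pm z)$, as at the beginning of Subsection~\ref{highlayers}. So $q\ge2$.

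For $10\le n\le25$ we apply Proposition~\ref{multi} to $U$. For equal well curvatures, \eqref{hecthr} with $q\ge3$ would require $n\ge2+24=26$. For general $W$, the three-layer Rayleigh quotient bound shown in that proof, at least six, together with \eqref{hscond}, again gives $6\le(n-2)/4$, that is $n\ge26$. This contradiction leaves $q=2$.

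The main point needing care is the bookkeeping in the induction of case~(3). When multiplicity drops, the restricted problem is again of the same type, but we must check that the angle defect \eqref{mbad} survives the fixed isotropic dilation and restriction. It does, because $\beta$ is dilation-invariant and the accumulated rotation is small by \eqref{anboun}. We must also check that the density bound $q\le m$ is inherited. Both properties are as in the proof of Theorem~\ref{sheet}, and with them the induction closes.
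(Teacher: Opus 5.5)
Your proposal is correct and follows the paper's proof step for step: rerun the finite iteration, rule out positive limiting thickness and any drop in multiplicity, stop with $E_j\le\delta_j$, apply Lemma~\ref{bterm}, and conclude with Lemma~\ref{plane}, Wang's multiplicity-one theorem, and the three-layer bound from Proposition~\ref{multi}. The only difference is organizational: you induct on $Q$, whereas the paper fixes a counterexample sequence of minimal multiplicity, so that every lower-multiplicity rescaling automatically has convergent normals. The two devices are equivalent, and that rescaling step is needed both for the reduction alternative of Proposition~\ref{cont} and for verifying its centered multiplicity-$Q$ hypothesis.
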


\begin{proof}
Among sequences satisfying the preceding hypotheses, with multiplicity
at most $Q$ and zeros satisfying \eqref{mbad}, we choose one with
the smallest multiplicity.
For every sequence with smaller multiplicity, the zeros are therefore
eventually regular on compact interior sets and their normal lines
converge uniformly to the vertical direction. These are the properties
needed in the iteration below.

We apply the finite iteration from that proof, keeping $X_j$ at the
center. By \eqref{anboun}, the excess tends to zero and the total
change of normal tends to zero, uniformly in the number of steps.
In particular, the bound at the selected zero remains
$\beta(Du_j(X_j),e_j)\le1-\eta/2$ at the final radius.
Whenever a rescaling has a planar limit of smaller multiplicity, our
choice of the sequence implies regularity and uniform normal
convergence for that rescaled sequence. Its normals would then
converge to the vertical direction,
contrary to the bound on $\beta$. This excludes the smaller-multiplicity
alternative in Proposition~\ref{cont}.
A positive limiting thickness is also excluded:
the first stopping case in the local proof yields a one-dimensional
limit whose vertical derivative at the origin is nonzero, contradicting
the bound on $\beta$. Finally, when the thickness tends to zero and
the excess is larger than the thickness, Proposition~\ref{cont}
implies a further contraction. The iteration can therefore end only,
after a subsequence, with
\begin{equation*}
 \delta_j\longrightarrow0,\qquad E_j\le\delta_j.
\end{equation*}

We now rescale by $\delta_j$ about the selected zero. Since
$E_j\delta_j^{-1/2}\le\delta_j^{1/2}\to0$, Lemma~\ref{bterm}
applies with $\gamma=3/4$. After a further subsequence, the rescaled
solutions converge in $C^2_{\mathrm{loc}}(\mathbb R^{n+1})$ to a
bounded stable entire solution $U$ with $U(0)=0$ and the bounds
\eqref{hassu}. The bound on $\beta$ is unchanged by dilation.
Since the chosen normals tend to the vertical direction, smooth
convergence yields
\begin{equation*}
 |U_z(0)|\le(1-\eta/2)|DU(0)|.
\end{equation*}
Both $g(z)$ and $g(-z)$ have zero horizontal gradient and vertical
derivative of absolute value $g'(0)>0$ at the origin. Hence $U$
differs from both of them.

Lemma~\ref{plane} determines the planar blowdown of $U$ and its
positive integer multiplicity $q$. The density bound in
Lemma~\ref{bterm} implies $q\le Q$. If $q=1$, the multiplicity-one
argument at the beginning of this subsection would imply that $U$ equals
$g(z)$ or $g(-z)$. Thus $q\ge2$. The three-layer estimate in the
proof of Proposition~\ref{multi} shows that $q\ge3$ requires
$n\ge26$, so $q=2$ when $10\le n\le25$.
\end{proof}

\section{Finite Morse index solutions}
\label{finproof}

In this section, we prove
Theorem~\ref{finindex}.
We first show that all blowdowns have the same multiplicity.
We then apply Theorem~\ref{sheet} on the regular parts of their
supports and exclude higher multiplicity separately for nonflat
cones and hyperplanes. In the planar case, we use the layer
equations and the scalar estimates established in
Subsections~\ref{layerint} and~\ref{scalarred}.
The one-dimensional conclusion for $4\le N\le7$ follows at the end.

Throughout this section, $U$ satisfies the assumptions of
Theorem~\ref{finindex}.
In particular, $4\le N\le10$, and we write
\begin{equation}\label{equalwells}
 W''(-1)=W''(1)=k^2,\qquad k>0.
\end{equation}
We use $n=N-1$ and $\sigma_0$ as defined in the Introduction.

Finite Morse index implies that $U$ is stable outside some ball
$B_{R_0}$. Indeed, otherwise we could choose compactly supported
functions with pairwise disjoint supports, each outside a ball
containing all the preceding supports, on which the second variation
is negative. Their linear span would contain negative subspaces of
arbitrarily large dimension. Thus
\begin{equation}\label{extstab}
 \int_{\mathbb R^N}\bigl(|D\zeta|^2+W''(U)\zeta^2\bigr)\,dX\ge0
 \qquad
 \text{for }\zeta\in C_c^1(\mathbb R^N\setminus\overline{B_{R_0}}).
\end{equation}
Only this consequence of finite Morse index will be used below.

For $R>0$, set $u_R(X)=U(RX)$ and $\varepsilon_R=R^{-1}$.
We use the normalized measures
\begin{equation*}
 \mu_R=\frac1{\sigma_0}
 \left(\frac1{2R}|Du_R|^2+RW(u_R)\right)dX
\end{equation*}
and the associated varifolds
\begin{equation}\label{finvar}
 V_R(\Phi)=\frac1{\sigma_0R^n}
 \int_{\mathbb R^N}
 \Phi\bigl(X/R,(DU(X))^\perp\bigr)|DU(X)|^2\,dX,
 \qquad \Phi\in C_c(\mathbb R^N\times G(N,n)).
\end{equation}
The choice of plane at a critical point of $U$ does not affect this
formula. We write $|C|$ for the varifold of a hypersurface $C$ with
multiplicity one on its regular part.

We first describe the limits obtained by rescaling at infinity.
From Modica's inequality and the monotonicity formula
\cite{Modica85,Modica88}, we obtain
\begin{equation}\label{finmono}
 \begin{split}
 \xi&=W(U)-\frac12|DU|^2\ge0,\\
 \frac{d}{dR}\left(R^{-n}\mathcal E_1(U;B_R)\right)
 &=R^{-n}\int_{\partial B_R}U_r^2\,d\mathcal H^n
   +R^{-n-1}\int_{B_R}\xi\,dX.
 \end{split}
\end{equation}
Consequently,
\begin{equation}\label{findens}
 D_\infty=\lim_{R\to\infty}
 \frac{\mathcal E_1(U;B_R)}{\sigma_0\omega_n R^n}
 \quad\text{satisfies}\quad 0<D_\infty<\infty.
\end{equation}
The positivity follows from $W(U)>0$ at finite points, and the
upper bound follows from \eqref{fingrowth}.
Monotonicity also yields
\begin{equation}\label{finmass}
 \|V_R\|(B_\rho)\le\mu_R(B_\rho)
 \le \omega_nD_\infty\rho^n
 \qquad(R,\rho>0).
\end{equation}
If $X(R)=\int_{B_R}\xi\,dX$, we integrate
\eqref{finmono} from $R$ to $2R$ and obtain
\begin{equation*}
 (2R)^{-n}\mathcal E_1(U;B_{2R})
 -R^{-n}\mathcal E_1(U;B_R)
 \ge\frac{1-2^{-n}}n R^{-n}X(R).
\end{equation*}
It follows that $R^{-n}X(R)\to0$.
Thus $\mu_R-\|V_R\|$ tends to zero locally in total variation.

\begin{lemma}\label{conelimits}
Every sequence $R_j\to\infty$ has a subsequence such that
\begin{equation}\label{coneconv}
 V_{R_j}\rightharpoonup Q|C|,
 \qquad
 \mu_{R_j}\rightharpoonup Q\mathcal H^n\lfloor C,
\end{equation}
where $Q\in\mathbb N$ and $C$ is a nonempty stable minimal
hypercone with connected, two-sided regular part.
If $n\le6$, then $C$ is a hyperplane. If $n\ge7$, then
$\dim_{\mathcal H}\operatorname{sing}C\le n-7$.
Moreover,
\begin{equation}\label{conedens}
 Q\Theta_C(0)=D_\infty,
 \qquad
 \Theta_C(0)=\frac{\mathcal H^n(C\cap B_1)}{\omega_n}.
\end{equation}
\end{lemma}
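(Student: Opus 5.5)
The plan is to combine Hutchinson--Tonegawa compactness with Tonegawa--Wickramasekera regularity for the rescalings $u_{R_j}$, which are stable outside the shrinking balls $B_{R_0/R_j}$. First, by \eqref{fingrowth} and \eqref{finmass}, the measures $\mu_{R_j}$ and varifolds $V_{R_j}$ have locally bounded mass. \cite[Theorem~1]{HT} then gives, after a subsequence, a stationary integral varifold $V$ with $\mu_{R_j}\rightharpoonup\|V\|$. Here I use the vanishing discrepancy $R^{-n}X(R)\to0$ established above, so $\mu_R-\|V_R\|\to0$ in total variation. By \eqref{findens}, $\rho^{-n}\|V\|(B_\rho)=\omega_nD_\infty$ for every $\rho>0$. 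Indeed, $\mu_{R_j}(B_\rho)=\mathcal E_1(U;B_{\rho R_j})/(\sigma_0R_j^n)\to\omega_nD_\infty\rho^n$, and spheres carry no limiting mass by monotonicity. Equality in the stationary monotonicity formula then forces $V$ to be a cone, and $D_\infty>0$ makes it nonzero.

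Next I obtain regularity. On every compact subset of $\mathbb R^N\setminus\{0\}$, the dilates are stable for large $j$ by \eqref{extstab}. Hence hypotheses (A1)--(A3) of \cite{TW12} hold on balls avoiding the origin, and \cite[Theorem~2.1]{TW12} gives that $\operatorname{spt}V\setminus\{0\}$ is a stable minimal hypersurface. Its singular set has dimension at most $n-7$, it is empty for $n\le6$, and it is at most a point if $n=7$. Since $V$ is a cone, the singular set of $C=\operatorname{spt}V$ is a cone as well. This gives the bound $\dim\operatorname{sing}C\le n-7$ including the vertex when $n\ge7$. When $n\le6$, the link $C\cap\mathbb S^n$ is a smooth stable minimal hypersurface of the sphere, so $C$ is a stable cone smooth away from $0$. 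Simons' theorem \cite{Simons}, which applies for $N\le7$, then shows that $C$ is a hyperplane.

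It remains to obtain connectedness, two-sidedness and constant multiplicity. Two-sidedness holds because the regular part of a codimension-one minimal hypersurface with singular set of codimension at least $7$ in $\mathbb R^N$ (or a smooth cone over a simply connected complement) is orientable. Concretely, $\operatorname{reg}C$ separates $\mathbb R^N$ locally, and the limits of $\{U>0\}$ give a Caccioppoli-set description. Connectedness of $\operatorname{reg}C$ follows from the link being connected. Two disjoint components of a stationary cone would give two minimal hypersurfaces of the sphere with disjoint closures. By Frankel's theorem, or equivalently the strong maximum principle of Ilmanen/Wickramasekera applied to stationary cones with small singular set, this is impossible. On the connected regular part, the constancy theorem makes the integer multiplicity equal to a constant $Q\in\mathbb N$. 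Hence $V=Q|C|$, and \eqref{coneconv} follows. Finally, \eqref{conedens} follows from $\|V\|(B_1)=Q\mathcal H^n(C\cap B_1)=\omega_nD_\infty$.

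The main obstacle is two-sidedness and connectedness of $\operatorname{reg}C$ together with the precise use of \cite{TW12} away from the origin with only exterior stability. I would handle connectedness by Frankel-type intersection of minimal links in $\mathbb S^n$. Their singular sets have codimension at least $7$, so the strong maximum principle for stationary varifolds applies. For two-sidedness, I would argue that a one-sided connected hypersurface cannot arise as the boundary-type limit of the phases $\{u_{R_j}\gtrless0\}$. Alternatively, I would use that $\mathbb R^N\setminus C$ has exactly two components, since the link separates the simply connected sphere $\mathbb S^n$.
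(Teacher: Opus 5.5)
Your outline matches the paper's proof up to the regularity step: Hutchinson--Tonegawa compactness with vanishing discrepancy, the constant density ratio forcing a cone, Tonegawa--Wickramasekera on compact subsets of $\mathbb R^N\setminus\{0\}$ via \eqref{extstab}, Simons for $n\le6$, and the constancy theorem plus the mass on $B_1$ for \eqref{conedens}. One slip: the link is not a stable hypersurface of $\mathbb S^n$, since closed minimal hypersurfaces of the round sphere are never stable. What you actually use is stability of the cone in $\mathbb R^N$.

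The genuine gap is connectedness of $\operatorname{reg}C$. Connectedness of the link as a set says nothing about its regular part. Two distinct components of $\operatorname{reg}C$ need not have disjoint closures in $\mathbb S^n$: they can touch along $\operatorname{sing}C\setminus\{0\}$. In $\mathbb R^N$ their closures always share the vertex, because dilations preserve each component. Frankel's theorem only produces an intersection, which is no contradiction when the intersection lies in the singular set. Moreover, an Ilmanen/Wickramasekera maximum principle applies only to varifolds known to be stationary, and a priori only the sum $V$ is. You must first show that each component $M$ defines a stationary varifold $|M|$ across $S=\operatorname{sing}C$. This uses $\mathcal H^{n-1}(S)=0$ and a cutoff argument with the mass bounds inherited from $V$; the paper cites \cite[Section~5]{Wic14max}. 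Only then does the intersection theorem \cite[Theorem~3.2]{Wic14max} exclude two components whose closures meet at the origin and only in $S$.

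Your two-sidedness argument also does not work as written. Every embedded hypersurface separates locally, one-sided ones included. The limit of $\{u_{R_j}>0\}$ does not see $C$ when $Q$ is even, so a Caccioppoli description cannot detect the side structure; compare Proposition~\ref{closedjac}, where one-sided components necessarily carry even multiplicity. The real input is topological. Since $\dim_{\mathcal H}S+2<N$, every closed curve in $\mathbb R^N\setminus S$ bounds a disk there: fill it in $\mathbb R^N$, translate the filling off $S$ by a small generic vector, and reconnect. A one-sided properly embedded hypersurface in $\mathbb R^N\setminus S$ would carry a loop with odd mod-$2$ intersection number, which a transverse filling disk forbids. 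Your remark that ``the link separates the simply connected sphere'' becomes correct only after replacing $\mathbb S^n$ by $\mathbb S^n\setminus\operatorname{sing}C$ and proving that this set is still simply connected, which is the same argument.
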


\begin{proof}
As in the proof of Theorem~\ref{global}, we obtain from the compactness theorem
\cite[Theorem~1 and Proposition~4.3]{HT} a stationary
integral varifold limit $V$ on $\mathbb R^N$.
By the discrepancy estimate above, its weight equals the
limit of $\mu_{R_j}$. From \eqref{findens}, first at
continuity radii and then by approximation, we obtain
\begin{equation*}
 \|V\|(B_\rho)=\omega_nD_\infty\rho^n\qquad(\rho>0).
\end{equation*}
Equality in the stationary monotonicity formula makes $V$ a cone.

On every compact subset of $\mathbb R^N\setminus\{0\}$, the
rescaled solutions are stable for large $j$ by
\eqref{extstab}. The theorem of
Tonegawa--Wickramasekera \cite[Theorem~2.1]{TW12} therefore implies
stability and embedded regularity of the limiting support away
from a singular set of dimension at most $n-7$ there.
For $n\le6$, the support is smooth away from the origin.
For $n\ge7$, adjoining the origin preserves the stated
bound on the singular set. In both cases, with
$S=\operatorname{sing}(\operatorname{spt}\|V\|)$, we have
$\mathcal H^{n-1}(S)=0$.

Each connected component $M$ of the regular part defines a
stationary varifold $|M|$ across $S$ by the cutoff argument in
\cite[Section~5, proof of Theorem~1.1]{Wic14max}.
Its hypotheses hold because $\mathcal H^{n-1}(S)=0$ and the
local mass bounds for $V$ also hold for $M$.
Positive dilations preserve each component, so the closures of
two distinct components would meet at the origin and only in $S$.
The intersection theorem \cite[Theorem~3.2]{Wic14max} excludes
this possibility. Hence the regular part is connected.

The regular part is also two-sided. Indeed, every closed curve
in $\mathbb R^N\setminus S$ bounds a disk in that open set.
One can first fill the curve in $\mathbb R^N$, translate the
filling by an arbitrarily small vector to avoid $S$, and join
its boundary back to the original curve. Such translations
exist because the set of forbidden translations has Hausdorff
dimension at most $\dim_{\mathcal H}S+2<N$.
A one-sided properly embedded hypersurface would lead to a closed
curve with odd intersection number modulo two. If we made a filling
disk transverse to the hypersurface, we would then obtain a compact
one-dimensional manifold with an odd number of boundary points,
which is impossible.

From the constancy theorem, we now obtain $V=Q|C|$ for a single positive
integer $Q$, where $C=\operatorname{spt}\|V\|$.
The singular set carries no varifold mass.
The regular part of $C$ is stable: if the origin is singular,
every compactly supported test on the regular part avoids it;
if the origin is regular, $C$ is already a hyperplane.
For $n\le6$, Simons' theorem \cite{Simons} makes the stable cone,
smooth away from its vertex, a hyperplane.
The identity \eqref{conedens} follows from its mass on $B_1$.
Finally, \eqref{finmass} excludes any additional mass at the
origin.
\end{proof}

\begin{lemma}\label{fixedmult}
The integer $Q$ in \eqref{coneconv} is independent of
the sequence $R_j$. Either every limiting support is a hyperplane,
or every limiting support is nonflat.
\end{lemma}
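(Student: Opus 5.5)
The plan is to exploit the fact that the product $Q\Theta_C(0)=D_\infty$ in \eqref{conedens} is fixed, and to constrain $\Theta_C(0)$ for cones arising as limits.

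\textbf{Step 1 (connectedness of the limit set).} Let $\mathcal K$ be the set of all varifolds $Q|C|$ arising as subsequential limits in Lemma~\ref{conelimits}. By \eqref{finmass}, the varifolds $V_R$ have locally uniformly bounded mass, so they lie in a compact metrizable set for the varifold topology. The map $R\mapsto V_R$ is continuous in $R$, since $U$ is smooth and the rescalings depend continuously on $R$. The limit set of a continuous curve in a compact metric space, taken as $R\to\infty$, is therefore nonempty, compact and connected. Hence $\mathcal K$ is connected.

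\textbf{Step 2 (openness and closedness of the possible values of $Q$).} For each $q$, let $\mathcal K_q=\{Q|C|\in\mathcal K: Q=q\}$. Each such element has $\Theta_C(0)=D_\infty/q$. The weight of any element is $\|V\|=q\mathcal H^n\lfloor C$, and the density at a regular point of $C$ equals $q$. I would show that $\mathcal K_q$ is closed in $\mathcal K$. Take limits $q|C_k|\to V=Q'|C'|$. Choose a regular point $p$ of $C'$ with $|p|=1$; one exists because the singular set is small. Allard's theorem, applied near $p$, shows that the $C_k$ converge near $p$ to sheets. By the constancy theorem and the connectedness of the regular part of $C_k$, the number of sheets $m$ satisfies $qm=Q'$. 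Hence $Q'$ is a multiple of $q$.

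Since only finitely many values of $q$ are allowed, I would then use densities: $\Theta_{C'}(0)=D_\infty/Q'$ and $\Theta_{C_k}(0)=D_\infty/q$. Monotonicity at the vertex gives $\Theta_{C'}(0)\cdot Q'=\lim q\Theta_{C_k}(0)=D_\infty$, which is consistent. The real constraint instead comes from the lower bound $\Theta\ge1$ for every minimal cone. This bound gives $q\le D_\infty$, so there are only finitely many values, and each $\mathcal K_q$ is closed. A finite partition of a connected set into closed sets forces a single $q$, provided that $m=1$ in the convergence above. To exclude $m\ge2$, I would argue that stable cones $C_k$ with uniformly bounded density and nearby regular parts converge with multiplicity one near regular points. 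This step is the main obstacle. I would first try the Schoen--Simon/Wickramasekera sheeting theorem for stable minimal hypersurfaces with small singular set. It shows that near $p$ the cone $C_k$ consists of $m$ disjoint minimal graphs over the regular part of $C'$. Since the graphs are ordered and $C_k$ is a cone, the sheets would be separate components of $C_k\cap B_\rho(p)$. Connectedness of the regular part of $C_k$ and the rigidity of cones then force the sheets to rejoin only through singular points. Since $m$ sheets of a fixed cone near a ray must then be distinct dilation-invariant pieces, a limit density count gives $m=1$.

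\textbf{Step 3 (flat versus nonflat).} A cone $C$ is a hyperplane exactly when $\Theta_C(0)=1$; otherwise $\Theta_C(0)\ge1+c_n$, where $c_n>0$ comes from Allard's regularity theorem for stationary cones. With $Q$ now fixed, $\Theta_C(0)=D_\infty/Q$ is the same for all limits. Either it equals one, so every limit support is a hyperplane, or it exceeds one, so every limit support is nonflat. This completes the dichotomy.
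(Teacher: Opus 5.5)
Your overall architecture is sound and parallels the paper's. Connectedness of the $\omega$-limit set of the continuous curve $R\mapsto V_R$ plays the role of the paper's argument with disjoint neighborhoods of the finitely many compact classes $\mathcal K_q$. Your Step~3 is exactly the paper's use of \eqref{conedens}. Both routes, however, rest on one fact that your proposal does not establish: a sequence of stable minimal hypercones of the type in Lemma~\ref{conelimits}, with bounded density, cannot converge to $m|C'|$ with $m\ge2$. Without it, $q|C_k|\to qm|C'|$ is possible, so $\mathcal K_q$ need not be closed and $Q$ could jump along the curve. The paper takes this fact from the multiplicity estimate for stable cones \cite[Lemma~3.2]{ZW24}.

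The justification you sketch does not supply this fact. Allard's theorem near $p$ presupposes density close to one, i.e.\ $m=1$, so invoking it there is circular; the correct tool is Schoen--Simon sheeting. The ``limit density count'' is vacuous: $\Theta_{C_k}(0)=m\,\Theta_{C'}(0)$ and $Q'=qm$ are consistent with every $m$.

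Connectedness of $\operatorname{reg}C_k$ does settle the case in which $C'$ is singular only at the vertex. There, dilation invariance propagates sheeting from a neighborhood of $C'\cap\partial B_1$ to all of $C'\setminus\{0\}$, and $C_k\setminus\{0\}$ then splits into $m$ relatively clopen sheets. But Theorem~\ref{finindex} allows $N\le10$, and for $n=8,9$ the set $\operatorname{sing}C'$ may contain rays (a Simons cone times a line is an example). Sheeting fails near these rays, and the sheets can be joined through regular points of $C_k$ there.

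What is missing is a rigidity argument. For instance, take $C'$ two-sided. The difference of the top and bottom sheets is homogeneous of degree one. After normalization at a point, Harnack's inequality on the connected regular part gives a positive Jacobi field $v=r\varphi$ on $\operatorname{reg}C'$. Hence $-\Delta\varphi-|A|^2\varphi=(n-1)\varphi$ with $\varphi>0$ on the regular part of the link. Writing $\psi=\varphi\eta$ then gives $\int(|\nabla\psi|^2-|A|^2\psi^2)\ge(n-1)\int\psi^2$ for every compactly supported $\psi$. This fails for cutoffs of the constant function, because the singular set of the link has codimension at least seven and hence zero capacity.
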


\begin{proof}
Fix $L<\infty$. The class of varifolds $|C|$ of nonempty stable
minimal hypercones with the regularity in Lemma~\ref{conelimits}
and $\mathcal H^n(C\cap B_1)\le L$ is compact.
For $n\le6$, this is the compactness of the family of hyperplanes.
For $n\ge7$, we first obtain from Schoen--Simon compactness \cite[Theorem~2]{SS81}
a limit $m|C_\infty|$, with the same regularity and
stability. The connectedness and two-sidedness arguments in the
preceding proof apply to this limit as well.
The multiplicity estimate for stable cones
\cite[Lemma~3.2]{ZW24} excludes $m\ge2$, so $m=1$.
In particular, convergence in this class is smooth on compact
subsets of the regular part of the limit.

Every such cone has $\Theta_C(0)\ge1$. This follows by applying
monotonicity at a regular point, whose density is one, and letting
the radius tend to infinity. Equality implies that the cone is
a hyperplane. For fixed $L$, consider the sets
\begin{equation*}
 \mathcal K_q=
 \{q|C|:\ C\text{ is as above and }
                 q\mathcal H^n(C\cap B_1)\le L\},
 \qquad 1\le q\le\lfloor L/\omega_n\rfloor,
\end{equation*}
The nonempty sets among them form a finite collection of disjoint
compact sets.
They are disjoint because the integer density at a regular point
of a common support determines $q$.
Thus they have positive pairwise distance in any metric inducing
local varifold convergence.

Take $L=\omega_nD_\infty$. Every subsequential limit of $V_R$ as
$R\to\infty$ belongs to $\bigcup_q\mathcal K_q$.
Consequently, the distance of $V_R$ from this union tends to zero.
On the other hand, $R\mapsto V_R$ is continuous.
This follows directly from \eqref{finvar} by dominated
convergence, since $R$ in a compact interval determines a fixed compact
integration region and the plane field there is independent of $R$.
Choose disjoint neighborhoods of the finitely many compact sets
$\mathcal K_q$. For all sufficiently large $R$, the continuous
image of $[R,\infty)$ lies in their union, and hence in just one
of them. Every limiting varifold belongs to the corresponding
$\mathcal K_Q$. This proves the first assertion.

By \eqref{conedens}, every limiting support now has density
$D_\infty/Q$. If one of them is a hyperplane, this density is one,
so all of them are hyperplanes. Otherwise all are nonflat.
\end{proof}

We next state the consequence of Theorem~\ref{sheet} needed to
compare the layers on a limiting cone.

\begin{lemma}\label{localgraph}
Let $v_j$ be stable solutions of \eqref{acn}, with $|v_j|\le1$
and parameters $\varepsilon_j\to0$, on a fixed open subset of
$\mathbb R^N$. Assume that their energies are locally bounded and their
varifolds converge to $Q|\Gamma|$, where $\Gamma$ is a smooth
embedded minimal hypersurface. On every compact interior
coordinate neighborhood of $\Gamma$, the complete zero set
consists, for large $j$, of exactly $Q$ ordered normal graphs
converging to $\Gamma$ in $C^2$.
Their second derivatives are uniformly H\"older continuous on
smaller neighborhoods, and
\begin{equation}\label{localmean}
 |H_{\{v_j=0\}}|\le C\varepsilon_j.
\end{equation}
At zeros in a smaller neighborhood, we obtain one-dimensional heteroclinic
limits by rescaling by $\varepsilon_j$.
In particular, distinct zero graphs have separation tending
to infinity in these rescaled coordinates.
\end{lemma}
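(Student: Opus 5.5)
The plan is to reduce Lemma~\ref{localgraph} to Theorem~\ref{sheet} by flattening $\Gamma$ locally. We fix a compact interior coordinate neighborhood and cover it by finitely many small neighborhoods $\mathcal V$ of points $p\in\Gamma$. Over each of them $\Gamma$ is a graph $z=\gamma(x)$ with small $C^{2,\theta}$ norm over its tangent plane at $p$. On a small ball $B_{r_p}(p)$, varifold convergence to $Q|\Gamma|$, together with smoothness and embeddedness of $\Gamma$, gives the following after dilation by $r_p^{-1}$: the rescaled measures $(\varepsilon_j/r_p)|Dv_j|^2\,dX$ converge weakly to $Q\sigma_0$ times $\mathcal H^n$ on a hypersurface that is $C^2$-close to a hyperplane. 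Lemma~\ref{gtfene} then transfers this convergence to the full energy. It also gives local Hausdorff convergence of the transition sets, so that complete zero sets are controlled.

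Theorem~\ref{sheet} is stated for an exactly planar limit, so we use one of two devices. The first is a blowup at $p$: as $r_p\to0$ the limit becomes the tangent plane with multiplicity $Q$, and a diagonal argument along $r\to0$, $j\to\infty$ makes the hypothesis \eqref{planar} hold exactly. The second, which I prefer, is to rerun the proof of Theorem~\ref{sheet} with the tilt-excess measured relative to the normal of $\Gamma$. The contradiction argument (choosing zeros $X_j$ with $\beta(Dv_j(X_j),\nu_\Gamma)\le1-\eta$, then iterating Proposition~\ref{cont} and classifying terminal limits by Proposition~\ref{crit}) is purely local at shrinking scales. At those scales $\Gamma$ converges to its tangent planes, and the density bound \eqref{dcap} holds with the same $Q$ by monotonicity. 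The outcome is that normals converge uniformly to $\nu_\Gamma$ at all zeros. The graph-counting step (Remark~\ref{cntupper}) then yields exactly $Q$ ordered normal graphs over $\Gamma$, converging in $C^1$.

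For the second-order estimates and \eqref{localmean} we appeal to the Wang--Wei argument, as in the proof of Theorem~\ref{sheet}. Here \eqref{equalwells} holds, so after normalizing $W''(\pm1)=1$ we can quote \cite[Corollary~1.2]{WW} directly in Fermi coordinates around $\Gamma$. Its hypotheses are a local energy bound, exactly $Q$ ordered layers, and $C^1$ convergence, and we require $N\le10$. It gives uniform $C^{2,\theta}$ bounds on the layers. Its proof also gives the microscopic mean-curvature bound $|H|\le C\varepsilon_j$ in original coordinates; this is stated there as an $O(\varepsilon^2)$ bound after rescaling by $\varepsilon$. Arzel\`a--Ascoli then upgrades $C^1$ convergence to $C^2$ convergence on smaller neighborhoods. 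Finally, we rescale by $\varepsilon_j$ at a zero. The rescaled solutions are stable, and their zero sets are graphs with curvature $O(\varepsilon_j)$ and slopes tending to zero. By \eqref{relip} and the interaction estimates of Wang--Wei, whose gap bound is $e^{-d_i}\lesssim\varepsilon_j^2|\log\varepsilon_j|$, the neighboring layers lie at rescaled distance $\gtrsim|\log\varepsilon_j|\to\infty$. The limit is then a stable solution whose zero set is a single hyperplane, hence a heteroclinic by Proposition~\ref{prigi} or Lemma~\ref{hrigi}.

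The main obstacle I expect is making Theorem~\ref{sheet} apply over a curved limit. One either has to check that every step of the induction and excess iteration is insensitive to replacing the reference hyperplane by a $C^{2}$ hypersurface, or carry out the diagonal blowup carefully enough to get uniformity over the neighborhood. The quantitative separation also relies on extracting the $\mathcal A\le C\varepsilon^2$ gap bound from \cite{WW} rather than just its final statement.
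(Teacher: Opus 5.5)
Your plan is correct, and its core is the paper's proof: the paper uses your first device, not a rerun of the excess iteration relative to a curved reference. The uniformity you list as the main obstacle comes from a contradiction argument. If zeros $P_j$ exist that are critical or whose normals do not converge to $\nu_\Gamma$, compactness gives $P_j\to P\in\Gamma$. One then chooses $\rho_j\downarrow0$ slowly enough that $\varepsilon_j/\rho_j\to0$, $|P_j-P|/\rho_j\to0$, and $v_j(P+\rho_j\,\cdot)$ converges to $Q\sigma_0\mathcal H^n\lfloor T_P\Gamma$. The normal estimate from the proof of Theorem~\ref{sheet} is uniform on compact interior sets, so it applies at the rescaled points $(P_j-P)/\rho_j\to0$ and gives the contradiction. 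Your second device therefore adds nothing and would require redoing the harmonic approximation over a curved reference.

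For the count, the paper does not use Remark~\ref{cntupper}, which assumes a planar limit at fixed scale. It fixes one central fiber and makes a second slow diagonal choice of radius, large compared with the displacements of all roots on that fiber. Theorem~\ref{sheet} then counts exactly $Q$ roots there, and local constancy spreads the count over the cylinder. The $C^{2,\theta}$ bounds, \eqref{localmean}, and the $C^2$ convergence come from \cite[Corollary~1.2]{WW}, exactly as you propose.

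Your last step reverses the paper's order. The paper takes the heteroclinic limit from the first step of the proof of that corollary. It then gets divergence of the rescaled gaps from strict monotonicity of $g$ along a transverse fiber. You instead first import the gap bound from Sections~9--10 of \cite{WW} and then classify the limit. That route works but uses heavier input, and you must check that the limit's zero set is exactly the limiting hyperplane. A touching zero would force $U\equiv0$ by the strong maximum principle and unique continuation, and $U\equiv0$ is unstable. You then conclude with Lemma~\ref{hrigi} and unique continuation, not Proposition~\ref{prigi}, which needs a planar blowdown you have not established.
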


\begin{proof}
Suppose first that zeros $P_j\to P\in\Gamma$ are critical or
that their normal lines fail to converge to the normal of
$\Gamma$ at $P$. Choose $\rho_j\downarrow0$ sufficiently slowly
that $\varepsilon_j/\rho_j\to0$, $|P_j-P|/\rho_j\to0$, and
$v_j(P+\rho_j\,\cdot)$ converges in gradient measure to
$Q\sigma_0\mathcal H^n\lfloor T_P\Gamma$ on every fixed ball.
Such a choice follows by first fixing the radius and passing
to the limit, and then letting the radius decrease.
The normal estimate in the proof of Theorem~\ref{sheet},
obtained before the final graph count, contradicts either failure.
The same argument is uniform over a compact subset of $\Gamma$.

In a small fixed graph cylinder, projection of the zero set
onto the base is therefore a local diffeomorphism.
The convergence of the transition regions \cite[Theorem~1]{HT}
keeps all zeros away from the upper and lower caps. Projection is proper
over compact subsets of the base, with finite fibers of locally constant
cardinality. By ordering their heights, we obtain smooth graphs with
$C^1$ convergence. To count them, fix a central fiber and make
a second slow diagonal choice of radius dominating the height
displacements of all its roots. Theorem~\ref{sheet} counts
this fiber as exactly $Q$, and local constancy implies the same
number throughout the smaller cylinder.

Set $\widetilde W=W/k^2$ and
$\widetilde\varepsilon_j=\varepsilon_j/k$.
Then $\widetilde W''(\pm1)=1$ and the equation has the
normalization of \cite{WW}.
The local energy bound, the complete set of $Q$ graphs, and their
$C^1$ convergence verify the three hypotheses of
\cite[Corollary~1.2]{WW}. Its dimension denotes the ambient
dimension, and our assumption $N\le10$ is within its range.
That result establishes a uniform $C^{2,\theta}$ bound on smaller
patches, for each fixed $\theta\in(0,1)$, and
\eqref{localmean}. By interpolation, we obtain the asserted
$C^2$ convergence. The first step in the proof of that corollary
establishes the heteroclinic limit at the scale
$\widetilde\varepsilon_j$, equivalently at the scale
$\varepsilon_j$ after a fixed change of coordinates.
The normal derivative of the limiting heteroclinic is strictly positive.
Two distinct zero graphs at bounded distance after this rescaling would
then have two intersections with the same transverse fiber in
a region of strict monotonicity. This proves the last assertion.
\end{proof}

We first exclude higher multiplicity when the limiting cones are nonflat.

\begin{proposition}\label{nonflat}
If every limiting support in \eqref{coneconv} is
nonflat, then $Q=1$.
\end{proposition}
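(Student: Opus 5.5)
We argue by contradiction: assume $Q\ge2$ and every blowdown support is a nonflat stable cone $C$. The aim is to build a positive Jacobi field on the regular part $\mathcal R(C)$ and contradict the known spectral properties of nonflat stable cones.

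First, fix a sequence $R_j\to\infty$ with $V_{R_j}\rightharpoonup Q|C|$ from Lemma~\ref{conelimits}. On compact subsets of $\mathcal R(C)$ the rescaled solutions $u_{R_j}$ are eventually stable by \eqref{extstab}. Lemma~\ref{localgraph} applies in each coordinate neighbourhood and gives exactly $Q$ ordered normal graphs $f_{1,j}<\cdots<f_{Q,j}$ over $\Gamma=\mathcal R(C)$. These graphs converge in $C^2$, their mean curvatures satisfy $|H|\le C\varepsilon_j$, and their rescaled separations tend to infinity. Because $\mathcal R(C)$ is connected and two-sided, the local labels patch into global ordered graphs over any compact $K\subset\mathcal R(C)$.

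Next, consider the normalized top–bottom gap. Fix a base point $p_0\in\mathcal R(C)$ and set
\[
w_j=\frac{f_{Q,j}-f_{1,j}}{(f_{Q,j}-f_{1,j})(p_0)}>0 .
\]
Subtracting the graph mean-curvature equations gives a linear equation $L_jw_j=O(\varepsilon_j)/\text{gap}$. Here $L_j\to L_C=\Delta_C+|A_C|^2$, and the inhomogeneity comes from the Toda interaction and the $O(\varepsilon_j)$ curvature bound. This is where we need the finer Wang--Wei interaction estimates behind Lemma~\ref{localgraph}. With equal wells \eqref{equalwells}, the interaction has the favorable sign: the gap satisfies $L_j(\text{gap})\ge -C\varepsilon_j^2|\log\varepsilon_j|$, or is a supersolution. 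Hence the gap is, up to errors, a positive supersolution of the Jacobi operator.

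The main obstacle is showing that the gap dominates the $\varepsilon_j^2$-type errors after normalization. We would use the separation lower bound implied by stability (the Wang--Wei estimate $\text{gap}\ge c\varepsilon_j|\log\varepsilon_j|$ in original units) together with a Harnack inequality for $L_j$ on compact sets, and pass to a limit. If the normalization degenerates, we compare instead with the scale $\varepsilon_j|\log\varepsilon_j|$. In the limit, after Harnack and elliptic estimates, $w_j\to w$ locally smoothly on $\mathcal R(C)$, with $w>0$ and $L_Cw\le0$; if the errors vanish we get $L_Cw=0$. We can take it homogeneous by using a further blowdown along $R$, since the data are dilation-invariant up to the logarithmic drift of the gaps.

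Finally, we derive the contradiction. A positive (super)solution of the Jacobi operator on the regular part of a nonflat stable cone cannot be homogeneous of the degree forced by the gap growth. Writing $w=r^{\gamma}\varphi(\omega)$, the degree must satisfy $\gamma^2+(n-2)\gamma+\lambda_1=0$, where $\lambda_1$ is the first eigenvalue of $-\Delta_\Sigma-|A|^2$ on the link. By contrast, the gaps grow logarithmically, as in the planar Toda analysis, which forces $\gamma=0$. Then $\lambda_1=0$, so $|A|^2$ is the first eigenvalue with a positive eigenfunction $\varphi$ on the link. Testing with the Simons identity $\Delta|A|\ge(2/n)|D|A||^2/|A|-|A|^3$ contradicts $\lambda_1\le-(n-1)$ for nonflat cones, which is Simons' bound. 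This rules out $Q\ge2$.
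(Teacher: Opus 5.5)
Your first half is sound and matches the first half of the paper's argument. On compact pieces of $\operatorname{reg}C$, Lemma~\ref{localgraph} gives $Q$ ordered graphs with $|H|\le C\varepsilon_j$. Since the gaps are $\gg\varepsilon_j$ (they tend to infinity in the unscaled picture), the normalized top--bottom gap solves $L_jw_j=f_j$ with $f_j\to0$ locally, and Harnack yields a positive Jacobi field on the connected regular part. No sign information on the Toda interaction is needed for this.

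The gap is in your last step. A positive Jacobi field is not by itself contradictory. On a nonflat stable cone the functions $r^{\gamma_\pm}\varphi_1$ are positive Jacobi fields, where $\gamma^2+(n-2)\gamma=\lambda_1$ (your quadratic has the wrong sign on $\lambda_1$), i.e.\ $\gamma_\pm=-\frac{n-2}{2}\pm\sqrt{\frac{(n-2)^2}{4}+\lambda_1}$. Both exponents are negative because $\lambda_1\le-(n-1)$. So everything rests on your claim that the limit is homogeneous of degree $0$, and that claim is unsupported.

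The logarithmic gap growth you import from the planar case comes from Lemmas~\ref{heightbd} and~\ref{vertdecomp}. Those use that the layers are graphs over a hyperplane: Florit-Simon's exterior graph description, a Liouville argument for entire harmonic functions on $\mathbb R^n$, and the Wang--Wei bound $e^{-k(f_{i+1}-f_i)}\le Cr^{-2}$. Nothing of this kind is available over a nonflat cone. Nor does ``a further blowdown'' produce homogeneity. Blowdown cones need not be unique, gaps at different scales are measured over different reference cones, and you give no monotone quantity forcing the limit to be homogeneous.

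The missing idea is a normalization that links the scales and controls the limit at the vertex. For each dyadic radius $2^\ell$, the paper records the maximal gap $D_\ell$ over the uniformly regular part $\Sigma_{C_\ell}(\rho)$ of an approximating cone, and shows $D_\ell\to\infty$ and $2^{-\ell}D_\ell\to0$. It then blows down only along \emph{record} scales, where $M_j=D_{\ell_j}=\max_{\ell_0\le\ell\le\ell_j}D_\ell$. Normalizing the gap by $M_j/2^{\ell_j}$ gives a positive Jacobi field $v$ on $\operatorname{reg}C_*$ with $v(\theta_*)=1$. Since every smaller radius had a smaller maximal gap, also $v(2^{-m}\theta)\le1$ for all $m\ge1$. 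The comparison \eqref{gapbound} between gaps measured relative to different reference cones is what makes this transfer across scales valid.

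This boundedness toward the vertex contradicts \cite[Theorem~1.4]{ZW24}: every positive Jacobi field on the regular part of a nonflat stable cone tends to $+\infty$ at the vertex. Your Simons-bound input is exactly why $\gamma_\pm<0$. But without the record-scale normalization, or some other device controlling the limit near the vertex, your argument does not close.
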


\begin{proof}
By Lemma~\ref{conelimits}, this case can occur only when
$n\ge7$. Suppose $Q\ge2$.
Let $\mathcal C_U$ denote the family of limiting supports,
each taken with multiplicity one. This is compact by
Lemma~\ref{fixedmult} and a diagonal choice of
blowdown sequences. For $C\in\mathcal C_U$, write
$\Sigma_C=\operatorname{reg}C\cap S^n$.

We will compare layers on subsets where the link is uniformly
regular. For $\theta\in\Sigma_C$, define $r_C(\theta)$ to be the
supremum of $s\in(0,10^{-3})$ such that, in geodesic normal
coordinates of $S^n$ centered at $\theta$, the whole link in
$B_{4s}^{n-1}\times(-4s,4s)$ is a single graph $v$ over
$T_\theta\Sigma_C$, with
\begin{equation*}
 v(0)=Dv(0)=0,
 \qquad \|Dv\|_\infty+s\|D^2v\|_\infty\le\frac1{10}.
\end{equation*}
Set
\begin{equation*}
 \Sigma_C(\rho)=\{\theta\in\Sigma_C:r_C(\theta)>\rho\}.
\end{equation*}
By compactness, there exists $\rho>0$ such that
$\Sigma_C(8\rho)\ne\varnothing$ for every $C\in\mathcal C_U$.
Indeed, otherwise a convergent sequence of cones would contradict
smooth convergence near any regular point of its limit.
Moreover, if $C_j\to C$ and $\theta_j\in\Sigma_{C_j}(\rho)$,
then every limit of $\theta_j$ is regular in $C$.
This follows from the single graph description on a fixed smaller
ball and the minimal surface equation. Conversely, every fixed
point of $\Sigma_C(8\rho)$ and a small neighborhood of it are
represented in $\Sigma_{C_j}(\rho)$ for large $j$.

Choose a unit normal $\nu_C$ on $\Sigma_C$ and use the coordinates
\begin{equation}\label{sphereco}
 \Psi_C(r,\theta,h)
 =r\cos(h/r)\theta+r\sin(h/r)\nu_C(\theta).
\end{equation}
They satisfy $|\Psi_C(r,\theta,h)|=r$, and $h$ is arc length
along the corresponding normal geodesic on $\partial B_r$.
The graphs constructed in Lemma~\ref{localgraph} can be expressed
in these coordinates on each compact subset of
$\operatorname{reg}C$. Local orderings agree on overlaps because
the normal has been fixed.

We also need to compare gaps measured with respect to different
reference cones converging to the same cone. Fix the same point $p$ on the actual lower graph.
In the first coordinates, write the actual upper graph as
$s=b(x)$ and put $F(x,s)=s-b(x)$. Thus
$F(p)=-w_1$, where $w_1$ is the gap on the first normal fiber
through $p$. Along the second normal fiber from $p$ to the
upper graph, with arc length parameter $t$, we have
\begin{equation*}
 \frac{d}{dt}F(\gamma(t))=1+o(1).
\end{equation*}
The error is uniform on each fixed compact regular neighborhood,
since both reference cones and both zero graphs converge there
in $C^1$, with consistently chosen normals.
We integrate along this fiber and obtain
\begin{equation}\label{gapbound}
 w_1=(1+o(1))w_2.
\end{equation}
No estimate of the individual graph heights relative to their
separation is required.

For each sufficiently large integer $\ell$, choose $C_\ell\in\mathcal C_U$
such that the local varifold distance between $V_{2^\ell}$ and
$Q|C_\ell|$ tends to zero as $\ell\to\infty$.
Such a choice is possible by compactness.
For all sufficiently large $\ell$, we obtain from Lemma~\ref{localgraph}
exactly $Q$ ordered graph heights $h_{1,\ell}<\cdots<h_{Q,\ell}$ in
\eqref{sphereco} over a neighborhood of
$\{1\}\times\Sigma_{C_\ell}(\rho)$.
The assertion is uniform on this set: a sequence of failures
would converge to a regular point of a limiting cone, where
Lemma~\ref{localgraph} applies.
Define
\begin{equation}\label{conewidth}
 D_\ell=2^\ell\sup_{\theta\in\Sigma_{C_\ell}(\rho)}
       \bigl(h_{Q,\ell}(1,\theta)-h_{1,\ell}(1,\theta)\bigr).
\end{equation}
By the same compactness argument, we obtain
\begin{equation*}
 2^{-\ell}D_\ell\to0,
 \qquad D_\ell\to\infty.
\end{equation*}
If $D_\ell$ were bounded along a subsequence, the lowest and
highest graphs over a point of $\Sigma_{C_\ell}(8\rho)$ would
have bounded separation in the original coordinates.
This contradicts the last assertion of Lemma~\ref{localgraph},
applied on the uniformly regular neighborhoods just described.

Choose integers $\ell_j\to\infty$ such that
\begin{equation*}
 M_j:=D_{\ell_j}=\max_{\ell_0\le \ell\le \ell_j}D_\ell\to\infty,
 \qquad R_j=2^{\ell_j},\qquad \delta_j=M_j/R_j\to0.
\end{equation*}
After passing to a subsequence, $V_{R_j}\rightharpoonup Q|C_*|$
and $|C_{\ell_j}|\rightharpoonup |C_*|$.
On every compact subset of $\operatorname{reg}C_*$, denote the
highest and lowest graph heights in \eqref{sphereco} by
$h_j^+$ and $h_j^-$. Put
\begin{equation*}
 v_j=(h_j^+-h_j^-)/\delta_j>0.
\end{equation*}
Let $H(h)$ be scalar mean curvature, pulled back to $C_*$ with
consistent normal. From the first variation of mean curvature
\cite[Appendix~A]{CM20}, we obtain $DH(0)=-J_{C_*}$, where
$J_{C_*}=\Delta_{C_*}+|A_{C_*}|^2$.
Using the exact identity
\begin{equation*}
 H(h_j^+)-H(h_j^-)
 =\int_0^1 DH\bigl(h_j^-+t(h_j^+-h_j^-)\bigr)
                    [h_j^+-h_j^-]\,dt
\end{equation*}
and dividing by $\delta_j$, we obtain
\begin{equation}\label{lineareq}
 L_jv_j=f_j,
 \qquad L_j\to J_{C_*},
 \qquad \|f_j\|_{L^\infty(K)}\le C_K/M_j\to0
\end{equation}
for every compact coordinate neighborhood $K$ in the regular part.
Here the coefficients converge by the $C^2$ convergence of the
graphs. The last bound follows from
\eqref{localmean}, since
$|H(h_j^\pm)|\le C_K/R_j$ and $R_j\delta_j=M_j$.
The operators are uniformly elliptic with uniformly H\"older
continuous coefficients on each such neighborhood.

Choose points approaching the supremum in \eqref{conewidth}
with relative error tending to zero. Their limits are regular
points of $C_*$. We express the corresponding lower zero in the
fixed coordinates of $C_*$ and use \eqref{gapbound}
to obtain points $\theta_j'\to\theta_*\in\Sigma_{C_*}$ such that
\begin{equation*}
 v_j(1,\theta_j')\to1.
\end{equation*}
From the inhomogeneous Harnack inequality and interior elliptic
estimates \cite[Chapters~8 and~9]{GT}, applied to
\eqref{lineareq}, we obtain bounds on every fixed connected
regular neighborhood.
The regular part is connected by Lemma~\ref{conelimits}.
By an exhaustion and diagonal extraction, we therefore obtain
\begin{equation}\label{posjacobi}
 v_j\to v\quad\text{locally in }C^1(\operatorname{reg}C_*),
 \qquad J_{C_*}v=0,
 \qquad v>0,
 \qquad v(\theta_*)=1.
\end{equation}

Fix $\theta\in\Sigma_{C_*}(8\rho)$ and an integer $m\ge1$.
Write $r_m=2^{-m}$.
At the radii $R_jr_m$, the varifolds still converge to
$Q|C_*|$, because $C_*$ is a cone. Consequently the chosen
reference cones $C_{\ell_j-m}$ also converge to $C_*$.
The lower zero above $(r_m,\theta)$, rescaled to unit radius,
has a base point $\theta_{j,m}$ in the corresponding
$\Sigma_{C_{\ell_j-m}}(\rho)$ for large $j$.
Applying \eqref{gapbound} at this same lower zero yields
\begin{equation*}
 \frac{h_j^+(r_m,\theta)-h_j^-(r_m,\theta)}{r_m}
 =(1+o_{j,m}(1))
 \bigl(h_{Q,\ell_j-m}(1,\theta_{j,m})
       -h_{1,\ell_j-m}(1,\theta_{j,m})\bigr).
\end{equation*}
Since $2^{\ell_j-m}=R_jr_m$, it follows that
\begin{equation*}
 v_j(r_m,\theta)
 \le(1+o_{j,m}(1))\frac{D_{\ell_j-m}}{M_j}
 \le1+o_{j,m}(1).
\end{equation*}
First fixing $m$ and then sending $j\to\infty$, we conclude
that the same function in \eqref{posjacobi} satisfies
\begin{equation}\label{boundvals}
 0<v(2^{-m}\theta)\le1\qquad(m\ge1).
\end{equation}

The cone $C_*$ is nonflat, so its vertex is singular.
Theorem~1.4 of \cite{ZW24} applies to the positive solution $v$
on the two-sided stable regular part of $C_*$, whose singular set
has codimension at least seven.
We obtain
\begin{equation*}
 \liminf_{\operatorname{reg}C_*\ni X\to0}v(X)=+\infty,
\end{equation*}
contradicting \eqref{boundvals}.
Thus $Q=1$.
\end{proof}

Suppose now that every limiting support is a hyperplane.
Write $Z=\{U=0\}$.
From Lemma~\ref{localgraph} and a contradiction argument at points
$X_j\in Z$, $|X_j|\to\infty$, we obtain
\begin{equation}\label{anncurv}
 |X|\,|A_Z(X)|+|X|^2|H_Z(X)|\le C
 \qquad\text{on }Z\setminus B_{R_1},
\end{equation}
and $Z\setminus B_{R_1}$ is smoothly embedded.
Indeed, by rescaling by $|X_j|$, we obtain a planar limit, and
$X_j/|X_j|$ stays in its regular part away from the origin.
The mean curvature bound in the rescaled coordinates is
$O(|X_j|^{-1})$, which becomes $O(|X_j|^{-2})$ in the original
coordinates. From the proof of \cite[Corollary~1.2]{WW}, we also obtain,
on each set $\{|U|\le1-b\}$, for fixed $b\in(0,1)$,
outside a sufficiently large ball, the bounds
\begin{equation*}
 |DU|\ge c>0,\qquad
 \frac{|D^2U|^2-|D|DU||^2}{|DU|^2}\le C|X|^{-2}.
\end{equation*}

For every $\eta>0$ and all sufficiently large $R$, some unit
vector $e_R$ satisfies
\begin{equation}\label{annslabs}
 Z\cap(B_{8R}\setminus B_{R/8})
 \subset\{|X\cdot e_R|\le\eta R\}.
\end{equation}
Otherwise, rescaling a sequence of violating annuli and using
the convergence of the transition regions would contradict
planarity of the limit.
The relatively closed embedded hypersurface outside $B_{R_1}$,
\eqref{anncurv}, and \eqref{annslabs}
satisfy the hypotheses of the geometric result
\cite[Definition~3.1 and Proposition~3.2]{FS26}.
After one fixed rotation and enlargement of $R_1$, we obtain
smooth ordered functions $h_1<\cdots<h_m$ on
$\Omega_{R_1}=\mathbb R^n\setminus\overline{B_{R_1}^n}$ such that
\begin{equation}\label{extgraphs}
 Z\cap(\Omega_{R_1}\times\mathbb R)
 =\bigcup_{i=1}^m\{(x,h_i(x)):x\in\Omega_{R_1}\},
\end{equation}
the remaining part of $Z$ is bounded, and, for every
$\alpha\in(0,1)$,
\begin{equation}\label{subheight}
 |h_i(x)|+|x|\,|Dh_i(x)|\le C_\alpha|x|^\alpha.
\end{equation}
Here $m=Q$. To check this, rescale about the origin and fix a
nonzero base point of the limiting plane. The corresponding
fiber contains exactly $m$ roots by \eqref{extgraphs}
and \eqref{subheight}, whereas
Theorem~\ref{sheet} counts exactly $Q$ roots in a fixed small
cylinder about that point.

\begin{lemma}\label{heightbd}
After increasing $R_1$, the functions in
\eqref{extgraphs} satisfy
\begin{equation}\label{logheight}
 |h_i(x)|\le C\log(2+|x|),\qquad
 |Dh_i(x)|\le C|x|^{-1},\qquad
 |D^2h_i(x)|\le C|x|^{-1}.
\end{equation}
\end{lemma}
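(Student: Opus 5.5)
The plan is to upgrade the sublinear bounds \eqref{subheight} to logarithmic ones using the layer structure of Subsections~\ref{layerint}--\ref{scalarred}. Here the exterior domain $\Omega_{R_1}$ contains no exceptional balls. The first step is to localize. For $|x|=R$ large, rescale by $R$ about the annulus. By \eqref{anncurv} and Lemma~\ref{localgraph}, all $Q$ graphs are $C^{2,\theta}$ close to horizontal planes at unit scale. The slope bound $|Dh_i|\le C|x|^{-1}$ and the bound $|D^2h_i|\le C|x|^{-1}$ come from the curvature bound $|A_Z|\le C|X|^{-1}$ once the heights are controlled. The second-derivative bound is weaker than scaling, so it is harmless. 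The real content is the height bound, and the gradient bound is the first nontrivial claim.

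For $|Dh_i|$, I would use the orthogonal decomposition of Proposition~\ref{orth} on $\Omega_{R_1}$, which applies since every zero far out is close to a vertical heteroclinic. The result is layer functions $f_i$ with $|f_i-h_i|$ small. They satisfy the Toda system \eqref{hsequa} with errors \eqref{hserro}, and the stability inequality \eqref{evjhr}. By \eqref{localmean} in original coordinates, each zero graph has mean curvature $O(|x|^{-2})$. The graph mean curvature operator applied to $h_i$ then gives $|\Delta h_i-(a_{i-1}-a_i)|$, up to errors quadratic in $Dh_i$ and $O(|x|^{-2})$. The interaction terms $a_i$ are exponentially small in the gaps. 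First I would show $a_i\le C|x|^{-2}$, by applying the second-order estimate of \cite[Corollary~1.2]{WW} on unit rescaled patches: its conclusion $\mathcal A\le C\varepsilon^2$ translates, with $\varepsilon=|x|^{-1}$, to exactly $e^{-k(h_{i+1}-h_i)}\le C|x|^{-2}$. Consequently each $h_i$ satisfies $|\Delta h_i|\le C|x|^{-2}$ on $\Omega_{R_1}$, with sublinear growth by \eqref{subheight}.

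The next step is a Liouville/asymptotic expansion argument for the exterior Poisson problem. Write $\Delta h_i=F_i$ with $|F_i|\le C|x|^{-2}$ and $n\ge3$. Subtract the Newtonian-type solution $w_i=\Gamma*(F_i\mathbf 1_{\Omega_{R_1}})$. Its growth is controlled by
\[
 |w_i(x)|\le C\int\frac{|y|^{-2}}{|x-y|^{n-2}}\,dy\le C\log(2+|x|),
\]
and the same integral bound, since $|F_i|\le C|x|^{-2}$, gives $|Dw_i|\le C|x|^{-1}$. The difference $h_i-w_i$ is harmonic in the exterior domain with growth $O(|x|^\alpha)$, $\alpha<1$. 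The exterior expansion of harmonic functions then gives $h_i-w_i=c_i+O(|x|^{2-n})$, with gradient $O(|x|^{1-n})$. Combining these yields the first two bounds in \eqref{logheight}. Scaled Schauder estimates on balls $B_{|x|/2}(x)$, using the $C^\theta$ control of the mean curvature from Lemma~\ref{localgraph}, then give $|D^2h_i|\le C|x|^{-2}\log|x|\le C|x|^{-1}$.

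The main obstacle is justifying $|\Delta h_i|\le C|x|^{-2}$ with a constant uniform over the whole exterior domain. This requires that the patch constants from \cite{WW} not degrade as the graphs approach one another. If the direct citation is insufficient, I would instead argue by contradiction along dyadic annuli, using the averaged relations of Lemma~\ref{haver} and the lower bound \eqref{hmgmea}. These force the gaps to be at least $(2/k)\log|x|-C$ on average. Harnack for the linearized gap equation would then upgrade the average bound to a pointwise one.
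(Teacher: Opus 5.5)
Your outline follows the paper's argument. You bound $|D^2h_i|\le C|x|^{-1}$ from the curvature bound in \eqref{anncurv} and the small slopes. You obtain $|\Delta h_i|\le C|x|^{-2}$ from the graph mean-curvature identity, using $|H_Z|\le C|X|^{-2}$, and you need \eqref{subheight} with $\alpha=1/2$ to control the term $|Dh_i|^2|D^2h_i|$. You then subtract a potential with logarithmic growth and apply a Liouville argument to the sublinear harmonic remainder. Your exterior expansion for $n\ge3$ is a fine substitute for the paper's extension through a ball and the entire gradient estimate.

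The potential step, however, fails as you wrote it. The decay $|F_i(y)|\sim|y|^{-2}$ is exactly borderline for the Newtonian potential. For $|y|>2|x|$ the integrand $|y|^{-2}|x-y|^{2-n}$ is comparable to $|y|^{-n}$, so
\[
 \int\frac{|y|^{-2}}{|x-y|^{n-2}}\,dy=+\infty ,
\]
and $w_i=\Gamma*(F_i\mathbf 1_{\Omega_{R_1}})$ is not defined. You must renormalize the kernel, as the paper does, and take $\int[\Gamma_n(x-y)-\Gamma_n(-y)]F_i(y)\,dy$. The kernel difference is $O(|x|\,|y|^{1-n})$ for $|y|\ge2|x|$, so this integral converges absolutely. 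It is the subtracted term on $1<|y|<2|x|$ that produces the bound $C\log(2+|x|)$. The gradient bound $C|x|^{-1}$ follows by splitting the integral over $|y|<|x|/2$, $|x|/2\le|y|\le2|x|$ and $|y|>2|x|$. With this correction your argument closes.

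The detour through Proposition~\ref{orth}, the Toda system, and a separate bound $a_i\le C|x|^{-2}$ is unnecessary. The relation $\Delta h_i\approx a_{i-1}-a_i$ conflates the projected equation for the orthogonal layer functions $f_i$ with the mean-curvature equation for the actual zero graphs $h_i$. The latter already has right-hand side $O(|x|^{-2})$ by \eqref{anncurv}.

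Proposition~\ref{orth} and Lemma~\ref{haver} are also proved for stable solutions under the uniform tangential bound \eqref{mwm} or \eqref{hassu}. That bound is not available for the finite-index $U$ of this section. Here the orthogonal decomposition is Lemma~\ref{vertdecomp}, whose proof itself uses \eqref{logheight}. So both the detour and your fallback for the ``main obstacle'' would be unjustified or circular. No fallback is needed: the uniformity of $|H_Z|\le C|X|^{-2}$ on the whole exterior region is exactly the content of \eqref{anncurv}. The paper obtains it from Lemma~\ref{localgraph} by a contradiction argument along zeros with $|X_j|\to\infty$.
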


\begin{proof}
Set $r=|x|$.
From the curvature bound and the small slopes, we obtain
$|D^2h_i|\le C/r$.
For the upward normal, choose the scalar mean curvature
$H_i=-\operatorname{div}(Dh_i/\sqrt{1+|Dh_i|^2})$.
Then
\begin{equation*}
 \Delta h_i=-\sqrt{1+|Dh_i|^2}\,H_i
       +\frac{Dh_i\otimes Dh_i:D^2h_i}{1+|Dh_i|^2}.
\end{equation*}
Taking $\alpha=1/2$ in \eqref{subheight}, we obtain
$|\Delta h_i|\le Cr^{-2}$.
Extend $h_i$ smoothly through a fixed ball and denote the
extension by $\widetilde h_i$.
The function $F_i^0=\Delta\widetilde h_i$ satisfies
$|F_i^0(x)|\le C(1+|x|)^{-2}$.
Since $n>2$, define
\begin{equation*}
 \Gamma_n(x)=-\frac{|x|^{2-n}}{(n-2)|S^{n-1}|},
 \qquad
 P_i(x)=\int_{\mathbb R^n}
       [\Gamma_n(x-y)-\Gamma_n(-y)]F_i^0(y)\,dy.
\end{equation*}
The integral converges absolutely: at infinity the kernel
difference is $O(|x||y|^{1-n})$, and its singularities at $0$
and $x$ are locally integrable.
Moreover, $\Delta P_i=F_i^0$ in distributions.

For $r\ge2$, split the integral into $|y|<r/2$,
$r/2\le|y|\le2r$, and $|y|>2r$.
The subtraction term on $1<|y|<2r$ is bounded by
$C\int_1^{2r}s^{-1}\,ds$, and all remaining terms are bounded.
Differentiating the first kernel is justified by local
integrability and the integrable tail of $D\Gamma_n F_i^0$.
Each of the same three regions contributes at most $C/r$.
Consequently,
\begin{equation*}
 |P_i(x)|\le C\log(2+r),\qquad |DP_i(x)|\le C/r.
\end{equation*}
The entire harmonic function $\widetilde h_i-P_i$ has
sublinear growth by \eqref{subheight}.
The interior gradient estimate on expanding balls makes it
constant. This proves \eqref{logheight}.
\end{proof}

\begin{lemma}\label{vertdecomp}
After increasing $R_1$, there are smooth ordered functions
$f_1<\cdots<f_Q$ on $\Omega_{R_1}$ for which the decomposition
$U=G+\phi$ of \eqref{sumg} satisfies the orthogonality conditions
of \eqref{iwh}. For some fixed finite $M$, with $r=|x|$, we have
\begin{equation}\label{phasebd}
 \begin{split}
 |f_i-h_i|+|D(f_i-h_i)|&\le Cr^{-2}(1+\log r)^M,\\
 |f_i|&\le C\log(2+r),\qquad |Df_i|\le C/r,\\
 e^{-k(f_{i+1}-f_i)}&\le Cr^{-2},\\
 B_\phi(x):=\int_{\mathbb R}
       (|D_x\phi|^2+\phi^2)\,dz
       &\le Cr^{-4}(1+\log r)^M.
 \end{split}
\end{equation}
Also $\sup_z|\phi(x,z)|\to0$ as $r\to\infty$.
All normal integrations by parts have exponentially decaying
tails, locally uniformly in the base variable.
\end{lemma}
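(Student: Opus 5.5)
The plan is to build the decomposition locally from the zero graphs $h_i$ of \eqref{extgraphs}, following the implicit-function construction of Proposition~\ref{orth}, and then to bootstrap the quantitative bounds using the layer equations of Subsection~\ref{scalarred}.

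The first step is qualitative. By Lemma~\ref{localgraph}, rescaling at zeros far out gives vertical heteroclinic limits, and distinct graphs separate without bound. Together with \eqref{logheight}, this shows that on $\Omega_{R_1}$ the solution $U$ is uniformly close on each column to $G$ built with $f_i=h_i$. The gaps tend to infinity and $|Dh_i|\to0$. Moreover $\|U-G\|_{L^\infty}+\|U-G\|_{L^2(dz)}\to0$ as $r\to\infty$, by the same estimates as \eqref{jgfb}, using Lemma~\ref{wentr} away from the layers. The Jacobian of the orthogonality equations is then $\sigma_0I+o(1)$, and the quantitative implicit-function argument of \cite[Proposition~4.1]{WW} yields unique ordered $f_i$ with $|f_i-h_i|\le C\|U-G_h\|_{L^2(dz)}\to0$. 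Differentiating the orthogonality conditions as in \eqref{jgvcmu} bounds $Df_i$, and all higher derivatives, by the corresponding quantities for $U$. The exponential normal decay follows from Lemma~\ref{wentr} and interior estimates, which justifies every integration by parts in $z$.

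The second step makes the errors quantitative. Curvature decay \eqref{anncurv} gives $|D^2h_i|\le C/r$, and \eqref{logheight} gives $|Dh_i|\le C/r$. Testing \eqref{wkjm} on unit horizontal balls with the spectral gap (the local form of \eqref{vnb}) yields
\[
B_\phi\le C\sum_i|Df_i|^4+C\epsilon_D\sum_i e^{-k(f_{i+1}-f_i)}
\]
plus a smaller Caccioppoli contribution from the neighbouring unit balls. A local maximum-principle or Schauder argument then upgrades this to pointwise control of $\phi$ and $D_x\phi$. The gap bound $e^{-k(f_{i+1}-f_i)}\le Cr^{-2}$ should come from the projected equation \eqref{hsequa} at a point. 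There $|\Delta f_i|\lesssim|D^2h_i|$ plus errors, which is $O(r^{-1})$ only, so a direct pointwise bound falls short. Instead I would use the heteroclinic comparison on each column: the mean curvature satisfies $|H_Z|\le C|X|^{-2}$ by \eqref{anncurv}, and the Wang--Wei layer identity expresses $H$ of the $i$th zero graph as $a_{i-1}-a_i$ up to higher-order errors. Combined with the sign structure of the $a_i$ in \eqref{hidefi}, this inductively bounds each $a_i$ by $Cr^{-2}$ plus errors. Once the gaps obey $a_i\le Cr^{-2}$, the $\phi$-estimate gives $B_\phi\le Cr^{-4}$, up to the logarithmic factors produced by the widths $|f_{i+1}-f_i|\sim k^{-1}\log r$ in the overlap integrals of Proposition~\ref{quanti}.

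Finally, $|f_i-h_i|$ is controlled by the values of $\phi$ near the zeros, since $U(x,h_i)=0$ forces $G(x,h_i)=-\phi(x,h_i)$ and $|\partial_zG|\ge c$ there. Hence
\[
|f_i-h_i|\le C\|\phi(x,\cdot)\|_{L^\infty}+Ce^{-cD},
\]
and the cross-layer tails are again of size $a_i$, giving $r^{-2}(\log r)^M$. The same bound for $D(f_i-h_i)$ follows by differentiating this identity. The main obstacle is the gap bound $e^{-k(f_{i+1}-f_i)}\le Cr^{-2}$ and closing the bootstrap among $a_i$, $\phi$ and the mean curvature. I would attempt it through the mean-curvature identity of \cite[Corollary~1.2]{WW} with $Q$ ordered layers, using \eqref{localmean} at every scale $r$.
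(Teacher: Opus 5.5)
Your first step, building $f_i$ from the zero graphs $h_i$ by the implicit-function argument once Lemma~\ref{localgraph} gives heteroclinic profiles and diverging gaps, is sound. It is in fact the last step of the paper's proof.

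The gap is in the quantitative part, which is the real content of the lemma. The bound $e^{-k(f_{i+1}-f_i)}\le Cr^{-2}$ is left open, and the route you sketch for it does not close. The curvature bound \eqref{anncurv} concerns the zero graphs $h_i$, while the interactions enter the equation \eqref{hsequa} for the orthogonally adjusted heights $f_i$. Telescoping from $a_0=0$ therefore needs
\[
 a_{i-1}-a_i=\Delta h_i+\Delta(f_i-h_i)-e_i-\operatorname{div}F_i
\]
with $\Delta(f_i-h_i)=O(r^{-2})+o(a)$. Nothing you invoke gives this:
\begin{itemize}
\item the shift $f_i-h_i$ comes from the neighbours' tails and from $\phi$ at the zero, so a priori it is of the same order as $a+r^{-2}$;
\item the lemma itself controls $f_i-h_i$ only in $C^1$;
\item the phase-correction estimates in \cite{WW} are of order $\varepsilon^2+\mathcal A$, the same order as the interaction you want to bound.
\end{itemize}
The induction then returns only $\max_i a_i\le Cr^{-2}+C\max_i a_i$ with a constant that is not small. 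This is not a technicality. The bound $\mathcal A\le C\varepsilon^2$ is the stability-dependent output of \cite[Sections~9--10]{WW}, and \eqref{anncurv}, through \eqref{localmean}, was deduced from it rather than the other way round.

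The paper uses that input directly. On balls of radius $cr$ about $x_0$, with $W/k^2$ and parameter $1/(kcr)$, \cite[Proposition~10.1, (4.13), (11.1)]{WW} give $e^{-kD_i}\le Cr^{-2}$ for the normal distances. They also give $C^{2,\theta}$ bounds $Cr^{-2}$ for the normal phase corrections and the Fermi-coordinate remainder. Since a vertical gap dominates the normal distance, the gap bound follows at once. The rest is a change of coordinates:
\begin{itemize}
\item \eqref{distconv} compares signed distance with the vertical offset on $|z-h_i|\le L\log r$, at a $C^1$ cost $r^{-2}(1+\log r)^M$, using only $|Dh_i|+|D^2h_i|\le C/r$;
\item \eqref{zerodist} and exponential decay handle larger $|z|$;
\item the implicit-function step is then fed inputs of size $r^{-2}(1+\log r)^M$.
\end{itemize}

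Your $\phi$-step also falls short as written. In \eqref{vnb} the interaction enters linearly with a factor $\epsilon_D$, so even with $a\le Cr^{-2}$ it yields only averaged bounds $B_\phi\le C\epsilon_D r^{-2}$. Reaching $r^{-4}$ pointwise would need three further ingredients: the squared interaction bound $(1+d)e^{-2kd}$ from the appendix, an absorbable weighted cutoff in place of the unit-ball Caccioppoli term, and a separate pointwise upgrade.
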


\begin{proof}
Replacing $(U,W)$ by $(-U,W(-\,\cdot))$ if necessary, we may
assume that the lower phase is $-1$.
We then use $s_i=(-1)^{i-1}$ as in \eqref{sumg}.
This replacement preserves \eqref{wells},
\eqref{equalwells}, and all the geometric conclusions.

Fix a large $r=|x_0|$ and work in balls of radius $cr$ in the
original coordinates, with larger concentric balls still contained
in the stable region.
The graphical estimates already established verify the
hypotheses of the profile estimates in \cite{WW}.
With $\widetilde W=W/k^2$ and parameter $1/(kcr)$, their
stretched coordinates differ from the physical coordinates by
the fixed factor $k$.
Thus from \cite[Proposition~10.1, (4.13), and (11.1)]{WW} we obtain
\begin{equation}\label{normtrans}
 e^{-kD_i}\le Cr^{-2},\qquad
 \|\alpha_i\|_{C^{2,\theta}(B_1^{\Gamma_i})}
       +\|\Phi\|_{C^{2,\theta}(B_1)}\le Cr^{-2}.
\end{equation}
Here $\Gamma_i=\operatorname{graph}h_i$, $D_i$ is the pointwise
distance to the nearest adjacent layer, $\alpha_i$ is the normal
phase correction, and $\Phi$ is the remainder in the
normal-coordinate representation. If $Q=1$, we set $D_i=+\infty$.
The bounds hold uniformly on physical unit
neighborhoods in the smaller ball, for fixed $\theta\in(0,1)$.
The truncation in \cite[Section~4.1]{WW}, together with
\eqref{gtail}, has error $O(r^{-4})$ in $C^1$, so the profiles
may be replaced by $g$.
A vertical gap is at least the distance to the neighboring
hypersurface. Hence the first estimate in
\eqref{normtrans} also bounds
$e^{-k(h_{i+1}-h_i)}$.

We pass to vertical coordinates. For one of the graphs, put
$t=z-h_i(x)$ and let $d$ be signed distance from that graph.
If $(y,h_i(y))$ is the normal footpoint, then
\begin{equation*}
 x=y-\frac{dDh_i(y)}{\sqrt{1+|Dh_i(y)|^2}},
 \qquad
 z=h_i(y)+\frac d{\sqrt{1+|Dh_i(y)|^2}}.
\end{equation*}
For $|t|\le L\log r$, we obtain from \eqref{logheight}
$|y-x|\le C|t|/r$ and
\begin{equation}\label{distconv}
 |d-t|+|D(d-t)|\le Cr^{-2}(1+|t|)^2.
\end{equation}
For the derivative estimate, use
\begin{equation*}
 D_xd=-\frac{Dh_i(y)}{\sqrt{1+|Dh_i(y)|^2}},
 \qquad \partial_zd=(1+|Dh_i(y)|^2)^{-1/2}.
\end{equation*}
The difference between $Dh_i(y)$ and $Dh_i(x)$ is at most
$C|y-x|/r\le C|t|r^{-2}$.
This proves \eqref{distconv} using only the
second derivative bound already in
\eqref{logheight}.
Together with \eqref{normtrans}, it shows that replacing
the normal profiles by vertical profiles about the actual zero
heights creates a $C^1$ error bounded by
$Cr^{-2}(1+\log r)^M$ on $|z|\le L\log r$.

We explain the control outside this interval.
Extend each exterior graph height to a Lipschitz function on
$\mathbb R^n$, with a common finite Lipschitz constant.
Distance to its graph is bounded below by a fixed multiple
of the vertical distance. The part of $Z$ not covered by the
exterior graphs is contained in a fixed compact set.
For $r$ large, distance to that set is bounded below by a
fixed multiple of $r+|z|$, whereas
$\min_i|z-h_i(x)|\le |z|+C\log(2+r)$.
It follows that, for all $z\in\mathbb R$,
\begin{equation}\label{zerodist}
 \operatorname{dist}((x,z),Z)
 \ge c\min_i|z-h_i(x)|.
\end{equation}
The convergence and decay argument in Lemma~\ref{wentr}
applies outside a larger compact set, since all fixed
physical balls sufficiently far away are stable.
In a well, the barrier and interior estimates imply exponential
decay of $U$ to that well and of its derivatives in the distance
to $Z$. When using these estimates on a far column, the fixed
excluded compact set causes no loss: its distance is also
bounded below by a fixed multiple of
$\min_i|z-h_i(x)|$.
By \eqref{logheight} and
\eqref{zerodist}, increasing $L$ makes the integrals
of the squared errors and their first derivatives on
$|z|>L\log r$ smaller than any prescribed fixed inverse power
of $r$.

Let $G_h$ be the expression in \eqref{sumg} with $f_i=h_i$,
and put $\phi_h=U-G_h$.
The central interval has length $O(\log r)$, so from the preceding
estimates we obtain
\begin{equation*}
 \int_{\mathbb R}(|D_x\phi_h|^2+\phi_h^2)\,dz
 \le Cr^{-4}(1+\log r)^M.
\end{equation*}
They also control the integrals
$\int\phi_h s_i g'(s_i(z-h_i))\,dz$ and their horizontal
derivatives by $Cr^{-2}(1+\log r)^M$.
For the derivatives, differentiate under the integral and use
the bounds for $D_x\phi_h$, $Dh_i$, and the exponential tails.

Apply the implicit-function argument in the proof of
Proposition~\ref{orth} to \eqref{iwh}, with displacements
$f_i-h_i$. The same Jacobian is uniformly invertible, and the
preceding bounds for the initial integrals imply
$|f_i-h_i|\le Cr^{-2}(1+\log r)^M$.
For the derivative estimate, differentiate as in \eqref{jgvcmu}.
The horizontal derivatives at zero displacement have just been
bounded by $Cr^{-2}(1+\log r)^M$. Their change at the selected
displacements is $O(\max_i|f_i-h_i|)$, since the mixed derivatives
are uniformly bounded by the profile and tail estimates.
Using the same inverse matrix, we therefore obtain
$|D(f_i-h_i)|\le Cr^{-2}(1+\log r)^M$.
Uniqueness makes the local choices agree on overlaps.
The small corrections preserve the gap estimate, and substitution
in $U=G+\phi$ proves the remaining bounds and the tail decay.
\end{proof}

\begin{proposition}\label{finplanar}
If every limiting support in \eqref{coneconv} is
a hyperplane, then $Q=1$.
\end{proposition}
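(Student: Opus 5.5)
We argue by contradiction: assume $Q\ge2$ and run the scalar argument of Subsection~\ref{scalarred} on the exterior domain $\Omega_{R_1}$. In contrast with Proposition~\ref{crit}, no exceptional balls have to be removed. Lemma~\ref{vertdecomp} supplies the orthogonal decomposition on all of $\Omega_{R_1}$, with quantitative decay. Under \eqref{equalwells} the coefficients $c_i$ in \eqref{gtcoef} are equally spaced. We define $b$ and $a$ by \eqref{clp}, so that \eqref{ojyzt} holds with $\beta=1$ (after rescaling by $k$). The stability inequality \eqref{evjhr} is available because \eqref{extstab} makes $U$ stable on the region where the tests are supported. Its scalar form \eqref{ctyq} therefore holds for radial Hardy tests supported outside a large ball, and so does the differential inequality \eqref{mny}.

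The key observation is that \eqref{phasebd} makes every averaged error vanish, not merely become small. First, $|Df_i|\le C/r$ gives $\sum_i|Df_i|^2\le Cr^{-2}$. Second, $B_\phi\le Cr^{-4}(1+\log r)^M$. Third, by \eqref{hpiano}, $|F|\le C(\delta+\epsilon_D)r^{-1}$. Integrating against $|x|^{2-n}$ over $\{R_1<|x|<R\}$, the terms $\sum_i|Df_i|^2$ and $B_\phi$ give convergent integrals. Hence the averages $E_*$ and $G_*$ in \eqref{ohk}, built from the nonnegative error bounds in \eqref{mny} and \eqref{ctyq}, receive contributions only from the terms $C\epsilon_D a$. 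For those terms we use the upper bound $a\le Cr^{-2}$, which follows from $e^{-k(f_{i+1}-f_i)}\le Cr^{-2}$ and again gives a convergent weighted integral. So $E_*=G_*=0$. For the flux, the radial average $q(t)$ in \eqref{bpke} is bounded by $e^t\sup_{|x|=e^t}|F|\le C$. A sharper estimate is needed to get $F_*=0$, and it comes from the construction of $F_i$: the vertical integrals $\int\phi g''\,dz$ are $O(r^{-2}(\log r)^M)$ by \eqref{phasebd}, and the cross-layer overlaps are $O(r^{-2}\log r)$ because the gaps satisfy $f_{i+1}-f_i\ge (2/k)\log r-C$. Since these multiply $Df_j=O(r^{-1})$, we get $|F|=O(r^{-3}(\log r)^{M})$, so $q(t)\to0$ and $F_*=0$.

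Because all errors are now exactly zero, I do not expect to need a limit $D\to\infty$. Instead, the coefficients $\lambda$ and $\kappa$ can be taken to be $1+o(1)$ as $r\to\infty$, since $\epsilon_D$ may be replaced by a quantity tending to zero: the gaps grow like $(2/k)\log r$. Concretely, I would rerun the stability computation with $\tau\to0$ and with $\epsilon_D$ replaced by the actual overlap size, which is $O(r^{-c})$ by the gap bound. Then $\lambda,\kappa\to1$, and Lemma~\ref{lemma} (its hypotheses hold on a genuine exterior domain, so Proposition~\ref{extens} is not needed) would require $(n-2)/4\ge2$, i.e.\ $n\ge10$. This contradicts $n=N-1\le9$, hence $Q=1$. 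The case $n=3$, i.e.\ $N=4$, is included, since $n>2$ is all that Lemma~\ref{lemma} requires.

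The main obstacle is this bookkeeping. The constants $C_\tau$ in \eqref{ctyq} and \eqref{mny} blow up as $\tau\to0$, so the $\tau$-dependent terms must be multiplied by integrable quantities. This is exactly what \eqref{phasebd} provides, because every $\tau$-dependent term multiplies either $\sum_i|Df_i|^2$ or $B_\phi$. One must also check that $\epsilon_D a$ in the \emph{leading} coefficient is genuinely $o(1)\,a$. If this is delicate, I would simply fix small $\tau$ and large $D$ (legitimate once $r$ is large, since the gaps are at least $D$ there) and use the strict inequality $(n-2)/4\le 2-1/4$. That gives the contradiction with room to spare, exactly as in the proof of Proposition~\ref{crit}.
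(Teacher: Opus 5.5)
Your strategy is the paper's. You assume $Q\ge2$, use the decomposition of Lemma~\ref{vertdecomp} on the whole exterior base, and justify \eqref{mny}, \eqref{ojyzt}, \eqref{ctyq} there via \eqref{extstab}. You then apply Lemma~\ref{lemma} directly, without Proposition~\ref{extens}, and conclude from $(n-2)/4<2$.

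However, the step you call the key observation is false. The bounds $\sum_i|Df_i|^2\le Cr^{-2}$ and $a\le Cr^{-2}$ sit exactly at the critical rate for the averages in \eqref{ohk}:
\[
 \int_{\{R_1<|x|<R\}} r^{-2}|x|^{2-n}\,dx=|\mathbb S^{n-1}|\log(R/R_1),
\]
and this is the normalizing factor itself. These weighted integrals therefore diverge logarithmically. The terms $C(\epsilon_D+\tau)\sum_i|Df_i|^2$ and $C\epsilon_D a$ in \eqref{mny} and \eqref{ctyq} contribute a nonzero amount of size $C(\epsilon_D+\tau)$ to $E_*$ and $G_*$, where $C$ depends on the fixed constants in \eqref{phasebd}. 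Only $B_\phi\le Cr^{-4}(\log r)^M$ is genuinely integrable against $|x|^{2-n}$, and that disposes only of the $C_\tau$ terms. Hence $E_*=G_*=0$ does not follow, and neither does the sentence saying \eqref{phasebd} makes every $\tau$-dependent term integrable.

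What actually closes the argument is your fallback, which is precisely the paper's proof. The constants in $|Df_i|\le C/r$ and $e^{-k(f_{i+1}-f_i)}\le Cr^{-2}$ are fixed. So first take $\tau$ small relative to them, then $D$ large; this is admissible far out because the gaps grow like $(2/k)\log r$. Finally take the exterior radius so large that $C_\tau B_\phi\le\eta r^{-2}$ and $|F|\le\eta r^{-1}$. This gives $\Delta b\le a+\eta r^{-2}+\operatorname{div}F$, and stability with constant $1+\eta$ and error $\eta r^{-2}$. Lemma~\ref{lemma} then applies with $\beta=\lambda=1$, $\kappa=1+\eta$, $E_*=G_*=\eta$ and $F_*\le\eta$, and $d/4<2$ gives the contradiction for small $\eta$.

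Your refinement using the pointwise overlap size $\epsilon_{D(x)}=O(r^{-c})$ is sound, but for a different reason than you give. A coefficient tending to zero times an $O(r^{-2})$ density has zero logarithmic average; this is a Ces\`aro argument, not integrability. So the $\epsilon$-terms do drop out, and your bound $|F|=O(r^{-3}(\log r)^M)$ does give $F_*=0$. The term $C\tau\sum_i|Df_i|^2$, which comes from Young's inequality on the $D_x\phi$ terms, still contributes $O(\tau)$. You must therefore let $\tau\to0$ at the end. Exact vanishing of the errors is neither what your estimates give nor needed.
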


\begin{proof}
Suppose $Q\ge2$.
We use the decomposition of Lemma~\ref{vertdecomp}
on the full exterior base $\Omega_{R_1}$.
The calculations of Subsections~\ref{layerint}
and~\ref{scalarred} apply on this domain.
Indeed, every test has compact base support in $\Omega_{R_1}$,
so its support in the ambient space is disjoint from $B_{R_0}$. Normal cutoffs may be removed using the
exponential tails. Thus \eqref{hsequa}, \eqref{hserro}, and
Proposition~\ref{rstat} hold here without requiring stability
inside that ball.

Choose the constants $c_i$ by \eqref{gtcoef} and define $b,a$
by \eqref{clp}. In particular, from \eqref{ojyzt} we obtain
\begin{equation*}
 a\ge c_{W,Q}e^{-b}>0.
\end{equation*}
By \eqref{phasebd},
\begin{equation*}
 a+\sum_i|Df_i|^2\le Cr^{-2},
 \qquad B_\phi=o(r^{-2}).
\end{equation*}
It follows from \eqref{hpiano}, \eqref{mny}, and
\eqref{ctyq} that, for every fixed $\eta>0$, there is an
exterior radius $R_\eta$ on which
\begin{equation}\label{finscalar}
 \Delta b\le a+\eta r^{-2}+\operatorname{div}F,
 \qquad |F|\le\eta r^{-1},
\end{equation}
and
\begin{equation*}
 \int_{\Omega_{R_\eta}}a\zeta^2\,dx
 \le(1+\eta)\int_{\Omega_{R_\eta}}|D\zeta|^2\,dx
       +\eta\int_{\Omega_{R_\eta}}r^{-2}\zeta^2\,dx
 \qquad(\zeta\in C_c^1(\Omega_{R_\eta})).
\end{equation*}
For these choices, first take $\tau$ small relative to the
fixed constants in the $Cr^{-2}$ bounds. Then take $D$ large
so that the terms with $\epsilon_D$ are small. Finally increase
$R_\eta$ so that the actual gaps exceed $D$, the slopes and
$\phi$ are small, and $C_\tau B_\phi\le\eta r^{-2}$ after
adjusting the fixed constants. We also obtain the bound
for $F$ in \eqref{finscalar}.

Apply Lemma~\ref{lemma} with
\begin{equation*}
 \beta=\lambda=1,\qquad \kappa=1+\eta,
 \qquad e(x)=g(x)=\eta|x|^{-2}.
\end{equation*}
Here $g$ denotes the error function in that lemma.
From its definitions, we obtain $E_*=G_*=\eta$ and $F_*\le\eta$.
The last assertion follows from $|F|\le\eta/r$ through
the signed radial flux, independently of any bound for
$\operatorname{div}F$.
Writing $d=n-2=N-3$, \eqref{fsjoj} therefore implies
\begin{equation*}
 2\le\frac d4+
       \left(1+\frac d4+\frac2d\right)\eta.
\end{equation*}
Since $4\le N\le10$, we have $1\le d\le7$ and hence $d/4<2$.
We choose $\eta>0$ sufficiently small and obtain a contradiction.
This proves $Q=1$.
\end{proof}

We can now prove Theorem~\ref{finindex}.

\begin{proof}[Proof of Theorem~\ref{finindex}]
By Lemmas~\ref{conelimits} and~\ref{fixedmult},
every blowdown is $Q|C|$ with the same integer $Q$, and the
limiting supports are either all planar or all nonflat.
Propositions~\ref{nonflat} and~\ref{finplanar} imply $Q=1$
in the two cases. The convergence of the full energy measures
with the same multiplicity is already included in
\eqref{coneconv}. This proves the theorem.
\end{proof}

For dimensions at most seven, the preceding result also determines
the original solution.

\begin{corollary}\label{finoned}
Let $4\le N\le7$, and let $U$ satisfy the assumptions of
Theorem~\ref{finindex}.
Then
\begin{equation*}
 U(X)=g(e\cdot X-b)
 \quad\text{for some }e\in\mathbb S^{N-1},\ b\in\mathbb R.
\end{equation*}
\end{corollary}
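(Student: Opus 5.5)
The plan is to mimic the proof of Proposition~\ref{prigi}, with exterior stability \eqref{extstab} replacing global stability and the exterior graph description \eqref{extgraphs} replacing the full sheeting picture.

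\textbf{Step 1 (planar blowdown, one layer).} By Lemma~\ref{conelimits}, the case $n=N-1\le6$ forces every limiting support to be a hyperplane. Theorem~\ref{finindex} then gives $Q=1$, so $D_\infty=1$. Lemma~\ref{vertdecomp} applies with $Q=1$. Outside a compact set, $Z$ is a single graph $z=h_1(x)$ over $\Omega_{R_1}$, with $|h_1|\le C\log(2+|x|)$ and $|Dh_1|\le C|x|^{-1}$ by Lemma~\ref{heightbd}. Moreover, the mean curvature of $Z$ decays like $|X|^{-2}$ by \eqref{anncurv}.

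\textbf{Step 2 (the height is constant).} First I show that $h_1$ has a finite limit at infinity. The graph equation gives $\Delta h_1=O(r^{-2})$, but that alone yields only logarithmic growth. So I improve the mean-curvature decay using the $Q=1$ profile estimates of \cite{WW}: the single-layer argument at the end of the proof of Theorem~\ref{sheet} gives the microscopic bound $|H|\le C\varepsilon^2$. At scale $r$ this is $|H_Z|\le Cr^{-3}$. In fact, with $Q=1$ there is no interaction term, and the remainder is exponentially small in the distance to the transition. Iterating the linear estimates on annuli then gives $|\Delta h_1|\le Cr^{-2-\sigma}$ for some $\sigma>0$. With this, the potential $P_1$ of Lemma~\ref{heightbd} converges at infinity, so $h_1\to b$ and $Dh_1=O(r^{-1-\sigma})$.

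Next I use the finite Morse index. Suppose $h_1$ were nonconstant. Then its translates would produce a Jacobi field $\partial_{x_i}U$. I instead plan to run a sliding/rigidity argument: compare $U$ with $g(z-b)$ on the exterior half-spaces above and below $Z$, in the spirit of Lemma~\ref{hrigi}. Since I expect that comparison to be hard to make rigorous, I would first try a more robust route through monotonicity. The energy density satisfies $D_\infty=1$, which equals the density of $g(z-b)$. Then \eqref{finmono} gives
\[
 \int_{\mathbb R^N}\frac{\xi}{|X|^{n+1}}\,dX<\infty
 \quad\text{and}\quad
 \int\frac{U_r^2}{|X|^n}<\infty.
\]
Combining this with the decay of $D_xU$ obtained from $Dh_1\to0$ and the profile estimates, I hope to show that $\xi\equiv0$.

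\textbf{Step 3 (conclude).} Modica equality $\xi\equiv0$ gives one-dimensionality by the standard equality case of Modica's inequality. The level sets are then hyperplanes, and $U=g(e\cdot X-b)$. If Step 2 via equality fails, the fallback is this: on the exterior region above the graph, $U$ has one sign, so a half-space-type argument (Lemma~\ref{hrigi}, adapted to the asymptotically flat domain $\{z>h_1\}$) identifies $U$ with $g(z-b)$ far out. Unique continuation \cite{Aronszajn}, applied as in Proposition~\ref{prigi}, then extends the identity to $\mathbb R^N$.

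The main obstacle is Step 2: passing from multiplicity one with exterior stability to exact one-dimensionality. Interior instability can genuinely occur, and unique continuation is the only tool that bridges the bounded region. So the key difficulty is establishing the exact identity $U=g(z-b)$ on an open exterior set.
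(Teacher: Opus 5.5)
Your Step~1 is correct and matches how the paper begins. For $N\le7$, Lemma~\ref{conelimits} makes every blowdown a hyperplane, Theorem~\ref{finindex} gives $Q=1$, and \eqref{conedens} yields $D_\infty=1$, that is, $\lim_{R\to\infty}R^{1-N}\mathcal E_1(U;B_R)=\sigma_0\omega_{N-1}$.

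The gap is Step~2, and neither of your routes closes it.
\begin{itemize}
\item \emph{The monotonicity route.} The identity \eqref{finmono} with $D_\infty=1$ only gives finiteness of weighted integrals of $\xi$ and of $U_r^2$. This holds for every entire solution of finite density, so it carries no rigidity. For every entire solution the ratio $R^{-n}\mathcal E_1(U;B_R)$ increases from $0$ (as $R\to0$, since the energy density is bounded) to $\sigma_0\omega_nD_\infty$. The heteroclinic $g(z-b)$ itself has $D_\infty=1$ and $U_r\not\equiv0$. So there is no equality case to exploit. Passing from unit density at infinity to $\xi\equiv0$ is an Allard-type regularity statement, not a consequence of monotonicity.
\item \emph{The half-space fallback.} Lemma~\ref{hrigi} needs zero boundary values on an entire hyperplane. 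Here $\{z>h_1\}$ is only defined over an exterior base, and $h_1$ is not known to be constant. More fundamentally, a solution on an exterior region is not determined by its behaviour at infinity. No argument of this kind can produce the exact identity $U=g(z-b)$ on an open set, which is what unique continuation requires.
\item \emph{The decay improvement.} The bound $|H_Z|\le Cr^{-3}$ comes from a scaling slip. For $U(r\,\cdot)$ with $\varepsilon=r^{-1}$, the microscopic coordinates are the original coordinates of $U$. The bound $C\varepsilon^2$ therefore gives only $|H_Z|\le Cr^{-2}$, which is just \eqref{anncurv}. So even $h_1\to b$ is not established.
\end{itemize}

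The missing idea is that nothing beyond $D_\infty=1$ is needed. Wang's theorem \cite[Theorem~11.1]{Wang17} states that an entire solution whose energy density at infinity equals one is one-dimensional. It uses no stability hypothesis, so the possible instability inside $B_{R_0}$ is irrelevant. First apply the potential extension of Section~\ref{locenergy}, which puts $W$ in the form required there without changing the equation or the energy of $U$. The theorem then gives $U(X)=g(e\cdot X-b)$ at once; this is the paper's entire proof. The graph $h_1$, Lemmas~\ref{heightbd} and~\ref{vertdecomp}, and the Jacobi-field and sliding considerations are all unnecessary.

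If you wanted to avoid Wang's theorem, a route closer to your plan would need two things. You would have to prove that $h_1$ is bounded, which requires a genuine improvement of \eqref{anncurv}. You would then use a Gibbons-type symmetry theorem for solutions tending uniformly to $\pm1$ as $z\to\pm\infty$, not Lemma~\ref{hrigi}.
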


\begin{proof}
By Theorem~\ref{finindex}
and Lemma~\ref{conelimits}, every varifold blowdown is a
hyperplane with multiplicity one. From \eqref{findens}
and~\eqref{conedens}, we therefore obtain
\begin{equation*}
 \lim_{R\to\infty}R^{1-N}\mathcal E_1(U;B_R)
 =\sigma_0\omega_{N-1}.
\end{equation*}
The potential extension described in
Section~\ref{locenergy} verifies the assumptions on $W$
in \cite{Wang17}, while preserving the equation and energy of $U$.
The conclusion follows from \cite[Theorem~11.1]{Wang17}.
\end{proof}

\section{Multiplicity one on closed manifolds}
\label{multproof}

In this section, we prove Theorem~\ref{multone}
and derive its geometric applications. We follow
Chodosh--Mantoulidis \cite[Sections~4--7]{CM20}, using
Theorem~\ref{global} and the graphical estimates of Wang--Wei
\cite{WW} to obtain the local estimates in dimensions
$3\le N\le7$. Away from finitely many points, the highest and
lowest zero graphs determine a positive Jacobi field after normalization.
The comparison argument of \cite[Section~4.2]{CM20} bounds this
field near the excluded points. Once multiplicity one is established,
the convergence and index results of \cite[Section~5]{CM20} apply
directly, and the applications follow as in its Section~6.

We first work with an arbitrary smooth metric $\mathbf g$ on $M$.
Throughout this section, $W$ is even and satisfies \eqref{wells}.
We continue to use $n=N-1$, the heteroclinic $g$, and its energy $\sigma_0$,
and write $k=\sqrt{W''(1)}=\sqrt{W''(-1)}$.
Since $W$ is even and $g(0)=0$, uniqueness of the increasing
heteroclinic implies that $g$ is odd. Thus $g'$ is even and
\begin{equation*}
 \int_{\mathbb R}t\bigl(g'(t)\bigr)^2\,dt=0.
\end{equation*}
The same cancellation holds with an even cutoff. It is used in
\cite[Lemma~7.10]{CM20} to estimate the error in the Dirichlet
barrier construction. Evenness also ensures that $W'$ is odd,
so changing the sign of a solution preserves the equation.
We use both facts below when applying those barriers.

The replacement
\begin{equation*}
 \overline W=\frac{2W}{k^2},\qquad
 \overline\varepsilon=\frac{\sqrt2}{k}\varepsilon
\end{equation*}
puts the potential in the normalization of \cite{CM20}.
It multiplies the energy, $\sigma_0$, and the second variation
by the same positive constant. Thus it changes neither the
normalized varifolds nor the Morse index. The estimates below
are stated in the original normalization.

For a solution of \eqref{closedac}, the second variation is
\begin{equation*}
 \int_M\left(\varepsilon|\nabla\zeta|_{\mathbf g}^2
       +\varepsilon^{-1}W''(u)\zeta^2\right)d\mu_{\mathbf g}.
\end{equation*}
Stability on an open set means that this expression is nonnegative
for every smooth test function compactly supported there.
The normalized varifolds are
\begin{equation*}
 V_j(\Phi)=\frac1{\sigma_0}
 \int_M\Phi\bigl(X,(\nabla u_j(X))^\perp\bigr)
       \varepsilon_j|\nabla u_j|_{\mathbf g}^2\,d\mu_{\mathbf g},
 \qquad \Phi\in C(G_n(M)),
\end{equation*}
where $G_n(M)$ is the bundle of unoriented $n$-planes.
The choice of plane at a critical point does not affect this formula.

\subsection{Local estimates}

All local estimates are taken on smaller coordinate neighborhoods
with a fixed positive distance from the boundary of the equation
domain. Their constants may depend on these neighborhoods, the
energy bound, $W$, and fixed smooth bounds for the metric.
They also apply to smoothly converging sequences of metrics.
At a regular point of a level set, put
\begin{equation}\label{levelcurv}
 B(u)^2=
 \frac{|\nabla^2u|_{\mathbf g}^2
       -|\nabla|\nabla u|_{\mathbf g}|_{\mathbf g}^2}
      {|\nabla u|_{\mathbf g}^2}.
\end{equation}
As in \cite[(1.5)]{WW}, this is the sum of the squared norm of
the second fundamental form and
$|\nabla^{\top}\log|\nabla u|_{\mathbf g}|^2$.

We first recall the graphical estimates under an a priori bound
for \eqref{levelcurv}. The separation estimate used
here is the Riemannian estimate in \cite[Remark~10.9]{WW}.

\begin{lemma}\label{closedest}
Let $3\le N\le7$, and let $v_j$ be stable solutions with parameters
$\varepsilon_j\to0$ in a fixed coordinate neighborhood.
Assume a uniform local energy bound and, on a fixed central phase
region,
\begin{equation}\label{priorbd}
 \varepsilon_j|\nabla v_j|\ge c>0,\qquad B(v_j)\le C.
\end{equation}
On smaller graph neighborhoods, the zero graphs have uniformly
bounded $C^{2,\theta}$ norms for each fixed $\theta\in(0,1)$.
Denote them by $\Gamma_{i,j}$ and their distances to the adjacent
graphs by $D_{i,j}$, taking $D_{i,j}=+\infty$ when there is no
adjacent graph. On each compact interior graph domain $K$,
\begin{equation}\label{closedsep}
 \max_i\sup_K e^{-kD_{i,j}/\varepsilon_j}
     =o(\varepsilon_j^2|\log\varepsilon_j|),
\end{equation}
and their mean curvatures satisfy
\begin{equation}\label{smallmean}
 \max_i\frac{\|H_{i,j}\|_{L^\infty(K)}}
                  {\varepsilon_j|\log\varepsilon_j|}\longrightarrow0.
\end{equation}
In particular, for all sufficiently large $j$,
\begin{equation}\label{logsep}
 D_{i,j}\ge\frac{\varepsilon_j}{k}
 \bigl(2|\log\varepsilon_j|-\log|\log\varepsilon_j|-C_K\bigr).
\end{equation}
\end{lemma}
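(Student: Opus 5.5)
The plan is to reduce Lemma~\ref{closedest} to the Riemannian version of the Wang--Wei theory. Under the a priori bound \eqref{priorbd}, their Theorem~1.1 and Sections~4--11 give the microscopic Fermi coordinates. Those sections also give the stability-based Toda reduction. First, rescale the potential to $\overline W=2W/k^2$, or to $W/k^2$ in the normalization of \cite{WW}. This changes $\varepsilon_j$ by a fixed factor and multiplies energies and second variations by positive constants, so stability and the hypotheses are unchanged.

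The first step is the $C^{2,\theta}$ estimate. We apply \cite[Theorem~1.1]{WW} in its Riemannian form. The curvature bound $B(v_j)\le C$ and the gradient lower bound on the central phase region make the level sets uniformly $C^{1,1}$ graphs in the stretched coordinates. The metric enters only through lower-order terms that are $O(\varepsilon_j)$ after rescaling. The Schauder estimates of \cite[Sections~6--7]{WW} for the remainder $\phi$ and for the phase corrections then give $\|\phi\|_{C^{2,\theta}}\le C(\varepsilon_j^2+\mathcal A)$. Here $\mathcal A=\max e^{-kD_{i,j}/\varepsilon_j}$. The mean curvature of each $\Gamma_{i,j}$, rescaled, is controlled by $\varepsilon_j^2+\mathcal A$ through the projected Toda equations. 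Uniform $C^{2,\theta}$ bounds in original coordinates follow once $\mathcal A=O(\varepsilon_j)$, which the first iteration of \cite[Section~10.1]{WW} already supplies.

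The main step, and the main obstacle, is the separation bound \eqref{closedsep}. This is where the dimension restriction $N\le7$ is needed. We take it from the Riemannian estimate \cite[Remark~10.9]{WW}. Their Section~9 integral argument, based on stability tested with heteroclinic derivatives and the Toda inequalities, gives $\mathcal A\le C\varepsilon_j^2|\log\varepsilon_j|$ as long as $n\le9$. The improvement from $O$ to $o$ should come from the refined iteration in \cite[Section~10.2]{WW}. There the geometric error is $O(\varepsilon^3|\log\varepsilon|^2)$. The ambient curvature adds only terms of size $O(\varepsilon^2)$ times the interaction, which are absorbable, and the matching coefficients in the equation and stability for $kD_{i,j}$ are those computed in the proof of Theorem~\ref{sheet}. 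I would check that the Ricci and ambient curvature terms in the second variation enter with relative size $O(\varepsilon_j^2)$ against the leading interaction. If \cite[Remark~10.9]{WW} states only the $O(\varepsilon^2|\log\varepsilon|)$ bound, the fallback is a blowup contradiction. Assume $\mathcal A\ge c\varepsilon_j^2|\log\varepsilon_j|$ along a subsequence. Normalize the Toda system at that scale, as in \cite[Section~9]{WW}. The resulting limit is a stable solution of the Toda system on $\mathbb R^{n-1}$ with bounded interaction. The radial Hardy argument of Lemma~\ref{lemma} excludes such a limit for these dimensions.

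Once \eqref{closedsep} holds, \eqref{smallmean} follows directly. The projected layer equation expresses $\varepsilon_j^{-1}H_{i,j}$ in the stretched variables as adjacent interactions plus $O(\varepsilon_j^2)$ errors. Dividing by $\varepsilon_j^2|\log\varepsilon_j|$ makes this term tend to zero, and undoing the scaling gives \eqref{smallmean}. Finally, \eqref{logsep} is obtained by taking logarithms in \eqref{closedsep}. From $e^{-kD/\varepsilon}\le\varepsilon^2|\log\varepsilon|e^{C_K}$ we get $kD/\varepsilon\ge2|\log\varepsilon|-\log|\log\varepsilon|-C_K$ for large $j$. The graph distance is comparable to the pointwise adjacent distance up to the $C^{2,\theta}$ control, which is absorbed into $C_K$.
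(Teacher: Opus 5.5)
Your skeleton coincides with the paper's proof. There, \eqref{closedsep} is quoted from the Riemannian estimate \cite[Remark~10.9 and (10.24)]{WW}, and \eqref{logsep} follows by taking logarithms. Next, \eqref{smallmean} comes from the projected layer equation \cite[(2.35)--(2.36)]{CM20}, whose error terms are all $o(\varepsilon_j^2|\log\varepsilon_j|)$ once \eqref{closedsep} is known. The $C^{2,\theta}$ bounds are taken from \cite[Remark~2.6 and (2.22)]{CM20} only after \eqref{closedsep}. The trouble is the material you add around the citation. Neither of your two proposed sources for the little-$o$ works, so they cannot serve as a backup. Your proof therefore rests on the citation exactly as the paper's does.

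First, the ambient curvature does not enter as $O(\varepsilon^2)$ times the interaction. In stretched coordinates the gap $d$ between adjacent layers satisfies, schematically, $\Delta d\approx c_1e^{-kd}-\varepsilon^2\mu d$. Here $\mu=|A|^2+\operatorname{Ric}(\nu,\nu)$ is measured in original units, and $d\approx2|\log\varepsilon|/k$. The curvature therefore contributes an additive, sign-indefinite term of size $\varepsilon^2|\log\varepsilon|$, the same order as the interaction you want to beat. In the Euclidean setting of \cite[Section~10.2]{WW} the corresponding geometric term is nonnegative, and the proof of Theorem~\ref{sheet} uses this to reach $\mathcal A\le C\varepsilon^2$. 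With $\operatorname{Ric}$ present that step breaks.

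Second, your blow-up fallback drops this term. Suppose $\mathcal A_j\ge c\,\varepsilon_j^2|\log\varepsilon_j|$ and you rescale by $\sqrt{\mathcal A_j}$. The curvature term in the stability inequality does vanish, since its relative size is $\varepsilon_j^2/\mathcal A_j\le 1/(c|\log\varepsilon_j|)$. In the equation, however, it is multiplied by $d$ and survives as a constant forcing of size about $2\mu(p)/(kc)$. When $\mu(p)<0$ the limit is $\Delta\tilde d=(\text{interaction})+L$ with $L>0$. Lemma~\ref{lemma} then gives nothing, because $E_*=+\infty$. Moreover, on a base of dimension $N-1\ge3$, the radial solution with $\tilde d(0)=0$ satisfies $\tilde d\ge Lr^2/(2(N-1))$. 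For large $L$ its potential lies below the Hardy potential $(N-3)^2/(4r^2)$, so it is stable. Thus this route cannot exclude $\mathcal A_j\sim c\,\varepsilon_j^2|\log\varepsilon_j|$ with small $c$. It closes only when $\mu(p)\ge0$, or when the base is two-dimensional, where logarithmic cutoffs rule out every stable limit.

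Two smaller points. First, $\mathcal A=O(\varepsilon)$ does not give uniform $C^{2,\theta}$ bounds. A rescaled mean curvature of size $\varepsilon^2+\mathcal A$ gives $[H]_{C^\theta}\lesssim\varepsilon^{-1-\theta}(\varepsilon^2+\mathcal A)$ in original coordinates, so you need $\mathcal A=O(\varepsilon^{1+\theta})$. Doing this step after \eqref{closedsep}, as the paper does, removes the issue. Second, $N\le7$ is not the dimension bound needed for the separation estimate; the cited range is ambient dimension at most ten. In this section the restriction enters through Lemma~\ref{stableest}, via Theorem~\ref{global}.
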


\begin{proof}
The bounds in \eqref{priorbd} yield ordered zero graphs with bounded
second derivatives on smaller cylinders. Their number is bounded by
the local energy, since each graph contributes one transition across
a fixed smaller base disk. After replacing $W$ by $W/k^2$ and
$\varepsilon_j$ by $\varepsilon_j/k$, these graphs satisfy the
hypotheses of the local graphical analysis of Wang--Wei.
The Riemannian estimate \cite[Remark~10.9 and (10.24)]{WW} implies
\eqref{closedsep}. Its dimension bound concerns the ambient dimension
and is satisfied for $N\le7$. Taking logarithms, we obtain
\eqref{logsep}. The local estimate
\cite[Remark~2.6 and (2.22)]{CM20} then yields the stated
$C^{2,\theta}$ bounds.

Let $A_j$ be the largest interaction on a slightly larger graph
domain. We have $A_j=o(\varepsilon_j^2|\log\varepsilon_j|)$ by
\eqref{closedsep}. From the projected equation
\cite[(2.35)--(2.36)]{CM20}, we obtain, for some fixed $\kappa>0$,
\begin{equation*}
 \varepsilon_j\|H_{i,j}\|_{L^\infty(K)}
 \le C A_j+C\varepsilon_j^2
       +C\varepsilon_j^\kappa A_j+C A_j^{1+\kappa}.
\end{equation*}
Division by $\varepsilon_j^2|\log\varepsilon_j|$ proves
\eqref{smallmean}.
\end{proof}

We now obtain the a priori bounds from Theorem~\ref{global}.

\begin{lemma}\label{stableest}
Let $3\le N\le7$, and let $v_j$ be stable solutions of the
Allen--Cahn equation on a fixed open set, with parameters
$\varepsilon_j\to0$ and locally bounded energy.
For each compact interior set and each fixed $\beta\in(0,1)$,
there are $c,C>0$ such that, for all large $j$,
\begin{equation}\label{stablecurv}
 \varepsilon_j|\nabla v_j|\ge c,\qquad B(v_j)\le C
 \quad\text{on }\{|v_j|\le1-\beta\}.
\end{equation}
Consequently, Lemma~\ref{closedest} applies in
smaller zero-set neighborhoods.
\end{lemma}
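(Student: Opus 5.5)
We argue by contradiction and blow up at the scale $\varepsilon_j$. Suppose \eqref{stablecurv} fails for some compact interior set $K$ and fixed $\beta$. Then there are points $X_j\in K$ with $|v_j(X_j)|\le1-\beta$ and either $\varepsilon_j|\nabla v_j(X_j)|\to0$ or $\varepsilon_jB(v_j)(X_j)/\varepsilon_j\to\infty$; in the second case, since $B$ scales like inverse length, this means $\varepsilon_jB(v_j)(X_j)\ge M_j\varepsilon_j$ with $M_j\to\infty$. These are not yet microscopic contradictions, because the curvature of a level set may be unbounded at scale one while remaining small at scale $\varepsilon_j$. We therefore use a point-picking argument in the spirit of \cite{CM20}. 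For the gradient bound alone, we would rescale $w_j(Y)=v_j(\exp_{X_j}(\varepsilon_jY))$. The metrics converge smoothly to the Euclidean metric, so $w_j$ converges in $C^2_{\rm loc}$ to a bounded stable entire solution $U$ on $\mathbb R^N$ with $|U(0)|\le1-\beta$. The monotonicity formula, applied with the local energy bound, gives $\int_{B_R}e(U)\le CR^{N-1}$. Theorem~\ref{global} (here $N\le7$) then gives $U=g(e\cdot X-b)$, so $|\nabla U(0)|=g'(g^{-1}(U(0)))\ge c_\beta>0$, which contradicts $\varepsilon_j|\nabla v_j(X_j)|\to0$. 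This yields the first bound in \eqref{stablecurv}.

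For the curvature bound we would use the doubling argument on the quantity $\rho_j(X)=B(v_j)(X)\,\mathrm{dist}(X,\partial\mathcal O)$ over the region $\{|v_j|\le1-\beta/2\}\cap\mathcal O$, where $\mathcal O$ is a slightly larger neighborhood. If $\sup\rho_j\to\infty$, we choose near-maximizers $X_j$ and set $\lambda_j=B(v_j)(X_j)^{-1}$. The standard selection, which replaces $X_j$ by a point where the doubling fails, gives $B\le2\lambda_j^{-1}$ on $B_{L_j\lambda_j}(X_j)$ within the phase region, with $L_j\to\infty$. By the microscopic step above, $\lambda_j\gg\varepsilon_j$, because at scale $\varepsilon_j$ the solution is close in $C^2$ to a heteroclinic, whose $B$ is $o(1/\varepsilon_j)$. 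Rescaling by $\lambda_j$ produces stable solutions with parameter $\varepsilon_j/\lambda_j\to0$, level-set curvature bounded by $2$ on expanding balls, value $B=1$ at the origin, and locally bounded energy by monotonicity.

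The main obstacle is passing to a contradiction at this intermediate scale, since the limit is now a varifold and no longer a smooth function. We would argue as follows. The rescaled hypotheses \eqref{priorbd} hold locally, with the gradient bound from the first step and curvature at most $2$. Lemma~\ref{closedest} therefore applies on fixed balls and gives ordered zero graphs with uniform $C^{2,\theta}$ bounds and mean curvatures tending to zero by \eqref{smallmean}. The graphs converge in $C^2$ to a smooth stable minimal hypersurface, properly embedded in $\mathbb R^N$ with possible multiplicity. Its energy growth is $O(R^{N-1})$, so its tangent cone at infinity is a stable cone and hence planar for $N\le7$ by \cite{Simons}. Density monotonicity then makes each sheet a hyperplane.

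The curvature thus tends to zero in $C^{0}$ on the sheets. We still need to control the term $|\nabla^\top\log|\nabla v_j||$ in $B$. This would follow from the $C^{2,\theta}$ graphical estimates, which also yield convergence of the profile corrections, in the manner of \cite[Corollary~1.2]{WW}. It contradicts $B=1$ at the origin. We expect the delicate point to be the verification that the $\phi$ and interaction terms in the $\log|\nabla v|$ part vanish in the limit. For this we would invoke \eqref{closedsep} together with the $C^{2,\theta}$ control of $\phi$ from \cite{WW}. Once \eqref{stablecurv} holds, Lemma~\ref{closedest} applies directly.
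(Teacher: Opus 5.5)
Your proposal follows the paper's proof essentially step for step. The steps are: the gradient bound by blowing up at scale $\varepsilon_j$ and applying Theorem~\ref{global} (the paper packages this as the hypothesis of \cite[Proposition~3.7]{CM20}); then $\varepsilon_jB(v_j)\to0$; point selection for $B$ times distance to the boundary; Lemma~\ref{closedest} at the intermediate scale; planarity of the limiting hypersurface; and control of the tangential $\log|\nabla v|$ part through \eqref{closedsep}. For the last step the paper cites \cite[(2.25)--(2.30)]{CM20}, which gives the bound $o(1)+C\widehat\varepsilon_j+C\widehat\varepsilon_j^{-1}(\widehat\varepsilon_j^2|\log\widehat\varepsilon_j|)^{1+\kappa}$.

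One justification needs replacing. Density monotonicity cannot flatten a component whose blowdown plane has multiplicity at least two; stability has to be used again there. Cite the stable Bernstein theorem with Euclidean area growth for $N\le7$, i.e.\ \cite{SS81} together with \cite{Simons}, with stability of the limit coming from \cite{Ton05,TW12}, as the paper does.
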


\begin{proof}
From the local monotonicity estimates
\cite[Appendix~B]{Gua18}, we obtain
\begin{equation}\label{localdens}
 \mathcal E_{\varepsilon_j}(v_j;B_r(p))\le Cr^{N-1}
 \qquad(\varepsilon_j\le r\le r_0)
\end{equation}
on every fixed smaller open set.
Theorem~\ref{global} therefore verifies the hypothesis of
\cite[Proposition~3.7]{CM20}, and implies the lower gradient bound.
Indeed, the metrics become Euclidean under rescaling by
$\varepsilon_j$, and every limit at a central-phase point is a
stable entire solution with the energy growth in
Theorem~\ref{global}. It is a heteroclinic. From interior estimates
in the same rescaling, we also obtain
\begin{equation}\label{microcurv}
 \varepsilon_j B(v_j)\longrightarrow0
\end{equation}
uniformly on smaller central phase regions.

Suppose the curvature bound in \eqref{stablecurv}
fails. By point selection for $B(v_j)$ times distance to the
boundary, we obtain $p_j$ and
$\lambda_j=B(v_j)(p_j)\to\infty$ such that, after rescaling
lengths by $\lambda_j$, the domains exhaust $\mathbb R^N$,
$B\le2$ on each fixed ball in the central phase region, and
$B(0)=1$. We use a slightly larger phase interval when making
this selection. The rescaled parameters satisfy
\begin{equation*}
 \widehat\varepsilon_j=\lambda_j\varepsilon_j\longrightarrow0
\end{equation*}
by \eqref{microcurv}. The metrics converge
smoothly to the Euclidean metric, and \eqref{localdens}
implies uniform area growth bounds on fixed balls.

The a priori curvature and gradient bounds now allow us to
apply Lemma~\ref{closedest} to the rescaled
solutions. Their zero graphs have uniform $C^{2,\theta}$ bounds
and mean curvatures tending to zero. The limiting support is
therefore a smooth embedded minimal hypersurface, possibly
with several components and integer multiplicities.
Each component is complete, stable, and has area growth at
most $CR^{N-1}$; stability follows from \cite{Ton05,TW12}.
The stable Bernstein theorem with this area growth bound in
$N\le7$, following from Simons' cone theorem and
Schoen--Simon sheeting \cite{SS81,Simons}, makes these components
parallel hyperplanes.

The zero graphs converge to those planes in $C^2$ on smaller
neighborhoods. The remaining tangential part of $B$ tends to
zero as well: \cite[(2.25)--(2.30)]{CM20} bound it in these
coordinates by
\begin{equation*}
 o(1)+C\widehat\varepsilon_j
 +C\widehat\varepsilon_j^{-1}
       (\widehat\varepsilon_j^2
                   |\log\widehat\varepsilon_j|)^{1+\kappa}
 \longrightarrow0.
\end{equation*}
The $o(1)$ includes the Christoffel symbols in the limiting
planar coordinates. This contradicts $B(0)=1$ and proves
\eqref{stablecurv}.
Thus the curvature rescaling argument of
\cite[Theorem~3.4]{CM20} applies in the present dimensions,
with Theorem~\ref{global} and
Lemma~\ref{closedest} establishing the two required estimates.
\end{proof}

We also use the additional derivative estimate
\cite[(2.7)]{CM20}. On smaller stable neighborhoods, we obtain
\begin{equation}\label{curvderiv}
 \varepsilon_j|\nabla A_{\Gamma_{i,j}}|
 +\varepsilon_j^{1+\theta}
           [\nabla A_{\Gamma_{i,j}}]_\theta\le C_\theta.
\end{equation}
The derivatives and seminorms are computed in the graph
coordinates. This weighted bound will be used in the Dirichlet
construction below.

\subsection{Convergence away from finitely many points}

Let $u_j$ satisfy \eqref{closedac} and
\eqref{closedbd}, with no additional assumption on
the metric. From compactness and bounded-index regularity
\cite{Gua18,HT,TW12}, we obtain, after a subsequence,
\begin{equation}\label{limitdecom}
 V_j\rightharpoonup V
      =\sum_{\ell=1}^J Q_\ell|\Sigma_\ell|,
 \qquad Q_\ell\in\mathbb N.
\end{equation}
We consider a nonzero limit. For $3\le N\le7$, its support is
a smooth embedded closed minimal hypersurface and has finitely
many connected components. Regularity across the possible
points of index concentration follows from
\cite[Theorem~3.8 and Proposition~3.10]{Gua18}.
This concerns the limiting support, not the convergence of the
zero sets at these points.

There is a finite set $S\subset M$, with at most $I_0$ points,
such that every point of $M\setminus S$ has a neighborhood on
which $u_j$ is stable for all sufficiently large $j$.
To see this, pass diagonally over a countable basis of balls so
that stability on each ball either holds eventually or fails
eventually. The set of points without an eventually stable
neighborhood has at most $I_0$ elements: otherwise $I_0+1$
disjoint such neighborhoods would admit that many independent
negative variations. Only local stability on these
neighborhoods is asserted.

Fix a component $\Sigma$ of the limiting support, with
multiplicity $Q$, and a tubular neighborhood disjoint from the
other components. On the component of $M$ containing $\Sigma$,
we have $-1<u_j<1$ for all large $j$. Indeed, a solution
attaining a well is constant there and contributes no mass.
For the analysis on $\Sigma$, replace $S$ by $S\cap\Sigma$.
On compact subsets of $\Sigma\setminus S$,
we obtain from Lemmas~\ref{closedest} and~\ref{stableest}
complete ordered zero graphs. Their number is $Q$:
proper transverse projection implies a locally constant finite
number of roots; the microscopic heteroclinic convergence and
the separation estimate imply energy $\sigma_0$ per transition.
Exponential decay between the transitions and equipartition
then show that this number equals the multiplicity in
\eqref{limitdecom}.

Suppose first that $\Sigma$ is two-sided, with unit normal $\nu$.
In its normal coordinates
\begin{equation*}
 (y,z)\longmapsto\exp_y(z\nu(y)),
\end{equation*}
the local orderings agree on overlaps. We obtain heights on
compact subsets of $\Sigma\setminus S$ satisfying
\begin{equation}\label{ordheights}
 f_{1,j}<\cdots<f_{Q,j},\qquad
 f_{i,j}\longrightarrow0\quad\text{locally in }C^2.
\end{equation}
The convergence also holds in each fixed $C^{2,\theta}$ norm,
by first using a larger H\"older exponent in the local estimates.
The graphs are defined on successively larger members of an
exhaustion of $\Sigma\setminus S$.

When $Q\ge2$, set
\begin{equation*}
 w_j=f_{Q,j}-f_{1,j}>0,\qquad L_j=|\log\varepsilon_j|.
\end{equation*}
For each $K\Subset\Sigma\setminus S$, we obtain from the preceding estimates
\begin{equation}\label{heightlow}
 w_j\ge c_K\varepsilon_jL_j
\end{equation}
and
\begin{equation}\label{heightpde}
 \frac{\|H_{Q,j}-H_{1,j}\|_{L^\infty(K)}}
      {\varepsilon_jL_j}\longrightarrow0.
\end{equation}
Here and below mean curvatures are pulled back to the fixed
hypersurface $\Sigma$.

\subsection{Comparison near the excluded points}

For $p\in\Sigma$, denote by $D_\rho(p)$ the intrinsic ball
of radius $\rho$ in $\Sigma$. The next estimate is the local
comparison used in \cite[Section~4.2]{CM20}. We verify the
hypotheses of its barrier construction and keep track of the
independence from the inner radius.

\begin{lemma}\label{heightest}
Assume that $\Sigma$ is two-sided and $Q\ge2$.
For $p\in S$, choose $\rho>0$ sufficiently small that
$\overline{D_{4\rho}(p)}$ is a coordinate disk containing no
other point of $S$. There is $C_\rho<\infty$ such that, for
every fixed $r\in(0,\rho/2)$ and all sufficiently large $j$,
\begin{equation}\label{heightsup}
 \sup_{D_\rho(p)\setminus D_r(p)}w_j
 \le C_\rho\left(\varepsilon_jL_j
           +\sup_{\partial D_{2\rho}(p)}w_j\right).
\end{equation}
The constant is independent of $r$ and $j$.
\end{lemma}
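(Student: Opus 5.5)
The plan is to follow the Chodosh--Mantoulidis comparison argument \cite[Section~4.2]{CM20}. The idea is to trap the top and bottom zero graphs, on the annulus $D_{2\rho}(p)\setminus D_r(p)$, between Allen--Cahn sub- and supersolutions. These barriers are built from Dirichlet solutions with the heteroclinic profile over graphs of a positive Jacobi-type function. The barriers are stable and unique, so a sliding argument yields the comparison.

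\textbf{Step 1: the boundary data.} On $\partial D_{2\rho}(p)$, Lemmas~\ref{closedest}--\ref{stableest} give $C^{2,\theta}$ control of the graphs $f_{1,j},f_{Q,j}$. We then solve, on the punctured disk $D_{2\rho}(p)\setminus\{p\}$, a linearized problem for the highest graph: $J_\Sigma\psi=0$ approximately, with boundary value $\sup_{\partial D_{2\rho}}w_j+\varepsilon_jL_j$. Since $\rho$ is small, the Jacobi operator is coercive on $D_{4\rho}(p)$, and a positive supersolution $\psi$ with $\psi\le C_\rho(\text{boundary data})$ exists. The key point is that in dimension $n\ge2$ a point has zero capacity. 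Hence we may use a family of cutoffs about $p$ whose Dirichlet cost tends to zero. This makes the bound independent of the inner radius $r$, so $C_\rho$ does not depend on $r$.

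\textbf{Step 2: barriers.} Following \cite[Theorem~7.4 and Lemma~7.10]{CM20}, we construct the Allen--Cahn Dirichlet solutions $\underline u$ and $\overline u$ on thin neighborhoods of the graphs $\pm\psi$ above $f_{Q,j}$ and below $f_{1,j}$. The profile is $g(\pm(z-\psi)/\varepsilon_j)$, corrected at order $\varepsilon_j^2$. Evenness of $W$ makes $g$ odd and kills the first moment $\int t g'^2$, which keeps the barrier error at the required order. The curvature derivative bound \eqref{curvderiv} controls the geometric error terms. Changing the sign of the solution preserves the equation, so both the upper and the lower barrier are available.

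\textbf{Step 3: comparison.} Outside $D_r(p)$, the graphs satisfy the separation estimate \eqref{logsep} and the mean curvature bound \eqref{heightpde}. We slide the barriers down from a large height, using the strong maximum principle at the first contact. Contact can occur neither on $\partial D_{2\rho}$, by the choice of boundary data, nor on the inner boundary. The inner boundary is handled by letting the barrier region extend into $D_r(p)$ and using the capacity cutoff. Where the zero set is uncontrolled, the barrier's transition must not touch it; for this we use the fact that $u_j$ stays near a well away from its zero set. The conclusion is $f_{Q,j}\le\psi+C\varepsilon_jL_j$, and symmetrically $f_{1,j}\ge-\psi-C\varepsilon_jL_j$, on $D_\rho\setminus D_r$. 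This gives \eqref{heightsup}.

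The main obstacle is the behaviour near $p$. The zero set there is not graphical, and we must prevent it from touching the barrier inside $D_r(p)$ while keeping every constant independent of $r$. I would first try the approach of \cite{CM20} directly: define the barrier only over $D_{2\rho}\setminus D_{r/2}$, and at the inner boundary impose a barrier boundary value in pure phase. Positivity of the height data at the inner boundary is replaced by requiring a pure well there, which the slab confinement of $\{|u_j|\le1-\beta\}$ near $\Sigma$ provides.
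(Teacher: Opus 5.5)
You have the right overall mechanism (the Dirichlet barriers of \cite[Theorem~7.4]{CM20}, sliding, and the strong maximum principle), but two steps fail as written.

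\textbf{Centring of the barriers.} You place the barriers over the graphs of $\pm\psi$, where $\psi$ is fixed on $\partial D_{2\rho}(p)$ by $\sup_{\partial D_{2\rho}(p)}w_j+\varepsilon_jL_j$. Nothing gives $f_{Q,j}\le\psi$ or $f_{1,j}\ge-\psi$ on that boundary. Nothing in the hypotheses prevents all $Q$ sheets from being displaced or tilted together relative to $\Sigma$ by much more than their separation $w_j$. In that case the comparison cannot start.

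Sliding by a constant does not repair this. It only yields a bound in terms of $\sup_{\partial D_{2\rho}}f_{Q,j}-\inf_{\partial D_{2\rho}}f_{1,j}$, which is not controlled by $\sup_{\partial D_{2\rho}}w_j$. After division by $a_j=w_j(y_0)$ it need not even stay bounded.

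The missing idea is to match each barrier to the actual boundary values of its own sheet. The paper takes minimal disks $d^{\pm}_{j,t}$ over all of $D_{2\rho}(p)$ with boundary values $f_{Q,j}+t$ and $f_{1,j}+t$, obtained from the implicit function theorem for the minimal graph equation. It builds the barriers over these disks. \cite[Theorem~7.4]{CM20} needs a minimal base, not a solution of the linearized equation. It also needs the weighted curvature bounds, which the paper obtains from \eqref{curvderiv} by interpolation, and the Dirichlet spectral gap for small $\rho$. Only at the end does the paper bound $d^+_{j,0}-d^-_{j,0}$, which solves a uniformly elliptic equation, by $C_\rho\sup_{\partial D_{2\rho}}w_j$.

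\textbf{Treatment of $p$.} This rests on the wrong mechanism. No capacity cutoff and no inner boundary are needed, because $\Sigma$ is smooth through $p$. The barriers are genuine solutions of the same equation as the pulled-back $u_j$ on the whole Fermi cylinder over $D_{2\rho}(p)$. The strong maximum principle therefore rules out an interior first contact, whatever the non-graphical zero set does near $p$.

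The only boundary analysis in the paper's argument is on two pieces:
\begin{itemize}
\item on the lateral boundary over the fixed stable annulus, where the zeros are graphs and $1-u_j$ decays exponentially in $\varepsilon_j^{-1}$ times the distance to the zero set; this is combined with the strict bound $b_{j,t}\le1-c\varepsilon_j^m$ on the whole boundary;
\item on the caps, which stay a fixed distance from the transition region.
\end{itemize}
This gives $t_j\le C_\rho\varepsilon_jL_j$ at the first contact. The constant $C_\rho$ is independent of $r$ because $r$ enters only through where $f_{Q,j}$ is defined.

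Your fallback does not work. The inner wall over $\partial D_{r/2}(p)$ crosses the transition slab, so slab confinement gives no pure phase there. Pure-phase Dirichlet data on that wall forces the barrier's transition to bend along it, which is outside the scope of \cite[Theorem~7.4]{CM20}. Near $\partial D_r(p)$ the barrier's zero set is then no longer close to its base graph, and any constant would depend on $r$.

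Likewise, invoking the well decay of $u_j$ to keep the barrier away from the zeros near $p$ is circular. That those zeros lie below the barrier is an output of the global comparison, not an input.
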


\begin{proof}
The zero graphs converge in $C^{2,\theta}$ on the fixed annulus
$D_{4\rho}(p)\setminus\overline{D_{\rho/2}(p)}$.
After rescaling the disk by $\rho^{-1}$, we obtain from the implicit function
theorem for the minimal graph equation minimal disks
\begin{equation*}
 d_{j,t}^{+},\ d_{j,t}^{-}:D_{2\rho}(p)\longrightarrow\mathbb R,
 \qquad |t|<t_0,
\end{equation*}
with boundary values $f_{Q,j}+t$ and $f_{1,j}+t$, respectively.
Their derivatives in $t$ solve the corresponding linear
Dirichlet equations with boundary value one. On a sufficiently
small disk these derivatives are bounded above and below by
positive constants. Each family therefore foliates a fixed
smaller tubular neighborhood. The difference of the two disks
at $t=0$ satisfies a uniformly elliptic equation, and from the
Dirichlet estimate we obtain
\begin{equation}\label{diskbound}
 \|d_{j,0}^{+}-d_{j,0}^{-}\|_{L^\infty(D_{2\rho}(p))}
 \le C_\rho\|w_j\|_{L^\infty(\partial D_{2\rho}(p))}.
\end{equation}

We next verify the smallness conditions in
\cite[(7.3)--(7.6)]{CM20}. We choose exponents
$\theta<\theta_+<1$, with $\theta$ above the lower bound
required in \cite[Theorem~7.4]{CM20}, and use
\eqref{curvderiv} with exponent $\theta_+$ on the fixed stable
annulus containing the boundary of the disk. We take
$t_0=\rho\tau_0$, where $\tau_0>0$ is fixed and sufficiently
small, and then fix $\rho$. Let $A_{j,t}$ denote
the second fundamental form of either family, pulled back to
the fixed disk by its graph parametrization. The boundary
values are $f_{Q,j}+t$ and $f_{1,j}+t$, so their tangential
derivatives are independent of $t$. By \eqref{curvderiv}, the
weighted third derivatives of these boundary values are bounded.
The differentiated minimal graph equations are uniformly
elliptic for $|t|\le t_0$. Their boundary Schauder estimates
therefore yield
\begin{equation*}
 \sup_{|t|\le t_0}
 \left(\varepsilon_j\|\nabla A_{j,t}\|_{C^0}
 +\varepsilon_j^{1+\theta_+}
       [\nabla A_{j,t}]_{\theta_+}\right)\le C_\rho.
\end{equation*}
The constant is independent of $j$ and $t$.

The minimal disks converge in $C^{2,\theta_+}$ to a smooth
family, uniformly for $|t|\le t_0$. We denote its second
fundamental forms by $A_{\infty,t}$ and set
\begin{equation*}
 \delta_{j,\rho}
 =\sup_{|t|\le t_0}\|A_{j,t}-A_{\infty,t}\|_{C^0}
 \longrightarrow0.
\end{equation*}
Interpolation on the fixed disk, using the preceding weighted
bound, implies
\begin{align*}
 \varepsilon_j\|D(A_{j,t}-A_{\infty,t})\|_{C^0}
 &\le C_\rho\left(
 \delta_{j,\rho}^{\theta_+/(1+\theta_+)}
 +\varepsilon_j\delta_{j,\rho}\right),\\
 \varepsilon_j^{1+\theta}
 [D(A_{j,t}-A_{\infty,t})]_\theta
 &\le C_\rho\left(
 \delta_{j,\rho}^{(\theta_+-\theta)/(1+\theta_+)}
 +(\varepsilon_j\delta_{j,\rho})^{1-\theta/\theta_+}\right).
\end{align*}
The limiting family is smooth, with uniform bounds in $t$.
The connection terms are controlled by the $C^{2,\theta_+}$
convergence. Hence, for this fixed $\rho$,
\begin{equation*}
 \sup_{|t|\le t_0}
 \left(\varepsilon_j\|\nabla A_{j,t}\|_{C^0}
 +\varepsilon_j^{1+\theta}[\nabla A_{j,t}]_\theta\right)
 \longrightarrow0.
\end{equation*}

After lengths are rescaled by $\rho^{-1}$, the parameter is
$\varepsilon_j/\rho$. The second fundamental form and its
covariant derivative acquire factors $\rho$ and $\rho^2$,
respectively. Each of the two weighted terms in the last
display consequently acquires a factor $\rho$ and still tends
to zero. The induced metrics have uniformly bounded
$C^{1,\theta}$ norms on the rescaled disk, so their weighted
derivatives tend to zero as well. The remaining, unweighted
geometry in \cite[(7.3) and (7.5)]{CM20} is small when
$\rho$ is small and then $j$ is large. Indeed, the limiting
disk families become flat under this rescaling as
$\rho\downarrow0$, uniformly for $|t|\le\rho\tau_0$.
For fixed $\rho$, the Dirichlet Jacobi operators converge as
$j\to\infty$ to those of the limiting disk families. As
$\rho\downarrow0$, these latter operators approach the
Euclidean Dirichlet Laplacian on the rescaled disk. Its positive spectral gap
persists for these small perturbations, which verifies
\cite[(7.9)]{CM20}.

We first choose $\rho$ to meet these geometric and spectral
conditions. With $\rho$ fixed, we choose $j$ so that the weighted
derivatives and $\varepsilon_j/\rho$ meet the bounds in
\cite[Theorem~7.4]{CM20}. These choices are uniform in $t$
and depend only on the fixed outer annulus, hence are independent
of the inner radius $r$. The theorem applies to both families
in the present ambient dimension.

We use its boundary data from \cite[(4.19)--(4.28)]{CM20},
with a fixed large constant $B$. Returning from the rescaled
disk changes $\varepsilon_j|\log\varepsilon_j|$ by
$O_\rho(\varepsilon_j)$. With $\rho$ fixed, we absorb this
change by increasing $B$ once and taking $j$ sufficiently large.
On the constant parts of the boundary, the error
$\varepsilon_j^3=\rho^3(\varepsilon_j/\rho)^3$ is
$o((\varepsilon_j/\rho)^2)$, as required by the rescaled
boundary norm in that theorem.
For the upper family, we change the sign of $u_j$ if necessary so that the phase above its
highest zero is positive. In Fermi coordinates $(y,s)$ about
the disk, the resulting solutions $b_{j,t}$ depend continuously
on $t$ and have boundary values
\begin{equation}\label{barrierbd}
 b_{j,t}=1-\varepsilon_j^3\quad\text{for }s\ge2B\varepsilon_jL_j,
 \qquad
 b_{j,t}=-1\quad\text{for }s\le-2B\varepsilon_jL_j.
\end{equation}
We also need a strict upper bound on the whole boundary.
The explicit boundary values in that construction and the
exponential decay of $1-g$ yield fixed
$m>3$ and $c>0$ such that
\begin{equation}\label{upperbd}
 -1\le b_{j,t}\le1-c\varepsilon_j^m
 \quad\text{on the whole boundary}.
\end{equation}
Indeed, the central part is $g(s/\varepsilon_j)$ plus a
nonnegative $o(1)$ multiple of $g'(s/\varepsilon_j)$.
Since $g'/(1-g)$ is bounded on $[0,\infty)$, its deficit
from $1$ on $|s|\le2B\varepsilon_jL_j$ is at least
$c\varepsilon_j^{2kB}$. The remaining part interpolates
with $1-\varepsilon_j^3$.
We take $m>\max\{3,2kB\}$ and obtain
\eqref{upperbd}.

The same theorem shows that the zero set of $b_{j,t}$ is at
distance $o(\varepsilon_j)$ from the disk and that its central
phase region has width $O(\varepsilon_j)$. The constants are
uniform in $j,t$. The normal shift is
$O(\varepsilon_j^{2-2\alpha})$, where we choose $\alpha<1/2$
sufficiently small for \cite[Theorem~7.4]{CM20}.
This is the transition estimate \cite[(4.29)]{CM20}.

We now apply the sliding argument of
\cite[(4.30)--(4.39)]{CM20}. For each $t$, we pull $u_j$ back
using the Fermi map of $d_{j,t}^{+}$. The coordinate cylinder
is fixed, while this map, the pulled-back metric, and the
pulled-back solution vary continuously with $t$. We use $u_j$
for this pullback throughout the sliding argument. For each fixed
$t$, it and $b_{j,t}$ solve the same equation with the same
metric. The parameter dependence in \cite[Theorem~7.4]{CM20}
also makes $b_{j,t}$ continuous in $t$.
At $t=t_0$, we have $b_{j,t_0}<u_j$: the transition regions are separated,
the boundary inequalities are strict, and the maximum principle
applies in the two well regions. The inequality fails for
$t=-t_0$. At the first contact value $t_j$, we have
$b_{j,t_j}\le u_j$. The strong maximum principle for their
difference implies a boundary contact, unless the solutions
coincide, in which case a boundary contact can again be used.
Also $t_j\to0$, since the initial strict comparison holds
for every fixed positive $t$ when $j$ is sufficiently large.

We next prove $t_j\le C_\rho\varepsilon_jL_j$.
From exponential decay away from the zero set, we obtain
\begin{equation*}
 1-u_j\le C\exp\bigl(-c_0\varepsilon_j^{-1}
                   \operatorname{dist}_{\mathbf g}(\,\cdot\,,\{u_j=0\})\bigr)
\end{equation*}
in the upper phase. A boundary contact cannot have
$s\le-2B\varepsilon_jL_j$, by \eqref{barrierbd}
and $u_j>-1$. On the remaining lateral boundary, the positive
speed of the disks and the small slopes of the zero graphs
show that the signed distance above the highest zero graph
is at least
\begin{equation*}
 c_\rho t_j-C_\rho B\varepsilon_jL_j.
\end{equation*}
The comparison is made on the fixed annulus, where all zeros
are graphs. Points outside this annulus and the horizontal
caps either satisfy the lower boundary condition in
\eqref{barrierbd} or stay a fixed distance above
the transition region. Thus, if
$t_j>C_\rho\varepsilon_jL_j$ with a sufficiently large fixed
constant, exponential decay implies
$u_j>1-c\varepsilon_j^m\ge b_{j,t_j}$ at every possible
contact point. This contradiction proves the bound for $t_j$.

At a zero of $u_j$, the inequality $b_{j,t_j}\le u_j$ and the
transition estimate place that zero below the upper disk,
up to $C_\rho\varepsilon_j$. Consequently,
\begin{equation*}
 f_{Q,j}\le d_{j,0}^{+}+C_\rho\varepsilon_jL_j
 \quad\text{on }D_\rho(p)\setminus D_r(p).
\end{equation*}
We repeat the argument below the lowest graph, with the normal and
phase reversed, and obtain
\begin{equation*}
 f_{1,j}\ge d_{j,0}^{-}-C_\rho\varepsilon_jL_j.
\end{equation*}
Subtract these inequalities and use \eqref{diskbound}.
All constructions use only the fixed outer annulus. The inner
radius $r$ only specifies where the zero graphs are already
defined. Hence the constants are independent of $r$.
\end{proof}

\subsection{Positive Jacobi fields}

\begin{proposition}\label{closedjac}
Let $V$ be the nonzero limit in \eqref{limitdecom}
for an arbitrary smooth metric, and let $\Sigma$ be a connected
component of its support. If $\Sigma$ is two-sided and has
multiplicity at least two, it carries a smooth positive Jacobi
field. If $\Sigma$ is one-sided, its multiplicity is even and
its normal double cover carries a smooth positive Jacobi field.
\end{proposition}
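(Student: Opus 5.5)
For the two-sided case with $Q\ge2$, I would normalize the height difference $w_j=f_{Q,j}-f_{1,j}$ and pass to a limit, following the scheme of Proposition~\ref{nonflat}. Fix a base point $y_0\in\Sigma\setminus S$ and set $v_j=w_j/w_j(y_0)$. The difference of the mean curvatures of the highest and lowest graphs, written through the exact integral identity
\begin{equation*}
 H(f_{Q,j})-H(f_{1,j})=\int_0^1DH\bigl(f_{1,j}+t w_j\bigr)[w_j]\,dt ,
\end{equation*}
gives $L_jv_j=h_j$ with $L_j\to J_\Sigma=\Delta_\Sigma+|A_\Sigma|^2+\operatorname{Ric}(\nu,\nu)$, by the $C^{2,\theta}$ convergence in \eqref{ordheights}. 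Since \eqref{heightlow} gives $w_j(y_0)\ge c\,\varepsilon_jL_j$, the estimate \eqref{heightpde} yields $\|h_j\|_{L^\infty(K)}\to0$ on every $K\Subset\Sigma\setminus S$.

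I would then use the inhomogeneous Harnack inequality on a connected exhaustion of $\Sigma\setminus S$; this set is connected because $S$ is finite and $n\ge2$. It gives locally uniform bounds $v_j\le C_K(1+o(1))$. Interior Schauder estimates and a diagonal extraction then produce $v_j\to v$ in $C^2_{\rm loc}(\Sigma\setminus S)$, with $J_\Sigma v=0$, $v\ge0$ and $v(y_0)=1$. The strong maximum principle gives $v>0$ on $\Sigma\setminus S$.

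The main obstacle is extending $v$ across the points of $S$, and here Lemma~\ref{heightest} is the key input. Apply \eqref{heightsup} with $r\downarrow0$ and divide by $w_j(y_0)$. The term $\varepsilon_jL_j/w_j(y_0)$ is bounded by \eqref{heightlow}, and $\sup_{\partial D_{2\rho}}w_j/w_j(y_0)$ is bounded by Harnack on the fixed annulus. Since $C_\rho$ does not depend on $r$, this gives $\sup_{D_\rho(p)\setminus\{p\}}v\le C$. So $v$ is a bounded positive solution of $J_\Sigma v=0$ on a punctured disk. For $n\ge2$ a point has zero capacity, so the singularity is removable: $v$ is weakly a solution across $p$, hence smooth by elliptic regularity. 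The strong maximum principle again gives $v>0$ everywhere on $\Sigma$. The delicate point is the order of limits: fix $\rho$, let $j\to\infty$ at each fixed $r$, and only then let $r\to0$, using the $r$-independence of $C_\rho$.

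For the one-sided case, I would lift to the normal double cover $\widetilde\Sigma$ inside the orientation double cover of the tubular neighborhood. The solutions $u_j$ lift to this two-sided setting, and the preimage of $S$ stays finite. Over a contractible patch of $\Sigma\setminus S$ the $Q$ ordered graphs lift, and going around an orientation-reversing loop reverses their order. This reversal has to respect the phases: the phase below the lowest sheet and the phase above the highest must be exchanged consistently with the alternating pattern $s_i=(-1)^{i-1}$. Because the region near $\Sigma$ on either side lies in the same connected pure phase far from $\Sigma$, the lowest and highest phases coincide, which forces $Q$ to be even. Evenness of $W$ is what allows the sign flip in the lifted comparison. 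Finally, on $\widetilde\Sigma$ the lifted heights are ordered with a consistent normal, and the argument above, including Lemma~\ref{heightest} applied on the lifted disks, yields a smooth positive Jacobi field on the double cover.
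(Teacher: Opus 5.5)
Your proposal is correct and follows the paper's proof essentially step for step. The shared steps are the normalized difference $w_j/w_j(y_0)$ of the extreme graphs, Harnack on the connected set $\Sigma\setminus S$, boundedness near $S$ from the $r$-independent bound \eqref{heightsup} divided by $w_j(y_0)$, removability by zero capacity, and in the one-sided case the parity argument on the normal double cover (one phase on both sides of the one-sided $\Sigma$) followed by the same construction for the highest and lowest lifted graphs. One cosmetic point: the right-hand sides $h_j$ are only known to be small in $L^\infty$, so the elliptic estimates give $C^{1,\alpha}_{\rm loc}$ rather than $C^2_{\rm loc}$ convergence (the paper states $C^1$). This suffices, since the limit is then a smooth solution by elliptic regularity.
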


\begin{proof}
First suppose that $\Sigma$ is two-sided and $Q\ge2$.
Let $H(f)$ be scalar mean curvature of a normal graph, pulled
back to $\Sigma$, with
\begin{equation*}
 DH(0)=-J_\Sigma,\qquad
 J_\Sigma=\Delta_\Sigma+|A_\Sigma|^2
                  +\operatorname{Ric}_{\mathbf g}(\nu,\nu).
\end{equation*}
From the normal graph formula \cite[Appendix~A]{CM20} and the
fundamental theorem of calculus, we obtain
\begin{equation*}
 H(f_{Q,j})-H(f_{1,j})
 =\int_0^1DH\bigl(f_{1,j}+t(f_{Q,j}-f_{1,j})\bigr)[w_j]dt.
\end{equation*}
Thus, as in \cite[(4.3)--(4.5)]{CM20},
\begin{equation}\label{diffeq}
 \mathcal L_jw_j=-(H_{Q,j}-H_{1,j}),\qquad
 \mathcal L_j\longrightarrow J_\Sigma
\end{equation}
on compact subsets of $\Sigma\setminus S$.
The operators are uniformly elliptic, with uniformly H\"older
continuous coefficients, by \eqref{ordheights}.

Fix $y_0\in\Sigma\setminus S$ and set
\begin{equation*}
 a_j=w_j(y_0),\qquad v_j=w_j/a_j.
\end{equation*}
Then $v_j>0$, $v_j(y_0)=1$, and
$a_j\ge c\varepsilon_jL_j$ by \eqref{heightlow}.
Equations~\eqref{heightpde} and \eqref{diffeq} yield
\begin{equation*}
 \mathcal L_jv_j=F_j,\qquad
 \|F_j\|_{L^\infty(K)}\longrightarrow0
 \quad(K\Subset\Sigma\setminus S).
\end{equation*}
We follow the local argument for these normalized differences in
\cite[proof of Theorem~4.1 and (4.5)--(4.15)]{CM20}.
Since $n\ge2$, the punctured hypersurface $\Sigma\setminus S$
is connected. The inhomogeneous Harnack inequality and interior
estimates then yield, after a diagonal subsequence,
\begin{equation}\label{punctjac}
 v_j\longrightarrow v\quad\text{locally in }C^1(\Sigma\setminus S),
 \qquad J_\Sigma v=0,\qquad v>0,\qquad v(y_0)=1.
\end{equation}
For $p\in S$, divide \eqref{heightsup} by $a_j$.
Equation~\eqref{heightlow} controls its first term, and
\eqref{punctjac} controls the term on the fixed boundary
$\partial D_{2\rho}(p)$. Hence $v$ is bounded near $p$.
A finite set has zero $W^{1,2}$ capacity when $n\ge2$.
The standard cutoff argument for bounded weak solutions
extends the Jacobi equation across $S$. Elliptic regularity
and the strong maximum principle extend $v$ to a smooth
positive solution on $\Sigma$. In particular, $\Sigma$ is stable, since
\begin{equation}\label{jacstab}
 \int_\Sigma\left(|\nabla\zeta|^2
       -( |A_\Sigma|^2+\operatorname{Ric}_{\mathbf g}(\nu,\nu))\zeta^2\right)
                   d\mu_\Sigma
 =\int_\Sigma v^2|\nabla(\zeta/v)|^2\,d\mu_\Sigma\ge0.
\end{equation}

If $\Sigma$ is one-sided, pass to its connected normal double
cover $\pi:\widetilde\Sigma\to\Sigma$ and lift the tubular
neighborhood and the functions $u_j$. The deck transformation
maps $(y,z)$ to $(\tau(y),-z)$ and leaves each lifted $u_j$ invariant.
The parity argument in \cite[proof of Theorem~4.1]{CM20}
uses only this invariance and the constancy of the two limiting
phases on the connected punctured cover. An odd value of $Q$
would exchange opposite phases, so $Q$ is even and at least two.
Applying the preceding two-sided construction to the highest and
lowest lifted graphs yields a smooth positive Jacobi function on
$\widetilde\Sigma$. This step uses only local stability off
$\pi^{-1}(S)$ and is independent of the global Morse index
of the lifted functions.
\end{proof}

\begin{proof}[Proof of Theorem~\ref{multone}]
Let $V$ be a nonzero subsequential limit in
\eqref{limitdecom}.
If a two-sided component has multiplicity at least two,
Proposition~\ref{closedjac} produces a nonzero positive
Jacobi field on it. If a component is one-sided, that
proposition produces such a field on its normal double cover.
For a bumpy metric both conclusions are impossible, in the
second case by applying the definition to the closed immersed
hypersurface $\widetilde\Sigma\to M$.
For $\operatorname{Ric}_{\mathbf g}>0$, stability on the
component or its double cover contradicts
\eqref{jacstab} with $\zeta=1$. In the one-sided case, we first
integrate on $\widetilde\Sigma$ and then divide by two. In both cases
we obtain
\begin{equation*}
 0\le-\int_\Sigma
       (|A_\Sigma|^2+\operatorname{Ric}_{\mathbf g}(\nu,\nu))\,d\mu_\Sigma<0.
\end{equation*}
Thus every component is two-sided and has multiplicity one.
By equipartition, the full energy measures converge to
$\sigma_0\|V\|$, as asserted.
\end{proof}

\subsection{Geometric applications}

We now follow \cite[Section~6]{CM20}. The results for smooth
multiplicity-one limits in Section~5 of that paper are valid
in arbitrary dimension, so no further extension of its index
estimates is needed. Combined with the min-max construction
of Gaspar--Guaraco \cite{GG18}, Theorem~\ref{multone}
provides another proof of Yau's conjecture for bumpy metrics and
for metrics of positive Ricci curvature in dimensions three
through seven. The proof below is the argument of
Chodosh--Mantoulidis, with the multiplicity-one conclusion
now available in these dimensions.

For a closed minimal hypersurface, $\operatorname{nul}$ denotes
the dimension of the kernel of its normal Jacobi operator;
for an Allen--Cahn solution it denotes the dimension of the
kernel of its second variation. For a disconnected minimal
hypersurface, both index and nullity are the sums over its
connected components.

\begin{corollary}\label{convindex}
Under the hypotheses of Theorem~\ref{multone}, let
$V=|\Sigma|$ be a nonzero limit along a subsequence,
with $\Sigma=\bigcup_{\ell=1}^J\Sigma_\ell$.
All the conclusions below concern this subsequence, which we continue
to index by $j$.
For each $\beta,\theta\in(0,1)$, the level sets $\{u_j=t\}$
converge to $\Sigma$ as single normal graphs in $C^{2,\theta}$,
uniformly for $|t|\le1-\beta$. Moreover,
\begin{equation}\label{indexbound}
 \operatorname{ind}(\Sigma)
      \le\liminf_{j\to\infty}\operatorname{ind}(u_j),\qquad
 \operatorname{ind}(u_j)+\operatorname{nul}(u_j)
      \le\operatorname{ind}(\Sigma)+\operatorname{nul}(\Sigma)
\end{equation}
for all sufficiently large $j$ in the second inequality.
If $\mathbf g$ is bumpy, then eventually
\begin{equation*}
 \operatorname{ind}(u_j)=\operatorname{ind}(\Sigma),\qquad
 \operatorname{nul}(u_j)=0.
\end{equation*}
\end{corollary}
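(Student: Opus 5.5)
The plan is to follow \cite[Section~5]{CM20} and reduce all three assertions to the multiplicity-one conclusion of Theorem~\ref{multone} together with the local estimates of Lemmas~\ref{closedest} and~\ref{stableest}. The key steps, in order, are: graphical convergence, then the lower index bound, then the upper index-plus-nullity bound, then the bumpy case.

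\textbf{Graphical convergence.} I first prove convergence of the level sets as single normal graphs, using multiplicity one everywhere on $\Sigma$.
\begin{itemize}
\item Away from the finite set $S$, Lemma~\ref{stableest} and Lemma~\ref{closedest} give ordered zero graphs converging in $C^{2,\theta}$.
\item The counting argument after \eqref{limitdecom} shows there is exactly one graph, since $Q=1$.
\item The level sets $\{u_j=t\}$ with $|t|\le1-\beta$ are then graphs over it. The lower gradient bound in \eqref{stablecurv} shows these are $O(\varepsilon_j)$ normal translates with the same $C^{2,\theta}$ control.
\end{itemize}

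The points of $S$ are the main obstacle, because there stability is not available. Two routes are possible.
\begin{itemize}
\item \emph{Comparison route.} Adapt the argument of Lemma~\ref{heightest} with one layer, sliding the Dirichlet barriers $b_{j,t}$ from both sides. This confines the zero set near $p\in S$ to an $O(\varepsilon_jL_j)$ neighborhood of $\Sigma$.
\item \emph{Point-selection route.} This is the one I would try first. Use the multiplicity-one theory of \cite[Section~5]{CM20}, valid in every dimension. A density-one limit forces, via Allard-type regularity for Allen--Cahn, the curvature estimate near each point. Concretely, run a point selection for $B(u_j)$ near $p$. By Lemma~\ref{localgraph}-type reasoning and Wang's multiplicity-one theorem \cite[Theorem~11.1]{Wang17}, any blow-up is a single heteroclinic. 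Bounded index rules out the alternatives.
\end{itemize}
Either route gives uniform $B(u_j)\le C$ on the whole central region. Lemma~\ref{closedest}, which only needs \eqref{priorbd}, then yields $C^{2,\theta}$ single-graph convergence across $S$.

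\textbf{Lower index bound.} For the first inequality in \eqref{indexbound}, I take a negative space $\{\psi_1,\dots,\psi_I\}$ for $J_\Sigma$ with $I=\operatorname{ind}(\Sigma)$, where each component is two-sided by Theorem~\ref{multone}. I test the Allen--Cahn second variation with $\psi_i(y)\,g'(z/\varepsilon_j)$ in Fermi coordinates about the converged zero graph. The computation in Proposition~\ref{rstat}, with $Q=1$ so there is no interaction, gives
\[
\varepsilon_j^{-1}\sigma_0\bigl(\langle -J_\Sigma\psi,\psi\rangle+o(1)\bigr).
\]
Hence these $I$ test functions span an $I$-dimensional negative subspace for large $j$.

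\textbf{Upper index-plus-nullity bound.} I argue by contradiction, as in \cite[Theorem~5.?]{CM20}. Suppose there were $\operatorname{ind}(\Sigma)+\operatorname{nul}(\Sigma)+1$ orthonormal eigenfunctions of the Allen--Cahn Jacobi operator with eigenvalues $\le0$.
\begin{itemize}
\item By the spectral gap orthogonal to $g'$ (the lemma before \eqref{vnb}), each eigenfunction is close in $L^2$ to $\eta(y)g'(z/\varepsilon_j)$.
\item The coefficient functions $\eta$ converge, after the $\varepsilon_j^{-1}$ normalization, to an $(\operatorname{ind}+\operatorname{nul}+1)$-dimensional space on which the form of $-J_\Sigma$ is $\le0$. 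This is a contradiction.
\item The uniform $C^{2,\theta}$ graphs and the vanishing mean curvature error \eqref{smallmean} make the projected operator converge to $J_\Sigma$.
\end{itemize}

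\textbf{Bumpy case.} If $\mathbf g$ is bumpy, then $\operatorname{nul}(\Sigma)=0$. The two inequalities combine to give $\operatorname{ind}(u_j)=\operatorname{ind}(\Sigma)$ and $\operatorname{nul}(u_j)=0$ eventually.
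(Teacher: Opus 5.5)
\textbf{Verdict: there is a genuine gap.} Your skeleton follows the results the paper simply cites. The paper's proof uses \cite[Lemma~5.4]{CM20} for the level sets, \cite[Theorem~A and Corollary~B]{Hie18} for the lower bound, and \cite[Theorem~5.11]{CM20} for the upper bound, followed by the same integer argument you give in the bumpy case. The justifications you supply yourself break down exactly where the Jacobi operator has to be identified.

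\textbf{The lower bound.} Proposition~\ref{rstat} cannot produce $\langle -J_\Sigma\psi,\psi\rangle$. It is a one-sided estimate in flat vertical coordinates, and it absorbs every geometric term other than the interaction into errors. For $Q=1$ its left side is empty, so it reduces to $0\le(\sigma_0+C\epsilon_D+C\tau)\int|D\eta|^2+\cdots$, with no trace of the potential $|A_\Sigma|^2+\operatorname{Ric}_{\mathbf g}(\nu,\nu)$.

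A direct Fermi-coordinate computation with $\psi\,g'(d/\varepsilon_j)$ does not close either. The form has size $\varepsilon_j^2\sigma_0$, not $\varepsilon_j^{-1}\sigma_0$. After factoring out $\varepsilon_j^2$, the Jacobian expansion together with $\int t^2(g''^2+W''(g)g'^2)\,dt=\sigma_0$ yields only $-\tfrac12\sigma_0(|A_\Sigma|^2+\operatorname{Ric}_{\mathbf g}(\nu,\nu))\psi^2$. A contribution of the same order comes from $\varepsilon_j^{-2}\int W'''(g)\phi\,g'^2\,dt$, where $\phi=u_j-g(d/\varepsilon_j)$. That term is fixed by the exact $\varepsilon_j^2$-order profile correction, not by any bound $\|\phi\|=O(\varepsilon_j^2)$.

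The standard remedy is to test with $\psi|\nabla u_j|$ instead. The Sternberg--Zumbrun identity is then exact. Using $B(u_j)^2\ge|A_{\{u_j=t\}}|^2$ and $C^2$ convergence of the level sets, you get $\limsup_j\mathcal Q_j(\psi|\nabla u_j|)\le\sigma_0\mathcal Q_\Sigma(\psi)$, where $\mathcal Q_j$ is the Allen--Cahn second variation and $\mathcal Q_\Sigma$ the Jacobi form.

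\textbf{The upper bound.} Here you need the reverse inequality for \emph{arbitrary} eigenfunctions with eigenvalue $\le0$. That requires three things: an upper bound on $B(u_j)^2-|A_{\{u_j=t\}}|^2=|\nabla^{\top}\log|\nabla u_j||^2$, positivity of the form in the wells, and exclusion of concentration at $S$. You assert that the $C^{2,\theta}$ graphs together with \eqref{smallmean} make the projected operator converge to $J_\Sigma$. But \eqref{smallmean} concerns only the mean curvature of the zero set, so this assertion is the content of \cite[Theorem~5.11]{CM20} stated without an argument.

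\textbf{Near $S$.} The comparison route adapted from Lemma~\ref{heightest} only confines the zero set in $C^0$. It gives no bound on $B(u_j)$, so it does not feed into Lemma~\ref{closedest}. Moreover, Lemma~\ref{closedest} and Lemma~\ref{localgraph} (the latter through Theorem~\ref{sheet}) are both stated for stable solutions, which is exactly what fails at $S$.

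The point-selection route can be repaired, but only by replacing every use of stability with the density-one information from Theorem~\ref{multone}. Concretely, you would use the Allard-type theory of \cite{Wang17}, including its Theorem~11.1 for the microscopic limits, which rests on the density bound rather than on stability. You would then add one-layer second-order estimates, where no interaction terms occur. The paper obtains all of this at once from \cite[Lemma~5.4]{CM20}, whose hypothesis is precisely the multiplicity-one conclusion of Theorem~\ref{multone}.
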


\begin{proof}
Theorem~\ref{multone} provides the smooth
two-sided limit with multiplicity one required in
\cite[Section~5]{CM20}. Its Lemma~5.4 implies the convergence of
the level sets, including near the points of $S$.
The first inequality in \eqref{indexbound} is the
index lower semicontinuity \cite[Theorem~A and Corollary~B]{Hie18}.
The second is \cite[Theorem~5.11]{CM20}.
For a bumpy metric, $\operatorname{nul}(\Sigma)=0$.
Since indices are integers, the first inequality eventually
implies $\operatorname{ind}(u_j)\ge\operatorname{ind}(\Sigma)$;
the second inequality proves the last assertion.
\end{proof}

\begin{corollary}\label{yau}
Let $(M^N,\mathbf g)$ be a smooth closed connected Riemannian
manifold, $3\le N\le7$, whose metric is bumpy or has positive
Ricci curvature. There is $C=C(M,\mathbf g,W)>1$ such that,
for every positive integer $p$, there is a smooth closed
embedded minimal hypersurface $\Sigma_p$, all of whose
components are two-sided, satisfying
\begin{equation}\label{areagrowth}
 C^{-1}p^{1/N}\le\mathcal H_{\mathbf g}^{N-1}(\Sigma_p)
                       \le Cp^{1/N}
\end{equation}
and
\begin{equation*}
 \operatorname{ind}(\Sigma_p)\le p
       \le\operatorname{ind}(\Sigma_p)+\operatorname{nul}(\Sigma_p).
\end{equation*}
If $\mathbf g$ is bumpy, then $\operatorname{ind}(\Sigma_p)=p$.
If $\operatorname{Ric}_{\mathbf g}>0$, then $\Sigma_p$ is connected.
In either case, $M$ contains infinitely many geometrically
distinct connected closed embedded two-sided minimal hypersurfaces.
\end{corollary}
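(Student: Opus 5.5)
The plan is to follow Chodosh--Mantoulidis \cite[Section~6]{CM20}, with Theorem~\ref{multone} and Corollary~\ref{convindex} supplying the multiplicity-one input in dimensions $3\le N\le7$. The approach is Allen--Cahn min-max. By Gaspar--Guaraco \cite{GG18}, for each $p$ and each small $\varepsilon$ there is a critical point $u_{\varepsilon,p}$ of the energy. Its energy is the $p$-th phase-transition spectral value $c_\varepsilon(p)$, which satisfies $C^{-1}p^{1/N}\le c_\varepsilon(p)\le Cp^{1/N}$ uniformly in small $\varepsilon$; this is the Gromov--Guth type Weyl-law bound proved there. The solution can also be chosen with $\operatorname{ind}(u_{\varepsilon,p})\le p\le\operatorname{ind}+\operatorname{nul}$; this is the standard index bound for $p$-parameter min-max, obtained after a generic perturbation if necessary, as in \cite{GG18,CM20}. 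Taking $\varepsilon_j\downarrow0$, the hypotheses \eqref{closedbd} hold with $E_0=Cp^{1/N}$ and $I_0=p$.

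First, pass to a subsequence with a varifold limit $V$. The lower bound on $c_\varepsilon(p)$ shows that $V$ is nonzero. Theorem~\ref{multone} then gives $V=|\Sigma_p|$ with pairwise disjoint, two-sided, multiplicity-one components. Because the energy measures converge to $\sigma_0\mathcal H^{N-1}\lfloor\Sigma_p$, the area bounds \eqref{areagrowth} follow from the energy bounds, after dividing by $\sigma_0$ and adjusting $C$.

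Second, Corollary~\ref{convindex} applies along this subsequence. Its first inequality gives $\operatorname{ind}(\Sigma_p)\le\liminf\operatorname{ind}(u_j)\le p$. Its second inequality gives $p\le\operatorname{ind}(u_j)+\operatorname{nul}(u_j)\le\operatorname{ind}(\Sigma_p)+\operatorname{nul}(\Sigma_p)$ for large $j$. If $\mathbf g$ is bumpy, $\operatorname{nul}(\Sigma_p)=0$, so $\operatorname{ind}(\Sigma_p)=p$. If $\operatorname{Ric}_{\mathbf g}>0$, the Frankel argument shows that any two closed minimal hypersurfaces intersect. Since the components are disjoint, $\Sigma_p$ is connected.

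Finally, infinitely many distinct hypersurfaces are needed.
\begin{itemize}
\item Bumpy case: by \eqref{areagrowth} the areas tend to infinity. If only finitely many connected minimal hypersurfaces occurred, each $\Sigma_p$ would be a disjoint union drawn from a finite list with multiplicity one. The areas of such unions are bounded, which is a contradiction.
\item Positive Ricci case: each $\Sigma_p$ is connected and its area tends to infinity, so infinitely many distinct areas, hence distinct hypersurfaces, occur.
\end{itemize}

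The main obstacle is the first step. It requires the exact index bound $\operatorname{ind}\le p\le\operatorname{ind}+\operatorname{nul}$ for the min-max solutions, and the Weyl-law two-sided bounds on $c_\varepsilon(p)$ uniformly in $\varepsilon$. I would cite \cite{GG18} and \cite[Section~6]{CM20} for these rather than reprove them. I would check only that the normalization $\overline W=2W/k^2$ introduced above transfers the statements to the present potential, which it does, since it rescales energies and second variations by a fixed positive constant.
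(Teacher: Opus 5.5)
Your proposal is correct and follows the paper's proof essentially step by step. The shared steps are the min-max critical points of \cite{GG18} with the $p^{1/N}$ energy bounds and $\operatorname{ind}\le p\le\operatorname{ind}+\operatorname{nul}$; Theorem~\ref{multone} together with equipartition, which gives the two-sided multiplicity-one limit and the area bounds; Corollary~\ref{convindex} for the index inequalities; Frankel's theorem for connectedness; and the observation that finitely many hypersurfaces would give only finitely many unions, all of bounded area. The one step you omit is that the paper first modifies $W$ evenly outside $[-1,1]$ to meet the growth hypotheses of \cite{GG18}, and then notes that the resulting critical points take values in $[-1,1]$, which Theorem~\ref{multone} requires.
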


\begin{proof}
We modify $W$ evenly outside $[-1,1]$ to meet the growth assumptions
in \cite{GG18}. The critical points used below take values in
$[-1,1]$, so the equation and their energies are unchanged.
For each $p$, the construction in \cite[Theorems~3.2 and~3.3]{GG18} yields
$\varepsilon_j\downarrow0$ and critical points $u_{p,j}$ such that
\begin{equation*}
 \begin{split}
 &C^{-1}p^{1/N}\le\mathcal E_{\varepsilon_j}(u_{p,j};M)
       \le Cp^{1/N},\\
 &\operatorname{ind}(u_{p,j})\le p
       \le\operatorname{ind}(u_{p,j})+\operatorname{nul}(u_{p,j}),
 \end{split}
\end{equation*}
where $C$ is independent of $p$.
The remaining steps are the dimension-independent part of
\cite[proof of Corollary~6.2]{CM20}. Theorem~\ref{multone}
and equipartition yield a smooth two-sided multiplicity-one limit
$|\Sigma_p|$ and \eqref{areagrowth}. Corollary~\ref{convindex}
implies the asserted index and nullity inequalities. For a bumpy
metric the nullity vanishes, so $\operatorname{ind}(\Sigma_p)=p$.
If $\operatorname{Ric}_{\mathbf g}>0$, Frankel's theorem
\cite{Frankel66} makes $\Sigma_p$ connected.
By \eqref{areagrowth}, the areas tend to infinity. Finitely many
connected hypersurfaces would produce only finitely many
multiplicity-one unions, all with bounded area. There are therefore
infinitely many geometrically distinct connected hypersurfaces
among the components of the $\Sigma_p$.
\end{proof}

For a bumpy metric we can also select connected components
with large index, as in \cite[Corollary~6.4]{CM20}.
By monotonicity, there exists a constant $a_0=a_0(M,\mathbf g)>0$ such
that every nonempty closed minimal hypersurface has area at
least $a_0$. Hence \eqref{areagrowth} implies
\begin{equation*}
 \#\pi_0(\Sigma_p)\le Cp^{1/N}.
\end{equation*}
Since the sum of the component indices is $p$, some connected
component $\Gamma_p\subset\Sigma_p$ satisfies
\begin{equation*}
 \operatorname{ind}(\Gamma_p)\ge c p^{(N-1)/N},\qquad
 \mathcal H_{\mathbf g}^{N-1}(\Gamma_p)\le Cp^{1/N}.
\end{equation*}

\appendix

\section{One-dimensional estimates}\label{aux}

This appendix contains the one-dimensional estimates used to recover
the full transition energy and to compute the interaction coefficients.
The first subsection treats truncated normal lines, including endpoint
terms and controlling nonmonotonicity so that Lemma~\ref{count}
accounts for the tails near both wells. The second proves
Proposition~\ref{quanti}, computing the leading interaction between
adjacent heteroclinics and estimating the remainders and their
derivatives with respect to the layer positions. The interaction
calculation accounts for the different decay rates required for asymmetric
potentials.

\subsection{Estimates on normal lines}
Our aim is to compare the energy in the tails with the number of
crossings of a nearby level. A normal line need not be monotone,
and truncating the line introduces endpoint terms. The next lemma
includes these terms and measures the effect of nonmonotonicity by
the one-dimensional discrepancy. This is the form needed in
Lemma~\ref{count}.

Fix a normal line and a closed interval $J=[z_-,z_+]$ on it, with
a fixed ambient neighborhood contained in the domain.

\begin{lemma}\label{odtail}
There are $\delta_*,r_*>0$, depending only on $W$ and the local
derivative constants, such that the following estimates hold for
$0<\delta\le\delta_*$. We take $\varepsilon$ sufficiently small that
$J^+=(z_--r_*\varepsilon,z_++r_*\varepsilon)$ stays in the fixed
neighborhood. Then
\begin{equation}\label{ultail}
\begin{split}
 &\int_{J\cap\{u_\varepsilon>1-\delta\}}
                         \sqrt{2W(u_\varepsilon)}|u_{\varepsilon,z}|\,dz\\
 \le& \left(\int_{1-\delta}^1\sqrt{2W(s)}\,ds\right)
       \#\{z\in\operatorname{int}J\mid 
                u_\varepsilon=1-\delta,\ u_{\varepsilon,z}\ne0\}\\
 &\quad+\frac C\varepsilon
        \int_{J^+\cap\{u_\varepsilon>1-2\delta\}}\left(\sqrt{2W(u_\varepsilon)}-\varepsilon|u_{\varepsilon,z}|\right)^2\,dz\\
 &\quad+\sum_{z\in\{z_-,z_+\}}
     \mathbf1_{\{u_\varepsilon(z)\ge1-\delta\}}
                  \int_{u_\varepsilon(z)}^1\sqrt{2W(s)}\,ds.
\end{split}
\end{equation}
For every fixed $0<\kappa<1$ one also has
\begin{equation}\label{lltail}
\begin{split}
 &\int_{J\cap\{u_\varepsilon>1-\delta\}}
                         \sqrt{2W(u_\varepsilon)}|u_{\varepsilon,z}|\,dz\\
 \ge& \left(\int_{1-\delta}^1\sqrt{2W(s)}\,ds\right)
       \#\{z\in\operatorname{int}J\mid 
         u_\varepsilon=1-\delta,\ 
         \varepsilon|u_{\varepsilon,z}|\ge\kappa\sqrt{2W(1-\delta)}\}\\
 &\quad-\frac {C_\kappa}\varepsilon
        \int_{J\cap\{u_\varepsilon>1-\delta\}}\left(\sqrt{2W(u_\varepsilon)}-\varepsilon|u_{\varepsilon,z}|\right)^2\,dz\\
 &\quad-\sum_{z\in\{z_-,z_+\}}
     \mathbf1_{\{u_\varepsilon(z)\ge1-\delta\}}
                  \int_{u_\varepsilon(z)}^1\sqrt{2W(s)}\,ds.
\end{split}
\end{equation}
\end{lemma}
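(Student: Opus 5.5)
The proof is one-dimensional, on the fixed line. Write $u=u_\varepsilon(z)$ along the line and $\Psi(s)=\int_s^1\sqrt{2W(\sigma)}\,d\sigma$, so that $\sqrt{2W(u)}|u_z|=|\tfrac{d}{dz}\Psi(u)|$. The set $J\cap\{u>1-\delta\}$ is a finite or countable union of relatively open intervals $I$. On each such $I$ the integral $\int_I|(\Psi(u))'|\,dz$ is the total variation of $\Psi\circ u$ on $I$. Each interval has endpoints that are either crossings of the level $1-\delta$ in $\operatorname{int}J$, where $\Psi(u)=\Psi(1-\delta)$, or endpoints $z_\pm$ of $J$ where $u\ge1-\delta$.

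If $u$ were monotone from each endpoint up to a single interior maximum, the variation would be exactly $2\Psi(1-\delta)-2\Psi(\max u)$ for an interior interval, or $\Psi(1-\delta)+\Psi(u(z_\pm))$ for a boundary one, minus twice the maximum term. Distributing one copy of $\Psi(1-\delta)$ to each crossing gives the principal terms in \eqref{ultail} and \eqref{lltail}. The boundary terms $\Psi(u(z_\pm))$ appear with the signs shown.

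The correction for nonmonotonicity is the main point. Choose $\delta_*$ so small that $W''\ge c>0$ on $[1-2\delta_*,1]$, so that in this well $\sqrt{2W}\sim\sqrt{W''(1)}(1-s)$. Consider an interior local minimum $z_0$ of $u$ inside $I$. Its excess variation is $2[\Psi(u(z_0))-\Psi(\text{neighbouring maxima})]$. At $z_0$ we have $u_z=0$, so the pointwise defect $d=\sqrt{2W(u)}-\varepsilon|u_z|$ equals $\sqrt{2W(u(z_0))}$ there.

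By \eqref{relip} and the interior derivative bound \eqref{ellipt}, $\varepsilon|u_{zz}|\le C\sqrt{2W(u)}/\varepsilon$. Hence on a window of length $r_*\varepsilon$ around $z_0$, $u$ stays comparable, with $1-u\in[\tfrac12(1-u(z_0)),2(1-u(z_0))]$, and $d\ge \tfrac12\sqrt{2W(u(z_0))}$. This uses $1-u\sim$ the Harnack comparability for $1-u$ at scale $\varepsilon$. Therefore
\[
\frac1\varepsilon\int_{z_0-r_*\varepsilon}^{z_0+r_*\varepsilon}d^2\,dz\ge c\,W(u(z_0))\ge c\,\Psi(u(z_0)),
\]
since $\Psi(s)\sim\tfrac{\sqrt{W''(1)}}2(1-s)^2\sim W(s)$ near the well. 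The window may leave $J$, which is why $J^+$ appears. It may also leave $\{u>1-\delta\}$, but it stays in $\{u>1-2\delta\}$ by the comparability.

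Windows of distinct minima need not be disjoint. To handle this, I first select a maximal subfamily of minima with disjoint windows. Minima within $r_*\varepsilon$ of a selected one have comparable values of $\Psi$ by the same comparability. I expect this bookkeeping to be the main obstacle. A cleaner route is to bound the excess variation on each monotone lap directly, via $|\Psi(u)'|\le \sqrt{2W}\cdot(\sqrt{2W}/\varepsilon)+\sqrt{2W}\,|d|/\varepsilon$, integrated only on $\{|d|\ge\tfrac14\sqrt{2W}\}$ near turning points. In either form the summed excess is at most $C\varepsilon^{-1}\int_{J^+\cap\{u>1-2\delta\}}d^2$, which proves \eqref{ultail}. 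For the upper bound, dropping the positive maxima terms only helps.

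For \eqref{lltail} I argue on the opposite side. At each counted crossing $z_1$, with $\varepsilon|u_z(z_1)|\ge\kappa\sqrt{2W(1-\delta)}$, follow $u$ upward into $I$ until either $u$ reaches $1$ or $|d|$ becomes large. Along the way $|(\Psi(u))'|=\sqrt{2W(u)}|u_z|\ge 2W(u)/\varepsilon-\sqrt{2W(u)}|d|/\varepsilon$. Integrating and using Young's inequality, the lost amount relative to the full $\Psi(1-\delta)$ is at most $C_\kappa\varepsilon^{-1}\int_{J\cap\{u>1-\delta\}}d^2$ plus the boundary term when the lap ends at $z_\pm$.

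When the lap stops at an interior turning point before reaching $1$, two cases arise. Either the lap ends where $\varepsilon|u_z|$ has dropped from $\kappa\sqrt{2W}$, which has defect cost $\gtrsim(1-\kappa)^2W(u)$ over an $\varepsilon$-window, bounded as before. Or it ends at a maximum, which the defect controls by the identical window argument. Each crossing claims the adjacent monotone lap, and the two crossings bounding one interval claim distinct laps or share the cost at most twice. Summing over crossings gives \eqref{lltail}.
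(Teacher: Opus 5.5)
Your strategy for \eqref{ultail} is the right one: take the total variation of $\Psi(u)$ on the components of $J\cap\{u>1-\delta\}$, and charge each turning point to the defect $d$ on an $\varepsilon$-window where $1-u$ stays comparable. But the bookkeeping you flag is a genuine gap, not a formality.

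Charging each interior local minimum $z_0$ an excess of order $\Psi(u(z_0))$, and noting that minima within $r_*\varepsilon$ of a selected one have comparable $\Psi$, gives no bound on how many such minima there are. Nothing available limits the number of critical points of $u$ on the line inside a window of length $r_*\varepsilon$; they may even accumulate. Yet the defect integral over that window is $\gtrsim(1-u(z_0))^2$ only once.

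Your ``cleaner route'' has the same defect. On $\{|d|\ge\frac14\sqrt{2W(u)}\}$ the integrand is indeed $\le Cd^2/\varepsilon$ pointwise. However, the excess of a turn is carried mostly by the adjacent low-defect monotone pieces, whose endpoints must again be charged with no control on their number. You also omit degenerate crossings ($u=1-\delta$, $u_z=0$). These can bound components of $\{u>1-\delta\}$ but are excluded from the count in \eqref{ultail}.

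The missing device is to estimate the \emph{whole} integral over a window instead of each oscillation:
\begin{itemize}
\item Partition the line into cells of length $r_*\varepsilon/10$. Select one critical point with value $\ge1-\delta$ in each cell where $J$ contains such a point, and delete that cell with its two neighbours.
\item On a deleted triple $\varepsilon|u_z|\le C(1-u)$ and $1-u$ is comparable to $1-u(z_0)$. So the integral over the triple and the primitive values at its two ends are at most $C(1-u(z_0))^2$, however many oscillations it contains.
\item Your window estimate bounds this by $C\varepsilon^{-1}\int_{|z-z_0|<r_*\varepsilon}d^2\,dz$. These windows are finitely many, overlap boundedly, and lie in $J^+\cap\{u>1-2\delta\}$.
\item Off the deleted triples, $u$ is strictly monotone on each remaining component. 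Its variation is therefore exactly a difference of primitive values, and the count closes.
\end{itemize}

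For \eqref{lltail} the error must live in $J\cap\{u>1-\delta\}$. ``The identical window argument'' only puts windows in $J^+\cap\{u>1-2\delta\}$, so it does not suffice. This is exactly where the slope condition and $C_\kappa$ enter, and your proposal does not supply that step.

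At a counted crossing $|u_z|\ge c\kappa\delta/\varepsilon$. On the component $|u_{zz}|\le C\varepsilon^{-2}\delta$, which keeps $|u_z|$ at least half as large over a length $d_\kappa\varepsilon$, with $d_\kappa$ independent of $\delta$. Hence the maximum point $z_*$ of $u$ on an interior component has $1-u(z_*)\le(1-\gamma_\kappa)\delta$. The bound $|D\log(1-u)|\le C/\varepsilon$ then places $z_*$ at distance $\ge c_\kappa\varepsilon$ from both ends. So a window of radius $c'_\kappa\varepsilon\le r_*\varepsilon$ about $z_*$ lies inside the component and bounds $\Psi(u(z_*))$.

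The variation of such a component is at least $2\Psi(1-\delta)-2\Psi(u(z_*))$. Thus one window per counted interior component suffices, and these windows are disjoint. Components meeting an end of $J$ need only the trivial bound by the net change of $\Psi(u)$.

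Your alternative stopping rule, at the point where $\varepsilon|u_z|$ first drops below $\kappa\sqrt{2W(u)}$, is unnecessary. As stated it is also unsafe: the stopping point can lie arbitrarily close to the crossing, where a centred window leaves the component.
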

\begin{proof}
The primitive change of variables in Hutchinson--Tonegawa
\cite[Section~2.1]{HT} and the one-dimensional chain rule
\cite[Sections~3.2 and~3.10]{AFP} express the integral in the lemma
as a total variation. Constant well solutions satisfy the estimates
immediately. For every other solution, \eqref{relip} implies
\begin{equation*}
 |D\log(1-u_\varepsilon)|\le C/\varepsilon,
 \qquad |u_{\varepsilon,zz}|
                    \le C\varepsilon^{-2}(1-u_\varepsilon).
\end{equation*}
Near the positive well, nondegeneracy also yields
\begin{equation*}
 c(1-s)\le\sqrt{2W(s)}\le C(1-s),\qquad
 c(1-s)^2\le\int_s^1\sqrt{2W(t)}\,dt\le C(1-s)^2.
\end{equation*}
We choose $r_*>0$ sufficiently small. If
$u_{\varepsilon,z}(z_0)=0$ and $u_\varepsilon(z_0)\ge1-\delta$,
the logarithmic bound and integration of $u_{\varepsilon,zz}$ imply,
on $|z-z_0|<r_*\varepsilon$,
\begin{equation*}
 \begin{gathered}
 u_\varepsilon>1-2\delta,\qquad
 \varepsilon|u_{\varepsilon,z}|
                       \le\tfrac12\sqrt{2W(u_\varepsilon)},\\
 c(1-u_\varepsilon(z_0))\le1-u_\varepsilon(z)
                       \le C(1-u_\varepsilon(z_0)).
 \end{gathered}
\end{equation*}
Consequently, for every $0<r\le r_*$,
\begin{equation}\label{clcost}
 \frac1\varepsilon
 \int_{z_0-r\varepsilon}^{z_0+r\varepsilon}
 \left(\sqrt{2W(u_\varepsilon)}
                -\varepsilon|u_{\varepsilon,z}|\right)^2\,dz
 \ge cr(1-u_\varepsilon(z_0))^2.
\end{equation}
The constants are uniform for $0<\delta\le\delta_*$.

For the upper bound, we partition the line into intervals of length
$r_*\varepsilon/10$. In each interval containing a critical point
in $J$ with value at least $1-\delta$, we select one such point
and remove that interval and its two neighbors from $J$.
Only finitely many intervals are involved, even if critical points
accumulate. The relative derivative bound controls the integral
over each removed triple and the primitive values at its ends by
$C(1-u_\varepsilon(z_0))^2$. By \eqref{clcost}, this is bounded
by the error integral over $|z-z_0|<r_*\varepsilon$.
These larger intervals lie in $J^+\cap\{u_\varepsilon>1-2\delta\}$
and have bounded overlap, since each partition interval contributes
at most one selected point.

On each remaining component of $\{u_\varepsilon>1-\delta\}$,
the function is monotone, so its integral equals the absolute
difference of the primitive values at that component's two ends.
Each remaining interior crossing is regular and borders one such
component. It contributes at most
$\int_{1-\delta}^1\sqrt{2W(s)}\,ds$.
The ends created by the removal have already been estimated,
with at most two assigned to each removed triple.
The original ends of $J$ contribute the sum in \eqref{ultail}.
We sum these bounds to prove the upper estimate.

For the lower bound, a component meeting exactly one end of $J$ has at most
one interior crossing satisfying the derivative condition in
\eqref{lltail}. If such a crossing is present, the integral over
the component is at least
$\int_{1-\delta}^1\sqrt{2W(s)}\,ds$ minus the primitive at that
end. A component meeting both ends of $J$ contributes zero to the
interior crossing count. This accounts for the end terms.

We consider an interior component with a crossing satisfying
the derivative condition in \eqref{lltail} and orient the line
into it from that crossing. Initially
$u_{\varepsilon,z}\ge c\kappa\delta/\varepsilon$.
The relative first and second derivative bounds keep this derivative
at least half as large for a distance $d_\kappa\varepsilon$,
where $d_\kappa>0$ is independent of $\delta$.
This interval stays in the component. At a maximum point $z_*$,
we consequently have
\begin{equation*}
 1-u_\varepsilon(z_*)\le(1-\gamma_\kappa)\delta
 \qquad\text{for some }\gamma_\kappa>0.
\end{equation*}
The logarithmic derivative bound places $z_*$ at distance at least
$c_\kappa\varepsilon$ from both ends of the component.
We choose $0<c'_\kappa\le r_*$ so that the interval of radius
$c'_\kappa\varepsilon$ about $z_*$ lies inside it.
The quadratic bound for the primitive and \eqref{clcost} imply
\begin{equation*}
 \int_{u_\varepsilon(z_*)}^1\sqrt{2W(s)}\,ds
 \le\frac{C_\kappa}{\varepsilon}
 \int_{z_*-c'_\kappa\varepsilon}^{z_*+c'_\kappa\varepsilon}
 \left(\sqrt{2W(u_\varepsilon)}
                 -\varepsilon|u_{\varepsilon,z}|\right)^2\,dz.
\end{equation*}
Both ends have value $1-\delta$, so the variation through $z_*$
is at least
\begin{equation*}
 2\int_{1-\delta}^1\sqrt{2W(s)}\,ds
       -2\int_{u_\varepsilon(z_*)}^1\sqrt{2W(s)}\,ds.
\end{equation*}
This controls its one or two counted crossings with the required
error. The error intervals for distinct components are disjoint.
Each counted component has length at least $d_\kappa\varepsilon$,
so only finitely many occur in $J$. All other components contribute
nonnegative integrals. Summation, including the components meeting
the ends of $J$, proves \eqref{lltail}.

The negative-well estimates follow by applying this argument to
$-u_\varepsilon$ and $W(-s)$.
\end{proof}

\subsection{The interaction integrals}\label{inter}
We now prove Proposition~\ref{quanti}. The main objective is to
compute the coefficient and sign of the interaction between
adjacent layers, and to show that the remaining integrals are of
smaller order. The estimates must also remain valid after
differentiation with respect to the layer positions, since the
projected equations use these differentiated interactions.
We follow the calculations of Wang--Wei
\cite[Appendix~A, Lemmas~A.1--A.2, and Appendix~B, Lemma~B.1]{WW},
keeping the two rates $\sqrt{W''(-1)}$ and $\sqrt{W''(1)}$ separate.
We also explain how the estimates behave under differentiation with
respect to the layer positions. The calculation of the leading
coefficient follows the integration by parts in
\cite[Appendix~A, proof of Lemma~A.1, Step~4]{WWends}.

\begin{proof}
We set $d_j=f_{j+1}-f_j$ and $k_j=\sqrt{W''(s_j)}$ for
$1\le j<Q$. By \eqref{gtail}, every fixed positive-order derivative
of a heteroclinic decays at the same rate as its difference from the
limiting well. Differentiation with respect to a layer position
preserves these bounds. The constants below depend on $W,Q$ and
the fixed differentiation order, and are independent of the layer
positions and $D$.

We first consider an increasing layer at $f_i$ and its decreasing
right neighbor. With $t=z-f_i$, the neighboring heteroclinic
differs from $1$ by
\begin{equation*}
  -A_+e^{-k_i d_i}e^{k_i t}
        +O(e^{-2k_i(d_i-t)})\qquad\text{for }t\le d_i/2.
\end{equation*}
We expand the integrand in \eqref{suu} about $g(t)$ and subtract
the contribution $W'(g(d_i-t))$ from the neighboring layer.
The resulting linear term is
\begin{equation*}
  -A_+e^{-k_i d_i}e^{k_i t}
                    [W''(g(t))-k_i^2]g'(t).
\end{equation*}
We evaluate its integral using $g'''=W''(g)g'$.
At $+\infty$, we have $W''(g)-k_i^2=O(e^{-k_i t})$.
At $-\infty$, both $e^{k_i t}$ and $g'$ decay.
The integral therefore converges, and integration by parts yields
\begin{equation}\label{gfwron}
  \begin{split}
  \int_{\mathbb R}e^{k_i t}[W''(g(t))-k_i^2]g'(t)\,dt
 &=\left[e^{k_i t}(g''(t)-k_i g'(t))\right]_{-\infty}^{+\infty}\\
 &=-2k_i^2A_+.
  \end{split}
\end{equation}
Thus the right neighbor contributes $2k_i^2A_+^2e^{-k_i d_i}$.
For an interaction through the well $-1$, the same calculation uses
$A_-$ and $W''(-1)$. Reflection of the variable $z$ treats decreasing
layers. With the signed test $s_i g'(s_i(z-f_i))$, the right-neighbor
contribution is positive and the left-neighbor contribution is
negative. These are the two leading terms in \eqref{suu}.

We now estimate the errors in this calculation. We divide the line
at the midpoints of consecutive layer positions. On the interval
containing $f_i$, we expand
$W'$ about $g(s_i(z-f_i))$. The other heteroclinics contribute their
differences from the well values facing this interval. The constant
well values cancel in the definition of $G$. After subtracting the
individual terms $W'(g(s_j(z-f_j)))$, each linear coefficient is
the difference between $W''(g(s_i(z-f_i)))$ and $W''$ at the relevant
well. Taylor's formula bounds the remainder by the sum of the
squares of the other heteroclinic differences.

The estimates for adjacent layers follow from their common decay
rate $k_j$ in the gap $(f_j,f_{j+1})$. The product of their
differences from the common well is bounded by $Ce^{-k_jd_j}$.
After including the decaying parts outside the gap, the integral
of its square is bounded by
\begin{equation*}
  C(1+d_j)e^{-2k_jd_j}
        \le Ce^{-c_WD}e^{-k_jd_j}.
\end{equation*}
When we multiply a quadratic remainder by the derivative of an
adjacent layer, the half of the gap nearest that layer contributes
at most
\begin{equation*}
  \int_0^{d_j/2}e^{-2k_j(d_j-t)}e^{-k_jt}\,dt
        \le Ce^{-3k_jd_j/2}.
\end{equation*}
The side of that layer facing away from the neighbor contributes
$O(e^{-2k_jd_j})$. These estimates also hold for fixed
positive-order derivatives of the heteroclinics. We choose $c_W>0$
small enough to absorb the factors $1+d_j$.

For layers farther apart, we use the decay across the intervening
gaps. After a heteroclinic has crossed the first complete gap
$(f_j,f_{j+1})$, its difference from the limiting well is at most
$Ce^{-k_jd_j}$. Each nonadjacent midpoint interval lies at least
$D/2$ farther away. The integral there against a fixed
heteroclinic derivative is bounded by
$Ce^{-k_jd_j}e^{-k_jD/2}$, since that derivative has uniformly
bounded $L^1$ norm. The integral of the squared heteroclinic
difference over these farther intervals is bounded by
$Ce^{-2k_jd_j}e^{-k_jD}$.
This estimate is also needed when $W''(-1)\ne W''(1)$:
a linear coefficient may stay bounded away from zero near the
opposite well, while the heteroclinic difference multiplying it
has already crossed a complete gap.

We apply these bounds to the Taylor expansion on each midpoint
interval. An adjacent linear term is bounded by $Ce^{-k_jd_j}$.
On the side toward the common well, its coefficient decays at the
same rate as the difference between the central heteroclinic and
that well. On the other side, the neighboring heteroclinic has
crossed the gap. Extending the leading integral
to the full line in \eqref{gfwron} produces an error bounded by
$Ce^{-c_WD}e^{-k_jd_j}$ at each end. If the test comes from a layer
outside this interval, its derivative has integral
$O(e^{-c_WD})$ there. For the quadratic terms, a test from an
adjacent layer is controlled by the half-gap estimate above.
Every other test is $O(e^{-c_WD})$ on the interval, while the
squared heteroclinic difference has integral $O(e^{-k_jd_j})$.
Summing over the intervals proves \eqref{suu} for $\alpha=0$.
The squared linear terms and Taylor remainders have integrals
bounded by $C(1+d_j)e^{-2k_jd_j}$ for adjacent layers. Combining
these bounds with the estimates for the farther layers proves
\eqref{drtf}.

For derivatives with respect to the layer positions, we
differentiate the integrand on the full line and Taylor's integral
remainder before dividing the line at the midpoints. This order
avoids boundary terms from the moving midpoints. Every
differentiated heteroclinic difference has the same decay rate,
and each differentiated quadratic remainder contains at least two
such differences or their derivatives. The derivatives of $W$
appearing as coefficients are bounded. Thus the preceding
estimates justify differentiation under the integral and control
the differentiated error. The leading terms are the corresponding
derivatives of the two exponential terms in \eqref{suu}, which proves
that estimate for every fixed multi-index $\alpha$.

It remains to estimate the functions in \eqref{tizqr}. On the
midpoint interval containing $f_i$, the definition of $G$ and the
mean value theorem imply
\begin{equation*}
 \begin{split}
  |W''(G)-W''(g(s_i(z-f_i)))|
  \le{}&C\sum_{j<i}|g(s_j(z-f_j))-s_j|\\
       &+C\sum_{j>i}|g(s_j(z-f_j))+s_j|.
 \end{split}
\end{equation*}
The wells facing this interval are $s_j$ for $j<i$ and $-s_j$
for $j>i$.
After multiplication by $g'(s_i(z-f_i))$ or $g''(s_i(z-f_i))$,
the preceding estimates for adjacent and farther layers apply on
this interval. They also control the region between this interval
and each neighboring layer. Beyond a neighboring layer, the
coefficient is bounded and the multiplying derivative has crossed
a complete gap. The integral of the square of this derivative is
at most $Ce^{-2k_jd_j}$, even near the opposite well. Here $j=i-1$
for the left neighboring layer and $j=i$ for the right neighboring layer.
The same argument controls the difference of the third
derivatives of $W$ multiplied by $g'(s_i(z-f_i))^2$, and the
products of derivatives of distinct heteroclinics.

Differentiation in $f_i$ replaces a difference of derivatives of
$W$ by a higher-order difference multiplied by a heteroclinic
derivative, or differentiates the existing multiplying factor.
Differentiation in another position introduces the decaying
derivative of that heteroclinic. Each resulting term therefore
has the factors used in the preceding $L^2$ estimates.
This proves the asserted bounds for \eqref{tizqr} and its
fixed-order position derivatives.
Finally, exponential decay bounds the integrals of products of
fixed positive-order derivatives of distinct heteroclinics by
$C(1+|f_i-f_j|)e^{-c_W|f_i-f_j|}$.
Position differentiation preserves this bound, which tends to
zero uniformly as $D\to\infty$ because $|f_i-f_j|\ge D$.
\end{proof}

\section*{Acknowledgements} 
Yong Liu was supported by  National Natural Science Foundation of China No. 12471204. Juncheng Wei was supported by National R\&D Program of China (Grant No. 2022YFA1005602), and Hong Kong General Research Fund (No. 14303125) ``On Fujita equation in the critical or supercritical regime''. Kelei Wang was supported by the National Key R\&D Program of China (No. 2022YFA1005602) and the National Natural Science Foundation of China (No. 12425108 and No. 12221001). Yong Wei was supported by the National Key R\&D Program of China (2021YFA1001800) and National Natural Science Foundation of China (No.~12531002).  Ke Wu is supported by the National Natural Science Foundation of China (No. 12401264) and Yunnan Revitalization Talent Support Program. The authors used OpenAI models as assistive tools in preparing
this manuscript. Mathematical arguments, proofs, and computations in the manuscript were verified and finalized by the authors.

\end{document}